\crefname{algocf}{alg.}{algs.}
\Crefname{algocf}{Algorithm}{Algorithms}
\definecolor{bgcolor}{rgb}{0.76,0.88,0.50}
\definecolor{bgcolor0}{rgb}{0.93,0.99,1}
\definecolor{bgcolor1}{rgb}{0.8,1,1}
\definecolor{bgcolor2}{rgb}{0.8,1,0.8}
\definecolor{bgcolor3}{rgb}{0.50,0.90,0.50}
\definecolor{mydarkgreen}{RGB}{39,130,67}
\definecolor{mydarkorange}{RGB}{236,147,14}
\definecolor{mydarkred}{RGB}{192,47,25}
\definecolor{ruby}{RGB}{155,17,30}
\definecolor{chili}{RGB}{191,0,0}
\definecolor{sangria}{RGB}{146,0,10}
\definecolor{burgundy}{RGB}{128,0,32} 
\definecolor{darkred}{RGB}{132,0,0} 
\definecolor{cherry}{RGB}{192,0,0} 
\definecolor{blue}{RGB}{0,0,255}
\definecolor{linen}{HTML}{FAF0E6} 
\definecolor{mydarkblue}{RGB}{20,20,192}
\definecolor{niceblue}{rgb}{0.0,0.19,0.56}
\definecolor{PineGreen}{RGB}{0,110,51}
\definecolor{BrickRed}{RGB}{143,20,2}
\newcommand{\cmark}{{\color{PineGreen}\ding{51}}}%
\newcommand{\xmark}{{\color{BrickRed}\ding{55}}}%
\newcommand{\algname}[1]{{\sf #1}}
\newcommand{\norm}[1]{\left\| #1 \right\|}
\newcommand{\TVdist}[1]{\left\| #1 \right\|_{\textnormal{TV}}}
\newcommand{\sqnorm}[1]{\left\| #1 \right\|^2}
\newcommand{\abs}[1]{\left| #1 \right|}
\newcommand{\R}{\mathbb{R}} 
\newcommand{\E}[1]{\mathbb{E}\left[#1\right]}
\newcommand{\Exp}[1]{{\mathbb{E}}\left[#1\right]}
\newcommand{\ExpSub}[2]{{\mathbb{E}}_{#1}\left[#2\right]}
\newcommand{\ExpCond}[2]{{\mathbb{E}}\left[\left.#1\,\right\vert\,#2\right]}
\newcommand{\ExpSubCond}[3]{{\mathbb{E}}_{#1}\left[\left.#2\,\right\vert\,#3\right]}
\newcommand{\Proba}[1]{\mathbb{P}\left(#1\right)} 
\newcommand{\ProbCond}[2]{\mathbb{P}\left(#1\middle\vert#2\right)}
\newcommand{\cD}{\mathcal{D}}
\newcommand{\cO}{\mathcal{O}}
\newcommand{\addeq}{\addtocounter{equation}{1}}%
\newcommand\numberthis{\addeq\tag{\theequation}}
\DeclareRobustCommand\widecheck[1]{{\mathpalette\@widecheck{#1}}}
\def\@widecheck#1#2{%
    \setbox\z@\hbox{\m@th$#1#2$}%
    \setbox\tw@\hbox{\m@th$#1%
       \widehat{%
          \vrule\@width\z@\@height\ht\z@
          \vrule\@height\z@\@width\wd\z@}$}%
    \dp\tw@-\ht\z@
    \@tempdima\ht\z@ \advance\@tempdima2\ht\tw@ \divide\@tempdima\thr@@
    \setbox\tw@\hbox{%
       \raise\@tempdima\hbox{\scalebox{1}[-1]{\lower\@tempdima\box
\tw@}}}%
    {\ooalign{\box\tw@ \cr \box\z@}}}
\newcommand{\oversetref}[2]{\overset{{\scriptscriptstyle\mathrm{#1}\!~\text{#2}}}}
\newcommand{\oversetlab}[1]{\overset{{\scriptscriptstyle\text{#1}}}}
\newcounter{relctr} 
\newcounter{relgroup} 
\everydisplay\expandafter{\the\everydisplay\setcounter{relctr}{0}}
\newcommand\oversetrel[1]{%
    \ifnum\value{relctr}=0\relax%
      \stepcounter{relgroup}%
    \fi%
    \refstepcounter{relctr}%
    \phantomsection%
    \originallabel{rel:#1@\therelgroup}%
    \overset{\scriptscriptstyle\text{(\alph{relctr})}}%
}
\newcommand\relref[1]{%
    \@ifundefined{r@rel:#1@\arabic{relgroup}}{%
        \eqref{}%
    }{%
        \eqref{rel:#1@\arabic{relgroup}}
    }%
}
\theoremstyle{plain}
\newtheorem{theorem}{Theorem}[section]
\newtheorem{lemma}{Lemma}[section]
\newtheorem{corollary}{Corollary}[section]
\newtheorem{assumption}{Assumption}[section]
\theoremstyle{definition}
\newtheorem{definition}{Definition}[section]
\theoremstyle{remark}
\newtheorem{remark}{Remark}[section]
\crefname{assumption}{assumption}{assumptions}
\Crefname{assumption}{Assumption}{Assumptions}
\crefname{condition}{condition}{conditions}
\crefname{observation}{observation}{observations}
\newcommand{\eqdef}{:=}
\newcommand\eqdef*{:\!&=}
\newcommand{\reqdef}{=:}
\newcommand{\vast}{\bBigg@{4}}
\def\<{\left\langle}
\def\>{\right\rangle}
\def\({\left(}
\def\){\right)}
\theoremstyle{plain}
\newenvironment{restate-theorem}[1]
{\innercustomthm}
{\endinnercustomthm}
\newenvironment{restate-lemma}[1]
{\innercustomlemma}
{\endinnercustomlemma}
\newenvironment{restate-proposition}[1]
{\innercustomproposition}
{\endinnercustomproposition}
\newcommand*{\sketchproofname}{Sketch of Proof}
\newcommand{\prog}{\mathrm{prog}\,}
\newcommand{\Int}[2]{\left\{ #1, \ldots, #2 \right\}}
\newcommand{\supp}{\mathrm{supp}\,}
\newcommand*{\lbd}{\lambda}
\newcommand*{\eps}{\varepsilon}
\newcommand*{\Floor}[1]{\left\lfloor #1 \right\rfloor}
\newcommand*{\T}[1]{{#1}^{\top}}
\newcommand*{\intervalle}[4]{\mathopen{#1}#2\mathclose{},#3\mathclose{#4}}%
\newcommand*{\intff}[2]{\intervalle{[}{#1}{#2}{]}}
\newcommand*{\intof}[2]{\intervalle{(}{#1}{#2}{]}}
\newcommand*{\intfo}[2]{\intervalle{[}{#1}{#2}{)}}
\newcommand*{\ens}[1]{\left\{#1\right\}}%
\newcommand*{\enstq}[2]{\left\{#1\,:\,#2\right\}}%
\renewcommand*{\P}{\mathbb{P}}
\renewcommand*{\R}{\mathbb{R}}
\newcommand{\normop}[1]{\left\| #1 \right\|_{\mathrm{op}}}
\newcommand{\ps}[2]{\left\langle #1, #2 \right\rangle}
\renewcommand*{\abs}[1]{\left\lvert #1 \right\rvert}
\renewcommand*{\lim}{\mathop{\mathrm{lim}}\limits}
\newcommand*{\argmin}{\mathop{\mathrm{arg\,min}}}
\newcommand*{\probac}[2]{\mathop{\mathbb{P}}\left( #1 \, \vert \, #2 \right)}
\newcommand*{\Ber}{\mathrm{Ber}}
\newcommand{\clip}{{\normalfont\texttt{clip}}}
\newcommand{\sign}{\mathop{\mathrm{sign}}}
\definecolor{table-green}{rgb}{0., 0.43, 0.2}
\newcommand{\lya}{H}
\DeclareSymbolFont{extraup}{U}{zavm}{m}{n}
\DeclareMathSymbol{\varheart}{\mathalpha}{extraup}{86}
\DeclareMathSymbol{\vardiamond}{\mathalpha}{extraup}{87}
\newsavebox{\test} 
\title{\textbf{Tight Lower Bounds and Optimal Algorithms for Stochastic Nonconvex Optimization\\ with Heavy-Tailed Noise}}
\author{ Adrien Fradin$^{1,2}$\thanks{The work of Adrien Fradin was performed during a summer research internship in the Optimization and Machine Learning Lab at KAUST led by Peter Richt\'arik.
	} \qquad Abdurakhmon Sadiev$^{1}$\thanks{ Correspondence to: \texttt{abdurakhmon.sadiev@kaust.edu.sa}.
	} \qquad Laurent Condat$^{1}$ \\ Peter Richtárik$^{1}$\\
	\phantom{x}
	\\$^{1}$King Abdullah University of Science and Technology (KAUST)\\ Thuwal, Saudi Arabia\\$^{2}$\'Ecole Polytechnique, Paris, France\\
}
\date{October 9, 2025}
\begin{document}
	\maketitle

	\begin{abstract}
		We study stochastic nonconvex optimization under heavy-tailed noise. In this setting, the stochastic gradients only have bounded $p$-th central moment ($p$-BCM) for some $p \in \intof{1}{2}$. Building on the foundational work of \cite{arjevani2022lower} in stochastic optimization, we establish tight sample complexity lower bounds for all first-order methods under \emph{relaxed} mean-squared smoothness ($q$-WAS) and $\delta$-similarity ($(q, \delta)$-S) assumptions, allowing any exponent $q \in [1,2]$ instead of the standard $q = 2$. These results substantially broaden the scope of existing lower bounds. To complement them, we show that Normalized Stochastic Gradient Descent with Momentum Variance Reduction (\algname{NSGD-MVR}), a known algorithm, matches these bounds in expectation. Beyond expectation guarantees, we introduce a new algorithm, Double-Clipped \algname{NSGD-MVR}, which allows the derivation of high-probability convergence rates under weaker assumptions than in previous works. Finally, for second-order methods with stochastic Hessians satisfying bounded $q$-th central moment assumptions for some exponent $q \in [1, 2]$ (allowing $q \neq p$), we establish sharper lower bounds than previous works while improving over \cite{sadiev2025second} (where only $p = q$ is considered) and yielding stronger convergence exponents. Together, these results provide a nearly complete complexity characterization of stochastic nonconvex optimization in heavy-tailed regimes.
	\end{abstract}
	
	\newpage
	{\small
		\renewcommand\baselinestretch{0}
		\tableofcontents
		\renewcommand\baselinestretch{1}
	}
	
	\section{Introduction}
	We consider the stochastic optimization problem
	\[ \min_{x \in \R^d} F(x), \qquad F(x) \eqdef \ExpSub{\xi \sim \cD}{f(x, \xi)}, \numberthis\label{eq:optim-problem} \]
	where $d\geq 1$ is the dimension, $F \colon \R^d \to \R$ is a smooth possibly nonconvex objective function, and $\xi$ is a random variable drawn from an unknown distribution $\cD$. In the nonconvex setting, our goal is to find an $\eps$--stationary point~\citep{nesterov2018lectures}; that is, a (random) vector $\bar{x}$ such that $\mathbb{E}[\norm{\nabla F(\bar{x})}] \le \eps$. Problem of the form~\eqref{eq:optim-problem} are pervasive in machine learning where $f\left(x, \xi \right)$ denotes the loss of a model with weights $x$ on a data sample $\xi \sim \mathcal{D}$, and $\cD$ is the distribution of the training samples from the dataset~\citep{bottou2018optimizationmethodsML}. While Gradient Descent (\algname{GD}) is known to achieve the optimal rate $\cO(\eps^{-2})$ for finding an $\eps$--stationary point \citep{carmon2020lower}, it requires access to exact gradients $\nabla F(\cdot)$ which is infeasible in practice. Therefore, reliance on noisy gradients has become a gold standard approach, giving rise to stochastic gradient methods like Stochastic Gradient Descent (\algname{SGD})~\citep{MR42668}. Under the standard $\sigma_1^2$--bounded variance assumption, \algname{SGD} is provably optimal~\citep{ghadimi2013stochastic}, matching the lower bound $\Omega(\nicefrac{L_1 \Delta}{\eps^2} + \nicefrac{L_1 \Delta \sigma_1^2}{\eps^4})$~\citep{arjevani2022lower} for $L_1$--smooth functions with 
	$\smash{\Delta \eqdef F(x^0) - F^{\inf}}$.
	
	However, empirical observations in modern machine learning, e.g., image classification~\citep{simsekli2019heavytailedtheorystochasticgradient,pmlr-v97-simsekli19a,pmlr-v238-battash24a}, large language models~\citep{10.5555/3495724.3497014,ahn2024linear} and reinforcement learning~\citep{pmlr-v139-garg21b}, have shed light on the importance of \emph{heavy-tailed} noise as a more realistic setting than the standard bounded variance assumption. That is, noisy gradients only have bounded $p$--th central moment, $p \in \intof{1}{2}$. Such a relaxed moment assumption introduces challenges on both the algorithmic and theoretical sides, as \algname{SGD} may fail to converge when $p < 2$. In the heavy-tailed regime,~\citet{10.5555/3495724.3497014} establish a lower bound of $\Omega(T^{-\nicefrac{(p - 1)}{(3 p - 2)}})$ on the convergence rate of first-order methods, where $T$ denotes the number of iterations. Yet, their analysis is confined to the standard stochastic setting, where algorithms access smooth (potentially nonconvex) functions via unbiased stochastic gradients. Crucially, they did not address more structured assumptions on the gradient noise such as \textit{mean-squared smoothness} or \textit{$\delta$--similarity}, which play a central to the design and analysis of \emph{variance-reduced} methods, but they remain largely unexplored in the heavy-tailed regime.
	
	In this work, we aim to: $(a)$ \textbf{develop lower complexity bounds for first-order variance-reduced methods in the heavy-tailed regime}, thereby closing an important open question from~\cite{pmlr-v195-liu23c}, while advancing beyond the results of~\citet{sun2024gradientnormalizationprovablybenefits}; $(b)$ \textbf{extend existing high-probability analysis to these new assumptions}, and finally, for second-order methods; $(c)$ \textbf{characterize how the tail indices $p$ (of gradients) and $q$ (of Hessians) jointly affect convergence in the heavy-tailed regime}.
	
	Beyond the development of lower bounds, a natural question arises:
	
	\vspace{0.1cm}
	{\centering\textit{Does there exist algorithm(s) which provably match (in expectation) the lower bounds developed in $(a)$?}\par}
	\vspace{0.2cm}
	Here, we answer this question affirmatively by revisiting a classical technique: gradient normalization. Remarkably, our results show that this simple method remains effective even in the heavy-tailed regime.
	
	\subsection{Our Contributions}
	
	\begin{table}[b!]
		\centering
		\caption{Sample complexities of stochastic methods for finding an $\varepsilon$-stationary point (in expectation or with high probability). Stochastic gradients satisfy $p$-BCM with $p \in (1,2]$ (\Cref{ass:p-bounded-central-moment-gradient}). The column “$q$” specifies the relevant moment assumptions: \Cref{ass:mean-squared-smoothness} ($q$-WAS), \Cref{ass:mean-squared-smoothness-2} ($(q,\delta)$-S), or \Cref{ass:q-bounded-central-moment-hessian} ($q$-BCM); $q=\infty$ corresponds to bounded noise. The column “\textbf{HP?}” indicates whether a high-probability guarantee with only polylogarithmic dependence on $\nicefrac{1}{\beta}$ is available.}
		\label{tab:summary}
		\begin{threeparttable}
			\resizebox{\textwidth}{!}{%
				\begin{tabular}{c|cccc}
					
					\bf Setup& \bf Algorithm & \bf Sample Complexity & $q$& \bf HP?\\ \hline \hline  
					\multirow{10}{*}{\makecell{$q$-WAS}} &\begin{tabular}{c}  \algname{AccNSGD}  \\  \cite{pmlr-v195-liu23c} \end{tabular} & $\left(\frac{\sqrt{\bar L\Delta}+ \sigma_1}{\varepsilon}\right)^{2 +\frac{1}{p-1}}$ & {\color{BrickRed} $\ \ \ \ \infty$}$^{\color{blue}(1)}$ &  \ \cmark \\ 
					&\begin{tabular}{c}  \algname{NSFOM with RM}  \\  \cite{he2025complexitynormalizedstochasticfirstorder} \end{tabular} & $\left(\frac{\Delta +\sigma_1^p+L_1+L_1^p+\bar{L}^p}{\varepsilon}\right)^{2 +\frac{1}{p-1}}$ $^{\color{blue}(2)}$ & {\color{BrickRed} $\ p=q$}&  \ \xmark \\ 
					&
					\cellcolor{linen} \begin{tabular}{c} \algname{NSGD-MVR}  \\  Theorem~\ref{thm:nsgd-mvr-convergence-analysis} \end{tabular} & \cellcolor{linen}$\frac{\bar L\Delta}{\varepsilon^2}+\frac{\bar L\Delta}{\varepsilon^2}\left(\frac{\sigma_1}{\varepsilon}\right)^{\frac{p}{q(p-1)}}+ \left(\frac{\sigma_1}{\varepsilon}\right)^{\frac{p}{p-1}} $ & \cellcolor{linen} {\color{PineGreen}$(1,2]$} & \cellcolor{linen} \xmark \\ 
					&\cellcolor{linen} \begin{tabular}{c} \algname{D-Clip-NSGD-MVR}  \\  Theorem~\ref{thm:clipped-nsgd-mvr-convergence-analysis} \end{tabular} & \cellcolor{linen}$\left(\frac{\sqrt{\bar L\Delta}+ \sigma_1}{\varepsilon}\right)^{2 +\max\left\{\frac{1}{p-1}, \frac{p}{q(p-1)}\right\}}$ & \cellcolor{linen} {\color{PineGreen}$(1,2]$} & \cellcolor{linen} \cmark \\ \cline{2-5}
					& \cellcolor{linen} \begin{tabular}{c} Lower Bound \\ \Cref{thm:lower-bound-sofo-mean-squared-smoothness} \end{tabular} & \cellcolor{linen}$\frac{\bar L\Delta}{\varepsilon^2}+\frac{\bar L\Delta}{\varepsilon^2}\left(\frac{\sigma_1}{\varepsilon}\right)^{\frac{p}{q(p-1)}}+ \left(\frac{\sigma_1}{\varepsilon}\right)^{\frac{p}{p-1}} $ & \cellcolor{linen} {\color{PineGreen}$(1,2]$} & \cellcolor{linen} - \\ 
					\hline
					\multirow{6}{*}{\makecell{$(q,\delta)$-S}} 
					&
					\cellcolor{linen} \begin{tabular}{c} \algname{NSGD-MVR}  \\  Theorem~\ref{thm:nsgd-mvr-convergence-analysis-2} \end{tabular} & \cellcolor{linen}$\frac{(L_1+\delta)\Delta}{\varepsilon^2}+\frac{\delta\Delta}{\varepsilon^2}\left(\frac{\sigma_1}{\varepsilon}\right)^{\frac{p}{q(p-1)}}+ \left(\frac{\sigma_1}{\varepsilon}\right)^{\frac{p}{p-1}} $ & \cellcolor{linen} {\color{PineGreen}$(1,2]$} & \cellcolor{linen} \xmark \\ 
					&\cellcolor{linen} \begin{tabular}{c} \algname{D-Clip-NSGD-MVR}  \\  Theorem~\ref{thm:clipped-nsgd-mvr-convergence-analysis-2} \end{tabular} & \cellcolor{linen}$\left(\frac{\sqrt{L_1\Delta}+ \sigma_1}{\varepsilon}\right)^{2 +\max\left\{\frac{1}{p-1}, \frac{p}{q(p-1)}\right\}}+ \left(\frac{\sqrt{\delta\Delta}}{\varepsilon}\right)^{2 + \frac{p}{q(p-1)}}$ & \cellcolor{linen} {\color{PineGreen}$(1,2]$} & \cellcolor{linen} \cmark \\ \cline{2-5}
					& \cellcolor{linen} \begin{tabular}{c} Lower Bound \\ \Cref{thm:lower-bound-sofo-mean-squared-smoothness-2} \end{tabular} & \cellcolor{linen}$\frac{(L_1+\delta)\Delta}{\varepsilon^2}+\frac{\delta\Delta}{\varepsilon^2}\left(\frac{\sigma_1}{\varepsilon}\right)^{\frac{p}{q(p-1)}}+ \left(\frac{\sigma_1}{\varepsilon}\right)^{\frac{p}{p-1}} $ & \cellcolor{linen} {\color{PineGreen}$(1,2]$} & \cellcolor{linen} - \\ 
					\hline
					\multirow{10}{*}{\makecell{$q$-BCM}} &\begin{tabular}{c}  \algname{NSGDHess}  \\  \cite{sadiev2025second} \end{tabular} & $\frac{(L_1 +\sigma_2)\Delta}{\varepsilon^2} + \left(\frac{(L_1 + \sigma_2)\Delta }{\varepsilon^2}  + \frac{\sigma_1}{\varepsilon}\right)\left(\frac{\sigma_1}{\varepsilon}\right)^{\frac{1}{p-1}} $ & {\color{BrickRed} $\ p=q$}&  \ \xmark \\ 
					&\begin{tabular}{c}  \algname{Clip NSGDM-Hess}  \\  \cite{sadiev2025second} \end{tabular} & $\left(\frac{\sqrt{(L_1+\sigma_2)\Delta}+ \sigma_1}{\varepsilon}\right)^{2 +\frac{1}{p-1}}$ & {\color{BrickRed} $\ p=q$}&  \ \cmark \\ 
					&
					\cellcolor{linen} \begin{tabular}{c} \algname{NSGD-Hess}  \\  Theorem~\ref{thm:nsgd-mvr-hess-convergence-analysis} \end{tabular} & \cellcolor{linen}$\frac{\sigma_2\Delta}{\varepsilon^2}\left(\frac{\sigma_1}{\varepsilon}\right)^{\frac{p}{q(p-1)}}+ \left(\frac{\sigma_1}{\varepsilon}\right)^{\frac{p}{p-1}} +\frac{\sqrt{L_2} \Delta \sigma_1^{\nicefrac{1}{4}}}{\eps^{\nicefrac{7}{4}}} \left( \frac{\sigma_1}{\eps} \right)^{\frac{1}{4 (p - 1)}}$ $^{{\color{blue}(3)}}$ & \cellcolor{linen} {\color{PineGreen}$(1,2]$} & \cellcolor{linen} \xmark \\ 
					&\cellcolor{linen} \begin{tabular}{c} \algname{Clip NSGD-Hess}  \\  Theorem~\ref{thm:clipped-nsgd-mvr-convergence-analysis-hess} \end{tabular} & \cellcolor{linen}$\left(\frac{\sqrt{L_1\Delta}+ \sigma_1}{\varepsilon}\right)^{2 +\max\left\{\frac{1}{p-1}, \frac{p}{q(p-1)}\right\}}+ \left(\frac{\sqrt{\sigma_2\Delta}}{\varepsilon}\right)^{2 + \frac{p}{q(p-1)}}$ $^{{\color{blue}(3)}}$ & \cellcolor{linen} {\color{PineGreen}$(1,2]$} & \cellcolor{linen} \cmark \\ \cline{2-5}
					& \cellcolor{linen} \begin{tabular}{c} Lower Bound \\ \Cref{thm:lower-bound-sofo-bounded-central-moments} \end{tabular} & \cellcolor{linen}$\min\left\{\frac{\sigma_2\Delta }{\varepsilon^2}\left(\frac{\sigma_1}{\varepsilon}\right)^{\frac{p}{q(p-1)}}, \frac{L_1\Delta}{\varepsilon^{2}}\left(\frac{\sigma_1}{\varepsilon}\right)^{\frac{p}{p-1}} , \frac{\sqrt{L_2}\Delta}{\varepsilon^{\nicefrac{3}{2}}}\left(\frac{\sigma_1}{\varepsilon}\right)^{\frac{p}{p-1}} \right\}$$^{{\color{blue}(3)}}$ & \cellcolor{linen} {\color{PineGreen}$(1,2]$} & \cellcolor{linen} - \\ \hline
				\end{tabular}
			}
			\begin{tablenotes}
				{\small \item [{\color{blue}(1)}] 
					\cite{pmlr-v195-liu23c} provide analysis under stronger assumptions, which implies the bounded stochastic Hessian.  
					\item [{\color{blue}(2)}] \cite{he2025complexitynormalizedstochasticfirstorder} establish a near-optimal bound in terms of $\varepsilon$ for the case $p=q$. 
					\item [{\color{blue}(3)}] For simplicity we present only the stochastic part of the complexity.
				}
			\end{tablenotes}
		\end{threeparttable}
	\end{table}
	
	We revisit the analysis of Normalized \algname{SGD} (\algname{NSGD}) with Momentum Variance Reduction (\algname{MVR}) in the heavy-tailed regime. Our contributions are:
	
	$\bullet$ \textbf{Tight lower bounds.} We obtain novel and tight lower bounds on the sample complexity of any first-order methods in the $p$--BCM noise model, under \emph{relaxed} mean-squared smoothness and $\delta$-similarity assumptions, allowing any exponent $q \in \intff{1}{2}$ instead of $q = 2$. Our bound, $\smash{\Omega(\eps^{-\nicefrac{(p (2 q + 1) - 2 q)}{q (p - 1)}})}$, recovers the known $\smash{\Omega(\eps^{-3})}$ when $p = q = 2$ and achieves an improvement over the minimax complexity for first-order methods~\citep{10.5555/3495724.3497014,sun2024gradientnormalizationprovablybenefits,hübler2025gradientclippingnormalizationheavy}, thereby demonstrating the effectiveness of variance reduction in the heavy-tailed regime. Moreover, our result \emph{continuously interpolates} between variance-reduced and non-variance-reduced regimes, as in the limit $q \to 1$ we recover the known $\Omega(\eps^{-\nicefrac{(3 p - 2)}{(p - 1)}})$ complexity. 
	\emph{Our proofs are based on a generalization of~\citet[Lemma~10]{arjevani2022lower} to the $p$--BCM noise assumption, which resolves an open question from~\citet{pmlr-v195-liu23c}}.
	
	$\bullet$ \textbf{Optimal first-order methods.} Revisiting \algname{NSGD-MVR}, we show that, with a suitable choice of parameters, the method matches the lower bounds (up to constant factors) with respect to all parameters, thereby establishing the optimality of our analysis and the robustness of gradient normalization in the context of heavy-tailed noises. 
	We further provide convergence rates with \emph{unknown} tail indices $p$ and $q$, recovering the best known rate of $\cO(\eps^{-\nicefrac{2 p}{(p - 1)}})$~\citep{liu2025nonconvex}.
	
	$\bullet$ \textbf{High-probability upper bounds.} We propose a clipped variant of \algname{NSGD-MVR} leading to a \emph{new} algorithm: Double-Clipped \algname{NSGD-MVR}, which provably achieves high-probability convergence rates under weaker assumptions than previous works \citep{pmlr-v195-liu23c}.
	
	$\bullet$ \textbf{Extension to second-order methods.} Lastly, for second-order methods using stochastic Hessians with bounded $q$-th central moment, for some exponent $q \in \intff{1}{2}$, we establish sharper lower bounds than previous works~\citep{pmlr-v125-arjevani20a,sadiev2025second}, extend previous analysis to the more general setting where $p \neq q$ (unlike~\citet{sadiev2025second}), and derive upper bounds with stronger convergence exponents, thereby offering a more complete characterization of the limits of stochastic second-order optimization under heavy-tailed noise. Overall, our results reveal a striking parallel in the rates of variance-reduced methods and second-order optimization. 
	
	See summary in \Cref{tab:summary}.

	\subsection{Related Works}
	
	\textbf{Lower bound in the heavy-tailed regime:} In nonconvex optimization,~\cite{10.5555/3495724.3497014,liu2025nonconvex}
	establish a lower bound of $\smash{\Omega\big(\frac{L_1 \Delta}{\eps^2} + \frac{L_1 \Delta}{\eps^2} \left( \frac{\sigma_1}{\eps}\right)^{\nicefrac{p}{(p - 1)}}\big)}$ under $p$--BCM noise and standard $L_1$--smoothness, recovering the well-known $\Omega(\eps^{-4})$ lower bound when $p = 2$.
	Other line of works focused on algorithm-dependent lower bounds: \algname{NSGD}~\citep{10.5555/3666122.3669370} and recently \algname{SGD}~\citep{fatkhullin2025sgdhandleheavytailednoise}, highlighting the ineffectiveness of \algname{SGD} under heavy-tailed noise with only bounded $p$-th (non-central) moments. Beyond first-order methods,~\cite{sadiev2025second} obtain near-optimal lower bounds for all second-order methods, extending the work of~\cite{pmlr-v125-arjevani20a} to $p$--BCM noise.\medskip
	
	\textbf{Convergence under heavy-tailed noise:} A substantial body of work investigates upper bounds for stochastic optimization with heavy-tailed noise, both in expectation and high probability. For smooth and nonconvex objectives, clipped or normalized variants of \algname{SGD} achieve the rate $\cO(\eps^{-\nicefrac{(3 p - 2)}{(p - 1)}})$ up to logarithmic factors~\citep{10.5555/3495724.3497014,NEURIPS2021_26901deb,pmlr-v195-liu23c,pmlr-v202-sadiev23a}. Among these,~\citet{10.5555/3495724.3497014} obtained the sharpest in-expectation guarantees, later shown to be tight by \citet{hübler2025gradientclippingnormalizationheavy} via \algname{minibatch-NSGD}, while~\citet{NEURIPS2023_4c454d34} established improved high-probability guarantees, avoiding extra $\cO(\log T)$ factors. Later works extended known high-probability analyses to settings with higher-order smoothness~\citep{sadiev2025second} and to more general nonlinear \algname{SGD}-type methods~\citep{DBLP:journals/corr/abs-2410-13954,pmlr-v258-armacki25a}. In parallel,~\citet{sun2024gradientnormalizationprovablybenefits} studied \algname{clip-SGD} under stronger smoothness assumptions, obtaining a rate of $\cO(\eps^{-\nicefrac{(2 p - 1)}{(p - 1)}})$, while~\citet{liu2025nonconvex} derived in-expectation bounds for \algname{NSGD-Mom} under a general $(\sigma_0,\sigma_1)$--affine $p$--BCM model, albeit without matching lower bounds.\medskip

	\textbf{Gradient clipping and normalization:} Gradient clipping and normalization are two closely related techniques that have become central in modern optimization. Gradient clipping, originally popularized to stabilize training across various machine learning applications~\citep{10.5555/3042817.3043083,schulman2017proximalpolicyoptimizationalgorithms}, has been analyzed extensively, providing robustness under relaxed moment assumptions~\citep{PolyTsy79,jakovetic2023nonlinear}, high-order smoothness~\citep{sadiev2025second}, and enabling high-probability convergence guarantees with only logarithmic dependence on the failure probability. These results hold in both convex~\citep{Nazin2019,10.5555/3495724.3496985,JMLR:v22:20-821,10.5555/3692070.3692710,liu2023stochasticnonsmoothconvexoptimization,Gorbunov2024,pmlr-v238-puchkin24a,armacki2024highprobabilityconvergenceboundsnonlinear,armacki2025optimalhighprobabilityconvergencenonlinear} and nonconvex settings~\citep{10.5555/3495724.3497014,NEURIPS2021_26901deb,pmlr-v202-sadiev23a,NEURIPS2023_4c454d34,pmlr-v195-liu23c,sadiev2025second}. Importantly, high-probability guarantees are valuable both theoretically and practically, as they capture the behavior of individual runs rather than merely characterizing the average-case performance.
	
	Normalized gradient methods date back to Nesterov's pioneering work~\citep{nesterov1984minimization} and were later extended to the smooth and stochastic regimes~\citep{kiwiel2001,hazan2015beyond,NIPS2017_ce5140df,pmlr-v89-nacson19b}. In deep learning, normalization addresses exploding and vanishing gradients~\citep{you2017largebatchtrainingconvolutional,you2020large}, though rigorous nonconvex guarantees were first obtained by~\citet{pmlr-v119-cutkosky20b}, showing how Polyak's momentum ensures convergence without large batches. Later works explored \algname{NSGD}'s strengths and limits, including saddle-point escape~\citep{levy2016powernormalizationfasterevasion}, lower bounds~\citep{10.5555/3666122.3669370} and parameter-agnostic convergence~\citep{pmlr-v238-hubler24a}. Recently, extensions to heavy-tailed noise have been investigated~\citep{NEURIPS2021_26901deb,pmlr-v195-liu23c,sun2024gradientnormalizationprovablybenefits}, though typically under stronger smoothness assumptions. 
	
	{\centering\emph{Despite strong empirical performance and complementarity with clipping, the theory of gradient normalization under heavy-tailed noise remains incomplete, motivating our contributions}.\par}\medskip

	\textbf{Variance reduction for stochastic optimization:} Variance reduction originated as a tool to accelerate convergence in convex finite-sum optimization, with seminal works such as~\citet{10.5555/2999325.2999432,10.5555/2999611.2999647,10.5555/2567709.2502598,10.5555/3042817.3043024,10.5555/2968826.2969010} introducing various algorithms, e.g., \algname{SAG}, \algname{SVRG}, \algname{SAGA}. Building on these foundations and the key advances of~\citet{10.1145/3055399.3055448}, a sequence of works 
	\citep{lan2019unified,zhou2019direct,song2020variance,kovalev2020don} developed algorithms attaining near-optimal or optimal rates under various regimes. In nonconvex optimization, variance reduction can also improve convergence: in the general stochastic setting~\eqref{eq:optim-problem}, several works~\citep{SPIDER,cutkosky2019momentum,tran2019hybrid,liu2020optimal,PAGE} established an $\cO(\eps^{-3})$ convergence rate in expectation, which improves upon the classical $\Theta(\eps^{-4})$ of \algname{SGD} and matches the minimax lower bound $\Omega(\eps^{-3})$ under mean-squared smoothness~\citep{arjevani2022lower}. More recently, there has been growing interest in extending variance reduction techniques to the heavy-tailed regime~\citep{pmlr-v195-liu23c,sun2024gradientnormalizationprovablybenefits,he2025complexitynormalizedstochasticfirstorder}, where normalization and clipping have emerged as effective mechanisms to guarantee robustness under weaker moment assumptions. 
	
	{\centering\emph{In this work, we characterize the fundamental limits of first-order \say{variance-reduced} methods in the heavy-tailed regime and provide optimal methods which attain these limits}.\par}

	\section{Notation and  Assumptions}
	
	We review here the basic notation and assumptions needed in this paper (see~\Cref{appdx-sec:notation} for more details).
	
	\textbf{Notations.} For integer $n > 0$, $[n] \eqdef \ens{1, 2, \ldots, n}$. We let $d \ge 1$ be the dimension, $\ps{\cdot}{\cdot}$ the standard dot product on $\R^d$, $\norm{\cdot}$ the $\ell^2$--norm and $\normop{\cdot}$ the canonical spectral/operator norm. Here, $\nabla f(\cdot, \cdot)$ and $\nabla^2 f(\cdot, \cdot)$ denote the stochastic gradient and Hessian oracles; $a \wedge b \eqdef \min\ens{a, b}$ and $a \vee b \eqdef \max\ens{a, b}$. We use the standard $\cO(\cdot)$ and $\Omega(\cdot)$ for complexity notation. To avoid confusion, we use subscript $1$ for gradient parameters (e.g., $L_1$, $\sigma_1$) and $2$ for Hessian ones.
	
	In this work, we make the following assumptions.
	\begin{assumption}[Lower Boundedness]
		\label{ass:lower-boundedness}
		The objective $F$ is lower bounded: $F^{\inf} \eqdef \inf_{x \in \R^d} F(x) > -\infty$. 
	\end{assumption}
	We then let $\Delta \eqdef F(x^0) - F^{\inf}$ be the initial suboptimality where $x^0$ is the starting point.
	
	\begin{assumption}[$L_1$--Lipschitz Gradients]
		\label{ass:L-lipschitz-gradients}
		The objective $F$ is differentiable over $\R^d$ and  its gradient is $L_1$--Lipschitz for some $L_1 \ge 0$, i.e., for all $x, y \in \R^d$,
		\[ \norm{\nabla F(x) - \nabla F(y)} \le L_1 \norm{x - y}. \]
	\end{assumption}
	
	\begin{assumption}[$p$--BCM for Gradients]
		\label{ass:p-bounded-central-moment-gradient}
		We have access to unbiased stochastic gradients $\nabla f(x, \xi)$, which have $p$-bounded central moment for some $p \in \intof{1}{2}$, 
		i.e., there exists $\sigma_1 > 0$ such that for all $x \in \R^d$,
		\begin{equation*}
			\ExpSub{\xi \sim \cD}{\nabla f(x, \xi)} = \nabla F(x) \quad \text{and} \quad \ExpSub{\xi \sim \cD}{\norm{\nabla f(x, \xi) - \nabla F(x)}^p} \le \sigma_1^p.
		\end{equation*}
	\end{assumption}
	
	\begin{assumption}[$L_2$--Lipschitz Hessians]
		\label{ass:L-lipschitz-hessians}
		The objective $F$ is twice continuously differentiable over $\R^d$ 
		and we have access to stochastic Hessian-vector products $\nabla^2 f(x, \xi) \cdot v$ for any $v \in \R^d$. Moreover, the Hessian of $F$ is $L_2$--Lipschitz, i.e., for all $x, y \in \R^d$,
		\[ \normop{\nabla^2 F(x) - \nabla^2 F(y)} \le L_2 \norm{x - y}. \] 
	\end{assumption}
	
	\begin{assumption}[$q$--BCM for Hessians]
		\label{ass:q-bounded-central-moment-hessian}
		The stochastic Hessians $\nabla^2 f(x, \xi)$ are unbiased and have $q$-bounded central moment for some $q \in \intff{1}{2}$,
		i.e., there exists $\sigma_2 > 0$ such that for all $x \in \R^d$
		\begin{equation*}
			\ExpSub{\xi \sim \cD}{\nabla^2 f(x, \xi)} = \nabla^2 F(x), \quad \text{and} \quad \ExpSub{\xi \sim \cD}{\normop{\nabla^2 f(x, \xi) - \nabla^2 F(x)}^q} \le \sigma_2^q. 
		\end{equation*}
	\end{assumption}

	\begin{assumption}[{$q$-Weak Average Smoothness~\citep{he2025complexitynormalizedstochasticfirstorder}}]\label{ass:mean-squared-smoothness}
		For some $q \in [1;2]$, there exists a finite constant $\bar{L} \ge 0$ such that, for all $x, y \in \R^d$ we have
		\[ \ExpSub{\xi \sim \cD}{\norm{\nabla f(x, \xi) - \nabla f(y, \xi)}^q} \le \bar{L}^q \norm{x - y}^q. \]
	\end{assumption}
	
	The next assumption has been introduced in~\cite{pmlr-v125-arjevani20a} in the special case $q = 2$.
	\begin{assumption}[$(q, \delta)$--Similarity]\label{ass:mean-squared-smoothness-2}
		For some exponent $q \in [1;2]$, there exists a finite constant $\delta \ge 0$ such that, for all $x, y \in \R^d$ we have
		\begin{equation*}
			\ExpSub{\xi \sim \cD}{\norm{\left[ \nabla f(x, \xi) - \nabla f(y, \xi) \right] - \left[ \nabla F(x) - \nabla F(y) \right]}^q} \leq \delta^q \norm{x - y}^q.
		\end{equation*}
	\end{assumption}
	
	\section{Lower Complexity Bounds}

	To establish our lower bounds, we rely on a technique developed in~\citet{carmon2020lower}, where a \say{worst-case} nonconvex function $F_T \colon \R^T \to \R$ is introduced, which is hard to optimize for any zero-respecting algorithm. Our main novelty here is~\Cref{appdx-lem:lower-bound-global-stichastic-model-mean-squared-smoothness-p-BCM}, an important result which lies at the core of all our lower bounds; it is a generalization of~\citet[Lemma~10]{arjevani2022lower} to the $p$--BCM assumption, yielding an improvement from $\Omega(\nicefrac{\sigma_1^2}{\eps^2})$ to $\Omega((\nicefrac{\sigma_1}{\eps})^{\nicefrac{p}{(p - 1)}})$. This lemma handles a critical case in all~\Cref{thm:lower-bound-sofo-mean-squared-smoothness,thm:lower-bound-sofo-mean-squared-smoothness-2,thm:lower-bound-sofo-bounded-central-moments} when the parameters force the dimension $T$ of the \say{hard instance} $F_T$ to be too small. More details along with the main motivations behind this generalization are provided in~\Cref{appdx-rem:lower-bound-global-stochastic-model-motivations}. We recall all the necessary definitions (function class, oracle class\textellipsis) and the \say{hard instance} $F_T$ from~\citet{carmon2020lower} in~\Cref{appdx-sec:lower-bound-def}. The proofs of the following three lower bounds can be found in~\Cref{appdx-subsec:proof-lower-bound-sofo-mean-squared-smoothness,appdx-subsec:proof-lower-bound-sofo-mean-squared-smoothness-2,appdx-subsec:proof-lower-bound-sofo-bounded-central-moments}.
	
	\subsection{Under $q$-Weak Average Smoothness}
	
	We first establish a lower bound on the oracle complexity of any zero-respecting algorithm under~\Cref{ass:mean-squared-smoothness}, an extension of the mean-squared smoothness (MSS) assumption. This result generalizes one of the core contributions of~\citet{arjevani2022lower}, who derived the optimal lower bound in the $p = q = 2$ setting.
	\begin{theorem}\label{thm:lower-bound-sofo-mean-squared-smoothness}
		Given $\Delta, \bar{L} > 0$, $\sigma_1 \ge 0$ and $0 < \eps \le c_1 \sqrt{\bar{L} \Delta}$ for some universal constant $c_1 > 0$. Then, for any algorithm $A \in \mathcal{A}_{\texttt{zr}}$, there exists a function $f \in \mathcal{F}\left( \Delta \right)$, an oracle and a distribution $(O, \cD) \in \mathcal{O}\left( f, \bar{L}^q, \sigma_1^p \right)$ such that
		\begin{equation*}
			\mathfrak{m}^{\normalfont\texttt{zr}}_{\eps}\left( K, \bar{L}, \Delta, \sigma_1^p \right) \geq \Omega(1) \cdot \left( \left( \frac{\sigma_1}{\eps} \right)^{\frac{p}{p - 1}} + \frac{\bar{L} \Delta}{\eps^2} + \frac{\bar{L} \Delta}{\eps^2} \left( \frac{\sigma_1}{\eps} \right)^{\frac{p}{q (p - 1)}} \right).
		\end{equation*}
	\end{theorem}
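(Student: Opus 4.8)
The plan is to follow the construction of~\citet{carmon2020lower} and~\citet{arjevani2022lower}: lower bound the oracle complexity of a zero-respecting algorithm on the ``hard instance'' $F_T = \Lambda \cdot f_T(x / \eta)$ for suitably chosen scale parameters $\Lambda, \eta > 0$, where $f_T \colon \R^T \to \R$ is the chain-structured function whose gradient at $x$ has a nonzero new coordinate only once the previous coordinate has been ``activated.'' The key point is that a zero-respecting algorithm must discover the coordinates one at a time, so it needs $\Omega(T)$ oracle calls; the scaling $\Lambda, \eta$ is then tuned so that (i) $F_T \in \mathcal{F}(\Delta)$, i.e.\ $F_T(0) - \inf F_T \le \Delta$, (ii) the stochastic oracle has $p$-th central moment bounded by $\sigma_1^p$ and satisfies $q$-WAS with constant $\bar L$, and (iii) any $\eps$-stationary point must have discovered $\Omega(T)$ coordinates. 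Balancing these constraints produces the term $\tfrac{\bar L \Delta}{\eps^2}$, which is the deterministic Carmon--Duchi--Hinder--Sidford bound (recovered by a zero-noise oracle). For the noisy contribution, I would use the probabilistic oracle of~\citet{arjevani2022lower} that reveals the next coordinate only with a small probability $\rho$ per query, padded so that the noise has the prescribed $p$-BCM bound: discovering a new coordinate then costs $\Theta(1/\rho)$ queries in expectation, and optimizing $\rho$ against the central-moment budget $\sigma_1^p$ and the smoothness budget $\bar L^q$ gives the $\tfrac{\bar L \Delta}{\eps^2}\bigl(\tfrac{\sigma_1}{\eps}\bigr)^{p/(q(p-1))}$ term.

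Concretely, I would split into two regimes depending on whether the forced chain length $T$ is ``large'' or ``small.'' In the large-$T$ regime, run the $\arjevani2022lower$-style argument with the probability-$\rho$ reveal oracle: set the per-coordinate signal magnitude so that an unactivated coordinate still looks $\eps$-far from stationary, bound the expected number of queries to activate all coordinates by $\Omega(T/\rho)$ via a coupling/union-bound on the reveal events (or, more carefully, a submartingale stopping-time argument à la~\citet[Lemma~10]{arjevani2022lower}), and then choose $T \asymp \bar L \Delta / \eps^2$ and $\rho$ as the largest value consistent with $\ExpSub{}{\|\text{noise}\|^p} \le \sigma_1^p$ and $\ExpSub{}{\|\text{noise}\|^q \cdot (\text{Lipschitz increment})} \le \bar L^q \|x-y\|^q$. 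The $q$-WAS constraint is the new ingredient relative to $\arjevani2022lower$: because only the $q$-th moment of the stochastic gradient difference is controlled (not the second), the noise may be heavier, which is precisely what lets $\rho$ be smaller and the bound larger; tracking how $\rho$ depends jointly on $(p,q)$ is the crux. In the small-$T$ regime — which is exactly the case flagged in the paper as handled by \Cref{appdx-lem:lower-bound-global-stichastic-model-mean-squared-smoothness-p-BCM} — the chain is too short for the sequential argument to give the full bound, so instead I would invoke that lemma directly: it is the $p$-BCM generalization of~\citet[Lemma~10]{arjevani2022lower} and yields the ``global'' stochastic term $\Omega\bigl((\sigma_1/\eps)^{p/(p-1)}\bigr)$, which is the first summand in the claimed bound. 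Taking the maximum over the two regimes (and absorbing the max into a sum, up to constants) gives all three terms.

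The main obstacle I expect is getting the joint $(p,q)$-dependence of the reveal probability $\rho$ exactly right so that the exponent comes out as $\tfrac{p}{q(p-1)}$ rather than something off by a factor. This requires carefully designing the padded noise distribution: the stochastic gradient should equal the true gradient plus a ``spike'' supported on the next coordinate that reveals it with probability $\rho$ and has magnitude $\asymp (\text{something}/\rho^{1/p})$ to saturate the $p$-th moment, while simultaneously the increment form of that spike across two points $x,y$ must saturate the $q$-WAS bound $\bar L^q \|x-y\|^q$ — and these two budgets interact through the same $\rho$. Verifying $F_T \in \mathcal{F}(\Delta)$ and that $F_T$ has the right Lipschitz constants under the chosen scaling is routine (it follows the structure in~\Cref{appdx-sec:lower-bound-def}), as is checking that $q \to 1$ recovers $\Omega(\eps^{-(3p-2)/(p-1)})$ and $p=q=2$ recovers $\Omega(\eps^{-3})$, which serve as sanity checks on the algebra. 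The other technical point requiring care is that the oracle must be valid for \emph{all} $x$, not just along the chain, so the noise construction and moment bounds need to be verified off the ``active'' subspace as well — this is standard but must be stated, and it is where the precise definition of $\mathcal{O}(f, \bar L^q, \sigma_1^p)$ in the appendix is used.
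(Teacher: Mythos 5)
Your overall plan matches the paper's proof structure closely: the Carmon et al.\ chain function, the Arjevani-style probability-$\theta$ reveal oracle, the zero-chain progress lemma to convert chain length into oracle queries, the two-regime split, and the invocation of the generalized Lemma~10 (\Cref{appdx-lem:lower-bound-global-stichastic-model-mean-squared-smoothness-p-BCM}) when the chain is too short. You also correctly locate the crux — where exactly the $q$ enters — as the place you are least sure about, and indeed that is where your plan, as stated, would go wrong.

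The gap is in the sentence ``choose $T \asymp \bar L \Delta / \eps^2$ and $\rho$ as the largest value consistent with [both the $p$-BCM and $q$-WAS budgets].'' With the Arjevani multiplicative construction, the reveal probability $\theta$ (your $\rho$) is fixed \emph{by the $p$-BCM constraint alone}, i.e.\ $\theta \asymp (\eps/\sigma_1)^{p/(p-1)}$ — the $q$-WAS constraint gives you no extra freedom in $\theta$. Instead, the $q$-WAS constraint bites in a different place: the stochastic gradient difference satisfies $\E{\|\bar g_T(x,\xi) - \bar g_T(y,\xi)\|^q} \lesssim (\bar\ell_1^q/\theta^{q-1})\|x-y\|^q$ (\Cref{appdx-lem:properties-gradient-estimator-1}), so the effective Lipschitz constant of the rescaled instance must be taken to be $L \asymp \bar L \,\theta^{(q-1)/q}$, not $\bar L$. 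Since $T \asymp L\Delta/\eps^2$ under the function-class rescaling, this shortens the chain to $T \asymp (\bar L \Delta/\eps^2)\,\theta^{(q-1)/q}$, and the query count $T/\theta \asymp (\bar L \Delta/\eps^2)\,\theta^{-1/q}$ then produces the exponent $p/(q(p-1))$. If you keep $T \asymp \bar L \Delta/\eps^2$ fixed and tune $\theta$ to both budgets as you propose, you either get $T/\theta \asymp (\bar L \Delta/\eps^2)(\sigma_1/\eps)^{p/(p-1)}$ (if $\theta$ is set by $p$-BCM — too strong a bound, which cannot be attained by any algorithm and hence must be wrong) or you are forced to $\theta = 1$ (if the $q$-WAS budget is imposed with $L = \bar L$ — which kills the noise entirely). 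So the constraints are not both on $\theta$; one of them reshapes the hard instance. The rest of your plan — including invoking the generalized Lemma~10 and taking the max over the two regimes — is correct as stated.
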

	In the special case $p = q = 2$, our result recovers the optimal complexity lower bound $\Omega(\nicefrac{\sigma_1^2}{\eps^2} + \nicefrac{\bar{L} \Delta}{\eps^2} + \nicefrac{\bar{L} \Delta \sigma_1}{\eps^3})$ derived by~\citet{arjevani2022lower}. We further establish the tightness of this bound in~\Cref{thm:nsgd-mvr-convergence-analysis}, where we prove a matching upper bound.
	
	\subsection{Under $(q, \delta)$--Similarity}
	
	The previous bound captures limits of first-order methods under~\Cref{ass:mean-squared-smoothness}, but this assumption couples smoothness and noise together into $\bar{L}$. Assuming separately $(q, \delta)$–similarity with $L_1$-smoothness decouples these effects, yielding sharper lower bounds.
	\begin{theorem}\label{thm:lower-bound-sofo-mean-squared-smoothness-2}
		Given $\Delta, L_1, \delta > 0$, $\sigma_1 \ge 0$ and $0 < \eps \le c_1 \sqrt{L_1 \Delta}$ for some universal constant $c_1 > 0$. Then, for any algorithm $A \in \mathcal{A}_{\texttt{zr}}$, there exists a function $f \in \mathcal{F}\left( \Delta, L_1 \right)$, an oracle and a distribution $(O, \cD) \in \mathcal{O}\left( f, \delta^q, \sigma_1^p \right)$ such that
		\begin{alignat*}{2}
			&\mathfrak{m}^{\normalfont\texttt{zr}}_{\eps}\left( K, L_1, \Delta, \delta, \sigma_1^p \right) \\
			&\qquad \ge \Omega(1) \cdot \min\left\{ \frac{L_1 \Delta}{\eps^2} + \frac{L_1 \Delta}{\eps^2} \left( \frac{\sigma_1}{\eps} \right)^{\frac{p}{p - 1}},  \left( \frac{\sigma_1}{\eps} \right)^{\frac{p}{p - 1}} + \frac{(L_1 + \delta) \Delta}{\eps^2} + \frac{\delta \Delta}{\eps^2} \left( \frac{\sigma_1}{\eps} \right)^{\frac{p}{q (p - 1)}} \right\}.
		\end{alignat*}
	\end{theorem}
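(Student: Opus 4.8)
The plan is to follow the same three-regime template used in the proof of Theorem~\ref{thm:lower-bound-sofo-mean-squared-smoothness}, but now exploiting the separation of smoothness ($L_1$) from noise-growth ($\delta$) afforded by Assumption~\ref{ass:mean-squared-smoothness-2}. As in \citet{carmon2020lower,arjevani2022lower}, I take the hard instance to be the scaled and rotated version of the zero-chain function $F_T\colon\R^T\to\R$, choosing the scaling parameters $(\lambda,\mu)$ so that the resulting $f$ lies in $\mathcal{F}(\Delta,L_1)$ while the progress per oracle call stays controlled. The dimension $T$ is the number of ``chain links'' the algorithm must traverse before producing an $\eps$-stationary point, and a zero-respecting algorithm needs $\Omega(T)$ oracle calls to activate the last coordinate. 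The two minimands in the statement correspond to two distinct ways of building the oracle noise.

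The first step is the ``pure noise'' construction: I equip the deterministic gradient of $F_T$ with a heavy-tailed perturbation on the ``active-frontier'' coordinate, designed so that the stochastic gradient reveals the next chain link only with small probability, while the $p$-BCM bound $\sigma_1^p$ and the $(q,\delta)$-similarity bound $\delta^q$ are both respected. Because the perturbation is supported on the frontier coordinate and is a difference of the form $\nabla f(x,\xi)-\nabla f(y,\xi)$ minus its mean, the similarity bound forces the perturbation magnitude times the coordinate-scale to be at most $\delta\,\|x-y\|$; combined with the $\sigma_1$-moment budget this yields a probability-of-progress bound that, after optimizing $T$ against the suboptimality budget $\Delta$ and the target accuracy $\eps$, produces the term $\frac{(L_1+\delta)\Delta}{\eps^2} + \frac{\delta\Delta}{\eps^2}\big(\frac{\sigma_1}{\eps}\big)^{\frac{p}{q(p-1)}}$, plus the ``dimension-free'' additive piece $\big(\frac{\sigma_1}{\eps}\big)^{\frac{p}{p-1}}$ coming from the degenerate small-$T$ regime. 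This last piece is exactly where I invoke the generalized global-stochastic lower bound \Cref{appdx-lem:lower-bound-global-stichastic-model-mean-squared-smoothness-p-BCM}: when the parameters force $T$ to be $O(1)$, the chain argument is vacuous and one instead lower-bounds the complexity by the intrinsic cost of estimating a $p$-BCM gradient to accuracy $\eps$, which is $\Omega\big((\sigma_1/\eps)^{p/(p-1)}\big)$.

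The second minimand comes from a different oracle: here I \emph{ignore} the similarity structure and instead use a construction in which the noise scales with the \emph{smoothness} $L_1$ rather than with $\delta$ — effectively the $q=1$, bounded-variance-style construction of \citet{arjevani2022lower} adapted to $p$-BCM — giving the alternative bound $\frac{L_1\Delta}{\eps^2} + \frac{L_1\Delta}{\eps^2}\big(\frac{\sigma_1}{\eps}\big)^{\frac{p}{p-1}}$. Since the adversary is free to pick whichever oracle is worse for the algorithm, and since we must exhibit \emph{one} $(O,\cD)\in\mathcal{O}(f,\delta^q,\sigma_1^p)$ realizing a large complexity, the final lower bound is the minimum of the two expressions: each construction is valid for \emph{all} admissible parameter values, but which one is tighter depends on the relative sizes of $L_1$, $\delta$, $\sigma_1$ and $\eps$, so only their minimum can be claimed universally. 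The bookkeeping is to verify that both constructed instances satisfy Assumptions~\ref{ass:lower-boundedness}, \ref{ass:L-lipschitz-gradients}, \ref{ass:p-bounded-central-moment-gradient} and~\ref{ass:mean-squared-smoothness-2} with the stated constants, and that the scaling of $\lambda$ and $\mu$ keeps $F(x^0)-F^{\inf}\le\Delta$.

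The main obstacle, as in the first theorem, is calibrating the noise construction so that the $(q,\delta)$-similarity bound and the $p$-BCM bound hold \emph{simultaneously} with the \emph{same} random perturbation — the similarity bound is a statement about \emph{differences} of stochastic gradients at two points, so the perturbation must be (up to its mean) a Lipschitz-in-$\|x-y\|$ function of the points with the $q$-th moment controlled by $\delta^q\|x-y\|^q$, yet also have $p$-th central moment at most $\sigma_1^p$ pointwise. Getting the interpolation exponent $\frac{p}{q(p-1)}$ out of this requires choosing the perturbation's tail to be a truncated power law whose truncation level is tuned against \emph{both} the accuracy $\eps$ (through $\delta$) and the moment budget $\sigma_1$, and then carefully tracking how the resulting progress-probability compounds over the $T$ chain links; this is the step where the proof of \Cref{thm:lower-bound-sofo-mean-squared-smoothness} (and the motivating discussion in \Cref{appdx-rem:lower-bound-global-stochastic-model-motivations}) does the real work, and the present proof reduces to re-running that argument with $\bar L$ replaced by $(L_1,\delta)$ throughout and then taking the minimum against the $\delta$-free construction.
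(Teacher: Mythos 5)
There is a genuine conceptual gap in the proposal, and it concerns the central structural point of the theorem: why a \emph{minimum} appears.

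You attribute the two minimands to two distinct oracle constructions and then argue that ``each construction is valid for all admissible parameter values, but which one is tighter depends on the relative sizes of $L_1,\delta,\sigma_1,\eps$, so only their minimum can be claimed universally.'' This is backwards: if both constructions lay in $\mathcal{O}(f,\delta^q,\sigma_1^p)$, the adversary would pick the harder one, which would give a \emph{maximum}, not a minimum. The deeper problem is that your proposed second construction---the one that ``ignores the similarity structure'' and lets the noise scale with $L_1$---is not a valid member of $\mathcal{O}(f,\delta^q,\sigma_1^p)$ when $\delta\ll L_1$: the $(q,\delta)$-similarity bound in \Cref{ass:mean-squared-smoothness-2} is a hard constraint on the oracle, and an estimator whose per-step difference variance scales like $L_1^q\|x-y\|^q$ violates it. So that branch of your argument cannot be run as stated.

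In the paper's proof there is a \emph{single} family of constructions, parameterized by the Bernoulli rate $\theta$, and the minimum arises because the hard instance's Lipschitz constant is forced to be
\[
L=\min\Bigl\{L_1,\;\tfrac{\ell_1\delta}{\bar\delta_1}\,\theta^{\frac{q-1}{q}}\Bigr\},
\]
the first cap so that $F_T^\star\in\mathcal F(\Delta,L_1)$ and the second so that the rescaled estimator $\bar g_T^\star$ obeys the $(q,\delta)$-similarity bound via inequality \eqref{5ebfcdd7-34dd-4433-bb48-69a29f3c7dab} in \Cref{appdx-lem:properties-gradient-estimator-1}. Since the zero-chain length is $T\asymp L\Delta/\eps^2$, the cap on $L$ propagates directly into a $\min$ on the attainable lower bound; after dividing by $\theta$ and substituting $\theta=\min\{1,(4\gamma_\infty\eps/\sigma_1)^{p/(p-1)}\}$ one obtains exactly the two minimands. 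The additive $L_1\Delta/\eps^2$ that upgrades $\delta\Delta/\eps^2$ to $(L_1+\delta)\Delta/\eps^2$ comes from a separate noiseless subcase ($\theta=1$), where the similarity constraint is vacuously satisfied so $L=L_1$ is available---not from ``ignoring'' the constraint. Your reliance on \Cref{appdx-lem:lower-bound-global-stichastic-model-mean-squared-smoothness-p-BCM} for the degenerate small-$T$ regime and the $(\sigma_1/\eps)^{p/(p-1)}$ additive term is correct and matches the paper, but the mechanism producing the minimum and the legitimacy of the second claimed construction need to be repaired.
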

	It is worth noting that~\Cref{ass:mean-squared-smoothness} is in fact equivalent to~\Cref{ass:mean-squared-smoothness-2,ass:L-lipschitz-gradients}. Specifically, the latter two assumptions imply~\Cref{ass:mean-squared-smoothness} with $\bar{L}^q = 2^{q - 1} \left( L_1^q + \delta^q \right)$, while conversely, if~\Cref{ass:mean-squared-smoothness} holds then both~\Cref{ass:L-lipschitz-gradients,ass:mean-squared-smoothness-2} hold with $L_1 = \bar{L}$ and $\delta = \bar{L}$. Hence, $L_1 \le \bar{L}$, $\delta \le \bar{L}$, and $\bar{L}^q \le 2^{q - 1} \left( L_1^q + \delta^q \right) \le 2^q \bar{L}^q$. Expressing the rate in terms of $L_1$ and $\delta$ is more precise than using $\bar{L}$ alone; in particular, when $\delta \ll \bar{L}$, we show in~\Cref{thm:nsgd-mvr-convergence-analysis-2} that \algname{NSGD-MVR} can achieve improved complexity, even matching the second term of the above $\min\ens{\ldots}$.
	
	\subsection{Under Bounded Central Moments}
	
	Compared to the previous results, which focused on first-order methods under $q$-weak average smoothness and $(q, \delta)$–similarity, we consider here the setting where the oracle's noise is controlled only via bounded central moments, allowing distinct exponents $p$ and $q$ for the stochastic gradients and Hessians.
	\begin{theorem}\label{thm:lower-bound-sofo-bounded-central-moments}
		Given $\Delta, L_1, L_2 > 0$, $\sigma_1, \sigma_2 \ge 0$ and $0 < \eps \le c_1 \min\{\sqrt{L_1 \Delta}, L_2^{\nicefrac{1}{3}} \Delta^{\nicefrac{2}{3}} \}$ for some universal constant $c_1 > 0$. Then, for any algorithm $A \in \mathcal{A}_{\texttt{zr}}$, there exists a function $f \in \mathcal{F}\left( \Delta, L_1, L_2 \right)$, an oracle and a distribution $(O, \cD) \in \mathcal{O}\left( f, \sigma_1^p, \sigma_2^q \right)$ such that\footnote{For clarity, we omit the deterministic term in this lower bound. The full lower bound can be found in the proof of~\Cref{thm:lower-bound-sofo-bounded-central-moments} in~\Cref{appdx-subsec:proof-lower-bound-sofo-bounded-central-moments}.}
		\begin{alignat*}{2}
			&\mathfrak{m}^{\normalfont\texttt{zr}}_{\eps}\left( K, L_1, L_2, \Delta, \sigma_1^p, \sigma_2^q \right) \\
			&\quad \ge \Omega(1) \cdot \min\left\{ \frac{L_1 \Delta}{\eps^2} \left( \frac{\sigma_1}{\eps} \right)^{\frac{p}{p - 1}}, \frac{L_2^{\nicefrac{1}{2}} \Delta}{\eps^{\nicefrac{3}{2}}} \left( \frac{\sigma_1}{\eps} \right)^{\frac{p}{p - 1}},  \left( \frac{\sigma_1}{\eps} \right)^{\frac{p}{p - 1}} + \frac{\sigma_2 \Delta}{\eps^2} + \frac{\sigma_2 \Delta}{\eps^2} \left( \frac{\sigma_1}{\eps} \right)^{\frac{p}{q (p - 1)}} \right\}.
		\end{alignat*}
	\end{theorem}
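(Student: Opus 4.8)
The plan is to adapt the hard-instance machinery of~\citet{carmon2020lower} together with the probabilistic ``zero-chain'' construction of~\citet{arjevani2022lower}, now attaching independent noise to \emph{both} the gradient and the Hessian(-vector-product) oracles, with possibly different tail exponents $p$ and $q$. I would start from the Carmon et al.\ worst-case function $\bar{F}_T\colon\R^T\to\R$, which is a zero-chain, satisfies $\bar{F}_T(0)-\inf\bar{F}_T\lesssim T$, obeys $\norm{\nabla\bar{F}_T(x)}\ge 1$ until all $T$ coordinates have been activated, and enjoys $O(1)$-Lipschitz gradient and Hessian. Working with the rescaling $f_{T,\lambda}(x)\eqdef c\,\bar{F}_T(x/\lambda)$, I would fix $c,\lambda$ so that the progress-direction gradient scale $c/\lambda$ is of order $\eps$ and the suboptimality $cT$ is of order $\Delta$; then the requirements that the Hessian scale $c/\lambda^2$ stay below $L_1$ and the third-derivative scale $c/\lambda^3$ stay below $L_2$ turn into the two dimension caps $T\lesssim L_1\Delta/\eps^2$ and $T\lesssim L_2^{1/2}\Delta/\eps^{3/2}$, both of which the hypothesis $0<\eps\le c_1\min\{\sqrt{L_1\Delta},L_2^{1/3}\Delta^{2/3}\}$ guarantees to be at least a universal constant. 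This places $f_{T,\lambda}$ in $\mathcal{F}(\Delta,L_1,L_2)$.

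Next I would build the randomized oracle. Following~\citet{arjevani2022lower}, I attach to every query a fresh reveal variable $z$ with $\E{z}=1$, equal to $1/p_\delta$ with probability $p_\delta$ and to $0$ otherwise, and multiply the ``progress'' component of the returned gradient — and of the returned Hessian, hence of its vector products — by $z$, leaving everything else exact. Unbiasedness is immediate and the oracle is still a zero-chain, since a new coordinate is activated only on queries with $z\neq0$. The gradient noise on the progress component has magnitude $\Theta(\eps)\cdot\abs{z-1}$, so its $p$-th moment equals $\Theta(\eps^p)\,\E{\abs{z-1}^p}=\Theta(\eps^p\,p_\delta^{1-p})$, which is $\le\sigma_1^p$ exactly when $p_\delta\gtrsim(\eps/\sigma_1)^{p/(p-1)}$; likewise the Hessian noise has operator norm $\Theta(c/\lambda^2)\cdot\abs{z-1}=\Theta(\eps^2T/\Delta)\cdot\abs{z-1}$, so the $q$-BCM bound holds iff $p_\delta\gtrsim(\eps^2T/(\Delta\sigma_2))^{q/(q-1)}$; hence the oracle lies in $\mathcal{O}(f_{T,\lambda},\sigma_1^p,\sigma_2^q)$ under these two lower bounds on $p_\delta$. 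A standard progress argument combined with a concentration bound (any $A\in\mathcal{A}_{\texttt{zr}}$ activates at most one coordinate per query, and only when $z\neq0$) then shows that reaching an $\eps$-stationary point requires $\Omega(T/p_\delta)$ oracle calls.

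It remains to choose $T$ and $p_\delta$ optimally subject to $p_\delta\ge(\eps/\sigma_1)^{p/(p-1)}$, $p_\delta\ge(\eps^2T/(\Delta\sigma_2))^{q/(q-1)}$, $p_\delta\le1$, and the two dimension caps. Taking $T$ as large as these allow and $p_\delta$ as the larger of its two lower bounds, $T/p_\delta$ specializes as follows: when the $L_1$-cap binds and the gradient-noise floor dominates, $T/p_\delta=\Theta(\tfrac{L_1\Delta}{\eps^2}(\sigma_1/\eps)^{p/(p-1)})$; when the $L_2$-cap binds instead, $T/p_\delta=\Theta(\tfrac{L_2^{1/2}\Delta}{\eps^{3/2}}(\sigma_1/\eps)^{p/(p-1)})$; when the Hessian-noise floor is the active one, the two floors cross at $T=\Theta(\tfrac{\sigma_2\Delta}{\eps^2}(\eps/\sigma_1)^{p(q-1)/(q(p-1))})$, and simplifying the exponent $\tfrac{p(q-1)}{q(p-1)}-\tfrac{p}{p-1}=-\tfrac{p}{q(p-1)}$ gives $T/p_\delta=\Theta(\tfrac{\sigma_2\Delta}{\eps^2}(\sigma_1/\eps)^{p/(q(p-1))})$, while the feasibility constraint $p_\delta\le1$ caps $T$ at $\Theta(\sigma_2\Delta/\eps^2)$ and, with an exact oracle on that instance, yields the bare $\tfrac{\sigma_2\Delta}{\eps^2}$ summand; the purely deterministic $\tfrac{L_1\Delta}{\eps^2}$ and $\tfrac{L_2^{1/2}\Delta}{\eps^{3/2}}$ contributions (suppressed in the displayed bound) come from the noiseless ($p_\delta=1$) regime. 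Using a single instance with $T$ equal to the minimum of all the caps then delivers $\Omega(\min\{A,B,C\})$, where $A,B,C$ denote the three terms inside the $\min$ in the theorem.

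The one situation this does not cover is when the caps force $T<1$ and the probabilistic chain degenerates to a single coordinate; there, as flagged in the excerpt, I would invoke~\Cref{appdx-lem:lower-bound-global-stichastic-model-mean-squared-smoothness-p-BCM}, the $p$-BCM generalization of~\citet[Lemma~10]{arjevani2022lower}, applied to a one-dimensional instance, which supplies the additive term $\Omega((\sigma_1/\eps)^{p/(p-1)})$; this term dominates $\min\{A,B,C\}$ in exactly that degenerate regime, so combining the two cases completes the proof. I expect the principal obstacle to lie not in any single inequality but in the bookkeeping with the two distinct exponents $p$ and $q$ threading through the constraint optimization — in particular, verifying that the crossover term genuinely carries the exponent $\tfrac{p}{q(p-1)}$ and that the constraints $p_\delta\le1$, the two dimension caps, the two moment bounds, and $T\ge1$ are simultaneously met by one choice of $(c,\lambda,T,p_\delta)$ — together with checking that the combined first-and-second-order zero-chain structure survives the Hessian-vector-product oracle model.
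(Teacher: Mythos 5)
Your proposal takes essentially the same route as the paper's proof. The correspondence is exact once you identify your scale parameters $(c,\lambda)$ with the paper's $(\alpha,\beta)$ via $c\leftrightarrow\alpha$ and $1/\lambda\leftrightarrow\beta$ (so $c/\lambda\leftrightarrow\alpha\beta$, $c/\lambda^2\leftrightarrow\alpha\beta^2$, $c/\lambda^3\leftrightarrow\alpha\beta^3$), and your Bernoulli reveal probability $p_\delta$ with the paper's $\theta$: the same single Bernoulli draw per query multiplies the progress coordinate of both the gradient and the Hessian estimator, exactly as in Definitions~\ref{appdx-def:gradient-estimator} and~\ref{appdx-def:hessian-estimator}. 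Your three dimension/moment caps on $T$ reproduce the paper's choice $\beta=\min\{\tfrac{L_1}{2\ell_1\eps},\sqrt{\tfrac{L_2}{2\ell_2\eps}},\tfrac{\sigma_2\theta^{(q-1)/q}}{4\ell_1\eps}\}$, your crossover calculation recovers the exponent $\tfrac{p}{q(p-1)}$ via the same identity $\tfrac{p}{p-1}-\tfrac{p(q-1)}{q(p-1)}=\tfrac{p}{q(p-1)}$, and your handling of the degenerate small-$T$ regime via the $p$-BCM generalization of Lemma~10 of~\citet{arjevani2022lower} is precisely the paper's invocation of Lemma~\ref{appdx-lem:lower-bound-global-stichastic-model-mean-squared-smoothness-p-BCM} (in the form of Remark~\ref{appdx-rem:improve-lower-bound-global-stichastic-model-mean-squared-smoothness-2-hessian}) in Case~2 of the proof. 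The one cosmetic difference is that the paper fixes $\theta$ from the gradient moment constraint alone and pushes the Hessian constraint into $\beta$ (hence $T$), whereas you let $p_\delta$ be the larger of two floors and optimize $T$ separately; after the joint optimization the two bookkeepings coincide, as your crossover computation shows. Your concern about the zero-chain surviving the Hessian-vector-product model is addressed implicitly in the paper: since the Hessian estimator's nonzero rows are confined to indices $\le\mathrm{prog}_{1/4}(x)+1$, any Hessian-vector product is supported there too, so the probability-$\theta$ zero-chain property is preserved.
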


	\begin{remark}
		In the noiseless setting, i.e., $\sigma_1 = \sigma_2 = 0$, the complexity of~\Cref{thm:lower-bound-sofo-bounded-central-moments} reduces to
		\[ \min\ens{\frac{L_1 \Delta}{\eps^2}, \frac{L_2^{\nicefrac{1}{2}} \Delta}{\eps^{\nicefrac{3}{2}}}}, \numberthis\label{b26a6d07-2917-471c-a3c4-5c9b1b33f5cc} \]
		which is matched by the combination of gradient descent (\algname{GD}) and cubic regularized Newton method~\citep{Nesterov2006}, hence is optimal. This is slightly better than the $\smash{\cO(L_1^{\nicefrac{1}{2}} L_2^{\nicefrac{1}{4}} \Delta \eps^{-\nicefrac{7}{4}})}$ bound 
		achieved in~\citet{10.5555/3305381.3305449}, since
		\[ \min\ens{\frac{L_1 \Delta}{\eps^2}, \frac{L_2^{\nicefrac{1}{2}} \Delta}{\eps^{\nicefrac{3}{2}}}} \le \sqrt{\frac{L_1 \Delta}{\eps^2} \cdot \frac{L_2^{\nicefrac{1}{2}} \Delta}{\eps^{\nicefrac{3}{2}}}} = \frac{L_1^{\nicefrac{1}{2}} L_2^{\nicefrac{1}{4}} \Delta}{\eps^{\nicefrac{7}{4}}}, \]
		but achieving~\eqref{b26a6d07-2917-471c-a3c4-5c9b1b33f5cc} requires full Hessian access in order to get rid of the dependency in $L_1$~\citep{pmlr-v125-arjevani20a,Carmon2021}.
	\end{remark}
	
	\section{Optimal Method under First-Order $p$--BCM}\label{sec:optimal-method-nsgd-mvr}
	
	To establish the optimal rate, we consider the well-known Normalized \algname{SGD} algorithm with momentum variance reduction \texttt{MVR} technique~\citep{cutkosky2019momentum}, and present its pseudo-code in \Cref{algo:nsgd-mvr}.
	
	\begin{algorithm}%
		\caption{\algname{NSGD-MVR} (Normalized \algname{SGD} with \texttt{MVR})}%
		\label{algo:nsgd-mvr}%
		
		\DontPrintSemicolon%
		\SetKwProg{Init}{Initialization}{:}{}%
		\Init{}{%
			$x_0 \in \R^d$, the starting point\;
			$T > 0$, the number of iterations\;
			$g_0 \in \R^d$, an initial vector\;
			$\gamma > 0$, the stepsize\;
			$\alpha \in \intof{0}{1}$, the momentum parameter for \texttt{MVR}\;
		}%
		
		\vspace{\baselineskip}
		
		$x_1 \gets x_0 - \gamma \frac{g_0}{\norm{g_0}}$\;
		\For{$t = 1, 2, \ldots, T - 1$}{%
			\tcp*[h]{Apply \texttt{MVR}.}\;
			$g_t \gets (1 - \alpha) \left( g_{t - 1} + \nabla f\left( x_t, \xi_t \right) - \nabla f \left( x_{t - 1}, \xi_t \right) \right) + \alpha \nabla f \left( x_t, \xi_t \right)$\;
			\tcp*[h]{Do one descent step.}\;
			$x_{t + 1} \gets x_t - \gamma \frac{g_t}{\norm{g_t}}$\;
		}%
		\KwOut{$x_T$}%
	\end{algorithm}%
	
	\subsection{Convergence Analysis}
	
	\subsubsection{Case of Known Tail Indices $p$ and $q$}
	
	\begin{theorem}\label{thm:nsgd-mvr-convergence-analysis}
		Under~\Cref{ass:lower-boundedness,ass:p-bounded-central-moment-gradient,ass:mean-squared-smoothness}, let the initial gradient estimate $g_0$ be given by
		\[ g_0 = \frac{1}{B_{\textnormal{init}}} \sum_{j = 1}^{B_{\textnormal{init}} - 1} \nabla f\left( x_0, \xi_{0, j} \right), \]
		where $B_{\textnormal{init}} = \max\ens{1, \left( \frac{\sigma_1}{\eps} \right)^{\frac{p}{p - 1}}}$, let the stepsize $\gamma = \sqrt{\frac{\Delta \alpha^{\nicefrac{1}{q}}}{\bar{L} T}}$, the momentum parameter $\alpha = \min\ens{1, \alpha_{\textnormal{eff}}}$ where
		\[ \alpha_{\textnormal{eff}} = \max\ens{\left( \frac{\eps}{\sigma_1 T} \right)^{\frac{p}{2p - 1}}, \left( \frac{\bar{L} \Delta}{\sigma_1^2 T} \right)^{\frac{pq}{p (2 q + 1) - 2 q}}}. \numberthis\label{2c0dbb02-323f-4955-a4f2-8f3b73a0f89d} \]
		Then,~\Cref{algo:nsgd-mvr} guarantees to find an $\eps$--stationary point with  total sample complexity
		\[ \cO\left( \left( \frac{\sigma_1}{\eps} \right)^{\frac{p}{p - 1}} + \frac{\bar{L} \Delta}{\eps^2} + \frac{\bar{L} \Delta}{\eps^2} \left( \frac{\sigma_1}{\eps} \right)^{\frac{p}{q (p - 1)}} \right). \]
	\end{theorem}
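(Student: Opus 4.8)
The plan is the classical normalized-\algname{SGD} argument, with the heavy-tailed difficulty concentrated in controlling the momentum--variance-reduced error $e_t \eqdef g_t - \nabla F(x_t)$. We first note that \Cref{ass:mean-squared-smoothness} implies \Cref{ass:L-lipschitz-gradients} with $L_1 \le \bar{L}$ (so $F$ is $\bar{L}$-smooth) and that normalization forces $\norm{x_{t+1} - x_t} = \gamma$ for every $t$. Combining $\bar{L}$-smoothness along $x_{t+1} - x_t = -\gamma g_t/\norm{g_t}$ with the elementary bound $\norm{\nabla F(x_t)} \le \inp{\nabla F(x_t)}{g_t/\norm{g_t}} + 2\norm{e_t}$ (triangle inequality, valid when $g_t \neq 0$) gives $F(x_{t+1}) \le F(x_t) - \gamma\norm{\nabla F(x_t)} + 2\gamma\norm{e_t} + \tfrac{\bar{L}\gamma^2}{2}$, and likewise for the first step with $(g_0,e_0)$. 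Summing over $t=0,\dots,T-1$, using $F(x_T) \ge F^{\inf}$ and dividing by $\gamma T$ yields
\begin{equation*}
\frac{1}{T}\sum_{t=0}^{T-1}\E{\norm{\nabla F(x_t)}} \;\le\; \frac{\Delta}{\gamma T} + \frac{2}{T}\sum_{t=0}^{T-1}\E{\norm{e_t}} + \frac{\bar{L}\gamma}{2}.
\end{equation*}

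\textbf{Error recursion.} Expanding \texttt{MVR} and centering, for $t\ge 1$ we get $e_t = (1-\alpha)e_{t-1} + S_t + N_t$, where $S_t \eqdef (1-\alpha)\big([\nabla f(x_t,\xi_t) - \nabla f(x_{t-1},\xi_t)] - [\nabla F(x_t) - \nabla F(x_{t-1})]\big)$ and $N_t \eqdef \alpha\big(\nabla f(x_t,\xi_t) - \nabla F(x_t)\big)$. Both $(S_t)_t$ and $(N_t)_t$ are martingale-difference sequences for the natural filtration $(\mathcal{F}_t)_t$; since $\norm{x_t - x_{t-1}} = \gamma$, \Cref{ass:mean-squared-smoothness} reduces to $\E{\norm{S_t}^q \mid \mathcal{F}_{t-1}} \le (2\bar{L}\gamma)^q$, and \Cref{ass:p-bounded-central-moment-gradient} gives $\E{\norm{N_t}^p \mid \mathcal{F}_{t-1}} \le (\alpha\sigma_1)^p$. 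Unrolling, $e_t = (1-\alpha)^t e_0 + \sum_{s=1}^{t}(1-\alpha)^{t-s}(S_s + N_s)$.

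\textbf{Heavy-tailed moment control (the main obstacle).} This is the crux of the proof. We invoke a von Bahr--Esseen-type inequality for Hilbert-space-valued martingale differences: $\E{\norm{\sum_s Y_s}^r} \lesssim \sum_s \E{\norm{Y_s}^r}$ for $r\in[1,2]$. Applying it with $r=q$ to $\sum_s (1-\alpha)^{t-s}S_s$, together with $\sum_s (1-\alpha)^{q(t-s)} \le (1-(1-\alpha)^q)^{-1} \le \alpha^{-1}$ (which uses $q\ge 1$), yields $\E{\norm{\sum_s (1-\alpha)^{t-s}S_s}} \lesssim \bar{L}\gamma\,\alpha^{-1/q}$; applying it with $r=p$ to the noise part yields $\E{\norm{\sum_s (1-\alpha)^{t-s}N_s}} \lesssim \sigma_1\,\alpha^{(p-1)/p}$. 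It is exactly here that variance reduction survives heavy tails — producing the factor $\alpha^{-1/q}$ instead of the naive $\alpha^{-1}$ — and where the two distinct tail exponents $p$ (noise) and $q$ (smoothness) must be tracked separately; getting these scalings right, rather than anything in the descent argument, is the hard part. The same inequality applied to the initial minibatch average gives $\E{\norm{e_0}} \lesssim \sigma_1 B_{\textnormal{init}}^{-(p-1)/p} \lesssim \eps$ by the choice of $B_{\textnormal{init}}$. Using in addition $\sum_{t=0}^{T-1}(1-\alpha)^t \le \alpha^{-1}$ and substituting into the descent bound gives the master estimate
\begin{equation*}
\frac{1}{T}\sum_{t=0}^{T-1}\E{\norm{\nabla F(x_t)}} \;\lesssim\; \frac{\Delta}{\gamma T} + \frac{\eps}{\alpha T} + \sigma_1\alpha^{\frac{p-1}{p}} + \bar{L}\gamma\,\alpha^{-1/q} + \bar{L}\gamma.
\end{equation*}

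\textbf{Parameters and conclusion.} Plugging in $\gamma = \sqrt{\Delta\alpha^{1/q}/(\bar{L}T)}$ collapses the $\Delta/(\gamma T)$, $\bar{L}\gamma\,\alpha^{-1/q}$ and $\bar{L}\gamma$ terms all into $\lesssim \sqrt{\bar{L}\Delta/(T\alpha^{1/q})}$. The two quantities defining $\alpha_{\textnormal{eff}}$ in~\eqref{2c0dbb02-323f-4955-a4f2-8f3b73a0f89d} are precisely the values of $\alpha$ that equalize, respectively, $\eps/(\alpha T)$ with $\sigma_1\alpha^{(p-1)/p}$, and $\sqrt{\bar{L}\Delta/(T\alpha^{1/q})}$ with $\sigma_1\alpha^{(p-1)/p}$; since the first two are decreasing in $\alpha$ and $\sigma_1\alpha^{(p-1)/p}$ is increasing, taking $\alpha = \min\{1,\alpha_{\textnormal{eff}}\}$ makes the whole bound $\lesssim \sigma_1\alpha^{(p-1)/p}$ (and $\lesssim \sqrt{\bar{L}\Delta/T}$ in the regime where the minimum is $1$). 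Finally, choosing $T$ equal to a large constant multiple of $(\sigma_1/\eps)^{\frac{p}{p-1}} + \bar{L}\Delta/\eps^2 + (\bar{L}\Delta/\eps^2)(\sigma_1/\eps)^{\frac{p}{q(p-1)}}$ forces $\alpha_{\textnormal{eff}} \lesssim (\eps/\sigma_1)^{\frac{p}{p-1}}$ (with $\alpha=1$ and $\sigma_1 \lesssim \eps$ in the complementary regime), so that the bound is $\lesssim \eps$. Hence $\tfrac{1}{T}\sum_{t=0}^{T-1}\E{\norm{\nabla F(x_t)}} \le \eps$, a uniformly random iterate is $\eps$-stationary in expectation, and the total sample count $B_{\textnormal{init}} + (T-1)$ equals the claimed complexity.
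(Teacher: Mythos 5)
Your proposal is correct and follows essentially the same route as the paper's proof: the same normalized-descent lemma, the same decomposition of $e_t$ into a smoothness-driven martingale term (bounded with exponent $q$) and a noise-driven one (bounded with exponent $p$), the same application of a von Bahr--Esseen inequality for Hilbert-space martingale differences to obtain the $\alpha^{-1/q}$ and $\alpha^{(p-1)/p}$ scalings, the same minibatch warm start to make $\E{\|e_0\|}\lesssim\eps$, and the same parameter balancing to recover the claimed complexity.
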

	The detailed proof is provided in~\Cref{appdx:proof-nsgd-mvr}.
	
	Using~\Cref{ass:mean-squared-smoothness-2} combined with~\Cref{ass:L-lipschitz-gradients}, we obtain the following refined result.
	\begin{theorem}\label{thm:nsgd-mvr-convergence-analysis-2}
		Under~\Cref{ass:lower-boundedness,ass:L-lipschitz-gradients,ass:p-bounded-central-moment-gradient,ass:mean-squared-smoothness-2}, let the initial gradient estimate $g_0$ be given by
		\[ g_0 = \frac{1}{B_{\textnormal{init}}} \sum_{j = 1}^{B_{\textnormal{init}} - 1} \nabla f\left( x_0, \xi_{0, j} \right), \]
		with $B_{\textnormal{init}} = \max\ens{1, \left( \frac{\sigma_1}{\eps} \right)^{\frac{p}{p - 1}}}$,   stepsize $\gamma = \min\ens{\sqrt{\frac{\Delta}{L_1 T}}, \sqrt{\frac{\Delta \alpha^{\nicefrac{1}{q}}}{\delta T}}}$,  momentum parameter $\alpha = \min\ens{1, \alpha_{\textnormal{eff}}}$, where
		\[ \alpha_{\textnormal{eff}} = \max\ens{\left( \frac{\eps}{\sigma_1 T} \right)^{\frac{p}{2p - 1}}, \left( \frac{\delta \Delta}{\sigma_1^2 T} \right)^{\frac{pq}{p (2 q + 1) - 2 q}}}. \numberthis\label{2c0dbb02-323f-4955-a4f2-8f3b73a0f89d-2} \]
		Then \Cref{algo:nsgd-mvr} is guaranteed to find an $\eps$-stationary point with total sample complexity
		\[ \cO\left( \left( \frac{\sigma_1}{\eps} \right)^{\frac{p}{p - 1}} + \frac{(L_1 + \delta) \Delta}{\eps^2} + \frac{\delta \Delta}{\eps^2} \left( \frac{\sigma_1}{\eps} \right)^{\frac{p}{q (p - 1)}} \right). \]
	\end{theorem}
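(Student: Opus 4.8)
The plan is to follow the standard analysis of Normalized \algname{SGD} with the \texttt{MVR} estimator, specializing it to the heavy-tailed $p$--BCM noise and the $(q,\delta)$--similarity structure; the argument should closely mirror the proof of \Cref{thm:nsgd-mvr-convergence-analysis}, the only structural change being that here the Lipschitz constant $L_1$ and the similarity constant $\delta$ are kept separate rather than merged into $\bar L$, which is precisely why the stepsize is a minimum of two terms. First I would set $\eps_t \eqdef g_t - \nabla F(x_t)$ and derive the normalized--descent inequality: from $L_1$--smoothness (\Cref{ass:L-lipschitz-gradients}) applied to $x_{t+1} = x_t - \gamma g_t/\norm{g_t}$ together with the elementary estimate $\inp{\nabla F(x_t)}{g_t/\norm{g_t}} \ge \norm{\nabla F(x_t)} - 2\norm{\eps_t}$, one gets $F(x_{t+1}) \le F(x_t) - \gamma\norm{\nabla F(x_t)} + 2\gamma\norm{\eps_t} + \tfrac{L_1\gamma^2}{2}$. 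Telescoping over $t=0,\dots,T-1$, using $F(x_T)\ge F^{\inf}$, dividing by $\gamma T$ and taking expectations reduces the whole problem to controlling $\tfrac1T\sum_t\E{\norm{\eps_t}}$, since then, choosing $\bar x$ uniformly among $x_0,\dots,x_{T-1}$, we have $\E{\norm{\nabla F(\bar x)}} \le \tfrac{\Delta}{\gamma T} + \tfrac{L_1\gamma}{2} + \tfrac2T\sum_t\E{\norm{\eps_t}}$.

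Next I would unroll the error. Expanding the \texttt{MVR} update of \Cref{algo:nsgd-mvr}, the deterministic terms cancel and one is left, for $t\ge1$, with the recursion $\eps_t = (1-\alpha)\eps_{t-1} + (1-\alpha)S_t + \alpha U_t$, where $S_t \eqdef [\nabla f(x_t,\xi_t) - \nabla f(x_{t-1},\xi_t)] - [\nabla F(x_t) - \nabla F(x_{t-1})]$ and $U_t \eqdef \nabla f(x_t,\xi_t) - \nabla F(x_t)$ are martingale increments for the natural filtration $(\cF_t)$. The key structural fact is that the step is normalized, so $\norm{x_t - x_{t-1}} = \gamma$ exactly; hence $(q,\delta)$--similarity (\Cref{ass:mean-squared-smoothness-2}) gives the conditional bound $\ExpCond{\norm{S_t}^q}{\cF_{t-1}}\le\delta^q\gamma^q$, while $p$--BCM (\Cref{ass:p-bounded-central-moment-gradient}) gives $\ExpCond{\norm{U_t}^p}{\cF_{t-1}}\le\sigma_1^p$. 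Unrolling, $\eps_t = (1-\alpha)^t\eps_0 + (1-\alpha)\sum_{j=1}^t(1-\alpha)^{t-j}S_j + \alpha\sum_{j=1}^t(1-\alpha)^{t-j}U_j$, so three contributions must be bounded in $L^1$.

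For the warm start, a von Bahr--Esseen--type moment inequality for sums of i.i.d.\ mean-zero vectors gives $\E{\norm{\eps_0}} \le (\E{\norm{\eps_0}^p})^{1/p} \le C\sigma_1 B_{\textnormal{init}}^{-(p-1)/p} \le C\eps$ by the choice of $B_{\textnormal{init}}$, and summing the geometric weights $(1-\alpha)^t$ contributes $\le C\eps/(\alpha T)$ after dividing by $T$. For the similarity sum I would use the martingale analogue of the same inequality with exponent $q\in[1,2]$, together with $1-(1-\alpha)^q\ge\alpha$, to obtain $\E{\norm{\sum_j(1-\alpha)^{t-j}S_j}^q} \le C_q\delta^q\gamma^q/\alpha$, so this term is $\le C\delta\gamma\alpha^{-1/q}$; for the noise sum, the same inequality with exponent $p$ gives $\alpha\cdot\E{\norm{\sum_j(1-\alpha)^{t-j}U_j}} \le C\sigma_1\alpha^{(p-1)/p}$. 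Collecting everything,
\[ \frac1T\sum_{t=0}^{T-1}\E{\norm{\nabla F(x_t)}} \;\le\; \frac{\Delta}{\gamma T} + \frac{L_1\gamma}{2} + \frac{C\delta\gamma}{\alpha^{1/q}} + \frac{C\eps}{\alpha T} + C\sigma_1\alpha^{(p-1)/p}. \]

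Finally I would tune the parameters. Plugging in $\gamma = \min\{\sqrt{\Delta/(L_1T)},\sqrt{\Delta\alpha^{1/q}/(\delta T)}\}$ collapses the first three terms to $\cO(\sqrt{L_1\Delta/T}+\sqrt{\delta\Delta/(T\alpha^{1/q})})$, and it remains to pick $\alpha=\min\{1,\alpha_{\textnormal{eff}}\}$ so the increasing term $\sigma_1\alpha^{(p-1)/p}$ does not dominate: balancing it against $\eps/(\alpha T)$ yields the exponent $\tfrac{p}{2p-1}$, and balancing it against $\sqrt{\delta\Delta/(T\alpha^{1/q})}$ yields, after simplifying $\tfrac{2(p-1)}{p}+\tfrac1q = \tfrac{p(2q+1)-2q}{pq}$, the exponent $\tfrac{pq}{p(2q+1)-2q}$ — so their maximum is exactly $\alpha_{\textnormal{eff}}$ in \eqref{2c0dbb02-323f-4955-a4f2-8f3b73a0f89d-2}, at which the right-hand side becomes $\cO(\sqrt{L_1\Delta/T}+\sigma_1\alpha_{\textnormal{eff}}^{(p-1)/p})$. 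Inverting this to be $\le\eps$ — using that $\tfrac{p}{q(p-1)}$ equals $\tfrac1r-2$ for the relevant balancing exponent $r=\tfrac{q(p-1)}{p(2q+1)-2q}$ — shows it suffices to take $T\gtrsim L_1\Delta/\eps^2 + (\sigma_1/\eps)^{p/(p-1)} + (\delta\Delta/\eps^2)(\sigma_1/\eps)^{p/(q(p-1))}$ (with the $\delta\Delta/\eps^2$ part entering through the $\alpha=1$ regime when $\sigma_1\lesssim\eps$), and since each iteration queries two stochastic gradients and the warm start uses $B_{\textnormal{init}}$ of them, the total count $B_{\textnormal{init}}+\cO(T)$ equals the claimed $\cO((\sigma_1/\eps)^{p/(p-1)} + (L_1+\delta)\Delta/\eps^2 + (\delta\Delta/\eps^2)(\sigma_1/\eps)^{p/(q(p-1))})$. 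I expect this last bookkeeping to be the main obstacle: the heavy-tailed regime forbids the $L^2$-orthogonality of martingale differences available when $q=2$, forcing the $L^r$ moment inequality for $r\in[1,2]$, and it is precisely the fractional exponents $\tfrac{p}{p-1}$ and $\tfrac{p}{q(p-1)}$ it produces that must be matched consistently when solving for $T$ and reconciling with the coupled definition of $\alpha_{\textnormal{eff}}$.
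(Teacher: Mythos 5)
Your proposal is correct and follows essentially the same route as the paper: normalized descent lemma under $L_1$-smoothness, unrolling of the MVR error recursion into geometric sums of martingale increments $S_t$ (bounded via $(q,\delta)$-similarity and $\norm{x_t-x_{t-1}}=\gamma$) and $U_t$ (bounded via $p$-BCM), a von Bahr–Esseen-type inequality for exponents in $[1,2]$ to control those sums, and the same two-branch stepsize and momentum balancing. The only differences are cosmetic — a sign convention in $S_t$ and packaging the argument inline rather than in the paper's auxiliary \Cref{appdx-lem:unrolling-descent-lemma-nsgd-mvr-hess,appdx-lem:bounding-error-term-nsgd-mvr-refined} — so the content matches.
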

	The detailed proof is provided in~\Cref{appdx:proof-nsgd-mvr}. Essentially, the constant $L_1 + \delta$ is of  same order as the constant $\bar{L}$ from~\Cref{ass:mean-squared-smoothness}.
	
	This shows that the lower bound from~\Cref{thm:lower-bound-sofo-mean-squared-smoothness-2} is matched, up to some multiplicative constant, by the combination of \algname{NSGD-Mom} (normalized \algname{SGD} with momentum) with \algname{NSGD-MVR}, which is~\Cref{algo:nsgd-mvr}.

	\subsubsection{Case of Unknown Tail Indices $p$ and $q$}
	
	\begin{theorem}\label{thm:nsgd-mvr-convergence-analysis-unknown-p-q}
		Under~\Cref{ass:lower-boundedness,ass:p-bounded-central-moment-gradient,ass:mean-squared-smoothness}, assume 
		$g_0 = \nabla f(x_0, \xi_0)$, 
		$\gamma = \sqrt{\frac{\Delta \alpha}{\bar{L} T}}$, 
		$\alpha = T^{-\frac{1}{2}} \in \intof{0}{1}$. Then in ~\Cref{algo:nsgd-mvr} we have 
		\[ \frac{1}{T} \sum_{t = 0}^{T - 1} \E{\norm{\nabla F(x_t)}} = \cO\left( \frac{\sigma_1}{T^{\frac{p - 1}{2 p}}} + \frac{\sqrt{\bar{L} \Delta}}{T^{\nicefrac{1}{4}}} \right). \numberthis\label{12c01754-28b6-4bc2-9ab4-c6f380187237} \]
	\end{theorem}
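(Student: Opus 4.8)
The plan is to follow the standard three-step analysis of normalized momentum methods, with the heavy-tailed content concentrated in the estimation-error bound; this is the same scheme as in the proof of \Cref{thm:nsgd-mvr-convergence-analysis}, specialized to the crude parameter-agnostic choice $\alpha=T^{-1/2}$.

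\textbf{Step 1: Descent inequality and telescoping.} First note that \Cref{ass:mean-squared-smoothness} implies $F$ has $\bar{L}$-Lipschitz gradient (apply Jensen to the $q$-th moment bound, using $\nabla F(x)-\nabla F(y)=\E{\nabla f(x,\xi)-\nabla f(y,\xi)}$). The normalized update in \Cref{algo:nsgd-mvr} forces $\norm{x_{t+1}-x_t}=\gamma$ for every $t\ge 0$. Combining the smoothness inequality with the elementary estimate $-\inp{a}{b/\norm{b}}\le -\norm{a}+2\norm{a-b}$ (valid for $b\ne 0$) gives, writing $\epsilon_t:=g_t-\nabla F(x_t)$,
\[ F(x_{t+1})\le F(x_t)-\gamma\norm{\nabla F(x_t)}+2\gamma\norm{\epsilon_t}+\tfrac{\bar{L}\gamma^2}{2}. \]
Summing over $t=0,\dots,T-1$, using $F(x_T)\ge F^{\inf}$, dividing by $\gamma T$ and taking expectations,
\[ \frac1T\sum_{t=0}^{T-1}\E{\norm{\nabla F(x_t)}}\le \frac{\Delta}{\gamma T}+\frac{\bar{L}\gamma}{2}+\frac2T\sum_{t=0}^{T-1}\E{\norm{\epsilon_t}}. \]
With $\gamma=\sqrt{\Delta\alpha/(\bar{L}T)}$ the first two terms equal $\sqrt{\bar{L}\Delta/(T\alpha)}$ and $\tfrac12\sqrt{\bar{L}\Delta\alpha/T}$.

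\textbf{Step 2: Controlling $\E{\norm{\epsilon_t}}$ (the crux).} Expanding the \texttt{MVR} update shows that for $t\ge 1$,
\[ \epsilon_t=(1-\alpha)\epsilon_{t-1}+(1-\alpha)S_t+\alpha N_t,\qquad \epsilon_0=N_0, \]
where $N_t:=\nabla f(x_t,\xi_t)-\nabla F(x_t)$ and $S_t:=[\nabla f(x_t,\xi_t)-\nabla f(x_{t-1},\xi_t)]-[\nabla F(x_t)-\nabla F(x_{t-1})]$. Relative to $\cF_{t-1}:=\sigma(\xi_0,\dots,\xi_{t-1})$ both are martingale differences, with $\E{\norm{N_t}^p\mid\cF_{t-1}}\le\sigma_1^p$ (\Cref{ass:p-bounded-central-moment-gradient}) and $\E{\norm{S_t}^q\mid\cF_{t-1}}\le(2\bar{L}\gamma)^q$ (\Cref{ass:mean-squared-smoothness} together with $\norm{x_t-x_{t-1}}=\gamma$, centering at most doubling the $L^q$-norm). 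Unrolling and splitting $\epsilon_t=A_t+B_t$ with $A_t:=\sum_{j=1}^t(1-\alpha)^{t-j+1}S_j$ and $B_t:=(1-\alpha)^tN_0+\alpha\sum_{j=1}^t(1-\alpha)^{t-j}N_j$, I apply a dimension-free martingale $L^r$-moment inequality (von Bahr--Esseen / Pinelis, valid for $r\in[1,2]$ in Hilbert space) with $r=q$ to $A_t$ and $r=p$ to $B_t$, then take $r$-th roots and use $\E{\norm{X}}\le(\E{\norm{X}^r})^{1/r}$. Summing the geometric series, using $1-(1-\alpha)^r\ge\alpha$ for $r\ge 1$, yields $(\E{\norm{A_t}^q})^{1/q}\lesssim\bar{L}\gamma\,\alpha^{-1/q}$ and $(\E{\norm{B_t}^p})^{1/p}\lesssim\sigma_1\bigl((1-\alpha)^t+\alpha^{(p-1)/p}\bigr)$. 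Averaging over $t$ and using $\tfrac1T\sum_t(1-\alpha)^t\le\tfrac1{T\alpha}$,
\[ \frac1T\sum_{t=0}^{T-1}\E{\norm{\epsilon_t}}\lesssim \frac{\bar{L}\gamma}{\alpha^{1/q}}+\frac{\sigma_1}{T\alpha}+\sigma_1\alpha^{\frac{p-1}{p}}. \]

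\textbf{Step 3: Substitution, and the main obstacle.} Plugging $\gamma=\sqrt{\Delta\alpha/(\bar{L}T)}$ and $\alpha=T^{-1/2}$ into Steps 1--2 gives $\Delta/(\gamma T)=\sqrt{\bar{L}\Delta}\,T^{-1/4}$, $\bar{L}\gamma=\sqrt{\bar{L}\Delta}\,T^{-3/4}$, $\bar{L}\gamma\alpha^{-1/q}=\sqrt{\bar{L}\Delta}\,T^{1/(2q)-3/4}\le\sqrt{\bar{L}\Delta}\,T^{-1/4}$ (since $q\in[1,2]$), $\sigma_1/(T\alpha)=\sigma_1T^{-1/2}$, and $\sigma_1\alpha^{(p-1)/p}=\sigma_1T^{-(p-1)/(2p)}$, which dominates $\sigma_1T^{-1/2}$ because $p\le 2$; collecting terms gives exactly $\cO\bigl(\sigma_1T^{-(p-1)/(2p)}+\sqrt{\bar{L}\Delta}\,T^{-1/4}\bigr)$. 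The routine parts are Step 1 and the arithmetic of Step 3; the real work is Step 2, where the key point is that a naive triangle-inequality bound on the $\alpha N_t$ contributions would leave an irreducible $\Theta(\sigma_1)$ residual and destroy variance reduction — one must instead exploit the martingale-difference structure through a genuine $L^p$ moment inequality and carefully track the two distinct tail exponents ($q$ from the gradient-difference terms, $p$ from the noise terms) through the two separate geometric sums.
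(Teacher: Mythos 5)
Your proof is correct and follows essentially the same route as the paper: establish the normalized descent/telescoping bound, bound the average estimation error via the von Bahr--Esseen martingale $L^r$-inequality applied separately with exponent $q$ to the gradient-difference sum and exponent $p$ to the noise sum, then substitute $\gamma=\sqrt{\Delta\alpha/(\bar{L}T)}$, $\alpha=T^{-1/2}$. The only cosmetic difference is that you fold the initial term $(1-\alpha)^t\hat e_0$ into the noise martingale $B_t$ (legitimate here since $g_0=\nabla f(x_0,\xi_0)$ makes $\hat e_0=N_0$), whereas the paper carries $\E{\norm{\hat e_0}}$ as a separate quantity and simply reuses the intermediate inequality already established in the proof of \Cref{thm:nsgd-mvr-convergence-analysis}; the resulting rates are identical.
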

	
	Our~\Cref{thm:nsgd-mvr-convergence-analysis-unknown-p-q} makes the pessimistic assumption $q = 1$, under which we recover the usual $\cO(\smash{T^{-\frac{p - 1}{2 p}}})$ rate with unknown tail index $p$~\citep{hübler2025gradientclippingnormalizationheavy,liu2025nonconvex}. The bound~\eqref{12c01754-28b6-4bc2-9ab4-c6f380187237} can be slightly refined under~\Cref{ass:L-lipschitz-gradients,ass:mean-squared-smoothness-2}.
	
	\begin{theorem}\label{thm:nsgd-mvr-convergence-analysis-unknown-p-q-2}
		Under~\Cref{ass:lower-boundedness,ass:L-lipschitz-gradients,ass:p-bounded-central-moment-gradient,ass:mean-squared-smoothness-2}, 
		assume $g_0 = \nabla f(x_0, \xi_0)$, 
		$\gamma = \min\ens{\sqrt{\frac{\Delta}{L_1 T}}, \sqrt{\frac{\Delta \alpha}{\delta T}}}$,  
		$\alpha = T^{-\frac{1}{2}} \in \intof{0}{1}$. Then in \Cref{algo:nsgd-mvr} we have
		\[ \frac{1}{T} \sum_{t = 0}^{T - 1} \E{\norm{\nabla F(x_t)}} = \cO\left( \frac{\sigma_1}{T^{\frac{p - 1}{2 p}}} + \frac{\sqrt{\delta \Delta}}{T^{\nicefrac{1}{4}}} + \frac{\sqrt{L_1 \Delta}}{T^{\nicefrac{1}{2}}} \right). \]
	\end{theorem}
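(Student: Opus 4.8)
The plan is to adapt the standard \algname{NSGD-MVR} analysis behind~\Cref{thm:nsgd-mvr-convergence-analysis-unknown-p-q}, replacing the single smoothness constant $\bar L$ by the pair $(L_1,\delta)$ supplied by~\Cref{ass:L-lipschitz-gradients,ass:mean-squared-smoothness-2}. Introduce the estimator error $e_t \eqdef g_t - \nabla F(x_t)$, the pure gradient noise $U_t \eqdef \nabla f(x_t,\xi_t) - \nabla F(x_t)$, and the centered increment $S_t \eqdef [\nabla f(x_t,\xi_t) - \nabla f(x_{t-1},\xi_t)] - [\nabla F(x_t) - \nabla F(x_{t-1})]$, together with the natural filtration $\cF_t \eqdef \sigma(\xi_0,\dots,\xi_t)$. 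Since $x_t,x_{t-1}$ are $\cF_{t-1}$-measurable while $\xi_t$ is drawn fresh, $U_t$ and $S_t$ are conditionally mean-zero, $\E{\norm{U_t}^p\mid\cF_{t-1}}\le\sigma_1^p$ by~\Cref{ass:p-bounded-central-moment-gradient}, and $\E{\norm{S_t}^q\mid\cF_{t-1}}\le\delta^q\norm{x_t-x_{t-1}}^q=\delta^q\gamma^q$ by~\Cref{ass:mean-squared-smoothness-2} and the normalization $\norm{x_t-x_{t-1}}=\gamma$. First, the $L_1$-descent estimate combined with the elementary normalized-step bound $\langle\nabla F(x_t),g_t/\norm{g_t}\rangle\ge\norm{\nabla F(x_t)}-2\norm{e_t}$ gives $F(x_{t+1})\le F(x_t)-\gamma\norm{\nabla F(x_t)}+2\gamma\norm{e_t}+\tfrac{L_1\gamma^2}{2}$; telescoping over $t=0,\dots,T-1$, using $F(x_T)\ge F^{\inf}$, taking expectations and dividing by $\gamma T$ yields
\[
\frac1T\sum_{t=0}^{T-1}\E{\norm{\nabla F(x_t)}}\;\le\;\frac{\Delta}{\gamma T}+\frac{2}{T}\sum_{t=0}^{T-1}\E{\norm{e_t}}+\frac{L_1\gamma}{2}.
\]

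The crux is the bound on $\E{\norm{e_t}}$. The \texttt{MVR} update gives the recursion $e_t=(1-\alpha)(e_{t-1}+S_t)+\alpha U_t$ with $e_0=U_0$, which unrolls to $e_t=(1-\alpha)^tU_0+\sum_{s=1}^{t}(1-\alpha)^{t-s+1}S_s+\alpha\sum_{s=1}^{t}(1-\alpha)^{t-s}U_s$. For fixed $t$ the two sums are martingale-difference sums (the geometric weights are $\cF_{s-1}$-measurable), so a moment inequality for martingale differences (of von Bahr--Esseen / Pinelis type), valid for exponents in $\intff{1}{2}$, bounds each piece \emph{in its own} $L^r$-norm: in $L^p$ for the $U$-part and in $L^q$ for the $S$-part. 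Using $\sum_{s}(1-\alpha)^{r(t-s)}\le 1/(1-(1-\alpha)^r)\le 1/\alpha$ for $r\in\intff{1}{2}$ (since $(1-\alpha)^r\le 1-\alpha$), together with Lyapunov's inequality to pass from $L^r$-norms back to $\E{\norm{\cdot}}$, this produces $\E{\norm{e_t}}\lesssim(1-\alpha)^t\sigma_1+\delta\gamma\,\alpha^{-\nicefrac{1}{q}}+\sigma_1\,\alpha^{1-\nicefrac{1}{p}}$; averaging over $t$ and using $\sum_{t\ge0}(1-\alpha)^t\le1/\alpha$ gives $\frac1T\sum_t\E{\norm{e_t}}\lesssim\frac{\sigma_1}{\alpha T}+\delta\gamma\,\alpha^{-\nicefrac{1}{q}}+\sigma_1\,\alpha^{1-\nicefrac{1}{p}}$.

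It then remains to substitute $\alpha=T^{-\nicefrac{1}{2}}$ and $\gamma=\min\{\sqrt{\Delta/(L_1 T)},\sqrt{\Delta\alpha/(\delta T)}\}$ into the two displays and track each term. The potential/stepsize terms contribute $\Delta/(\gamma T)\le\sqrt{L_1\Delta}\,T^{-\nicefrac{1}{2}}+\sqrt{\delta\Delta}\,T^{-\nicefrac{1}{4}}$ and $L_1\gamma/2\le\tfrac12\sqrt{L_1\Delta}\,T^{-\nicefrac{1}{2}}$; the error terms contribute $\sigma_1/(\alpha T)=\sigma_1T^{-\nicefrac{1}{2}}$, $\sigma_1\alpha^{1-\nicefrac{1}{p}}=\sigma_1T^{-(p-1)/(2p)}$, and the key one $\delta\gamma\,\alpha^{-\nicefrac{1}{q}}\le\sqrt{\delta\Delta/T}\,\alpha^{\nicefrac{1}{2}-\nicefrac{1}{q}}\le\sqrt{\delta\Delta/T}\,\alpha^{-\nicefrac{1}{2}}=\sqrt{\delta\Delta}\,T^{-\nicefrac{1}{4}}$, where we used $q\in\intff{1}{2}$ (hence $\nicefrac{1}{2}-\nicefrac{1}{q}\ge-\nicefrac{1}{2}$) and $\alpha\le1$. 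Since $\sigma_1T^{-\nicefrac{1}{2}}\le\sigma_1T^{-(p-1)/(2p)}$, collecting everything gives exactly $\cO\big(\sigma_1T^{-(p-1)/(2p)}+\sqrt{\delta\Delta}\,T^{-\nicefrac{1}{4}}+\sqrt{L_1\Delta}\,T^{-\nicefrac{1}{2}}\big)$.

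The main obstacle is the second step: the two noise sources carry \emph{different} moment exponents --- $p$ for the gradients and $q$ for the similarity increment --- so one must resist merging the two martingale sums and instead estimate each in the appropriate $L^r$ space; moreover $U_s$ and $S_s$ share the same sample $\xi_s$ and are therefore not independent, which is precisely why the triangle inequality (rather than an orthogonality argument) is used to split $e_t$ across the two families. Everything else --- the descent estimate, the geometric-sum bounds, and the final parameter substitution --- is routine; the whole argument is the $(q,\delta)$-refinement of the proof of~\Cref{thm:nsgd-mvr-convergence-analysis-unknown-p-q}, with $\bar L$ entering only through its decomposition $\bar L\asymp L_1+\delta$.
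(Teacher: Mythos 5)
Your proposal is correct and follows essentially the same route as the paper: the same $L_1$-descent/normalized-step bound, the same unrolled \texttt{MVR} error recursion split into the $U$-martingale (bounded in $L^p$ via von Bahr--Esseen, which the paper instantiates as Lemma~\ref{appdx-technical-lem:app-von-Bahr-and-Essen}) and the $S$-martingale (bounded in $L^q$ using Assumption~\ref{ass:mean-squared-smoothness-2}), followed by the identical parameter substitution and the observations $\alpha^{1/2-1/q}\le\alpha^{-1/2}$ and $\sigma_1 T^{-1/2}\le\sigma_1 T^{-(p-1)/(2p)}$. The only cosmetic difference is that the paper first records the parameter-free intermediate bound (its eq.~\eqref{c6f0fec1-2964-442a-b2c3-7c25dfd2e653} via Lemma~\ref{appdx-lem:unrolling-descent-lemma-nsgd-mvr-hess} and Corollary~\ref{appdx-lem:bounding-error-term-nsgd-mvr-refined}) and then specializes, whereas you rederive the pieces inline.
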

	The detailed proof is provided in~\Cref{appdx:proof-nsgd-mvr}.
	
	\subsection{Discussion of the Obtained Results}
	
	As we can see, \algname{NSGD-MVR} achieves the optimal rate: our upper bounds in \Cref{thm:nsgd-mvr-convergence-analysis,thm:nsgd-mvr-convergence-analysis-2} match the lower bounds in \Cref{thm:lower-bound-sofo-mean-squared-smoothness,thm:lower-bound-sofo-mean-squared-smoothness-2}. For the standard case $q=2$, the stochastic terms are
	$\tfrac{\bar L \Delta}{\varepsilon^2}\!\left(\tfrac{\sigma_1}{\varepsilon}\right)^{\nicefrac{p}{2(p-1)}}
	\quad \text{and} \quad
	\tfrac{\delta \Delta}{\varepsilon^2}\!\left(\tfrac{\sigma_1}{\varepsilon}\right)^{\nicefrac{p}{2(p-1)}},$
	under \Cref{ass:mean-squared-smoothness} ($q$-WAS) and \Cref{ass:mean-squared-smoothness-2} ($(q,\delta)$-S), respectively. Compared to \cite{pmlr-v195-liu23c}, this yields faster rates under weaker assumptions.
	
	In the case $p=q$, these stochastic terms coincide with those for \algname{NSGDHess} in \cite{sadiev2025second} under \Cref{ass:q-bounded-central-moment-hessian}:
	$\tfrac{\sigma_2 \Delta}{\varepsilon^2}\!\left(\tfrac{\sigma_1}{\varepsilon}\right)^{\nicefrac{1}{(p-1)}}.$
	This relation follows since \Cref{ass:mean-squared-smoothness-2} implies \Cref{ass:q-bounded-central-moment-hessian}, and under certain conditions $\delta \leq \sigma_2$. Moreover, for empirical risk minimization with $p=q=2$, the two assumptions are equivalent \citep{pmlr-v125-arjevani20a}.
	
	These insights motivate the study of a more general framework for second-order stochastic optimization.

	\section{Near-Optimal Method under $p$--BCM Gradients and $q$--BCM Hessians}\label{sec:near-optimal-method-nsgd-mvr-hess}
	
	Unlike Algorithm~\ref{algo:nsgd-mvr}, we consider a method, which employs Hessian–vector products instead of differences of stochastic gradients. This modification leads to the Hessian-corrected momentum (\texttt{Hess}) technique~\citep{10.5555/3600270.3600525,salehkaleybar-et-al22}.
	\begin{algorithm}%
		\caption{\algname{NSGD-\texttt{Hess}} (Normalized \algname{SGD} with Hessian-corrected Momentum)}%
		\label{algo:nsgd-mvr-hess}%
		
		\DontPrintSemicolon%
		\SetKwProg{Init}{Initialization}{:}{}%
		\Init{}{%
			$x_0 \in \R^d$, the starting point\;
			$T > 0$, the number of iterations\;
			$g_0 \in \R^d$, an initial vector\;
			$\gamma > 0$, the stepsize\;
			$\alpha \in \intof{0}{1}$, the momentum parameter for \texttt{Hess}\;
		}%
		
		\vspace{\baselineskip}
		
		$x_1 \gets x_0 - \gamma \frac{g_0}{\norm{g_0}}$\;
		\For{$t = 1, 2, \ldots, T - 1$}{%
			Sample $q_t \sim \mathcal{U}\left(\intff{0}{1}\right)$\;
			$\hat{x}_t \gets q_t x_t + (1 - q_t) x_{t - 1}$\;
			\tcp*[h]{Apply \texttt{Hess}, here $\xi_t, \hat{\xi}_t \sim \cD$ are independent.}\;
			$g_t \gets (1 - \alpha) \left( g_{t - 1} + \nabla^2 f(\hat{x}_t, \hat{\xi}_t) (x_t - x_{t - 1}) \right) + \alpha \nabla f \left( x_t, \xi_t \right)$\;
			\tcp*[h]{Do one descent step.}\;
			$x_{t + 1} \gets x_t - \gamma \frac{g_t}{\norm{g_t}}$\;
		}%
		\KwOut{$x_T$}%
	\end{algorithm}%
	
	\begin{remark}
		\Cref{algo:nsgd-mvr-hess} is not new and has been considered in~\citet{sadiev2025second}. Here, we provide a refined analysis and improve over~\citet[Theorem~2]{sadiev2025second}. Note that~\Cref{algo:nsgd-mvr-hess} is \textit{zero-respecting} (see~\Cref{appdx-def:zero-especting-algo}) since $\supp(\hat{x}_t) \subseteq \supp(x_{t - 1}) \cup \supp(x_t)$, where $\supp(x) \eqdef \enstq{i \in [d]}{x_i \neq 0}$ is the support of $x = (x_1, \ldots, x_d) \in \R^d$.
		
		Additionally, the use of $\hat{x}_t$ taken uniformly at random on the line $\intff{x_{t - 1}}{x_t}$ allows to use~\Cref{appdx-technical-lem:app-von-Bahr-and-Essen}, which provides a better overall sample complexity than the one derived in~\cite{10.5555/3600270.3600525}.
	\end{remark}
	
	\begin{theorem}\label{thm:nsgd-mvr-hess-convergence-analysis}
		Under~\Cref{ass:lower-boundedness,ass:L-lipschitz-gradients,ass:L-lipschitz-hessians,ass:p-bounded-central-moment-gradient,ass:q-bounded-central-moment-hessian}, let the initial gradient estimate $g_0$ be given by
		\[ g_0 = \frac{1}{B_{\textnormal{init}}} \sum_{j = 1}^{B_{\textnormal{init}} - 1} \nabla f\left( x_0, \xi_{0, j} \right), \]
		where $B_{\textnormal{init}} = \max\ens{1, \left( \frac{\sigma_1}{\eps} \right)^{\frac{p}{p - 1}}}$, let the stepsize 
		\[ \gamma = \min\ens{\sqrt{\frac{\Delta}{L_1 T}}, \sqrt{\frac{\Delta \alpha^{\nicefrac{1}{q}}}{\sigma_2 T}}, \sqrt[3]{\frac{\Delta \alpha^{\nicefrac{1}{2}}}{L_2 T}}}, \]
		the momentum parameter $\alpha = \min\ens{1, \alpha_{\textnormal{eff}}}$, where 
		\begin{alignat*}{2}
			\alpha_{\textnormal{eff}} = & \max\left\{\left( \frac{\eps}{\sigma_1 T} \right)^{\frac{p}{2p - 1}}\!, \left( \frac{\sigma_2 \Delta}{\sigma_1^2 T} \right)^{\frac{pq}{p (2 q + 1) - 2 q}}\!, 
			\left( \frac{L_2^{\nicefrac{1}{2}} \Delta}{\sigma_1^{\nicefrac{3}{2}} T} \right)^{\frac{4 p}{7 p - 6}}\right\}.
		\end{alignat*}
		Then,~\Cref{algo:nsgd-mvr-hess} guarantees to find an $\eps$--stationary point with the total sample complexity\footnote{For clarity, we omit the optimization terms in this lower bound. The full lower bound can be found in the proof of~\Cref{thm:nsgd-mvr-hess-convergence-analysis} in~\Cref{appdx-subsec:proof-nsgd-mvr-hess-convergence-analysis}.}
		\begin{alignat*}{2}
			&\cO\left( \left( \frac{\sigma_1}{\eps} \right)^{\frac{p}{p - 1}} + \frac{\Delta \sigma_2}{\eps^2} + \frac{\Delta \sigma_2}{\eps^2} \left( \frac{\sigma_1}{\eps} \right)^{\frac{p}{q (p - 1)}} + \frac{L_2^{\nicefrac{1}{2}} \Delta \sigma_1^{\nicefrac{1}{4}}}{\eps^{\nicefrac{7}{4}}} \left( \frac{\sigma_1}{\eps} \right)^{\frac{1}{4 (p - 1)}} \right).
		\end{alignat*}
	\end{theorem}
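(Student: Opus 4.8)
The plan is to adapt the standard analysis of Normalized \algname{SGD} with a biased-but-controlled momentum estimator, tracking the \emph{first} moment of the gradient-estimation error $\varepsilon_t:=g_t-\nabla F(x_t)$ under the mixed $p$/$q$ heavy-tailed structure. I would start from the descent step: since $\norm{x_{t+1}-x_t}=\gamma$ for every $t\ge 0$, \Cref{ass:L-lipschitz-gradients} together with the elementary bound $-\ps{\nabla F(x_t)}{g_t/\norm{g_t}}\le -\norm{\nabla F(x_t)}+2\norm{\varepsilon_t}$ gives, after telescoping over $t=0,\dots,T-1$, using \Cref{ass:lower-boundedness}, and taking expectations,
\[ \frac{1}{T}\sum_{t=0}^{T-1}\E{\norm{\nabla F(x_t)}}\;\le\;\frac{\Delta}{\gamma T}+\frac{L_1\gamma}{2}+\frac{2}{T}\sum_{t=0}^{T-1}\E{\norm{\varepsilon_t}}. \]
Everything then reduces to controlling the averaged error $\frac1T\sum_t\E{\norm{\varepsilon_t}}$.

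Next I would set up the error recursion $\varepsilon_t=(1-\alpha)\varepsilon_{t-1}+\delta_t$ with $\delta_t=(1-\alpha)(A_t+B_t)+\alpha\,n_t$, where $n_t=\nabla f(x_t,\xi_t)-\nabla F(x_t)$, $A_t=(\nabla^2 f(\hat x_t,\hat\xi_t)-\nabla^2 F(\hat x_t))(x_t-x_{t-1})$, and $B_t=\nabla^2 F(\hat x_t)(x_t-x_{t-1})-(\nabla F(x_t)-\nabla F(x_{t-1}))$. Because $q_t$ is uniform on $[0,1]$, the integral form of the gradient difference (fundamental theorem of calculus) yields $\mathbb{E}[B_t\mid\mathcal{F}_{t-1}]=0$, while \Cref{ass:L-lipschitz-hessians} gives $\norm{B_t}\le L_2\gamma^2$ almost surely; \Cref{ass:q-bounded-central-moment-hessian} gives $\mathbb{E}[A_t\mid\mathcal{F}_{t-1}]=0$ and $\mathbb{E}[\norm{A_t}^q\mid\mathcal{F}_{t-1}]\le\sigma_2^q\gamma^q$; \Cref{ass:p-bounded-central-moment-gradient} gives $\mathbb{E}[n_t\mid\mathcal{F}_{t-1}]=0$ and $\mathbb{E}[\norm{n_t}^p\mid\mathcal{F}_{t-1}]\le\sigma_1^p$. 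Unrolling to $\varepsilon_t=(1-\alpha)^t\varepsilon_0+\sum_{s=1}^t(1-\alpha)^{t-s}\delta_s$ and splitting into the three martingale sums (the $A$-, $B$-, and $n$-parts), I would apply the martingale von Bahr--Esseen inequality (\Cref{appdx-technical-lem:app-von-Bahr-and-Essen}) at exponents $q$, $2$, and $p$ respectively, together with $\sum_{j\ge 0}(1-\alpha)^{rj}\le(1-(1-\alpha)^r)^{-1}\le\alpha^{-1}$ for $r\ge 1$ and Jensen's inequality to pass from $r$-th moments to first moments. This yields $\E{\norm{\varepsilon_t}}\lesssim(1-\alpha)^t\E{\norm{\varepsilon_0}}+\sigma_2\gamma\alpha^{-1/q}+L_2\gamma^2\alpha^{-1/2}+\sigma_1\alpha^{(p-1)/p}$, and applying the same inequality to the minibatch $g_0$ with the stated $B_{\textnormal{init}}$ gives $\E{\norm{\varepsilon_0}}\le\eps$; averaging over $t$ (and $\sum_t(1-\alpha)^t\le\alpha^{-1}$) produces $\frac1T\sum_t\E{\norm{\varepsilon_t}}\lesssim\frac{\eps}{\alpha T}+\sigma_2\gamma\alpha^{-1/q}+L_2\gamma^2\alpha^{-1/2}+\sigma_1\alpha^{(p-1)/p}$.

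Finally I would tune the parameters. Combined with the descent bound, the error is $\lesssim\frac{\Delta}{\gamma T}+L_1\gamma+\frac{\eps}{\alpha T}+\sigma_2\gamma\alpha^{-1/q}+L_2\gamma^2\alpha^{-1/2}+\sigma_1\alpha^{(p-1)/p}$; the three-way $\min$ defining $\gamma$ is exactly the stepsize balancing $\Delta/(\gamma T)$ against $L_1\gamma$, $\sigma_2\gamma\alpha^{-1/q}$, and $L_2\gamma^2\alpha^{-1/2}$, collapsing those four terms to $\sqrt{L_1\Delta/T}+\sqrt{\sigma_2\Delta/(\alpha^{1/q}T)}+(L_2\Delta^2/(\alpha^{1/2}T^2))^{1/3}$, and $\alpha_{\textnormal{eff}}$ is the value balancing the heavy-tailed floor $\sigma_1\alpha^{(p-1)/p}$ against $\eps/(\alpha T)$, against $\sqrt{\sigma_2\Delta/(\alpha^{1/q}T)}$, and against $(L_2\Delta^2/(\alpha^{1/2}T^2))^{1/3}$ --- the three arguments of the $\max$ --- with the clamped case $\alpha=1$ only improving the relevant terms. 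Using the identities $\tfrac{p(2q+1)-2q}{q(p-1)}=2+\tfrac{p}{q(p-1)}$ and $\tfrac{7p-6}{4(p-1)}=\tfrac74+\tfrac{1}{4(p-1)}$, substituting $\alpha=\min\{1,\alpha_{\textnormal{eff}}\}$, requiring the bound to be at most $\eps$, and solving for $T$ term by term gives $T=O\big((\sigma_1/\eps)^{p/(p-1)}+\tfrac{\Delta\sigma_2}{\eps^2}+\tfrac{\Delta\sigma_2}{\eps^2}(\sigma_1/\eps)^{p/(q(p-1))}+\tfrac{L_2^{1/2}\Delta\sigma_1^{1/4}}{\eps^{7/4}}(\sigma_1/\eps)^{1/(4(p-1))}\big)$, up to the deterministic $L_1\Delta/\eps^2$ and $L_2^{1/2}\Delta/\eps^{3/2}$ terms omitted per the footnote; since each iteration costs $O(1)$ oracle calls, the total sample complexity is $B_{\textnormal{init}}+O(T)$, which is the claimed bound.

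\textbf{Main obstacle.} The delicate step is the error recursion: unlike the light-tailed case --- where one bounds $\E{\norm{\varepsilon_t}^2}$ and obtains an $\alpha^{1/2}$ floor --- here the single martingale increment $\delta_t$ must be decomposed into three sub-increments with genuinely different integrability ($q$ for the Hessian noise, bounded for the $q_t$-discretization bias, $p$ for the gradient noise), and the von Bahr--Esseen inequality applied at three distinct exponents, producing the $\alpha^{(p-1)/p}$ floor. It is precisely the uniform choice $\hat x_t\sim\mathcal{U}([x_{t-1},x_t])$ that renders $B_t$ unbiased and of size $O(L_2\gamma^2)$ rather than $O(L_2\gamma)$, which is what ultimately yields the $\nicefrac{7}{4}$ exponent rather than $2$. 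The remaining work --- checking that the stated $\gamma$ and $\alpha_{\textnormal{eff}}$ dominate every cross-term (including the clamped regime $\alpha=1$) and that the term-by-term inversion in $T$ is tight --- is routine but lengthy bookkeeping.
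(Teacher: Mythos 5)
Your proposal is correct and follows essentially the same route as the paper: the same decomposition of $\varepsilon_t$ (your $n_t$, $A_t$, $B_t$ are precisely the paper's $e_t$, $-\hat S_t$, $-\hat R_t$), the same observation that the uniform $q_t$ makes the discretization term $B_t$ an unbiased martingale difference of size $O(L_2\gamma^2)$, the same application of the martingale von Bahr--Esseen inequality at the three exponents $p$, $q$, and $2$, and the same descent lemma plus parameter tuning. The only cosmetic difference is that the paper also records the alternative bound $\norm{\hat R_t}\le 2\gamma L_1$ to yield a $\min\{L_1,\gamma L_2\}$ in its intermediate lemma, but it then explicitly drops the $\min$ and keeps only $\gamma^2 L_2$ in the proof of the theorem, matching exactly what you do.
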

	The detailed proof is provided in~\Cref{appdx:proof-nsgd-mvr-hess}.

	\section{Gradient Clipping for High-Probability Convergence}\label{sec:clipped-nsgd-mvr}
	
	In this section, we conduct a high-probability analysis under heavy-tailed noise. Since we work under \Cref{ass:p-bounded-central-moment-gradient}, we incorporate gradient clipping in~\Cref{algo:nsgd-mvr}; the pseudo-code of the new method is provided in~\Cref{algo:clipped-nsgd-mvr}. Formally, a clipping operator (clipping for short) is defined as
	\[ \clip(v, \lbd) = \min\ens{1, \frac{\lbd}{\norm{v}}} v \quad \text{ for any $v \neq 0$ in $\R^d$}, \]
	where $\lbd > 0$ is called clipping level/threshold. The proofs are deferred to~\Cref{appdx:proof-clipped-nsgd-mvr}.
	
	\begin{algorithm}%
		\caption{\algname{D-\clip-NSGD-MVR} (Double Clipped Normalized \algname{SGD} with \texttt{MVR})}%
		\label{algo:clipped-nsgd-mvr}%
		
		\DontPrintSemicolon%
		\SetKwProg{Init}{Initialization}{:}{}%
		\Init{}{%
			$x_0 \in \R^d$, the starting point\;
			$T > 0$, the number of iterations\;
			$g_0 \in \R^d$, an initial vector\;
			$\gamma > 0$, the stepsize\;
			$\alpha \in \intof{0}{1}$, the momentum parameter for \texttt{MVR}\;
			$\lbd_1, \lbd_2 > 0$, the clipping thresholds\;
		}%
		
		\vspace{\baselineskip}
		
		$x_1 \gets x_0 - \gamma \frac{g_0}{\norm{g_0}}$\;
		\For{$t = 1, 2, \ldots, T - 1$}{%
			\tcp*[h]{Apply \texttt{MVR} with \clip~operator.}\;
			$ \begin{aligned}[t]
				g_t \gets&(1 - \alpha) \left( g_{t - 1} + \clip(\nabla f\left( x_t, \xi_t \right) - \nabla f \left( x_{t - 1}, \xi_t \right), \lbd_1) \right) + \alpha \, \clip(\nabla f \left( x_t, \xi_t \right), \lbd_2)
			\end{aligned}
			$\;
			\tcp*[h]{Do one descent step.}\;
			$x_{t + 1} \gets x_t - \gamma \frac{g_t}{\norm{g_t}}$\;
		}%
		\KwOut{$x_T$}%
	\end{algorithm}%

	\Cref{algo:clipped-nsgd-mvr} modifies \Cref{algo:nsgd-mvr} by applying clipping not only to the stochastic gradient — to control heavy-tailed noise — but also to the gradient difference term. This additional clipping step is the primary feature distinguishing our method from Accelerated NSGD with clipping and momentum \citep{pmlr-v195-liu23c}. Crucially, this modification enables us to establish high-probability guarantees under assumptions weaker than those in \cite{pmlr-v195-liu23c}. While their analysis relies on individual smoothness (i.e., assuming $f(\cdot,\xi)$ is $\bar L$-smooth almost surely), our analysis holds under $q$-WAS (\Cref{ass:mean-squared-smoothness}). Individual smoothness is a significantly stronger condition; notably, under that assumption alone, \cite{lei2019stochastic} showed it is possible to attain an $\cO(\varepsilon^{-4})$ rate independent of the heavy-tail index $p$.
	
	\begin{theorem}\label{thm:clipped-nsgd-mvr-convergence-analysis}
		Under~\Cref{ass:lower-boundedness,ass:p-bounded-central-moment-gradient,ass:mean-squared-smoothness}, let $T \ge 1$ and $\beta \in \intof{0}{1}$ be such that $\log \frac{8 T}{\beta} \ge 1$. Let $x_0\in\mathbb{R}^d$ and define $\Delta_1 \eqdef F(x_0) - F^{\inf}$. 
		Suppose that \Cref{algo:clipped-nsgd-mvr} is run with $g_0 = 0$, momentum parameter $\alpha = \max\{T^{-\frac{p}{2 p - 1}}, T^{-\frac{p q}{p (2 q + 1) - 2 q}}\}$, clipping thresholds $\lbd_1 = 2 \gamma \bar{L} \alpha^{-\frac{1}{q}}$ and $\lbd_2 = \max\{4 \sqrt{\bar{L} \Delta_1}, \sigma_1 \alpha^{-\frac{1}{p}}\}$, and  stepsize
		\begin{alignat*}{2}
			\gamma & = \cO\left( \min\left\{\sqrt{\frac{\Delta_1}{\bar{L} T}}, \alpha \sqrt{\frac{\Delta_1}{\bar{L}}}, \frac{1}{\alpha T \log \frac{T}{\beta}} \sqrt{\frac{\Delta_1}{\bar{L}}},\frac{\Delta_1}{\sigma_1 \alpha^{\frac{p - 1}{p}} T \log \frac{T}{\beta}}, \sqrt{\frac{\Delta_1 \alpha^{\frac{1}{q}}}{\bar{L} T \log \frac{T}{\beta}}} \right\} \right).
		\end{alignat*}
		
		Then, with probability at least $1 - \beta$, the output of~\Cref{algo:clipped-nsgd-mvr} satisfies
		\[ \frac{1}{T} \sum_{t = 0}^{T - 1} \norm{\nabla F(x_t)} \le \frac{2 \Delta_1}{\gamma T}, \]
		and, by our choice of parameters, the gradient norm converges with high probability at the rate
		\[ \frac{1}{T} \sum_{t = 0}^{T - 1} \norm{\nabla F(x_t)} = \cO\left( \left( \frac{\sqrt{\bar{L} \Delta_1} + \sigma_1}{T^{\frac{p - 1}{2 p - 1} \wedge \frac{q (p - 1)}{p (2 q + 1) - 2 q}}} \right)\log \frac{T}{\beta} \right). \]
		
	\end{theorem}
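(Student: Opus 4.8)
The plan is to reduce the high-probability guarantee to a high-probability bound on the accumulated error $e_t := g_t - \nabla F(x_t)$ of the doubly-clipped \texttt{MVR} estimator, via the standard normalized-descent inequality. Note first that \Cref{ass:mean-squared-smoothness} implies, by Jensen, that $F$ is $\bar L$-smooth, and that each step of \Cref{algo:clipped-nsgd-mvr} obeys $\norm{x_{t+1}-x_t}=\gamma$. Combining $\bar L$-smoothness with the elementary inequality $\ps{v}{g/\norm{g}}\ge\norm{v}-2\norm{v-g}$ (at $v=\nabla F(x_t)$, $g=g_t$) gives $F(x_{t+1})\le F(x_t)-\gamma\norm{\nabla F(x_t)}+2\gamma\norm{e_t}+\tfrac{\bar L\gamma^2}{2}$; telescoping (handling the $t=0$ step via the convention $0/\norm{0}=0$ forced by $g_0=0$) and using $F(x_T)\ge F^{\inf}$ yields $\tfrac1T\sum_{t}\norm{\nabla F(x_t)}\le\tfrac{\Delta}{\gamma T}+\tfrac{\bar L\gamma}{2}+\tfrac2T\sum_{t}\norm{e_t}$. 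Since $\tfrac{\Delta}{\gamma T}+\tfrac{\bar L\gamma}{2}$ already matches the advertised rate for the stated $\gamma,\alpha$, it suffices to prove $\sum_t\norm{e_t}=\cO(\Delta/\gamma)$ with probability at least $1-\beta$ (the desired inequality $\tfrac1T\sum_t\norm{\nabla F(x_t)}\le\tfrac{2\Delta}{\gamma T}$ then follows using $\gamma\le\sqrt{\Delta/(\bar L T)}$).

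For the error I would unroll the \texttt{MVR} recursion $e_t=(1-\alpha)e_{t-1}+(1-\alpha)a_t+\alpha b_t$, where $a_t=\clip(\nabla f(x_t,\xi_t)-\nabla f(x_{t-1},\xi_t),\lbd_1)-(\nabla F(x_t)-\nabla F(x_{t-1}))$ and $b_t=\clip(\nabla f(x_t,\xi_t),\lbd_2)-\nabla F(x_t)$, after splitting $a_t=\bar a_t+\tilde a_t$ and $b_t=\bar b_t+\tilde b_t$ into conditional-mean (bias) and martingale-difference parts and writing $r_t:=(1-\alpha)\tilde a_t+\alpha\tilde b_t$. This gives $e_t=(1-\alpha)^t e_0+\sum_{s=1}^t(1-\alpha)^{t-s}\bigl((1-\alpha)\bar a_s+\alpha\bar b_s\bigr)+\sum_{s=1}^t(1-\alpha)^{t-s}r_s$, and since $g_0=0$ we have $\norm{e_0}=\norm{\nabla F(x_0)}\le\sqrt{2\bar L\Delta}$, so $\sum_t(1-\alpha)^t\norm{e_0}\le\sqrt{2\bar L\Delta}/\alpha=\cO(\Delta/\gamma)$ exactly because $\gamma\le\alpha\sqrt{\Delta/\bar L}$. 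The purpose of clipping the gradient \emph{difference} (the feature distinguishing \Cref{algo:clipped-nsgd-mvr} from \citet{pmlr-v195-liu23c}) is that, under \Cref{ass:mean-squared-smoothness} alone, $\ExpSub{t}{\norm{\nabla f(x_t,\xi_t)-\nabla f(x_{t-1},\xi_t)}^q}\le\bar L^q\gamma^q$, so the standard clipping lemmas — bias $\lesssim\tau^r\lbd^{1-r}$, conditional second moment $\lesssim\lbd^{2-r}\tau^r$, almost-sure bound $\le\lbd$, all valid once $\lbd\ge 2\norm{\text{conditional mean}}$ — apply with $(\tau,r,\lbd)=(2\bar L\gamma,q,\lbd_1)$ for $a_t$ and $(\sigma_1,p,\lbd_2)$ for $b_t$. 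Plugging $\lbd_1=2\gamma\bar L\alpha^{-1/q}$ and $\lbd_2=\max\{4\sqrt{\bar L\Delta_1},\,\sigma_1\alpha^{-1/p}\}$ into these yields $\norm{\bar a_s}\lesssim\bar L\gamma\alpha^{(q-1)/q}$, $\norm{\bar b_s}\lesssim\sigma_1\alpha^{(p-1)/p}$, a deterministic bound $V$ on $\ExpSub{s}{\norm{r_s}^2}$, and a deterministic almost-sure bound $R$ on $\norm{r_s}$.

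The bias contribution is then handled deterministically: $\sum_t\norm{\sum_{s\le t}(1-\alpha)^{t-s}\bigl((1-\alpha)\bar a_s+\alpha\bar b_s\bigr)}\le\tfrac1\alpha\sum_s\bigl((1-\alpha)\norm{\bar a_s}+\alpha\norm{\bar b_s}\bigr)\lesssim T\bar L\gamma\alpha^{-1/q}+T\sigma_1\alpha^{(p-1)/p}$, which is $\cO(\Delta/\gamma)$ precisely by the constraints $\gamma\le\sqrt{\Delta\alpha^{1/q}/(\bar LT\log\tfrac T\beta)}$ and $\gamma\le\Delta_1/(\sigma_1\alpha^{(p-1)/p}T\log\tfrac T\beta)$. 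For the martingale contribution, for each fixed $t$ the family $\{\,(1-\alpha)^{t-s}r_s\,\}_{s\le t}$ is a vector martingale-difference sequence with increments bounded by $R$ and total conditional variance at most $V/(1-(1-\alpha)^2)\le V/\alpha$, so a vector Bernstein/Freedman inequality gives $\norm{\sum_{s\le t}(1-\alpha)^{t-s}r_s}\lesssim\sqrt{(V/\alpha)\log\tfrac T\beta}+R\log\tfrac T\beta$ with probability at least $1-\beta/T$; a union bound over $t\in\{0,\dots,T-1\}$ controls the maximum, so $\sum_t\norm{\sum_{s\le t}(1-\alpha)^{t-s}r_s}\lesssim T\sqrt{(V/\alpha)\log\tfrac T\beta}+TR\log\tfrac T\beta$, which, after inserting $V$, $R$, $\lbd_1$, $\lbd_2$, is again $\cO(\Delta/\gamma)$ under the remaining stepsize constraints (chiefly $\gamma\le\tfrac{1}{\alpha T\log(T/\beta)}\sqrt{\Delta/\bar L}$ and $\gamma\le\sqrt{\Delta\alpha^{1/q}/(\bar LT\log(T/\beta))}$).

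The delicate point — and the main obstacle — is that the clipping-bias bound for $b_t$ requires $\lbd_2\ge2\norm{\nabla F(x_t)}$, equivalently $F(x_t)-F^{\inf}=\cO(\Delta_1)$, which is not available a priori. I would close this by the stopping-time bootstrap familiar from high-probability clipped-method analyses: let $t^\star$ be the first index at which $F(x_{t^\star})-F^{\inf}$ exceeds $c\Delta_1$; on $\{\,t^\star>t\,\}$ all clipping-bias bounds up to time $t$ hold, so $F(x_{t+1})-F^{\inf}\le\Delta+\tfrac{\bar L\gamma^2 T}{2}+2\gamma\sum_{s\le t}\norm{e_s}$ combined with the high-probability estimate of $\sum_s\norm{e_s}$ above forces $F(x_{t+1})-F^{\inf}\le c\Delta_1$; hence $t^\star=T$ on an event of probability at least $1-\beta$, which retroactively validates every estimate. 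On that event $\tfrac1T\sum_t\norm{\nabla F(x_t)}\le\tfrac{2\Delta}{\gamma T}$, and substituting $\alpha=\max\{\,T^{-p/(2p-1)},\,T^{-pq/(p(2q+1)-2q)}\,\}$ and the largest admissible $\gamma$ yields the rate $\cO\bigl((\sqrt{\bar L\Delta}+\sigma_1)\,T^{-\min\{(p-1)/(2p-1),\,q(p-1)/(p(2q+1)-2q)\}}\log\tfrac T\beta\bigr)$. Apart from the stopping-time argument, the remaining labor is bookkeeping: matching each of the five stepsize constraints to one of the five error contributions (initial error, the two bias sums, the two concentration terms) and verifying that a single $\log\tfrac T\beta$ factor suffices; tracking the absolute constants in the clipping lemmas is routine but tedious.
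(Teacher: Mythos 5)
Your proposal follows essentially the same route as the paper: the normalized-descent/telescope reduction to bounding $\sum_t\norm{e_t}$, the unrolled \texttt{MVR} decomposition into initial error, clipping bias, and martingale-difference parts, Freedman-type concentration with a union bound over $t$, and the induction/stopping-time bootstrap to validate the requirement $\lbd_2 \ge 2\norm{\nabla F(x_t)}$ on the running trajectory. The matching of each stepsize constraint to a distinct error contribution is also exactly what the paper does.

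The one place where you diverge is in how the vector martingale is concentrated. You invoke a generic ``vector Bernstein/Freedman inequality'' for the MDS $\{(1-\alpha)^{t-s}r_s\}_s$ with increments bounded by $R$ and conditional variance bounded by $V/\alpha$. The paper instead applies the Cutkosky--Mehta scalarization (\Cref{appdx-technical-lem:cutkosky-mehta}), which reduces the vector norm to a scalar martingale plus a square-root of empirical second moments, then applies scalar Freedman twice: once to the scalar martingale and once to control the empirical second-moment sum by its conditional expectation. Both routes are valid and yield the same rate; your version is shorter to state but outsources the hard part to a vector Freedman inequality that must be quoted or proved in dimension-free form, whereas the paper's two-Freedman decomposition is fully elementary given the scalar inequality. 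In short: same skeleton, slightly different muscle in one step; no gap.
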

	
	The detailed proof is provided in~\Cref{appdx:proof-clipped-nsgd-mvr}.
	
	Notably, in the case where $p=q$, we establish the same rate  $\widetilde \cO\left(\varepsilon^{\frac{2p-1}{p-1}}\right)$  as \citet{pmlr-v195-liu23c} did for Algorithm~2, but under weaker assumptions. Moreover, our lower bounds (see \Cref{thm:lower-bound-sofo-mean-squared-smoothness}) indicate that this rate is optimal in terms of the dependence on $\varepsilon$. 
	
	Another interesting observation is that when $p\leq q$, the high-probability rate remains identical to the $p=q$ case. In contrast, for in-expectation guarantees, we observe a faster rate (see \Cref{thm:nsgd-mvr-convergence-analysis}). Determining whether this gap is fundamental or a limitation of the current proof technique remains an open question, and calls for a tighter high-probability analysis for these methods.
	
	Furthermore, \Cref{appdx:proof-clipped-nsgd-mvr} establishes high-probability convergence guarantees for \Cref{algo:clipped-nsgd-mvr} under \Cref{ass:mean-squared-smoothness-2}, and for \Cref{algo:clipped-nsgd-mvr-hess} under \Cref{ass:L-lipschitz-hessians,ass:q-bounded-central-moment-hessian}.
	
	\textbf{Interpretation of Clipping Thresholds and Practical Implementation.} 
	While deriving optimal high-probability bounds in heavy-tailed settings remains an open challenge, our double-clipping mechanism provides a significant theoretical advantage: it establishes convergence guarantees without assuming bounded noise---a restrictive assumption often required by prior methods lacking this specific mechanism (e.g., \cite{pmlr-v195-liu23c}). 
	
	Beyond the theoretical guarantees, our analysis offers a crucial insight for practical implementation. Under \Cref{ass:p-bounded-central-moment-gradient} (heavy-tailed noise, exponent $p$) and \Cref{ass:mean-squared-smoothness,ass:mean-squared-smoothness-2} ($q$-WAS and $(q,\delta)$-S, exponent $q$), our theoretical clipping thresholds scale as $\lambda_1 \sim \mathcal{O}(\gamma \alpha^{-1/q})$ for the gradient differences, and $\lambda_2 \sim \mathcal{O}(\alpha^{-1/p})$ for the raw stochastic gradients. In practice, we can rewrite the gradient difference clipping step by factoring out the stepsize $\gamma$:
	\begin{align*} 
		\clip\big(\nabla f(x_t, \xi_t) - \nabla f(x_{t-1}, \xi_t), \lambda_1\big) =\gamma \cdot \clip\left( \frac{\nabla f(x_t, \xi_t) - \nabla f(x_{t-1}, \xi_t)}{\gamma}, \bar{\lambda}_1 \right),
	\end{align*}
	where we define the rescaled threshold $\bar{\lambda}_1 := \gamma^{-1} \lambda_1$. This reveals that $\bar{\lambda}_1$ scales as $\mathcal{O}(\alpha^{-1/q})$. Consequently, in the standard case where $p = q$, both thresholds share the exact same scaling: $\bar{\lambda}_1 \sim \lambda_2 \sim \mathcal{O}(\alpha^{-1/p})$. This is a highly valuable insight for practitioners, as it implies that we do not need to tune two independent clipping hyperparameters. Instead, we can simply tune a single clipping parameter $\lambda := \bar{\lambda}_1 = \lambda_2$, significantly simplifying the deployment of \algname{D-\clip-NSGD-MVR}.

	\section*{Acknowledgments} This work was supported by funding from King Abdullah University of Science and Technology (KAUST): 
	
	\noindent i) KAUST Baseline Research Scheme, 
	
	\noindent ii) Center of Excellence for Generative AI, under award no.\ 5940, 
	
	\noindent iii) Competitive Research Grant (CRG) Program, under award no.\ 6460, 
	
	\noindent iv) SDAIA-KAUST Center of Excellence in Data Science and Artificial Intelligence (SDAIA-KAUST AI).

	\bibliography{bib.bib}

	
	\clearpage%
	
	\appendix

	\section{Additional Notation}\label{appdx-sec:notation}

	Following~\cite{pmlr-v125-arjevani20a}, a $q^{\textnormal{th}}$-order ($q \ge 0$) tensor $T \in \R^{d \times \cdots \times d} = \R^{\otimes^q d}$ is a $q$-dimensional array of real numbers, where $\R^{\otimes^q d}$ denotes the $q$-fold tensor product of $\R^d$. By convention, a $0^{\textnormal{th}}$-order tensor corresponds to a scalar (i.e., an element of $\R$), a $1^{\textnormal{st}}$-order tensor corresponds to a vector in $\R^d$, and a $2^{\textnormal{nd}}$-order tensor corresponds to a matrix in $\R^{d \times d}$. If $q \ge 1$, we denote $T = (T_1, \ldots, T_d)$ where each $T_i$ is the $(q-1)^{\textnormal{th}}$-order subtensor of $T$ obtained by fixing the first index to $i$. Formally, for all $i \in [d]$, we define $[T_i]_{j_1, \ldots, j{q-1}} = T_{i, j_1, \ldots, j_{q-1}}$ where $j_1, \ldots, j_{q-1} \in [d]$. This recursive definition allows us to view any tensor as an ordered collection of its subtensors along a given mode. When $q = 2$, i.e., when $T$ is a matrix, we write $[T]_{i, \cdot}$ for its $i^{\textnormal{th}}$ row to avoid confusion with the $i^{\textnormal{th}}$ coordinate of a vector. Similarly, $[T]_{\cdot, j}$ denotes its $j^{\textnormal{th}}$ column. Throughout this work, we only consider tensors of order $1$ and $2$: first-order tensors correspond to gradients, and second-order tensors correspond to Hessians.
	
	For any integer $n > 0$, we define the index set $[n] \eqdef \ens{1, 2, \ldots, n}$. Let $d \ge 1$ denote the ambient dimension. We use $\ps{\cdot}{\cdot}$ to represent the standard Euclidean inner product on $\R^d$, i.e., $\ps{x}{y} = \sum_{i=1}^d x_i y_i$, and $\norm{\cdot}$ to denote the associated $\ell^2$–norm, $\norm{x} = \sqrt{\ps{x}{x}}$. For a matrix $A \in \R^{d \times d}$, we denote by $\normop{A}$ its operator (spectral) norm, defined as $\normop{A} = \sup_{\norm{x}=1} \norm{A x}$. We write $\nabla f(\cdot, \cdot)$ and $\nabla^2 f(\cdot, \cdot)$ for the stochastic gradient and Hessian oracles, respectively. For any two real numbers $a,b$, we use $a \wedge b \eqdef \min\ens{a,b}$ and $a \vee b \eqdef \max\ens{a,b}$. We adopt the standard asymptotic notations $\cO(\cdot)$ and $\Omega(\cdot)$ to denote upper and lower bounds on growth rates. To prevent ambiguity, constants related to first-order (gradient) quantities are indexed by the subscript $1$ (e.g., $L_1$, $\sigma_1$), whereas those related to second-order (Hessian) quantities carry the subscript $2$ (e.g., $L_2$, $\sigma_2$).
	
	We denote by $\Proba{E}$ the probability of an event $E$ and by $\E{X}$ the expectation of a random variable $X$. Conditional probability and conditional expectation are written, respectively, as $\probac{E}{\mathcal{F}}$ and $\ExpCond{X}{\mathcal{F}}$, where $\mathcal{F}$ denotes a $\sigma$–algebra or a conditioning event. The notation $\ExpSub{\xi}{\cdot}$ (or equivalently $\ExpSub{\xi \sim \P}{\cdot}$) indicates that the expectation is taken with respect to the randomness of $\xi$, and explicitly that $\xi$ is distributed according to the probability law $\P$. This notation is used to clarify the source of randomness when multiple random variables are involved or when the distribution of $\xi$ is not immediately clear from the context.

	\section{Additional Definitions}\label{appdx-sec:lower-bound-def}
	We present in this section the formal setup (function, oracle and algorithm/protocol classes) we considered and under which our (dimension-free) lower and upper bounds are derived.
	
	\subsection{The Setup}
	
	\subsubsection{Function Class}
	The lower bounds developed in this work apply to algorithms that find $\eps$--stationary point of (nonconvex) functions. All functions considered here are defined from $\R^d$ to $\R$ where $d \ge 1$ is an integer. Depending on the assumptions (\Cref{ass:mean-squared-smoothness,ass:mean-squared-smoothness-2,ass:L-lipschitz-hessians}), the class of functions may vary. Formally, we define
	\[
	\mathcal{F}(\Delta) \eqdef \left\{ F \in \mathcal{C}^1(\R^d, \R) \;\middle|\; F(0) - \inf_{x \in \R^d} F(x) \le \Delta \right\} \numberthis\label{106ec3da-9b9d-4af4-acb0-1a325cdaabae}
	\]
	which is the class of continuously differentiable and $\Delta$--bounded functions. If the considered function further satisfies some (standard) regularity conditions (see~\Cref{ass:L-lipschitz-gradients,ass:L-lipschitz-hessians}) we let
	\[
	\mathcal{F}(\Delta, L_1) \eqdef \left\{ F \in \mathcal{C}^1(\R^d, \R) \;\middle|\;
	\begin{aligned}
		& F(0) - \inf_{x \in \R^d} F(x) \le \Delta, \\
		& \norm{\nabla F(x) - \nabla F(y)} \le L_1 \norm{x - y}, \quad \forall x, y \in \R^d
	\end{aligned}
	\right\}
	\]
	and
	\[
	\mathcal{F}(\Delta, L_1, L_2) \eqdef \left\{ F \in \mathcal{C}^2(\R^d, \R) \;\middle|\;
	\begin{aligned}
		& F(0) - \inf_{x \in \R^d} F(x) \le \Delta, \\
		& \norm{\nabla F(x) - \nabla F(y)} \le L_1 \norm{x - y},  \quad \forall x, y \in \R^d \\
		& \normop{\nabla^2 F(x) - \nabla^2 F(y)} \le L_2 \norm{x - y}, \quad \forall x, y \in \R^d
	\end{aligned}
	\right\}
	\]
	when access to second-order information is possible. The Lipchitz constants $L_1$ and $L_2$ appearing above are measured with respect to the canonical (Euclidean) $\ell^2$--norm over $\R^d$ for the gradients and points while, for the Hessian we use the operator norm $\normop{\cdot}$ induced by $\norm{\cdot}$. We recall in~\Cref{appdx-def:operator-norm} the definition of $\normop{\cdot}$.
	
	The dimension $d \ge 1$ appearing in the above definitions will be made explicit in the proofs (see~\Cref{appdx-sec:proofs-lower-bound}) and it may depends on some problem specific parameters, e.g., $\eps$, $\Delta$ and the smoothness constants.
	
	Note that the function class~\eqref{106ec3da-9b9d-4af4-acb0-1a325cdaabae} as stated is too broad and lacks regularity, e.g., Lipchitz continuous gradients. Nonetheless in some cases, appropriate assumptions on the stochastic gradients (for instance, \textit{mean-squared smoothness}, see~\Cref{ass:mean-squared-smoothness}) are enough to ensure the underlying function itself is smooth, i.e., belongs to $\mathcal{F}(\Delta, L)$ for some constant $L \ge 0$ (for the exact derivation of this fact, see~\eqref{94828cd2-cf02-492b-a6ec-40b3012f66c1}).
	
	\subsubsection{Oracle Class}
	
	\begin{definition}[{Stochastic $p^{\textnormal{th}}$-order Oracles~\citep{pmlr-v125-arjevani20a}}]\label{appdx-def:stochastic-pth-order-oracle}
		Given integers $d, p \ge 1$ and a function $F \in \mathcal{C}^p(\R^d, \R)$, we define  $\mathcal{O}_p(F)$ as the class stochastic $p^{\textnormal{th}}$-order oracles, i.e., the pairs $(\P_{\xi}, \texttt{O}^p_F)$ where $\P_{\xi}$ is a distribution on a measurable set $\mathcal{Z}$ and $\texttt{O}^p_F$ is an unbiased mapping defined as
		\[ \texttt{O}^p_F \colon (x, \xi) \mapsto \left( F(x), \nabla f(x, \xi), \ldots, \nabla^p f(x, \xi) \right); \]
		that is, for every $r \in [p]$ we have 
		\[ \ExpSub{\xi \sim \P_{\xi}}{\nabla^r f(x, \xi)} = \nabla^r F(x). \]
	\end{definition}
	Furthermore, if some of the derivative estimator (here, only the gradient estimator $\nabla f(x, \xi)$ or the Hessian estimator $\nabla^2 f(x, \xi)$ are used) satisfies additional properties or assumption depending on some parameters $\sigma_1$ (\Cref{ass:p-bounded-central-moment-gradient}), $\sigma_2$ (\Cref{ass:q-bounded-central-moment-hessian}), $\bar{L}$ (\Cref{ass:mean-squared-smoothness}), $\delta$ (\Cref{ass:mean-squared-smoothness-2}) and so on, the oracle class will be denoted by
	\[ \mathcal{O}_p(F, \sigma_1, \sigma_2\ldots), \numberthis\label{440058b9-2a5c-4a0d-a158-4aaff45f095b} \]
	with all parameters listed inside the parenthesis, in an arbitrary order. As each assumption defines unambiguously its own parameters, we keep the notation~\eqref{440058b9-2a5c-4a0d-a158-4aaff45f095b} for simplicity as it avoids any risk of ambiguity.
	
	\subsubsection{Optimization Protocol and Algorithm Class}
	
	First, let us introduce some important definitions.
	\begin{definition}[Support of a Vector/Tensor]\label{appdx-def:support}
		Let $d \ge 1$ be an integer, the support of a vector $x = (x_1, \ldots, x_d) \in \R^d$ is the set
		\[ \supp(x) \eqdef \enstq{i \in [d]}{x_i \neq 0}, \]
		i.e., the set of all indices $i \in [d]$ for which $x$ has a nonzero $i^{\textnormal{th}}$ coordinate.
		
		For a given $p^{\textnormal{th}}$-order tensor $T = (T_1, \ldots, T_d) \in \R^{d \times \cdots \times d} = \R^{\otimes^p d}$, its support is defined as 
		\[ \supp(T) \eqdef \enstq{i \in [d]}{T_i \neq 0}, \]
		where $T_i$ is the $i^{\textnormal{th}}$ sub-tensor (which is a $(p - 1)^{\textnormal{th}}$-order tensor), e.g., the $i^{\textnormal{th}}$ row of the matrix $T$ if $p = 2$.
	\end{definition}
	
	\begin{definition}[Progress, \protect\say{$\prog$}]
		Let $d \ge 1$ be an integer, for any $x = (x_1, \ldots, x_d) \in \R^d$ and any $\alpha \in \intfo{0}{+\infty}$ we define
		\[ \prog_{\alpha}(x) \eqdef \max\enstq{i \in [d]}{\abs{x_i} > \alpha}, \]
		and $\prog_{\alpha}(x) = 0$ if $\abs{x_i} \le \alpha$ for all $i \in [d]$.
		
		If $\alpha = 0$, for a given $p^{\textnormal{th}}$-order tensor $T = (T_1, \ldots, T_d) \in \R^{d \times \cdots \times d} = \R^{\otimes^p d}$, its \say{$\prog$} is defined as 
		\[ \prog_0(T) \eqdef \max\enstq{i \in [d]}{T_i \neq 0}, \]
		or $0$ if no there is not such index $i \in [d]$ for which $T_i \neq 0$.
	\end{definition}
	Notably, if $\alpha = 0$ then $\prog_0(x)$ is the largest index at which $x \in \R^d$ has a nonzero coordinate. $\prog_0$ will be used to capture the rate at which new coordinates are \say{discovered}. Initially, all coordinates are set to $0$ and, as we progressively acquire information from the queries to the oracle, the union of the support of the oracle responses grows. The growth rate is quantified using $\prog$ and controlled thanks to the notion os \emph{zero-chain}, which we recall formally below in~\Cref{appdx-def:deterministic-zero-chain,appdx-def:probability-zero-chain}.
	
	We recall below some technical notions from the paper of~\citet{arjevani2022lower}.
	\paragraph{Optimization Protocol:} the lower bound guarantees obtained in this work apply to algorithms interacting with an oracle over several rounds, where in each round they may issue a batch of $K \ge 1$ queries (\say{\textit{multi-point}} queries). More formally, at round $i \ge 1$, the algorithm queries the oracle at a batch $\smash{\mathbf{x}^{(i)} = (x^{(i)}_1, \ldots, x^{(i)}_K)}$ of $K$ points in $\R^d$ and for each point $\smash{x^{(i)}_j}$, $j \in [K]$, the oracle performs an independent draw $\xi^{(i)} \sim \P_{\xi}$ and replies with
	\[ \texttt{O}_F^{p, K}\left( \mathbf{x}^{(i)}, \xi^{(i)} \right) \eqdef \left( \texttt{O}^p_F\left( x^{(i)}_1, \xi^{(i)} \right), \ldots, \texttt{O}^p_F\left( x^{(i)}_K, \xi^{(i)} \right) \right), \]
	where the randomness is shared across all queries of the batch: the same seed $\smash{\xi^{(i)}}$ is used. For instance, in~\Cref{algo:nsgd-mvr} we use a batch a $K = 2$ queries.
	
	\paragraph{Optimization Algorithms:} a (randomized) algorithm $\texttt{A}$ consists of a distribution $\P_r$ (over a measurable set $\mathcal{R}$), a random seed $r \sim \P_r$ drawn at the very beginning of the protocol, and a sequence of measurable mappings $\{\texttt{A}^{(i)}\}_{i \ge 1}$ such that $\smash{\texttt{A}^{(i)}}$ takes all the previous $i - 1$ oracle responses and use the random seed $r \in \mathcal{R}$ to produce the next $i^{\textnormal{th}}$ query. Formally, a randomized algorithm $\texttt{A}$ produces the sequence of iterates $\{ \mathbf{x}_{\texttt{A}[\texttt{O}^{p, K}_F]}^{(i)} \}_{i \ge 1}$ where
	\[ \mathbf{x}^{(i)}_{\texttt{A}[\texttt{O}^{p, K}_F]} \eqdef \texttt{A}^{(i)} \left( \left[ \texttt{O}_F^{p, K}\left( \mathbf{x}_{\texttt{A}[\texttt{O}^{p, K}_F]}^{(1)}, \xi^{(1)} \right), \ldots, \texttt{O}_F^{p, K}\left( \mathbf{x}_{\texttt{A}[\texttt{O}^{p, K}_F]}^{(i - 1)}, \xi^{(i - 1)} \right) \right] , r\right). \]
	
	We define $\mathcal{A}_{\texttt{rand}}(K)$ as the class of all algorithms that follow the aforementioned protocol with a batch size of $K$ queries per round.
	
	\begin{definition}[{Zero-Respecting Algorithm~\citep[Definition~1]{arjevani2022lower,pmlr-v125-arjevani20a}}]\label{appdx-def:zero-especting-algo}
		A stochastic $p^{\textnormal{th}}$-order algorithm $\texttt{A}$ is \emph{zero-respecting} if for any function $F \in \mathcal{C}^p(\R^d, \R)$ and any $p^{\textnormal{th}}$-order oracle $\texttt{O}^{p, K}_F$, the iterates $\{ \mathbf{x}_{\texttt{A}[\texttt{O}^{p, K}_F]}^{(i)} \}_{i \ge 1}$ satisfies, for any 
		\[ \supp\left( \mathbf{x}_{\texttt{A}[\texttt{O}^{p, K}_F]}^{(i)} \right) \subseteq \bigcup_{j = 1}^{i - 1} \supp\left(  \texttt{O}_F^{p, K}\left( \mathbf{x}_{\texttt{A}[\texttt{O}^{p, K}_F]}^{(j)}, \xi^{(j)} \right) \right), \]
		
		We define $\mathcal{A}_{\texttt{zr}}(K) \subseteq \mathcal{A}_{\texttt{rand}}(K)$ as the class of all \emph{zero-respecting} algorithms.
	\end{definition}
	In other words, a zero-respecting algorithm cannot modify coordinates where no information is known; that is, its queries at each round have support in the union of the supports of all previous oracle responses.
	
	In the rest of the paper, we drop the $K$ in all our notation for simplicity and because it does not appear in any of the lower bound complexity we derive.
	
	\paragraph{Proof Strategy: Zero-Chains}
	The main motivations for building the hard instance~\eqref{f9d4cc59-a955-4e12-997e-7870c5314204} is the notion of zero-chain, which in some sense, are functions for which it is \say{hard} to uncover new, nonzero, coordinates.
	
	\begin{definition}[{Deterministic $p^{\textnormal{th}}$-order Zero-Chain~\citep[Definition~3]{carmon2020lower}}]\label{appdx-def:deterministic-zero-chain}
		Given an integer $p \ge 1$ and a function $F \in \mathcal{C}^p(\R^d, \R)$, we say that $F$ is a \emph{$p^{\textnormal{th}}$-order zero-chain} if, for every $x \in \R^d$ and every $i \in [d]$ we have
		\[ \supp(x) \subseteq \ens{1, 2, \ldots, i - 1} \,\, \text{ implies } \,\, \bigcup_{r = 1}^p \supp(\nabla^r F(x)) \subseteq \ens{1, 2, \ldots, i}. \numberthis\label{15294226-4f58-484e-b80c-22c08bec83d5-0} \]
		
		We say that $F$ is a \emph{zero-chain} if it is a $p^{\textnormal{th}}$-order zero-chain for every integer $p \ge 1$.
	\end{definition}
	In other word,~\Cref{appdx-def:deterministic-zero-chain} tells us that given $x \in \R^d$, we can \say{discover} at most one new coordinate when accessing the gradient of $F$ at $x$ (or any high-order derivatives of $F$). When dealing with stochastic estimators instead, we can extend the previous~\Cref{appdx-def:deterministic-zero-chain} as follows:
	\begin{definition}[{Probability-$\theta$ $p^{\textnormal{th}}$-order Zero-Chain~\citep[Definition~2]{pmlr-v125-arjevani20a}}]\label{appdx-def:probability-zero-chain}
		Given an integer $p \ge 1$, $\theta \in \intof{0}{1}$, a function $F \in \mathcal{C}^p(\R^d, \R)$ and derivative estimators $\nabla f(x, \xi)$, $\ldots$, $\nabla^p f(x, \xi)$ of $F$, we say that $F$ is a probability-$\theta$ \emph{$p^{\textnormal{th}}$-order zero-chain} if we have
		\[\Proba{\exists x_0 \mid \prog_0\left( \nabla f(x_0, \xi), \ldots, \nabla^p f(x_0, \xi) \right) > \prog_{\alpha}(x_0) + 1} = 0, \numberthis\label{15294226-4f58-484e-b80c-22c08bec83d5-1} \]
		and
		\[ \Proba{\exists x_0 \mid \prog_0\left( \nabla f(x_0, \xi), \ldots, \nabla^p f(x_0, \xi) \right) = \prog_{\alpha}(x_0) + 1} \le \theta, \numberthis\label{15294226-4f58-484e-b80c-22c08bec83d5-2} \]
		
		We say that $F$ is a \emph{probability-$\theta$ zero-chain} if it is a probability-$\theta$ $p^{\textnormal{th}}$-order zero-chain for every integer $p \ge 1$.
	\end{definition}
	In the above~\Cref{appdx-def:probability-zero-chain}, condition~\eqref{15294226-4f58-484e-b80c-22c08bec83d5-1} is the analogue of condition~\eqref{15294226-4f58-484e-b80c-22c08bec83d5-0} when we have only access to noisy derivatives estimators of $F$. In addition, condition~\eqref{15294226-4f58-484e-b80c-22c08bec83d5-2} tells us that we have a \say{small} chance to discover a new coordinate upon querying the oracle, i.e., the added noise behaves \say{adversarially} and can slow down the discovery of new coordinates (in expectation).

	An important properties of probability-$\theta$ zero-chain is the following lemma.
	\begin{lemma}[{\citet[Lemma~16]{pmlr-v125-arjevani20a}}]\label{appdx-lem:zero-chain-progress}
		Let $p \ge 1$, $\theta \in \intof{0}{1}$, a function $F \in \mathcal{C}^p(\R^d, \R)$ and unbiased derivative estimators $\nabla f(x, \xi)$, $\ldots$, $\nabla^p f(x, \xi)$ of $F$ which form a probability-$\theta$ zero-chain and let ${\normalfont\texttt{O}^p_F}$ be an oracle such that ${\normalfont\texttt{O}^p_F}(x, \xi) = \left( F(x), \nabla f(x, \xi), \ldots, \nabla^p f(x, \xi) \right)$.
		
		Let $\{ \mathbf{x}_{{\normalfont\texttt{A}}[{\normalfont\texttt{O}^p_F}]}^{(i)} \}_{i \ge 1}$ be the queries produced by any zero-respecting algorithm ${\normalfont\texttt{A}} \in \mathcal{A}_{\normalfont\texttt{zr}}$ interacting with ${\normalfont\texttt{O}^p_F}$. Then, with probability at least $1 - \delta$, we have
		\[ \prog_0\left( \mathbf{x}_{{\normalfont\texttt{A}}[{\normalfont\texttt{O}^p_F}]}^{(t)} \right) < T \,\, \text{ for all } \,\, t \le \frac{T - \log(\frac{1}{\delta})}{2 \theta}. \]
	\end{lemma}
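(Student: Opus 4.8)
The plan is to reduce the claim to a Chernoff bound on a conditionally-Bernoulli process that counts how fast the algorithm can \say{discover} new nonzero coordinates. Abbreviate the $j$-th query batch by $\mathbf{x}^{(j)}\eqdef\mathbf{x}^{(j)}_{\texttt{A}[\texttt{O}^p_F]}$, let $\texttt{O}^{(j)}\eqdef\texttt{O}^p_F(\mathbf{x}^{(j)},\xi^{(j)})$ be the $j$-th (batched) oracle response, let $G_t$ be the $\sigma$-algebra generated by the internal randomness $r$ and the oracle seeds $\xi^{(1)},\dots,\xi^{(t)}$, and set $M_0\eqdef 0$ and $M_t\eqdef\max_{1\le j\le t}\prog_0(\texttt{O}^{(j)})$. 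Since $\texttt{A}\in\mathcal{A}_{\texttt{zr}}$ we have $\supp(\mathbf{x}^{(t)})\subseteq\bigcup_{j<t}\supp(\texttt{O}^{(j)})$, hence $\prog_0(\mathbf{x}^{(t)})\le M_{t-1}$, and since $t\mapsto M_t$ is nondecreasing, on the complement of $\{M_{t^\star}\ge T\}$ we get $\prog_0(\mathbf{x}^{(t)})<T$ for every $t\le t^\star+1$, where $t^\star\eqdef\lfloor(T-\log(1/\delta))/(2\theta)\rfloor$; this covers the asserted range, so it suffices to prove $\Proba{M_{t^\star}\ge T}\le\delta$ (the range is empty when $t^\star\le 0$, and the claim is vacuous when $\delta\ge 1$).

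The core step is to control the increments $Z_t\eqdef M_t-M_{t-1}$. First, $Z_t\in\{0,1\}$ almost surely: each point $x$ appearing in the batch $\mathbf{x}^{(t)}$ has $\prog_\alpha(x)\le\prog_0(x)\le M_{t-1}$, so condition~\eqref{15294226-4f58-484e-b80c-22c08bec83d5-1} — whose \say{$\exists x_0$} quantifier covers all $K$ points of the batch sharing the seed $\xi^{(t)}$ — forces $\prog_0$ of the response at $x$ to be at most $\prog_\alpha(x)+1\le M_{t-1}+1$, and taking the max over the batch gives $M_t\le M_{t-1}+1$. Second, $\Proba{Z_t=1\mid G_{t-1}}\le\theta$: conditionally on $G_{t-1}$ the batch $\mathbf{x}^{(t)}$ is fixed while $\xi^{(t)}$ is a fresh draw from $\P_\xi$, and $Z_t=1$ forces some batch point $x$ with $\prog_0(\text{response at }x)=M_{t-1}+1$, which chains the previous inequalities into equalities and in particular yields $\prog_0(\text{response at }x)=\prog_\alpha(x)+1$; hence $\{Z_t=1\}$ lies in the event of condition~\eqref{15294226-4f58-484e-b80c-22c08bec83d5-2}, which has probability $\le\theta$ and depends only on $\xi^{(t)}$, so it is still $\le\theta$ conditionally on $G_{t-1}$. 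Therefore, for every $\lambda\ge 0$, $\E{e^{\lambda Z_t}\mid G_{t-1}}=1+(e^\lambda-1)\Proba{Z_t=1\mid G_{t-1}}\le e^{(e^\lambda-1)\theta}$, and iterating the tower property over the $t^\star$ rounds gives $\E{e^{\lambda M_{t^\star}}}\le e^{t^\star\theta(e^\lambda-1)}$.

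It then remains to optimize a Markov bound. For $\lambda\ge 0$, $\Proba{M_{t^\star}\ge T}\le e^{-\lambda T}\E{e^{\lambda M_{t^\star}}}\le e^{-\lambda T+t^\star\theta(e^\lambda-1)}$, and taking $e^\lambda=T/(t^\star\theta)$ (which is $\ge 2>1$ since $t^\star\theta\le(T-\log(1/\delta))/2\le T/2$) gives $\Proba{M_{t^\star}\ge T}\le\exp\!\big(T[\,1-u+\log u\,]\big)$ with $u\eqdef t^\star\theta/T$. Using $u\le(1-s)/2$, where $s\eqdef\log(1/\delta)/T$, and monotonicity of $u\mapsto 1-u+\log u$ on $(0,1]$, the exponent is at most $T[\,(1+s)/2+\log((1-s)/2)\,]=T[\,-s+h(s)\,]$ with $h(s)\eqdef(1+3s)/2+\log(1-s)-\log 2$; since $h$ is concave on $[0,1)$ with maximum $h(1/3)=1-\log 3<0$, the exponent is $\le -sT$, hence $\Proba{M_{t^\star}\ge T}\le e^{-sT}=\delta$, which is the claim.

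I expect the delicate point to be the second increment estimate $\Proba{Z_t=1\mid G_{t-1}}\le\theta$: it requires carefully separating the $G_{t-1}$-measurability of the query batch from the freshness of $\xi^{(t)}$, passing the shared-seed $K$-point batch through the \say{$\exists x_0$} quantifiers of conditions~\eqref{15294226-4f58-484e-b80c-22c08bec83d5-1}--\eqref{15294226-4f58-484e-b80c-22c08bec83d5-2}, and tracking the one-sided gap between $\prog_0$ (used for the zero-respecting bookkeeping and in the conclusion) and $\prog_\alpha$ (used in the zero-chain conditions). The concentration step is routine in spirit, but hitting exactly the threshold $\delta$ over the stated range $t\le(T-\log(1/\delta))/(2\theta)$ does need the sharp Chernoff optimization above rather than the cruder estimate $\binom{t^\star}{T}\theta^T\le(et^\star\theta/T)^T$, whose extra factor $e$ is too lossy near $s=0$.
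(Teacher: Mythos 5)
Your proof is correct. Note first that the paper does not supply its own argument for this lemma --- it cites \citet[Lemma~16]{pmlr-v125-arjevani20a} and \citet[Lemma~1]{arjevani2020tight} --- so what you have written is a self-contained reconstruction rather than an alternative to a proof in the text. That said, the approach you take (reduce to a conditionally-Bernoulli progress process $M_t$ with a.s.\ increments in $\{0,1\}$ and conditional success probability at most $\theta$, then run a moment-generating-function Chernoff bound) is exactly the method used in those references, so there is nothing structurally different to compare.

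A few points of confirmation, since these are where such a proof can go wrong and you handled them correctly: (i) the reduction from $\prog_0(\mathbf{x}^{(t)})$ to $M_{t-1}$ via zero-respecting is one-sided but in the right direction, and the chain $\prog_\alpha(x)\le\prog_0(x)\le M_{t-1}$ is needed \emph{twice}, once to invoke condition~\eqref{15294226-4f58-484e-b80c-22c08bec83d5-1} for $Z_t\le 1$ and once to turn $Z_t=1$ into the exact equality $\prog_0(\cdot)=\prog_\alpha(x)+1$ that feeds condition~\eqref{15294226-4f58-484e-b80c-22c08bec83d5-2}; (ii) the $\exists x_0$ quantifier in the zero-chain conditions does indeed absorb the $K$-point shared-seed batch at no extra cost, and since the event depends only on the fresh $\xi^{(t)}$ while the batch is $G_{t-1}$-measurable, the conditional bound $\Proba{Z_t=1\mid G_{t-1}}\le\theta$ follows; (iii) your Chernoff computation with $e^\lambda=T/(t^\star\theta)$ gives exponent $T(1-u+\log u)$ with $u=t^\star\theta/T\le(1-s)/2$, and the monotonicity of $1-u+\log u$ on $(0,1]$ together with $h(s)=(1+3s)/2+\log(1-s)-\log 2\le h(1/3)=1-\log 3<0$ gives exactly $e^{-sT}=\delta$; and you are right that the cruder estimate $(e\,t^\star\theta/T)^T$ is too lossy near $s=0$ (its exponent is $T(1+\log u)$, and $1+s+\log((1-s)/2)$ is positive at $s=0$), so the sharp optimization is genuinely required to hit the denominator $2\theta$ in the stated range.
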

	The proof can be found in~\citet[Lemma~16]{pmlr-v125-arjevani20a} and~\citet[Lemma~1]{arjevani2020tight}.
	
	\subsubsection{Complexity Measures}
	
	As in~\citet{arjevani2022lower,pmlr-v125-arjevani20a}, we develop lower bounds on the \emph{distributional complexity} for finding an $\eps$--stationary point, which in turn, implies lower bounds on the \emph{minimax complexity} for finding such stationary points. Formally, following~\cite{arjevani2022lower}
	\[ \mathfrak{m}^{\texttt{zr}}_{\eps}(K, \Delta\ldots) \eqdef \sup_{F \in \mathcal{F}(\Delta\ldots)} \sup_{(\P_{\xi}, \texttt{O}_F) \in \mathcal{O}(F\ldots)} \inf_{\texttt{A} \in \mathcal{A}_{\texttt{zr}}} \inf \enstq{T \ge 1}{\ExpSub{\texttt{O}_F, \texttt{A}}{\norm{\nabla F\left( \mathbf{x}_{{\normalfont\texttt{A}}[{\normalfont\texttt{O}_F}]}^{(T)} \right)}} \le \eps}, \]
	where the expectation is taken over the randomness in the oracle $\texttt{O}_F$ and in the algorithm $\texttt{A} \in \mathcal{A}_{\texttt{zr}}$, if any.
	
	As in the definition of the oracle class, complexity measures may depend on various parameters depending on the assumptions considered. The convention is to list all involved parameters (in an arbitrary order), and there is no ambiguity in doing so.
	
	\subsection{The Worst-Case Function}\label{appdx-subsec:hard-instance}
	
	In this section, we recall the \say{worst-case} function introduced in~\citet{carmon2020lower,arjevani2022lower} and which is used to prove our lower bounds. This function (or some variations, depending on the targeted class of functions) is at the core of many lower bounds in stochastic nonvonvex optimization~\citep{arjevani2022lower,NEURIPS2024_dd850be1,10.5555/3737916.3739635,islamov2025safeef,sun2025improved,sadiev2025second}. Formally, given an integer $T \ge 1$ which denotes the dimension in which the hard instance $F_T \colon \R^T \to \R$ lies, we define
	\[ F_T \colon x \mapsto -\Psi(1) \Phi(x_1) + \sum_{i = 2}^T \left[ \Psi(-x_{i - 1}) \Phi(-x_i) - \Psi(x_{i - 1}) \Phi(x_i) \right], \numberthis\label{f9d4cc59-a955-4e12-997e-7870c5314204} \]
	where the inner components $\Psi$ and $\Phi$ are defined, for any $t \in \R$, as
	\[ \Psi(t) \eqdef \begin{cases}
		0, & \text{if $t \le \frac{1}{2}$;} \\
		\exp\left( 1 - \frac{1}{(2 t - 1)^2} \right), & \text{if $t > \frac{1}{2}$;}
	\end{cases} \quad \text{ and } \quad \Phi(t) \eqdef \sqrt{e} \int_{-\infty}^t e^{-\frac{s^2}{2}} \odif{s}. \]
	The particular design~\eqref{f9d4cc59-a955-4e12-997e-7870c5314204} of $F_T$ is such that it is a (deterministic) zero-chain (see~\Cref{appdx-def:deterministic-zero-chain}) and the gradients of $F_T$ are large unless all coordinates are large (i.e., $\prog_1(x) \ge T$); these two keys properties are recalled below ($5.$ and $6.$). Several other properties of the function $F_T$ are stated in the next lemma.
	\begin{lemma}[Properties of the Hard Instance~\citep{carmon2020lower,arjevani2022lower}]\label{appdx-lem:properties-hard-instance}
		For any integer $T \ge 1$, the function $F_T$ satisfies:
		\begin{enumerate}
			\item $F_T(0) - \inf_{x \in \R^T} F_T(x) \le \Delta_0 T$ where $\Delta_0 \eqdef 12$,
			
			\item The gradient of $F_T$ is $\ell_1$--Lipschitz continuous with $\ell_1 \eqdef 152$,
			
			\item For all $x \in \R^T$, we have $\norm{\nabla F_T(x)}_{\infty} \le \gamma_{\infty}$ where $\gamma_{\infty} \eqdef 23$. Notably, this shows that $\norm{\nabla F_T(x)} \le 23 \sqrt{T}$,
			
			\item There exists an universal constant $0 < c < +\infty$ such that for every integer $p \ge 1$, the $p$--th order derivatives of $F_T$ are $\ell_p$--Lipschitz continuous for some $\ell_p \le \exp\left( \frac{5}{2} p \log p + c p \right)$,
			
			\item For all $x \in \R^T$ we have $\prog_0(\nabla F_T(x)) \le \prog_{\frac{1}{2}}(x) + 1$,
			
			\item For all $x \in \R^T$, if $\prog_0(x) < T$ then $\norm{\nabla F_T(x)} > 1$.
		\end{enumerate}
	\end{lemma}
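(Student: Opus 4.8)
The plan is to reduce every claim to elementary properties of the two scalar building blocks $\Psi$ and $\Phi$, since $F_T$ in~\eqref{f9d4cc59-a955-4e12-997e-7870c5314204} is just a sum of products of shifted and reflected copies of them, coupling two consecutive coordinates at a time. First I would record the scalar facts I need: $\Psi$ is $C^\infty$, nonnegative and nondecreasing, with $\Psi \equiv 0$ on $(-\infty,\tfrac12]$, $\Psi(1) = \exp(1-1) = 1$, and $\sup_t\Psi(t) = e$ (the limit at $+\infty$); and $\Phi$ is $C^\infty$, strictly increasing and positive, with $\Phi'(t) = \sqrt{e}\,e^{-t^2/2}$ — hence the design property $\Phi'(1) = 1$ and $\Phi'(t) > 1 \Leftrightarrow |t| < 1$ — together with $\sup_t\Phi = \sqrt{2\pi e}$, $\sup_t|\Phi'| = \sqrt{e}$, $\sup_t|\Phi''| = 1$, and finite, explicitly computable bounds on all $\sup_t|\Psi^{(k)}|$ and $\sup_t|\Phi^{(k)}|$. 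I would then write down the structural gradient identity obtained by differentiating~\eqref{f9d4cc59-a955-4e12-997e-7870c5314204} coordinatewise: for $2 \le i \le T-1$,
\[ \partial_i F_T(x) = -\Psi(-x_{i-1})\Phi'(-x_i) - \Psi(x_{i-1})\Phi'(x_i) - \Psi'(-x_i)\Phi(-x_{i+1}) - \Psi'(x_i)\Phi(x_{i+1}), \]
with the obvious truncations at $i=1$ and $i=T$; the key point is that $\Phi'>0$, $\Psi\ge 0$, $\Psi'\ge 0$, so every term is nonpositive and there is no cancellation in $|\partial_i F_T(x)|$.

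Next I would dispatch the two combinatorial claims (items~5 and 6), which are the conceptual core. For item~5: if $i > \prog_{1/2}(x) + 1$ then $|x_{i-1}|,|x_i| \le \tfrac12$, so every $\Psi(\pm x_{i-1})$ and $\Psi'(\pm x_i)$ factor vanishes (as $\Psi \equiv \Psi' \equiv 0$ on $(-\infty,\tfrac12]$), giving $\partial_i F_T(x) = 0$ and hence $\prog_0(\nabla F_T(x)) \le \prog_{1/2}(x)+1$. For item~6: $\prog_0(x) < T$ forces $x_T = 0$, so $j \eqdef \min\{i \in [T] : |x_i| < 1\}$ is well defined; if $j = 1$ then $|\partial_1 F_T(x)| \ge \Psi(1)\Phi'(x_1) = \Phi'(x_1) > 1$ because $|x_1|<1$, while if $j \ge 2$ minimality gives $|x_{j-1}| \ge 1$, so by monotonicity and $\Psi\equiv 0$ on $(-\infty,\tfrac12]$ exactly one of $\Psi(\pm x_{j-1})$ is $\ge \Psi(1) = 1$ (the other vanishing), making the matching term of $\partial_j F_T(x)$ have modulus $\ge \Phi'(\pm x_j) > 1$; either way $\|\nabla F_T(x)\| \ge |\partial_j F_T(x)| > 1$.

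For the magnitude claims (items~1--3) I would argue by direct estimation. Item~1: $F_T(0) = -\Psi(1)\Phi(0) = -\Phi(0)$ since $\Psi(0)=0$; dropping the nonnegative terms $\Psi(-x_{i-1})\Phi(-x_i)$ from~\eqref{f9d4cc59-a955-4e12-997e-7870c5314204} and bounding the remaining factors by $\sup\Psi\cdot\sup\Phi$ per coordinate gives $F_T(x) \ge -\sqrt{2\pi e}\,(1 + e(T-1)) \ge -e\sqrt{2\pi e}\,T$ for $T\ge 1$, so $F_T(0) - \inf_x F_T(x) \le e\sqrt{2\pi e}\,T < 12\,T$. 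Item~3: at most one of $\Psi(\pm x_{i-1})$ and one of $\Psi'(\pm x_i)$ is nonzero, so the identity above yields $|\partial_i F_T(x)| \le e^{3/2} + \sqrt{2\pi e}\,\sup_t|\Psi'(t)|$; plugging the explicitly maximized value $\sup_t|\Psi'(t)| = 4\max_{u>0} u^{-3}e^{\,1-u^{-2}}$ gives $\|\nabla F_T(x)\|_\infty \le 23$, whence $\|\nabla F_T(x)\| \le \sqrt{T}\,\|\nabla F_T(x)\|_\infty \le 23\sqrt{T}$. Item~2: $\nabla^2 F_T(x)$ is symmetric and tridiagonal (each summand couples only $x_{i-1}$ and $x_i$), so by Gershgorin $\normop{\nabla^2 F_T(x)}$ is at most three times the largest entry in modulus, and each entry is a sum of products $\Psi''\Phi$, $\Psi'\Phi'$, $\Psi\Phi''$ whose uniform bounds combine to give $\ell_1 = 152$.

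The main obstacle, which I would handle last, is item~4 — the factorially controlled bound on all higher derivatives. The $\Phi$ side is routine: $\Phi^{(k)}(t) = \sqrt{e}\,(-1)^{k-1}H_{k-1}(t)e^{-t^2/2}$ is controlled by standard Hermite-polynomial estimates. The $\Psi$ side is delicate because $\Psi$ behaves like $\exp(-(2t-1)^{-2})$ near $t=\tfrac12$, where its derivatives blow up; the clean route is to observe that $\Psi$ extends holomorphically to a complex strip around $\R$ whose width is bounded below by a universal constant, and apply the Cauchy integral formula, yielding $|\Psi^{(k)}(t)| \le k!\,C^k = \exp(\tfrac52 k\log k + O(k))$ after Stirling. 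Finally, since each summand of $F_T$ depends on only two consecutive coordinates, the $(p{+}1)$-st derivative tensor of $F_T$ is block-sparse, so the Lipschitz constant of $\nabla^p F_T$ is at most a universal constant times $\max_{a+b=p+1}\sup_t|\Psi^{(a)}|\,\sup_t|\Phi^{(b)}| = \exp(\tfrac52 p\log p + O(p))$, as claimed. Since all of items~1--6 are established in~\citet{carmon2020lower} (items~1--4 also in~\citet{arjevani2022lower}), including the precise constants $\Delta_0 = 12$, $\ell_1 = 152$, $\gamma_\infty = 23$, I would present the above as a guided recollection of their arguments rather than redo the constant-chasing in full.
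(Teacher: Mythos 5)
The paper does not actually prove this lemma; it states it as a citation to \citet{carmon2020lower,arjevani2022lower} and relies on those references for the constants $\Delta_0$, $\ell_1$, $\gamma_\infty$ and the Gevrey-type bound in item~4. Your proposal reconstructs the argument from first principles, which is more than the paper offers, and most of it is sound: the coordinatewise gradient identity you write down is correct, the sign structure (all terms nonpositive, no cancellation) is exactly the mechanism driving items~5 and~6, the zero-chain argument for item~5 correctly exploits that $\Psi$ and $\Psi'$ vanish on $(-\infty,\tfrac12]$, and the minimal-index argument for item~6, using $\Phi'(t)>1\Leftrightarrow|t|<1$ and the fact that $|x_{j-1}|\ge 1$ forces exactly one of $\Psi(\pm x_{j-1})$ to be $\ge \Psi(1)=1$, is the right one. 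The magnitude estimates for items~1--3 (dropping nonnegative terms for the lower bound on $F_T$, the ``at most one of $\Psi(\pm\cdot)$ is active'' observation for $\gamma_\infty$, and Gershgorin on the symmetric tridiagonal Hessian for $\ell_1$) are the same devices used in \citet{carmon2020lower} and do produce constants in the claimed range after plugging in the extrema of $\Psi,\Psi',\Psi''$ and $\Phi,\Phi',\Phi''$.

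There is, however, a genuine gap in your treatment of item~4. You claim that ``$\Psi$ extends holomorphically to a complex strip around $\R$ whose width is bounded below by a universal constant,'' and then appeal to Cauchy estimates. This is false: $\Psi(z)=\exp\!\big(1-(2z-1)^{-2}\big)$ has an \emph{essential singularity} at $z=\tfrac12$. Writing $2z-1=re^{i\theta}$ and letting $r\to 0$, one finds $\Re\big[(2z-1)^{-2}\big]=r^{-2}\cos(2\theta)$, so $|\Psi(z)|=\exp\!\big(1-r^{-2}\cos(2\theta)\big)\to\infty$ along any ray with $\cos(2\theta)<0$ (e.g.\ $\theta$ slightly larger than $\pi/4$). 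Since $\tfrac12\in\R$, no strip around $\R$ avoids this blow-up, and Cauchy's estimate on a fixed-width strip does not apply. The correct route is the standard Gevrey argument for bump functions of the form $\exp(-1/t^\alpha)$: one applies Cauchy's estimate on a disk of \emph{$t$-dependent} radius $\rho(t)\asymp|t-\tfrac12|$, inside a sector where $|\Psi|$ stays bounded, and then optimizes the trade-off between the $\rho(t)^{-k}$ factor from Cauchy and the decay of $|\Psi|$ near $t=\tfrac12$; alternatively, one applies Fa\`a di Bruno's formula to $\exp\circ g$ with $g(t)=1-(2t-1)^{-2}$ and tracks the combinatorics. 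Either way yields $\sup_t|\Psi^{(k)}(t)|\le\exp\!\big(c'k\log k+ck\big)$ with the $\tfrac52$ exponent emerging after combining with the (sub-Gevrey) Hermite bounds for $\Phi^{(k)}$ and the two-coordinate block-sparsity of $\nabla^{p+1}F_T$. Without this correction, your sketch does not yield item~4; everything else is a faithful, if unoptimized, recollection of the cited proofs.
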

	In the proofs of our lower bounds, we mostly focus on properties $1.$, $2.$, $3.$ and $4.$ which, after a specific rescaling, are enough to ensure the hard instance $F_T$ belongs to the right class of functions. The two remaining properties $5.$ and $6.$ are not explicitly used in this paper but their role, however, should not be underestimated: they guarantee that the inner mechanics of the hard instance of~\citet{carmon2020lower,arjevani2022lower} remain the same, thereby ensuring that any zero-respecting algorithm on $F_T$ behaves as intended and that their lower-bound strategy carries over.
	
	\subsection{The Stochastic Gradient and Hessian Estimator}
	
	We recall below from~\citet{arjevani2022lower,pmlr-v125-arjevani20a} the gradient and Hessian estimators used:
	\begin{definition}[Gradient Estimator]\label{appdx-def:gradient-estimator}
		Given an integer $T \ge 1$ and $\theta \in \intof{0}{1}$, the gradient estimator $g_T \colon \R^T \times \mathcal{Z} \to \R^T$ of the hard instance $F_T$ in~\eqref{f9d4cc59-a955-4e12-997e-7870c5314204} is defined coordinate-wise as
		\[ \left[ g_T(x, \xi) \right]_i \eqdef \left[ \nabla F_T(x) \right]_i \cdot \left( 1 + \mathbb{I}\!\ens{i > \prog_{\frac{1}{4}}(x)} \left( \frac{\xi}{\theta} - 1 \right) \right), \]
		where $i \in [T]$, $x \in \R^T$ and $\xi \sim \Ber(\theta)$, i.e., we take as distribution $\P_{\xi} = \Ber(\theta)$ and measurable set $\mathcal{Z} = \ens{0, 1}$.
	\end{definition}
	
	\begin{definition}[Hessian Estimator]\label{appdx-def:hessian-estimator}
		Given an integer $T \ge 1$ and $\theta \in \intof{0}{1}$, the Hessian estimator $\nabla g_T \colon \R^T \times \mathcal{Z} \to \R^T$ of the hard instance $F_T$ in~\eqref{f9d4cc59-a955-4e12-997e-7870c5314204} is defined row-wise as
		\[ \left[ \nabla g_T(x, \xi) \right]_{i, \cdot} \eqdef \left( 1 + \mathbb{I}\!\ens{i > \prog_{\frac{1}{4}}(x)} \left( \frac{\xi}{\theta} - 1 \right) \right) \cdot \left[ \nabla^2 F_T(x) \right]_{i, \cdot}, \]
		where $i \in [T]$, $x \in \R^T$ and $\xi \sim \Ber(\theta)$ (as before).
	\end{definition}
	Note that, for all $x \in \R^T$ and all $i > \prog_{\frac{1}{4}}(x) + 1$ we have $\left[ \nabla F_T(x) \right]_i = 0$ by~\Cref{appdx-lem:properties-hard-instance} (property $5.$) so, only the specific coordinate at $i = \prog_{\frac{1}{4}}(x) + 1$ is noisy.
	
	In the case of~\Cref{ass:mean-squared-smoothness,ass:mean-squared-smoothness-2}, i.e., additional regularity conditions on the gradient estimator, we define, as in~\citet{arjevani2022lower} the \say{smoothed} indicator $\Theta_i$ of $\mathbb{I}\!\ens{i > \prog_{\frac{1}{4}}(\cdot)}$ as:
	\[ \Theta_i \colon x \mapsto \Gamma\left( 1 - \left( \sum_{k = i}^T \Gamma\left(\abs{x_k}\right)^2 \right)^{\frac{1}{2}} \right) = \Gamma\left( 1 - \norm{\left( \Gamma(\abs{x_i}), \ldots, \Gamma(\abs{x_T}) \right)} \right), \]
	where $\Gamma \colon \R \to \R$ is any $\mathcal{C}^{\infty}$, non-decreasing Lipschitz continuous function with $\Gamma(t) = 0$ for all $t \le \frac{1}{4}$ and $\Gamma(t) = 1$ for all $t \ge \frac{1}{2}$. For instance, we can take, as in~\citet{arjevani2022lower}
	\[ \Gamma(t) \eqdef \frac{\int_{\frac{1}{4}}^t \Lambda(s) \odif{s}}{\int_{\frac{1}{4}}^{\frac{1}{2}} \Lambda(s) \odif{s}}, \,\, \text{ where } \,\, \Lambda(t) \eqdef \begin{cases}
		0, & \text{if $t \le \frac{1}{4}$ or $t \ge \frac{1}{2}$;} \\
		\exp\left( -\frac{1}{100 \left( t - \frac{1}{4} \right) \left( \frac{1}{2} - t \right)} \right), & \text{if $\frac{1}{4} < t < \frac{1}{2}$;}
	\end{cases} \]
	for any $t \in \R$. Notably, the smoothed indicator $\Theta_i$ satisfies for all $x \in \R^T$
	\[ \mathbb{I}\!\ens{i > \prog_{\frac{1}{4}}(x)} \le \Theta_i(x) \le \mathbb{I}\!\ens{i > \prog_{\frac{1}{2}}(x)}, \]
	hence
	\[ \Theta_i(x) = \begin{cases}
		1, & \text{for all $i > \prog_{\frac{1}{4}}(x)$;} \\
		0, & \text{for all $i \le \prog_{\frac{1}{2}}(x)$.}
	\end{cases} \]
	
	Overall, this gives the following \say{smoothed} gradient estimator:
	\begin{definition}[\protect\say{Smoothed} Gradient Estimator]\label{appdx-def:smoothed-gradient-estimator}
		Given an integer $T \ge 1$ and $\theta \in \intof{0}{1}$, the gradient estimator $g_T \colon \R^T \times \mathcal{Z} \to \R^T$ of the hard instance $F_T$ in~\eqref{f9d4cc59-a955-4e12-997e-7870c5314204} is defined coordinate-wise as
		\[ \left[ g_T(x, \xi) \right]_i \eqdef \left[ \nabla F_T(x) \right]_i \cdot \left( 1 + \Theta_i(x) \left( \frac{\xi}{\theta} - 1 \right) \right), \]
		where $i \in [T]$, $x \in \R^T$ and $\xi \sim \Ber(\theta)$ (as before).
	\end{definition}
	
	The gradient and Hessian estimators defined above are all probability-$\theta$ zero-chain.
	
	\newpage
	\section{Proofs of the Lower Bounds}\label{appdx-sec:proofs-lower-bound}
	
	\subsection{A Lower Bound in the \textit{Global Stochastic Model} Under Heavy-Tail Noise}
	
	In this part, we extend~\citet[Lemma~11]{arjevani2022lower} in the setting where we only assume the variance of the gradient estimator to have bounded $p$--th moment for some $p \in \intof{1}{2}$. This result holds in any dimension $d \ge 1$.
	
	\begin{lemma}\label{appdx-lem:lower-bound-global-stichastic-model-mean-squared-smoothness-p-BCM}
		Under~\Cref{ass:lower-boundedness,ass:p-bounded-central-moment-gradient,ass:mean-squared-smoothness} and as long as $0 < \eps \le \frac{1}{8} \sqrt{\bar{L} \Delta}$, the number of samples required to obtain an $\eps$--stationary point in the global stochastic model defined above is
		\[ \Omega(1) \cdot \left( \frac{\sigma_1}{\eps} \right)^{\frac{p}{p - 1}}, \]
		where $p \in \intof{1}{2}$.
	\end{lemma}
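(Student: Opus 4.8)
The plan is to instantiate the worst-case function $F_T$ of~\eqref{f9d4cc59-a955-4e12-997e-7870c5314204} in a \emph{small} dimension $T$, equip it with the \emph{smoothed} probability-$\theta$ gradient estimator of~\Cref{appdx-def:smoothed-gradient-estimator}, and then upgrade the bounded-variance budget computation of~\citet{arjevani2022lower} to the $p$--BCM regime. Concretely, I would work with a rescaled copy $F(x)\eqdef\lambda\,F_T(x/\mu)$ and choose $\lambda,\mu>0$ so that simultaneously: (i) $F\in\mathcal F(\Delta)$, which by property~$1.$ of~\Cref{appdx-lem:properties-hard-instance} asks $12\lambda T\le\Delta$; (ii) the stationarity scale is correct, i.e.\ $\lambda/\mu\asymp\eps$, so that by property~$6.$ the relation $\prog_0(x)<T$ forces $\norm{\nabla F(x)}>\eps$; and (iii) the smoothed estimator obeys $q$--weak average smoothness (\Cref{ass:mean-squared-smoothness}) with the prescribed constant $\bar L$. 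In the regime of this lemma $T$ will be forced to be $\cO(1)$ --- which is exactly the situation the statement is designed to cover --- and the consistency of (i)--(iii) is precisely what consumes the hypothesis $0<\eps\le\tfrac18\sqrt{\bar L\Delta}$.

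Next I would pin down $\theta$ from~\Cref{ass:p-bounded-central-moment-gradient}, the only assumption not automatic (the smoothed estimator is unbiased by construction). On the event of probability $\theta$ on which the estimator ``reveals'' the active coordinate, its value is amplified by a factor $\asymp 1/\theta$, so the stochastic gradient deviates from its mean by $\asymp(\lambda/\mu)\theta^{-1}\asymp\eps\,\theta^{-1}$ in that single coordinate, whence
\[ \ExpSub{\xi}{\norm{\nabla f(x,\xi)-\nabla F(x)}^p}\;\asymp\;\eps^{p}\,\theta^{\,1-p}. \]
Requiring this to be at most $\sigma_1^p$ is equivalent to $\theta\ge\Theta\!\big((\eps/\sigma_1)^{p/(p-1)}\big)$, and this is exactly the step that replaces the classical $\theta\ge\Theta((\eps/\sigma_1)^2)$ of the bounded-variance case. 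I would then fix $\theta$ at this threshold, $\theta\asymp(\eps/\sigma_1)^{p/(p-1)}$, after noting $\theta\le1$ (it is without loss of generality that $\eps\lesssim\sigma_1$, as otherwise $(\sigma_1/\eps)^{p/(p-1)}\le\cO(1)$ and the claim is vacuous).

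Finally, the estimators in question are probability-$\theta$ zero-chains, so I would invoke~\Cref{appdx-lem:zero-chain-progress} with $\delta=\tfrac12$: for every $\texttt{A}\in\mathcal A_{\texttt{zr}}$, with probability at least $\tfrac12$ one has $\prog_0(\mathbf x^{(t)})<T$ --- hence $\norm{\nabla F(\mathbf x^{(t)})}>\eps$ by property~$6.$ of the rescaled $F$ --- for all $t\le (T-\log 2)/(2\theta)=\Omega(1/\theta)$. Thus no zero-respecting algorithm can output an $\eps$--stationary point in fewer than $\Omega(1/\theta)=\Omega\!\big((\sigma_1/\eps)^{p/(p-1)}\big)$ oracle queries; passing from this ``with probability at least $\tfrac12$'' statement to the in-expectation complexity measure $\mathfrak m^{\texttt{zr}}_\eps$ is a routine Markov argument, giving the claim.

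The main obstacle is item (iii) together with the $\theta$--lower bound: one must choose the rescaling and the smoothing profile $\Gamma$ so that the \emph{amplified} estimator --- whose active-coordinate noise scales like $1/\theta$, and is \emph{forced} large by $p$--BCM --- still satisfies $q$--weak average smoothness with constant $\bar L$ rather than with some $\theta$--dependent blow-up. This is exactly where the smoothed indicator $\Theta_i$ of~\citet{arjevani2022lower} is essential, and it has to be re-derived for a general exponent $q\in[1,2]$, since their argument is written only for $q=2$; showing that (i)--(iii) can be met at once is what pins the admissible range of $\eps$ to $\eps\le c_1\sqrt{\bar L\Delta}$, and everything else is bookkeeping around~\Cref{appdx-lem:properties-hard-instance,appdx-lem:zero-chain-progress}.
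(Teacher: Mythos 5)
Your proof takes a genuinely different route from the paper's, and that route has a gap that cannot be patched within the zero-chain framework.

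The paper proves this lemma \emph{not} by instantiating the zero-chain hard instance $F_T$ in small dimension, but by a Le Cam two-point argument on the quadratic stochastic function $f_d(x,\xi)=\tfrac{L}{2}\bigl(\|x\|^2-2x_1\xi+r^2\bigr)$ equipped with the two location-shifted Bernoulli laws $\P^1=\Ber(r)$ and $\P^{-1}=-\Ber(r)$. The whole problem is reduced to a hypothesis test between $\P^1$ and $\P^{-1}$, whose total variation after $T$ samples is $1-(1-r)^T$; taking $r\asymp\min\{1,(\eps/\sigma_1)^{p/(p-1)}\}$ yields the $\Omega((\sigma_1/\eps)^{p/(p-1)})$ bound. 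The decisive feature is that $f_d$ is quadratic with noise entering only as an additive location shift: $\nabla f_d(x,\xi)-\nabla f_d(y,\xi)=L(x-y)$ is \emph{deterministic}, so $q$-WAS (and $(q,\delta)$-similarity, and $q$-BCM for Hessians, per Remarks E.2 and E.3) holds \emph{with the trivial constant}, for every $q\in[1,2]$, independently of how small the Bernoulli parameter $r$ is. The $p$-BCM budget and the WAS budget are completely decoupled.

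Your plan instead rescales the Carmon--Arjevani instance $F_T$, equips it with the smoothed probability-$\theta$ estimator, and invokes the progress lemma (\Cref{appdx-lem:zero-chain-progress}) in a fixed small dimension. The gap is in step (iii). By \Cref{appdx-lem:properties-gradient-estimator-1}, the smoothed estimator's $q$-WAS constant scales like $\bar\ell_1\,\theta^{-(q-1)/q}$, and under your scaling $F(x)=\alpha F_T(\beta x)$ this becomes $\alpha\beta^2\bar\ell_1\theta^{-(q-1)/q}\le\bar L$. Combined with (ii), which forces $\alpha\beta\asymp\eps$, one gets $\alpha\gtrsim\eps^2\theta^{-(q-1)/q}/\bar L$, hence
\[
    T \;=\; \Bigl\lfloor\tfrac{\Delta}{\alpha\Delta_0}\Bigr\rfloor \;\lesssim\; \frac{\bar L\,\Delta}{\eps^2}\,\theta^{\frac{q-1}{q}}.
\]
With $\theta\asymp(\eps/\sigma_1)^{p/(p-1)}$ pinned by $p$-BCM, the right-hand side tends to zero as $\sigma_1/\eps\to\infty$ whenever $q>1$, so there is a whole regime of the parameters (large $\sigma_1$, i.e.\ precisely where the claimed bound is interesting) in which even $T\ge 1$ is unachievable, and no version of \Cref{appdx-lem:zero-chain-progress} can be invoked. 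This is not a bookkeeping issue to be absorbed into the assumption $\eps\le\tfrac18\sqrt{\bar L\Delta}$: that assumption controls $\eps^2/(\bar L\Delta)$ but puts no upper bound on $\sigma_1/\eps$, which is the quantity driving $\theta\to 0$. The probability-$\theta$ noise is \emph{multiplicative} on the active gradient coordinate, so its contribution to the gradient difference $\nabla f(x,\xi)-\nabla f(y,\xi)$ blows up as $\theta^{-1}$; the paper's additive-location noise does not have this defect, which is exactly why Lemma E.1 is proved outside the zero-chain machinery, and is then used (Theorems 3.1--3.3) to cover precisely the case where the zero-chain $T$ would otherwise be forced below $1$.
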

	
	In particular, this lower bound does not depend on the exponent $q$ from~\Cref{ass:mean-squared-smoothness}.
	
	\begin{remark}\label{appdx-rem:improve-lower-bound-global-stichastic-model-mean-squared-smoothness-p-BCM}
		More precisely,~\Cref{appdx-lem:lower-bound-global-stichastic-model-mean-squared-smoothness-p-BCM} shows that
		\[ \mathfrak{m}^{\normalfont\texttt{zr}}_{\eps} \left( K, \bar{L}, \Delta, \sigma_1^p \right) \ge \Omega(1) \cdot \max\ens{1, \left( \frac{\sigma_1}{\eps} \right)^{\frac{p}{p - 1}}}. \numberthis\label{df3dc971-577b-4af1-818f-ae694fee0100} \]
	\end{remark}
	
	\begin{remark}\label{appdx-rem:improve-lower-bound-global-stichastic-model-mean-squared-smoothness-2-p-BCM}
		It is also worth mentioning that the same lower bound holds under~\Cref{ass:lower-boundedness,ass:L-lipschitz-gradients,ass:p-bounded-central-moment-gradient,ass:mean-squared-smoothness-2} since, by our choice of function~\eqref{af23ded8-f6c2-4361-8fc4-1e72f4395d6d} for any $s \in \ens{-1, 1}$ and any $x, y \in \R^d$ we have
		\begin{alignat*}{2}
			&\ExpSub{\xi \sim \P^s}{\norm{\left[ \nabla f_d^{\star}(x, \xi) - \nabla f_d^{\star}(y, \xi) \right] - \left[ \nabla F_{d, s}^{\star}(x) - \nabla F_{d, s}^{\star}(y) \right]}^q} \\
			&\qquad\qquad\begin{aligned}[t]
				\oversetlab{\eqref{70110a40-c2b6-4b8f-b790-e09561a73316}}&{=} \beta^q \ExpSub{\xi \sim \P^s}{\norm{\left[ \nabla f_d(\beta x, \xi) - \nabla f_d(\beta y, \xi) \right] - \left[ \nabla F_{d, s}(\beta x) - \nabla F_{d, s}(\beta y) \right]}^q} \\
				\oversetlab{\eqref{2130180a-a3a9-4040-8a04-214e99e99948}}&{=} \beta^q \ExpSub{\xi \sim \P^s}{\norm{\left[ L (\beta x - \boldsymbol{\xi}) - L(\beta y - \boldsymbol{\xi}) \right] - \left[ L (\beta x - \theta_s) - L (\beta y - \theta_s) \right]}^q} \\
				& = 0,
			\end{aligned}
		\end{alignat*}
		hence~\Cref{ass:mean-squared-smoothness-2} is satisfied. Moreover, taking $L = \frac{2 \Delta}{r^2}$ is enough to ensure that $F_{d, s}^{\star}$ has $\Delta$--bounded sub-optimality (see~\eqref{f38cccea-2eb2-49b1-910d-1eb1671cb5ff}). On the other hand, for any $x, y \in \R^d$ we have
		\begin{alignat*}{2}
			\norm{\nabla F_{d, s}^{\star}(x) - \nabla F_{d, s}^{\star}(y)} \oversetlab{\eqref{70110a40-c2b6-4b8f-b790-e09561a73316}}&{=} \beta \norm{\nabla F_{d, s}(\beta x) - \nabla F_{d, s}(\beta y)} \\
			& = \beta^2 L \norm{x - y}, \numberthis\label{1446c17c-7865-4a08-be55-9a576a51828f}
		\end{alignat*}
		and it's enough to take $0 < \beta \le \sqrt{\frac{L_1}{L}}$ to ensure the function $F_{d, s}^{\star}$ has $L_1$--Lipschitz gradients. The rest of the proof (\textbf{steps 3}, \textbf{4} and \textbf{5}) is the same, using $L_1$ instead of $\bar{L}$.
	\end{remark}
	
	\begin{remark}\label{appdx-rem:improve-lower-bound-global-stichastic-model-mean-squared-smoothness-2-hessian}
		As a last remark before proving~\Cref{appdx-lem:lower-bound-global-stichastic-model-mean-squared-smoothness-p-BCM}, it is also worth noting that the stochastic function $f_d \colon \R^d \to \R$ used to prove the lower bound~\eqref{df3dc971-577b-4af1-818f-ae694fee0100} still holds when one has access to high-order information (e.g., the Hessian), that is, under~\Cref{ass:lower-boundedness,ass:L-lipschitz-gradients,ass:p-bounded-central-moment-gradient,ass:q-bounded-central-moment-hessian}, because by construction $f_d$ is a quadratic function so, for any $s \in \ens{-1, 1}$ and any $x, y \in \R^d$, we have
		\begin{alignat*}{2}
			\ExpSub{\xi \sim \P^s}{\normop{\nabla^2 f^{\star}_d(x, \xi) - \nabla^2 F^{\star}_{d, s}(x)}^q} & = \beta^{2 q} \ExpSub{\xi \sim \P^s}{\normop{\nabla^2 f_d(\beta x, \xi) - \nabla^2 F_{d, s}(\beta x)}^q} \\
			& = \beta^{2 q} \ExpSub{\xi \sim \P^s}{\normop{L I_d - L I_d}^q} \\
			& = 0,
		\end{alignat*}
		where $I_d \in \R^{d \times d}$ is the identity matrix. Hence,~\Cref{ass:q-bounded-central-moment-hessian} is satisfied. Additionally, since $\nabla^2 F^{\star}_{d, s}(x) = L \beta^2 I_d$ for any $x \in \R^d$ then the Hessian of $F^{\star}_{d, s}$ is $L_2$--Lipschitz continuous for any constant $L_2 \ge 0$. Combining this observation with the arguments in the previous remark (see~\eqref{1446c17c-7865-4a08-be55-9a576a51828f}), we obtain that $F^{\star}_{d, s} \in \mathcal{F}(\Delta, L_1, L_2)$ for any $d \ge 1$ and any $s \in \ens{-1, 1}$, as long as we choose
		\[ L = \frac{2 \Delta}{r^2} \,\, \text{ and } \,\, 0 < \beta \le \sqrt{\frac{L_1}{L}}. \]
		The rest of the proof (\textbf{steps 3}, \textbf{4} and \textbf{5}) is the same.
	\end{remark}
	
	\begin{proof}
		Let the accuracy parameter $\eps > 0$, initial sub-optimality $\Delta \ge 0$, the mean-squared smoothness parameter $\bar{L}$, and the variance parameter $\sigma_1 \ge 0$ be fixed, and $0 < L \le \bar{L}$ to be specified late. Our proof follows the same step as in~\citet{arjevani2020tight} except that, taking inspiration from~\cite{hübler2025gradientclippingnormalizationheavy} and the gradient oracle from~\cite{arjevani2022lower} as recalled in~\Cref{appdx-def:gradient-estimator}, we take two Bernoulli distributions instead of normal distributions.
		
		Let us fix $d \ge 1$ an integer and consider the following family of functions $f_d \colon \R^d \times \ens{-1, 0, 1} \to \R$ defined as
		\[ f_d(x, \xi) \eqdef \frac{L}{2} \left( \sqnorm{x} - 2 x_1 \xi + r^2 \right), \numberthis\label{af23ded8-f6c2-4361-8fc4-1e72f4395d6d} \]
		where $r \in \intof{0}{1}$ is a fixed parameter (to be specified later), $(x, \xi) \in \R^d \times \ens{-1, 0, 1}$ and $x = (x_1, \ldots, x_d)$ are its coordinates. Then, for $\beta > 0$, we rescale the function $f_d$ as
		\[ f_d^{\star} (x, \xi) \colon x \mapsto f_d(\beta x, \xi). \numberthis\label{70110a40-c2b6-4b8f-b790-e09561a73316} \]
		
		Next, we define the two probability distributions $\P^1 = \Ber(r)$ and $\P^{-1} = -\Ber(r)$ (whose support is $\ens{-1, 0}$) and we let $\theta_s \eqdef (r s, 0, \ldots, 0) \in \R^d$ for $s \in \ens{-1, 1}$. Additionally, for any $s \in \ens{-1, 1}$, we define the function $F_{d, s}^{\star} \colon \R^d \to \R$ for all $x \in \R^d$ as
		\[ F_{d, s}^{\star}(x) \eqdef \ExpSub{\xi \sim \P^s}{f_d^{\star}\left(x, \xi \right)} \oversetrel{rel:e390b969-f5f5-40f5-bbfa-ccb12b9adbf4}{=} \frac{L}{2} \left( \sqnorm{\beta x} - 2 \beta x_1 r s + r^2 \right) = \frac{L}{2} \sqnorm{\beta x - \theta_s} \reqdef F_{d, s}( \beta x), \]
		where~\relref{rel:e390b969-f5f5-40f5-bbfa-ccb12b9adbf4} follows from the fact that, when $s = 1$ then $\xi \sim \Ber(r)$ and $\E{\xi} = r = r s$, while when $s = -1$ then $\xi \sim -\Ber(r)$ and $\E{\xi} = -r = r s$.
		
		\paragraph{Step 1:} \textit{Ensuring $F_{d, s}^{\star}$ has $\Delta$--bounded initial sub-optimality (\Cref{ass:lower-boundedness})}.
		
		To ensure $F_{d, s}^{\star}$ satisfies~\Cref{ass:lower-boundedness}, let us compute the initial sub-optimality. Assuming the starting point is $x^0 = 0$, we have
		\[ F^{\star}_{d, s}(0) - \inf_{x \in \R^d} F^{\star}_{d, s}(x) \oversetlab{\eqref{70110a40-c2b6-4b8f-b790-e09561a73316}}{=} F_{d, s}(0) = \frac{L r^2}{2}, \numberthis\label{f38cccea-2eb2-49b1-910d-1eb1671cb5ff} \]
		thus, it's enough to take $L = \frac{2 \Delta}{r^2} < +\infty$ (since $r > 0)$ so as to ensure the function $F_{d, s}^{\star}$ has $\Delta$--bounded initial sub-optimality. In the next step, we will show that $F_{d, s}^{\star}$ also has $\bar{L}$--Lipschitz gradients, as a consequence of the mean-squared smoothness property.
		
		\paragraph{Step 2:} \textit{The Oracle Class and~\Cref{ass:p-bounded-central-moment-gradient,ass:mean-squared-smoothness}}.
		
		Now, let us compute the $p$--th central moment, for any $x \in \R^d$ and any $s \in \ens{-1, 1}$ we have
		\begin{eqnarray}{2}
			\ExpSub{\xi \sim \P^s}{\norm{\nabla f_d^{\star}(x, \xi) - \nabla F_{d, s}^{\star}(x)}^p} &\oversetlab{\eqref{70110a40-c2b6-4b8f-b790-e09561a73316}}{=}& \beta^p \, \ExpSub{\xi \sim \P^s}{\norm{\nabla f_d(\beta x, \xi) - \nabla F_{d, s}(\beta x)}^p} \notag\\
			&\oversetrel{rel:e204c2ed-159a-493d-8b6b-09add6f9cd4a}{=}& \beta^p \, \ExpSub{\xi \sim \P^s}{\norm{L \left( \beta x - \boldsymbol{\xi} \right) - L \left( \beta x - \theta_s \right)}^p} \notag\\
			& =& \left( L \beta \right)^p \ExpSub{\xi \sim \P^s}{\abs{\xi - r s}^p} \notag\\
			& =& \left( L \beta r \right)^p \ExpSub{\xi \sim \P^s}{\abs{\frac{\xi}{r} - s}^p} \notag\\
			&\oversetrel{rel:2ee486e0-265e-41db-8749-66406c614680}{=}& \left( L \beta r \right)^p \left( (1 - r) \abs{s}^p + r \abs{\frac{s}{r} - s}^p \right) \notag\\
			& =& \left( L \beta r \right)^p \left( (1 - r) + r \abs{\frac{1}{r} - 1}^p \right) \notag\\
			& =& \left( L \beta r \right)^p (1 - r) \left( 1 + \left( \frac{1 - r}{r} \right)^{p - 1} \right) \notag \\
			&\oversetrel{rel:1ab9605a-dfe8-49bb-82e2-5fbe63923a0a}{=}& \left( L \beta \right)^p r (1 - r) \left( r^{p - 1} + \left( 1 - r \right)^{p - 1} \right) \notag\\
			& \le& 2 \left( L \beta \right)^p r (1 - r), \numberthis\label{279a6448-b491-4172-8fb0-c0db4abd8e3f}
		\end{eqnarray}
		where in~\relref{rel:e204c2ed-159a-493d-8b6b-09add6f9cd4a} we let $\boldsymbol{\xi} = (\xi, 0, \ldots, 0) \in \R^d$ and the gradient of $f_d$ and $F_{d, s}$ are given, for any $x \in \R^d$ by
		\[ \nabla f_d(x, \xi) = L (x - \boldsymbol{\xi}) \,\, \text{ and } \,\, \nabla F_{d, s}(x) = L (x - \theta_s), \numberthis\label{2130180a-a3a9-4040-8a04-214e99e99948} \]
		notably, $\ExpSub{\xi \sim \P^s}{\nabla f_d^{\star}(x, \xi)} = \nabla F_{d, s}^{\star} (x)$. In~\relref{rel:2ee486e0-265e-41db-8749-66406c614680} we use the fact that $\abs{s} = 1$ while in~\relref{rel:1ab9605a-dfe8-49bb-82e2-5fbe63923a0a} we use both $p > 1$ and $r \in \intof{0}{1}$ to bound $r^{p - 1} + \left( 1 - r \right)^{p - 1}$ by $2$. Hence, following~\eqref{279a6448-b491-4172-8fb0-c0db4abd8e3f} we need to guarantee
		\[ 2 \left( L \beta \right)^p r ( 1 - r) \le \sigma_1^p \,\, \text{ so } \,\, 2 ^{\frac{1}{p}} L \beta \left( r ( 1 - r) \right)^{\frac{1}{p}} \le \sigma_1. \numberthis\label{d98b1956-8d19-4272-af9a-9a2d85b182c3} \]
		
		Now, computing the mean-squared smoothness constant, we have, for $q \in \intof{1}{2}$, $s \in \ens{-1, 1}$ and any $x, y \in \R^d$
		\begin{alignat*}{2}
			\ExpSub{\xi \sim \P^s}{\norm{\nabla f_d^{\star}(x, \xi) - \nabla f_d^{\star}(y, \xi)}^q} \oversetlab{\eqref{70110a40-c2b6-4b8f-b790-e09561a73316}}&{=} \beta^q \, \ExpSub{\xi \sim \P^s}{\norm{\nabla f_d(\beta x, \xi) - \nabla f_d(\beta y, \xi)}^q} \\
			\oversetlab{\eqref{2130180a-a3a9-4040-8a04-214e99e99948}}&{=} \left( L \beta \right)^q \, \ExpSub{\xi \sim \P^s}{\norm{\beta x - \beta y}^q} \\
			& = \left( L \beta^2 \right)^q \norm{x - y}^q,
		\end{alignat*}
		and it suffices to ensure
		\[ \left( L \beta^2 \right)^q \le \bar{L}^q, \,\, \text{ that is, } \,\, L \beta^2 \le \bar{L}, \numberthis\label{63deeb98-3d9a-4bca-9cc0-9acaba35425e} \]
		so as to satisfy~\Cref{ass:mean-squared-smoothness}.
		
		\paragraph{Step 3:} \textit{Choice of $\beta$, $r$}.
		
		It remains to choose $\beta > 0$, $r \in \intof{0}{1}$ (universal constant) to satisfy the two inequalities~\eqref{d98b1956-8d19-4272-af9a-9a2d85b182c3} and~\eqref{63deeb98-3d9a-4bca-9cc0-9acaba35425e}. Additionally, let $c' > 0$, to be fixed later, be an universal constant such that $0 < \eps \le c' \sqrt{\bar{L} \Delta}$. From inequality~\eqref{63deeb98-3d9a-4bca-9cc0-9acaba35425e} we have
		\[ \beta \le \sqrt{\frac{\bar{L}}{L}} = \frac{1}{L} \sqrt{\bar{L} L} = \frac{\sqrt{2 \bar{L} \Delta}}{L r}, \]
		hence for any constant $c' > 0$, taking $\beta = \frac{\eps \sqrt{2}}{c' L r} > 0$ gives
		\[ \beta = \frac{\eps \sqrt{2}}{c' L r} \le \frac{c' \sqrt{\bar{L} \Delta} \sqrt{2}}{c' L r} = \frac{\sqrt{2 \bar{L} \Delta}}{L r}, \]
		as desired. Plugging back this value in the inequality~\eqref{d98b1956-8d19-4272-af9a-9a2d85b182c3} we need to have
		\[ \frac{2^{\frac{1}{p}} \sqrt{2}}{c'} \eps r^{\frac{1}{p} - 1} (1 - r)^{\frac{1}{p}} \le \sigma_1, \,\, \text{ i.e., }\,\, \frac{2^{\frac{1}{p}} \sqrt{2}}{c'} \left( \frac{\eps}{\sigma_1} \right) (1 - r)^{\frac{1}{p}} \le r^{\frac{p - 1}{p}}, \]
		which is equivalent to
		\[ C^{\frac{p}{p - 1}} \left( \frac{\eps}{\sigma_1} \right)^{\frac{p}{p - 1}} (1 - r)^{\frac{1}{p - 1}} \le r, \]
		where we set $C \eqdef \frac{2^{\frac{1}{p}} \sqrt{2}}{c'}$. Thus, as $1 - r \le 1$, it is enough to take
		\[ r = \min\ens{1, \left( \frac{C \eps}{\sigma_1} \right)^{\frac{p}{p - 1}}} > 0, \]
		and now all the inequalities are satisfied so do~\Cref{ass:lower-boundedness,ass:p-bounded-central-moment-gradient,ass:mean-squared-smoothness} hold.
		
		\paragraph{Step 4:} \textit{Transforming the Optimization Problem into a Function Identification Problem}.
		
		We now continue to follow the proof of~\cite{arjevani2022lower}. First, let us randomized the selection of the instances $\{F_{d, s}^{\star}\}_{s \in \ens{-1, 1}}$ by drawing $s$ uniformly in the set $\ens{-1, 1}$. Hence, we let $S \sim \mathcal{U}\left(\ens{-1, 1}\right)$ and consider any algorithm \texttt{A} that takes as input the iid samples $\xi_1, \ldots, \xi_T \sim \P^S$, where $T$ is the number of queries, and which returns the (random) iterate $\widehat{x} \in \R^d$. We now bound the expected norm of the gradient at $\widehat{x}$. To do so, let us define the (random) quantity $\widehat{S} \eqdef \argmin_{s \in \ens{-1, 1}} \|\nabla F^{\star}_{d, s} \left( \widehat{x} \right)\!\|$, breaking ties arbitrarily. Then if $S \neq \widehat{S}$ we have, by definition of $\widehat{S}$, 
		\[ \norm{\nabla F^{\star}_{d, S}\left( \widehat{x} \right)} \ge \norm{\nabla F^{\star}_{d, \widehat{S}}\left( \widehat{x} \right)}, \]
		hence
		\begin{alignat*}{2}
			2 \E{\norm{\nabla F^{\star}_{d, S}\left( \widehat{x} \right)}} & \ge \E{\norm{\nabla F^{\star}_{d, S}\left( \widehat{x} \right)}} + \E{\norm{\nabla F^{\star}_{d, \widehat{S}}\left( \widehat{x} \right)}} \\
			& = \E{\norm{\nabla F^{\star}_{d, S}\left( \widehat{x} \right)} + \norm{\nabla F^{\star}_{d, \widehat{S}}\left( \widehat{x} \right)}} \\
			\oversetrel{rel:7707f412-0c19-4e37-98b0-62d39ea4ba31}&{\ge} \inf_{x \in \R^d} \left( \norm{\nabla F^{\star}_{d, 1} (x)} + \norm{\nabla F^{\star}_{d, -1} (x)} \right) \\
			\oversetlab{\eqref{2130180a-a3a9-4040-8a04-214e99e99948}}&{=} L \beta \inf_{x \in \R^d} \left( \norm{\beta x - \theta_1} + \norm{\beta x - \theta_{-1}} \right) \\
			\oversetrel{rel:fa79275c-d6ff-48bd-8566-d261b2edaab2}&{\ge} L \beta \norm{\theta_1 - \theta_{-1}} \\
			& = 2 L \beta r, \numberthis\label{174ddb60-9406-45e1-b077-a317743c46a2}
		\end{alignat*}
		where in~\relref{rel:7707f412-0c19-4e37-98b0-62d39ea4ba31} we use $S \neq \widehat{S}$ and that both belongs to $\ens{-1, 1}$. In~\relref{rel:fa79275c-d6ff-48bd-8566-d261b2edaab2} we use the triangle inequality. Hence,
		\begin{alignat*}{2}
			\E{\norm{\nabla F^{\star}_{d, S} \left( \widehat{x} \right)}} \oversetref{Lem.}{\ref{lem:markov-inequality}}&{\ge} L \beta r \Proba{\norm{\nabla F^{\star}_{d, S}\left( \widehat{x} \right)} \ge L \beta r} \\
			\oversetlab{\eqref{174ddb60-9406-45e1-b077-a317743c46a2}}&{\ge} L \beta r \Proba{\widehat{S} \neq S},
		\end{alignat*}
		since $L \beta r > 0$ and the last inequality follows from the lower bound in~\eqref{174ddb60-9406-45e1-b077-a317743c46a2}, implied by the event $\{ \widehat{S} \neq S \}$.
		
		\paragraph{Step 5:} \textit{Lower Bounding the Misidentification Probability $\Proba{\widehat{S} \neq S}$}.
		
		Next, for $s \in \ens{-1, 1}$ let $\P_T^s = s\Ber^{\otimes T}(r)$ be the law of $\left( \xi_1, \ldots, \xi_T \right)$ conditioned on the event $\ens{S = s}$, then
		\begin{alignat*}{2}
			\Proba{\widehat{S} \neq S} & = 1 - \Proba{\widehat{S} = S} \\
			& = 1 - \left( \Proba{S = 1} \ProbCond{\widehat{S} = S}{S = 1} + \Proba{S = -1} \ProbCond{\widehat{S} = S}{S = -1} \right) \\
			& = 1 - \frac{1}{2} \left( \P_T^1\left( \widehat{S} = 1 \right) + \P_T^{-1} \left( \widehat{S} \neq 1 \right) \right) \\
			& \le 1 - \frac{1}{2} \sup_{A \subseteq \R^T \textnormal{ mesurable}} \left( \P^1_T(A) + \P^{-1}_T(A^{\texttt{c}}) \right) \\
			& = \frac{1}{2} - \frac{1}{2} \sup_{A \subseteq \R^T \textnormal{ mesurable}} \left( \P^1_T(A) - \P^{-1}_T(A) \right) \\
			\oversetrel{rel:d3a8f793-de7b-40f7-8a9f-dd2e89f0ca7b}&{=} \frac{1}{2} \left( 1 - \TVdist{\P_T^1 - \P_T^{-1}} \right),
		\end{alignat*}
		where~\relref{rel:d3a8f793-de7b-40f7-8a9f-dd2e89f0ca7b} follows form the definition of the total variation distance. Then we compute $\TVdist{\P_T^1 - \P_T^{-1}}$, to do so observe that the support of $\P_T^1$ is $\ens{0, 1}^T$ and the support of $\P_T^{-1}$ is $\ens{-1, 0}^T$ thus, if we let $\mathbf{0} \eqdef (0, \ldots, 0)$,
		\begin{alignat*}{2}
			\TVdist{\P_T^1 - \P_T^{-1}} & = \frac{1}{2} \sum_{x \in \ens{-1, 0, 1}^T} \abs{\P_T^1(x) - \P_T^{-1} (x)} \\
			\oversetrel{rel:d24508f1-b202-4372-9da7-f989372bd339}&{=} \frac{1}{2} \sum_{x \in \ens{-1, 0}^T \setminus \ens{\mathbf{0}}} \P_T^{-1}(x) + \frac{1}{2} \sum_{x \in \ens{0, 1}^T \setminus \ens{\mathbf{0}}} \P_T^1(x) \\
			& = \frac{1}{2} \left( 1 - (1 - r)^T \right) + \frac{1}{2} \left( 1 - (1 - r)^T \right) \\
			& = 1 - (1 - r)^T,
		\end{alignat*}
		where~\relref{rel:d24508f1-b202-4372-9da7-f989372bd339} follows from $\P_T^1\left( \mathbf{0} \right) = (1 - r)^T = \P_T^{-1}\left( \mathbf{0} \right)$ along with the fact that $\ens{-1, 0}^T \cap \ens{0, 1}^T = \ens{\mathbf{0}}$. Hence, we obtain
		\[ \Proba{\widehat{S} \neq S} \ge \frac{1}{2} ( 1 - r)^T, \]
		and plugging this bound into~\eqref{174ddb60-9406-45e1-b077-a317743c46a2} gives
		\[ \E{\norm{\nabla F^{\star}_{d, S} \left( \widehat{x} \right)}} \ge \frac{L \beta r}{2} (1 - r)^T. \]
		Also, note that
		\begin{eqnarray*}
			\max\ens{\E{\norm{\nabla F^{\star}_{d, 1} \left( \widehat{x} \right)}}, \E{\norm{\nabla F^{\star}_{d, -1} \left( \widehat{x} \right)}}} &\ge& \frac{1}{2} \left( \E{\norm{\nabla F^{\star}_{d, 1} \left( \widehat{x} \right)}} + \E{\norm{\nabla F^{\star}_{d, -1} \left( \widehat{x} \right)}} \right) \\
			&=& \E{\norm{\nabla F^{\star}_{d, S} \left( \widehat{x} \right)}}.
		\end{eqnarray*}
		
		It remains to lower bound adequately the quantity $\frac{L \beta r}{2} (1 - r)^T$. For this, we consider two cases:
		\begin{itemize}
			\item if $\sigma_1 > C \eps$ then $1 > \frac{C \eps}{\sigma_1}$ so $1 > \left( \frac{C \eps}{\sigma_1} \right)^{\frac{p}{p - 1}}$ thus $r = \left( \frac{C \eps}{\sigma_1} \right)^{\frac{p}{p - 1}}$ so
			\begin{alignat*}{2}
				\frac{L \beta r}{2} (1 - r)^T & = \frac{\eps \sqrt{2}}{2 c'} (1 - r)^T,
			\end{alignat*}
			and, assume we have $T \le \frac{1}{2 r}$ then
			\[ (1 - r)^T \oversetrel{rel:b5b918cc-3d5b-4b94-b68e-094fb2d4acd5}{\ge} 1 - rT \ge \frac{1}{2}, \]
			where~\relref{rel:b5b918cc-3d5b-4b94-b68e-094fb2d4acd5} follows from the Bernoulli's inequality (see~\Cref{lem:bernoulli-inequality}) thus
			\[ \max\ens{\E{\norm{\nabla F^{\star}_{d, 1} \left( \widehat{x} \right)}}, \E{\norm{\nabla F^{\star}_{d, -1} \left( \widehat{x} \right)}}} \ge \frac{\eps \sqrt{2}}{4 c'}, \]
			and it's enough to take $c' = \frac{1}{8}$ to ensure 
			\[ \max\ens{\E{\norm{\nabla F^{\star}_{d, 1} \left( \widehat{x} \right)}}, \E{\norm{\nabla F^{\star}_{d, -1} \left( \widehat{x} \right)}}} \ge 2 \eps \sqrt{2} > 2\eps \]
			So, when $T \le \frac{1}{2 r}$ it is not possible to reach an $\eps$--stationary point on both $F^{\star}_{d, 1}$ and on $F^{\star}_{d, -1}$.
			
			Hence, we deduce that we must have 
			\[ T > \frac{1}{2 r} = \frac{1}{2} \left( \frac{\sigma_1}{C \eps} \right)^{\frac{p}{p - 1}} = \Omega(1) \cdot \left( \frac{\sigma_1}{\eps} \right)^{\frac{p}{p - 1}} \ge \Omega(1), \]
			as desired,

			\item if $\sigma_1 \le C \eps$ then $r = 1$ and
			\[ \left( \frac{\sigma_1}{C \eps} \right)^{\frac{p}{p - 1}} \le 1, \]
			and since for any $s \in \ens{-1, 1}$ we have 
			\[ \norm{\nabla F^{\star}_{d, s} (0)} \oversetlab{\eqref{2130180a-a3a9-4040-8a04-214e99e99948}}{=} L \beta r = \frac{\eps \sqrt{2}}{c'} = 8 \eps \sqrt{2} > 2 \eps, \]
			so at least one query is required to reach an $\eps$--stationary point, i.e., $T \ge 1$ from where 
			\[ T \ge 1 \ge \left( \frac{\sigma_1}{C \eps} \right)^{\frac{p}{p - 1}} = \Omega(1) \cdot \left( \frac{\sigma_1}{\eps} \right)^{\frac{p}{p - 1}}, \]
			which achieves the proof of the lemma.
		\end{itemize}
	\end{proof}
	
	
	\begin{remark}\label{appdx-rem:lower-bound-global-stochastic-model-motivations}
		Let us expand on the proof strategy of the above lemma. First, we start by defining a stochastic function $f \colon (x, \xi) \mapsto \frac{L}{2} \left( \sqnorm{x} - 2 x_1 \xi + r^2 \right)$ and we then provide two probability distributions $\P^-$ and $\P^+$ for the random variable $\xi$ which allows to define two deterministic functions
		\[ F_- \colon x \mapsto \ExpSub{\xi \sim \P_-}{f(x, \xi)} \,\, \text{ and } \,\, F_+ \colon x \mapsto \ExpSub{\xi \sim \P_+}{f(x, \xi)}. \]
		The strategy to establish a lower bound then is to randomized the initial choice of the function by choosing $F_-$ or $F_+$ with probability $\frac{1}{2}$ and lower bound the optimization error on the norm of the gradient by the misidentification error, in other word, how many samples $\xi^{(1)}, \ldots, \xi^{(T)}$ are needed in expectation to distinguish between the two distributions $\P^-$ and $\P^+$.
		
		Contrary to~\citet{arjevani2022lower}, instead of using normal distributions we use two Bernoulli distributions $\Ber(r)$ and $-\Ber(r)$ where, after tuning the parameters so as to satisfy the different assumptions, we obtain
		\[ r = \Theta\left( \min\ens{1, \left( \frac{\eps}{\sigma_1} \right)^{\frac{p}{p - 1}}} \right). \]
		The choice of Bernoulli distributions is actually inspired from the definition of the gradient estimator in the work of~\citet{arjevani2022lower} where a Bernoulli distribution is used to probabilistically hide new coordinates (see~\Cref{appdx-def:gradient-estimator}).
		
		Hence with our Bernoulli distributions, upon querying the oracle, it produces repeated $0$s until it ultimately returns $-1$ or $+1$. In the former situation, we are unable to distinguish between the distributions $\Ber(r)$ and $-\Ber(r)$ since the outcome $0$ happens with equal probability while, in the later case, we can immediately tells which distribution the oracle has chosen initially. As the outcome $\pm 1$ happens with probability $r$, we have to do in expectation $\frac{1}{r} = \Omega(1 + \left( \nicefrac{\sigma_1}{\eps} \right)^{\nicefrac{p}{p - 1}})$ queries to be able to distinguish between $\P^-$ and $\P^+$ and this leads to the claimed lower bound.
	\end{remark}
	
	\begin{remark}
		Combining~\citet[Lemma~11]{arjevani2022lower} and the above lemma, we deduce that under the general $p$--bounded central moment where $p > 1$ is any real number, we have
		\[ \mathfrak{m}^{\normalfont\texttt{zr}}_{\eps} \left( K, \bar{L}, \Delta, \sigma_1^p \right) \ge \Omega(1) \cdot \left( \frac{\sigma_1}{\eps} \right)^{\frac{p}{p - 1} \vee\, 2}, \numberthis\label{4f3b2ba5-4a06-44f5-b589-94a0605dc4ab} \]
		where $\vee$ denotes the maximum between the two exponent, and the hidden constant in $\Omega(1)$ may depends on $p$. Hence
		\begin{itemize}
			\item $p$--bounded central moment for gradient when $p = 1$ brings literally \emph{no information} and we can't find an $\eps$--stationary points in a finite number $T$ of oracle queries,
			
			\item assuming high-order bounded moments ($p > 2$) does not bring additional information than in the bounded variance case ($p = 2$).
		\end{itemize}
	\end{remark}
	
	\begin{proof}[Proof of~\eqref{4f3b2ba5-4a06-44f5-b589-94a0605dc4ab}]
		The proof here applies not only to the setting of~\Cref{appdx-lem:lower-bound-global-stichastic-model-mean-squared-smoothness-p-BCM}, i.e.,~\Cref{ass:lower-boundedness,ass:p-bounded-central-moment-gradient,ass:mean-squared-smoothness} but also to the settings discussed in~\Cref{appdx-rem:improve-lower-bound-global-stichastic-model-mean-squared-smoothness-2-p-BCM,appdx-rem:improve-lower-bound-global-stichastic-model-mean-squared-smoothness-2-hessian}.
		
		First, the case of an exponent $p \in \intof{1}{2}$ is already covered by~\Cref{appdx-lem:lower-bound-global-stichastic-model-mean-squared-smoothness-p-BCM}. For the case when $p > 2$, we reuse the exact same proof as in~\citet[Lemma~11]{arjevani2022lower}. More precisely, instead of considering Bernoulli distributions for $\P^{-1}$ and $\P^1$, we fall back to normal distributions, that is, $\P^s = \smash{\mathcal{N}(r s, \frac{\sigma_1^2}{c_p^2})}$ for $s \in \ens{-1, 1}$ and some well-chosen constant $c_p > 0$ (depending on the exponent $p$ and $\bar{L}$) that we fix later. Taking $\beta = 1$ and $L = \bar{L}$, as in~\citet{arjevani2022lower}, it remains to bound the $p^{\textnormal{th}}$ central moment of the stochastic gradient, we have for any $x \in \R^d$ and any $s \in \ens{-1, 1}$, by~\eqref{279a6448-b491-4172-8fb0-c0db4abd8e3f}
		\begin{alignat*}{2}
			\ExpSub{\xi \sim \P^s}{\norm{\nabla f_d^{\star}(x, \xi) - \nabla F_{d, s}^{\star}(x)}^p} \oversetlab{\eqref{279a6448-b491-4172-8fb0-c0db4abd8e3f}}&{=} \bar{L}^p \ExpSub{\xi \sim \P^s}{\abs{\xi - r s}^p} \\
			\oversetref{Lem.}{\ref{appdx-lem:absolute-central-moment-gaussian}}&{=} \bar{L}^p \left( \frac{\sigma_1 \sqrt{2}}{c_p} \right)^p \frac{\Gamma\left( \frac{p + 1}{2} \right)}{\sqrt{\pi}} \\
			\oversetrel{rel:0d8cfffb-413d-4c15-a6c3-6c83cc2eac36}&{\le} \sigma_1^p,
		\end{alignat*}
		where in~\relref{rel:0d8cfffb-413d-4c15-a6c3-6c83cc2eac36} we took
		\[ c_p = \bar{L} \sqrt{2} \left( \frac{\Gamma\left( \frac{p + 1}{2} \right)}{\sqrt{\pi}} \right)^{\frac{1}{p}} = C_p \bar{L} > 0, \]
		where $C_p = \sqrt{2} \left( \frac{\Gamma\left( \frac{p + 1}{2} \right)}{\sqrt{\pi}} \right)^{\frac{1}{p}}$. The rest of the proof is similar to~\citet[Lemma~10]{arjevani2022lower}.        
	\end{proof}
	
	\subsection{Some Bounds on the Gradient Estimator $\bar{g}_T$}
	
	\begin{lemma}[Properties of the Gradient Estimator $\bar{g}_T$]\label{appdx-lem:properties-gradient-estimator-1}
		The stochastic gradient estimator $\bar{g}_T$ is a probability--$\theta$ zero-chain, is unbiased with respect to $\nabla F_T$ and satisfies
		\[ \E{\norm{\bar{g}_T(x, \xi) - \nabla F_T(x)}^p} \le \frac{2 \gamma_{\infty}^p (1 - \theta)}{\theta^{p - 1}}, \,\, \text{ and } \,\, \E{\norm{\bar{g}_T(x, \xi) - \bar{g}_T(y, \xi)}^q} \le \frac{\bar{\ell}_1^q}{\theta^{q - 1}} \norm{x - y}^q, \numberthis\label{9ff1cee4-5c5c-4c5b-854f-445bd4e1539d} \]
		and
		\[ \E{\norm{\left[ \bar{g}_T(x, \xi) - \bar{g}_T(y, \xi) \right] - \left[ \nabla F_T(x) - \nabla F_T(y) \right]}^q} \le \frac{\bar{\delta_1}^q (1 - \theta)}{\theta^{q - 1}} \norm{x - q}^q, \numberthis\label{5ebfcdd7-34dd-4433-bb48-69a29f3c7dab} \]
		for all $x, y \in \R^T$, where $p, q \in \intof{1}{2}$, $\gamma_{\infty}$ is defined in~\Cref{appdx-lem:properties-hard-instance}, $\bar{\ell}_1 \eqdef 2 \left( 2^q \left( 6^{2 q} \gamma_{\infty}^q + \ell_1^q \right) + \ell_1^q  \right)^{\frac{1}{q}} \ge \ell_1$ and $\bar{\delta}_1 \eqdef 4 \left( 6^{2 q} \gamma_{\infty}^q + \ell_1^q \right)^{\frac{1}{q}}$.
	\end{lemma}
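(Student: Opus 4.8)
The plan is to work directly with the smoothed gradient estimator $\bar{g}_T$ of Definition~\ref{appdx-def:smoothed-gradient-estimator}, abbreviating $\zeta \eqdef \tfrac{\xi}{\theta} - 1$ so that coordinatewise $[\bar{g}_T(x,\xi)]_i = [\nabla F_T(x)]_i + [\nabla F_T(x)]_i\,\Theta_i(x)\,\zeta$. Since $\xi \sim \Ber(\theta)$ has $\ExpSub{\xi}{\zeta} = 0$, the estimator is unbiased: $\ExpSub{\xi}{\bar{g}_T(x,\xi)} = \nabla F_T(x)$; and being probability-$\theta$ zero-chain is exactly the property recorded for this estimator in~\Cref{appdx-sec:lower-bound-def} (it follows from property~5 of Lemma~\ref{appdx-lem:properties-hard-instance} together with the fact that the event $\xi = 0$, of probability $1-\theta$, kills every coordinate $i > \prog_{\frac{1}{4}}(x)$). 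I will use repeatedly the elementary moment bound: for every $r \ge 1$,
\[ \ExpSub{\xi \sim \Ber(\theta)}{\abs{\zeta}^r} = (1 - \theta)\left( 1 + \left(\tfrac{1 - \theta}{\theta}\right)^{r - 1} \right) \le \frac{2 (1 - \theta)}{\theta^{r - 1}}, \]
where the last step uses $\theta \le 1$ and $1 - \theta \le 1$.

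For the first bound in~\eqref{9ff1cee4-5c5c-4c5b-854f-445bd4e1539d}, I observe that at most one coordinate carries noise: $[\nabla F_T(x)]_i \neq 0$ forces $i \le \prog_{\frac{1}{2}}(x) + 1$ by property~5 of Lemma~\ref{appdx-lem:properties-hard-instance}, while $\Theta_i(x) \neq 0$ forces $i > \prog_{\frac{1}{2}}(x)$, so both can be nonzero only at $i^{\star} \eqdef \prog_{\frac{1}{2}}(x) + 1$. Hence $\bar{g}_T(x,\xi) - \nabla F_T(x)$ is supported on $i^{\star}$ with $\norm{\bar{g}_T(x,\xi) - \nabla F_T(x)} = \abs{[\nabla F_T(x)]_{i^{\star}}}\,\Theta_{i^{\star}}(x)\,\abs{\zeta} \le \gamma_{\infty}\abs{\zeta}$, using $\abs{[\nabla F_T(x)]_{i^{\star}}} \le \gamma_{\infty}$ (property~3) and $0 \le \Theta_{i^{\star}}(x) \le 1$; raising to the power $p$ and taking expectation with $r = p$ gives exactly $\tfrac{2\gamma_{\infty}^p(1-\theta)}{\theta^{p-1}}$.

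For the remaining two bounds I split $\bar{g}_T(x,\xi) - \bar{g}_T(y,\xi) = \big(\nabla F_T(x) - \nabla F_T(y)\big) + \zeta\, v(x,y)$ with $v(x,y)_i \eqdef [\nabla F_T(x)]_i\Theta_i(x) - [\nabla F_T(y)]_i\Theta_i(y)$, noting that $\zeta\, v(x,y)$ is precisely the quantity appearing in~\eqref{5ebfcdd7-34dd-4433-bb48-69a29f3c7dab}. Writing $v(x,y)_i = [\nabla F_T(x)]_i(\Theta_i(x) - \Theta_i(y)) + \Theta_i(y)([\nabla F_T(x)]_i - [\nabla F_T(y)]_i)$ and using $\abs{[\nabla F_T(\cdot)]_i} \le \gamma_{\infty}$, $\abs{\Theta_i(\cdot)} \le 1$, $\norm{\nabla F_T(x) - \nabla F_T(y)} \le \ell_1\norm{x - y}$ (property~2), together with the Lipschitz bound $\norm{(\Theta_i(x))_i - (\Theta_i(y))_i} \le 6^2\norm{x - y}$ from~\citet{arjevani2022lower}, gives $\norm{v(x,y)} \le (6^2\gamma_{\infty} + \ell_1)\norm{x - y}$. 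Then~\eqref{5ebfcdd7-34dd-4433-bb48-69a29f3c7dab} follows from $\norm{\zeta v(x,y)}^q = \abs{\zeta}^q\norm{v(x,y)}^q$, the estimate $(6^2\gamma_{\infty} + \ell_1)^q \le 2^{q-1}(6^{2q}\gamma_{\infty}^q + \ell_1^q)$, and the moment bound with $r = q$ (the slack $2^q \le 4^q$ being absorbed into $\bar{\delta}_1$). For the middle bound in~\eqref{9ff1cee4-5c5c-4c5b-854f-445bd4e1539d} I instead use the triangle inequality $\norm{\bar{g}_T(x,\xi) - \bar{g}_T(y,\xi)} \le \big(\ell_1 + \abs{\zeta}(6^2\gamma_{\infty} + \ell_1)\big)\norm{x - y}$, raise to the $q$-th power via $(a + b)^q \le 2^{q-1}(a^q + b^q)$, expand $(6^2\gamma_{\infty} + \ell_1)^q$ as above, bound the deterministic term using $\ell_1^q \le \ell_1^q/\theta^{q-1}$, and take expectation; collecting constants produces the factor $\bar{\ell}_1^q/\theta^{q-1}$.

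The only non-elementary ingredient is the Lipschitz estimate for the smoothed-indicator map $(\Theta_i)_i \colon \R^T \to \R^T$: its Jacobian has a triangular sparsity pattern, with entries proportional to $\Gamma'\big(1 - \norm{(\Gamma(\abs{x_k}))_{k \ge i}}\big)$ divided by $\norm{(\Gamma(\abs{x_k}))_{k \ge i}}$, and one argues that $\Gamma'$ is supported where this norm lies in $(\tfrac{1}{2},\tfrac{3}{4})$, so the denominator is bounded below and the operator norm of the Jacobian is at most $6^2$. This is exactly the computation carried out in~\citet{arjevani2022lower}, so I will invoke it rather than reproduce it; everything else reduces to the routine inequalities above.
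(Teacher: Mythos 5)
Your overall decomposition, the moment bound $\E{\abs{\tfrac{\xi}{\theta}-1}^r}\le 2(1-\theta)/\theta^{r-1}$, and the treatment of the first inequality are correct and match the paper's argument. However, there is a genuine gap in how you bound $\norm{v(x,y)}$: you invoke a Lipschitz estimate for the \emph{vector} map $x \mapsto (\Theta_1(x),\dots,\Theta_T(x))$ in the $\ell^2$ norm, claiming $\norm{(\Theta_i(x))_i - (\Theta_i(y))_i} \le 6^2\norm{x-y}$ and attributing this to~\citet{arjevani2022lower}. What is actually established there (and used in this paper, see the line citing the $6^2$-Lipschitz property) is only that each \emph{scalar} function $\Theta_i\colon\R^T\to\R$ is $6^2$-Lipschitz, i.e.\ the rows of the Jacobian have norm $\le 6^2$; this does not bound the operator norm, and your sketch of a Jacobian argument does not close the hole. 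Indeed the number of indices $i$ for which $\Gamma'\!\big(1-\norm{(\Gamma(\abs{x_k}))_{k\ge i}}\big)\neq 0$ is not a priori bounded independently of the dimension (take many coordinates $\abs{x_i}$ just above $1/4$ so that $\Gamma(\abs{x_i})^2$ is tiny and the norm stays in $(1/2,3/4)$ across many shells), so the operator norm could pick up an extra $\sqrt{T}$.

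The paper avoids this entirely by working coordinatewise: since $\delta(\cdot,\xi)\eqdef \bar g_T(\cdot,\xi)-\nabla F_T(\cdot)$ is supported on a single coordinate, $v(x,y)=\delta(x,\xi)-\delta(y,\xi)$ is supported on at most two coordinates $\{i_x,i_y\}$, and it suffices to bound each $\abs{\delta_i(x,\xi)-\delta_i(y,\xi)}$ individually via the \emph{scalar} Lipschitz bound on $\Theta_i$ and the entrywise bound $\abs{[\nabla F_T(\cdot)]_i}\le\gamma_\infty$. Your argument is salvaged the same way: $v(x,y)$ has at most two nonzero coordinates, each bounded in absolute value by $(6^2\gamma_\infty+\ell_1)\norm{x-y}$ via the scalar Lipschitz bound, so $\norm{v(x,y)}\le\sqrt{2}\,(6^2\gamma_\infty+\ell_1)\norm{x-y}$; the extra $\sqrt{2}$ (equivalently a $2^{q/2}$ after raising to the $q$-th power) is still absorbed by the slack in $\bar\ell_1$ and $\bar\delta_1$, since $2^{3q/2}\le 4^q$ for $q\in(1,2]$. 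Aside from that repair, your proof is correct and the constants close.
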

	
	The proof of~\Cref{appdx-lem:properties-gradient-estimator-1} is largely inspired from~\citep[Lemma~4]{arjevani2022lower}. Notably, we rely on~\Cref{lem:norm-power-alpha-inequality} which states the following  inequality
	\[ \norm{a + b}^{\alpha} \le 2^{\alpha - 1} \left( \norm{a}^{\alpha} + \norm{b}^{\alpha} \right), \numberthis\label{12e46e5c-0bf2-4de8-ac73-920ea999660b} \]
	holds, for any vectors $a, b \in \R^d$ and any exponent $\alpha \ge 1$. This generalizes the well-known inequality $\sqnorm{a + b} \le 2( \sqnorm{a} + \sqnorm{b})$. We use inequality~\eqref{12e46e5c-0bf2-4de8-ac73-920ea999660b} in~\eqref{af4fdc92-39df-4ba0-b39e-528947561b51} as a \say{substitute} for the squared norm expansion (as used in~\citet{arjevani2022lower}) which does not hold anymore with exponent $q \in \intof{1}{2}$ instead of $2$.
	
	\begin{proof}
		Following the proof of~\citet[Lemma~4]{arjevani2022lower}, for any $\xi$, the vector $\delta(x, \xi) \eqdef \bar{g}_T(x, \xi) - \nabla F_T(x)$ has at most one nonzero entry at coordinate $i_x = \prog_{\frac{1}{4}}(x) + 1$. Moreover, for any $i \in \Int{1}{T}$, the $i^{\textnormal{th}}$ entry $\delta_i(x, \xi)$ of $\delta(x, \xi)$ reads
		\[ \delta_i(x, \xi) = \left[ \nabla F_T(x) \right]_i \Theta_i(x) \left( \frac{\xi}{\theta} - 1 \right), \]
		where the function $\Theta_i$ is defined in~\Cref{appdx-def:smoothed-gradient-estimator}. Hence, we have
		\begin{alignat*}{2}
			\E{\norm{\bar{g}_T(x, \xi) - \nabla F_T(x)}^p} & = \E{\abs{\delta_{i_x}(x, \xi)}^p} \\
			& = \abs{ \left[ \nabla F_T(x) \right]_{i_x}}^p \abs{\Theta_{i_x}(x)}^p \E{\abs{\frac{\xi}{\theta} - 1}^p} \\
			\oversetref{Lem.}{\ref{appdx-lem:properties-hard-instance}}&{\le} \gamma_{\infty}^p \abs{\Theta_{i_x}(x)}^p \E{\abs{\frac{\xi}{\theta} - 1}^p} \\
			& \le \gamma_{\infty}^p \, \E{\abs{\frac{\xi}{\theta} - 1}^p} \\
			\oversetrel{rel:91d78f4b-c07c-4b5e-974c-cea7451e140d}&{=} \gamma_{\infty}^p \left( (1 - \theta) + \theta \left( \frac{1}{\theta} - 1 \right)^p \right) \\
			& = \gamma_{\infty}^p (1 - \theta) \left( 1 + \left( \frac{1 - \theta}{\theta} \right)^{p - 1} \right) \\
			\oversetrel{rel:51b68383-0e41-40f2-a4b7-c07c7d81d151}&{\le} \frac{2 \gamma_{\infty}^p (1 - \theta)}{\theta^{p - 1}}, \numberthis\label{717d22a1-a3cf-4c72-9b37-af698eef534b}
		\end{alignat*}
		where in~\relref{rel:91d78f4b-c07c-4b5e-974c-cea7451e140d} we use the fact that $\xi \sim \Ber(\theta)$. In~\relref{rel:51b68383-0e41-40f2-a4b7-c07c7d81d151} we use $\theta \in \intff{0}{1}$ and $p \ge 1$ to bound $(1 - \theta)^{p - 1} \le 1$ and $1 \le \theta^{- (p - 1)}$. This establishes the first inequality from~\eqref{9ff1cee4-5c5c-4c5b-854f-445bd4e1539d}.
		
		Now, for the second inequality in~\eqref{9ff1cee4-5c5c-4c5b-854f-445bd4e1539d}, we use the fact that $\delta(\cdot, \xi) \eqdef \bar{g}_T(\cdot, \xi) - \nabla F_T(\cdot)$ has at most one nonzero coordinate, then if we let $i_x = \prog_{\frac{1}{4}}(x) + 1$ and $i_y = \prog_{\frac{1}{4}}(y) + 1$ we obtain
		\begin{alignat*}{2}
			\E{\norm{\bar{g}_T(x, \xi) - \bar{g}_T(y, \xi)}^q} & = \E{\norm{\left[ \delta(x, \xi) - \delta(y, \xi) \right] + \left[ \nabla F_T(x) - \nabla F_T(y) \right]}^q} \\
			\oversetref{Lem.}{\ref{lem:norm-power-alpha-inequality}}&{\le} 2^{q - 1} \left( \E{\norm{\delta(x, \xi) - \delta(y, \xi)}^q} + \norm{\nabla F_T(x) - \nabla F_T(y)}^q \right) \\
			& \le 2^{q - 1} \left( \E{\abs{\delta_{i_x}(x, \xi) - \delta_{i_x}(y, \xi)}^q} + \E{\abs{\delta_{i_y}(x, \xi) - \delta_{i_y}(y, \xi)}^q}\right)\\
			& + 2^{q - 1}\norm{\nabla F_T(x) - \nabla F_T(y)}^q , \numberthis\label{af4fdc92-39df-4ba0-b39e-528947561b51}
		\end{alignat*}
		and, for any integer $i \in \Int{1}{T}$ we have
		\begin{alignat*}{2}
			\E{\abs{\delta_i(x, \xi) - \delta_i(y, \xi)}^q} & = \abs{\left[ \nabla F_T(x) \right]_i \Theta_i(x) - \left[ \nabla F_T(y) \right]_i \Theta_i(y)}^q \E{\abs{\frac{\xi}{\theta} - 1}^q} \\
			\oversetrel{rel:e38ff9cf-aa6b-4698-a9b3-41486d609379}&{\le} \abs{\left[ \nabla F_T(x) \right]_i \Theta_i(x) - \left[ \nabla F_T(y) \right]_i \Theta_i(y)}^q \left( \frac{2}{\theta^{q - 1}} \right) \\
			& = \abs{\left[ \nabla F_T(x) \right]_i \left( \Theta_i(x) - \Theta_i(y) \right) + \left( \left[ \nabla F_T(x) \right]_i - \left[ \nabla F_T(y) \right]_i \right) \Theta_i(y)}^q \left( \frac{2}{\theta^{q - 1}} \right) \\
			\oversetref{Lem.}{\ref{lem:norm-power-alpha-inequality}}&{\le} \left( \frac{2^q}{\theta^{q - 1}} \right) \left( \abs{\left[ \nabla F_T(x) \right]_i}^q \abs{\Theta_i(x) - \Theta_i(y)}^q + \abs{\left[ \nabla F_T(x) \right]_i - \left[ \nabla F_T(y) \right]_i}^q \abs{\Theta_i(y)}^q \right)  \\
			\oversetrel{rel:396b3332-3a8a-41c4-97bf-31a23104be17}&{\le} \left( \frac{2^q}{\theta^{q - 1}} \right) \left( 6^{2 q} \abs{\left[ \nabla F_T(x) \right]_i}^q \norm{x - y}^q + \abs{\left[ \nabla F_T(x) \right]_i - \left[ \nabla F_T(y) \right]_i}^q \right) \\
			\oversetref{Lem.}{\ref{appdx-lem:properties-hard-instance}}&{\le} \left( \frac{2^q}{\theta^{q - 1}} \right) \left( 6^{2 q} \gamma_{\infty}^q \norm{x - y}^q + \norm{\nabla F_T(x) - \nabla F_T(y)}^q \right) , \numberthis\label{78a8d3c0-b886-4e27-965b-695544cbd732}
		\end{alignat*}
		where in~\relref{rel:e38ff9cf-aa6b-4698-a9b3-41486d609379} we use our previous bound from~\eqref{717d22a1-a3cf-4c72-9b37-af698eef534b} (which we derived with exponent $p$ instead of $q$). In~\relref{rel:396b3332-3a8a-41c4-97bf-31a23104be17} we use the fact that $\Theta_i$ is $6^2$--Lipschitz (see~\cite{arjevani2022lower}) and $\abs{\Theta_i(\cdot)} \le 1$. Now, plugging back the bound~\eqref{78a8d3c0-b886-4e27-965b-695544cbd732} in~\eqref{af4fdc92-39df-4ba0-b39e-528947561b51} we obtain
		\begin{alignat*}{2}
			\E{\norm{\bar{g}_T(x, \xi) - \bar{g}_T(y, \xi)}^q} \oversetlab{\eqref{78a8d3c0-b886-4e27-965b-695544cbd732}+\eqref{af4fdc92-39df-4ba0-b39e-528947561b51}}&{\le} 2^{q - 1}  \left( \frac{2^{q}}{\theta^{q - 1}} \right)\left( 6^{2 q} \gamma_{\infty}^q \norm{x - y}^q + \norm{\nabla F_T(x) - \nabla F_T(y)}^q \right) \\
			&\qquad + 2^{q - 1} \norm{\nabla F_T(x) - \nabla F_T(y) }^q  \\
			\oversetref{Lem.}{\ref{appdx-lem:properties-hard-instance}}&{\le} 2^{q - 1} \left( 2^q \left( 6^{2 q} \gamma_{\infty}^q + \ell_1^q \right) \left( \frac{2}{\theta^{q - 1}} \right) + \ell_1^q \right) \norm{x - y}^q \\
			\oversetrel{rel:e804c734-01d8-4ff4-bcb2-476336204562}&{\le} 2^q \left( 2^q \left( 6^{2 q} \gamma_{\infty}^q + \ell_1^q \right) + \ell_1^q \right) \frac{\norm{x - y}^q}{\theta^{q - 1}},
		\end{alignat*}
		where in~\relref{rel:e804c734-01d8-4ff4-bcb2-476336204562} we use $1 \le \nicefrac{2}{\theta^{q - 1}}$ to factor it out. If we let $\bar{\ell}_1 \eqdef 2 \left( 2^q \left( 6^{2 q} \gamma_{\infty}^q + \ell_1^q \right) + \ell_1^q  \right)^{\frac{1}{q}}$ we obtain
		\[ \E{\norm{\bar{g}_T(x, \xi) - \bar{g}_T(y, \xi)}^q} \le \frac{\bar{\ell}_1^q}{\theta^{q - 1}} \norm{x - y}^q, \]
		as desired.
		
		It remains to establish the third inequality~\eqref{5ebfcdd7-34dd-4433-bb48-69a29f3c7dab}.Using that 
		\begin{equation*}
			\E{\norm{\left[ \bar{g}_T(x, \xi) - \bar{g}_T(y, \xi) \right] - \left[ \nabla F_T(x) - \nabla F_T(y) \right]}^q}  = \E{\norm{\delta(x, \xi) - \delta(y, \xi)}^q},
		\end{equation*}
		and combining the bounds~\eqref{af4fdc92-39df-4ba0-b39e-528947561b51} and~\eqref{78a8d3c0-b886-4e27-965b-695544cbd732} we have
		\begin{alignat*}{2}
			\E{\norm{\delta(x, \xi) - \delta(y, \xi)}^q} \oversetref{Lem.}{\ref{lem:norm-power-alpha-inequality}}&{\le} 2^{q - 1} \left( \E{\abs{\delta_{i_x}(x, \xi) - \delta_{i_x}(y, \xi)}^q} + \E{\abs{\delta_{i_y}(x, \xi) - \delta_{i_y}(y, \xi)}^q} \right) \\
			\oversetlab{\eqref{717d22a1-a3cf-4c72-9b37-af698eef534b}+\eqref{78a8d3c0-b886-4e27-965b-695544cbd732}}&{\le} 4^q \left( 6^{2 q} \gamma_{\infty}^q + \ell_1^q \right) \frac{1 - \theta}{\theta^{q - 1}} \norm{x - y}^q \\
			& = \frac{\bar{\delta_1}^q (1 - \theta)}{\theta^{q - 1}} \norm{x - q}^q,
		\end{alignat*}
		where we define $\bar{\delta}_1 \eqdef 4 \left( 6^{2 q} \gamma_{\infty}^q + \ell_1^q \right)^{\frac{1}{q}}$. This achieves the proof of the lemma.
	\end{proof}
	
	\subsection{Proof of~\Cref{thm:lower-bound-sofo-mean-squared-smoothness}}\label{appdx-subsec:proof-lower-bound-sofo-mean-squared-smoothness}
	
	\begin{restate-theorem}{\ref{thm:lower-bound-sofo-mean-squared-smoothness}}
		Given $\Delta, \bar{L} > 0$ and $0 < \eps \le c_1 \sqrt{\bar{L} \Delta}$ then, for any algorithm $A \in \mathcal{A}_{\texttt{zr}}$, there exists a function $f \in \mathcal{F}\left( \Delta \right)$, an oracle and a distribution $(O, \cD) \in \mathcal{O}\left( f, \bar{L}^q, \sigma_1^p \right)$ such that
		
		\begin{alignat*}{2}
			\mathfrak{m}^{\normalfont\texttt{zr}}_{\eps}\left( K, \bar{L}, \Delta, \sigma_1^p \right) \ge \Omega(1) \cdot \left( \left( \frac{\sigma_1}{\eps} \right)^{\frac{p}{p - 1}} + \frac{\bar{L} \Delta}{\eps^2} + \frac{\bar{L} \Delta}{\eps^2} \left( \frac{\sigma_1}{\eps} \right)^{\frac{p}{q (p - 1)}} \right).
		\end{alignat*}
	\end{restate-theorem}
	
	\begin{proof}
		Let $\Delta_0$, $\ell_1$, $\gamma_{\infty}$ and $\bar{\ell}_1$ be the numerical constants in~\Cref{appdx-lem:properties-hard-instance,appdx-lem:properties-gradient-estimator-1} respectively. Additionally, we let the accuracy parameter $\eps > 0$, initial sub-optimality $\Delta \ge 0$, the $q$--weak average smoothness parameter $\bar{L}$, and the variance parameter $\sigma_1 \ge 0$ be fixed, and $0 < L \le \bar{L}$ to be specified late. Then, for $\alpha, \beta > 0$ two positive real numbers, following~\citet{arjevani2022lower}, we rescale the function $F_T$ as
		\[ F^{\star}_T \colon x \mapsto \alpha F_T(\beta x). \numberthis\label{6f55ce68-cc03-4d22-a262-4aa3367f948d} \]

		\paragraph{Step 1:} \textit{Ensuring $F^{\star}_T \in \mathcal{F} \left(\Delta, L \right)$}.
		
		To guarantee the rescaled function $F^{\star}_T$ belongs to the function class $\mathcal{F}(\Delta, L)$, let us compute the initial sub-optimality $\Delta$ and the smoothness constant $L$. Assuming the algorithm \texttt{A} starts at $x^0 = 0$ we have
		\[ F^{\star}_T(0) - \inf_{x \in \R^T} F^{\star}_T(x) \oversetlab{\eqref{6f55ce68-cc03-4d22-a262-4aa3367f948d}}{=} \alpha \left( F_T(0) - \inf_{x \in \R^T} F_T(x) \right) \oversetref{Lem.}{\ref{appdx-lem:properties-hard-instance}}{\le} \alpha \Delta_0 T, \]
		thus, it's enough to take $T = \Floor{\frac{\Delta}{\alpha \Delta_0}}$ so as to ensure $F^{\star}_T(0) - \inf_{x \in \R^T} F^{\star}_T(x) \le \Delta$. Moreover, for any $x, y \in \R^T$,
		\begin{alignat*}{2}
			\norm{\nabla F^{\star}_T(x) - \nabla F^{\star}_T(y)} & = \alpha \beta \norm{\nabla F_T(\beta x) - \nabla F_T(\beta y)} \\
			\oversetref{Lem.}{\ref{appdx-lem:properties-hard-instance}}&{\le} \alpha \beta \ell_1 \norm{\beta x - \beta y} \\
			& = \alpha \beta^2 \ell_1 \norm{x - y},
		\end{alignat*}
		and it suffices to take $\alpha = \frac{L}{\beta^2 \ell_1} > 0$ to ensure the function $F^{\star}_T$ has $L$--Lipschitz gradients. Consequently, we have $F^{\star}_T \in \mathcal{F}(\Delta, L)$, as desired.
		
		\paragraph{Step 2:} \textit{Analysis of the Protocol and Choice for $\beta$}.
		
		Following the proof of~\citet[Theorem~1]{arjevani2022lower}, according to~\Cref{appdx-lem:properties-gradient-estimator-1}, for all points $x \in \R^T$ such that $\prog_0(x) < T$ we have $\prog_0(\beta x) = \prog_0(x) < T$ so
		\[ \norm{\nabla F^{\star}_T(x)} \oversetlab{\eqref{6f55ce68-cc03-4d22-a262-4aa3367f948d}}{=} \frac{L}{\ell_1 \beta} \norm{\nabla F_T(\beta x)} \oversetref{Lem.}{\ref{appdx-lem:properties-gradient-estimator-1}}{>} \frac{L}{\ell_1 \beta}, \numberthis\label{191cd1ea-8e5f-4b85-8e63-f31fd25f3b17} \]
		and we need to guarantee that
		\[ \norm{\nabla F^{\star}_T(x)} > 2\eps, \]
		for all $x \in \R^T$ with $\prog_0(x) < T$ which, given~\eqref{191cd1ea-8e5f-4b85-8e63-f31fd25f3b17}, can be done if we set $\beta = \frac{L}{2 \ell_1 \eps}$.

		\paragraph{Step 3:} \textit{The Oracle Class and~\Cref{ass:p-bounded-central-moment-gradient,ass:mean-squared-smoothness}}.
		
		It remains to choose the parameter $\theta \in \intof{0}{1}$ and constant $L$ such that the gradient estimator $\bar{g}^{\star}_T$ of $\nabla F^{\star}_T$ satisfies~\Cref{ass:p-bounded-central-moment-gradient,ass:mean-squared-smoothness}. Computing the $p$--th central moment of $\bar{g}^{\star}_T$ gives, for all $x \in \R^T$
		\begin{alignat*}{2}
			\E{\norm{\bar{g}^{\star}_T(x, \xi) - \nabla F^{\star}_T(x)}^p} \oversetlab{\eqref{6f55ce68-cc03-4d22-a262-4aa3367f948d}}&{=} \left( \alpha \beta \right)^p \E{\norm{\bar{g}_T(\beta x, \xi) - \nabla F_T(\beta x)}^p} \\
			\oversetref{Lem.}{\ref{appdx-lem:properties-gradient-estimator-1}}&{\le} \frac{2 \left( \gamma_{\infty} \alpha \beta \right)^p (1 - \theta)}{\theta^{p - 1}} \\
			\oversetrel{rel:0fc4ad15-661f-4519-87ce-6459c8d1c74b}&{\le} \frac{\left( 2 \gamma_{\infty} \alpha \beta \right)^p}{\theta^{p - 1}}, \numberthis\label{29bb8bb5-e6e5-482d-ab46-fab6357955ec}
		\end{alignat*}
		where in~\relref{rel:0fc4ad15-661f-4519-87ce-6459c8d1c74b} we use the fact that $0 < \theta \le 1$ and $p \ge 1$ so that $2 \le 2^p$. From~\eqref{29bb8bb5-e6e5-482d-ab46-fab6357955ec}, so as to satisfy~\Cref{ass:p-bounded-central-moment-gradient} it's enough to take $\theta = \min\ens{1, \bar{\theta}}$ where
		
		\[ \bar{\theta}^{p - 1} \ge \left( \frac{2 \gamma_{\infty} \alpha \beta}{\sigma_1} \right)^p \oversetrel{rel:46a820b5-b4a2-4c73-b77f-41e02836f2e0}{=} \left( \frac{4 \gamma_{\infty} \eps}{\sigma_1} \right)^p, \,\, \text{ so } \,\, \bar{\theta} \ge \left( \frac{4 \gamma_{\infty} \eps}{\sigma_1} \right)^{\frac{p}{p - 1}} \]
		where in~\relref{rel:46a820b5-b4a2-4c73-b77f-41e02836f2e0} we use the value of $\alpha$ and $\beta$ fixed earlier. Hence
		\[ \theta = \min\ens{1, \left( \frac{4 \gamma_{\infty} \eps}{\sigma_1} \right)^{\frac{p}{p - 1}}}. \numberthis\label{b3242270-31d2-4d6a-a567-27370b26f605-2} \]
		
		Next, concerning the mean-squared smoothness assumption, we have
		\begin{alignat*}{2}
			\E{\norm{\bar{g}^{\star}_T(x, \xi) - \bar{g}^{\star}_T(y, \xi)}^q} \oversetlab{\eqref{6f55ce68-cc03-4d22-a262-4aa3367f948d}}&{=} \left( \alpha \beta \right)^q \E{\norm{\bar{g}_T(\beta x, \xi) - \bar{g}_T(\beta y, \xi)}^q} \\
			\oversetref{Lem.}{\ref{appdx-lem:properties-gradient-estimator-1}}&{\le} \frac{\left( \alpha \beta \bar{\ell}_1 \right)^q}{\theta^{q - 1}} \norm{\beta x - \beta y}^q \\
			& = \left( \frac{\alpha \beta^2 \bar{\ell}_1}{\theta^{\frac{q - 1}{q}}} \right)^q \norm{x - y}^q \\
			\oversetrel{rel:b4ce77e4-45fb-40a3-9459-1426ca4e5db4}&{=} \left( \frac{L \bar{\ell}_1}{\ell_1 \theta^{\frac{q - 1}{q}}} \right)^q \norm{x - y}^q, \numberthis\label{b4ce77e4-45fb-40a3-9459-1426ca4e5db4-2}
		\end{alignat*}
		where in~\relref{rel:b4ce77e4-45fb-40a3-9459-1426ca4e5db4} we use $\alpha = \frac{L}{\beta^2 \ell_1}$. Hence, from the upper bound~\eqref{b4ce77e4-45fb-40a3-9459-1426ca4e5db4-2} it suffices to take
		\[ L = \frac{\ell_1 \bar{L}}{\bar{\ell}_1} \theta^{\frac{q - 1}{q}} \le \bar{L} \min\ens{1, \left( \frac{4 \gamma_{\infty} \eps}{\sigma_1} \right)^{\frac{p (q - 1)}{q (p - 1)}}} \le \bar{L}, \]
		since $\ell_1 = 152 \le \bar{\ell}_1$ (see~\Cref{appdx-lem:properties-gradient-estimator-1}). This proves~\Cref{ass:mean-squared-smoothness} is satisfied by the gradient estimator $\bar{g}^{\star}_T$.
		
		\paragraph{Step 4:} \textit{Lower Bounding $\mathfrak{m}^{\normalfont\texttt{zr}}_{\eps}\left( K, \Delta, \bar{L}, \sigma^p_1 \right)$}.
		
		Continuing on \textbf{step 2}, by~\Cref{appdx-lem:zero-chain-progress} we know that with probability at least $\frac{1}{2}$ it holds that for all integer $0 \le t \le \frac{T - 1}{2 \theta}$ and all $k \in [K]$ we have
		\[ \norm{\nabla F^{\star}_T\left( x^{(t, k)}_{\texttt{A}[\texttt{O}_F]} \right)} > 2\eps, \,\, \text{ hence } \,\, \E{\norm{\nabla F^{\star}_T\left( x^{(t, k)}_{\texttt{A}[\texttt{O}_F]} \right)}} > \eps, \]
		from where it follows that
		\begin{alignat*}{2}
			\mathfrak{m}^{\normalfont\texttt{zr}}_{\eps}\left( K, \Delta, \bar{L}, \sigma^p_1 \right) > \frac{T - 1}{2 \theta} = \frac{1}{2 \theta} \left( \Floor{\frac{\Delta}{\alpha \Delta_0}} - 1 \right) & = \frac{1}{2 \theta} \left( \Floor{\frac{\beta^2 \ell_1 \Delta}{L \Delta_0}} - 1 \right) \\
			& = \frac{1}{2 \theta} \left( \Floor{\frac{L \Delta}{4 \Delta_0 \ell_1 \eps^2}} - 1 \right) \\
			& = \frac{1}{2 \theta} \left( \Floor{\frac{\bar{L} \Delta \theta^{\frac{q - 1}{q}}}{4 \Delta_0 \bar{\ell}_1 \eps^2}} - 1 \right), \numberthis\label{b3242270-31d2-4d6a-a567-27370b26f605}
		\end{alignat*}
		we then distinguish two cases:
		\begin{itemize}
			\item if $\frac{\bar{L} \Delta \theta^{\frac{q - 1}{q}}}{4 \Delta_0 \bar{\ell}_1 \eps^2} \ge 3$ then, using the inequality $\Floor{x} - 1 \ge \frac{x}{2}$, valid for all real number $x \ge 3$ we obtain
			\begin{alignat*}{2}
				\mathfrak{m}^{\normalfont\texttt{zr}}_{\eps}\left( K, \Delta, \bar{L}, \sigma^p_1 \right) \oversetlab{\eqref{b3242270-31d2-4d6a-a567-27370b26f605}}&{>} \frac{1}{4 \theta} \cdot \frac{\bar{L} \Delta \theta^{\frac{q - 1}{q}}}{4 \Delta_0 \bar{\ell}_1 \eps^2} \\
				& = \frac{1}{16 \Delta_0 \bar{\ell}_1} \cdot \frac{\bar{L} \Delta}{\eps^2 \theta^{\frac{1}{q}}} \\
				\oversetlab{\eqref{b3242270-31d2-4d6a-a567-27370b26f605-2}}&{\ge} \frac{1}{32 \Delta_0 \bar{\ell}_1} \cdot \frac{\bar{L} \Delta}{\eps^2} \left[ \left( \frac{\sigma_1}{4 \gamma_{\infty} \eps} \right)^{\frac{p}{q (p - 1)}} + 1 \right] \\
				& = \Omega(1) \cdot \left( \frac{\bar{L} \Delta}{\eps^2} + \frac{\bar{L} \Delta}{\eps^2} \left( \frac{\sigma_1}{\eps} \right)^{\frac{p}{q (p - 1)}} \right),
			\end{alignat*}
			and combining the lower bound above with~\Cref{appdx-lem:lower-bound-global-stichastic-model-mean-squared-smoothness-p-BCM}, with the same choice for $c'$ as provided below, gives the desired results.
			
			\item otherwise, if $\frac{\bar{L} \Delta \theta^{\frac{q - 1}{q}}}{4 \Delta_0 \bar{\ell}_1 \eps^2} < 3$, choosing the universal constant $0 < c' = \left( 12 \Delta_0 \bar{\ell}_1 \right)^{-\frac{1}{2}} \le \left( 12 \Delta_0 \ell_1 \right)^{-\frac{1}{2}} \approx 0.0067592 < \frac{1}{8}$ then the assumption 
			\[ 0 < \eps \le c' \sqrt{\bar{L} \Delta} = \sqrt{\frac{\bar{L} \Delta}{12 \Delta_0 \bar{\ell}_1}}, \]
			precludes the possibility that $\theta = 1$ so $\sigma_1 \ge 4 \gamma_{\infty} \eps$ hence $\nicefrac{\sigma_1}{\eps} \gtrsim 1$. Moreover, we have
			\begin{alignat*}{2}
				3 > \frac{\bar{L} \Delta}{4 \Delta_0 \bar{\ell}_1 \eps^2} \left( \frac{4 \gamma_{\infty} \eps}{\sigma_1} \right)^{\frac{p (q - 1)}{q (p - 1)}},
			\end{alignat*}
			and multiplying both sides by $\left( \frac{\sigma_1}{\eps} \right)^{\frac{p}{p - 1}} > 0$ leads to
			\[ \left( \frac{\sigma_1}{\eps} \right)^{\frac{p}{p - 1}} > \frac{\bar{L} \Delta \left( 4 \gamma_{\infty} \right)^{\frac{p (q - 1)}{q (p - 1)}}}{12 \Delta_0 \bar{\ell}_1 \eps^2} \left( \frac{\sigma_1}{\eps} \right)^{\frac{p}{p - 1} - \frac{p (q - 1)}{q (p - 1)}} = \frac{\bar{L} \Delta \left( 4 \gamma_{\infty} \right)^{\frac{p (q - 1)}{q (p - 1)}}}{12 \Delta_0 \bar{\ell}_1 \eps^2} \left( \frac{\sigma_1}{\eps} \right)^{\frac{p}{q (p - 1)}}, \]
			and using~\Cref{appdx-lem:lower-bound-global-stichastic-model-mean-squared-smoothness-p-BCM}, since $0 < \eps \le \frac{1}{8} \sqrt{\bar{L} \Delta}$ then there exists an universal constant $C_p > 0$ (depending only on $p$) such that
			\[ \mathfrak{m}^{\normalfont\texttt{zr}}_{\eps}\left( K, \bar{L}, \Delta, \sigma_1^p \right) \ge C_p \left( \frac{\sigma_1}{\eps} \right)^{\frac{p}{p - 1}} \ge \Omega(1) \cdot \frac{\bar{L} \Delta}{\eps^2} \left( \frac{\sigma_1}{\eps} \right)^{\frac{p}{q (p - 1)}} \gtrsim \frac{\bar{L} \Delta}{\eps^2}, \numberthis\label{e278b503-7290-42c8-a9cd-e396da8844d8} \]
			hence, 
			\[ \mathfrak{m}^{\normalfont\texttt{zr}}_{\eps}\left( K, \bar{L}, \Delta, \sigma_1^p \right) \ge \Omega(1) \cdot \left( \left( \frac{\sigma_1}{\eps} \right)^{\frac{p}{p - 1}} + \frac{\bar{L} \Delta}{\eps^2} + \frac{\bar{L} \Delta}{\eps^2} \left( \frac{\sigma_1}{\eps} \right)^{\frac{p}{q (p - 1)}} \right), \]
			which holds in both cases and concludes the proof of the theorem.
		\end{itemize}
		
	\end{proof}
	
	\subsection{Proof of~\Cref{thm:lower-bound-sofo-mean-squared-smoothness-2}}\label{appdx-subsec:proof-lower-bound-sofo-mean-squared-smoothness-2}
	
	\begin{restate-theorem}{\ref{thm:lower-bound-sofo-mean-squared-smoothness-2}}
		Given $\Delta, L_1, \delta > 0$ and $0 < \eps \le c_1 \sqrt{L_1 \Delta}$ then, for any algorithm $A \in \mathcal{A}_{\texttt{zr}}$, there exists a function $f \in \mathcal{F}\left( \Delta, L_1 \right)$, an oracle and a distribution $(O, \cD) \in \mathcal{O}\left( f, \delta^q, \sigma_1^p \right)$ such that
		
		\begin{alignat*}{2}
			\mathfrak{m}^{\normalfont\texttt{zr}}_{\eps}\left( K, L_1, \Delta, \delta, \sigma_1^p \right) \ge \Omega(1) \cdot \min\left\{ \frac{L_1 \Delta}{\eps^2} + \frac{L_1 \Delta}{\eps^2} \left( \frac{\sigma_1}{\eps} \right)^{\frac{p}{p - 1}},  \left( \frac{\sigma_1}{\eps} \right)^{\frac{p}{p - 1}} + \frac{(L_1 + \delta) \Delta}{\eps^2} + \frac{\delta \Delta}{\eps^2} \left( \frac{\sigma_1}{\eps} \right)^{\frac{p}{q (p - 1)}} \right\}.
		\end{alignat*}
	\end{restate-theorem}
	
	\begin{proof}
		The proof follows the same lines as the proof of~\Cref{thm:lower-bound-sofo-mean-squared-smoothness}.  Let $\Delta_0$, $\ell_1$, $\gamma_{\infty}$ and $\bar{\delta}_1$ be the numerical constants in~\Cref{appdx-lem:properties-hard-instance,appdx-lem:properties-gradient-estimator-1} respectively. Additionally, we let the accuracy parameter $\eps > 0$, initial sub-optimality $\Delta \ge 0$, the smoothness constant $L_1 \ge 0$, the $q$--weak average smoothness parameter $\delta$ (\Cref{ass:mean-squared-smoothness-2}), and the variance parameter $\sigma_1 \ge 0$ be fixed, and $0 < L \le L_1$ to be specified late. Then, for $\alpha, \beta > 0$ two positive real numbers, following~\citet{arjevani2022lower}, we rescale the function $F_T$ as
		\[ F^{\star}_T \colon x \mapsto \alpha F_T(\beta x). \numberthis\label{6f55ce68-cc03-4d22-a262-4aa3367f948d-bis} \]
		
		\paragraph{Step 1:} \textit{Ensuring $F^{\star}_T \in \mathcal{F} \left(\Delta, L \right)$}.
		
		To guarantee the rescaled function $F^{\star}_T$ belongs to the function class $\mathcal{F}(\Delta, L)$, let us compute the initial sub-optimality $\Delta$ and the smoothness constant $L$. Assuming the algorithm \texttt{A} starts at $x^0 = 0$ we have
		\[ F^{\star}_T(0) - \inf_{x \in \R^T} F^{\star}_T(x) \oversetlab{\eqref{6f55ce68-cc03-4d22-a262-4aa3367f948d}}{=} \alpha \left( F_T(0) - \inf_{x \in \R^T} F_T(x) \right) \oversetref{Lem.}{\ref{appdx-lem:properties-hard-instance}}{\le} \alpha \Delta_0 T, \]
		thus, it's enough to take $T = \Floor{\frac{\Delta}{\alpha \Delta_0}}$ so as to ensure $F^{\star}_T(0) - \inf_{x \in \R^T} F^{\star}_T(x) \le \Delta$. Moreover, as done previously, for any $x, y \in \R^T$,
		\begin{alignat*}{2}
			\norm{\nabla F^{\star}_T(x) - \nabla F^{\star}_T(y)} & = \alpha \beta \norm{\nabla F_T(\beta x) - \nabla F_T(\beta y)} \\
			\oversetref{Lem.}{\ref{appdx-lem:properties-hard-instance}}&{\le} \alpha \beta \ell_1 \norm{\beta x - \beta y} \\
			& = \alpha \beta^2 \ell_1 \norm{x - y},
		\end{alignat*}
		and it suffices to take $\alpha = \frac{L}{\beta^2 \ell_1} > 0$ to ensure the function $F^{\star}_T$ has $L$--Lipschitz gradients. Consequently, we have $F^{\star}_T \in \mathcal{F}(\Delta, L)$, as desired.
		
		\paragraph{Step 2:} \textit{Analysis of the Protocol and Choice for $\beta$}.
		
		Following the proof of~\citet[Theorem~1]{arjevani2022lower}, according to~\Cref{appdx-lem:properties-gradient-estimator-1}, for all points $x \in \R^T$ such that $\prog_0(x) < T$ we have $\prog_0(\beta x) = \prog_0(x) < T$ so
		\[ \norm{\nabla F^{\star}_T(x)} \oversetlab{\eqref{6f55ce68-cc03-4d22-a262-4aa3367f948d}}{=} \frac{L}{\ell_1 \beta} \norm{\nabla F_T(\beta x)} \oversetref{Lem.}{\ref{appdx-lem:properties-gradient-estimator-1}}{>} \frac{L}{\ell_1 \beta}, \numberthis\label{191cd1ea-8e5f-4b85-8e63-f31fd25f3b17-bis} \]
		and we need to guarantee that
		\[ \norm{\nabla F^{\star}_T(x)} > 2\eps, \numberthis\label{e8299abb-d1db-4ee4-9919-3b49ed05df54} \]
		for all $x \in \R^T$ with $\prog_0(x) < T$ which, given~\eqref{191cd1ea-8e5f-4b85-8e63-f31fd25f3b17}, can be done if we set $\beta = \frac{L}{2 \ell_1 \eps}$.

		\paragraph{Step 3:} \textit{The Oracle Class and~\Cref{ass:p-bounded-central-moment-gradient,ass:mean-squared-smoothness}}.
		
		It remains to choose the parameter $\theta \in \intof{0}{1}$ and constant $L$ such that the gradient estimator $\bar{g}^{\star}_T$ of $\nabla F^{\star}_T$ satisfies~\Cref{ass:p-bounded-central-moment-gradient,ass:mean-squared-smoothness-2}. Computing the $p$--th central moment of $\bar{g}^{\star}_T$ gives, for all $x \in \R^T$
		\begin{alignat*}{2}
			\E{\norm{\bar{g}^{\star}_T(x, \xi) - \nabla F^{\star}_T(x)}^p} \oversetlab{\eqref{6f55ce68-cc03-4d22-a262-4aa3367f948d}}&{=} \left( \alpha \beta \right)^p \E{\norm{\bar{g}_T(\beta x, \xi) - \nabla F_T(\beta x)}^p} \\
			\oversetref{Lem.}{\ref{appdx-lem:properties-gradient-estimator-1}}&{\le} \frac{2 \left( \gamma_{\infty} \alpha \beta \right)^p (1 - \theta)}{\theta^{p - 1}} \\
			\oversetrel{rel:0fc4ad15-661f-4519-87ce-6459c8d1c74b}&{\le} \frac{\left( 2 \gamma_{\infty} \alpha \beta \right)^p}{\theta^{p - 1}}, \numberthis\label{29bb8bb5-e6e5-482d-ab46-fab6357955ec-bis}
		\end{alignat*}
		where in~\relref{rel:0fc4ad15-661f-4519-87ce-6459c8d1c74b} we use the fact that $0 < \theta \le 1$ and $p \ge 1$ so that $2 \le 2^p$. From~\eqref{29bb8bb5-e6e5-482d-ab46-fab6357955ec}, so as to satisfy~\Cref{ass:p-bounded-central-moment-gradient} it's enough to take $\theta = \min\ens{1, \bar{\theta}}$ where
		\[ \bar{\theta}^{p - 1} \ge \left( \frac{2 \gamma_{\infty} \alpha \beta}{\sigma_1} \right)^p \oversetrel{rel:46a820b5-b4a2-4c73-b77f-41e02836f2e0}{=} \left( \frac{4 \gamma_{\infty} \eps}{\sigma_1} \right)^p, \,\, \text{ so } \,\, \bar{\theta} \ge \left( \frac{4 \gamma_{\infty} \eps}{\sigma_1} \right)^{\frac{p}{p - 1}} \]
		where in~\relref{rel:46a820b5-b4a2-4c73-b77f-41e02836f2e0} we use the value of $\alpha$ and $\beta$ fixed earlier. Hence
		\[ \theta = \min\ens{1, \left( \frac{4 \gamma_{\infty} \eps}{\sigma_1} \right)^{\frac{p}{p - 1}}}. \numberthis\label{b3242270-31d2-4d6a-a567-27370b26f605-2-bis} \]
		
		Next, concerning the mean-squared smoothness assumption (\Cref{ass:mean-squared-smoothness-2}), we have
		\begin{alignat*}{2}
			&\E{\norm{\left[ \bar{g}^{\star}_T(x, \xi) - \bar{g}^{\star}_T(y, \xi) \right] - \left[ \nabla F_T^{\star}(x) - \nabla F_T^{\star}(y) \right]}^q} \\
			&\qquad\qquad\qquad\begin{aligned}[t]
				\oversetlab{\eqref{6f55ce68-cc03-4d22-a262-4aa3367f948d}}&{=} \left( \alpha \beta \right)^q \E{\norm{\left[ \bar{g}^{\star}_T(\beta x, \xi) - \bar{g}^{\star}_T(\beta y, \xi) \right] - \left[ \nabla F_T^{\star}(\beta x) - \nabla F_T^{\star}(\beta y) \right]}^q} \\
				\oversetref{Lem.}{\ref{appdx-lem:properties-gradient-estimator-1}}&{\le} \frac{\left( \alpha \beta \bar{\delta}_1 \right)^q (1 - \theta)}{\theta^{q - 1}} \norm{\beta x - \beta y}^q \\
				& = \left( \frac{\alpha \beta^2 \bar{\delta}_1}{\theta^{\frac{q - 1}{q}}} \right)^q (1 - \theta) \norm{x - y}^q \\
				\oversetrel{rel:b4ce77e4-45fb-40a3-9459-1426ca4e5db4}&{=} \left( \frac{L \bar{\delta}_1}{\ell_1 \theta^{\frac{q - 1}{q}}} \right)^q (1 - \theta) \norm{x - y}^q,
			\end{aligned} \numberthis\label{b4ce77e4-45fb-40a3-9459-1426ca4e5db4-2-bis}
		\end{alignat*}
		where in~\relref{rel:b4ce77e4-45fb-40a3-9459-1426ca4e5db4} we use $\alpha = \frac{L}{\beta^2 \ell_1}$. Hence, from the upper bound~\eqref{b4ce77e4-45fb-40a3-9459-1426ca4e5db4-2} it suffices to take
		\[ L \le \frac{\ell_1 \delta}{\bar{\delta}_1} \theta^{\frac{q - 1}{q}}, \]
		and since we must have $L \le L_1$, we set
		\[ L = \min\ens{L_1, \frac{\ell_1 \delta}{\bar{\delta}_1} \theta^{\frac{q - 1}{q}}}. \numberthis\label{bf6dc5ba-5913-4acb-b17c-aa57b62b93b3} \]
		This proves~\Cref{ass:mean-squared-smoothness-2} is satisfied by the gradient estimator $\bar{g}^{\star}_T$.
		
		\paragraph{Step 4:} \textit{Lower Bounding $\mathfrak{m}^{\normalfont\texttt{zr}}_{\eps}\left( K, L_1, \Delta, \delta, \sigma_1^p \right)$}.
		
		Continuing on \textbf{step 2}, by~\Cref{appdx-lem:zero-chain-progress} we know that with probability at least $\frac{1}{2}$ it holds that for all integer $0 \le t \le \frac{T - 1}{2 \theta}$ and all $k \in [K]$ we have
		\[ \norm{\nabla F^{\star}_T\left( x^{(t, k)}_{\texttt{A}[\texttt{O}_F]} \right)} > 2\eps, \,\, \text{ hence } \,\, \E{\norm{\nabla F^{\star}_T\left( x^{(t, k)}_{\texttt{A}[\texttt{O}_F]} \right)}} > \eps, \]
		from where it follows that
		\begin{alignat*}{2}
			\mathfrak{m}^{\normalfont\texttt{zr}}_{\eps}\left( K, L_1, \Delta, \delta, \sigma_1^p \right) > \frac{T - 1}{2 \theta} = \frac{1}{2 \theta} \left( \Floor{\frac{\Delta}{\alpha \Delta_0}} - 1 \right) = \frac{1}{2 \theta} \left( \Floor{\frac{\beta^2 \ell_1 \Delta}{L \Delta_0}} - 1 \right) = \frac{1}{2 \theta} \left( \Floor{\frac{L \Delta}{4 \Delta_0 \ell_1 \eps^2}} - 1 \right) \numberthis\label{b3242270-31d2-4d6a-a567-27370b26f605-bis}
		\end{alignat*}
		we then distinguish two cases:
		\begin{itemize}
			\item if $\frac{L \Delta}{4 \Delta_0 \ell_1 \eps^2} \ge 3$ then, using the inequality $\Floor{x} - 1 \ge \frac{x}{2}$, valid for all real number $x \ge 3$ we obtain
			\begin{alignat*}{2}
				\mathfrak{m}^{\normalfont\texttt{zr}}_{\eps}\left( K, L_1, \Delta, \delta, \sigma_1^p \right) \oversetlab{\eqref{b3242270-31d2-4d6a-a567-27370b26f605-bis}}&{>} \frac{1}{4 \theta} \cdot \frac{L \Delta}{4 \Delta_0 \ell_1 \eps^2} \\
				\oversetlab{\eqref{bf6dc5ba-5913-4acb-b17c-aa57b62b93b3}}&{=} \frac{1}{16 \Delta_0 \ell_1} \cdot \frac{\Delta}{\theta \eps^2} \min\ens{L_1, \frac{\ell_1 \delta}{\bar{\delta}_1} \theta^{\frac{q - 1}{q}}} \\
				& = \frac{1}{16 \Delta_0 \ell_1} \cdot \min\ens{\frac{L_1 \Delta}{\theta \eps^2}, \frac{\ell_1 \delta \Delta}{\bar{\delta}_1 \eps^2} \theta^{-\frac{1}{q}}} \\
				\oversetlab{\eqref{b3242270-31d2-4d6a-a567-27370b26f605-2-bis}}&{\ge} \frac{1}{32 \Delta_0 \ell_1} \cdot \min\ens{\frac{L_1 \Delta}{\eps^2} \left( 1 + \left( \frac{\sigma_1}{4 \gamma_{\infty} \eps} \right)^{\frac{p}{p - 1}} \right), \frac{\ell_1 \delta \Delta}{\bar{\delta}_1 \eps^2} \left( 1 + \left( \frac{\sigma_1}{4 \gamma_{\infty} \eps} \right)^{\frac{p}{q (p - 1)}} \right)} \\
				& = \Omega(1) \cdot \min\ens{\frac{L_1 \Delta}{\eps^2} + \frac{L_1 \Delta}{\eps^2} \left( \frac{\sigma_1}{\eps} \right)^{\frac{p}{p - 1}}, \frac{\delta \Delta}{\eps^2} + \frac{\delta \Delta}{\eps^2} \left( \frac{\sigma_1}{\eps} \right)^{\frac{p}{q (p - 1)}}}, \numberthis\label{3b256059-e30a-4307-b1b7-a8e09886c0f1}
			\end{alignat*}
			and combining the lower bound~\eqref{3b256059-e30a-4307-b1b7-a8e09886c0f1} with~\Cref{appdx-lem:lower-bound-global-stichastic-model-mean-squared-smoothness-p-BCM}, using the choice for $c'$ provided below, gives:
			\[ \mathfrak{m}^{\normalfont\texttt{zr}}_{\eps}\left( K, L_1, \Delta, \delta, \sigma_1^p \right) \ge \Omega(1) \cdot \min\ens{\frac{L_1 \Delta}{\eps^2} + \frac{L_1 \Delta}{\eps^2} \left( \frac{\sigma_1}{\eps} \right)^{\frac{p}{p - 1}}, \left( \frac{\sigma_1}{\eps} \right)^{\frac{p}{p - 1}} + \frac{\delta \Delta}{\eps^2} + \frac{\delta \Delta}{\eps^2} \left( \frac{\sigma_1}{\eps} \right)^{\frac{p}{q (p - 1)}}}, \numberthis\label{5aea445e-d78f-404f-ac66-dbff30f0a00c} \]
			since from the assumption $0 < \eps \le c' \sqrt{L_1 \Delta}$ we have
			\[ \frac{L_1 \Delta}{\eps^2} \left( \frac{\sigma_1}{\eps} \right)^{\frac{p}{p - 1}} \gtrsim \left( \frac{\sigma_1}{\eps} \right)^{\frac{p}{p - 1}}. \]
			
			\item otherwise, if $\frac{L \Delta}{4 \Delta_0 \ell_1 \eps^2} < 3$, choosing the universal constant $0 < c' = \left( 12 \Delta_0 \ell_1 \right)^{-\frac{1}{2}} \approx 0.0067592 < \frac{1}{8}$ then the assumption 
			\[ 0 < \eps \le c' \sqrt{L_1 \Delta} = \sqrt{\frac{L_1 \Delta}{12 \Delta_0 \ell_1}}, \]
			precludes the possibility that $L = L_1$. While we can still argue the same way as previously done in the proof of~\Cref{thm:lower-bound-sofo-mean-squared-smoothness}, we follows here a different strategy, thanks to~\Cref{appdx-rem:improve-lower-bound-global-stichastic-model-mean-squared-smoothness-p-BCM}. So, let us divide both sides of $\frac{L \Delta}{4 \Delta_0 \ell_1 \eps^2} < 3$ by our choice of $\theta > 0$ to obtain
			\[ \frac{1}{12 \Delta_0 \ell_1} \cdot \frac{L \Delta}{\eps^2} \theta^{-1} \le \theta^{-1} = \max\ens{1, \left( \frac{\sigma_1}{4 \gamma_{\infty} \eps} \right)^{\frac{p}{p - 1}}} \le \max\ens{1, \left( \frac{\sigma_1}{\eps} \right)^{\frac{p}{p - 1}}}, \numberthis\label{2b14b2b5-0770-4ca0-b177-61bcd5125db3} \]
			where the last inequality follows from the fact that $\frac{p}{p - 1} > 0$ and $4 \gamma_{\infty} = 92 \ge 1$. Using the definition of $L$ we obtain
			\begin{alignat*}{2}
				\max\ens{1, \left( \frac{\sigma_1}{\eps} \right)^{\frac{p}{p - 1}}} \oversetlab{\eqref{2b14b2b5-0770-4ca0-b177-61bcd5125db3}+\eqref{bf6dc5ba-5913-4acb-b17c-aa57b62b93b3}}&{\ge} \frac{1}{12 \Delta_0 \ell_1} \cdot \min\ens{\frac{L_1 \Delta}{\theta \eps^2}, \frac{\ell_1 \delta \Delta}{\bar{\delta}_1 \eps^2} \theta^{-\frac{1}{q}}} \\
				\oversetlab{\eqref{b3242270-31d2-4d6a-a567-27370b26f605-2-bis}}&{\ge} \frac{1}{24 \Delta_0 \ell_1} \cdot \min\ens{\frac{L_1 \Delta}{\eps^2} + \frac{L_1 \Delta}{\eps^2} \left( \frac{\sigma_1}{4 \gamma_{\infty} \eps} \right)^{\frac{p}{p - 1}}, \frac{\ell_1 \delta \Delta}{\bar{\delta}_1 \eps^2} + \frac{\ell_1 \delta \Delta}{\bar{\delta}_1 \eps^2} \left( \frac{\sigma_1}{4 \gamma_{\infty} \eps} \right)^{\frac{p}{q (p - 1)}}} \\
				& \ge C_{p, q} \min\ens{\frac{L_1 \Delta}{\eps^2} + \frac{L_1 \Delta}{\eps^2} \left( \frac{\sigma_1}{\eps} \right)^{\frac{p}{p - 1}}, \frac{\delta \Delta}{\eps^2} + \frac{\delta \Delta}{\eps^2} \left( \frac{\sigma_1}{\eps} \right)^{\frac{p}{q (p - 1)}}},
			\end{alignat*}
			where $C_{p, q} > 0$ is an universal constant depending only on $p$ and $q$. Moreover, using~\Cref{appdx-lem:lower-bound-global-stichastic-model-mean-squared-smoothness-p-BCM} (and more precisely~\Cref{appdx-rem:improve-lower-bound-global-stichastic-model-mean-squared-smoothness-2-p-BCM}), since $0 < \eps \le \frac{1}{8} \sqrt{L_1 \Delta}$ then there exists an universal constant $C_p > 0$ (which depends only on $p$) such that
			\[ \mathfrak{m}^{\normalfont\texttt{zr}}_{\eps}\left( K, L_1, \Delta, \delta, \sigma_1^p \right) \ge C_p \max\ens{1, \left( \frac{\sigma_1}{\eps} \right)^{\frac{p}{p - 1}}}, \]
			hence
			\begin{alignat*}{2}
				\mathfrak{m}^{\normalfont\texttt{zr}}_{\eps}\left( K, L_1, \Delta, \delta, \sigma_1^p \right) & \ge \Omega(1) \cdot \left( \left( \frac{\sigma_1}{\eps} \right)^{\frac{p}{p - 1}} + \min\ens{\frac{L_1 \Delta}{\eps^2} + \frac{L_1 \Delta}{\eps^2} \left( \frac{\sigma_1}{\eps} \right)^{\frac{p}{p - 1}}, \frac{\delta \Delta}{\eps^2} + \frac{\delta \Delta}{\eps^2} \left( \frac{\sigma_1}{\eps} \right)^{\frac{p}{q (p - 1)}}} \right) \\
				& \ge \Omega(1) \cdot \min\ens{\frac{L_1 \Delta}{\eps^2} + \frac{L_1 \Delta}{\eps^2} \left( \frac{\sigma_1}{\eps} \right)^{\frac{p}{p - 1}}, \left( \frac{\sigma_1}{\eps} \right)^{\frac{p}{p - 1}} + \frac{\delta \Delta}{\eps^2} + \frac{\delta \Delta}{\eps^2} \left( \frac{\sigma_1}{\eps} \right)^{\frac{p}{q (p - 1)}}}, \numberthis\label{5aea445e-d78f-404f-ac66-dbff30f0a00c-2}
			\end{alignat*}
			as $0 < \eps \le c' \sqrt{L_1 \Delta}$ implies
			\[ \frac{L_1 \Delta}{\eps^2} \left( \frac{\sigma_1}{\eps} \right)^{\frac{p}{p - 1}} \gtrsim \left( \frac{\sigma_1}{\eps} \right)^{\frac{p}{p - 1}}, \]
			and we can forget $\left( \frac{\sigma_1}{\eps} \right)^{\frac{p}{p - 1}}$ in the first term of the $\min$.
		\end{itemize}
		
		\paragraph{Step 5:} \textit{A Last Bound: the Case $\theta = 1$}.
		
		Observe that, if instead of taking $\theta$ as in~\eqref{b3242270-31d2-4d6a-a567-27370b26f605-2-bis}, we choose directly $\theta = 1$ then, thanks to~\Cref{appdx-lem:properties-gradient-estimator-1} we immediately have
		\[ \E{\norm{\bar{g}^{\star}_T(x, \xi) - \nabla F^{\star}_T(x)}^p} = 0, \]
		and
		\[ \E{\norm{\left[ \bar{g}^{\star}_T(x, \xi) - \bar{g}^{\star}_T(y, \xi) \right] - \left[ \nabla F_T^{\star}(x) - \nabla F_T^{\star}(y) \right]}^q} = 0, \]
		so~\Cref{ass:p-bounded-central-moment-gradient} and~\Cref{ass:mean-squared-smoothness-2} are satisfied. Hence, if we set 
		\[ T = \Floor{\frac{\Delta}{\alpha \Delta_0}}, \quad \alpha = \frac{L_1}{\beta^2 \ell_1}, \quad \text{ and } \quad \beta = \frac{L_1}{2 \ell_1 \eps}, \]
		then $F_T^{\star} \in \mathcal{F}(\Delta, L_1)$ and the inequality~\eqref{e8299abb-d1db-4ee4-9919-3b49ed05df54} is satisfied. Hence, with probability at least $\frac{1}{2}$ it holds that for all integer $0 \le t \le \frac{T - 1}{2 \theta}$ and all $k \in [K]$ we have
		\[ \norm{\nabla F^{\star}_T\left( x^{(t, k)}_{\texttt{A}[\texttt{O}_F]} \right)} > 2\eps, \,\, \text{ hence } \,\, \E{\norm{\nabla F^{\star}_T\left( x^{(t, k)}_{\texttt{A}[\texttt{O}_F]} \right)}} > \eps, \]
		from where we obtain also
		\begin{alignat*}{2}
			\mathfrak{m}^{\normalfont\texttt{zr}}_{\eps}\left( K, L_1, \Delta, \delta, \sigma_1^p \right) > \frac{T - 1}{2 \theta} = \frac{1}{2 \theta} \left( \Floor{\frac{\Delta}{\alpha \Delta_0}} - 1 \right) = \frac{1}{2 \theta} \left( \Floor{\frac{\beta^2 \ell_1 \Delta}{L_1 \Delta_0}} - 1 \right) = \frac{1}{2 \theta} \left( \Floor{\frac{L_1 \Delta}{4 \Delta_0 \ell_1 \eps^2}} - 1 \right) \numberthis\label{b3242270-31d2-4d6a-a567-27370b26f605-bis-bis}
		\end{alignat*}
		and, since we assume $0 < \eps < c' \sqrt{L_1 \Delta} = \sqrt{\frac{L_1 \Delta}{12 \Delta_0 \ell_1}}$ this implies $\frac{L_1 \Delta}{4 \Delta_0 \ell_1 \eps^2} \ge 3$ thus
		\begin{alignat*}{2}
			\mathfrak{m}^{\normalfont\texttt{zr}}_{\eps}\left( K, L_1, \Delta, \delta, \sigma_1^p \right) \oversetlab{\eqref{b3242270-31d2-4d6a-a567-27370b26f605-bis-bis}}&{>} \frac{1}{4 \theta} \cdot \frac{L_1 \Delta}{4 \Delta_0 \ell_1 \eps^2} \\
			& = \Omega(1) \cdot \frac{L_1 \Delta}{\eps^2},
		\end{alignat*}
		and, combining this bound with~\eqref{5aea445e-d78f-404f-ac66-dbff30f0a00c} and~\eqref{5aea445e-d78f-404f-ac66-dbff30f0a00c-2} respectively leads to the desired result, i.e.,
		\begin{alignat*}{2}
			\mathfrak{m}^{\normalfont\texttt{zr}}_{\eps}\left( K, L_1, \Delta, \delta, \sigma_1^p \right) \ge \Omega(1) \cdot \min\left\{ \frac{L_1 \Delta}{\eps^2} + \frac{L_1 \Delta}{\eps^2} \left( \frac{\sigma_1}{\eps} \right)^{\frac{p}{p - 1}},  \left( \frac{\sigma_1}{\eps} \right)^{\frac{p}{p - 1}} + \frac{(L_1 + \delta) \Delta}{\eps^2} + \frac{\delta \Delta}{\eps^2} \left( \frac{\sigma_1}{\eps} \right)^{\frac{p}{q (p - 1)}} \right\}.
		\end{alignat*}
		
	\end{proof}

	\subsection{Proof of~\Cref{thm:lower-bound-sofo-bounded-central-moments}}\label{appdx-subsec:proof-lower-bound-sofo-bounded-central-moments}
	
	\begin{lemma}[Properties of the Gradient and Hessian Estimators $g_T$ and $\nabla g_T$]\label{appdx-lem:properties-gradient-estimator-2}
		The stochastic gradient estimator $g_T$ is a probability--$\theta$ zero-chain, is unbiased with respect to $\nabla F_T$ and satisfies
		\[ \E{\norm{\nabla f_T (x, \xi) - \nabla F_T(x)}^p} \le \frac{2 \gamma_{\infty}^p (1 - \theta)}{\theta^{p - 1}}, \,\, \text{ and } \,\, \E{\normop{\nabla^2 f_T (x, \xi) - \nabla^2 F_T (x)}^q} \le \frac{2 \ell_1^q (1 - \theta)}{\theta^{q - 1}}, \]
		for all $x \in \R^T$, where $p \in \intof{1}{2}$, $q \in \intff{1}{2}$, $\gamma_{\infty}$ and $\ell_1$ are defined in~\Cref{appdx-lem:properties-hard-instance}.
	\end{lemma}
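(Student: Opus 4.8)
The plan is to follow the same route as the proof of \Cref{appdx-lem:properties-gradient-estimator-1} (which adapts \citet[Lemma~4]{arjevani2022lower}), now applied to the \emph{unsmoothed} estimators of \Cref{appdx-def:gradient-estimator,appdx-def:hessian-estimator} and extended to track the Hessian in operator norm. First I would dispatch the easy parts. Unbiasedness is immediate: with $\xi\sim\Ber(\theta)$ we have $\ExpSub{\xi}{\tfrac{\xi}{\theta}-1}=0$, hence $\ExpSub{\xi}{\nabla f_T(x,\xi)}=\nabla F_T(x)$ coordinate-wise and $\ExpSub{\xi}{\nabla^2 f_T(x,\xi)}=\nabla^2 F_T(x)$ row-wise. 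For the probability-$\theta$ zero-chain property I would argue that the multiplicative perturbation $\mathbb{I}\{i>\prog_{1/4}(x)\}\bigl(\tfrac{\xi}{\theta}-1\bigr)$ can touch only the single index $i_x\eqdef\prog_{1/4}(x)+1$: for $i\le\prog_{1/4}(x)$ the indicator vanishes, while for $i>\prog_{1/4}(x)+1$ property~5 of \Cref{appdx-lem:properties-hard-instance} (together with $\prog_{1/2}\le\prog_{1/4}$) gives $[\nabla F_T(x)]_i=0$, and the chained structure of $F_T$ in~\eqref{f9d4cc59-a955-4e12-997e-7870c5314204} gives $[\nabla^2 F_T(x)]_{i,\cdot}=0$; the two defining conditions of \Cref{appdx-def:probability-zero-chain} then follow exactly as in \citet[Lemma~16]{pmlr-v125-arjevani20a}, the probability of ``discovering'' $i_x$ being $\Proba{\xi\ne 0}=\theta$.

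For the moment bounds the key observation is that both estimator errors factor through the coordinate projection $P_x\eqdef\diag\bigl(\mathbb{I}\{i>\prog_{1/4}(x)\}\bigr)_{i\in[T]}$, namely $\nabla f_T(x,\xi)-\nabla F_T(x)=\bigl(\tfrac{\xi}{\theta}-1\bigr)P_x\nabla F_T(x)$ and $\nabla^2 f_T(x,\xi)-\nabla^2 F_T(x)=\bigl(\tfrac{\xi}{\theta}-1\bigr)P_x\nabla^2 F_T(x)$. Since $\normop{P_x}\le 1$, the Hessian error obeys $\normop{\nabla^2 f_T(x,\xi)-\nabla^2 F_T(x)}\le\bigl|\tfrac{\xi}{\theta}-1\bigr|\,\normop{\nabla^2 F_T(x)}\le\ell_1\,\bigl|\tfrac{\xi}{\theta}-1\bigr|$, using that the $\ell_1$-Lipschitzness of $\nabla F_T$ (property~2 of \Cref{appdx-lem:properties-hard-instance}) is equivalent to $\normop{\nabla^2 F_T(\cdot)}\le\ell_1$. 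For the gradient error, $P_x\nabla F_T(x)$ has its only nonzero entry at $i_x$, so $\norm{\nabla f_T(x,\xi)-\nabla F_T(x)}=\bigl|\tfrac{\xi}{\theta}-1\bigr|\,\abs{[\nabla F_T(x)]_{i_x}}\le\gamma_\infty\,\bigl|\tfrac{\xi}{\theta}-1\bigr|$ by property~3. It then remains to take $\alpha$-th powers ($\alpha=p$ for the gradient, $\alpha=q$ for the Hessian) and evaluate the scalar moment: with $\xi\sim\Ber(\theta)$,
\[ \ExpSub{\xi}{\abs{\tfrac{\xi}{\theta}-1}^{\alpha}}=(1-\theta)+\theta\Bigl(\tfrac{1-\theta}{\theta}\Bigr)^{\alpha}=(1-\theta)\Bigl(1+\bigl(\tfrac{1-\theta}{\theta}\bigr)^{\alpha-1}\Bigr)\le\frac{2(1-\theta)}{\theta^{\alpha-1}}, \]
where the last step uses $(1-\theta)^{\alpha-1}\le1$ and $1\le\theta^{-(\alpha-1)}$, valid for $\theta\in[0,1]$ and $\alpha\ge1$ (hence for both $p\in\intof{1}{2}$ and $q\in\intff{1}{2}$). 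Combining with $\gamma_\infty^p$ and $\ell_1^q$ respectively yields the two claimed bounds, matching the computation leading to~\eqref{717d22a1-a3cf-4c72-9b37-af698eef534b} in the proof of \Cref{appdx-lem:properties-gradient-estimator-1}.

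These computations are routine; the only slightly delicate point is the claim that no row of $\nabla^2 F_T(x)$ past index $\prog_{1/4}(x)+1$ is nonzero, which is used only in the zero-chain statement and is not one of the six items of \Cref{appdx-lem:properties-hard-instance}. It follows from the explicit form~\eqref{f9d4cc59-a955-4e12-997e-7870c5314204}: $\Psi$ and $\Psi'$ vanish on $(-\infty,\tfrac12]$, so whenever $\prog_{1/2}(x)<i-1$ the map $\partial_i F_T$ vanishes in a neighbourhood of $x$, whence $[\nabla^2 F_T(x)]_{i,\cdot}=\nabla(\partial_i F_T)(x)=0$; this is exactly property~5 of \Cref{appdx-lem:properties-hard-instance} read one derivative higher, and is recorded for these estimators in \citet[Lemma~16 and surrounding discussion]{pmlr-v125-arjevani20a}. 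I stress that one-row sparsity of the Hessian error is not needed for the quantitative bound itself — the projection inequality $\normop{P_x\nabla^2 F_T(x)}\le\normop{\nabla^2 F_T(x)}$ handles it regardless — so the only real work is the scalar moment estimate above.
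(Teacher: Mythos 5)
Your proof is correct and follows essentially the same route as the paper's (brief) argument: the gradient bound via the single nonzero error coordinate at $i_x=\prog_{\frac{1}{4}}(x)+1$ and $\norm{\nabla F_T(x)}_\infty\le\gamma_\infty$, the Hessian bound via $\normop{P_x\nabla^2F_T(x)}\le\normop{\nabla^2F_T(x)}\le\ell_1$ (the paper's displayed operator-norm computation together with \Cref{appdx-lem:lipschitz-gradients-implies-bounded-hessian}), and the Bernoulli moment estimate $\ExpSub{\xi}{\abs{\nicefrac{\xi}{\theta}-1}^{\alpha}}\le 2(1-\theta)\theta^{-(\alpha-1)}$. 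Your added remark on the vanishing of the Hessian rows past index $\prog_{\frac{1}{4}}(x)+1$ is a correct supplement that the paper leaves implicit.
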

	
	The proof of~\Cref{appdx-lem:properties-gradient-estimator-2} is very similar to~\Cref{appdx-lem:properties-gradient-estimator-1} (and is simpler since we can directly bound $\mathbb{I}\!\{i > \prog_{\frac{1}{4}}(x)\}$ by $1$). For the bound on the difference $\nabla^2 f_T (x, \xi) - \nabla^2 F_T (x)$ we use~\Cref{appdx-lem:properties-hard-instance}, that is, $F_T$ has $\ell_1$--Lipschitz gradients along with the fact that $F_T$ is twice continuously differentiable which allows to bound the Hessian appropriately (see~\Cref{appdx-lem:lipschitz-gradients-implies-bounded-hessian}). Moreover, by definition of the operator norm $\normop{\cdot}$ (\Cref{appdx-def:operator-norm}) we have
	\begin{alignat*}{2}
		\normop{\left( \left[ \nabla^2 F_T(x) \right]_{i, \cdot} \, \mathbb{I}\!\ens{i > \prog_{\frac{1}{4}}(x)} \right)_{i \in [T]}} \oversetref{Def.}{\ref{appdx-def:operator-norm}}&{=} \sup_{y \in \R^d, \norm{y} = 1} \left( \sum_{i = 1}^T \mathbb{I}\!\ens{i > \prog_{\frac{1}{4}}(x)} \abs{\T{\left[ \nabla^2 F_T(x) \right]_{i, \cdot}} y}^2 \right)^\frac{1}{2} \\
		\oversetrel{rel:46d9f020-9e97-442f-9ab0-af748448e2b1}&{\le} \sup_{y \in \R^d, \norm{y} = 1} \left( \sum_{i = 1}^T \abs{\T{\left[ \nabla^2 F_T(x) \right]_{i, \cdot}} y}^2 \right)^\frac{1}{2} \\
		& = \normop{\nabla^2 F_T(x)} \\
		\oversetref{Lem.}{\ref{appdx-lem:lipschitz-gradients-implies-bounded-hessian}}&{\le} \ell_1,
	\end{alignat*}
	where $\left[ \nabla^2 F_T(x) \right]_{i, \cdot}$ denotes the $i$--th row of the Hessian of $F_T$. In~\relref{rel:46d9f020-9e97-442f-9ab0-af748448e2b1} we use $\mathbb{I}\!\ens{i > \prog_{\frac{1}{4}}(x)} \le 1$.
	
	
	\begin{restate-theorem}{\ref{thm:lower-bound-sofo-bounded-central-moments}}
		Given $\Delta, L_1, L_2 > 0$, $\sigma_1, \sigma_2 \ge 0$ and $0 < \eps \le c_1 \min\{\sqrt{L_1 \Delta}, L_2^{\nicefrac{1}{3}} \Delta^{\nicefrac{2}{3}} \}$ then, for any algorithm $A \in \mathcal{A}_{\texttt{zr}}$, there exists a function $f \in \mathcal{F}\left( \Delta, L_1, L_2 \right)$, an oracle and a distribution $(O, \cD) \in \mathcal{O}\left( f, \sigma_1^p, \sigma_2^q \right)$ such that
		\begin{alignat*}{2}
			\mathfrak{m}^{\normalfont\texttt{zr}}_{\eps}\left( K, \Delta, L_1, L_2, \sigma^p_1, \sigma_2^q \right) \ge \Omega(1) \cdot &\min\left\{\frac{L_1 \Delta}{\eps^2} + \frac{L_1 \Delta}{\eps^2} \left( \frac{\sigma_1}{\eps} \right)^{\frac{p}{p - 1}}, \frac{L_2^{\nicefrac{1}{2}} \Delta}{\eps^{\nicefrac{3}{2}}} + \frac{L_2^{\nicefrac{1}{2}} \Delta}{\eps^{\nicefrac{3}{2}}} \left( \frac{\sigma_1}{\eps} \right)^{\frac{p}{p - 1}}, \right. \\
			&\qquad \left.\min\ens{\frac{L_1 \Delta}{\eps^2}, \frac{L_2^{\nicefrac{1}{2}} \Delta}{\eps^{\nicefrac{3}{2}}}} + \frac{\Delta \sigma_2}{\eps^2} + \frac{\Delta \sigma_2}{\eps^2} \left( \frac{\sigma_1}{\eps} \right)^{\frac{p}{q (p - 1)}} + \left( \frac{\sigma_1}{\eps} \right)^{\frac{p}{p - 1}}\right\}.
		\end{alignat*}
	\end{restate-theorem}
	
	\begin{proof}
		Let $\Delta_0$, $\ell_1$ and $\gamma_{\infty}$ be the numerical constants in~\Cref{appdx-lem:properties-hard-instance} respectively. Additionally, we let the accuracy parameter $\eps > 0$, initial sub-optimality $\Delta \ge 0$, the Lipschitz constants $L_1, L_2 \ge 0$ of the gradients and Hessians of $F$ respectively, and the variance parameters $\sigma_1, \sigma_2 \ge 0$ of the stochastic gradients and Hessians be fixed. Then, for $\alpha, \beta > 0$ two positive real numbers, as in~\citet{arjevani2022lower}, we rescale the function $F_T$ as
		\[ F^{\star}_T \colon x \mapsto \alpha F_T(\beta x). \numberthis\label{6f55ce68-cc03-4d22-a262-4aa3367f948d-2} \]
		
		\begin{itemize}
			\item \textbf{Step 1:} \textit{Ensuring $F^{\star}_T \in \mathcal{F} \left(\Delta, L_1, L_2 \right)$}.
			
			To guarantee the rescaled function $F^{\star}_T$ belongs to the function class $\mathcal{F}(\Delta, L_1, L_2)$, let us compute the initial sub-optimality $\Delta$ and check if $F_T$ has $L_1$--Lipschitz gradients and $L_2$--Lipschitz Hessians. Assuming the algorithm \texttt{A} starts at $x^0 = 0$ we have
			\[ F^{\star}_T(0) - \inf_{x \in \R^T} F^{\star}_T(x) \oversetlab{\eqref{6f55ce68-cc03-4d22-a262-4aa3367f948d-2}}{=} \alpha \left( F_T(0) - \inf_{x \in \R^T} F_T(x) \right) \oversetref{Lem.}{\ref{appdx-lem:properties-hard-instance}}{\le} \alpha \Delta_0 T, \]
			thus, it's enough to take $T = \Floor{\frac{\Delta}{\alpha \Delta_0}}$ so as to ensure $F^{\star}_T(0) - \inf_{x \in \R^T} F^{\star}_T(x) \le \Delta$. Moreover, for any $x, y \in \R^T$,
			\begin{alignat*}{2}
				\norm{\nabla F^{\star}_T(x) - \nabla F^{\star}_T(y)} & = \alpha \beta \norm{\nabla F_T(\beta x) - \nabla F_T(\beta y)} \\
				\oversetref{Lem.}{\ref{appdx-lem:properties-hard-instance}}&{\le} \alpha \beta \ell_1 \norm{\beta x - \beta y} \\
				& = \alpha \beta^2 \ell_1 \norm{x - y},
			\end{alignat*}
			and
			\begin{alignat*}{2}
				\normop{\nabla^2 F^{\star}_T(x) - \nabla^2 F^{\star}_T(y)} & = \alpha \beta^2 \norm{\nabla^2 F_T(\beta x) - \nabla^2 F_T(\beta y)} \\
				\oversetref{Lem.}{\ref{appdx-lem:properties-hard-instance}}&{\le} \alpha \beta^2 \ell_2 \norm{\beta x - \beta y} \\
				& = \alpha \beta^3 \ell_2 \norm{x - y},
			\end{alignat*}
			so it suffices to take $0 < \alpha \beta^2 \le \frac{L_1}{\ell_1}$ and $0 < \alpha \beta^3 \le \frac{L_2}{\ell_2}$ to ensure the function $F^{\star}_T$ has $L_1$--Lipschitz gradients and $L_2$--Lipschitz Hessians. Consequently, we have $F^{\star}_T \in \mathcal{F}(\Delta, L_1, L_2)$, as desired.
			
			\item \textbf{Step 2:} \textit{Analysis of the Protocol and Choice for $\beta$}.
			
			Following the proof of~\citet[Theorem~1]{arjevani2022lower}, according to~\Cref{appdx-lem:properties-gradient-estimator-2}, for all points $x \in \R^T$ such that $\prog_0(x) < T$ we have $\prog_0(\beta x) = \prog_0(x) < T$ so
			\[ \norm{\nabla F^{\star}_T(x)} \oversetlab{\eqref{6f55ce68-cc03-4d22-a262-4aa3367f948d-2}}{=} \alpha \beta \norm{\nabla F_T(\beta x)} \oversetref{Lem.}{\ref{appdx-lem:properties-gradient-estimator-2}}{>} \alpha \beta, \numberthis\label{191cd1ea-8e5f-4b85-8e63-f31fd25f3b17-2} \]
			and we need to guarantee that
			\[ \norm{\nabla F^{\star}_T(x)} > 2\eps, \numberthis\label{f01cde9c-e4f8-4c8b-b9b4-7e86b18da9c1} \]
			for all $x \in \R^T$ with $\prog_0(x) < T$ which, given~\eqref{191cd1ea-8e5f-4b85-8e63-f31fd25f3b17-2}, can be done if we set $\alpha = \frac{2 \eps}{\beta}$.
			
			
			
			\item \textbf{Step 3:} \textit{The Oracle Class and~\Cref{ass:p-bounded-central-moment-gradient,ass:q-bounded-central-moment-hessian}}.
			
			It remains to choose the parameter $\theta \in \intof{0}{1}$ and $\beta > 0$ such that $0 < \alpha \beta^2 = 2 \eps \beta \le \frac{L_1}{\ell_1}$, $0 < \alpha \beta^3 = 2 \eps \beta^2 \le \frac{L_2}{\ell_2}$ and the gradient estimator $g^{\star}_T$ of $\nabla F^{\star}_T$ and Hessian estimator $\nabla g^{\star}_T$ of $\nabla^2 F^{\star}_T$ satisfies~\Cref{ass:p-bounded-central-moment-gradient,ass:q-bounded-central-moment-hessian}. Computing the $p$--th central moment of $g^{\star}_T$ gives, for all $x \in \R^T$
			\begin{alignat*}{2}
				\E{\norm{g^{\star}_T(x, \xi) - \nabla F^{\star}_T(x)}^p} \oversetlab{\eqref{6f55ce68-cc03-4d22-a262-4aa3367f948d}}&{=} \left( \alpha \beta \right)^p \E{\norm{g_T(\beta x, \xi) - \nabla F_T(\beta x)}^p} \\
				\oversetref{Lem.}{\ref{appdx-lem:properties-gradient-estimator-2}}&{\le} \frac{2 \left( \gamma_{\infty} \alpha \beta \right)^p (1 - \theta)}{\theta^{p - 1}} \\
				\oversetrel{rel:0fc4ad15-661f-4519-87ce-6459c8d1c74b-2}&{\le} \frac{\left( 2 \gamma_{\infty} \alpha \beta \right)^p}{\theta^{p - 1}} \\
				\oversetrel{rel:0fc4ad15-661f-4519-87ce-6459c8d1c74c-2}&{=} \frac{\left( 4 \gamma_{\infty} \eps \right)^p}{\theta^{p - 1}}, \numberthis\label{29bb8bb5-e6e5-482d-ab46-fab6357955ec-2}
			\end{alignat*}
			where in~\relref{rel:0fc4ad15-661f-4519-87ce-6459c8d1c74b-2} we use the fact that $0 < \theta \le 1$ and $p \ge 1$ so that $2 \le 2^p$ while in~\relref{rel:0fc4ad15-661f-4519-87ce-6459c8d1c74c-2} we use $\alpha = \frac{2 \eps}{\beta}$. Moreover, following the sames lines as in~\eqref{29bb8bb5-e6e5-482d-ab46-fab6357955ec-2} we have
			\begin{alignat*}{2}
				\E{\norm{\nabla g^{\star}_T(x, \xi) - \nabla^2 F^{\star}_T(x)}^q} \oversetlab{\eqref{6f55ce68-cc03-4d22-a262-4aa3367f948d}}&{=} \left( \alpha \beta^2 \right)^q \E{\norm{\nabla g_T(\beta x, \xi) - \nabla^2 F_T(\beta x)}^q} \\
				\oversetref{Lem.}{\ref{appdx-lem:properties-gradient-estimator-2}}&{\le} \frac{2 \left( \ell_1 \alpha \beta^2 \right)^q (1 - \theta)}{\theta^{q - 1}} \\
				& \le \frac{\left( 2 \ell_1 \alpha \beta^2 \right)^q}{\theta^{q - 1}}. \numberthis\label{29bb8bb5-e6e5-482d-ab46-fab6357955ec-3}
			\end{alignat*}
			
			From~\eqref{29bb8bb5-e6e5-482d-ab46-fab6357955ec-2}, so as to satisfy~\Cref{ass:p-bounded-central-moment-gradient,ass:q-bounded-central-moment-hessian} it's enough to take $\theta = \min\ens{1, \theta_1}$ such that
			\[ \theta_1^{p - 1} \ge \left( \frac{4 \gamma_{\infty} \eps}{\sigma_1} \right)^p, \,\, \text{ so } \,\, \theta_1 \ge \left( \frac{4 \gamma_{\infty} \eps}{\sigma_1} \right)^{\frac{p}{p - 1}}, \numberthis\label{46a820b5-b4a2-4c73-b77f-41e02836f2e0-2} \]
			hence
			\[ \theta = \min\ens{1, \left( \frac{4 \gamma_{\infty} \eps}{\sigma_1} \right)^{\frac{p}{p - 1}}}, \numberthis\label{b3242270-31d2-4d6a-a567-27370b26f605-4} \]
			while for~\eqref{29bb8bb5-e6e5-482d-ab46-fab6357955ec-3} we need to have $0 < \alpha \beta^2 \le \frac{\sigma_2 \theta^{\frac{q - 1}{q}}}{2 \ell_1}$ thus we fix $\beta$ such that
			\[ \alpha \beta^2 = 2 \eps \beta \le \min\ens{\frac{L_1}{\ell_1}, \frac{\sigma_2 \theta^{\frac{q - 1}{q}}}{2 \ell_1}} \,\, \text{ and } \,\, \alpha \beta^3 = 2 \eps \beta^2 \le \frac{L_2}{\ell_2}, \]
			that is to say
			\[ \beta = \min\ens{\frac{L_1}{2 \ell_1 \eps}, \sqrt{\frac{L_2}{2 \ell_2 \eps}}, \frac{\sigma_2 \theta^{\frac{q - 1}{q}}}{4 \ell_1 \eps}}. \numberthis\label{979cbf5c-3220-43c2-aaca-043c521969e6} \]
			
			\item \textbf{Step 4:} \textit{Lower Bounding $\mathfrak{m}^{\normalfont\texttt{zr}}_{\eps}\left( K, \Delta, L_1, L_2, \sigma^p_1, \sigma_2^q \right)$}.
			
			Continuing on \textbf{step 2}, by~\Cref{appdx-lem:zero-chain-progress} we know that with probability at least $\frac{1}{2}$ it holds that for all integer $0 \le t \le \frac{T - 1}{2 \theta}$ and all $k \in [K]$ we have
			\[ \norm{\nabla F^{\star}_T\left( x^{(t, k)}_{\texttt{A}[\texttt{O}_F]} \right)} > 2\eps, \,\, \text{ hence } \,\, \E{\norm{\nabla F^{\star}_T\left( x^{(t, k)}_{\texttt{A}[\texttt{O}_F]} \right)}} > \eps, \]
			from where it follows that
			\begin{alignat*}{2}
				\mathfrak{m}^{\normalfont\texttt{zr}}_{\eps}\left( K, \Delta, L_1, L_2, \sigma^p_1, \sigma_2^q \right) > \frac{T - 1}{2 \theta} = \frac{1}{2 \theta} \left( \Floor{\frac{\Delta}{\alpha \Delta_0}} - 1 \right) & = \frac{1}{2 \theta} \left( \Floor{\frac{\Delta \beta}{2 \Delta_0 \eps}} - 1 \right), \numberthis\label{b3242270-31d2-4d6a-a567-27370b26f605-3}
			\end{alignat*}
			we then distinguish two cases:
			
			\paragraph{Case 1:} if $\frac{\Delta \beta}{2 \Delta_0 \eps} \ge 3$ then, using the inequality $\Floor{x} - 1 \ge \frac{x}{2}$, valid for all real number $x \ge 3$ we obtain
			\begin{alignat*}{2}
				&\mathfrak{m}^{\normalfont\texttt{zr}}_{\eps}\left( K, \Delta, L_1, L_2, \sigma^p_1, \sigma_2^q \right) 
				\oversetlab{\eqref{b3242270-31d2-4d6a-a567-27370b26f605-3}}{>} \frac{1}{4 \theta} \cdot \frac{\Delta \beta}{2 \Delta_0 \eps}  = \frac{1}{16 \Delta_0 \ell_1} \cdot \frac{\Delta}{\eps} \min\ens{\frac{L_1}{\eps \theta}, \frac{1}{\theta} \sqrt{\frac{2 L_2}{\ell_2 \eps}}, \frac{\sigma_2 \theta^{-\frac{1}{q}}}{2 \eps}} \\
				&\qquad\oversetlab{\eqref{b3242270-31d2-4d6a-a567-27370b26f605-4}}{\ge} \frac{1}{32 \Delta_0 \ell_1} \cdot \frac{\Delta}{\eps} \min\Bigg\{\frac{L_1}{\eps} \left( 1 + \left( \frac{\sigma_1}{4 \gamma_{\infty} \eps} \right)^{\frac{p}{p - 1}} \right),\\
				&\qquad\qquad\qquad\qquad  \sqrt{\frac{2 L_2}{\ell_2 \eps}} \left( 1 + \left( \frac{\sigma_1}{4 \gamma_{\infty} \eps} \right)^{\frac{p}{p - 1}} \right),\frac{\sigma_2}{2 \eps} \left( 1 + \left( \frac{\sigma_1}{4 \gamma_{\infty} \eps} \right)^{\frac{p}{q (p - 1)}} \right)\Bigg\} \\
				& = \Omega(1) \cdot \min\Bigg\{\frac{L_1 \Delta}{\eps^2} + \frac{L_1 \Delta}{\eps^2} \left( \frac{\sigma_1}{\eps} \right)^{\frac{p}{p - 1}}, \frac{L_2^{\nicefrac{1}{2}} \Delta}{\eps^{\nicefrac{3}{2}}} + \frac{L_2^{\nicefrac{1}{2}} \Delta}{\eps^{\nicefrac{3}{2}}} \left( \frac{\sigma_1}{\eps} \right)^{\frac{p}{p - 1}},\\
				&\qquad\qquad\qquad\qquad\frac{\Delta \sigma_2}{\eps^2} + \frac{\Delta \sigma_2}{\eps^2} \left( \frac{\sigma_1}{\eps} \right)^{\frac{p}{q (p - 1)}}\Bigg\},
				\numberthis\label{3b256059-e30a-4307-b1b7-a8e09886c0f1-2}
			\end{alignat*}
			and combining the lower bound~\eqref{3b256059-e30a-4307-b1b7-a8e09886c0f1-2} with~\Cref{appdx-lem:lower-bound-global-stichastic-model-mean-squared-smoothness-p-BCM}, using the choice for $c'$ provided below, gives:
			\begin{alignat*}{2}
				&\mathfrak{m}^{\normalfont\texttt{zr}}_{\eps}\left( K, L_1, L_2, \Delta, \sigma_1^p, \sigma_2^q \right) \ge \Omega(1) \cdot \min\Bigg\{\frac{L_1 \Delta}{\eps^2} + \frac{L_1 \Delta}{\eps^2} \left( \frac{\sigma_1}{\eps} \right)^{\frac{p}{p - 1}}, \frac{L_2^{\nicefrac{1}{2}} \Delta}{\eps^{\nicefrac{3}{2}}} + \frac{L_2^{\nicefrac{1}{2}} \Delta}{\eps^{\nicefrac{3}{2}}} \left( \frac{\sigma_1}{\eps} \right)^{\frac{p}{p - 1}},\\
				&\qquad\qquad\qquad\qquad\qquad\qquad\qquad \qquad\qquad\left( \frac{\sigma_1}{\eps} \right)^{\frac{p}{p - 1}} + \frac{\Delta \sigma_2}{\eps^2} + \frac{\Delta \sigma_2}{\eps^2} \left( \frac{\sigma_1}{\eps} \right)^{\frac{p}{q (p - 1)}}\Bigg\}, \numberthis\label{5aea445e-d78f-404f-ac66-dbff30f0a00c-3}
			\end{alignat*}
			since from the assumption $0 < \eps \le c' \min\ens{\sqrt{L_1 \Delta}, L_2^{\nicefrac{1}{3}} \Delta^{\nicefrac{2}{3}}}$ we have
			\[ \frac{L_1 \Delta}{\eps^2} \left( \frac{\sigma_1}{\eps} \right)^{\frac{p}{p - 1}} \gtrsim \left( \frac{\sigma_1}{\eps} \right)^{\frac{p}{p - 1}} \,\, \text{ and } \,\, \frac{L_2^{\nicefrac{1}{2}} \Delta}{\eps^{\nicefrac{3}{2}}} \left( \frac{\sigma_1}{\eps} \right)^{\frac{p}{p - 1}} \gtrsim \left( \frac{\sigma_1}{\eps} \right)^{\frac{p}{p - 1}}, \]
			and we can forget $\left( \frac{\sigma_1}{\eps} \right)^{\frac{p}{p - 1}}$ in the first two terms of the $\min$.
			
			\paragraph{Case 2:} if $\frac{\Delta \beta}{2 \Delta_0 \eps} < 3$, we choose the universal constant $c'$ as
			\[ 0 < c' = \min\ens{\left( 12 \Delta_0 \ell_1 \right)^{-\frac{1}{2}}, \left( 6 \sqrt{2} \Delta_0 \ell_2^{\nicefrac{1}{2}} \right)^{-\frac{2}{3}}} < \frac{1}{8}, \numberthis\label{e4896e63-043b-4b68-b23e-bb9b172efed4} \]
			then, using~\Cref{appdx-rem:improve-lower-bound-global-stichastic-model-mean-squared-smoothness-p-BCM}, dividing both sides of $\frac{\Delta \beta}{2 \Delta_0 \eps} < 3$ by our choice of $\theta > 0$ yields
			\[ \frac{1}{6 \Delta_0} \cdot \frac{\Delta \beta}{\eps} \theta^{-1} \le \theta^{-1} = \max\ens{1, \left( \frac{\sigma_1}{4 \gamma_{\infty} \eps} \right)^{\frac{p}{p - 1}}} \le \max\ens{1, \left( \frac{\sigma_1}{\eps} \right)^{\frac{p}{p - 1}}}, \numberthis\label{2b14b2b5-0770-4ca0-b177-61bcd5125db3-2} \]
			where the last inequality follows from the fact that $\frac{p}{p - 1} > 0$ and $4 \gamma_{\infty} = 92 \ge 1$. Using the definition of $\beta$ we obtain
			\begin{alignat*}{2}
				&\max\ens{1, \left( \frac{\sigma_1}{\eps} \right)^{\frac{p}{p - 1}}} \oversetlab{\eqref{2b14b2b5-0770-4ca0-b177-61bcd5125db3-2}+\eqref{979cbf5c-3220-43c2-aaca-043c521969e6}}{\ge} \frac{1}{6 \Delta_0} \cdot \min\ens{\frac{L_1 \Delta}{2 \ell_1 \theta \eps^2}, \frac{\Delta}{\theta \eps}\sqrt{\frac{L_2}{2 \ell_2 \eps}}, \frac{\Delta \sigma_2 \theta^{-\frac{1}{q}}}{4 \ell_1 \eps^2}} \\
				\oversetlab{\eqref{b3242270-31d2-4d6a-a567-27370b26f605-4}}&{\ge} \frac{1}{24 \Delta_0 \max\ens{\ell_1, \ell_2^{\nicefrac{1}{2}}}} \cdot \min\left\{\frac{L_1 \Delta}{\eps^2} + \frac{L_1 \Delta}{\eps^2} \left( \frac{\sigma_1}{4 \gamma_{\infty} \eps} \right)^{\frac{p}{p - 1}}, \right. \\
				&\qquad\qquad\qquad\qquad\qquad\qquad\qquad\quad\left. \frac{L_2^{\nicefrac{1}{2}} \Delta}{\eps^{\nicefrac{3}{2}}} + \frac{L_2^{\nicefrac{1}{2}} \Delta}{\eps^{\nicefrac{3}{2}}} \left( \frac{\sigma_1}{4 \gamma_{\infty} \eps} \right)^{\frac{p}{p - 1}}, \frac{\Delta \sigma_2}{\eps^2} + \frac{\Delta \sigma_2}{\eps^2} \left( \frac{\sigma_1}{4 \gamma_{\infty} \eps} \right)^{\frac{p}{q (p - 1)}}\right\} \\
				& \ge C_{p, q} \min\ens{\frac{L_1 \Delta}{\eps^2} + \frac{L_1 \Delta}{\eps^2} \left( \frac{\sigma_1}{\eps} \right)^{\frac{p}{p - 1}}, \frac{L_2^{\nicefrac{1}{2}} \Delta}{\eps^{\nicefrac{3}{2}}} + \frac{L_2^{\nicefrac{1}{2}} \Delta}{\eps^{\nicefrac{3}{2}}} \left( \frac{\sigma_1}{\eps} \right)^{\frac{p}{p - 1}}, \frac{\Delta \sigma_2}{\eps^2} + \frac{\Delta \sigma_2}{\eps^2} \left( \frac{\sigma_1}{\eps} \right)^{\frac{p}{q (p - 1)}}},
			\end{alignat*}
			where $C_{p, q} > 0$ is an universal constant depending only on $p$ and $q$. Moreover, using~\Cref{appdx-lem:lower-bound-global-stichastic-model-mean-squared-smoothness-p-BCM} (and more precisely~\Cref{appdx-rem:improve-lower-bound-global-stichastic-model-mean-squared-smoothness-2-hessian}), since $0 < \eps \le \frac{1}{8} \sqrt{L_1 \Delta}$ then there exists an universal constant $C_p > 0$ (which depends only on $p$) such that
			\[ \mathfrak{m}^{\normalfont\texttt{zr}}_{\eps}\left( K, L_1, L_2, \Delta, \sigma_1^p, \sigma_2^q \right) \ge C_p \max\ens{1, \left( \frac{\sigma_1}{\eps} \right)^{\frac{p}{p - 1}}}, \]
			hence
			\begin{alignat*}{2}
				&\mathfrak{m}^{\normalfont\texttt{zr}}_{\eps}\left( K, L_1, L_2, \Delta, \sigma_1^p, \sigma_2^q \right) \\
				&\ge \Omega(1) \cdot \Bigg( \left( \frac{\sigma_1}{\eps} \right)^{\frac{p}{p - 1}} + \min\Bigg\{\frac{L_1 \Delta}{\eps^2} + \frac{L_1 \Delta}{\eps^2} \left( \frac{\sigma_1}{\eps} \right)^{\frac{p}{p - 1}}, \\
				&\qquad\qquad\qquad\qquad\qquad\qquad\qquad \frac{L_2^{\nicefrac{1}{2}} \Delta}{\eps^{\nicefrac{3}{2}}} + \frac{L_2^{\nicefrac{1}{2}} \Delta}{\eps^{\nicefrac{3}{2}}} \left( \frac{\sigma_1}{\eps} \right)^{\frac{p}{p - 1}}, \frac{\Delta \sigma_2}{\eps^2} + \frac{\Delta \sigma_2}{\eps^2} \left( \frac{\sigma_1}{\eps} \right)^{\frac{p}{q (p - 1)}}\Bigg\} \Bigg) \\
				& \ge \Omega(1) \cdot \min\Bigg\{\frac{L_1 \Delta}{\eps^2} + \frac{L_1 \Delta}{\eps^2} \left( \frac{\sigma_1}{\eps} \right)^{\frac{p}{p - 1}}, \frac{L_2^{\nicefrac{1}{2}} \Delta}{\eps^{\nicefrac{3}{2}}} + \frac{L_2^{\nicefrac{1}{2}} \Delta}{\eps^{\nicefrac{3}{2}}} \left( \frac{\sigma_1}{\eps} \right)^{\frac{p}{p - 1}}, \\
				&\qquad\qquad\qquad\qquad\qquad\qquad\qquad
				\left( \frac{\sigma_1}{\eps} \right)^{\frac{p}{p - 1}} + \frac{\Delta \sigma_2}{\eps^2} + \frac{\Delta \sigma_2}{\eps^2} \left( \frac{\sigma_1}{\eps} \right)^{\frac{p}{q (p - 1)}}\Bigg\}, \numberthis\label{5aea445e-d78f-404f-ac66-dbff30f0a00c-4}
			\end{alignat*}
			as $0 < \eps \le c' \min\ens{\sqrt{L_1 \Delta}, L_2^{\nicefrac{1}{3}} \Delta^{\nicefrac{2}{3}}}$ implies
			\[ \frac{L_1 \Delta}{\eps^2} \left( \frac{\sigma_1}{\eps} \right)^{\frac{p}{p - 1}} \gtrsim \left( \frac{\sigma_1}{\eps} \right)^{\frac{p}{p - 1}} \,\, \text{ and } \,\, \frac{L_2^{\nicefrac{1}{2}} \Delta}{\eps^{\nicefrac{3}{2}}} \left( \frac{\sigma_1}{\eps} \right)^{\frac{p}{p - 1}} \gtrsim \left( \frac{\sigma_1}{\eps} \right)^{\frac{p}{p - 1}}, \]
			and we can forget $\left( \frac{\sigma_1}{\eps} \right)^{\frac{p}{p - 1}}$ in the first two terms of the $\min$.

			\item \textbf{Step 5:} \textit{A Last Bound: the Case $\theta = 1$}.
			
			Observe that, if instead of taking $\theta$ as in~\eqref{b3242270-31d2-4d6a-a567-27370b26f605-4}, we choose directly $\theta = 1$ then, thanks to~\Cref{appdx-lem:properties-gradient-estimator-2} we immediately have
			\[ \E{\norm{g^{\star}_T(x, \xi) - \nabla F^{\star}_T(x)}^p} = 0, \]
			and
			\[ \E{\norm{\nabla g^{\star}_T(x, \xi) - \nabla^2 F^{\star}_T(x)}^q} = 0, \]
			so~\Cref{ass:p-bounded-central-moment-gradient} and~\Cref{ass:q-bounded-central-moment-hessian} are satisfied. Hence, if we set 
			\[ T = \Floor{\frac{\Delta}{\alpha \Delta_0}}, \quad \alpha = \frac{2 \eps}{\beta}, \quad \text{ and } \quad \beta = \min\ens{\frac{L_1}{2 \ell_1 \eps}, \sqrt{\frac{L_2}{2 \ell_2 \eps}}}, \]
			then $F_T^{\star} \in \mathcal{F}(\Delta, L_1, L_2)$ and the inequality~\eqref{f01cde9c-e4f8-4c8b-b9b4-7e86b18da9c1} is satisfied. Hence, with probability at least $\frac{1}{2}$ it holds that for all integer $0 \le t \le \frac{T - 1}{2 \theta}$ and all $k \in [K]$ we have
			\[ \norm{\nabla F^{\star}_T\left( x^{(t, k)}_{\texttt{A}[\texttt{O}_F]} \right)} > 2\eps, \,\, \text{ hence } \,\, \E{\norm{\nabla F^{\star}_T\left( x^{(t, k)}_{\texttt{A}[\texttt{O}_F]} \right)}} > \eps, \]
			from where we obtain also
			\begin{alignat*}{2}
				\mathfrak{m}^{\normalfont\texttt{zr}}_{\eps}\left( K, \Delta, L_1, L_2, \sigma^p_1, \sigma_2^q \right) > \frac{T - 1}{2 \theta} = \frac{1}{2 \theta} \left( \Floor{\frac{\Delta}{\alpha \Delta_0}} - 1 \right) & = \frac{1}{2 \theta} \left( \Floor{\frac{\Delta \beta}{2 \Delta_0 \eps}} - 1 \right), \numberthis\label{b3242270-31d2-4d6a-a567-27370b26f605-bis-bis-bis}
			\end{alignat*}
			and by our assumption on $\eps$ (see~\eqref{e4896e63-043b-4b68-b23e-bb9b172efed4}), we assumed
			\[ 0 < \eps < c' \min\ens{\sqrt{L_1 \Delta}, L_2^{\nicefrac{1}{3}} \Delta^{\nicefrac{2}{3}}} \le \min\ens{\sqrt{\frac{L_1 \Delta}{12 \Delta_0 \ell_1}}, \left( \frac{L_2^{\nicefrac{1}{2}} \Delta}{6 \sqrt{2} \Delta_0 \ell_2^{\nicefrac{1}{2}}} \right)^{\frac{2}{3}}}, \]
			which is enough to imply the inequality
			\[ \min\ens{\frac{L_1 \Delta}{4 \Delta_0 \ell_1 \eps^2}, \frac{L_2^{\nicefrac{1}{2}} \Delta}{2 \sqrt{2} \Delta_0 \ell_2^{\nicefrac{1}{2}} \eps^{\nicefrac{3}{2}}}} = \frac{\Delta \beta}{2 \Delta_0 \eps} \ge 3, \]
			thus, we have
			\begin{alignat*}{2}
				\mathfrak{m}^{\normalfont\texttt{zr}}_{\eps}\left( K, L_1, L_2, \Delta, \sigma_1^p, \sigma_2^q \right) \oversetlab{\eqref{b3242270-31d2-4d6a-a567-27370b26f605-bis-bis-bis}}&{>} \frac{1}{4 \theta} \cdot \frac{\Delta \beta}{2 \Delta_0 \eps^2} \\
				& = \frac{1}{8 \Delta_0} \cdot \frac{\Delta}{\eps}\min\ens{\frac{L_1}{2 \ell_1 \eps}, \sqrt{\frac{L_2}{2 \ell_2 \eps}}} \\
				& = \Omega(1) \cdot \min\ens{\frac{L_1 \Delta}{\eps^2}, \frac{L_2^{\nicefrac{1}{2}} \Delta}{\eps^{\nicefrac{3}{2}}}},
			\end{alignat*}
			and, combining this bound with~\eqref{5aea445e-d78f-404f-ac66-dbff30f0a00c-3} and~\eqref{5aea445e-d78f-404f-ac66-dbff30f0a00c-4} respectively leads to the desired result, i.e.,
			\begin{alignat*}{2}
				&\mathfrak{m}^{\normalfont\texttt{zr}}_{\eps}\left( K, \Delta, L_1, L_2, \sigma^p_1, \sigma_2^q \right)\\
				&\quad\ge \Omega(1) \cdot \min\left\{\frac{L_1 \Delta}{\eps^2} + \frac{L_1 \Delta}{\eps^2} \left( \frac{\sigma_1}{\eps} \right)^{\frac{p}{p - 1}}, \frac{L_2^{\nicefrac{1}{2}} \Delta}{\eps^{\nicefrac{3}{2}}} + \frac{L_2^{\nicefrac{1}{2}} \Delta}{\eps^{\nicefrac{3}{2}}} \left( \frac{\sigma_1}{\eps} \right)^{\frac{p}{p - 1}}, \right. \\
				&\qquad\qquad\qquad \left.\min\ens{\frac{L_1 \Delta}{\eps^2}, \frac{L_2^{\nicefrac{1}{2}} \Delta}{\eps^{\nicefrac{3}{2}}}} + \frac{\Delta \sigma_2}{\eps^2} + \frac{\Delta \sigma_2}{\eps^2} \left( \frac{\sigma_1}{\eps} \right)^{\frac{p}{q (p - 1)}} + \left( \frac{\sigma_1}{\eps} \right)^{\frac{p}{p - 1}}\right\}.
			\end{alignat*}
		\end{itemize}
	\end{proof}
	
	\newpage
	\section{Missing Proofs in Section~\ref{sec:optimal-method-nsgd-mvr}}\label{appdx:proof-nsgd-mvr}
	
	\subsection{Auxiliary Lemmas}
	
	\begin{lemma}[A Descent Lemma]\label{appdx-lem:descent-lemma-nsgd-mvr}
		Under~\Cref{ass:mean-squared-smoothness,ass:p-bounded-central-moment-gradient}, for any choice of stepsize $\gamma > 0$, and for any $t \in \Int{0}{T - 1}$ we have
		\[ F(x_{t + 1}) \le F(x_t) + 2 \gamma \norm{\hat{e_t}} - \gamma \norm{\nabla F(x_t)} + \frac{\gamma^2 \bar{L}}{2}, \numberthis\label{2331e9e8-3c3e-4a5b-b3b6-50d71097a688} \]
		where $\hat{e}_t \eqdef g_t - \nabla F(x_t)$ is the error term.
	\end{lemma}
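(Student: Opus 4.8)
The plan is to run the standard \say{smooth descent plus normalization} argument, which for normalized updates turns the inexactness of the estimator $g_t$ into the additive term $2\gamma\norm{\hat e_t}$. Before that I first need the deterministic $\bar L$--smoothness of $F$ itself. This follows from~\Cref{ass:mean-squared-smoothness}: for all $x,y\in\R^d$, by Jensen's inequality applied to the concave map $t\mapsto t^{1/q}$ (valid since $q\ge 1$) and unbiasedness of $\nabla f(\cdot,\xi)$,
\[
    \norm{\nabla F(x)-\nabla F(y)} = \norm{\ExpSub{\xi\sim\cD}{\nabla f(x,\xi)-\nabla f(y,\xi)}} \le \left(\ExpSub{\xi\sim\cD}{\norm{\nabla f(x,\xi)-\nabla f(y,\xi)}^q}\right)^{\frac{1}{q}} \le \bar L\norm{x-y}.
\]
Hence $F$ is $\bar L$--smooth, and the quadratic upper bound $F(y)\le F(x)+\ps{\nabla F(x)}{y-x}+\frac{\bar L}{2}\norm{y-x}^2$ holds.

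Next I apply this bound along the iteration $x_{t+1}=x_t-\gamma\frac{g_t}{\norm{g_t}}$ (assuming $g_t\neq 0$; the degenerate case $g_t=0$ gives $x_{t+1}=x_t$ and $\norm{\nabla F(x_t)}=\norm{\hat e_t}$, so~\eqref{2331e9e8-3c3e-4a5b-b3b6-50d71097a688} holds trivially). Since $\norm{x_{t+1}-x_t}^2=\gamma^2$, this yields
\[
    F(x_{t+1})\le F(x_t)-\gamma\ps{\nabla F(x_t)}{\tfrac{g_t}{\norm{g_t}}}+\frac{\gamma^2\bar L}{2}.
\]
The only remaining work is to lower-bound the inner product. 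Writing $\nabla F(x_t)=g_t-\hat e_t$ and using $\ps{g_t}{\frac{g_t}{\norm{g_t}}}=\norm{g_t}$ together with Cauchy--Schwarz, $\lvert\ps{\hat e_t}{\frac{g_t}{\norm{g_t}}}\rvert\le\norm{\hat e_t}$, gives $\ps{\nabla F(x_t)}{\frac{g_t}{\norm{g_t}}}\ge\norm{g_t}-\norm{\hat e_t}$; then the reverse triangle inequality $\norm{g_t}=\norm{\nabla F(x_t)+\hat e_t}\ge\norm{\nabla F(x_t)}-\norm{\hat e_t}$ yields $\ps{\nabla F(x_t)}{\frac{g_t}{\norm{g_t}}}\ge\norm{\nabla F(x_t)}-2\norm{\hat e_t}$. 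Plugging this into the previous display produces exactly~\eqref{2331e9e8-3c3e-4a5b-b3b6-50d71097a688}.

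I do not anticipate a genuine obstacle here: the argument is entirely elementary once $\bar L$--smoothness of $F$ is in hand, and the $p$--BCM assumption is invoked only implicitly (it guarantees the estimator and error term $\hat e_t$ are well defined). The one point deserving a line of care is the normalization being undefined when $g_t=0$, which is handled by the trivial case above; otherwise every step is a direct computation.
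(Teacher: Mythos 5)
Your proof is correct and takes essentially the same route as the paper: establish deterministic $\bar{L}$-smoothness of $F$ via Jensen's inequality and unbiasedness, apply the quadratic descent bound along the normalized step (using $\norm{x_{t+1}-x_t}=\gamma$), and lower-bound the inner product $\ps{\nabla F(x_t)}{g_t/\norm{g_t}}$ by $\norm{\nabla F(x_t)}-2\norm{\hat e_t}$ via Cauchy--Schwarz and the reverse triangle inequality; the paper performs the identical decomposition, merely written as $-\gamma\ps{\nabla F(x_t)-g_t}{g_t/\norm{g_t}}-\gamma\norm{g_t}$ rather than via $\nabla F(x_t)=g_t-\hat e_t$. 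The only substantive difference is that you explicitly treat the degenerate case $g_t=0$, which the paper leaves implicit; this is a harmless (and slightly more careful) addition.
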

	
	\begin{proof}
		According to~\Cref{ass:mean-squared-smoothness} we know that the function $f$ is $\bar{L}$--smooth~\citep{nesterov2018lectures} since by Jensen's inequality (\Cref{lem:jensen-inequality}) applied on the convex function $x \mapsto \norm{x}^q$ (since $q \ge 1$)
		\begin{eqnarray}
			\norm{\nabla F(x) - \nabla F(y)}^q &\oversetref{Ass.}{\ref{ass:p-bounded-central-moment-gradient}}{=}& \norm{\ExpSub{\xi \sim \cD}{\nabla f(x, \xi) - \nabla f(y, \xi)}}^q \notag\\
			&\oversetref{Lem.}{\ref{lem:jensen-inequality}}{\le}& \ExpSub{\xi \sim \cD}{\norm{\nabla f(x, \xi) - \nabla f(y, \xi)}^q}\notag\\ 
			&\oversetref{Ass.}{\ref{ass:mean-squared-smoothness}}{\le}& \bar{L}^q \norm{x - y}^q, \numberthis\label{94828cd2-cf02-492b-a6ec-40b3012f66c1}
		\end{eqnarray}
		thus it holds
		\begin{alignat*}{2}
			F(x_{t + 1}) & \le F(x_t) + \ps{\nabla F(x_t)}{x_{t + 1} - x_t} + \frac{\bar{L}}{2}\sqnorm{x_{t + 1} - x_t} \\
			\oversetrel{rel:f31537d4-b478-4ead-af5c-ada0fbf2d600}&{=} F(x_t) - \gamma \ps{\nabla F(x_t)}{\frac{g_t}{\norm{g_t}}} + \frac{\gamma^2 \bar{L}}{2} \\
			& = F(x_t) - \gamma \ps{\nabla F(x_t) - g_t}{\frac{g_t}{\norm{g_t}}} - \gamma \norm{g_t} + \frac{\gamma^2 \bar{L}}{2} \\
			\oversetref{Lem.}{\ref{lem:cauchy-schwarz}}&{\le} F(x_t) + \gamma \norm{\nabla F(x_t) - g_t} - \gamma \norm{g_t} + \frac{\gamma^2 \bar{L}}{2} \\
			\oversetrel{rel:93f220b1-6765-4926-bdd5-2a465a2ff721}&{\le} F(x_t) + 2 \gamma \norm{\nabla F(x_t) - g_t} - \gamma \norm{\nabla F(x_t)} + \frac{\gamma^2 \bar{L}}{2} \\
			& = F(x_t) + 2 \gamma \norm{\hat{e_t}} - \gamma \norm{\nabla F(x_t)} + \frac{\gamma^2 \bar{L}}{2},
		\end{alignat*}
		where in~\relref{rel:f31537d4-b478-4ead-af5c-ada0fbf2d600} we use the update rule $x_{t + 1} = x_t - \gamma \frac{g_t}{\norm{g_t}}$ while in~\relref{rel:93f220b1-6765-4926-bdd5-2a465a2ff721} we use the triangle inequality. This establishes the desired claim.
	\end{proof}
	
	\begin{lemma}[Another Descent Lemma]\label{appdx-lem:descent-lemma-nsgd-mvr-hess}
		Under~\Cref{ass:L-lipschitz-gradients,ass:p-bounded-central-moment-gradient}, for any choice of stepsize $\gamma > 0$, and for any $t \in \Int{0}{T - 1}$ we have
		\[ F(x_{t + 1}) \le F(x_t) + 2 \gamma \norm{\hat{e_t}} - \gamma \norm{\nabla F(x_t)} + \frac{\gamma^2 L_1}{2}, \numberthis\label{2331e9e8-3c3e-4a5b-b3b6-50d71097a688-2} \]
		where $\hat{e}_t \eqdef g_t - \nabla F(x_t)$ is the error term.
	\end{lemma}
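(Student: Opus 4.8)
The plan is to follow the argument of the proof of~\Cref{appdx-lem:descent-lemma-nsgd-mvr} essentially verbatim, the only difference being that here the smoothness constant $L_1$ is supplied directly by~\Cref{ass:L-lipschitz-gradients}, so the detour through Jensen's inequality (the computation~\eqref{94828cd2-cf02-492b-a6ec-40b3012f66c1} that turned $q$-weak average smoothness into $\bar L$-smoothness of $F$) is not needed. In particular,~\Cref{ass:mean-squared-smoothness} is not invoked at all in this variant.

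First I would write down the standard descent inequality for functions with $L_1$-Lipschitz gradients,
$F(x_{t+1}) \le F(x_t) + \ps{\nabla F(x_t)}{x_{t+1} - x_t} + \tfrac{L_1}{2}\sqnorm{x_{t+1} - x_t}$,
and then substitute the normalized update $x_{t+1} = x_t - \gamma \tfrac{g_t}{\norm{g_t}}$. This gives $\sqnorm{x_{t+1} - x_t} = \gamma^2$ and rewrites the inner-product term as $-\gamma\ps{\nabla F(x_t)}{g_t/\norm{g_t}}$.

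Next I would decompose $-\gamma\ps{\nabla F(x_t)}{g_t/\norm{g_t}} = -\gamma\ps{\nabla F(x_t) - g_t}{g_t/\norm{g_t}} - \gamma\norm{g_t}$, bound the first piece by $\gamma\norm{\nabla F(x_t) - g_t}$ via Cauchy–Schwarz together with $\norm{g_t/\norm{g_t}} = 1$, and bound $-\gamma\norm{g_t} \le -\gamma\norm{\nabla F(x_t)} + \gamma\norm{\nabla F(x_t) - g_t}$ by the (reverse) triangle inequality. Collecting these and recalling that $\hat e_t \eqdef g_t - \nabla F(x_t)$ yields exactly~\eqref{2331e9e8-3c3e-4a5b-b3b6-50d71097a688-2}.

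I do not expect any genuine obstacle: every step is elementary and the statement is a direct analogue of~\Cref{appdx-lem:descent-lemma-nsgd-mvr} with $\bar L$ replaced by $L_1$. The only minor point is that the normalized direction $g_t/\norm{g_t}$ must be well defined, i.e.\ $g_t \neq 0$ along the trajectory; this is ensured by the construction of~\Cref{algo:nsgd-mvr-hess} (or, if one adopts the convention that the step is skipped when $g_t = 0$, the inequality only becomes easier, since then $x_{t+1} = x_t$).
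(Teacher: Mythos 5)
Your proposal is correct and matches the paper's own argument: the paper proves this lemma simply by pointing back to the proof of Lemma~\ref{appdx-lem:descent-lemma-nsgd-mvr} with $\bar L$ replaced by $L_1$, which is exactly the chain of steps (descent inequality, substitute the normalized step, split the inner product, Cauchy–Schwarz, reverse triangle inequality) you spell out, minus the Jensen detour that is no longer needed because $L_1$-smoothness is assumed directly. Your remark that $g_t\neq 0$ is needed for the normalized direction to be defined is a reasonable aside, consistent with the paper's convention.
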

	
	\begin{proof}
		The proof is the same as in the previous descent lemma (\Cref{appdx-lem:descent-lemma-nsgd-mvr}) where now $F$ has $L_1$--Lipschitz continuous gradients (instead of $\bar{L}$).
	\end{proof}
	
	\begin{lemma}[Unrolling the Descent Lemma]\label{appdx-lem:unrolling-descent-lemma-nsgd-mvr}
		Under~\Cref{ass:lower-boundedness,ass:p-bounded-central-moment-gradient,ass:mean-squared-smoothness}, for any choice of stepsize $\gamma > 0$ the iterates $\{x_t\}_{t \in \Int{0}{T}}$ produced by~\Cref{algo:nsgd-mvr} satisfy
		\[ \frac{1}{T} \sum_{t = 0}^{T - 1} \norm{\nabla F(x_t)} \le \frac{\Delta}{\gamma T} + \frac{2}{T} \sum_{t = 0}^{T - 1} \norm{\hat{e}_t} + \frac{\gamma \bar{L}}{2}. \]
	\end{lemma}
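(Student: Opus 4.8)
The plan is to obtain the claim by a one-line manipulation of the per-step descent inequality of \Cref{appdx-lem:descent-lemma-nsgd-mvr} followed by a telescoping sum. Fix any stepsize $\gamma > 0$. Since \Cref{ass:p-bounded-central-moment-gradient,ass:mean-squared-smoothness} hold, \Cref{appdx-lem:descent-lemma-nsgd-mvr} applies to every $t \in \Int{0}{T - 1}$ and gives
\[ F(x_{t + 1}) \le F(x_t) + 2 \gamma \norm{\hat{e}_t} - \gamma \norm{\nabla F(x_t)} + \frac{\gamma^2 \bar{L}}{2}, \]
which I would rearrange to isolate the gradient-norm term as $\gamma \norm{\nabla F(x_t)} \le \left( F(x_t) - F(x_{t + 1}) \right) + 2 \gamma \norm{\hat{e}_t} + \frac{\gamma^2 \bar{L}}{2}$.

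Next I would sum this inequality over $t = 0, 1, \ldots, T - 1$. The differences $F(x_t) - F(x_{t + 1})$ telescope to $F(x_0) - F(x_T)$, and \Cref{ass:lower-boundedness} yields $F(x_T) \ge F^{\inf}$, so $F(x_0) - F(x_T) \le F(x_0) - F^{\inf} = \Delta$. Hence
\[ \gamma \sum_{t = 0}^{T - 1} \norm{\nabla F(x_t)} \le \Delta + 2 \gamma \sum_{t = 0}^{T - 1} \norm{\hat{e}_t} + \frac{T \gamma^2 \bar{L}}{2}. \]
Dividing both sides by $\gamma T > 0$ produces exactly the asserted bound $\frac{1}{T} \sum_{t = 0}^{T - 1} \norm{\nabla F(x_t)} \le \frac{\Delta}{\gamma T} + \frac{2}{T} \sum_{t = 0}^{T - 1} \norm{\hat{e}_t} + \frac{\gamma \bar{L}}{2}$.

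There is no genuine obstacle here: the inequality is \emph{pathwise} (no expectation is taken), so no martingale or concentration argument is needed, and the estimate is a direct consequence of \Cref{appdx-lem:descent-lemma-nsgd-mvr}, telescoping, and lower boundedness. The substantive work — controlling the averaged error $\frac{1}{T} \sum_{t = 0}^{T - 1} \norm{\hat{e}_t}$ through the \texttt{MVR} recursion under the $p$--BCM and $q$-weak average smoothness assumptions — is handled in the subsequent lemmas and is not part of this step.
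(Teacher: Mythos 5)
Your proof is correct and follows essentially the same route as the paper's: invoke \Cref{appdx-lem:descent-lemma-nsgd-mvr}, rearrange, sum and telescope, bound $F(x_0) - F(x_T)$ by $\Delta$ via \Cref{ass:lower-boundedness}, and divide by $\gamma T$. Nothing to add.
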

	
	\begin{proof}
		From the previous descent lemma (\Cref{appdx-lem:descent-lemma-nsgd-mvr}), summing inequality~\eqref{2331e9e8-3c3e-4a5b-b3b6-50d71097a688} over $t \in \Int{0}{T - 1}$ gives
		\[ \gamma \sum_{t = 0}^{T - 1} \norm{\nabla F(x_t)} \le F(x_0) - F(x_T) + 2 \gamma \sum_{t = 0}^{T - 1} \norm{\hat{e}_t} + \frac{\gamma^2 \bar{L} T}{2}, \numberthis\label{eaec4407-2c6f-461b-805b-b59a34c6eee9} \]
		where we telescope the terms $F(x_t) - F(x_{t + 1})$. Multiplying both sides of~\eqref{eaec4407-2c6f-461b-805b-b59a34c6eee9} by $\nicefrac{1}{\gamma T}$ leads to
		\[ \frac{1}{T} \sum_{t = 0}^{T - 1} \norm{\nabla F(x_t)} \le \frac{1}{\gamma T} \left( F(x_0) - F(x_T)  \right) + \frac{2}{T} \sum_{t = 0}^{T - 1} \norm{\hat{e}_t} + \frac{\gamma \bar{L}}{2}, \]
		and using~\Cref{ass:lower-boundedness} we obtain
		\[ \frac{1}{T} \sum_{t = 0}^{T - 1} \norm{\nabla F(x_t)} \le \frac{\Delta}{\gamma T} + \frac{2}{T} \sum_{t = 0}^{T - 1} \norm{\hat{e}_t} + \frac{\gamma \bar{L}}{2}, \]
		as desired.
	\end{proof}
	
	If we assume the function $F$ has $L_1$--Lipschitz continuous gradients (\Cref{ass:L-lipschitz-gradients}) then~\Cref{appdx-lem:descent-lemma-nsgd-mvr-hess} holds and we can unroll it in the same way as we did above. For that reason, we only state the result and we omit the proof.
	\begin{lemma}[Unrolling the Descent Lemma]\label{appdx-lem:unrolling-descent-lemma-nsgd-mvr-hess}
		Under~\Cref{ass:lower-boundedness,ass:L-lipschitz-gradients,ass:p-bounded-central-moment-gradient}, for any choice of stepsize $\gamma > 0$ the iterates $\{x_t\}_{t \in \Int{0}{T}}$ produced by~\Cref{algo:nsgd-mvr} satisfy
		\[ \frac{1}{T} \sum_{t = 0}^{T - 1} \norm{\nabla F(x_t)} \le \frac{\Delta}{\gamma T} + \frac{2}{T} \sum_{t = 0}^{T - 1} \norm{\hat{e}_t} + \frac{\gamma L_1}{2}. \]
	\end{lemma}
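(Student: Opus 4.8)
The plan is to mirror the proof of \Cref{appdx-lem:unrolling-descent-lemma-nsgd-mvr}, replacing the $q$-weak average smoothness constant $\bar{L}$ by the Lipschitz constant $L_1$ and invoking \Cref{appdx-lem:descent-lemma-nsgd-mvr-hess} in place of \Cref{appdx-lem:descent-lemma-nsgd-mvr}. First I would apply the descent inequality~\eqref{2331e9e8-3c3e-4a5b-b3b6-50d71097a688-2} at each iterate $t \in \Int{0}{T - 1}$; this is legitimate because \Cref{ass:L-lipschitz-gradients} supplies $L_1$-smoothness of $F$ \emph{directly} (no Jensen step is needed here, unlike in the $q$-weak average smoothness setting of~\eqref{94828cd2-cf02-492b-a6ec-40b3012f66c1}), and because the normalized updates $x_{t + 1} = x_t - \gamma g_t / \norm{g_t}$ in~\Cref{algo:nsgd-mvr} are well-defined whenever $g_t \neq 0$. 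Summing over $t = 0, \ldots, T - 1$ and telescoping the differences $F(x_t) - F(x_{t + 1})$ down to $F(x_0) - F(x_T)$ yields
\[
    \gamma \sum_{t = 0}^{T - 1} \norm{\nabla F(x_t)} \le F(x_0) - F(x_T) + 2 \gamma \sum_{t = 0}^{T - 1} \norm{\hat{e}_t} + \frac{\gamma^2 L_1 T}{2}.
\]

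Next I would divide both sides by $\gamma T > 0$ and apply~\Cref{ass:lower-boundedness}, i.e.\ $F(x_T) \ge F^{\inf}$, so that $F(x_0) - F(x_T) \le F(x_0) - F^{\inf} = \Delta$. This gives exactly
\[
    \frac{1}{T} \sum_{t = 0}^{T - 1} \norm{\nabla F(x_t)} \le \frac{\Delta}{\gamma T} + \frac{2}{T} \sum_{t = 0}^{T - 1} \norm{\hat{e}_t} + \frac{\gamma L_1}{2},
\]
which is the claimed bound.

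There is no genuine obstacle here: the argument is a routine telescoping sum, and the only point requiring care — also the sole difference from the $\bar{L}$ version — is the justification of the per-step descent inequality, which comes for free from \Cref{ass:L-lipschitz-gradients}. The substantive work, namely controlling $\tfrac{1}{T}\sum_{t} \norm{\hat{e}_t}$ for the \texttt{MVR} error term $\hat{e}_t = g_t - \nabla F(x_t)$ in terms of $\sigma_1$, $\delta$ (or $\bar{L}$), $\alpha$, and $\gamma$, is handled separately in the lemmas bounding the momentum-variance-reduced estimator, and is what ultimately feeds into \Cref{thm:nsgd-mvr-convergence-analysis-2,thm:nsgd-mvr-convergence-analysis-unknown-p-q-2}.
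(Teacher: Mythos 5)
Your proof is correct and follows exactly the same route as the paper: apply the per-step descent inequality from \Cref{appdx-lem:descent-lemma-nsgd-mvr-hess}, sum and telescope, divide by $\gamma T$, and invoke \Cref{ass:lower-boundedness}. The paper itself omits the proof and simply remarks that it is the same as \Cref{appdx-lem:unrolling-descent-lemma-nsgd-mvr} with $L_1$ in place of $\bar{L}$, which is precisely what you write out.
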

	
	\begin{lemma}[Bounding the Error Term]\label{appdx-lem:bounding-error-term-nsgd-mvr}
		Under~\Cref{ass:p-bounded-central-moment-gradient,ass:mean-squared-smoothness}, for all $t \in \Int{0}{T - 1}$ we have
		\[ \E{\norm{\hat{e}_t}} \le (1 - \alpha)^t \E{\norm{\hat{e}_0}} + 2 \sigma_1 \alpha^{\frac{p - 1}{p}} + 4 \gamma \bar{L} \alpha^{-\frac{1}{q}}, \]
		where $\hat{e}_t \eqdef g_t - \nabla F(x_t)$.
	\end{lemma}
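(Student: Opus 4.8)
The plan is to exploit the \texttt{MVR} recursion of \Cref{algo:nsgd-mvr} to express $\hat e_t$ as a geometric average of past martingale increments, and then bound the two resulting noise sums with a vector-valued von Bahr--Esseen inequality at the appropriate moment exponent (rather than the squared-norm orthogonality used in the classical bounded-variance analysis). Concretely, writing $S_j \eqdef [\nabla f(x_j,\xi_j) - \nabla f(x_{j-1},\xi_j)] - [\nabla F(x_j) - \nabla F(x_{j-1})]$ and $Z_j \eqdef \nabla f(x_j,\xi_j) - \nabla F(x_j)$, a direct expansion of the update $g_t = (1-\alpha)(g_{t-1} + \nabla f(x_t,\xi_t) - \nabla f(x_{t-1},\xi_t)) + \alpha \nabla f(x_t,\xi_t)$, together with the trivial identity $\nabla F(x_t) = (1-\alpha)\nabla F(x_{t-1}) + (1-\alpha)(\nabla F(x_t) - \nabla F(x_{t-1})) + \alpha\nabla F(x_t)$, gives
\[ \hat e_t = (1-\alpha)\hat e_{t-1} + (1-\alpha)S_t + \alpha Z_t, \]
and hence, after unrolling,
\[ \hat e_t = (1-\alpha)^t \hat e_0 + \sum_{j=1}^{t} (1-\alpha)^{t-j+1} S_j + \alpha \sum_{j=1}^{t} (1-\alpha)^{t-j} Z_j . \]
Letting $\cF_{j}$ be the $\sigma$-algebra generated by the initialization and by $\xi_1,\dots,\xi_j$, the points $x_j$ and $x_{j-1}$ are $\cF_{j-1}$-measurable, so unbiasedness of the stochastic gradient (\Cref{ass:p-bounded-central-moment-gradient}) yields $\ExpCond{S_j}{\cF_{j-1}} = \ExpCond{Z_j}{\cF_{j-1}} = 0$; thus each of the two sums above is a martingale-difference sum with deterministic coefficients.

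Next I would apply the triangle inequality and treat the three pieces separately. The first contributes $(1-\alpha)^t \E{\norm{\hat e_0}}$. For the $Z$-sum I would invoke the von Bahr--Esseen inequality (\Cref{appdx-technical-lem:app-von-Bahr-and-Essen}) at exponent $p$: using $\ExpCond{\norm{Z_j}^p}{\cF_{j-1}} \le \sigma_1^p$ from \Cref{ass:p-bounded-central-moment-gradient} and $(1-\alpha)^{ip} \le (1-\alpha)^i$ (valid since $p \ge 1$) to sum the geometric series, one gets $\E{\norm{\sum_j (1-\alpha)^{t-j} Z_j}^p} \le 2\sigma_1^p \sum_{i \ge 0}(1-\alpha)^i \le 2\sigma_1^p/\alpha$, and taking the $p$-th root and multiplying by $\alpha$ gives $\alpha\,\E{\norm{\sum_j (1-\alpha)^{t-j}Z_j}} \le 2^{1/p}\sigma_1\alpha^{(p-1)/p} \le 2\sigma_1\alpha^{(p-1)/p}$. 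For the $S$-sum I would invoke the same inequality at exponent $q$: conditionally on $\cF_{j-1}$ the descent step is normalized so $\norm{x_j - x_{j-1}} = \gamma$, and since $S_j$ is the $\cF_{j-1}$-centered version of $\nabla f(x_j,\xi_j) - \nabla f(x_{j-1},\xi_j)$, the elementary bound $\norm{a - \E{a}}^q \le 2^q\,\E{\norm{a}^q}$ combined with $q$-WAS (\Cref{ass:mean-squared-smoothness}) gives $\ExpCond{\norm{S_j}^q}{\cF_{j-1}} \le 2^q\bar L^q\gamma^q$; hence $\E{\norm{\sum_j (1-\alpha)^{t-j+1}S_j}^q} \le 2\cdot 2^q\bar L^q\gamma^q\sum_{i\ge 1}(1-\alpha)^i \le 2^{q+1}\bar L^q\gamma^q/\alpha$, and its $q$-th root is at most $2^{(q+1)/q}\bar L\gamma\alpha^{-1/q} \le 4\bar L\gamma\alpha^{-1/q}$. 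Summing the three pieces yields exactly the claimed inequality.

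The main obstacle — essentially the only non-routine step — is recognizing that the two noise sources must be controlled by von Bahr--Esseen at \emph{different} exponents: $p$ for the stochastic-gradient fluctuation $Z_j$ and $q$ for the mean-squared-smoothness fluctuation $S_j$. In the classical $p=q=2$ case one bounds $\E{\sqnorm{\hat e_t}}$ directly via orthogonality of martingale increments, but with only a $p$-th moment ($p<2$) that identity is unavailable; applying the triangle inequality term by term instead would yield merely $\E{\norm{\hat e_t}} = \cO(\sigma_1)$ rather than the sharper $\cO(\sigma_1\alpha^{(p-1)/p})$ that drives the entire complexity improvement. Everything else — the algebraic rearrangement of the \texttt{MVR} recursion, the geometric-series estimates, and the applications of Jensen's inequality to pass from $q$-th (resp.\ $p$-th) moments to the $\ell^2$-norm expectation — is bookkeeping; one extra line disposes of the probability-zero event $\{g_j = 0\}$ on which the normalization is undefined.
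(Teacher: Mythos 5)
Your proof is correct and follows essentially the same route as the paper: unroll the \texttt{MVR} recursion into $(1-\alpha)^t\hat e_0$ plus two martingale-difference sums with deterministic geometric weights, split via the triangle inequality, then apply Jensen followed by the vector von Bahr--Esseen inequality (\Cref{appdx-technical-lem:app-von-Bahr-and-Essen}) at exponent $p$ for the gradient-noise sum and at exponent $q$ for the smoothness-fluctuation sum, using $(1-\alpha)^{r(t-j)} \le (1-\alpha)^{t-j}$ to collapse the geometric series into $1/\alpha$. Your bound on $\E{\norm{S_j}^q}$ via ``$S_j$ is a conditional centering, which costs $2^q$'' is a cosmetic variant of the paper's direct application of $\norm{a-b}^q \le 2^{q-1}(\norm{a}^q+\norm{b}^q)$ to the two halves of $\hat S_j$; both give exactly $2^q\gamma^q\bar L^q$ and hence the same constant $4\gamma\bar L\alpha^{-1/q}$.
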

	
	\begin{proof}
		By the update rule of the gradient estimator in~\Cref{algo:nsgd-mvr} (line 9) we have, for all $t \in \Int{1}{T - 1}$
		\begin{alignat*}{2}
			\hat{e}_t \eqdef* g_t - \nabla F(x_t) \\
			& = (1 - \alpha) \left(g_{t - 1} + \nabla f(x_t, \xi_t) - \nabla f(x_{t - 1}, \xi_t)\right) + \alpha \nabla f(x_t, \xi_t) - \nabla F(x_t) \\
			& = (1 - \alpha) (g_{t - 1} - \nabla F(x_{t - 1})) + \alpha \left( \nabla f(x_t, \xi_t) - \nabla F(x_t) \right) \\
			&\qquad- (1 - \alpha) \left( \left[ \nabla F(x_t) - \nabla f(x_t, \xi_t) \right] - \left[ \nabla F(x_{t - 1}) - \nabla f(x_{t - 1}, \xi_t) \right] \right) \\
			& = (1 - \alpha) \hat{e}_{t - 1} + \alpha e_t - (1 - \alpha) \hat{S}_t \numberthis\label{2f56c833-5913-40e8-a00c-20c7b1a4b32b},
		\end{alignat*}
		where we let $e_t \eqdef \nabla f(x_t, \xi_t) - \nabla F(x_t)$ and $\hat{S}_t \eqdef \left[ \nabla F(x_t) - \nabla f(x_t, \xi_t) \right] - \left[ \nabla F(x_{t - 1}) - \nabla f(x_{t - 1}, \xi_t) \right]$. It is worth noting that both $(e_t)_{t \ge 0}$ and $(\hat{S}_t)_{t \ge 0}$ are martingale difference sequence with respect to the filtration $(\mathcal{F}_t)_{t \ge 0}$ where $\mathcal{F}_t \eqdef \sigma(g_0, \xi_1, \ldots, \xi_t)$.
		
		Then, unrolling the recursion~\eqref{2f56c833-5913-40e8-a00c-20c7b1a4b32b} gives
		\[ \hat{e}_t = (1 - \alpha)^t \hat{e}_0 + \alpha \sum_{j = 0}^{t - 1} (1 - \alpha)^{t - j - 1} e_{j + 1} - \sum_{j = 0}^{t - 1} (1 - \alpha)^{t  - j} \hat{S}_{j + 1}, \]
		and taking the norm followed by the total expectation yields
		\begin{alignat*}{2}
			\E{\norm{\hat{e}_t}} \oversetrel{rel:}&{\le} (1 - \alpha)^t \E{\norm{\hat{e}_0}} + \alpha \E{\norm{ \sum_{j = 0}^{t - 1} (1 - \alpha)^{t - j - 1} e_{j + 1}}} + (1 - \alpha) \E{\norm{\sum_{j = 0}^{t - 1} (1 - \alpha)^{t  - j - 1} \hat{S}_{j + 1}}}.
		\end{alignat*}
		We now need to upper bound the last two terms of the previous inequality. For the first term, using Jensen's inequality (\Cref{lem:jensen-inequality}) we have
		\begin{alignat*}{2}
			\E{\norm{ \sum_{j = 0}^{t - 1} (1 - \alpha)^{t - j - 1} e_{j + 1}}} \oversetref{Lem.}{\ref{lem:jensen-inequality}}&{\le} \left( \E{\norm{ \sum_{j = 0}^{t - 1} (1 - \alpha)^{t - j - 1} e_{j + 1}}^p} \right)^{\frac{1}{p}} \\
			\oversetref{Lem.}{\ref{appdx-technical-lem:app-von-Bahr-and-Essen}}&{\le} \left( 2 \sum_{j = 0}^{t - 1} (1 - \alpha)^{p (t - j -1)} \E{\norm{e_{j + 1}}^p} \right)^{\frac{1}{p}} \\
			\oversetref{Ass.}{\ref{ass:p-bounded-central-moment-gradient}}&{\le} 2 \left( \sum_{j = 0}^{t - 1} (1 - \alpha)^{p (t - j -1)} \sigma_1^p \right)^{\frac{1}{p}} \\
			& \le 2 \sigma_1 \left( \sum_{j = 0}^{t - 1} (1 - \alpha)^{t - j - 1} \right)^{\frac{1}{p}} \\
			& \le 2 \sigma_1 \left( \sum_{j \ge 0} (1 - \alpha)^j \right)^{\frac{1}{p}} \\
			& = 2 \sigma_1 \alpha^{-\frac{1}{p}}, \numberthis\label{cea5881d-8688-41f3-8083-df58a5981198}
		\end{alignat*}
		while, for the last term we have
		\begin{alignat*}{2}
			\E{\norm{\sum_{j = 0}^{t - 1} (1 - \alpha)^{t  - j - 1} \hat{S}_{j + 1}}} \oversetref{Lem.}{\ref{lem:jensen-inequality}}&{\le} \left( \E{\norm{\sum_{j = 0}^{t - 1} (1 - \alpha)^{t  - j - 1} \hat{S}_{j + 1}}^q} \right)^{\frac{1}{q}} \\
			\oversetref{Lem.}{\ref{appdx-technical-lem:app-von-Bahr-and-Essen}}&{\le} \left( 2 \sum_{j = 0}^{t - 1} (1 - \alpha)^{q (t - j - 1)} \E{\norm{\hat{S}_{j + 1}}^q} \right)^{\frac{1}{q}}, \numberthis\label{d98da1f4-086e-4004-a809-2eb471da42a6}
		\end{alignat*}
		and, using~\Cref{ass:mean-squared-smoothness} we obtain
		\begin{alignat*}{2}
			\E{\norm{\hat{S}_{j + 1}}^q} & = \E{\norm{\left[ \nabla F(x_{j + 1}) - \nabla F(x_j) \right] - \left[ \nabla f(x_{j + 1}, \xi_{j + 1}) - \nabla f(x_j, \xi_{j + 1}) \right]}^q} \\
			\oversetref{Lem.}{\ref{lem:norm-power-alpha-inequality}}&{\le} 2^{q - 1} \E{\norm{\nabla F(x_{j + 1}) - \nabla F(x_j)}^q + \norm{\nabla f(x_{j + 1}, \xi_ {j + 1}) - \nabla f(x_j, \xi_{j + 1})}^q} \\
			\oversetref{Ass.}{\ref{ass:mean-squared-smoothness}}&{\le} 2^q \bar{L}^q \E{\norm{x_{j + 1} - x_j}^q} \\
			& = 2^q \gamma^q \bar{L}^q, \numberthis\label{f79e21c1-04c0-497e-81e9-c10afd7f04f9}
		\end{alignat*}
		thus, using~\eqref{f79e21c1-04c0-497e-81e9-c10afd7f04f9} and $2^{\frac{1}{q}} \le 2$ we obtain
		\begin{alignat*}{2}
			\E{\norm{\sum_{j = 0}^{t - 1} (1 - \alpha)^{t  - j - 1} \hat{S}_{j + 1}}} \oversetlab{\eqref{d98da1f4-086e-4004-a809-2eb471da42a6}}&{\le} 4 \gamma \bar{L} \left( \sum_{j = 0}^{t - 1} (1 - \alpha)^{q (t - j - 1)} \right)^{\frac{1}{q}} \\
			& \le 4 \gamma \bar{L} \alpha^{-\frac{1}{q}}. \numberthis\label{91e3e64b-54a6-46f3-96a5-a9b4f24dc6ee}
		\end{alignat*}
		
		Then, combining the bounds~\eqref{cea5881d-8688-41f3-8083-df58a5981198} and~\eqref{91e3e64b-54a6-46f3-96a5-a9b4f24dc6ee} we have
		\[ \E{\norm{\hat{e}_t}} \le (1 - \alpha)^t \E{\norm{\hat{e}_0}} + 2 \sigma_1 \alpha^{\frac{p - 1}{p}} + 4 \gamma \bar{L} \alpha^{-\frac{1}{q}}, \]
		which achieves the proof of the lemma.
	\end{proof}
	
	\begin{remark}
		In~\eqref{f79e21c1-04c0-497e-81e9-c10afd7f04f9} we only use the $q$--weak average smoothness assumption (\Cref{ass:mean-squared-smoothness}) to achieve the bound $2^q \gamma^q \bar{L}^q$. It is worth mentioning that this assumption can be replaced by the combinations of both $\bar{L}$--Lipschitz continuous gradients of $F$ and 
		\[ \ExpSub{\xi \sim \cD}{\norm{\left[ \nabla f(x, \xi) - \nabla f(y, \xi) \right] - \left[ \nabla F(x) - \nabla F(y) \right]}^q} \le \delta^q \norm{x - y}^q, \]
		for all $x, y \in \R^d$, where $\delta \ge 0$ is some fixed constant which can be much smaller than $\delta$ (see~\Cref{ass:mean-squared-smoothness-2}).
	\end{remark}
	
	\begin{corollary}[Bounding the Error Term: a Refined Version]\label{appdx-lem:bounding-error-term-nsgd-mvr-refined}
		Under~\Cref{ass:p-bounded-central-moment-gradient,ass:mean-squared-smoothness-2}, for all $t \in \Int{0}{T - 1}$ we have
		\[ \E{\norm{\hat{e}_t}} \le (1 - \alpha)^t \E{\norm{\hat{e}_0}} + 2 \sigma_1 \alpha^{\frac{p - 1}{p}} + 2 \gamma \delta \alpha^{-\frac{1}{q}}, \numberthis\label{b9474e8b-ac2e-49ce-9636-37a177622ba3} \]
		where $\hat{e}_t \eqdef g_t - \nabla F(x_t)$.
	\end{corollary}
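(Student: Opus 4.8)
The plan is to follow the proof of \Cref{appdx-lem:bounding-error-term-nsgd-mvr} essentially verbatim, changing only the single step that controls the ``smoothness residual'' term. First I would derive the same one-step recursion for the error $\hat e_t \eqdef g_t - \nabla F(x_t)$: plugging the update rule of the gradient estimator from \Cref{algo:nsgd-mvr} and adding and subtracting $\nabla F$, one gets, for $t \in \Int{1}{T - 1}$,
\[ \hat e_t = (1 - \alpha) \hat e_{t - 1} + \alpha e_t - (1 - \alpha) \hat S_t, \]
where $e_t \eqdef \nabla f(x_t, \xi_t) - \nabla F(x_t)$ and $\hat S_t \eqdef \left[ \nabla F(x_t) - \nabla f(x_t, \xi_t) \right] - \left[ \nabla F(x_{t - 1}) - \nabla f(x_{t - 1}, \xi_t) \right]$, and, exactly as in \Cref{appdx-lem:bounding-error-term-nsgd-mvr}, both $(e_t)$ and $(\hat S_t)$ are martingale difference sequences with respect to $\mathcal F_t \eqdef \sigma(g_0, \xi_1, \ldots, \xi_t)$, the latter because $\nabla f$ is unbiased. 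Unrolling the recursion, taking norms and total expectations, and using the triangle inequality gives the same three-term decomposition: a geometrically decaying term $(1 - \alpha)^t \E{\norm{\hat e_0}}$, a noise term built from the $e_{j + 1}$'s, and a residual term built from the $\hat S_{j + 1}$'s.

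For the decaying term and the noise term I would reuse the argument of \Cref{appdx-lem:bounding-error-term-nsgd-mvr} without change: pass to $p$-th moments via Jensen (\Cref{lem:jensen-inequality}), apply \Cref{appdx-technical-lem:app-von-Bahr-and-Essen}, bound $\E{\norm{e_{j + 1}}^p} \le \sigma_1^p$ via \Cref{ass:p-bounded-central-moment-gradient}, and sum the geometric series $\sum_{j \ge 0} (1 - \alpha)^j = \alpha^{-1}$; together with the $\alpha$-prefactor this yields the contribution $2 \sigma_1 \alpha^{\frac{p - 1}{p}}$.

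The only new ingredient is the residual term. Instead of invoking $q$-weak average smoothness together with the triangle inequality — which in \Cref{appdx-lem:bounding-error-term-nsgd-mvr} introduced the extra factor $2^{q - 1}$ inside the $q$-th moment and eventually produced the constant $4 \gamma \bar L$ — I would observe that $\hat S_{j + 1}$ is \emph{exactly} of the form $\left[ \nabla f(x, \xi) - \nabla f(y, \xi) \right] - \left[ \nabla F(x) - \nabla F(y) \right]$ appearing in \Cref{ass:mean-squared-smoothness-2}, so directly
\[ \E{\norm{\hat S_{j + 1}}^q} \le \delta^q \norm{x_{j + 1} - x_j}^q = \delta^q \gamma^q, \]
the last equality because $x_{j + 1} - x_j = -\gamma g_j / \norm{g_j}$ has norm exactly $\gamma$. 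Applying Jensen and \Cref{appdx-technical-lem:app-von-Bahr-and-Essen} as before, using $(1 - \alpha)^q \le 1 - \alpha$ (since $q \ge 1$), $\sum_{j \ge 0} (1 - \alpha)^j = \alpha^{-1}$, and $2^{1/q} \le 2$, the residual contribution is at most $2 \gamma \delta \alpha^{-\frac{1}{q}}$. Summing the three contributions yields \eqref{b9474e8b-ac2e-49ce-9636-37a177622ba3}. I do not anticipate any real obstacle here — it is a routine specialization — the one point to check is that $\hat S_{j + 1}$ is conditionally centered so that \Cref{appdx-technical-lem:app-von-Bahr-and-Essen} applies, which holds because $x_j$ and $x_{j + 1}$ are $\mathcal F_j$-measurable while $\xi_{j + 1}$ is drawn fresh, so $\E{\hat S_{j + 1} \mid \mathcal F_j} = 0$.
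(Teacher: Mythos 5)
Your proposal is correct and follows essentially the same route as the paper: reuse the recursion and decomposition from \Cref{appdx-lem:bounding-error-term-nsgd-mvr}, keep the treatment of the decaying and noise terms unchanged, and replace only the bound $\E{\norm{\hat S_{j+1}}^q}\le 2^q\gamma^q\bar L^q$ (obtained there via \Cref{lem:norm-power-alpha-inequality} and \Cref{ass:mean-squared-smoothness}) by the direct bound $\E{\norm{\hat S_{j+1}}^q}\le\delta^q\gamma^q$ from \Cref{ass:mean-squared-smoothness-2}, which removes the extra factor of two and yields $2\gamma\delta\alpha^{-1/q}$.
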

	
	\begin{proof}
		The first two term in the upper bound~\eqref{b9474e8b-ac2e-49ce-9636-37a177622ba3} are obtained exactly the same way as in~\Cref{appdx-lem:bounding-error-term-nsgd-mvr}. For the last term, we start exactly as in~\eqref{d98da1f4-086e-4004-a809-2eb471da42a6} and, using~\Cref{ass:mean-squared-smoothness-2} we have
		\begin{eqnarray*}
			\E{\norm{\hat{S}_{j + 1}}^q} &=& \E{\norm{\left[ \nabla F(x_t) - \nabla F(x_{t - 1}) \right] - \left[ \nabla f(x_t, \xi_t) - \nabla f(x_{t - 1}, \xi_t) \right]}^q} \\
			&\oversetref{Ass.}{\ref{ass:mean-squared-smoothness-2}}{\le}& \delta^q \norm{x_t - x_{t - 1}}^q \le \gamma^q \delta^q,
		\end{eqnarray*}
		and plugging this new bound into~\eqref{91e3e64b-54a6-46f3-96a5-a9b4f24dc6ee} gives the inequality~\eqref{b9474e8b-ac2e-49ce-9636-37a177622ba3}, as claimed.
	\end{proof}
	
	\subsection{Proof of~\Cref{thm:nsgd-mvr-convergence-analysis}}
	
	Thanks to~\Cref{appdx-lem:unrolling-descent-lemma-nsgd-mvr,appdx-lem:bounding-error-term-nsgd-mvr} we can now establish the convergence analysis (in expectation) of~\Cref{algo:nsgd-mvr}.
	
	\begin{restate-theorem}{\ref{thm:nsgd-mvr-convergence-analysis}}
		Under~\Cref{ass:lower-boundedness,ass:p-bounded-central-moment-gradient,ass:mean-squared-smoothness}, let the initial gradient estimate $g_0$ be given by
		\[ g_0 = \frac{1}{B_{\textnormal{init}}} \sum_{j = 1}^{B_{\textnormal{init}} - 1} \nabla f\left( x_0, \xi_{0, j} \right), \]
		where $B_{\textnormal{init}} = \max\ens{1, \left( \frac{\sigma_1}{\eps} \right)^{\frac{p}{p - 1}}}$, let the stepsize $\gamma = \sqrt{\frac{\Delta \alpha^{\nicefrac{1}{q}}}{\bar{L} T}}$, the momentum parameter $\alpha = \min\ens{1, \alpha_{\textnormal{eff}}}$ where
		\[ \alpha_{\textnormal{eff}} = \max\ens{\left( \frac{\eps}{\sigma_1 T} \right)^{\frac{p}{2p - 1}}, \left( \frac{\bar{L} \Delta}{\sigma_1^2 T} \right)^{\frac{pq}{p (2 q + 1) - 2 q}}}. \numberthis\label{2c0dbb02-323f-4955-a4f2-8f3b73a0f89d-3} \]
		Then,~\Cref{algo:nsgd-mvr} guarantees to find an $\eps$--stationary point with the total sample complexity
		\[ \cO\left( \left( \frac{\sigma_1}{\eps} \right)^{\frac{p}{p - 1}} + \frac{\bar{L} \Delta}{\eps^2} + \frac{\bar{L} \Delta}{\eps^2} \left( \frac{\sigma_1}{\eps} \right)^{\frac{p}{q (p - 1)}} \right). \]
	\end{restate-theorem}
	
	\begin{proof}
		According to~\Cref{appdx-lem:unrolling-descent-lemma-nsgd-mvr} we have
		\[ \frac{1}{T} \sum_{t = 0}^{T - 1} \norm{\nabla F(x_t)} \le \frac{\Delta}{\gamma T} + \frac{2}{T} \sum_{t = 0}^{T - 1} \norm{\hat{e}_t} + \frac{\gamma \bar{L}}{2}, \]
		and using~\Cref{appdx-lem:bounding-error-term-nsgd-mvr} this yields
		\begin{eqnarray*}
			\frac{1}{T} \sum_{t = 0}^{T - 1} \E{\norm{\hat{e}_t}} &\le& \frac{1}{T} \sum_{t = 0}^{T - 1} (1 - \alpha)^t \E{\norm{\hat{e}_0}} + 2 \sigma_1 \alpha^{\frac{p - 1}{p}} + 4 \gamma \bar{L} \alpha^{-\frac{1}{q}} \\
			&\le& \frac{\E{\norm{\hat{e}_0}}}{\alpha T} + 2 \sigma_1 \alpha^{\frac{p - 1}{p}} + 4 \gamma \bar{L} \alpha^{-\frac{1}{q}},
		\end{eqnarray*}
		hence 
		\begin{eqnarray}
			\frac{1}{T} \sum_{t = 0}^{T - 1} \E{\norm{\nabla F(x_t)}} &\le& \frac{\Delta}{\gamma T} + \frac{2 \E{\norm{\hat{e}_0}}}{\alpha T} + 4 \sigma_1 \alpha^{\frac{p - 1}{p}} + 8 \gamma \bar{L} \alpha^{-\frac{1}{q}} + \frac{\gamma \bar{L}}{2}\notag\\
			&\le& \frac{\Delta}{\gamma T} + \frac{2 \E{\norm{\hat{e}_0}}}{\alpha T} + 4 \sigma_1 \alpha^{\frac{p - 1}{p}} + 9 \gamma \bar{L} \alpha^{-\frac{1}{q}}, \numberthis\label{d53dc0d7-cefc-4bc1-88bf-08c8201be495}
		\end{eqnarray}
		since $0 < \alpha \le 1$. Now, using $\gamma = \sqrt{\frac{\Delta \alpha^{\nicefrac{1}{q}}}{\bar{L} T}}$ we have
		\[ \frac{1}{T} \sum_{t = 0}^{T - 1} \E{\norm{\nabla F(x_t)}} = \cO\left( \alpha^{-\frac{1}{2 q}} \sqrt{\frac{\bar{L} \Delta}{T}} + \frac{\E{\norm{\hat{e}_0}}}{\alpha T} + \sigma_1 \alpha^{\frac{p - 1}{p}} \right). \numberthis\label{36a6b109-a2ff-445f-937c-7040717cb89d} \]
		Now, by our choice of $g_0$ we have
		\begin{alignat*}{2}
			\E{\norm{\hat{e}_0}} \oversetref{Lem.}{\ref{lem:jensen-inequality}}&{\le} \left( \E{\norm{\hat{e}_0}^p} \right)^{\frac{1}{p}}  = \left( \E{\norm{g_0 - \nabla F(x_0)}^p} \right)^{\frac{1}{p}} \\
			\oversetref{Lem.}{\ref{appdx-technical-lem:app-von-Bahr-and-Essen}}&{\le} \frac{2}{B_{\textnormal{init}}} \left( \sum_{j = 0}^{B_{\textnormal{init}} - 1} \E{\norm{\nabla f(x_0, \xi_{0, j}) - \nabla F(x_0)}^p} \right)^{\frac{1}{p}} \\
			\oversetref{Ass.}{\ref{ass:p-bounded-central-moment-gradient}}&{\le} \frac{2}{B_{\textnormal{init}}} \left( \sum_{j = 0}^{B_{\textnormal{init}} - 1} \sigma_1^p \right)^{\frac{1}{p}}  = \frac{2 \sigma_1}{B_{\textnormal{init}}^{\frac{p - 1}{p}}}, \numberthis\label{6a9db682-cb1e-43bb-878f-e4b60ebd3183}
		\end{alignat*}
		and since $B_{\textnormal{init}} = \max\ens{1, \left( \frac{\sigma_1}{\eps} \right)^{\frac{p}{p - 1}}}$ we have $\E{\norm{\hat{e}_0}} \le \sigma_1 \times \left( \frac{\eps}{\sigma_1} \right) = \eps$. This gives
		\begin{alignat*}{2}
			\frac{1}{T} \sum_{t = 0}^{T - 1} \E{\norm{\nabla F(x_t)}} \oversetlab{\eqref{36a6b109-a2ff-445f-937c-7040717cb89d}}&{=} \cO\left( \alpha^{-\frac{1}{2 q}} \sqrt{\frac{\bar{L} \Delta}{T}} + \frac{\E{\norm{\hat{e}_0}}}{\alpha T} + \sigma_1 \alpha^{\frac{p - 1}{p}} \right) \\
			& = \cO\left( \alpha^{-\frac{1}{2 q}} \sqrt{\frac{\bar{L} \Delta}{T}} + \frac{\eps}{\alpha T} + \sigma_1 \alpha^{\frac{p - 1}{p}} \right) \\
			& = \cO\left( \left[ \sqrt{\frac{\bar{L} \Delta}{T}} + \alpha_{\textnormal{eff}}^{-\frac{1}{2 q}} \sqrt{\frac{\bar{L} \Delta}{T}} \right] + \frac{\eps}{T} \alpha_{\textnormal{eff}}^{-1} + \sigma_1 \alpha_{\textnormal{eff}}^{\frac{p - 1}{p}} \right) \\
			\oversetrel{rel:629eafc0-82b4-4035-b358-3aa96a206e1a}&{=} \cO\left( \sqrt{\frac{\bar{L} \Delta}{T}} + \sigma_1 \left( \frac{\bar{L} \Delta}{\sigma_1^2 T} \right)^{\frac{q (p - 1)}{p (2 q + 1) - 2 q}} + \sigma_1 \left( \frac{\eps}{\sigma_1 T}  \right)^{\frac{p - 1}{2 p - 1}} \right), \numberthis\label{0f0a2e3d-d04e-4e64-948f-d38d2619b1fb}
		\end{alignat*}
		where in~\relref{rel:629eafc0-82b4-4035-b358-3aa96a206e1a} we use the choice of $\alpha_{\textnormal{eff}}$ from~\eqref{2c0dbb02-323f-4955-a4f2-8f3b73a0f89d-3} since
		\[ \frac{\eps}{T} \alpha_{\textnormal{eff}}^{-1} \le \frac{\eps}{T} \left( \frac{\sigma_1 T}{\eps} \right)^{\frac{p}{2p - 1}} = \sigma_1 \left( \frac{\eps}{\sigma_1 T} \right)^{\frac{p - 1}{2 p - 1}}, \]
		and
		\[ \sigma_1 \alpha_{\textnormal{eff}}^{\frac{p - 1}{p}} \le \sigma_1 \left( \frac{\eps}{\sigma_1 T} \right)^{\frac{p - 1}{2p - 1}} + \sigma_1 \left( \frac{\bar{L} \Delta}{\sigma_1^2 T} \right)^{\frac{q (p - 1)}{p (2 q + 1) - 2 q}}. \]
		
		Finally, from the bound~\eqref{0f0a2e3d-d04e-4e64-948f-d38d2619b1fb} we deduce that the sample complexity of~\Cref{algo:nsgd-mvr} is exactly
		\[ \cO\left( \left( \frac{\sigma_1}{\eps} \right)^{\frac{p}{p - 1}} + \frac{\bar{L} \Delta}{\eps^2} + \frac{\bar{L} \Delta}{\eps^2} \left( \frac{\sigma_1}{\eps} \right)^{\frac{p}{q (p - 1)}} \right), \]
		as claimed, and it matches our lower bound from~\Cref{thm:lower-bound-sofo-mean-squared-smoothness}.
	\end{proof}
	
	\subsection{Proof of~\Cref{thm:nsgd-mvr-convergence-analysis-2}}
	
	\begin{restate-theorem}{\ref{thm:nsgd-mvr-convergence-analysis-2}}
		Under~\Cref{ass:lower-boundedness,ass:L-lipschitz-gradients,ass:p-bounded-central-moment-gradient,ass:mean-squared-smoothness-2}, let the initial gradient estimate $g_0$ be given by
		\[ g_0 = \frac{1}{B_{\textnormal{init}}} \sum_{j = 1}^{B_{\textnormal{init}} - 1} \nabla f\left( x_0, \xi_{0, j} \right), \]
		where $B_{\textnormal{init}} = \max\ens{1, \left( \frac{\sigma_1}{\eps} \right)^{\frac{p}{p - 1}}}$, let the stepsize $\gamma = \min\ens{\sqrt{\frac{\Delta}{L_1 T}}, \sqrt{\frac{\Delta \alpha^{\nicefrac{1}{q}}}{\delta T}}}$, the momentum parameter $\alpha = \min\ens{1, \alpha_{\textnormal{eff}}}$ where
		\[ \alpha_{\textnormal{eff}} = \max\ens{\left( \frac{\eps}{\sigma_1 T} \right)^{\frac{p}{2p - 1}}, \left( \frac{\delta \Delta}{\sigma_1^2 T} \right)^{\frac{pq}{p (2 q + 1) - 2 q}}}. \numberthis\label{2c0dbb02-323f-4955-a4f2-8f3b73a0f89d-2-bis} \]
		Then,~\Cref{algo:nsgd-mvr} guarantees to find an $\eps$--stationary point with the total sample complexity
		\[ \cO\left( \left( \frac{\sigma_1}{\eps} \right)^{\frac{p}{p - 1}} + \frac{(L_1 + \delta) \Delta}{\eps^2} + \frac{\delta \Delta}{\eps^2} \left( \frac{\sigma_1}{\eps} \right)^{\frac{p}{q (p - 1)}} \right). \]
	\end{restate-theorem}
	
	\begin{proof}
		As the function $F$ has $L_1$--Lipschitz gradients by~\Cref{ass:L-lipschitz-gradients} then applying~\Cref{appdx-lem:unrolling-descent-lemma-nsgd-mvr-hess} gives
		\[ \frac{1}{T} \sum_{t = 0}^{T - 1} \norm{\nabla F(x_t)} \le \frac{\Delta}{\gamma T} + \frac{2}{T} \sum_{t = 0}^{T - 1} \norm{\hat{e}_t} + \frac{\gamma L_1}{2}, \]
		and using~\Cref{appdx-lem:bounding-error-term-nsgd-mvr-refined} yields
		\[ \frac{1}{T} \sum_{t = 0}^{T - 1} \E{\norm{\hat{e}_t}} \le \frac{1}{T} \sum_{t = 0}^{T - 1} (1 - \alpha)^t \E{\norm{\hat{e}_0}} + 2 \sigma_1 \alpha^{\frac{p - 1}{p}} + 2 \gamma \delta \alpha^{-\frac{1}{q}} \le \frac{\E{\norm{\hat{e}_0}}}{\alpha T} + 2 \sigma_1 \alpha^{\frac{p - 1}{p}} + 2 \gamma \delta \alpha^{-\frac{1}{q}}, \]
		hence 
		\[ \frac{1}{T} \sum_{t = 0}^{T - 1} \E{\norm{\nabla F(x_t)}} \le \frac{\Delta}{\gamma T} + \frac{2 \E{\norm{\hat{e}_0}}}{\alpha T} + 4 \sigma_1 \alpha^{\frac{p - 1}{p}} + 4 \gamma \delta \alpha^{-\frac{1}{q}} + \frac{\gamma L_1}{2}, \numberthis\label{c6f0fec1-2964-442a-b2c3-7c25dfd2e653} \]
		Now, using our choice of stepsize $\gamma = \min\ens{\sqrt{\frac{\Delta}{L_1 T}}, \sqrt{\frac{\Delta \alpha^{\nicefrac{1}{q}}}{\delta T}}}$ we have
		\[ \frac{1}{T} \sum_{t = 0}^{T - 1} \E{\norm{\nabla F(x_t)}} = \cO\left( \sqrt{\frac{L_1 \Delta}{T}} + \alpha^{-\frac{1}{2 q}} \sqrt{\frac{\delta \Delta}{T}} + \frac{\E{\norm{\hat{e}_0}}}{\alpha T} + \sigma_1 \alpha^{\frac{p - 1}{p}} \right). \numberthis\label{36a6b109-a2ff-445f-937c-7040717cb89d-bis} \]
		Next, by our choice of $g_0$ we have, as in we did in the previous proof
		\begin{alignat*}{2}
			\E{\norm{\hat{e}_0}} \oversetlab{\eqref{6a9db682-cb1e-43bb-878f-e4b60ebd3183}}&{\le}  \frac{2 \sigma_1}{B_{\textnormal{init}}^{\frac{p - 1}{p}}},
		\end{alignat*}
		and since $B_{\textnormal{init}} = \max\ens{1, \left( \frac{\sigma_1}{\eps} \right)^{\frac{p}{p - 1}}}$ we have $\E{\norm{\hat{e}_0}} \le \sigma_1 \times \left( \frac{\eps}{\sigma_1} \right) = \eps$. This gives
		\begin{alignat*}{2}
			\frac{1}{T} \sum_{t = 0}^{T - 1} \E{\norm{\nabla F(x_t)}} \oversetlab{\eqref{36a6b109-a2ff-445f-937c-7040717cb89d-bis}}&{=} \cO\left( \sqrt{\frac{L_1 \Delta}{T}} + \alpha^{-\frac{1}{2 q}} \sqrt{\frac{\delta \Delta}{T}} + \frac{\E{\norm{\hat{e}_0}}}{\alpha T} + \sigma_1 \alpha^{\frac{p - 1}{p}} \right) \\
			& = \cO\left( \sqrt{\frac{L_1 \Delta}{T}} + \alpha^{-\frac{1}{2 q}} \sqrt{\frac{\delta \Delta}{T}} + \frac{\eps}{\alpha T} + \sigma_1 \alpha^{\frac{p - 1}{p}} \right) \\
			& = \cO\left( \sqrt{\frac{L_1 \Delta}{T}} + \left[ \sqrt{\frac{\delta \Delta}{T}} + \alpha_{\textnormal{eff}}^{-\frac{1}{2 q}} \sqrt{\frac{\delta \Delta}{T}} \right] + \frac{\eps}{T} \alpha_{\textnormal{eff}}^{-1} + \sigma_1 \alpha_{\textnormal{eff}}^{\frac{p - 1}{p}} \right) \\
			\oversetrel{rel:629eafc0-82b4-4035-b358-3aa96a206e1ab}&{=} \cO\left( \sqrt{\frac{L_1 \Delta}{T}} + \sqrt{\frac{\delta \Delta}{T}} + \sigma_1 \left( \frac{\delta \Delta}{\sigma_1^2 T} \right)^{\frac{q (p - 1)}{p (2 q + 1) - 2 q}} + \sigma_1 \left( \frac{\eps}{\sigma_1 T}  \right)^{\frac{p - 1}{2 p - 1}} \right), \numberthis\label{0f0a2e3d-d04e-4e64-948f-d38d2619b1fbc}
		\end{alignat*}
		where in~\relref{rel:629eafc0-82b4-4035-b358-3aa96a206e1ab} we use the choice of $\alpha_{\textnormal{eff}}$ from~\eqref{2c0dbb02-323f-4955-a4f2-8f3b73a0f89d-2-bis} since
		\[ \frac{\eps}{T} \alpha_{\textnormal{eff}}^{-1} \le \frac{\eps}{T} \left( \frac{\sigma_1 T}{\eps} \right)^{\frac{p}{2p - 1}} = \sigma_1 \left( \frac{\eps}{\sigma_1 T} \right)^{\frac{p - 1}{2 p - 1}}, \]
		and
		\[ \sigma_1 \alpha_{\textnormal{eff}}^{\frac{p - 1}{p}} \le \sigma_1 \left( \frac{\eps}{\sigma_1 T} \right)^{\frac{p - 1}{2p - 1}} + \sigma_1 \left( \frac{\delta \Delta}{\sigma_1^2 T} \right)^{\frac{q (p - 1)}{p (2 q + 1) - 2 q}}. \]
		
		Finally, from the bound~\eqref{0f0a2e3d-d04e-4e64-948f-d38d2619b1fbc} we deduce that the sample complexity of~\Cref{algo:nsgd-mvr} is exactly
		\[ \cO\left( \left( \frac{\sigma_1}{\eps} \right)^{\frac{p}{p - 1}} + \frac{(L_1 + \delta) \Delta}{\eps^2} + \frac{\delta \Delta}{\eps^2} \left( \frac{\sigma_1}{\eps} \right)^{\frac{p}{q (p - 1)}} \right), \]
		as claimed, and, combining~\Cref{algo:nsgd-mvr} with \algname{NSGD-Mom} is enough to match our lower bound from~\Cref{thm:lower-bound-sofo-mean-squared-smoothness-2}.
	\end{proof}
	
	\subsection{Proof of~\Cref{thm:nsgd-mvr-convergence-analysis-unknown-p-q}}
	
	\begin{restate-theorem}{\ref{thm:nsgd-mvr-convergence-analysis-unknown-p-q}}
		Under~\Cref{ass:lower-boundedness,ass:p-bounded-central-moment-gradient,ass:mean-squared-smoothness}, let the initial gradient estimate $g_0 = \nabla f(x_0, \xi_0)$, let the stepsize $\gamma = \sqrt{\frac{\Delta \alpha}{\bar{L} T}}$, the momentum parameter $\alpha = T^{-\frac{1}{2}} \in \intof{0}{1}$. Then,~\Cref{algo:nsgd-mvr} guarantees the bound
		\[ \frac{1}{T} \sum_{t = 0}^{T - 1} \E{\norm{\nabla F(x_t)}} = \cO\left( \frac{\sigma_1}{T^{\frac{p - 1}{2 p}}} + \frac{\sqrt{\bar{L} \Delta}}{T^{\nicefrac{1}{4}}} \right). \]
	\end{restate-theorem}
	
	\begin{proof}
		Notice that the bound~\eqref{d53dc0d7-cefc-4bc1-88bf-08c8201be495} holds for any choice of the parameters $\gamma > 0$, $\alpha \in \intof{0}{1}$ and $g_0 \in \R^d$. Hence, for our particular choice $g_0 = \nabla f(x_0, \xi_0)$, $\gamma = \sqrt{\frac{\Delta \alpha}{\bar{L} T}}$ and $\alpha = T^{-\frac{1}{2}}$ we obtain
		\begin{alignat*}{2}
			\frac{1}{T} \sum_{t = 0}^{T - 1} \E{\norm{\nabla F(x_t)}} \oversetlab{\eqref{d53dc0d7-cefc-4bc1-88bf-08c8201be495}}&{=} \cO\left( \frac{\Delta}{\gamma T} + \frac{\E{\norm{\hat{e}_0}}}{\alpha T} + \sigma_1 \alpha^{\frac{p - 1}{p}} + \gamma \bar{L} \alpha^{-\frac{1}{q}} \right) \\
			& = \cO\left( \alpha^{-\frac{1}{2}} \sqrt{\frac{\bar{L} \Delta}{T}} + \frac{\E{\norm{\hat{e}_0}}}{T^{\nicefrac{1}{2}}} + \frac{\sigma_1}{T^{\frac{p - 1}{2 p}}} + \alpha^{- \frac{2 - q}{2 q}} \sqrt{\frac{\bar{L} \Delta}{T}} \right) \\
			\oversetrel{rel:0ca871ff-05d5-4d08-93d6-a55411fd6a2c}&{=} \cO\left( \frac{\sqrt{\bar{L} \Delta}}{T^{\nicefrac{1}{4}}} + \frac{\E{\norm{\hat{e}_0}}}{T^{\nicefrac{1}{2}}} + \frac{\sigma_1}{T^{\frac{p - 1}{2 p}}} \right) \\
			\oversetrel{rel:460b3480-98c7-49a2-9e0d-1cdd0fb1d065}&{=} \cO\left( \frac{\sigma_1}{T^{\frac{p - 1}{2 p}}} + \frac{\sigma_1}{T^{\nicefrac{1}{2}}} + \frac{\sqrt{\bar{L} \Delta}}{T^{\nicefrac{1}{4}}} \right) \\
			& = \cO\left( \frac{\sigma_1}{T^{\frac{p - 1}{2 p}}} + \frac{\sqrt{\bar{L} \Delta}}{T^{\nicefrac{1}{4}}} \right),
		\end{alignat*}
		where in~\relref{rel:0ca871ff-05d5-4d08-93d6-a55411fd6a2c} we use $\alpha^{-\frac{1}{2}} \ge \alpha^{-\frac{2 - q}{2 q}}$ since $\alpha \in \intof{0}{1}$ and $\frac{2 - q}{2 q} \le \frac{1}{2}$. In~\relref{rel:460b3480-98c7-49a2-9e0d-1cdd0fb1d065} we use the definition of $\hat{e}_0$ and $g_0$, i.e.,
		\[ \E{\norm{\hat{e}_0}} = \E{\norm{\nabla f(x_0, \xi_0) - \nabla F(x_0)}} \oversetref{Lem.}{\ref{lem:jensen-inequality}}{\le} \left( \E{\norm{\nabla f(x_0, \xi_0) - \nabla F(x_0)}^p} \right)^{\frac{1}{p}} \oversetref{Ass.}{\ref{ass:p-bounded-central-moment-gradient}} \le \sigma_1. \]
		This achieves the proof of the theorem.
	\end{proof}
	
	\subsection{Proof of~\Cref{thm:nsgd-mvr-convergence-analysis-unknown-p-q-2}}
	
	\begin{restate-theorem}{\ref{thm:nsgd-mvr-convergence-analysis-unknown-p-q-2}}
		Under~\Cref{ass:lower-boundedness,ass:L-lipschitz-gradients,ass:p-bounded-central-moment-gradient,ass:mean-squared-smoothness-2}, let the initial gradient estimate $g_0 = \nabla f(x_0, \xi_0)$, let the stepsize $\gamma = \min\ens{\sqrt{\frac{\Delta}{L_1 T}}, \sqrt{\frac{\Delta \alpha}{\delta T}}}$, the momentum parameter $\alpha = T^{-\frac{1}{2}} \in \intof{0}{1}$. Then,~\Cref{algo:nsgd-mvr} guarantees guarantees the bound
		\[ \frac{1}{T} \sum_{t = 0}^{T - 1} \E{\norm{\nabla F(x_t)}} = \cO\left( \frac{\sigma_1}{T^{\frac{p - 1}{2 p}}} + \frac{\sqrt{\delta \Delta}}{T^{\nicefrac{1}{4}}} + \frac{\sqrt{L_1 \Delta}}{T^{\nicefrac{1}{2}}} \right). \numberthis\label{004535e0-7ad4-4957-8cc1-223ee18990af} \]
	\end{restate-theorem}
	
	\begin{proof}
		As in the previous theorem, note that the bound~\eqref{c6f0fec1-2964-442a-b2c3-7c25dfd2e653} holds for any choice of the parameters $\gamma > 0$, $\alpha \in \intof{0}{1}$ and $g_0 \in \R^d$. Hence, for our particular choice $g_0 = \nabla f(x_0, \xi_0)$, $\gamma = \min\ens{\sqrt{\frac{\Delta}{L_1 T}}, \sqrt{\frac{\Delta \alpha}{\delta T}}}$ and $\alpha = T^{-\frac{1}{2}}$ we obtain, as in the previous proof
		\begin{alignat*}{2}
			\frac{1}{T} \sum_{t = 0}^{T - 1} \E{\norm{\nabla F(x_t)}} \oversetlab{\eqref{c6f0fec1-2964-442a-b2c3-7c25dfd2e653}}&{=} \cO\left( \frac{\Delta}{\gamma T} + \frac{\E{\norm{\hat{e}_0}}}{\alpha T} + \sigma_1 \alpha^{\frac{p - 1}{p}} + \gamma \delta \alpha^{-\frac{1}{q}} + \gamma L_1 \right) \\
			& = \cO\left( \sqrt{\frac{L_1 \Delta}{T}} + \alpha^{-\frac{1}{2}} \sqrt{\frac{\delta \Delta}{T}} + \frac{\E{\norm{\hat{e}_0}}}{T^{\nicefrac{1}{2}}} + \frac{\sigma_1}{T^{\frac{p - 1}{2 p}}} + \alpha^{- \frac{2 - q}{2 q}} \sqrt{\frac{\delta \Delta}{T}} \right) \\
			\oversetrel{rel:0ca871ff-05d5-4d08-93d6-a55411fd6a2c}&{=} \cO\left( \sqrt{\frac{L_1 \Delta}{T}} + \frac{\sqrt{\delta \Delta}}{T^{\nicefrac{1}{4}}} + \frac{\E{\norm{\hat{e}_0}}}{T^{\nicefrac{1}{2}}} + \frac{\sigma_1}{T^{\frac{p - 1}{2 p}}} \right) \\
			\oversetrel{rel:460b3480-98c7-49a2-9e0d-1cdd0fb1d065}&{=} \cO\left( \frac{\sigma_1}{T^{\frac{p - 1}{2 p}}} + \frac{\sigma_1}{T^{\nicefrac{1}{2}}} + \frac{\sqrt{\delta \Delta}}{T^{\nicefrac{1}{4}}} + \sqrt{\frac{L_1 \Delta}{T}}\right) \\
			& = \cO\left( \frac{\sigma_1}{T^{\frac{p - 1}{2 p}}} + \frac{\sqrt{\delta \Delta}}{T^{\nicefrac{1}{4}}} + \sqrt{\frac{L_1 \Delta}{T}} \right),
		\end{alignat*}
		where in~\relref{rel:0ca871ff-05d5-4d08-93d6-a55411fd6a2c} we use $\alpha^{-\frac{1}{2}} \ge \alpha^{-\frac{2 - q}{2 q}}$ since $\alpha \in \intof{0}{1}$ and $\frac{2 - q}{2 q} \le \frac{1}{2}$. In~\relref{rel:460b3480-98c7-49a2-9e0d-1cdd0fb1d065} we use, as before, the bound~\eqref{004535e0-7ad4-4957-8cc1-223ee18990af}.
		
		This achieves the proof of the theorem.
	\end{proof}

	\newpage%
	\section{Missing Proofs in Section~\ref{sec:near-optimal-method-nsgd-mvr-hess}}\label{appdx:proof-nsgd-mvr-hess}
	
	\subsection{Auxiliary Lemmas}
	
	Some of the auxiliary lemmas needed in this section (\Cref{appdx-lem:descent-lemma-nsgd-mvr-hess,appdx-lem:unrolling-descent-lemma-nsgd-mvr-hess}) have already been established in the previous section.
	\begin{lemma}[Bounding the Error Term]\label{appdx-lem:bounding-error-term-nsgd-mvr-hess}
		Under~\Cref{ass:p-bounded-central-moment-gradient,ass:q-bounded-central-moment-hessian,ass:L-lipschitz-hessians}, for all $t \in \Int{0}{T - 1}$ we have
		\[ \E{\norm{\hat{e}_t}} \le (1 - \alpha)^t \E{\norm{\hat{e}_0}} + 2 \sigma_1 \alpha^{\frac{p - 1}{p}} + 2 \gamma \sigma_2 \alpha^{-\frac{1}{q}} + 4 \gamma \min\ens{L_1, \gamma L_2} \alpha^{-\frac{1}{2}}, \]
		where $\hat{e}_t \eqdef g_t - \nabla F(x_t)$.
	\end{lemma}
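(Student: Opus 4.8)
The plan is to mirror the proof of~\Cref{appdx-lem:bounding-error-term-nsgd-mvr}, the only genuinely new ingredient being the handling of the Hessian-corrected momentum term. First I would set up the one-step recursion. Writing $e_t \eqdef \nabla f(x_t,\xi_t) - \nabla F(x_t)$ and using the update of $g_t$ in~\Cref{algo:nsgd-mvr-hess} together with the splitting $\nabla F(x_t) = (1-\alpha)\nabla F(x_{t-1}) + (1-\alpha)\bigl(\nabla F(x_t)-\nabla F(x_{t-1})\bigr) + \alpha\nabla F(x_t)$, one obtains
\[ \hat e_t = (1-\alpha)\hat e_{t-1} + \alpha e_t - (1-\alpha)\hat S_t, \qquad \hat S_t \eqdef \bigl[\nabla F(x_t)-\nabla F(x_{t-1})\bigr] - \nabla^2 f(\hat x_t,\hat\xi_t)(x_t-x_{t-1}). \]
I would then decompose $\hat S_t = A_t + B_t$ into a \emph{Hessian-noise} part $A_t \eqdef \bigl[\nabla^2 F(\hat x_t) - \nabla^2 f(\hat x_t,\hat\xi_t)\bigr](x_t-x_{t-1})$ and a \emph{Taylor-remainder} part $B_t \eqdef \bigl[\nabla F(x_t)-\nabla F(x_{t-1})\bigr] - \nabla^2 F(\hat x_t)(x_t-x_{t-1})$. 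The crucial observation is that, since $\hat x_t = q_t x_t + (1-q_t)x_{t-1}$ with $q_t\sim\mathcal U([0,1])$ drawn fresh at step $t$, the fundamental theorem of calculus gives $\ExpCond{\nabla^2 F(\hat x_t)(x_t-x_{t-1})}{\mathcal F_{t-1}} = \bigl(\int_0^1 \nabla^2 F(x_{t-1}+s(x_t-x_{t-1}))\,\odif s\bigr)(x_t-x_{t-1}) = \nabla F(x_t)-\nabla F(x_{t-1})$; combined with unbiasedness of the stochastic Hessian (\Cref{ass:q-bounded-central-moment-hessian}), \emph{both} $(A_t)$ and $(B_t)$ are martingale difference sequences with respect to $\mathcal F_t \eqdef \sigma(g_0,\xi_1,\hat\xi_1,q_1,\dots,\xi_t,\hat\xi_t,q_t)$ — this is exactly why $\hat x_t$ is sampled uniformly on the segment.

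Next I would unroll the recursion into $\hat e_t = (1-\alpha)^t\hat e_0 + \alpha\sum_{j=0}^{t-1}(1-\alpha)^{t-1-j}e_{j+1} - \sum_{j=0}^{t-1}(1-\alpha)^{t-j}(A_{j+1}+B_{j+1})$, take norms and total expectations, and apply the triangle inequality, splitting into four pieces. The $\hat e_0$ piece contributes $(1-\alpha)^t\E{\norm{\hat e_0}}$. The $e$-piece is bounded exactly as in~\Cref{appdx-lem:bounding-error-term-nsgd-mvr} (Jensen, then~\Cref{appdx-technical-lem:app-von-Bahr-and-Essen} with exponent $p$, then~\Cref{ass:p-bounded-central-moment-gradient} and $\sum_{j\ge0}(1-\alpha)^j = \alpha^{-1}$), giving $\alpha\cdot 2\sigma_1\alpha^{-1/p} = 2\sigma_1\alpha^{(p-1)/p}$. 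For the $A$-piece, since the normalized descent step forces $\norm{x_{j+1}-x_j} = \gamma$, conditioning on $(\mathcal F_j,q_{j+1})$ and using~\Cref{ass:q-bounded-central-moment-hessian} yields $\E{\norm{A_{j+1}}^q} \le \gamma^q\sigma_2^q$; Jensen, \Cref{appdx-technical-lem:app-von-Bahr-and-Essen} with exponent $q$, and $(1-\alpha)^{q(t-j)}\le(1-\alpha)^{t-j}$ then give $\E{\norm{\sum_j(1-\alpha)^{t-j}A_{j+1}}} \le (2\gamma^q\sigma_2^q\alpha^{-1})^{1/q} \le 2\gamma\sigma_2\alpha^{-1/q}$.

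The $B$-piece is where the new smoothness assumptions enter. On one hand $L_1$-Lipschitz gradients give $\normop{\nabla^2 F(\cdot)} \le L_1$ (cf.~\Cref{appdx-lem:lipschitz-gradients-implies-bounded-hessian}), so $\norm{B_{j+1}} \le \norm{\nabla F(x_{j+1})-\nabla F(x_j)} + \normop{\nabla^2 F(\hat x_{j+1})}\norm{x_{j+1}-x_j} \le 2L_1\gamma$; on the other hand, writing $B_{j+1} = \int_0^1\bigl[\nabla^2 F(x_j+s(x_{j+1}-x_j)) - \nabla^2 F(\hat x_{j+1})\bigr](x_{j+1}-x_j)\,\odif s$ and using $L_2$-Lipschitz Hessians with $\norm{x_{j+1}-x_j}=\gamma$ gives $\norm{B_{j+1}} \le L_2\gamma^2$. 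Hence $\norm{B_{j+1}} \le 2\gamma\min\ens{L_1,\gamma L_2}$ almost surely, so $\E{\norm{B_{j+1}}^2} \le 4\gamma^2\min\ens{L_1,\gamma L_2}^2$. Since $(B_{j+1})$ is an almost-surely bounded martingale difference sequence, I would apply~\Cref{appdx-technical-lem:app-von-Bahr-and-Essen} with exponent $2$ (orthogonality) to $\sum_j(1-\alpha)^{t-j}B_{j+1}$, combine with $\sum_j(1-\alpha)^{2(t-j)} \le \alpha^{-1}$, and take square roots to get $\E{\norm{\sum_j(1-\alpha)^{t-j}B_{j+1}}} \le 4\gamma\min\ens{L_1,\gamma L_2}\alpha^{-1/2}$. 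Summing the four bounds gives the claim.

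The main obstacle is the martingale-difference property of the Taylor remainder $B_t$: it is not automatic and relies entirely on the uniform-on-the-segment choice of $\hat x_t$ via the integral representation of $\nabla F(x_t)-\nabla F(x_{t-1})$; absent this, one could only control $\hat S_t$ through its almost-sure bound and would lose the variance-reduction speed-up in $\alpha$. A secondary, routine point is the bookkeeping of routing the $L^2$-unbounded Hessian-noise part $A_t$ through the exponent-$q$ inequality (so only the $\sigma_2^q$ moment is used) while routing the a.s.-bounded remainder $B_t$ through the exponent-$2$ inequality (which is what produces the $\alpha^{-1/2}$ rather than $\alpha^{-1/q}$ factor), and verifying the various geometric-sum estimates $\sum_{k\ge1}(1-\alpha)^{rk}\le\alpha^{-1}$ for $r\in\{q,2\}$.
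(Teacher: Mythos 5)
Your proposal is correct and follows essentially the same route as the paper: the same three-way decomposition of the momentum error into the stochastic-gradient noise, the stochastic-Hessian noise, and the deterministic Taylor remainder, with the uniform choice of $\hat{x}_t$ on the segment used exactly to make the Taylor remainder a martingale difference, and the three pieces routed through \Cref{appdx-technical-lem:app-von-Bahr-and-Essen} with exponents $p$, $q$, and $2$ respectively. The only cosmetic difference is that you bound the Taylor remainder almost surely (giving $\norm{B_t}\le 2\gamma\min\{L_1,\gamma L_2\}$) whereas the paper bounds its second moment directly; both yield the same final constant.
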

	
	\begin{proof}
		By the update rule of the gradient estimator in~\Cref{algo:nsgd-mvr-hess} (line 9) we have, for all $t \in \Int{1}{T - 1}$
		\begin{alignat*}{2}
			\hat{e}_t \eqdef* g_t - \nabla F(x_t) \\
			& = (1 - \alpha) (g_{t - 1} + \nabla^2 f(\hat{x}_t, \hat{\xi}_t)(x_t - x_{t - 1}) + \alpha \nabla f(x_t, \xi_t) - \nabla F(x_t) \\
			& = (1 - \alpha) (g_{t - 1} - \nabla F(x_{t - 1})) 
			\begin{aligned}[t]
				&+ \alpha \left( \nabla f(x_t, \xi_t) - \nabla F(x_t) \right) \\
				&+ (1 - \alpha) \left( \nabla F(x_{t - 1}) - \nabla F(x_t) - \nabla^2 f(\hat{x}_t, \hat{\xi}_t)(x_{t - 1} - x_t) \right)
			\end{aligned} \\
			& = (1 - \alpha) (g_{t - 1} - \nabla F(x_{t - 1})) 
			\begin{aligned}[t]
				&+ \alpha \left( \nabla f(x_t, \xi_t) - \nabla F(x_t) \right) \\
				&+ (1 - \alpha) \left( \nabla F(x_{t - 1}) - \nabla F(x_t) - \nabla^2 F(\hat{x}_t)(x_{t - 1} - x_t) \right) \\
				&+ (1 - \alpha) \left( \nabla^2 F(\hat{x}_t)(x_{t - 1} - x_t) - \nabla^2 f(\hat{x}_t, \hat{\xi}_t)(x_{t - 1} - x_t) \right)
			\end{aligned} \\
			& = (1 - \alpha) \hat{e}_{t - 1} + \alpha e_t + (1 - \alpha) \hat{R}_t + (1 - \alpha) \hat{S}_t \numberthis\label{2f56c833-5913-40e8-a00c-20c7b1a4b32b-2},
		\end{alignat*}
		where we let $e_t \eqdef \nabla f(x_t, \xi_t) - \nabla F(x_t)$ and
		\begin{alignat*}{2}
			\hat{R}_t \eqdef* \nabla F(x_{t - 1}) - \nabla F(x_t) - \nabla^2 F(\hat{x}_t)(x_{t - 1} - x_t), \numberthis\label{14872232-13b0-4801-9fa1-bccc0be89a0b} \\
			\hat{S}_t \eqdef* \nabla^2 F(\hat{x}_t)(x_{t - 1} - x_t) - \nabla^2 f(\hat{x}_t, \hat{\xi}_t)(x_{t - 1} - x_t). \numberthis\label{891801fc-22ef-4562-81bb-3b5040b0f520}
		\end{alignat*}
		Notably, it is worth mentioning that $(e_t)_{t \ge 0}$, $(\hat{R}_t)_{t \ge 0}$ and $(\hat{S}_t)_{t \ge 0}$ are all martingale difference sequence with respect to the filtration $(\mathcal{F}_t)_{t \ge 0}$ where $\mathcal{F}_t \eqdef \sigma(g_0, (\xi_1, \hat{\xi}_1, q_1), \ldots, (\xi_t, \hat{\xi}_t, q_t))$. Effectively, for $\hat{R}_t$ we have
		\[ \ExpSub{q_t}{F(\hat{x}_t)(x_t - x_{t - 1})} = \int_0^1 \nabla^2 F(s x_t + (1 - s) x_{t - 1}) (x_t - x_{t - 1}) \odif{s} = \nabla F(x_t) - \nabla F(x_{t - 1}). \numberthis\label{ff082357-0807-4b61-b6c0-12bd29b8b854} \]
		
		Then, unrolling the recursion~\eqref{2f56c833-5913-40e8-a00c-20c7b1a4b32b-2} gives
		\[ \hat{e}_t = (1 - \alpha)^t \hat{e}_0 + \alpha \sum_{j = 0}^{t - 1} (1 - \alpha)^{t - j - 1} e_{j + 1} + \sum_{j = 0}^{t - 1} (1 - \alpha)^{t  - j} \hat{R}_{j + 1} + \sum_{j = 0}^{t - 1} (1 - \alpha)^{t  - j} \hat{S}_{j + 1}, \]
		and taking the norm followed by the total expectation yields
		\begin{alignat*}{2}
			\E{\norm{\hat{e}_t}} &\le (1 - \alpha)^t \E{\norm{\hat{e}_0}} + \alpha \E{\norm{ \sum_{j = 0}^{t - 1} (1 - \alpha)^{t - j - 1} e_{j + 1}}} \\
			&+ (1 - \alpha) \E{\norm{\sum_{j = 0}^{t - 1} (1 - \alpha)^{t  - j - 1} \hat{R}_{j + 1}}} + (1 - \alpha) \E{\norm{\sum_{j = 0}^{t - 1} (1 - \alpha)^{t  - j - 1} \hat{S}_{j + 1}}}.
		\end{alignat*}
		We now need to upper bound the last three terms of the previous inequality. For the first term, using Jensen's inequality (\Cref{lem:jensen-inequality}) as we did before in~\Cref{appdx-lem:bounding-error-term-nsgd-mvr} (see~\eqref{d98da1f4-086e-4004-a809-2eb471da42a6}) we have
		\begin{alignat*}{2}
			\E{\norm{ \sum_{j = 0}^{t - 1} (1 - \alpha)^{t - j - 1} e_{j + 1}}} \oversetlab{\eqref{d98da1f4-086e-4004-a809-2eb471da42a6}}&{\le} 2 \sigma_1 \alpha^{-\frac{1}{p}}. \numberthis\label{cea5881d-8688-41f3-8083-df58a5981198-2}
		\end{alignat*}
		Then, for the second term we have
		\begin{alignat*}{2}
			\E{\norm{\sum_{j = 0}^{t - 1} (1 - \alpha)^{t  - j - 1} \hat{R}_{j + 1}}} \oversetref{Lem.}{\ref{lem:jensen-inequality}}&{\le} \left( \E{\norm{\sum_{j = 0}^{t - 1} (1 - \alpha)^{t  - j - 1} \hat{R}_{j + 1}}^2} \right)^{\frac{1}{2}} \\
			\oversetref{Lem.}{\ref{appdx-technical-lem:app-von-Bahr-and-Essen}}&{\le} \left( 2 \sum_{j = 0}^{t - 1} (1 - \alpha)^{2 (t - j - 1)} \E{\norm{\hat{R}_{j + 1}}^2} \right)^{\frac{1}{2}}, \numberthis\label{d98da1f4-086e-4004-a809-2eb471da42a6-3}
		\end{alignat*}
		and, the variance of $\nabla^2 F(\hat{x}_t) (x_t - x_{t - 1})$ can be bounded in two different ways. First, by using~\Cref{ass:L-lipschitz-gradients} and Jensen's inequality we have
		\begin{alignat*}{2}
			\E{\sqnorm{\hat{R}_t}} \oversetlab{\eqref{14872232-13b0-4801-9fa1-bccc0be89a0b}}&{=} \E{\sqnorm{\nabla F(x_{t - 1}) - \nabla F(x_t) - \nabla^2 F(\hat{x}_t)(x_{t - 1} - x_t)}} \\
			\oversetref{Lem.}{\ref{lem:jensen-form-1}}&{\le} 2 \E{\sqnorm{\nabla F(x_t) - \nabla F(x_{t - 1})}} + 2 \E{\sqnorm{\nabla^2 F(\hat{x}_t)(x_t - x_{t - 1})}} \\
			\oversetref{Ass.}{\ref{ass:L-lipschitz-gradients}}&{\le} 2 L_1^2 \E{\sqnorm{x_t - x_{t - 1}}} + 2 \E{\normop{\nabla^2 F(\hat{x}_t)}^2 \cdot \sqnorm{x_t - x_{t - 1}}} \\
			\oversetref{Lem.}{\ref{appdx-lem:lipschitz-gradients-implies-bounded-hessian}}&{\le} 4 L_1^2 \E{\sqnorm{x_t - x_{t - 1}}} \\
			& = 4 \gamma^2 L_1^2, \numberthis\label{46d7ba91-ef00-4ec2-9b1e-d222e9968289a}
		\end{alignat*}
		and, using the connection between $\nabla^2 F(\hat{x}_t)(x_t - x_{t - 1})$ and $\nabla F(x_t) - \nabla F(x_{t - 1})$ as displayed in~\eqref{ff082357-0807-4b61-b6c0-12bd29b8b854} we also have
		\begin{alignat*}{2}
			\E{\sqnorm{\hat{R}_t}} \oversetlab{\eqref{14872232-13b0-4801-9fa1-bccc0be89a0b}}&{=} \E{\sqnorm{\nabla F(x_{t - 1}) - \nabla F(x_t) - \nabla^2 F(\hat{x}_t)(x_{t - 1} - x_t)}} \\
			& = \E{\sqnorm{\int_0^1 \left( \nabla^2 F(x_{t - 1} + s (x_t - x_{t - 1})) - \nabla^2 F(\hat{x}_t) \right) (x_t - x_{t - 1}) \odif{s}}} \\
			\oversetrel{rel:658b5028-bdf6-48c6-aa1c-db16d2f3a272}&{\le} \E{\left( \int_0^1 \norm{\left( \nabla^2 F(x_{t - 1} + s (x_t - x_{t - 1})) - \nabla^2 F(\hat{x}_t) \right) (x_t - x_{t - 1})} \odif{s} \right)^2} \\
			& \le \E{\left( \int_0^1 \normop{\nabla^2 F(x_{t - 1} + s (x_t - x_{t - 1})) - \nabla^2 F(\hat{x}_t)} \cdot \norm{x_t - x_{t - 1}} \odif{s} \right)^2} \\
			& = \gamma^2 \, \E{\left( \int_0^1 \normop{\nabla^2 F(x_{t - 1} + s (x_t - x_{t - 1})) - \nabla^2 F(\hat{x}_t)} \odif{s} \right)^2} \\
			\oversetref{Ass.}{\ref{ass:L-lipschitz-hessians}}&{\le} \gamma^2 L_2^2 \, \E{\left( \int_0^1 \norm{s (x_t - \hat{x}_t) + (1 - s) (x_{t - 1} - \hat{x}_t)} \odif{s} \right)^2} \\
			\oversetrel{rel:658b5028-bdf6-48c6-aa1c-db16d2f3a272a}&{\le} \gamma^2 L_2^2 \, \E{\left( \int_0^1 s \norm{x_t - \hat{x}_t} \odif{s} + \int_0^1 (1 - s) \norm{x_{t - 1} - \hat{x}_t} \odif{s} \right)^2} \\
			& = \frac{\gamma^2 L_2^2}{4} \, \E{\left( \norm{x_t - \hat{x}_t} + \norm{x_{t - 1} - \hat{x}_t} \right)^2} \\
			\oversetrel{rel:658b5028-bdf6-48c6-aa1c-db16d2f3a272b}&{=} \frac{\gamma^2 L_2^2}{4} \, \E{\left( (1 - q_t) \norm{x_t - x_{t - 1}} + q_t \norm{x_t - x_{t - 1}} \right)^2} \\
			& = \frac{\gamma^4 L_2^2}{4}, \numberthis\label{46d7ba91-ef00-4ec2-9b1e-d222e9968289b}
		\end{alignat*}
		where in~\relref{rel:658b5028-bdf6-48c6-aa1c-db16d2f3a272} and~\relref{rel:658b5028-bdf6-48c6-aa1c-db16d2f3a272a} we use the triangle inequality. In~\relref{rel:658b5028-bdf6-48c6-aa1c-db16d2f3a272b} we use the definition of $\hat{x}_t$, i.e., $\hat{x}_t \eqdef q_t x_t + (1 - q_t) x_{t - 1}$. It is worth mentioning that the bounds~\eqref{46d7ba91-ef00-4ec2-9b1e-d222e9968289a} and~\eqref{46d7ba91-ef00-4ec2-9b1e-d222e9968289b} holds without the expectation $\E{\cdot}$.
		
		Thus, using~\eqref{f79e21c1-04c0-497e-81e9-c10afd7f04f9a},~\eqref{46d7ba91-ef00-4ec2-9b1e-d222e9968289b} we obtain
		\begin{alignat*}{2}
			\E{\norm{\sum_{j = 0}^{t - 1} (1 - \alpha)^{t  - j - 1} \hat{R}_{j + 1}}} \oversetlab{\eqref{d98da1f4-086e-4004-a809-2eb471da42a6-2}}&{\le} 4 \gamma \min\ens{L_1, \gamma L_2} \left( \sum_{j = 0}^{t - 1} (1 - \alpha)^{2 (t - j - 1)} \right)^{\frac{1}{2}} \\
			& \le 4 \gamma \min\ens{L_1, \gamma L_2} \alpha^{-\frac{1}{2}}. \numberthis\label{91e3e64b-54a6-46f3-96a5-a9b4f24dc6ee-3}
		\end{alignat*}
		
		Finally, for the last term, we can write
		\begin{alignat*}{2}
			\E{\norm{\sum_{j = 0}^{t - 1} (1 - \alpha)^{t  - j - 1} \hat{S}_{j + 1}}} \oversetref{Lem.}{\ref{lem:jensen-inequality}}&{\le} \left( \E{\norm{\sum_{j = 0}^{t - 1} (1 - \alpha)^{t  - j - 1} \hat{S}_{j + 1}}^q} \right)^{\frac{1}{q}} \\
			\oversetref{Lem.}{\ref{appdx-technical-lem:app-von-Bahr-and-Essen}}&{\le} \left( 2 \sum_{j = 0}^{t - 1} (1 - \alpha)^{q (t - j - 1)} \E{\norm{\hat{S}_{j + 1}}^q} \right)^{\frac{1}{q}}, \numberthis\label{d98da1f4-086e-4004-a809-2eb471da42a6-2}
		\end{alignat*}
		and, using~\Cref{ass:q-bounded-central-moment-hessian} we obtain
		\begin{alignat*}{2}
			\E{\norm{\hat{S}_{j + 1}}^q} \oversetlab{\eqref{891801fc-22ef-4562-81bb-3b5040b0f520}}&{=} \E{\norm{\nabla^2 F(\hat{x}_t)(x_{t - 1} - x_t) - \nabla^2 f(\hat{x}_t, \hat{\xi}_t)(x_{t - 1} - x_t)}^q} \\
			& \le \E{\normop{\nabla^2 F(\hat{x}_t) - \nabla^2 f(\hat{x}_t, \hat{\xi}_t)}^q \cdot \norm{x_t - x_{t - 1}}^q} \\
			\oversetref{Ass.}{\ref{ass:q-bounded-central-moment-hessian}}&{\le} \gamma^q \sigma_2^q, \numberthis\label{f79e21c1-04c0-497e-81e9-c10afd7f04f9a}
		\end{alignat*}
		thus, using~\eqref{f79e21c1-04c0-497e-81e9-c10afd7f04f9a} and the fact that $2^{\frac{1}{q}} \le 2$ 
		we obtain
		\begin{alignat*}{2}
			\E{\norm{\sum_{j = 0}^{t - 1} (1 - \alpha)^{t  - j - 1} \hat{S}_{j + 1}}} \oversetlab{\eqref{d98da1f4-086e-4004-a809-2eb471da42a6-2}}&{\le} 2 \gamma \sigma_2 \left( \sum_{j = 0}^{t - 1} (1 - \alpha)^{q (t - j - 1)} \right)^{\frac{1}{q}} \\
			& \le 2 \gamma\sigma_2 \alpha^{-\frac{1}{q}}. \numberthis\label{91e3e64b-54a6-46f3-96a5-a9b4f24dc6ee-2}
		\end{alignat*}
		
		Then, combining the bounds~\eqref{cea5881d-8688-41f3-8083-df58a5981198-2},~\eqref{91e3e64b-54a6-46f3-96a5-a9b4f24dc6ee-3} and~\eqref{91e3e64b-54a6-46f3-96a5-a9b4f24dc6ee-2} we have
		\[ \E{\norm{\hat{e}_t}} \le (1 - \alpha)^t \E{\norm{\hat{e}_0}} + 2 \sigma_1 \alpha^{\frac{p - 1}{p}} + 2 \gamma \sigma_2 \alpha^{-\frac{1}{q}} + 4 \gamma \min\ens{L_1, \gamma L_2} \alpha^{-\frac{1}{2}}, \]
		which achieves the proof of the lemma.
	\end{proof}
	
	\subsection{Proof of~\Cref{thm:nsgd-mvr-hess-convergence-analysis}}\label{appdx-subsec:proof-nsgd-mvr-hess-convergence-analysis}
	
	Thanks to~\Cref{appdx-lem:unrolling-descent-lemma-nsgd-mvr-hess,appdx-lem:bounding-error-term-nsgd-mvr-hess} we can now establish the convergence analysis (in expectation) of~\Cref{algo:nsgd-mvr-hess}.
	
	\begin{restate-theorem}{\ref{thm:nsgd-mvr-hess-convergence-analysis}}
		Under~\Cref{ass:lower-boundedness,ass:L-lipschitz-gradients,ass:L-lipschitz-hessians,ass:p-bounded-central-moment-gradient,ass:q-bounded-central-moment-hessian}, let the initial gradient estimate $g_0$ be given by
		\[ g_0 = \frac{1}{B_{\textnormal{init}}} \sum_{j = 1}^{B_{\textnormal{init}} - 1} \nabla f\left( x_0, \xi_{0, j} \right), \]
		where $B_{\textnormal{init}} = \max\ens{1, \left( \frac{\sigma_1}{\eps} \right)^{\frac{p}{p - 1}}}$, let the stepsize $\gamma = \min\ens{\sqrt{\frac{\Delta}{L_1 T}}, \sqrt{\frac{\Delta \alpha^{\nicefrac{1}{q}}}{\sigma_2 T}}, \sqrt[3]{\frac{\Delta \alpha^{\nicefrac{1}{2}}}{L_2 T}}}$, the momentum parameter $\alpha = \min\ens{1, \alpha_{\textnormal{eff}}}$ where
		\[ \alpha_{\textnormal{eff}} = \max\ens{\left( \frac{\eps}{\sigma_1 T} \right)^{\frac{p}{2p - 1}}, \left( \frac{\Delta \sigma_2}{\sigma_1^2 T} \right)^{\frac{pq}{p (2 q + 1) - 2 q}}, \left( \frac{L_2^{\nicefrac{1}{2}} \Delta}{\sigma_1^{\nicefrac{3}{2}} T} \right)^{\frac{4 p}{7 p - 6}}}. \numberthis\label{9fe7782d-3762-4699-bd42-02bdf270abe0} \]
		Then,~\Cref{algo:nsgd-mvr-hess} guarantees to find an $\eps$--stationary point with the total sample complexity
		\[ \cO\left( \left( \frac{\sigma_1}{\eps} \right)^{\frac{p}{p - 1}} + \frac{\Delta}{\eps} \left( \frac{L_1 + \sigma_2}{\eps} + \sqrt{\frac{L_2}{\eps}} \right) + \frac{\Delta \sigma_2}{\eps^2} \left( \frac{\sigma_1}{\eps} \right)^{\frac{p}{q (p - 1)}} + \frac{L_2^{\nicefrac{1}{2}} \Delta \sigma_1^{\nicefrac{1}{4}}}{\eps^{\nicefrac{7}{4}}} \left( \frac{\sigma_1}{\eps} \right)^{\frac{p}{4 (p - 1)}} \right). \]
	\end{restate-theorem}
	
	\begin{proof}
		According to~\Cref{appdx-lem:unrolling-descent-lemma-nsgd-mvr-hess} we have
		\[ \frac{1}{T} \sum_{t = 0}^{T - 1} \norm{\nabla F(x_t)} \le \frac{\Delta}{\gamma T} + \frac{2}{T} \sum_{t = 0}^{T - 1} \norm{\hat{e}_t} + \frac{\gamma L_1}{2}, \]
		and using~\Cref{appdx-lem:bounding-error-term-nsgd-mvr-hess} this yields
		\begin{alignat*}{2}
			\frac{1}{T} \sum_{t = 0}^{T - 1} \E{\norm{\hat{e}_t}} & \le \frac{1}{T} \sum_{t = 0}^{T - 1} (1 - \alpha)^t \E{\norm{\hat{e}_0}} + 2 \sigma_1 \alpha^{\frac{p - 1}{p}} + 2 \gamma \sigma_2 \alpha^{-\frac{1}{q}} + 4 \gamma^2 L_2 \alpha^{-\frac{1}{2}} \\
			& \le \frac{\E{\norm{\hat{e}_0}}}{\alpha T} + 2 \sigma_1 \alpha^{\frac{p - 1}{p}} + 2 \gamma \sigma_2 \alpha^{-\frac{1}{q}} + 4 \gamma^2 L_2 \alpha^{-\frac{1}{2}},
		\end{alignat*}
		where in the last term we drop the $\min\ens{\cdots}$ to only keep the $\gamma^2 L_2$ term. Hence we obtain the bound
		\[ \frac{1}{T} \sum_{t = 0}^{T - 1} \E{\norm{\nabla F(x_t)}} \le \frac{\Delta}{\gamma T} + \frac{2 \E{\norm{\hat{e}_0}}}{\alpha T} + 4 \sigma_1 \alpha^{\frac{p - 1}{p}} + 4 \gamma \sigma_2 \alpha^{-\frac{1}{q}} + 8 \gamma^2 L_2 \alpha^{-\frac{1}{2}} + \frac{\gamma L_1}{2}. \]
		Now, using $\gamma = \min\ens{\sqrt{\frac{\Delta}{L_1 T}}, \sqrt{\frac{\Delta \alpha^{\nicefrac{1}{q}}}{\sigma_2 T}}, \sqrt[3]{\frac{\Delta \alpha^{\nicefrac{1}{2}}}{L_2 T}}}$ we have
		\begin{alignat*}{2} 
			&\frac{1}{T} \sum_{t = 0}^{T - 1} \E{\norm{\nabla F(x_t)}} = \\
			&\qquad\cO\left( \sqrt{\frac{L_1 \Delta}{T}} + \alpha^{-\frac{1}{2 q}} \sqrt{\frac{\Delta \sigma_2}{T}} + \alpha^{-\frac{1}{6}} \left( \frac{L_2^{\nicefrac{1}{2}} \Delta}{T} \right)^{\frac{2}{3}} + \frac{\E{\norm{\hat{e}_0}}}{\alpha T} + \sigma_1 \alpha^{\frac{p - 1}{p}} \right). \numberthis\label{36a6b109-a2ff-445f-937c-7040717cb89d-2} 
		\end{alignat*}
		Now, by our choice of $g_0$ we have, as in we did in the previous section
		\begin{alignat*}{2}
			\E{\norm{\hat{e}_0}} \oversetlab{\eqref{6a9db682-cb1e-43bb-878f-e4b60ebd3183}}&{\le}  \frac{2 \sigma_1}{B_{\textnormal{init}}^{\frac{p - 1}{p}}}, \numberthis\label{c83db8de-7f7d-4c84-82b2-a4e56a709c5f}
		\end{alignat*}
		and since $B_{\textnormal{init}} = \max\ens{1, \left( \frac{\sigma_1}{\eps} \right)^{\frac{p}{p - 1}}}$ we have $\E{\norm{\hat{e}_0}} \le \sigma_1 \times \left( \frac{\eps}{\sigma_1} \right) = \eps$. This gives
		\begin{alignat*}{2}
			&\frac{1}{T} \sum_{t = 0}^{T - 1} \E{\norm{\nabla F(x_t)}} \\
			&\qquad\oversetlab{\eqref{36a6b109-a2ff-445f-937c-7040717cb89d-2}}{=} \cO\left( \sqrt{\frac{L_1 \Delta}{T}} + \alpha^{-\frac{1}{2 q}} \sqrt{\frac{\Delta \sigma_2}{T}} + \alpha^{-\frac{1}{6}} \left( \frac{L_2^{\nicefrac{1}{2}} \Delta}{T} \right)^{\frac{2}{3}} + \frac{\E{\norm{\hat{e}_0}}}{\alpha T} + \sigma_1 \alpha^{\frac{p - 1}{p}} \right) \\
			&\qquad\oversetlab{\eqref{c83db8de-7f7d-4c84-82b2-a4e56a709c5f}}{=} \cO\left( \sqrt{\frac{L_1 \Delta}{T}} + \alpha^{-\frac{1}{2 q}} \sqrt{\frac{\Delta \sigma_2}{T}} + \alpha^{-\frac{1}{6}} \left( \frac{L_2^{\nicefrac{1}{2}} \Delta}{T} \right)^{\frac{2}{3}} + \frac{\eps}{\alpha T} + \sigma_1 \alpha^{\frac{p - 1}{p}} \right) \\
			&\qquad = \cO\Bigg( \sqrt{\frac{L_1 \Delta}{T}} + \left[ \sqrt{\frac{\Delta \sigma_2}{T}} + \alpha_{\textnormal{eff}}^{-\frac{1}{2 q}} \sqrt{\frac{\Delta \sigma_2}{T}} \right] + \frac{\eps}{T} \alpha_{\textnormal{eff}}^{-1} \\
			&\qquad\qquad\qquad+ \sigma_1 \alpha_{\textnormal{eff}}^{\frac{p - 1}{p}} + \left[ \left( \frac{L_2^{\nicefrac{1}{2}} \Delta}{T} \right)^{\frac{2}{3}} + \alpha_{\textnormal{eff}}^{-\frac{1}{6}} \left( \frac{L_2^{\nicefrac{1}{2}} \Delta}{T} \right)^{\frac{2}{3}} \right] \Bigg) \\
			&\qquad\oversetrel{rel:629eafc0-82b4-4035-b358-3aa96a206e1a-2}{=} \cO\Bigg( \sqrt{\frac{L_1 \Delta}{T}} + \sqrt{\frac{\Delta \sigma_2}{T}} + \left( \frac{L_2^{\nicefrac{1}{2}} \Delta}{T} \right)^{\frac{2}{3}} + \sigma_1 \left( \frac{\Delta \sigma_2}{\sigma_1^2 T} \right)^{\frac{q (p - 1)}{p (2 q + 1) - 2 q}}\\
			&\qquad\qquad\qquad+ \sigma_1 \left( \frac{\eps}{\sigma_1 T}  \right)^{\frac{p - 1}{2 p - 1}} + \sigma_1 \left( \frac{L_2^{\nicefrac{1}{2}} \Delta}{\sigma_1^{\nicefrac{3}{2}} T} \right)^{\frac{4 (p - 1)}{7 p - 6}} \Bigg), \numberthis\label{0f0a2e3d-d04e-4e64-948f-d38d2619b1fb-2}
		\end{alignat*}
		where in~\relref{rel:629eafc0-82b4-4035-b358-3aa96a206e1a-2} we use the choice of $\alpha_{\textnormal{eff}}$ from~\eqref{9fe7782d-3762-4699-bd42-02bdf270abe0}, especially, we have
		\[ \sigma_1 \alpha_{\textnormal{eff}}^{\frac{p - 1}{p}} \le \sigma_1 \left( \frac{\Delta \sigma_2}{\sigma_1^2 T} \right)^{\frac{q (p - 1)}{p (2 q + 1) - 2 q}} + \sigma_1 \left( \frac{\eps}{\sigma_1 T} \right)^{\frac{p - 1}{2 p - 1}} + \sigma_1 \left( \frac{L_2^{\nicefrac{1}{2}} \Delta}{\sigma_1^{\nicefrac{3}{2}} T} \right)^{\frac{4 (p - 1)}{7 p - 6}}. \]
		
		Finally, from the bound~\eqref{0f0a2e3d-d04e-4e64-948f-d38d2619b1fb-2} we deduce that the sample complexity of~\Cref{algo:nsgd-mvr-hess} is exactly
		\[ \cO\left( \left( \frac{\sigma_1}{\eps} \right)^{\frac{p}{p - 1}} + \frac{\Delta}{\eps} \left( \frac{L_1 + \sigma_2}{\eps} + \sqrt{\frac{L_2}{\eps}} \right) + \frac{\Delta \sigma_2}{\eps^2} \left( \frac{\sigma_1}{\eps} \right)^{\frac{p}{q (p - 1)}} + \frac{L_2^{\nicefrac{1}{2}} \Delta \sigma_1^{\nicefrac{1}{4}}}{\eps^{\nicefrac{7}{4}}} \left( \frac{\sigma_1}{\eps} \right)^{\frac{p}{4 (p - 1)}} \right), \numberthis\label{b5be16f7-9e74-48d0-af57-e31254eaedf7} \]
		as claimed.
	\end{proof}
	
	\begin{remark}
		In particular, when $\sigma_2 = 0$ we observe that the sample complexity~\eqref{b5be16f7-9e74-48d0-af57-e31254eaedf7} does not depend on the exponent $q$ anymore, as in the lower bound we derived in~\Cref{thm:lower-bound-sofo-bounded-central-moments}.
	\end{remark}
	
	\newpage
	\section{Missing Proofs in Section~\ref{sec:clipped-nsgd-mvr}}\label{appdx:proof-clipped-nsgd-mvr}
	
	\subsection{Preliminary Lemmas}
	
	Now we start the high-probability convergence analysis of~\Cref{algo:clipped-nsgd-mvr}. The proof heavily follows the one of~\cite{sadiev2025second}.
	
	\subsubsection{Some Descent Lemma}
	
	\begin{lemma}[A Descent Lemma (for~\Cref{algo:nsgd-mvr})]\label{appdx-lem:high-probability-analysis-descent-lemma}
		Under~\Cref{ass:lower-boundedness,ass:mean-squared-smoothness}, for any choice of stepsize $\gamma > 0$ and any choice of momentum parameter $\alpha \in \intof{0}{1}$,~\Cref{algo:clipped-nsgd-mvr} generates iterates $\{x_t\}_{t \in \Int{0}{T}}$ which satisfy almost surely (a.s.) the inequality
		\begin{alignat*}{2}
			\gamma \sum_{t = 0}^{T - 1} \norm{\nabla F(x_t)} + \Delta_T \le \Delta_0 & + 2 \gamma \alpha \sum_{t = 1}^T \norm{\sum_{j = 1}^t (1 - \alpha)^{t - j} \theta_j} + 2 \gamma (1 - \alpha) \sum_{t = 1}^T \norm{\sum_{j = 1}^t (1 - \alpha)^{t - j} \omega_j} \\
			& + \frac{2 \gamma \norm{\hat{e}_0}}{\alpha} + \frac{\gamma^2 \bar{L} T}{2},
		\end{alignat*}
		where for any $j \in \Int{1}{T - 1}$, the vectors $\theta_j$ and $\omega_j$ are defined as
		\begin{alignat*}{2}
			\theta_j \eqdef* \clip\left( \nabla f(x_j, \xi_j), \lbd_2 \right) - \nabla F(x_j), \numberthis\label{appdx-lem-proof:def-theta} \\
			\omega_j \eqdef* \clip\left( \nabla f(x_j, \xi_j) - \nabla f(x_{j - 1}, \xi_j) , \lbd_1 \right) - \left( \nabla F(x_j) - \nabla F(x_{j - 1}) \right), \numberthis\label{appdx-lem-proof:def-omega}
		\end{alignat*}
		and for any $t \in \Int{0}{T - 1}$ we let $\Delta_t \eqdef F(x_t) - F^{\inf}$.
	\end{lemma}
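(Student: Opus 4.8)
The plan is to reduce the statement to two standard ingredients: the deterministic descent inequality, which is unaffected by clipping because the update $x_{t+1}=x_t-\gamma g_t/\norm{g_t}$ is unchanged, and an explicit unrolling of the error recursion for $\hat e_t \eqdef g_t - \nabla F(x_t)$ in terms of the clipped noise vectors $\theta_j$ and $\omega_j$. First I would invoke \Cref{appdx-lem:descent-lemma-nsgd-mvr}: under \Cref{ass:mean-squared-smoothness} the function $F$ is $\bar L$--smooth (as derived in~\eqref{94828cd2-cf02-492b-a6ec-40b3012f66c1}), and the proof of that lemma uses no property of $g_t$ beyond the fact that it defines the descent direction, so inequality~\eqref{2331e9e8-3c3e-4a5b-b3b6-50d71097a688} holds verbatim for \Cref{algo:clipped-nsgd-mvr}. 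Telescoping over $t = 0, \ldots, T-1$ exactly as in \Cref{appdx-lem:unrolling-descent-lemma-nsgd-mvr} and using \Cref{ass:lower-boundedness} gives $\gamma \sum_{t=0}^{T-1} \norm{\nabla F(x_t)} + \Delta_T \le \Delta_0 + 2\gamma \sum_{t=0}^{T-1} \norm{\hat e_t} + \tfrac{\gamma^2 \bar L T}{2}$.

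The next step is to bound $\sum_{t=0}^{T-1}\norm{\hat e_t}$. I would write out the update of $g_t$ from \Cref{algo:clipped-nsgd-mvr}, subtract $\nabla F(x_t)$, and split $\nabla F(x_t) = (1-\alpha)\nabla F(x_{t-1}) + \alpha\nabla F(x_t) + (1-\alpha)(\nabla F(x_t) - \nabla F(x_{t-1}))$ so that the clipped terms combine exactly with the deterministic gradient differences. Using the definitions~\eqref{appdx-lem-proof:def-theta}--\eqref{appdx-lem-proof:def-omega}, namely $\clip(\nabla f(x_t,\xi_t)-\nabla f(x_{t-1},\xi_t),\lbd_1) = \omega_t + \nabla F(x_t) - \nabla F(x_{t-1})$ and $\clip(\nabla f(x_t,\xi_t),\lbd_2) = \theta_t + \nabla F(x_t)$, this yields the clean one-step recursion $\hat e_t = (1-\alpha)\hat e_{t-1} + (1-\alpha)\omega_t + \alpha\theta_t$ for all $t \in \Int{1}{T-1}$.

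I would then unroll this recursion to obtain $\hat e_t = (1-\alpha)^t \hat e_0 + (1-\alpha)\sum_{j=1}^t (1-\alpha)^{t-j}\omega_j + \alpha\sum_{j=1}^t (1-\alpha)^{t-j}\theta_j$, which holds also at $t = 0$ with empty sums, apply the triangle inequality to bound $\norm{\hat e_t}$ pointwise, and sum over $t = 0, \ldots, T-1$. The geometric series gives $\sum_{t=0}^{T-1}(1-\alpha)^t \le 1/\alpha$, and since the $t = 0$ contributions to the $\omega$- and $\theta$-sums vanish, extending the outer index up to $T$ (which only adds nonnegative terms) turns the double sums into exactly $\sum_{t=1}^T \norm{\sum_{j=1}^t (1-\alpha)^{t-j}\omega_j}$ and $\sum_{t=1}^T \norm{\sum_{j=1}^t (1-\alpha)^{t-j}\theta_j}$. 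Substituting $\sum_{t=0}^{T-1}\norm{\hat e_t} \le \tfrac{\norm{\hat e_0}}{\alpha} + (1-\alpha)\sum_{t=1}^T \norm{\sum_{j=1}^t (1-\alpha)^{t-j}\omega_j} + \alpha\sum_{t=1}^T \norm{\sum_{j=1}^t (1-\alpha)^{t-j}\theta_j}$ into the telescoped descent inequality and collecting the constant factors of $2\gamma$ produces the claimed bound.

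The argument is essentially bookkeeping; the only places demanding care are aligning the exponents $(1-\alpha)^{t-j}$ in the unrolled recursion with those in the statement, and verifying that the algebraic splitting above is an exact identity (so that $\theta_t, \omega_t$ appear with the precise coefficients $\alpha$ and $1-\alpha$). I expect the main conceptual point, rather than an obstacle, to be the observation that the descent lemma is clipping-agnostic, so that the entire analysis localizes the effect of clipping into the two noise-vector sums, which are exactly what the subsequent high-probability arguments will control.
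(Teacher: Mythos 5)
Your proof matches the paper's argument step for step: invoke the clipping-agnostic descent lemma (\Cref{appdx-lem:descent-lemma-nsgd-mvr}), telescope to reduce to bounding $\sum_t\norm{\hat e_t}$, derive the exact recursion $\hat e_t=(1-\alpha)\hat e_{t-1}+\alpha\theta_t+(1-\alpha)\omega_t$ from the algorithm's update, unroll, apply the triangle inequality and the geometric series bound. Your explicit remark that the $t=0$ terms vanish and that extending the outer index to $T$ only adds nonnegative terms is actually slightly more careful than the paper, which leaves that index reconciliation implicit in its final display.
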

	
	\begin{proof}
		First of all, let us observe that~\Cref{appdx-lem:descent-lemma-nsgd-mvr} still holds for~\Cref{algo:clipped-nsgd-mvr} (and can be proved analogously since~\Cref{algo:nsgd-mvr,algo:clipped-nsgd-mvr} have the same gradient update rule). Moreover, as the momentum term does not play any role in the proof of~\Cref{appdx-lem:descent-lemma-nsgd-mvr}, the iterates $\{x_t\}_{t \in \Int{0}{T}}$ of~\Cref{algo:clipped-nsgd-mvr} satisfy almost surely (a.s.)
		\[ \gamma \sum_{t = 0}^{T - 1} \norm{\nabla F(x_t)} + \Delta_T \le \Delta_0 + 2 \gamma \sum_{t = 0}^{T - 1} \norm{\hat{e}_t} + \frac{\gamma^2 \bar{L} T}{2}. \numberthis\label{ef85feaf-c75f-4a8a-9afc-c33134870e38} \]

		Next, we bound $\norm{\hat{e}_t}$ in an analogous way as we did in~\Cref{appdx-lem:bounding-error-term-nsgd-mvr}. According to the update rule for the momentum term in line $10$ of~\Cref{algo:clipped-nsgd-mvr}, we have
		\begin{alignat*}{2}
			\hat{e}_t & = g_t - \nabla F(x_t) \\
			& = (1 - \alpha) \left( g_{t - 1} + \clip(\nabla f\left( x_t, \xi_t \right) - \nabla f \left( x_{t - 1}, \xi_t \right), \lbd_1) \right) + \alpha \, \clip(\nabla f \left( x_t, \xi_t \right), \lbd_2) - \nabla F(x_t) \\
			& = (1 - \alpha) \left( g_{t - 1} - \nabla F(x_{t - 1}) \right) \\
			&\qquad+ (1 - \alpha) \left( \clip(\nabla f\left( x_t, \xi_t \right) - \nabla f \left( x_{t - 1}, \xi_t \right), \lbd_1) - \left[ \nabla F(x_t) - \nabla F(x_{t - 1}) \right] \right) \\
			&\qquad+ \alpha \left( \clip(\nabla f \left( x_t, \xi_t \right), \lbd_2) - \nabla F(x_t) \right)\\
			\oversetlab{\eqref{appdx-lem-proof:def-theta}+\eqref{appdx-lem-proof:def-omega}}&{=} (1 - \alpha) \hat{e}_{t - 1} + \alpha \theta_t + (1 - \alpha) \omega_t \\
			\oversetrel{rel:5ab6c03d-e700-4912-b084-a9509ca86e01}&{=} (1 - \alpha)^t \hat{e}_0 + \alpha \sum_{j = 1}^t (1 - \alpha)^{t - j} \theta_j + (1 - \alpha) \sum_{j = 1}^t (1 - \alpha)^{t - j} \omega_j, \numberthis\label{6f49529c-cbca-40f2-af32-aea3df8c50f2}
		\end{alignat*}
		where in~\relref{rel:5ab6c03d-e700-4912-b084-a9509ca86e01} we unroll the recursion. Then, taking the norm and applying the triangle inequality in this series of equalities, we obtain
		\begin{alignat*}{2}
			\norm{\hat{e}_t} & \le (1 - \alpha)^t \norm{\hat{e}_0} + \alpha \norm{\sum_{j = 1}^t (1 - \alpha)^{t - j} \theta_j} + (1 - \alpha) \norm{\sum_{j = 1}^t (1 - \alpha)^{t - j} \omega_j}, \numberthis\label{cc2cc25c-1b54-471f-ab22-55d547ad29a6}
		\end{alignat*}
		and plugging~\eqref{cc2cc25c-1b54-471f-ab22-55d547ad29a6} in~\eqref{ef85feaf-c75f-4a8a-9afc-c33134870e38} leads to
		\begin{alignat*}{2}
			\gamma \sum_{t = 0}^{T - 1} \norm{\nabla F(x_t)} + \Delta_T & \le \Delta_0 + 2 \gamma \sum_{t = 0}^{T - 1} \norm{\hat{e}_t} + \frac{\gamma^2 \bar{L} T}{2} \\
			& \le \Delta_0
			\begin{aligned}[t]
				&+ 2 \gamma \sum_{t = 0}^{T - 1} \left( (1 - \alpha)^t \norm{\hat{e}_0} + \alpha \norm{\sum_{j = 1}^t (1 - \alpha)^{t - j} \theta_j} + (1 - \alpha) \norm{\sum_{j = 1}^t (1 - \alpha)^{t - j} \omega_j} \right) \\
				&+ \frac{\gamma^2 \bar{L} T}{2}
			\end{aligned} \\
			\oversetrel{rel:3961661b-135d-4eca-b667-e5df3c5451e5}&{\le} \Delta_0 
			\begin{aligned}[t]
				&+ \frac{2 \gamma \norm{\hat{e}_0}}{\alpha} + \frac{\gamma^2 \bar{L} T}{2} \\
				&+ 2 \gamma \alpha \sum_{t = 0}^{T - 1} \norm{\sum_{j = 1}^t (1 - \alpha)^{t - j} \theta_j} + 2 \gamma (1 - \alpha) \sum_{t = 0}^{T - 1} \norm{\sum_{j = 1}^t (1 - \alpha)^{t - j} \omega_j},
			\end{aligned}
		\end{alignat*}
		where in~\relref{rel:3961661b-135d-4eca-b667-e5df3c5451e5} we use the inequality
		\[ 2 \gamma \sum_{t = 0}^{T - 1} (1 - \alpha)^t \norm{\hat{e}_0} \le 2 \gamma \norm{\hat{e}_0} \sum_{t \ge 0} (1 - \alpha)^t = \frac{2 \gamma \norm{\hat{e}_0}}{\alpha}. \]
		
		This proves the desired result.
	\end{proof}
	
	\begin{remark}
		In particular, if we assume $g_0 = 0$ in~\Cref{algo:clipped-nsgd-mvr} then $x_0 = x_1$ and $\Delta_0 = \Delta_1$. Moreover, we have
		\[ \norm{\hat{e}_0} = \norm{g_0 - \nabla F(x_0)} = \norm{\nabla F(x_1)} \le \sqrt{2 \bar{L} \Delta_1}, \]
		as by~\Cref{ass:mean-squared-smoothness} we know that $F$ has $\bar{L}$--Lipschitz continuous gradients.
	\end{remark}
	
	\begin{corollary}[Another Descent Lemma]\label{appdx-lem:high-probability-analysis-descent-lemma-2}
		Under~\Cref{ass:lower-boundedness,ass:L-lipschitz-gradients}, for any choice of stepsize $\gamma > 0$ and any choice of momentum parameter $\alpha \in \intof{0}{1}$,~\Cref{algo:clipped-nsgd-mvr} generates iterates $\{x_t\}_{t \in \Int{0}{T}}$ which satisfy almost surely (a.s.) the inequality
		\begin{alignat*}{2}
			\gamma \sum_{t = 0}^{T - 1} \norm{\nabla F(x_t)} + \Delta_T \le \Delta_0 & + 2 \gamma \alpha \sum_{t = 1}^T \norm{\sum_{j = 1}^t (1 - \alpha)^{t - j} \theta_j} + 2 \gamma (1 - \alpha) \sum_{t = 1}^T \norm{\sum_{j = 1}^t (1 - \alpha)^{t - j} \omega_j} \\
			& + \frac{2 \gamma \norm{\hat{e}_0}}{\alpha} + \frac{\gamma^2 L_1 T}{2},
		\end{alignat*}
		where for any $j \in \Int{1}{T - 1}$, the vectors $\theta_j$ and $\omega_j$ are defined in~\eqref{appdx-lem-proof:def-theta} and~\eqref{appdx-lem-proof:def-omega} and for any $t \in \Int{0}{T - 1}$ we let $\Delta_t \eqdef F(x_t) - F^{\inf}$.    
	\end{corollary}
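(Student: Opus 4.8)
The plan is to mirror the proof of~\Cref{appdx-lem:high-probability-analysis-descent-lemma}, with the mean-squared-smoothness constant $\bar{L}$ replaced by the gradient-Lipschitz constant $L_1$. The key observation is that the one-step descent inequality~\eqref{2331e9e8-3c3e-4a5b-b3b6-50d71097a688-2} of~\Cref{appdx-lem:descent-lemma-nsgd-mvr-hess} applies verbatim to~\Cref{algo:clipped-nsgd-mvr}: the clipping operations only modify the momentum estimator $g_t$, while neither the normalized descent step $x_{t+1} = x_t - \gamma g_t / \norm{g_t}$ nor the error quantity $\hat{e}_t = g_t - \nabla F(x_t)$ changes form, so the same Cauchy--Schwarz and triangle-inequality computation used there goes through under~\Cref{ass:L-lipschitz-gradients}. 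Summing~\eqref{2331e9e8-3c3e-4a5b-b3b6-50d71097a688-2} over $t$ from $0$ to $T-1$ and telescoping the terms $F(x_t) - F(x_{t+1})$ gives, almost surely,
\[
    \gamma \sum_{t = 0}^{T - 1} \norm{\nabla F(x_t)} + \Delta_T \le \Delta_0 + 2 \gamma \sum_{t = 0}^{T - 1} \norm{\hat{e}_t} + \frac{\gamma^2 L_1 T}{2},
\]
the exact analogue of~\eqref{ef85feaf-c75f-4a8a-9afc-c33134870e38}.

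Next I would reuse, without re-deriving it, the error-recursion identity~\eqref{6f49529c-cbca-40f2-af32-aea3df8c50f2} from the proof of~\Cref{appdx-lem:high-probability-analysis-descent-lemma}: the momentum update (line $10$ of~\Cref{algo:clipped-nsgd-mvr}) together with the definitions~\eqref{appdx-lem-proof:def-theta} and~\eqref{appdx-lem-proof:def-omega} of $\theta_t$ and $\omega_t$ yields $\hat{e}_t = (1-\alpha)^t \hat{e}_0 + \alpha \sum_{j=1}^t (1-\alpha)^{t-j}\theta_j + (1-\alpha)\sum_{j=1}^t (1-\alpha)^{t-j}\omega_j$, hence by the triangle inequality
\[
    \norm{\hat{e}_t} \le (1-\alpha)^t \norm{\hat{e}_0} + \alpha \norm{\sum_{j=1}^t (1-\alpha)^{t-j}\theta_j} + (1-\alpha) \norm{\sum_{j=1}^t (1-\alpha)^{t-j}\omega_j},
\]
as in~\eqref{cc2cc25c-1b54-471f-ab22-55d547ad29a6}. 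This step uses no smoothness assumption at all — only the algebra of the clipped momentum update — so it carries over unchanged.

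Finally I would plug this bound into the telescoped inequality, swap the order of the summations over $t$ and $j$, bound the $\hat{e}_0$ contribution via the geometric series $2\gamma \sum_{t=0}^{T-1}(1-\alpha)^t \norm{\hat{e}_0} \le 2\gamma \norm{\hat{e}_0} \sum_{t \ge 0}(1-\alpha)^t = 2\gamma\norm{\hat{e}_0}/\alpha$, and extend the two outer $\theta$/$\omega$ sums to $t \in \Int{1}{T}$ (the $t=0$ term has an empty inner sum and vanishes, and adding the $t=T$ term only enlarges the right-hand side). This produces exactly the claimed inequality. I do not expect any genuine obstacle: the statement is the $L_1$-smooth counterpart of~\Cref{appdx-lem:high-probability-analysis-descent-lemma} and the argument is word-for-word the same after the substitution $\bar{L} \mapsto L_1$; the only point deserving an explicit sentence is that~\Cref{appdx-lem:descent-lemma-nsgd-mvr-hess}, not~\Cref{appdx-lem:descent-lemma-nsgd-mvr}, is the correct one-step descent lemma to invoke, since here we assume $L_1$-Lipschitz gradients rather than $q$-weak average smoothness.
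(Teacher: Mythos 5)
Your proof is correct and follows essentially the same route as the paper, which likewise reduces the corollary to a verbatim repeat of~\Cref{appdx-lem:high-probability-analysis-descent-lemma} with $\bar{L}$ replaced by $L_1$. Your explicit remark that the $L_1$-smooth one-step descent lemma~\Cref{appdx-lem:descent-lemma-nsgd-mvr-hess} is the one to invoke, while the error-recursion steps are assumption-free and carry over unchanged, is precisely the content the paper leaves implicit.
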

	
	\begin{proof}
		The proof follows the exact same steps as in the previous lemma (\Cref{appdx-lem:high-probability-analysis-descent-lemma}), with the exception that the function $F$ has $L_1$--Lipschitz continuous gradients (instead of $\bar{L}$).
	\end{proof}
	
	\begin{lemma}[A Descent Lemma (for~\Cref{algo:nsgd-mvr-hess})]\label{appdx-lem:high-probability-analysis-descent-lemma-hess}
		Under~\Cref{ass:lower-boundedness,ass:L-lipschitz-gradients}, for any choice of stepsize $\gamma > 0$ and any choice of momentum parameter $\alpha \in \intof{0}{1}$,~\Cref{algo:clipped-nsgd-mvr} generates iterates $\{x_t\}_{t \in \Int{0}{T}}$ which satisfy almost surely (a.s.) the inequality
		\begin{alignat*}{2}
			\gamma \sum_{t = 0}^{T - 1} \norm{\nabla F(x_t)} + \Delta_T \le \Delta_0 & + 2 \gamma \alpha \sum_{t = 1}^T \norm{\sum_{j = 1}^t (1 - \alpha)^{t - j} \theta_j} + 2 \gamma (1 - \alpha) \sum_{t = 1}^T \norm{\sum_{j = 1}^t (1 - \alpha)^{t - j} \omega_j} \\
			& + \frac{2 \gamma \norm{\hat{e}_0}}{\alpha} + \frac{\gamma^2 L_1 T}{2}, \numberthis\label{5a8226aa-756e-43a3-92e4-1096c9fe1a58}
		\end{alignat*}
		where for any $j \in \Int{1}{T - 1}$, the vectors $\theta_j$ and $\omega_j$ are defined as
		\begin{alignat*}{2}
			\theta_j \eqdef* \clip\left( \nabla f(x_j, \xi_j), \lbd_2 \right) - \nabla F(x_j), \numberthis\label{appdx-lem-proof:def-theta-hess} \\
			\omega_j \eqdef* \clip\left( \nabla^2 f(\hat{x}_j, \hat{\xi}_j)(x_j - x_{j - 1}), \lbd_1 \right) - \left( \nabla F(x_j) - \nabla F(x_{j - 1}) \right), \numberthis\label{appdx-lem-proof:def-omega-hess}
		\end{alignat*}
		and for any $t \in \Int{0}{T - 1}$ we let $\Delta_t \eqdef F(x_t) - F^{\inf}$.   
	\end{lemma}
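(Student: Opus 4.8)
The plan is to mirror, almost verbatim, the proof of~\Cref{appdx-lem:high-probability-analysis-descent-lemma}, with $\bar L$ replaced by $L_1$ (using \Cref{ass:L-lipschitz-gradients} in place of \Cref{ass:mean-squared-smoothness}) and with the Hessian-corrected momentum recursion of \Cref{algo:clipped-nsgd-mvr-hess} in place of the gradient-difference one. First I would observe that \Cref{appdx-lem:descent-lemma-nsgd-mvr-hess} applies to \Cref{algo:clipped-nsgd-mvr-hess} as well: that descent inequality only uses the normalized descent step $x_{t+1} = x_t - \gamma\, g_t/\norm{g_t}$ and $L_1$-smoothness of $F$, and is completely insensitive to how $g_t$ is built (momentum, clipping, Hessian-vector products). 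Hence, for every $t \in \Int{0}{T-1}$ we have a.s.\ $F(x_{t+1}) \le F(x_t) + 2\gamma \norm{\hat e_t} - \gamma \norm{\nabla F(x_t)} + \tfrac{\gamma^2 L_1}{2}$ with $\hat e_t \eqdef g_t - \nabla F(x_t)$. Summing over $t=0,\dots,T-1$, telescoping $F(x_t)-F(x_{t+1})$, and using $\Delta_t \eqdef F(x_t) - F^{\inf}$ gives $\gamma\sum_{t=0}^{T-1}\norm{\nabla F(x_t)} + \Delta_T \le \Delta_0 + 2\gamma\sum_{t=0}^{T-1}\norm{\hat e_t} + \tfrac{\gamma^2 L_1 T}{2}$.

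Next I would unroll the error recursion. Using line~10 of \Cref{algo:clipped-nsgd-mvr-hess}, namely $g_t = (1-\alpha)\bigl(g_{t-1} + \clip(\nabla^2 f(\hat x_t,\hat\xi_t)(x_t-x_{t-1}),\lbd_1)\bigr) + \alpha\,\clip(\nabla f(x_t,\xi_t),\lbd_2)$, subtracting $\nabla F(x_t)$ and inserting $\pm\,\nabla F(x_{t-1})$ inside the $(1-\alpha)$ group, one obtains the affine recursion $\hat e_t = (1-\alpha)\hat e_{t-1} + \alpha\,\theta_t + (1-\alpha)\,\omega_t$, with $\theta_t,\omega_t$ exactly as in~\eqref{appdx-lem-proof:def-theta-hess} and~\eqref{appdx-lem-proof:def-omega-hess}. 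Iterating this identity from $t$ down to $0$ yields $\hat e_t = (1-\alpha)^t \hat e_0 + \alpha\sum_{j=1}^t (1-\alpha)^{t-j}\theta_j + (1-\alpha)\sum_{j=1}^t (1-\alpha)^{t-j}\omega_j$, and the triangle inequality gives $\norm{\hat e_t} \le (1-\alpha)^t\norm{\hat e_0} + \alpha\bignorm{\sum_{j=1}^t (1-\alpha)^{t-j}\theta_j} + (1-\alpha)\bignorm{\sum_{j=1}^t (1-\alpha)^{t-j}\omega_j}$.

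Finally I would substitute this bound for $\norm{\hat e_t}$ into the telescoped descent inequality, exchange the order of summation in the double sums $\sum_{t=0}^{T-1}\sum_{j=1}^t$, and use the geometric bound $\sum_{t\ge 0}(1-\alpha)^t = 1/\alpha$ to replace $2\gamma\sum_{t=0}^{T-1}(1-\alpha)^t\norm{\hat e_0}$ by $2\gamma\norm{\hat e_0}/\alpha$; this produces~\eqref{5a8226aa-756e-43a3-92e4-1096c9fe1a58} and closes the argument. I do not expect any genuine obstacle: the proof is algebraically identical to that of~\Cref{appdx-lem:high-probability-analysis-descent-lemma}, the sole difference being that the clipped gradient difference $\clip(\nabla f(x_j,\xi_j)-\nabla f(x_{j-1},\xi_j),\lbd_1)$ is replaced by the clipped stochastic Hessian--vector product $\clip(\nabla^2 f(\hat x_j,\hat\xi_j)(x_j-x_{j-1}),\lbd_1)$ inside $\omega_j$. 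The only point to keep in mind is that the unbiasedness identity $\ExpSub{q_t}{\nabla^2 F(\hat x_t)(x_t-x_{t-1})} = \nabla F(x_t) - \nabla F(x_{t-1})$ (cf.~\eqref{ff082357-0807-4b61-b6c0-12bd29b8b854}) is \emph{not} needed here — this descent lemma is a deterministic (a.s.) bound — and will only be invoked later, when the martingale sums $\sum_{j}(1-\alpha)^{t-j}\omega_j$ are controlled in high probability.
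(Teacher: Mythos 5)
Your proposal is correct and takes essentially the same route as the paper: invoke the normalized-step descent inequality with constant $L_1$ (the paper cites~\Cref{appdx-lem:descent-lemma-nsgd-mvr-hess} / \Cref{appdx-lem:high-probability-analysis-descent-lemma-2}), telescope, derive the affine error recursion $\hat e_t = (1-\alpha)\hat e_{t-1} + \alpha\theta_t + (1-\alpha)\omega_t$ from line~10 of \Cref{algo:clipped-nsgd-mvr-hess} by inserting $\pm\nabla F(x_{t-1})$, unroll, apply the triangle inequality, and bound the $\norm{\hat e_0}$ geometric series by $1/\alpha$. Your observation that the unbiasedness property~\eqref{ff082357-0807-4b61-b6c0-12bd29b8b854} plays no role in this almost-sure bound is also correct. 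One small slip of phrasing: there is no actual exchange of summation order — the inner sums $\sum_{j=1}^t(1-\alpha)^{t-j}\theta_j$ and $\sum_{j=1}^t(1-\alpha)^{t-j}\omega_j$ are kept inside norms as single terms, so the outer sum over $t$ is simply carried along unchanged into~\eqref{5a8226aa-756e-43a3-92e4-1096c9fe1a58}; only the $(1-\alpha)^t\norm{\hat e_0}$ term is actually summed. This is harmless and does not affect the argument.
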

	
	\begin{proof}
		The proof is very similar to the proof of~\Cref{appdx-lem:high-probability-analysis-descent-lemma} with the exception that the here function $F$ has $L_1$--Lipschitz continuous gradients (instead of $\bar{L}$) and the computations~\eqref{6f49529c-cbca-40f2-af32-aea3df8c50f2} varies slightly due to the use of the Hessian term $\nabla^2 f(\hat{x}_j, \hat{\xi}_j)(x_j - x_{j - 1})$ instead of the difference $\nabla f(x_j, \xi_j) - \nabla f(x_{j - 1}, \xi_{j - 1})$. We thus have,
		\begin{alignat*}{2}
			\hat{e}_t & = g_t - \nabla F(x_t) \\
			& = (1 - \alpha) \left( g_{t - 1} + \clip(\nabla^2 f(\hat{x}_t, \hat{\xi}_t) (x_t - x_{t - 1}), \lbd_1) \right) + \alpha \, \clip(\nabla f \left( x_t, \xi_t \right), \lbd_2) - \nabla F(x_t) \\
			& = (1 - \alpha) \left( g_{t - 1} - \nabla F(x_{t - 1}) \right) \\
			&\qquad+ (1 - \alpha) \left( \clip(\nabla^2 f(\hat{x}_t, \hat{\xi}_t) (x_t - x_{t - 1}), \lbd_1) - \left[ \nabla F(x_t) - \nabla F(x_{t - 1}) \right] \right) \\
			&\qquad+ \alpha \left( \clip(\nabla f \left( x_t, \xi_t \right), \lbd_2) - \nabla F(x_t) \right) \\
			\oversetlab{\eqref{appdx-lem-proof:def-theta}+\eqref{appdx-lem-proof:def-omega}}&{=} (1 - \alpha) \hat{e}_{t - 1} + \alpha \theta_t + (1 - \alpha) \omega_t \\
			\oversetrel{rel:5ab6c03d-e700-4912-b084-a9509ca86e01}&{=} (1 - \alpha)^t \hat{e}_0 + \alpha \sum_{j = 1}^t (1 - \alpha)^{t - j} \theta_j + (1 - \alpha) \sum_{j = 1}^t (1 - \alpha)^{t - j} \omega_j,
		\end{alignat*}
		where in~\relref{rel:5ab6c03d-e700-4912-b084-a9509ca86e01} we unroll the recursion. The next steps are exactly as in the proof of~\Cref{appdx-lem:high-probability-analysis-descent-lemma} and the claimed result~\eqref{5a8226aa-756e-43a3-92e4-1096c9fe1a58} follows.
	\end{proof}

	\subsubsection{High-Probability Analysis}
	
	From the previous~\Cref{appdx-lem:high-probability-analysis-descent-lemma}, we need to bound the two new terms:
	\[ \norm{\sum_{j = 1}^t (1 - \alpha)^{t - j} \theta_j} \,\, \text{ and } \,\, \norm{\sum_{j = 1}^t (1 - \alpha)^{t - j} \omega_j}, \]
	where $t \in \Int{0}{T - 1}$. To do so, we use the same strategy as in previous works~\citep{10.5555/3495724.3496985,pmlr-v202-sadiev23a,pmlr-v195-liu23c,sadiev2025second} and we introduce the biased and unbiased parts of $\theta_j$ and $\omega_j$, that is, for any $j \in [T - 1]$ we let $\theta_j = \theta_j^b + \theta_j^u$ with
	\begin{alignat*}{2}
		\theta_j^b \eqdef* \ExpSub{\xi_t}{\clip\left( \nabla f(x_t, \xi_t), \lbd_1 \right)} - \nabla F(x_t), \numberthis\label{647ac97a-ff0a-48a4-a4ed-13d555d50876} \\
		\theta_j^u \eqdef* \clip\left( \nabla f(x_t, \xi_t), \lbd_2 \right) - \ExpSub{\xi_t}{\clip\left( \nabla f(x_t, \xi_t), \lbd_2 \right)}, \numberthis\label{0ea28dd7-2e3c-42c1-af0a-754531b8eac2}
	\end{alignat*}
	and $\omega_j = \omega_j^b + \omega_j^u$ where
	\begin{alignat*}{2}
		\omega_j^b &\eqdef \ExpSub{\xi_t}{\clip\left( \nabla f(x_j, \xi_j) - \nabla f(x_{j - 1}, \xi_j) , \lbd_1 \right)} - \left( \nabla F(x_j) - \nabla F(x_{j - 1}) \right), \numberthis\label{9715675c-c075-4d13-a6e5-978530f178fd} \\
		\omega_j^u &\eqdef \clip\left( \nabla f(x_j, \xi_j) - \nabla f(x_{j - 1}, \xi_j) , \lbd_1 \right)\notag\\
		&\qquad\qquad\qquad\qquad\qquad\qquad\qquad - \ExpSub{\xi_t}{\clip\left( \nabla f(x_j, \xi_j) - \nabla f(x_{j - 1}, \xi_j) , \lbd_1 \right)} \numberthis\label{5bf5f4fb-a644-41c4-a212-9c4930a0075d}
	\end{alignat*}
	for~\Cref{appdx-lem:high-probability-analysis-descent-lemma} and~\Cref{appdx-lem:high-probability-analysis-descent-lemma-2}. For~\Cref{appdx-lem:high-probability-analysis-descent-lemma-hess} (hessian clipping) we let $\omega_j = \omega_j^r + \omega_j^b + \omega_j^u$ where
	\begin{alignat*}{2}
		\omega_j^r \eqdef* \nabla^2 F(\hat{x}_j) (x_j - x_{j - 1}) - \left( \nabla F(x_j) - \nabla F(x_{j - 1}) \right), \numberthis\label{9715675c-c075-4d13-a6e5-978530f178fe-hess} \\
		\omega_j^b \eqdef* \ExpSub{\hat{\xi}_t}{\clip\left( \nabla^2 f(\hat{x}_j, \hat{\xi}_j)(x_j - x_{j - 1}) , \lbd_1 \right)} - \nabla^2 F(\hat{x}_j) (x_j - x_{j - 1}), \numberthis\label{9715675c-c075-4d13-a6e5-978530f178fd-hess} \\
		\omega_j^u \eqdef* \clip\left( \nabla^2 f(\hat{x}_j, \hat{\xi}_j)(x_j - x_{j - 1}), \lbd_1 \right) - \ExpSub{\hat{\xi}_t}{\clip\left( \nabla^2 f(\hat{x}_j, \hat{\xi}_j)(x_j - x_{j - 1}), \lbd_1 \right)} \numberthis\label{5bf5f4fb-a644-41c4-a212-9c4930a0075d-hess}
	\end{alignat*}
	
	\begin{lemma}\label{appdx-lem:high-probability-analysis-bound-1}
		Under~\Cref{ass:p-bounded-central-moment-gradient}, for any $\delta' \in \intof{0}{\frac{1}{2}}$ and any $t \in \Int{0}{T - 1}$, if the clipping threshold satisfies
		\[ \lbd_2 \ge \max\ens{2 \norm{\nabla F(x_j)}, \sigma_1 \alpha^{-\frac{1}{p}}}, \]
		for all $j \in [t]$ then, with probability at least $1 - 2 \delta'$, we have
		\[ \norm{\sum_{j = 1}^t (1 - \alpha)^{t - j} \theta_j} \le 22 \lbd_2 \log \frac{2}{\delta'}. \]
	\end{lemma}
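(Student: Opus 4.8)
The plan is to split $\theta_j$ into its biased and unbiased parts, $\theta_j = \theta_j^b + \theta_j^u$ as defined in~\eqref{647ac97a-ff0a-48a4-a4ed-13d555d50876} and~\eqref{0ea28dd7-2e3c-42c1-af0a-754531b8eac2}, and to estimate the two resulting sums separately:
\[
\norm{\sum_{j=1}^t (1-\alpha)^{t-j}\theta_j} \le \norm{\sum_{j=1}^t (1-\alpha)^{t-j}\theta_j^b} + \norm{\sum_{j=1}^t (1-\alpha)^{t-j}\theta_j^u}.
\]
The first sum I would bound deterministically; the second, which is a martingale sum, by a Bernstein/Freedman-type concentration inequality. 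Throughout I use the geometric-series bounds $\sum_{j=1}^t (1-\alpha)^{t-j} \le \alpha^{-1}$ and $\sum_{j=1}^t (1-\alpha)^{2(t-j)} \le \alpha^{-1}$, together with the reformulation of the hypothesis $\lbd_2 \ge \sigma_1\alpha^{-1/p}$ as $\sigma_1^p \le \alpha\,\lbd_2^p$, and the fact that $\delta' \le \tfrac12$ forces $\log(2/\delta') \ge 1$.

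For the biased part: since $\lbd_2 \ge 2\norm{\nabla F(x_j)}$ for all $j\in[t]$, the standard bias estimate for clipped estimators (applied with~\Cref{ass:p-bounded-central-moment-gradient}, as in~\citep{10.5555/3495724.3496985,pmlr-v202-sadiev23a,sadiev2025second}) gives $\norm{\theta_j^b} \le c_b\,\sigma_1^p\,\lbd_2^{1-p}$ for a universal constant $c_b$ (one may take $c_b = 2^p \le 4$). Summing the geometric weights and using $\sigma_1^p \le \alpha\,\lbd_2^p$,
\[
\norm{\sum_{j=1}^t (1-\alpha)^{t-j}\theta_j^b} \le \frac{c_b\,\sigma_1^p}{\alpha\,\lbd_2^{p-1}} \le c_b\,\lbd_2 \le c_b\,\lbd_2\log\tfrac{2}{\delta'}.
\]
For the unbiased part: with respect to the filtration $\mathcal{F}_j \eqdef \sigma(g_0,\xi_1,\dots,\xi_j)$, the iterate $x_j$ is $\mathcal{F}_{j-1}$-measurable (by the update rule of~\Cref{algo:clipped-nsgd-mvr}), so $\ExpCond{\theta_j^u}{\mathcal{F}_{j-1}} = 0$ and $\{(1-\alpha)^{t-j}\theta_j^u\}_{j=1}^t$ is a martingale difference sequence. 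Because $\norm{\clip(v,\lbd_2)} \le \lbd_2$ for every $v$, one has the almost-sure bound $\norm{(1-\alpha)^{t-j}\theta_j^u} \le 2\lbd_2$, while the standard second-moment estimate for clipped estimators (again using~\Cref{ass:p-bounded-central-moment-gradient} and $\lbd_2 \ge 2\norm{\nabla F(x_j)}$) gives $\ExpCond{\norm{\theta_j^u}^2}{\mathcal{F}_{j-1}} \le c_v\,\lbd_2^{2-p}\sigma_1^p$ for a numerical constant $c_v$. Hence, using $\sigma_1^p \le \alpha\,\lbd_2^p$,
\[
\sum_{j=1}^t (1-\alpha)^{2(t-j)}\,\ExpCond{\norm{\theta_j^u}^2}{\mathcal{F}_{j-1}} \le \frac{c_v\,\lbd_2^{2-p}\sigma_1^p}{\alpha} \le c_v\,\lbd_2^2 .
\]
Plugging the variance proxy $c_v\,\lbd_2^2$ and the a.s.\ bound $2\lbd_2$ into a vector-valued Freedman/Bernstein inequality (in the form used elsewhere in this part of the paper), one obtains, with probability at least $1-2\delta'$,
\[
\norm{\sum_{j=1}^t (1-\alpha)^{t-j}\theta_j^u} \lesssim \lbd_2\sqrt{\log\tfrac{2}{\delta'}} + \lbd_2\log\tfrac{2}{\delta'} \le C\,\lbd_2\log\tfrac{2}{\delta'},
\]
where the last step again uses $\log(2/\delta')\ge 1$. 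On this event, adding the deterministic bias bound gives $\norm{\sum_{j=1}^t (1-\alpha)^{t-j}\theta_j} \le (c_b+C)\,\lbd_2\log\tfrac{2}{\delta'}$, and tracking the constants makes $c_b+C \le 22$.

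The only genuine obstacle is the constant bookkeeping: one must invoke the precise bias and second-moment bounds for the clipping operator under $p$-BCM and the explicit constants in the martingale concentration inequality so that $c_b+C$ is truly at most $22$ — but there is ample slack, since $c_b \le 4$ and $C$ is a moderate constant (on the order of $\sqrt{2c_v}$ with $c_v$ around $18$). The supporting measurability remark — that $x_j$ is a function of $(g_0,\xi_1,\dots,\xi_{j-1})$ only — is immediate from the algorithm, but it is precisely what legitimizes treating $(1-\alpha)^{t-j}\theta_j^u$ as a martingale difference sequence.
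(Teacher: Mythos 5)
Your decomposition into $\theta_j = \theta_j^b + \theta_j^u$, the measurability remark that makes $(1-\alpha)^{t-j}\theta_j^u$ a martingale difference sequence with respect to $\mathcal{F}_{j-1}=\sigma(g_0,\xi_1,\ldots,\xi_{j-1})$, the deterministic bound $\norm{\theta_j^b}\le 4\,\lbd_2^{1-p}\sigma_1^p$, and the conditional second-moment bound $\ExpCond{\norm{\theta_j^u}^2}{\mathcal{F}_{j-1}}\le 18\,\lbd_2^{2-p}\sigma_1^p$ all match the paper exactly (the last two are precisely what \Cref{appdx-technical-lem:sadiev-generalization} supplies once $\lbd_2\ge 2\norm{\nabla F(x_j)}$). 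The gap is in the concentration step.

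You write ``plugging the variance proxy and the a.s.\ bound into a vector-valued Freedman/Bernstein inequality (in the form used elsewhere in this part of the paper)'' --- but no such vector-valued inequality appears in the paper, and the paper's Freedman statement (\Cref{appdx-technical-lem:freedman-inequality}) is explicitly for \emph{scalar} martingale difference sequences. What the paper actually does is first apply the Cutkosky--Mehta reduction (\Cref{appdx-technical-lem:cutkosky-mehta}), which converts the vector-norm quantity
\[
\norm{\sum_{j=1}^t (1-\alpha)^{t-j}\theta_j^u}
\]
into a sum of a \emph{scalar} martingale $\sum_j V_j^t$ and a term involving $\bigl(\max_j\norm{\cdot}^2 + \sum_j\norm{\cdot}^2\bigr)^{1/2}$; the latter is then centered as $\sum_j Y_j^t + \sum_j \ExpCond{\cdot}{\mathcal{F}_{j-1}}$, where $Y_j^t$ is again a scalar MDS. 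Scalar Freedman is applied twice, once to $\{V_j^t\}$ and once to $\{Y_j^t\}$, which is where the $1-2\delta'$ in the probability comes from. Your $1-2\delta'$ thus appears for the right reason numerically but without its actual justification. Unless you are prepared to quote a concrete vector Freedman/Bernstein inequality with explicit constants and re-verify that $c_b + C \le 22$ under it, the appeal is to a tool the paper never provides; as written, the central concentration step is not proved. The fix is to insert the Cutkosky--Mehta decomposition and track the two scalar Freedman applications, which is precisely the content your $\lesssim$'s are eliding.
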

	
	\begin{proof}
		First of all, using~\eqref{647ac97a-ff0a-48a4-a4ed-13d555d50876} and~\eqref{0ea28dd7-2e3c-42c1-af0a-754531b8eac2} it follows
		\[ \norm{\sum_{j = 1}^t (1 - \alpha)^{t - j} \theta_j} \le \underbrace{\norm{\sum_{j = 1}^t (1 - \alpha)^{t - j} \theta_j^b}}_{\Circled{4}} + \norm{\sum_{j = 1}^t (1 - \alpha)^{t - j} \theta_j^u}, \]
		and now, we need to bound both terms above. For the second term, we use~\Cref{appdx-technical-lem:cutkosky-mehta} with exponent $2$ to obtain
		\begin{alignat*}{2}
			\norm{\sum_{j = 1}^t (1 - \alpha)^{t - j} \theta_j^u} \oversetrel{rel:960251ce-6167-478b-b779-d3020d913cdb}&{\le} \abs{\sum_{j = 1}^t V_j^t} + \sqrt{\max_{j \in [t]} \sqnorm{(1 - \alpha)^{t - j} \theta_j^u} + \sum_{j = 1}^t \sqnorm{(1 - \alpha)^{t - j} \theta_j^u}} \\
			& \le \abs{\sum_{j = 1}^t V_j^t} + \sqrt{2 \sum_{j = 1}^t \sqnorm{(1 - \alpha)^{t - j} \theta_j^u}} \\
			\oversetrel{rel:3b70b84d-fa4b-45fa-889f-2aad4458b16f}&{=} \underbrace{\abs{\sum_{j = 1}^t V_j^t}}_{\Circled{1}} + \sqrt{2 \underbrace{\sum_{j = 1}^t Y_j^t}_{\Circled{2}} + \underbrace{\sum_{j = 1}^t \ExpSub{\xi_j}{\sqnorm{(1 - \alpha)^{t - j} \theta_j^u}}}_{\Circled{3}}}, \numberthis\label{21eaf977-44d0-4b4b-8b63-5af59e5cc089}
		\end{alignat*}
		where in~\relref{rel:960251ce-6167-478b-b779-d3020d913cdb} we define the sequence $V_1^t, \ldots, V_t^t$ as in~\Cref{appdx-technical-lem:cutkosky-mehta}, that is,
		\begin{alignat*}{2}
			V_j^t \eqdef 
			\begin{cases}
				0, & \text{if $j = 0$}; \\
				\sign\left( \sum\limits_{i = 1}^{j - 1} V_i^t \right) \frac{\ps{\sum\limits_{i = 1}^{j - 1} (1 - \alpha)^{t - i} \theta_i^u}{(1 - \alpha)^{t - j} \theta_j^u}}{\norm{\sum\limits_{i = 1}^{j - 1} (1 - \alpha)^{t - i} \theta_i^u}}, & \text{if $j \neq 0$ and $\sum\limits_{i = 1}^{j - 1} (1 - \alpha)^{t - i} \theta_i^u \neq 0$}; \\
				0, & \text{if $j \neq 0$ and $\sum\limits_{i = 1}^{j - 1} (1 - \alpha)^{t - i} \theta_i^u = 0$}.
			\end{cases}
		\end{alignat*}
		while in~\relref{rel:3b70b84d-fa4b-45fa-889f-2aad4458b16f}, for any $j \in [t]$ we let
		\[ Y_j^t \eqdef \sqnorm{(1 - \alpha)^{t - j} \theta_j^u} - \ExpSub{\xi_j}{\sqnorm{(1 - \alpha)^{t - j} \theta_j^u}}. \]
		
		We now bound all terms \Circled{1}, \Circled{2}, \Circled{3} and \Circled{4}.
		
		\paragraph{Bounding \Circled{1}.} The sequence $V_1^t, \ldots, V_t^t$ is a martingale difference sequence since, by definition of $\theta_j^u$ and $V_j^t$, for all $j \in [t]$ we have $\ExpCond{V_j^t}{V_{j - 1}^t, \ldots, V_1^t} = 0$ since $\ExpSub{\xi_j}{\theta_j^u} = 0$. Moreover, by~\Cref{appdx-technical-lem:cutkosky-mehta} we also have
		\[ \abs{V_j^t} \le \norm{(1 - \alpha)^{t - j} \theta_j^u} \le \norm{\theta_j^u} \oversetref{Lem.}{\ref{appdx-technical-lem:sadiev-generalization}}{\le} 2 \lbd_2, \numberthis\label{77cc40ad-0220-4787-ab09-7c07a2d6dfa7} \]
		and let $c_2 \eqdef 2 \lbd_2$ be the upper bound on the random variables $V_1^t, \ldots, V_t^t$. Additionally, if we denote by $\sigma_j^2 \eqdef \ExpCond{\left( V_j^t \right)^2}{V_{j - 1}^t, \ldots, V_1^t}$ then by~\Cref{appdx-technical-lem:freedman-inequality} for any real number $b_2 > 0$ and any $G_2> 0$, as $0 < \delta' \le \frac{1}{2}$ then $\log \frac{2}{\delta'} \ge 1$ and we have
		\[ \Proba{\abs{\sum_{j = 1}^t V_j^t} > b_2 \, \text{ and } \, \sum_{j = 1}^t \sigma_j^2 \le G_2 \log \frac{2}{\delta'}} \le 2 \exp\left( -\frac{b_2^2}{2 G_2 \log \frac{2}{\delta'} + \frac{2 b_2 c_2}{3}} \right) = \delta', \numberthis\label{158c1570-485a-49ca-9ec3-32f47e8d8e8b} \]
		where the last equality holds provided $b_2 \eqdef \left( \frac{c_2}{3} + \sqrt{\frac{c_2^2}{9} + 2 G_2} \right) \log \frac{2}{\delta'} > 0$. We now need to define the constant $G_2$. To do so, we need to bound the sum $\sigma_1^2 + \cdots + \sigma_t^2$, this gives
		\begin{alignat*}{2}
			\sum_{j = 1}^t \sigma_j^2 & = \sum_{j = 1}^t \ExpCond{\left( V_j^t \right)^2}{V_{j - 1}^t, \ldots, V_1^t} \\
			\oversetlab{\eqref{77cc40ad-0220-4787-ab09-7c07a2d6dfa7}}&{\le} \sum_{j = 1}^n \ExpSub{\xi_j}{\sqnorm{(1 - \alpha)^{t - j} \theta_j^u}} \\
			& = \sum_{j = 1}^t ( 1 - \alpha)^{2 (t - j)} \, \ExpSub{\xi_j}{\sqnorm{\theta_j^u}} \\
			\oversetref{Lem.}{\ref{appdx-technical-lem:sadiev-generalization}}&{\le} 18 \lbd_2^{2 - p} \sigma_1^p \sum_{j = 1}^t (1 - \alpha)^{2 (t - j)} \\
			& \le \frac{18 \lbd_2^{2 - p} \sigma_1^p}{1 - (1 - \alpha)^2} \\
			& \le \frac{18 \lbd_2^{2 - p} \sigma_1^p}{\alpha}, \numberthis\label{74b4e2bb-4754-4dd3-ae90-37ef40362f93}
		\end{alignat*}
		where in the application of~\Cref{appdx-technical-lem:sadiev-generalization} we require $\lbd_2 \ge 2 \max_{j \in [t]} \norm{\nabla F(x_j)}$. Setting $G_2 \eqdef \frac{18 \lbd_2^{2 - p} \sigma_1^p}{\alpha} > 0$ gives the desired bound~\eqref{158c1570-485a-49ca-9ec3-32f47e8d8e8b}.
		
		\paragraph{Bounding \Circled{2}.} As in the previous paragraph, the sequence $Y_1^t, \ldots, Y_t^t$ is also a martingale difference sequence as the definition of $(Y_j^t)_{j \in [t]}$ implies $\ExpCond{Y_j^t}{Y_{j - 1}^t \ldots, Y_1^t} = \ExpSub{\xi_j}{Y_j^t} = 0$ for all $j \in [t]$. Moreover, according to~\Cref{appdx-technical-lem:sadiev-generalization}, we also have, as we did in~\eqref{77cc40ad-0220-4787-ab09-7c07a2d6dfa7}, for any $j \in [t]$
		\[ \abs{Y_j^t} \le \sqnorm{(1 - \alpha)^{t - j} \theta_j^u} + \ExpSub{\xi_j}{\sqnorm{(1 - \alpha)^{t - j} \theta_j^u}} \le 4 \lbd_2^2 + 4 \lbd_2^2 = 8 \lbd_2^2. \numberthis\label{4923f3a4-39e5-43f3-b260-7deeb46a8c67} \]
		Hence we define $\widetilde{c}_2 \eqdef 8 \lbd_2^2$. Now, denoting the conditional variance of $Y_j^t$ as $$\widetilde{\sigma}_j^2 \eqdef \ExpCond{\left( Y_j^t \right)^2}{Y_{j - 1}^t, \ldots, Y_1^t} = \ExpSub{\xi_j}{\left( Y_j^t \right)^2}$$ we have the bound
		\begin{alignat*}{2}
			\widetilde{\sigma}_j^2 \oversetlab{\eqref{4923f3a4-39e5-43f3-b260-7deeb46a8c67}}&{\le} 8 \lbd_2^2 \, \ExpSub{\xi_j}{\abs{Y_j^t}} \le 16 \lbd_2^2 \, \ExpSub{\xi_j}{\sqnorm{(1 - \alpha)^{t - j} \theta_j^u}}, \numberthis\label{a26983bb-60b7-49ab-ac0e-a1d026f1c51e}
		\end{alignat*}
		for all $j \in [t]$. Hence, 
		\[ \sum_{j = 1}^t \widetilde{\sigma}_j^2 \oversetlab{\eqref{a26983bb-60b7-49ab-ac0e-a1d026f1c51e}}{\le} 16 \lbd_2^2 \sum_{j = 1}^t \ExpSub{\xi_j}{\sqnorm{(1 - \alpha)^{t - j} \theta_j^u}} \oversetlab{\eqref{74b4e2bb-4754-4dd3-ae90-37ef40362f93}}{\le} 16 \lbd_2^2 \cdot \frac{18 \lbd_2^{2 - p} \sigma_1^p}{\alpha} = \frac{288 \lbd_2^{4 - p} \sigma_1^p}{\alpha}, \]
		(where we require $\lbd_2 \ge 2 \max_{j \in [t]} \norm{\nabla F(x_j)}$). Next, if we define $\widetilde{G}_2 \eqdef \frac{288 \lbd_2^{4 - p} \sigma_1^p}{\alpha}$ then, applying~\Cref{appdx-technical-lem:freedman-inequality} we obtain for any real number $\widetilde{b}_2$
		\[ \Proba{\abs{\sum_{j = 1}^t Y_j^t} > \widetilde{b}_2 \, \text{ and } \, \sum_{j = 1}^t \widetilde{\sigma}_j^2 \le \widetilde{G}_2 \log \frac{2}{\delta'}} \le 2 \exp\left( -\frac{\widetilde{b}_2^2}{2 \widetilde{G}_2 \log \frac{2}{\delta'} + \frac{2 \widetilde{b}_2 \widetilde{c}_2}{3}} \right) = \delta', \numberthis\label{04635ad8-7080-4c15-a1de-f0ada3a80f78} \]
		where the last identity holds if we set $\widetilde{b}_2 \eqdef \left( \frac{\widetilde{c}_2}{3} + \sqrt{\frac{\widetilde{c}_2^2}{9} + 2 \widetilde{G}_2} \right) \log \frac{2}{\delta'}$. This establishes the desired bound.
		
		\paragraph{Bounding \Circled{3}.} As we already did in the two last paragraphs, we have
		\[ \Circled{3} \eqdef \sum_{j = 1}^t \ExpSub{\xi_j}{\sqnorm{(1 - \alpha)^{t - j} \theta_j^u}} \oversetlab{\eqref{74b4e2bb-4754-4dd3-ae90-37ef40362f93}}{\le} \frac{18 \lbd_2^{2 - p} \sigma_1^p}{\alpha}, \]
		as desired.
		
		\paragraph{Bounding \Circled{4}.} If we assume that $\lbd_2 \ge 2 \norm{\nabla F(x_j)}$ for all $j \in [t]$ then, with probability one we have
		\begin{alignat*}{2}
			\Circled{4} \eqdef* \norm{\sum_{j = 1}^t (1 - \alpha)^{t - j} \theta_j^b} \\
			& \le \sum_{j = 1}^t (1 - \alpha)^{t - j} \norm{\theta_j^b} \\
			\oversetrel{rel:6bcc4f03-036d-4593-857a-983716b63243}&{\le} 4 \lbd_2^{1 - p} \sigma_1^p \sum_{j = 1}^t (1 - \alpha)^{t - j} \\
			& \le \frac{4 \lbd_2^{1 - p} \sigma_1^p}{\alpha},
		\end{alignat*}
		where in~\relref{rel:6bcc4f03-036d-4593-857a-983716b63243} we use~\Cref{appdx-technical-lem:sadiev-generalization}, more precisely, for any $j \in [t]$,
		\[ \norm{\theta_j^b} \le 2^p \lbd_2^{1 - p} \sigma_1^p \le 4 \lbd_2^{1 - p} \sigma_1^p. \]
		
		\paragraph{Summing up all bounds \Circled{1}, \Circled{2}, \Circled{3} and \Circled{4}.} To sum up, we introduce the event $E_{\Circled{1}, t}$ as follows
		\[ E_{\Circled{1}, t} \eqdef \ens{\abs{\sum_{j = 1}^t V_j^t} \le b_2 \, \text{ or } \, \sum_{j = 1}^t \sigma_j^2 > G_2 \log \frac{2}{\delta'}}, \]
		where we defined $c_2 \eqdef 2 \lbd_2$, $G_2 \eqdef \frac{18 \lbd_2^{2 - p} \sigma_1^p}{\alpha}$ and $b_2 \eqdef \left( \frac{c_2}{3} + \sqrt{\frac{c_2^2}{9} + 2 G_2} \right) \log \frac{2}{\delta'}$ and we can bound $b_2$ as
		\begin{alignat*}{2}
			b_2 \eqdef \left( \frac{c_2}{3} + \sqrt{\frac{c_2^2}{9} + 2 G_2} \right) \log \frac{2}{\delta'} & \le \left( \frac{2 c_2}{3} + \sqrt{2 G_2} \right) \log \frac{2}{\delta'} \\
			& = \left( \frac{4 \lbd_2}{3} + \sqrt{\frac{36 \lbd_2^{2 - p} \sigma_1^p}{\alpha}} \right) \log \frac{2}{\delta'} \\
			& = \lbd_2 \left( \frac{4}{3} + 6 \sqrt{\frac{1}{\alpha} \left(\frac{\sigma_1}{\lbd_2} \right)^p} \right) \log \frac{2}{\delta'} \\
			\oversetrel{rel:122b8683-5b62-44f2-b3aa-52164298844a}&{\le} \frac{22 \lbd_2}{3} \log \frac{2}{\delta'}, \numberthis\label{35c58053-3ad9-4b62-ae23-2a6f658c08da}
		\end{alignat*}
		where~\relref{rel:122b8683-5b62-44f2-b3aa-52164298844a} holds provided $\lbd_2 \ge \sigma_1 \alpha^{-\frac{1}{p}}$. On the other hand, we also define the event $E_{\Circled{2}, t}$ as follows
		\[ E_{\Circled{2}, t} \eqdef \ens{\abs{\sum_{j = 1}^t Y_j^t} \le \widetilde{b}_2 \, \text{ or } \, \sum_{j = 1}^t \widetilde{\sigma}_j^2 > \widetilde{G}_2 \log \frac{2}{\delta'}}, \]
		where we defined $\widetilde{c}_2 \eqdef 8 \lbd_2^2$, $\widetilde{G}_2 \eqdef \frac{288 \lbd_2^{4 - p} \sigma_1^p}{\alpha}$ and $\widetilde{b}_2 \eqdef \left( \frac{\widetilde{c}_2}{3} + \sqrt{\frac{\widetilde{c}_2^2}{9} + 2 \widetilde{G}_2} \right) \log \frac{2}{\delta'}$ and we can bound $\widetilde{b}_2$ as
		\begin{alignat*}{2}
			\widetilde{b}_2 \eqdef \left( \frac{\widetilde{c}_2}{3} + \sqrt{\frac{\widetilde{c}_2^2}{9} + 2 \widetilde{G}_2} \right) \log \frac{2}{\delta'} & \le \left( \frac{2 \widetilde{c}_2}{3} + \sqrt{2 \widetilde{G}_2} \right) \log \frac{2}{\delta'} \\
			& = \left( \frac{16 \lbd_2^2}{3} + \sqrt{\frac{576 \lbd_2^{4 - p} \sigma_1^p}{\alpha}} \right) \log \frac{2}{\delta'} \\
			& = \lbd_2^2 \left( \frac{16}{3} + 24 \sqrt{\frac{1}{\alpha} \left(\frac{\sigma_1}{\lbd_2} \right)^p} \right) \log \frac{2}{\delta'} \\
			& \le \frac{88 \lbd_2^2}{3} \log \frac{2}{\delta'}, \numberthis\label{c33e8593-ede2-4c3e-b76d-7e7ba9b43443}
		\end{alignat*}
		where the last inequality holds given $\lbd_2 \ge \sigma_1 \alpha^{-\frac{1}{p}}$.
		
		Next, given $\lbd_2 \ge \max\ens{2 \norm{\nabla F(x_j)}, \sigma_1 \alpha^{-\frac{1}{p}}}$ for all $j \in [t]$ we proved in~\eqref{158c1570-485a-49ca-9ec3-32f47e8d8e8b} and~\eqref{04635ad8-7080-4c15-a1de-f0ada3a80f78} that $\Proba{E_{\Circled{1}, t}} \ge 1 - \delta'$ and $\Proba{E_{\Circled{2}, t}} \ge 1 - \delta'$ hence, by the union bound inequality we have
		\[ \Proba{E_{\Circled{1}, t} \cap E_{\Circled{2}, t}} \ge 1 - 2 \delta', \]
		and on the event $E_{\Circled{1}, t} \cap E_{\Circled{2}, t}$ we obtain the inequality
		\begin{alignat*}{2}
			\norm{\sum_{j = 1}^t (1 - \alpha)^{t - j} \theta_j} & \le \norm{\sum_{j = 1}^t (1 - \alpha)^{t - j} \theta_j^b} + \norm{\sum_{j = 1}^t (1 - \alpha)^{t - j} \theta_j^u} \\
			\oversetlab{\eqref{21eaf977-44d0-4b4b-8b63-5af59e5cc089}}&{\le} \Circled{1} + \sqrt{2 \cdot \Circled{2} + 2 \cdot \Circled{3}} + \Circled{4} \\
			\oversetlab{\eqref{35c58053-3ad9-4b62-ae23-2a6f658c08da}+\eqref{c33e8593-ede2-4c3e-b76d-7e7ba9b43443}}&{\le} \frac{22 \lbd_2}{3} \log \frac{2}{\delta'} + \sqrt{2 \cdot \frac{88 \lbd_2^2}{3} \log \frac{2}{\delta'} + 2 \cdot \frac{18 \lbd_2^{2 - p} \sigma_1^p}{\alpha}} + \frac{4 \lbd_2^{1 - p} \sigma_1^p}{\alpha} \\
			\oversetrel{rel:65519c2e-05e9-4a6c-87ef-8631a38365a}&{\le} \lbd_2 \left( \frac{22}{3} + \sqrt{\frac{176}{3} + \frac{36}{\alpha} \left( \frac{\sigma_1}{\lbd_2} \right)^p} + \frac{4}{\alpha} \left( \frac{\sigma_1}{\lbd_2} \right)^p \right) \log \frac{2}{\delta'} \\
			\oversetrel{rel:95ba6431-6302-47f5-be7b-98aa809fc5a5}&{\le} \lbd_2 \left( \frac{22}{3} + \sqrt{\frac{176}{3} + 36} + 4 \right) \log \frac{2}{\delta'} \\
			& \le 22 \lbd_2 \log \frac{2}{\delta'},
		\end{alignat*}
		where in~\relref{rel:65519c2e-05e9-4a6c-87ef-8631a38365a} we use the fact that $\log \frac{2}{\delta'} \ge 1$ and in~\relref{rel:95ba6431-6302-47f5-be7b-98aa809fc5a5} we use $\lbd_2 \ge \sigma_1 \alpha^{-\frac{1}{p}}$. This proves the lemma.
	\end{proof}
	
	\begin{lemma}\label{appdx-lem:high-probability-analysis-bound-2}
		Under~\Cref{ass:mean-squared-smoothness}, for any $\delta'' \in \intof{0}{\frac{1}{2}}$ and any $t \in \Int{0}{T - 1}$, if the clipping threshold satisfies
		\[ \lbd_1 \ge \max\ens{2 \gamma \bar{L}, \gamma \bar{L} \alpha^{-\frac{1}{q}}}, \]
		for all $j \in [t]$ then, with probability at least $1 - 2 \delta''$, we have
		\[ \norm{\sum_{j = 1}^t (1 - \alpha)^{t - j} \omega_j} \le 46 \lbd_1 \log \frac{2}{\delta''}. \]
	\end{lemma}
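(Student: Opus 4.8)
The plan is to mirror, almost line for line, the proof of~\Cref{appdx-lem:high-probability-analysis-bound-1}, with the random vectors $\theta_j$ replaced by the $\omega_j$ of~\eqref{9715675c-c075-4d13-a6e5-978530f178fd}--\eqref{5bf5f4fb-a644-41c4-a212-9c4930a0075d}, the exponent $p$ replaced by $q$, and the role there played by the fixed central gradient moment $\sigma_1$ now played by the stepsize--dependent effective noise level $\gamma\bar{L}$. Writing $\omega_j = \omega_j^b + \omega_j^u$ and using the triangle inequality, one first splits
\[ \norm{\sum_{j=1}^t (1-\alpha)^{t-j}\omega_j} \le \norm{\sum_{j=1}^t (1-\alpha)^{t-j}\omega_j^b} + \norm{\sum_{j=1}^t (1-\alpha)^{t-j}\omega_j^u}. \]
The observation that unlocks the clipping machinery is that the clipped quantity $X_j \eqdef \nabla f(x_j,\xi_j) - \nabla f(x_{j-1},\xi_j)$ is unbiased with mean $\mu_j \eqdef \nabla F(x_j) - \nabla F(x_{j-1})$ obeying $\norm{\mu_j} \le \gamma\bar{L}$ --- because $F$ is $\bar{L}$--smooth by~\eqref{94828cd2-cf02-492b-a6ec-40b3012f66c1} and $\norm{x_j - x_{j-1}} = \gamma$ --- and with central $q$--th moment $\ExpSub{\xi_j}{\norm{X_j - \mu_j}^q} \le 2^q\bar{L}^q\gamma^q$, obtained from~\Cref{lem:norm-power-alpha-inequality} and~\Cref{ass:mean-squared-smoothness} as in~\eqref{f79e21c1-04c0-497e-81e9-c10afd7f04f9}. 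The hypothesis $\lbd_1 \ge 2\gamma\bar{L}$ then gives $\lbd_1 \ge 2\norm{\mu_j}$ for every $j$, so~\Cref{appdx-technical-lem:sadiev-generalization} applies to $(X_j,\mu_j,\lbd_1)$ and delivers $\norm{\omega_j^b} \lesssim \lbd_1^{1-q}(\gamma\bar{L})^q$, $\norm{\omega_j^u} \le 2\lbd_1$, and $\ExpSub{\xi_j}{\sqnorm{\omega_j^u}} \lesssim \lbd_1^{2-q}(\gamma\bar{L})^q$.

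For the \emph{bias term}, summing the first of these bounds against the geometric weights gives $\norm{\sum_j (1-\alpha)^{t-j}\omega_j^b} \le \sum_j(1-\alpha)^{t-j}\norm{\omega_j^b} \lesssim \tfrac1\alpha\lbd_1^{1-q}(\gamma\bar{L})^q = \lbd_1\cdot\tfrac1\alpha\bigl(\tfrac{\gamma\bar{L}}{\lbd_1}\bigr)^q \lesssim \lbd_1$, the last step using the second hypothesis $\lbd_1 \ge \gamma\bar{L}\,\alpha^{-1/q}$ to make the trailing factor at most $1$. For the \emph{martingale term}, note that for each fixed $t$ the family $\{(1-\alpha)^{t-j}\omega_j^u\}_{j=1}^t$ is a martingale difference sequence for $\mathcal{F}_j \eqdef \sigma(g_0,\xi_1,\dots,\xi_j)$ (since $x_j,x_{j-1}$ are $\mathcal{F}_{j-1}$--measurable and $\ExpSub{\xi_j}{\omega_j^u} = 0$), bounded a.s.\ by $2\lbd_1$. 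Applying the Cutkosky--Mehta decomposition~\Cref{appdx-technical-lem:cutkosky-mehta} with exponent $2$, exactly as in~\eqref{21eaf977-44d0-4b4b-8b63-5af59e5cc089}, bounds the norm of the weighted sum by $\bigl|\sum_j V_j^t\bigr| + \sqrt{2\sum_j Y_j^t + 2\sum_j \ExpSub{\xi_j}{\sqnorm{(1-\alpha)^{t-j}\omega_j^u}}}$ for the usual auxiliary martingale differences $V_j^t,Y_j^t$, whose deterministic inner term is $\le \tfrac1\alpha\max_j\ExpSub{\xi_j}{\sqnorm{\omega_j^u}} \lesssim \tfrac{18}{\alpha}\lbd_1^{2-q}(\gamma\bar{L})^q$; call this $G$.

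The conditional variances of $V_j^t$ and $Y_j^t$ are then bounded by $G$ and $\lesssim \lbd_1^2 G$ respectively, with a.s.\ bounds $2\lbd_1$ and $8\lbd_1^2$, so~\Cref{appdx-technical-lem:freedman-inequality} applied to each of the two sequences gives, on an event of probability at least $1 - 2\delta''$, the estimates $\bigl|\sum_j V_j^t\bigr| \lesssim \lbd_1\log\tfrac{2}{\delta''}$ and $\sum_j Y_j^t \lesssim \lbd_1^2\log\tfrac{2}{\delta''}$, again using $\lbd_1 \ge \gamma\bar{L}\alpha^{-1/q}$ (and $\log\tfrac2{\delta''}\ge1$) to absorb every $\tfrac1\alpha(\gamma\bar{L}/\lbd_1)^q\le1$ factor. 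Collecting the bias and martingale estimates on that event and carrying the numerical constants --- which work out to roughly twice those in~\Cref{appdx-lem:high-probability-analysis-bound-1}, because the central moment bound contributes the extra factor $2^q \le 4$ --- yields $\norm{\sum_{j=1}^t (1-\alpha)^{t-j}\omega_j} \le 46\,\lbd_1\log\tfrac{2}{\delta''}$.

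The only genuinely new ingredient is the first paragraph: converting~\Cref{ass:mean-squared-smoothness} into the mean bound $\norm{\mu_j}\le\gamma\bar{L}$ and the central $q$--th moment bound on $X_j-\mu_j$. This is what forces \emph{both} thresholds $\lbd_1\ge 2\gamma\bar{L}$ and $\lbd_1\ge\gamma\bar{L}\alpha^{-1/q}$ to appear, in contrast with~\Cref{appdx-lem:high-probability-analysis-bound-1} where $\sigma_1$ is a fixed problem constant; I expect the bookkeeping --- keeping every $\tfrac1\alpha(\gamma\bar{L}/\lbd_1)^q$ factor $\le1$ while threading $q$ rather than $p$ through the Cutkosky--Mehta and Freedman steps --- to be the only real obstacle, everything else being structurally identical to the $\theta_j$ analysis. (Replacing~\Cref{ass:mean-squared-smoothness} by $(q,\delta)$--similarity~\Cref{ass:mean-squared-smoothness-2} sharpens the central moment of $X_j-\mu_j$ to $\le \delta^q\gamma^q$, which is exactly how one obtains the refined version of this lemma used in the $(q,\delta)$--S analysis.)
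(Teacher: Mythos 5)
Your proposal follows essentially the same route as the paper: the identical bias/martingale split $\omega_j = \omega_j^b + \omega_j^u$, the Cutkosky--Mehta reduction with exponent $2$, two applications of Freedman's inequality, and \Cref{appdx-technical-lem:sadiev-generalization} fed with the mean bound $\norm{\mu_j}\le\gamma\bar{L}$ (from $\bar{L}$-smoothness) and central $q$-th moment $\le 2^q\gamma^q\bar{L}^q$ (from~\eqref{f79e21c1-04c0-497e-81e9-c10afd7f04f9}), with both threshold conditions used in exactly the places you identify. Your constant accounting ("roughly twice" that of \Cref{appdx-lem:high-probability-analysis-bound-1} from the extra $2^q\le 4$ in the moment bound, since it enters under a square root as $\sqrt{4}=2$) matches the paper's $46$ vs.\ $22$, and your closing remark about swapping in $(q,\delta)$-similarity is precisely how the paper obtains \Cref{appdx-lem:high-probability-analysis-bound-3}.
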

	
	\begin{proof}
		First, using~\eqref{9715675c-c075-4d13-a6e5-978530f178fd} and~\eqref{5bf5f4fb-a644-41c4-a212-9c4930a0075d} we have
		\[ \norm{\sum_{j = 1}^t (1 - \alpha)^{t - j} \omega_j} \le \underbrace{\norm{\sum_{j = 1}^t (1 - \alpha)^{t - j} \omega_j^b}}_{\Circled{8}} + \norm{\sum_{j = 1}^t (1 - \alpha)^{t - j} \omega_j^u}, \numberthis\label{9221fb23-db5b-497f-9062-1c71b7f12882} \]
		and, as before, we need to bound both terms above. For the second term, we use~\Cref{appdx-technical-lem:cutkosky-mehta} with exponent $2$ to obtain
		\begin{alignat*}{2}
			\norm{\sum_{j = 1}^t (1 - \alpha)^{t - j} \omega_j^u} \oversetrel{rel:960251ce-6167-478b-b779-d3020d913cdb-2}&{\le} \abs{\sum_{j = 1}^t W_j^t} + \sqrt{\max_{j \in [t]} \sqnorm{(1 - \alpha)^{t - j} \omega_j^u} + \sum_{j = 1}^t \sqnorm{(1 - \alpha)^{t - j} \omega_j^u}} \\
			& \le \abs{\sum_{j = 1}^t W_j^t} + \sqrt{2 \sum_{j = 1}^t \sqnorm{(1 - \alpha)^{t - j} \omega_j^u}} \\
			\oversetrel{rel:3b70b84d-fa4b-45fa-889f-2aad4458b16f-2}&{=} \underbrace{\abs{\sum_{j = 1}^t W_j^t}}_{\Circled{5}} + \sqrt{2 \underbrace{\sum_{j = 1}^t Z_j^t}_{\Circled{6}} + \underbrace{\sum_{j = 1}^t \ExpSub{\xi_j}{\sqnorm{(1 - \alpha)^{t - j} \omega_j^u}}}_{\Circled{7}}}, \numberthis\label{21eaf977-44d0-4b4b-8b63-5af59e5cc089-2}
		\end{alignat*}
		where in~\relref{rel:960251ce-6167-478b-b779-d3020d913cdb-2} we define the sequence $V_1^t, \ldots, V_t^t$ as in~\Cref{appdx-technical-lem:cutkosky-mehta}, that is,
		\begin{alignat*}{2}
			W_j^t \eqdef 
			\begin{cases}
				0, & \text{if $j = 0$}; \\
				\sign\left( \sum\limits_{i = 1}^{j - 1} W_i^t \right) \frac{\ps{\sum\limits_{i = 1}^{j - 1} (1 - \alpha)^{t - i} \omega_i^u}{(1 - \alpha)^{t - j} \omega_j^u}}{\norm{\sum\limits_{i = 1}^{j - 1} (1 - \alpha)^{t - i} \omega_i^u}}, & \text{if $j \neq 0$ and $\sum\limits_{i = 1}^{j - 1} (1 - \alpha)^{t - i} \omega_i^u \neq 0$}; \\
				0, & \text{if $j \neq 0$ and $\sum\limits_{i = 1}^{j - 1} (1 - \alpha)^{t - i} \omega_i^u = 0$}.
			\end{cases}
		\end{alignat*}
		while in~\relref{rel:3b70b84d-fa4b-45fa-889f-2aad4458b16f-2}, for any $j \in [t]$ we let
		\[ Z_j^t \eqdef \sqnorm{(1 - \alpha)^{t - j} \omega_j^u} - \ExpSub{\xi_j}{\sqnorm{(1 - \alpha)^{t - j} \omega_j^u}}. \]
		
		We now bound all terms \Circled{5}, \Circled{6}, \Circled{7} and \Circled{8}.
		
		\paragraph{Bounding \Circled{5}.} The sequence $W_1^t, \ldots, W_t^t$ is a martingale difference sequence since, by definition of $\omega_j^u$ and $W_j^t$, for all $j \in [t]$ we have $\ExpCond{W_j^t}{W_{j - 1}^t, \ldots, W_1^t} = 0$ since $\ExpSub{\xi_j}{\omega_j^u} = 0$. Moreover, by~\Cref{appdx-technical-lem:cutkosky-mehta} we also have
		\[ \abs{W_j^t} \le \norm{(1 - \alpha)^{t - j} \omega_j^u} \le \norm{\omega_j^u} \oversetref{Lem.}{\ref{appdx-technical-lem:sadiev-generalization}}{\le} 2 \lbd_1, \numberthis\label{77cc40ad-0220-4787-ab09-7c07a2d6dfa7-2} \]
		and let $c_1 \eqdef 2 \lbd_1$ be the upper bound on the random variables $W_1^t, \ldots, W_t^t$. Additionally, if we denote by $\sigma_j^2 \eqdef \ExpCond{\left( W_j^t \right)^2}{W_{j - 1}^t, \ldots, W_1^t}$ then by~\Cref{appdx-technical-lem:freedman-inequality} for any real number $b_1 > 0$ and any $G_1 > 0$, as $0 < \delta'' \le \frac{1}{2}$ then $\log \frac{2}{\delta''} \ge 1$ and we have
		\[ \Proba{\abs{\sum_{j = 1}^t W_j^t} > b_1 \, \text{ and } \, \sum_{j = 1}^t \sigma_j^2 \le G_1 \log \frac{2}{\delta''}} \le 2 \exp\left( -\frac{b_1^2}{2 G_1 \log \frac{2}{\delta''} + \frac{2 b_1 c_1}{3}} \right) = \delta'', \numberthis\label{158c1570-485a-49ca-9ec3-32f47e8d8e8b-2} \]
		where the last equality holds provided $b_1 \eqdef \left( \frac{c_1}{3} + \sqrt{\frac{c_1^2}{9} + 2 G_1} \right) \log \frac{2}{\delta''} > 0$. We now need to define the constant $G_1$. To do so, we need to bound the sum $\sigma_1^2 + \cdots + \sigma_t^2$, this gives
		\begin{alignat*}{2}
			\sum_{j = 1}^t \sigma_j^2 & = \sum_{j = 1}^t \ExpCond{\left( W_j^t \right)^2}{W_{j - 1}^t, \ldots, W_1^t} \\
			\oversetlab{\eqref{77cc40ad-0220-4787-ab09-7c07a2d6dfa7-2}}&{\le} \sum_{j = 1}^n \ExpSub{\xi_j}{\sqnorm{(1 - \alpha)^{t - j} \omega_j^u}} \\
			& = \sum_{j = 1}^t ( 1 - \alpha)^{2 (t - j)} \, \ExpSub{\xi_j}{\sqnorm{\omega_j^u}} \\
			\oversetref{Lem.}{\ref{appdx-technical-lem:sadiev-generalization}}&{\le} 72 \lbd_1^{2 - q} \gamma^q \bar{L}^q \sum_{j = 1}^t (1 - \alpha)^{2 (t - j)} \\
			& \le \frac{72 \lbd_1^{2 - q} \gamma^q \bar{L}^q}{1 - (1 - \alpha)^2} \\
			& \le \frac{72 \lbd_1^{2 - q} \gamma^q \bar{L}^q}{\alpha}, \numberthis\label{74b4e2bb-4754-4dd3-ae90-37ef40362f93-2}
		\end{alignat*}
		where in the application of~\Cref{appdx-technical-lem:sadiev-generalization} we use the bound we proved earlier in~\Cref{appdx-lem:bounding-error-term-nsgd-mvr} (see more precisely at~\eqref{f79e21c1-04c0-497e-81e9-c10afd7f04f9}), that is,
		\[ \ExpSub{\xi_j}{\norm{\left[ \nabla F(x_j) - \nabla F(x_{j - 1}) \right] - \left[ \nabla f(x_j, \xi_j) - \nabla f(x_{j - 1}, \xi_j) \right]}^q} \le 2^q \gamma^q \bar{L}^q. \]
		Moreover, applying~\Cref{appdx-technical-lem:sadiev-generalization} requires to take $\lbd_1 \ge 2 \max_{j \in [t]} \norm{\nabla F(x_j) - \nabla F(x_{j - 1})}$ and, since 
		\[ \norm{\nabla F(x_j) - \nabla F(x_{j - 1})} \oversetref{Ass.}{\ref{ass:mean-squared-smoothness}}{\le} \bar{L} \norm{x_j - x_{j - 1}} = \gamma \bar{L}, \]
		then it is enough to have $\lbd_1 \ge 2 \gamma \bar{L}$. Setting $G_1 \eqdef \frac{72 \lbd_1^{2 - q} \gamma^q \bar{L}^q}{\alpha} > 0$ gives the desired bound~\eqref{158c1570-485a-49ca-9ec3-32f47e8d8e8b-2}.
		
		\paragraph{Bounding \Circled{6}.} As in the previous paragraph, the sequence $Z_1^t, \ldots, Z_t^t$ is also a martingale difference sequence as the definition of $(Z_j^t)_{j \in [t]}$ implies $\ExpCond{Z_j^t}{Z_{j - 1}^t \ldots, Z_1^t} = \ExpSub{\xi_j}{Z_j^t} = 0$ for all $j \in [t]$. Moreover, according to~\Cref{appdx-technical-lem:sadiev-generalization}, we also have, as we did in~\eqref{77cc40ad-0220-4787-ab09-7c07a2d6dfa7-2}, for any $j \in [t]$
		\[ \abs{Z_j^t} \le \sqnorm{(1 - \alpha)^{t - j} \omega_j^u} + \ExpSub{\xi_j}{\sqnorm{(1 - \alpha)^{t - j} \omega_j^u}} \le 4 \lbd_1^2 + 4 \lbd_1^2 = 8 \lbd_1^2. \numberthis\label{4923f3a4-39e5-43f3-b260-7deeb46a8c67-2} \]
		Hence we define $\widetilde{c}_1 \eqdef 8 \lbd_1^2$. Now, denoting the conditional variance of $Z_j^t$ as $$\widetilde{\sigma}_j^2 \eqdef \ExpCond{\left( Z_j^t \right)^2}{Z_{j - 1}^t, \ldots, Z_1^t} = \ExpSub{\xi_j}{\left( Z_j^t \right)^2}$$ we have the bound
		\begin{alignat*}{2}
			\widetilde{\sigma}_j^2 \oversetlab{\eqref{4923f3a4-39e5-43f3-b260-7deeb46a8c67-2}}&{\le} 8 \lbd_1^2 \, \ExpSub{\xi_j}{\abs{Z_j^t}} \le 16 \lbd_1^2 \, \ExpSub{\xi_j}{\sqnorm{(1 - \alpha)^{t - j} \omega_j^u}}, \numberthis\label{a26983bb-60b7-49ab-ac0e-a1d026f1c51e-2}
		\end{alignat*}
		for all $j \in [t]$. Hence, 
		\[ \sum_{j = 1}^t \widetilde{\sigma}_j^2 \oversetlab{\eqref{a26983bb-60b7-49ab-ac0e-a1d026f1c51e-2}}{\le} 16 \lbd_1^2 \sum_{j = 1}^t \ExpSub{\xi_j}{\sqnorm{(1 - \alpha)^{t - j} \omega_j^u}} \oversetlab{\eqref{74b4e2bb-4754-4dd3-ae90-37ef40362f93-2}}{\le} 16 \lbd_1^2 \cdot \frac{72 \lbd_1^{2 - q} \gamma^q \bar{L}^q}{\alpha} = \frac{1152 \lbd_1^{4 - q} \gamma^q \bar{L}^q}{\alpha}, \]
		(where we require $\lbd_1 \ge 2 \gamma \bar{L}$). Next, if we define $\widetilde{G}_1 \eqdef \frac{1152 \lbd_1^{4 - q} \gamma^q \bar{L}^q}{\alpha}$ then, applying~\Cref{appdx-technical-lem:freedman-inequality} we obtain for any real number $\widetilde{b}_2$
		\[ \Proba{\abs{\sum_{j = 1}^t Z_j^t} > \widetilde{b}_1 \, \text{ and } \, \sum_{j = 1}^t \widetilde{\sigma}_j^2 \le \widetilde{G}_1 \log \frac{2}{\delta''}} \le 2 \exp\left( -\frac{\widetilde{b}_1^2}{2 \widetilde{G}_1 \log \frac{2}{\delta''} + \frac{2 \widetilde{b}_1 \widetilde{c}_1}{3}} \right) = \delta'', \numberthis\label{04635ad8-7080-4c15-a1de-f0ada3a80f78-2} \]
		where the last identity holds if we set $\widetilde{b}_1 \eqdef \left( \frac{\widetilde{c}_1}{3} + \sqrt{\frac{\widetilde{c}_1^2}{9} + 2 \widetilde{G}_1} \right) \log \frac{2}{\delta''}$. This establishes the desired bound.
		
		\paragraph{Bounding \Circled{7}.} As we already did in the two last paragraphs, we have
		\[ \Circled{7} \eqdef \sum_{j = 1}^t \ExpSub{\xi_j}{\sqnorm{(1 - \alpha)^{t - j} \omega_j^u}} \oversetlab{\eqref{74b4e2bb-4754-4dd3-ae90-37ef40362f93-2}}{\le} \frac{72 \lbd_1^{2 - q} \gamma^q \bar{L}^q}{\alpha}, \]
		as desired.
		
		\paragraph{Bounding \Circled{8}.} If we assume that $\lbd_1 \ge 2 \gamma \bar{L}$ then, with probability one we have
		\begin{alignat*}{2}
			\Circled{8} \eqdef* \norm{\sum_{j = 1}^t (1 - \alpha)^{t - j} \omega_j^b} \\
			& \le \sum_{j = 1}^t (1 - \alpha)^{t - j} \norm{\omega_j^b} \\
			\oversetrel{rel:6bcc4f03-036d-4593-857a-983716b63243-2}&{\le} 16 \lbd_1^{1 - q} \gamma^q \bar{L}^q \sum_{j = 1}^t (1 - \alpha)^{t - j} \\
			& \le \frac{16 \lbd_1^{1 - q} \gamma^q \bar{L}^q}{\alpha},
		\end{alignat*}
		where in~\relref{rel:6bcc4f03-036d-4593-857a-983716b63243-2} we use~\Cref{appdx-technical-lem:sadiev-generalization}, more precisely, for any $j \in [t]$,
		\[ \norm{\omega_j^b} \le 4^q \lbd_1^{1 - q} \gamma^q \bar{L}^q \le 16 \lbd_1^{1 - q} \gamma^q \bar{L}^q, \]
		since $\E{\norm{\left[ \nabla F(x_t) - \nabla F(x_{t - 1}) \right] - \left[ \nabla f(x_t, \xi_t) - \nabla f(x_{t - 1}, \xi_t) \right]}^q} \le 2^q \gamma^q \bar{L}^q$ by~\Cref{appdx-lem:bounding-error-term-nsgd-mvr} (and more precisely by~\eqref{f79e21c1-04c0-497e-81e9-c10afd7f04f9}).
		
		\paragraph{Summing up all bounds \Circled{5}, \Circled{6}, \Circled{7} and \Circled{8}.} To sum up, we introduce the event $E_{\Circled{5}, t}$ as follows
		\[ E_{\Circled{5}, t} \eqdef \ens{\abs{\sum_{j = 1}^t W_j^t} \le b_1 \, \text{ or } \, \sum_{j = 1}^t \sigma_j^2 > G_1 \log \frac{2}{\delta''}}, \numberthis\label{1ded530f-71dc-46eb-9fdb-5308c908cb80} \]
		where we defined $c_1 \eqdef 2 \lbd_1$, $G_1 \eqdef \frac{72 \lbd_1^{2 - q} \gamma^q \bar{L}^q}{\alpha}$ and $b_1 \eqdef \left( \frac{c_1}{3} + \sqrt{\frac{c_1^2}{9} + 2 G_1} \right) \log \frac{2}{\delta''}$ and we can bound $b_1$ as
		\begin{alignat*}{2}
			b_1 \eqdef \left( \frac{c_1}{3} + \sqrt{\frac{c_1^2}{9} + 2 G_1} \right) \log \frac{2}{\delta''} & \le \left( \frac{2 c_1}{3} + \sqrt{2 G_1} \right) \log \frac{2}{\delta''} \\
			& = \left( \frac{4 \lbd_1}{3} + \sqrt{\frac{144 \lbd_1^{2 - q} \gamma^q \bar{L}^q}{\alpha}} \right) \log \frac{2}{\delta''} \\
			& = \lbd_1 \left( \frac{4}{3} + 12 \sqrt{\frac{1}{\alpha} \left(\frac{\gamma \bar{L}}{\lbd_1} \right)^q} \right) \log \frac{2}{\delta''} \\
			\oversetrel{rel:122b8683-5b62-44f2-b3aa-52164298844a}&{\le} \frac{40 \lbd_1}{3} \log \frac{2}{\delta''}, \numberthis\label{35c58053-3ad9-4b62-ae23-2a6f658c08da-2}
		\end{alignat*}
		where~\relref{rel:122b8683-5b62-44f2-b3aa-52164298844a} holds provided $\lbd_1 \ge \gamma \bar{L} \alpha^{-\frac{1}{q}}$. On the other hand, we also define the event $E_{\Circled{6}, t}$ as follows
		\[ E_{\Circled{6}, t} \eqdef \ens{\abs{\sum_{j = 1}^t Z_j^t} \le \widetilde{b}_1 \, \text{ or } \, \sum_{j = 1}^t \widetilde{\sigma}_j^2 > \widetilde{G}_1 \log \frac{2}{\delta''}}, \numberthis\label{92e2f28f-a246-45f3-bdea-ba28563d3825} \]
		where we defined $\widetilde{c}_1 \eqdef 8 \lbd_1^2$, $\widetilde{G}_1 \eqdef \frac{1152 \lbd_1^{4 - q} \gamma^q \bar{L}^q}{\alpha}$ and $\widetilde{b}_1 \eqdef \left( \frac{\widetilde{c}_1}{3} + \sqrt{\frac{\widetilde{c}_1^2}{9} + 2 \widetilde{G}_1} \right) \log \frac{2}{\delta''}$ and we can bound $\widetilde{b}_1$ as
		\begin{alignat*}{2}
			\widetilde{b}_1 \eqdef \left( \frac{\widetilde{c}_1}{3} + \sqrt{\frac{\widetilde{c}_1^2}{9} + 2 \widetilde{G}_1} \right) \log \frac{2}{\delta''} & \le \left( \frac{2 \widetilde{c}_1}{3} + \sqrt{2 \widetilde{G}_1} \right) \log \frac{2}{\delta''} \\
			& = \left( \frac{16 \lbd_1^2}{3} + \sqrt{\frac{2304 \lbd_1^{4 - q} \gamma^q \bar{L}^q}{\alpha}} \right) \log \frac{2}{\delta''} \\
			& = \lbd_1^2 \left( \frac{16}{3} + 48 \sqrt{\frac{1}{\alpha} \left(\frac{\gamma \bar{L}}{\lbd_1} \right)^q} \right) \log \frac{2}{\delta''} \\
			& \le \frac{160 \lbd_1^2}{3} \log \frac{2}{\delta''}, \numberthis\label{c33e8593-ede2-4c3e-b76d-7e7ba9b43443-2}
		\end{alignat*}
		where the last inequality holds given $\lbd_1 \ge \gamma \bar{L} \alpha^{-\frac{1}{q}}$.
		
		Next, given $\lbd_1 \ge \max\ens{2 \gamma \bar{L}, \gamma \bar{L} \alpha^{-\frac{1}{q}}}$, we proved in~\eqref{158c1570-485a-49ca-9ec3-32f47e8d8e8b-2} and~\eqref{04635ad8-7080-4c15-a1de-f0ada3a80f78-2} that $\Proba{E_{\Circled{5}, t}} \ge 1 - \delta''$ and $\Proba{E_{\Circled{6}, t}} \ge 1 - \delta''$ hence, by the union bound inequality we have
		\[ \Proba{E_{\Circled{5}, t} \cap E_{\Circled{6}, t}} \ge 1 - 2 \delta'', \]
		and on the event $E_{\Circled{5}, t} \cap E_{\Circled{6}, t}$ we obtain the inequality
		\begin{alignat*}{2}
			\norm{\sum_{j = 1}^t (1 - \alpha)^{t - j} \omega_j} & \le \norm{\sum_{j = 1}^t (1 - \alpha)^{t - j} \omega_j^b} + \norm{\sum_{j = 1}^t (1 - \alpha)^{t - j} \omega_j^u} \\
			\oversetlab{\eqref{21eaf977-44d0-4b4b-8b63-5af59e5cc089-2}}&{\le} \Circled{5} + \sqrt{2 \cdot \Circled{6} + 2 \cdot \Circled{7}} + \Circled{8} \\
			\oversetlab{\eqref{35c58053-3ad9-4b62-ae23-2a6f658c08da-2}+\eqref{c33e8593-ede2-4c3e-b76d-7e7ba9b43443-2}}&{\le} \frac{40 \lbd_1}{3} \log \frac{2}{\delta''} + \sqrt{2 \cdot \frac{160 \lbd_1^2}{3} \log \frac{2}{\delta''} + 2 \cdot \frac{72 \lbd_1^{2 - q} \gamma^q \bar{L}^q}{\alpha}} + \frac{16 \lbd_1^{1 - q} \gamma^q \bar{L}^q}{\alpha} \\
			\oversetrel{rel:65519c2e-05e9-4a6c-87ef-8631a38365a-2}&{\le} \lbd_1 \left( \frac{40}{3} + \sqrt{\frac{320}{3} + \frac{144}{\alpha} \left( \frac{\gamma \bar{L}}{\lbd_1} \right)^q} + \frac{16}{\alpha} \left( \frac{\gamma \bar{L}}{\lbd_1} \right)^q \right) \log \frac{2}{\delta''} \\
			\oversetrel{rel:95ba6431-6302-47f5-be7b-98aa809fc5a5-2}&{\le} \lbd_1 \left( \frac{40}{3} + \sqrt{\frac{320}{3} + 144} + 16 \right) \log \frac{2}{\delta''} \\
			& \le 46 \lbd_1 \log \frac{2}{\delta''},
		\end{alignat*}
		where in~\relref{rel:65519c2e-05e9-4a6c-87ef-8631a38365a-2} we use the fact that $\log \frac{2}{\delta''} \ge 1$ and in~\relref{rel:95ba6431-6302-47f5-be7b-98aa809fc5a5-2} we use $\lbd_1 \ge \gamma \bar{L} \alpha^{-\frac{1}{q}}$. This proves the lemma.
	\end{proof}
	
	\begin{lemma}\label{appdx-lem:high-probability-analysis-bound-3}
		Under~\Cref{ass:L-lipschitz-gradients,ass:mean-squared-smoothness-2}, for any $\delta'' \in \intof{0}{\frac{1}{2}}$ and any $t \in \Int{0}{T - 1}$, if the clipping threshold satisfies
		\[ \lbd_1 \ge \max\ens{2 \gamma L_1, \gamma \bar{\delta} \alpha^{-\frac{1}{q}}}, \]
		for all $j \in [t]$ then, with probability at least $1 - 2 \delta''$, we have
		\[ \norm{\sum_{j = 1}^t (1 - \alpha)^{t - j} \omega_j} \le 22 \lbd_1 \log \frac{2}{\delta''}. \]
	\end{lemma}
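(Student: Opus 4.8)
The plan is to follow the proof of \Cref{appdx-lem:high-probability-analysis-bound-2} (the $q$--weak average smoothness case) essentially verbatim, replacing its smoothness input by the $(q,\delta)$--similarity one. As there, I would decompose $\omega_j = \omega_j^b + \omega_j^u$ into biased and unbiased parts according to~\eqref{9715675c-c075-4d13-a6e5-978530f178fd} and~\eqref{5bf5f4fb-a644-41c4-a212-9c4930a0075d}, split $\norm{\sum_{j=1}^t (1-\alpha)^{t-j}\omega_j}$ into the bias sum $\Circled{8}$ and the martingale sum $\norm{\sum_{j=1}^t (1-\alpha)^{t-j}\omega_j^u}$, and handle the latter via the Cutkosky--Mehta decomposition (\Cref{appdx-technical-lem:cutkosky-mehta}) into $\Circled{5}+\sqrt{2\Circled{6}+2\Circled{7}}$, bounding $\Circled{5},\Circled{6}$ by Freedman's inequality (\Cref{appdx-technical-lem:freedman-inequality}) and $\Circled{7},\Circled{8}$ and all conditional moments by \Cref{appdx-technical-lem:sadiev-generalization}. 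The two ingredients that differ are: (i) under \Cref{ass:L-lipschitz-gradients} the deterministic increment satisfies $\norm{\nabla F(x_j)-\nabla F(x_{j-1})}\le L_1\norm{x_j-x_{j-1}}=\gamma L_1$, so the hypothesis of \Cref{appdx-technical-lem:sadiev-generalization} (clipping level at least twice the norm of the conditional mean of the clipped vector) holds once $\lbd_1\ge 2\gamma L_1$; and (ii) under \Cref{ass:mean-squared-smoothness-2} the $q$--th central moment of the stochastic increment is $\ExpSub{\xi_j}{\norm{[\nabla f(x_j,\xi_j)-\nabla f(x_{j-1},\xi_j)]-[\nabla F(x_j)-\nabla F(x_{j-1})]}^q}\le\delta^q\gamma^q$ --- exactly the bound used in \Cref{appdx-lem:bounding-error-term-nsgd-mvr-refined} --- which spares the factor $2^q$ lost in~\eqref{f79e21c1-04c0-497e-81e9-c10afd7f04f9} along the $q$--WAS route.

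With these replacements ($\bar L\rightsquigarrow\delta$, no spurious $2^q$), the computation goes through unchanged: the geometric series $\sum_j(1-\alpha)^{2(t-j)}\le\alpha^{-1}$ yields conditional-variance proxies $G_1\asymp\lbd_1^{2-q}\gamma^q\delta^q/\alpha$ for the martingale $W_j^t$ and $\widetilde G_1\asymp\lbd_1^{4-q}\gamma^q\delta^q/\alpha$ for $Z_j^t$, together with $\Circled{7}\lesssim\lbd_1^{2-q}\gamma^q\delta^q/\alpha$ and $\Circled{8}\lesssim\lbd_1^{1-q}\gamma^q\delta^q/\alpha$. The clipping condition $\lbd_1\ge\gamma\bar\delta\alpha^{-1/q}$ (with $\bar\delta$ equal to $\delta$ up to a universal constant) is what makes $\alpha^{-1}(\gamma\delta/\lbd_1)^q\le 1$, collapsing all of these to universal constants times $\lbd_1$ or $\lbd_1^2$. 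Defining $c_1=2\lbd_1$, $b_1=(\tfrac{c_1}{3}+\sqrt{\tfrac{c_1^2}{9}+2G_1})\log\tfrac{2}{\delta''}$ and the tilde analogues exactly as in the cited proof, and taking a union bound over the two Freedman events, I would conclude that with probability at least $1-2\delta''$,
\[
\norm{\sum_{j=1}^t (1-\alpha)^{t-j}\omega_j}\le\Circled{5}+\sqrt{2\Circled{6}+2\Circled{7}}+\Circled{8}\le 22\,\lbd_1\log\tfrac{2}{\delta''},
\]
the final constant being smaller than the $46$ of \Cref{appdx-lem:high-probability-analysis-bound-2} precisely because of the absent $2^q$.

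The hard part will be purely the constant chasing: after substituting the sharper moment bound $\gamma^q\delta^q$ for $2^q\gamma^q\bar L^q$ everywhere and using $\lbd_1\ge\gamma\bar\delta\alpha^{-1/q}$, one must re-derive the estimates for $b_1$, $\widetilde b_1$ and $\Circled{8}$ and check that every simplification (the analogues of the final inequalities in the proof of \Cref{appdx-lem:high-probability-analysis-bound-2}) still closes, now with the target numerical factor $22$ rather than $46$. The one conceptual care point is to invoke \Cref{appdx-technical-lem:sadiev-generalization} with the \emph{conditional} mean $\nabla F(x_j)-\nabla F(x_{j-1})$ of the clipped increment, which is why the threshold keeps the $2\gamma L_1$ branch next to the variance-driven $\gamma\bar\delta\alpha^{-1/q}$ branch; beyond that the argument is a line-by-line transcription.
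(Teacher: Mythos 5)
Your proposal matches the paper's proof essentially line by line: the paper uses the identical bias/unbiased decomposition, the same Cutkosky--Mehta $+$ Freedman machinery with \Cref{appdx-technical-lem:sadiev-generalization} supplying the moment bounds, invokes \Cref{ass:L-lipschitz-gradients} to get $\norm{\nabla F(x_j)-\nabla F(x_{j-1})}\le\gamma L_1$ for the $2\gamma L_1$ branch of the threshold, and substitutes the direct moment bound $\gamma^q\bar\delta^q$ from \Cref{ass:mean-squared-smoothness-2} for the $2^q\gamma^q\bar L^q$ of the $q$-WAS route, which is precisely what drops the constant from $46$ to $22$. (The only cosmetic point is that the paper's $\bar\delta$ in this lemma is just the $\delta$ of \Cref{ass:mean-squared-smoothness-2}, a notational slip rather than a rescaling.)
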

	
	\begin{proof}
		First, note that the computations~\eqref{9221fb23-db5b-497f-9062-1c71b7f12882} and~\eqref{21eaf977-44d0-4b4b-8b63-5af59e5cc089-2} are identical and we now need to bound all terms \Circled{5}, \Circled{6}, \Circled{7} and \Circled{8} where
		\[ \Circled{5} \eqdef \abs{\sum_{j = 1}^t W_j^t}, \quad \Circled{6} \eqdef \sum_{j = 1}^t Z_j^t, \quad \Circled{7} \eqdef \sum_{j = 1}^t \ExpSub{\xi_j}{\sqnorm{(1 - \alpha)^{t - j} \omega_j^u}} \,\, \text{ and } \,\, \Circled{8} \eqdef \norm{\sum_{j = 1}^t (1 - \alpha)^{t - j} \omega_j^b}. \]
		
		\paragraph{Bounding \Circled{5}.} As in the previous lemma, we choose $c_1 \eqdef 2 \lbd_1$ and $b_1 \eqdef \left( \frac{c_1}{3} + \sqrt{\frac{c_1^2}{9} + 2 G_1} \right) \log \frac{2}{\delta''} > 0$. Then, to select $G_1 > 0$ we need to bound the sum $\sigma_1^2 + \cdots + \sigma_t^2$, i.e.,
		\begin{alignat*}{2}
			\sum_{j = 1}^t \sigma_j^2 \oversetlab{\eqref{74b4e2bb-4754-4dd3-ae90-37ef40362f93-2}}&{\le} \sum_{j = 1}^t ( 1 - \alpha)^{2 (t - j)} \, \ExpSub{\xi_j}{\sqnorm{\omega_j^u}} \\
			\oversetref{Lem.}{\ref{appdx-technical-lem:sadiev-generalization}}&{\le} 18 \lbd_1^{2 - q} \gamma^q \bar{\delta}^q \sum_{j = 1}^t (1 - \alpha)^{2 (t - j)} \\
			& \le \frac{18 \lbd_1^{2 - q} \gamma^q \bar{\delta}^q}{\alpha}, \numberthis\label{74b4e2bb-4754-4dd3-ae90-37ef40362f93-3}
		\end{alignat*}
		where $\bar{\delta} \ge 0$ is the parameter in~\Cref{ass:mean-squared-smoothness-2}. In the application of~\Cref{appdx-technical-lem:sadiev-generalization} we use~\Cref{ass:mean-squared-smoothness-2}
		\[ \ExpSub{\xi_j}{\norm{\left[ \nabla F(x_j) - \nabla F(x_{j - 1}) \right] - \left[ \nabla f(x_j, \xi_j) - \nabla f(x_{j - 1}, \xi_j) \right]}^q} \le \bar{\delta}^q \norm{x_j - x_{j - 1}}^q \le \gamma^q \bar{\delta}^q. \]
		Moreover, applying~\Cref{appdx-technical-lem:sadiev-generalization} requires to take $\lbd_1 \ge 2 \max_{j \in [t]} \norm{\nabla F(x_j) - \nabla F(x_{j - 1})}$ and, since 
		\[ \norm{\nabla F(x_j) - \nabla F(x_{j - 1})} \oversetref{Ass.}{\ref{ass:L-lipschitz-gradients}}{\le} L_1 \norm{x_j - x_{j - 1}} = \gamma L_1, \]
		then it is enough to have $\lbd_1 \ge 2 \gamma L_1$. Setting $G_1 \eqdef \frac{18 \lbd_1^{2 - q} \gamma^q \bar{\delta}^q}{\alpha} > 0$ gives the desired bound.
		
		\paragraph{Bounding \Circled{6}.} Similarly to the previous lemma, we set $\widetilde{c}_1 \eqdef 8 \lbd_1^2$ and $\widetilde{b}_1 \eqdef \left( \frac{\widetilde{c}_1}{3} + \sqrt{\frac{\widetilde{c}_1^2}{9} + 2 \widetilde{G}_1} \right) \log \frac{2}{\delta''}$. For the choice $\widetilde{G}_1$, we have the bound 
		\[ \sum_{j = 1}^t \widetilde{\sigma}_j^2 \oversetlab{\eqref{a26983bb-60b7-49ab-ac0e-a1d026f1c51e-2}}{\le} 16 \lbd_1^2 \sum_{j = 1}^t \ExpSub{\xi_j}{\sqnorm{(1 - \alpha)^{t - j} \omega_j^u}} \oversetlab{\eqref{74b4e2bb-4754-4dd3-ae90-37ef40362f93-3}}{\le} 16 \lbd_1^2 \cdot \frac{18 \lbd_1^{2 - q} \gamma^q \bar{\delta}^q}{\alpha} = \frac{288 \lbd_1^{4 - q} \gamma^q \bar{\delta}^q}{\alpha}, \]
		(where we require $\lbd_1 \ge 2 \gamma L_1$). Hence, if we let $\widetilde{G}_1 \eqdef \frac{288 \lbd_1^{4 - q} \gamma^q \bar{\delta}^q}{\alpha}$ this establishes the desired bound.
		
		\paragraph{Bounding \Circled{7}.} As we already did in the two last paragraphs, we have
		\[ \Circled{7} \eqdef \sum_{j = 1}^t \ExpSub{\xi_j}{\sqnorm{(1 - \alpha)^{t - j} \omega_j^u}} \oversetlab{\eqref{74b4e2bb-4754-4dd3-ae90-37ef40362f93-3}}{\le} \frac{18 \lbd_1^{2 - q} \gamma^q \bar{\delta}^q}{\alpha}, \]
		as desired.
		
		\paragraph{Bounding \Circled{8}.} For the last bound, if we assume that $\lbd_1 \ge 2 \gamma L_1$ then, with probability one we have
		\begin{alignat*}{2}
			\Circled{8} \eqdef* \norm{\sum_{j = 1}^t (1 - \alpha)^{t - j} \omega_j^b} \\
			& \le \sum_{j = 1}^t (1 - \alpha)^{t - j} \norm{\omega_j^b} \\
			\oversetrel{rel:6bcc4f03-036d-4593-857a-983716b63243-2}&{\le} 4 \lbd_1^{1 - q} \gamma^q \bar{\delta}^q \sum_{j = 1}^t (1 - \alpha)^{t - j} \\
			& \le \frac{4 \lbd_1^{1 - q} \gamma^q \bar{\delta}^q}{\alpha},
		\end{alignat*}
		where in~\relref{rel:6bcc4f03-036d-4593-857a-983716b63243-2} we use~\Cref{appdx-technical-lem:sadiev-generalization}, more precisely, for any $j \in [t]$,
		\[ \norm{\omega_j^b} \le 2^q \lbd_1^{1 - q} \gamma^q \bar{\delta}^q \le 4 \lbd_1^{1 - q} \gamma^q \bar{\delta}^q, \]
		since $\E{\norm{\left[ \nabla F(x_t) - \nabla F(x_{t - 1}) \right] - \left[ \nabla f(x_t, \xi_t) - \nabla f(x_{t - 1}, \xi_t) \right]}^q} \le \gamma^q \bar{\delta}^q$ by~\Cref{ass:mean-squared-smoothness-2}.
		
		\paragraph{Summing up all bounds \Circled{5}, \Circled{6}, \Circled{7} and \Circled{8}.} We introduce the same events $E_{\Circled{5}, t}$ and $E_{\Circled{6}, t}$ as in~\eqref{1ded530f-71dc-46eb-9fdb-5308c908cb80} and~\eqref{92e2f28f-a246-45f3-bdea-ba28563d3825} respectively. Then, by our choice of $c_1$, $b_1$ and $G_1$ we have the bound
		\begin{alignat*}{2}
			b_1 \eqdef \left( \frac{c_1}{3} + \sqrt{\frac{c_1^2}{9} + 2 G_1} \right) \log \frac{2}{\delta''} & \le \left( \frac{2 c_1}{3} + \sqrt{2 G_1} \right) \log \frac{2}{\delta''} \\
			& = \left( \frac{4 \lbd_1}{3} + \sqrt{\frac{36 \lbd_1^{2 - q} \gamma^q \bar{\delta}^q}{\alpha}} \right) \log \frac{2}{\delta''} \\
			& = \lbd_1 \left( \frac{4}{3} + 6 \sqrt{\frac{1}{\alpha} \left(\frac{\gamma \bar{\delta}}{\lbd_1} \right)^q} \right) \log \frac{2}{\delta''} \\
			\oversetrel{rel:122b8683-5b62-44f2-b3aa-52164298844a}&{\le} \frac{22 \lbd_1}{3} \log \frac{2}{\delta''}, \numberthis\label{35c58053-3ad9-4b62-ae23-2a6f658c08da-3}
		\end{alignat*}
		where~\relref{rel:122b8683-5b62-44f2-b3aa-52164298844a} holds provided $\lbd_1 \ge \gamma \bar{\delta} \alpha^{-\frac{1}{q}}$. Moreover, by our choice of $\widetilde{c}_1$, $\widetilde{b}_1$ and $\widetilde{G}_1$ we also have the bound
		\begin{alignat*}{2}
			\widetilde{b}_1 \eqdef \left( \frac{\widetilde{c}_1}{3} + \sqrt{\frac{\widetilde{c}_1^2}{9} + 2 \widetilde{G}_1} \right) \log \frac{2}{\delta''} & \le \left( \frac{2 \widetilde{c}_1}{3} + \sqrt{2 \widetilde{G}_1} \right) \log \frac{2}{\delta''} \\
			& = \left( \frac{16 \lbd_1^2}{3} + \sqrt{\frac{576 \lbd_1^{4 - q} \gamma^q \bar{\delta}^q}{\alpha}} \right) \log \frac{2}{\delta''} \\
			& = \lbd_1^2 \left( \frac{16}{3} + 24 \sqrt{\frac{1}{\alpha} \left(\frac{\gamma \bar{\delta}}{\lbd_1} \right)^q} \right) \log \frac{2}{\delta''} \\
			& \le \frac{88 \lbd_1^2}{3} \log \frac{2}{\delta''}, \numberthis\label{c33e8593-ede2-4c3e-b76d-7e7ba9b43443-3}
		\end{alignat*}
		where the last inequality holds given $\lbd_1 \ge \gamma \bar{\delta} \alpha^{-\frac{1}{q}}$.
		
		Next, given $\lbd_1 \ge \max\ens{2 \gamma L_1, \gamma \bar{\delta} \alpha^{-\frac{1}{q}}}$, as in the previous lemma, we have $\Proba{E_{\Circled{5}, t}} \ge 1 - \delta''$ and $\Proba{E_{\Circled{6}, t}} \ge 1 - \delta''$ hence, by the union bound inequality we have
		\[ \Proba{E_{\Circled{5}, t} \cap E_{\Circled{6}, t}} \ge 1 - 2 \delta'', \]
		and on the event $E_{\Circled{5}, t} \cap E_{\Circled{6}, t}$ we obtain the inequality
		\begin{alignat*}{2}
			\norm{\sum_{j = 1}^t (1 - \alpha)^{t - j} \omega_j} & \le \norm{\sum_{j = 1}^t (1 - \alpha)^{t - j} \omega_j^b} + \norm{\sum_{j = 1}^t (1 - \alpha)^{t - j} \omega_j^u} \\
			\oversetlab{\eqref{21eaf977-44d0-4b4b-8b63-5af59e5cc089-2}}&{\le} \Circled{5} + \sqrt{2 \cdot \Circled{6} + 2 \cdot \Circled{7}} + \Circled{8} \\
			\oversetlab{\eqref{35c58053-3ad9-4b62-ae23-2a6f658c08da-3}+\eqref{c33e8593-ede2-4c3e-b76d-7e7ba9b43443-3}}&{\le} \frac{22 \lbd_1}{3} \log \frac{2}{\delta''} + \sqrt{2 \cdot \frac{176 \lbd_1^2}{3} \log \frac{2}{\delta''} + 2 \cdot \frac{18 \lbd_1^{2 - q} \gamma^q \bar{\delta}^q}{\alpha}} + \frac{4 \lbd_1^{1 - q} \gamma^q \bar{\delta}^q}{\alpha} \\
			\oversetrel{rel:65519c2e-05e9-4a6c-87ef-8631a38365a-2}&{\le} \lbd_1 \left( \frac{22}{3} + \sqrt{\frac{176}{3} + \frac{36}{\alpha} \left( \frac{\gamma \bar{\delta}}{\lbd_1} \right)^q} + \frac{4}{\alpha} \left( \frac{\gamma \bar{\delta}}{\lbd_1} \right)^q \right) \log \frac{2}{\delta''} \\
			\oversetrel{rel:95ba6431-6302-47f5-be7b-98aa809fc5a5-2}&{\le} \lbd_1 \left( \frac{22}{3} + \sqrt{\frac{176}{3} + 36} + 4 \right) \log \frac{2}{\delta''} \\
			& \le 22 \lbd_1 \log \frac{2}{\delta''},
		\end{alignat*}
		where in~\relref{rel:65519c2e-05e9-4a6c-87ef-8631a38365a-2} we use the fact that $\log \frac{2}{\delta''} \ge 1$ and in~\relref{rel:95ba6431-6302-47f5-be7b-98aa809fc5a5-2} we use $\lbd_1 \ge \gamma \bar{\delta} \alpha^{-\frac{1}{q}}$. This proves the lemma.
	\end{proof}
	
	\begin{lemma}\label{appdx-lem:high-probability-analysis-bound-hess}
		Under~\Cref{ass:L-lipschitz-gradients,ass:L-lipschitz-hessians,ass:q-bounded-central-moment-hessian}, for any $\delta'' \in \intof{0}{\frac{1}{2}}$ and any $t \in \Int{0}{T - 1}$, if the clipping threshold satisfies
		\[ \lbd_1 \ge \max\ens{2 \gamma L_1, \gamma \sigma_2 \alpha^{-\frac{1}{q}}}, \]
		for all $j \in [t]$ then, with probability at least $1 - 2 \delta''$, we have
		\[ \norm{\sum_{j = 1}^t (1 - \alpha)^{t - j} \omega_j} \le \frac{2 \min\ens{\gamma L_1, \gamma^2 L_2}}{\alpha} + 22 \lbd_1 \log \frac{2}{\delta''}. \]
	\end{lemma}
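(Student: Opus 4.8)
The plan is to follow the proof of~\Cref{appdx-lem:high-probability-analysis-bound-3} almost verbatim, the only genuinely new ingredient being the Taylor-remainder term that the Hessian--vector product introduces into $\omega_j$. Using the three-way decomposition $\omega_j = \omega_j^r + \omega_j^b + \omega_j^u$ from~\eqref{9715675c-c075-4d13-a6e5-978530f178fe-hess}--\eqref{5bf5f4fb-a644-41c4-a212-9c4930a0075d-hess} together with the triangle inequality, I would first split
\[
    \norm{\sum_{j = 1}^t (1 - \alpha)^{t - j} \omega_j} \le \norm{\sum_{j = 1}^t (1 - \alpha)^{t - j} \omega_j^r} + \norm{\sum_{j = 1}^t (1 - \alpha)^{t - j} \omega_j^b} + \norm{\sum_{j = 1}^t (1 - \alpha)^{t - j} \omega_j^u}.
\]

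The remainder sum is handled deterministically. Since $\omega_j^r$ is exactly the quantity $\hat{R}_j$ from~\eqref{14872232-13b0-4801-9fa1-bccc0be89a0b}, the two expectation-free estimates~\eqref{46d7ba91-ef00-4ec2-9b1e-d222e9968289a} and~\eqref{46d7ba91-ef00-4ec2-9b1e-d222e9968289b} give $\norm{\omega_j^r} \le \min\ens{2 \gamma L_1, \nicefrac{\gamma^2 L_2}{2}}$ almost surely; summing the geometric weights then yields, a.s.,
\[
    \norm{\sum_{j = 1}^t (1 - \alpha)^{t - j} \omega_j^r} \le \min\ens{2 \gamma L_1, \tfrac{\gamma^2 L_2}{2}} \sum_{j = 1}^t (1 - \alpha)^{t - j} \le \frac{2 \min\ens{\gamma L_1, \gamma^2 L_2}}{\alpha},
\]
where the last step uses the slack inequality $\min\ens{2 \gamma L_1, \nicefrac{\gamma^2 L_2}{2}} \le 2\min\ens{\gamma L_1, \gamma^2 L_2}$.

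For the pair $(\omega_j^b, \omega_j^u)$, I would reproduce the entire Freedman-type argument of~\Cref{appdx-lem:high-probability-analysis-bound-3} under the substitutions $\bar{\delta} \rightsquigarrow \sigma_2$ and $\nabla f(x_j, \xi_j) - \nabla f(x_{j-1}, \xi_j) \rightsquigarrow \nabla^2 f(\hat{x}_j, \hat{\xi}_j)(x_j - x_{j-1})$. The two facts that make this transcription go through are: (i) the mean being clipped, $\nabla^2 F(\hat{x}_j)(x_j - x_{j-1})$, satisfies $\norm{\nabla^2 F(\hat{x}_j)(x_j - x_{j-1})} \le \normop{\nabla^2 F(\hat{x}_j)}\,\norm{x_j - x_{j-1}} \le \gamma L_1$ by~\Cref{appdx-lem:lipschitz-gradients-implies-bounded-hessian,ass:L-lipschitz-gradients}, so the hypothesis $\lbd_1 \ge 2 \max_{j \in [t]} \norm{\nabla^2 F(\hat{x}_j)(x_j - x_{j-1})}$ required by~\Cref{appdx-technical-lem:sadiev-generalization} is guaranteed by $\lbd_1 \ge 2\gamma L_1$; and (ii) the $q$-th central moment of the clipped quantity is controlled by $\E{\norm{\hat{S}_{j+1}}^q} \le \gamma^q \sigma_2^q$, which was established in~\eqref{f79e21c1-04c0-497e-81e9-c10afd7f04f9a} from~\Cref{ass:q-bounded-central-moment-hessian}. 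One then applies~\Cref{appdx-technical-lem:cutkosky-mehta} with exponent $2$ to the unbiased part exactly as in~\eqref{21eaf977-44d0-4b4b-8b63-5af59e5cc089-2}, forms the scalar martingale difference sequences $W_j^t$ and $Z_j^t$ (mean zero with respect to $(\mathcal{F}_t)$ by conditioning over $\hat{\xi}_j$ and then invoking the tower property, since $\xi_j, \hat{\xi}_j, q_j$ are drawn independently of $\mathcal{F}_{j-1}$), bounds the conditional variances by $\sum_{j=1}^t \sigma_j^2 \le 18 \lbd_1^{2-q} \gamma^q \sigma_2^q \alpha^{-1}$ and $\sum_{j=1}^t \widetilde{\sigma}_j^2 \le 288 \lbd_1^{4-q} \gamma^q \sigma_2^q \alpha^{-1}$, applies~\Cref{appdx-technical-lem:freedman-inequality} with the constants $c_1 = 2\lbd_1$, $G_1 = 18 \lbd_1^{2-q} \gamma^q \sigma_2^q \alpha^{-1}$, $\widetilde{c}_1 = 8\lbd_1^2$, $\widetilde{G}_1 = 288 \lbd_1^{4-q} \gamma^q \sigma_2^q \alpha^{-1}$, and uses $\lbd_1 \ge \gamma \sigma_2 \alpha^{-1/q}$ to simplify $b_1 \le \tfrac{22}{3}\lbd_1 \log \tfrac{2}{\delta''}$, $\widetilde{b}_1 \le \tfrac{88}{3}\lbd_1^2 \log \tfrac{2}{\delta''}$, together with $\norm{\sum_{j=1}^t (1-\alpha)^{t-j}\omega_j^b} \le 4\lbd_1^{1-q}\gamma^q\sigma_2^q\alpha^{-1}$. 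A union bound over the two Freedman events then gives, with probability at least $1 - 2\delta''$, that $\norm{\sum_{j=1}^t (1-\alpha)^{t-j}(\omega_j^b + \omega_j^u)} \le 22 \lbd_1 \log \tfrac{2}{\delta''}$, and adding the deterministic remainder estimate yields the claimed bound.

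The main obstacle is organizational rather than conceptual: one must verify that the clipping threshold needs only to dominate $2\gamma L_1$ (and not some quantity involving $L_2$), which hinges on the operator-norm bound $\normop{\nabla^2 F} \le L_1$ from~\Cref{appdx-lem:lipschitz-gradients-implies-bounded-hessian}; and confirm that the martingale-difference and conditional-variance estimates are unaffected by the extra interpolation randomness $q_j$, for which conditioning on the per-step noise $\hat{\xi}_j$ (and then applying the tower property over $q_j$ and $\xi_j$) is enough. The constant in the additive term comes precisely from the slack $\min\ens{2\gamma L_1, \nicefrac{\gamma^2 L_2}{2}} \le 2\min\ens{\gamma L_1, \gamma^2 L_2}$; everything else is a direct copy of~\Cref{appdx-lem:high-probability-analysis-bound-3}.
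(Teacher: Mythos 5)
Your proposal matches the paper's proof essentially line by line: the same $\omega_j = \omega_j^r + \omega_j^b + \omega_j^u$ decomposition, the same deterministic handling of the remainder $\omega_j^r$ via the two bounds $\norm{\hat{R}_t} \le 2\gamma L_1$ and $\norm{\hat{R}_t} \le \gamma^2 L_2 / 2$ followed by the slack inequality $\min\{2\gamma L_1, \gamma^2 L_2 / 2\} \le 2\min\{\gamma L_1, \gamma^2 L_2\}$, and the same transcription of the Freedman argument from Lemma~\ref{appdx-lem:high-probability-analysis-bound-3} with $\bar{\delta}$ replaced by $\sigma_2$, identical constants $G_1, \widetilde{G}_1, b_1, \widetilde{b}_1$, and the same observation that $\normop{\nabla^2 F} \le L_1$ makes $\lbd_1 \ge 2\gamma L_1$ sufficient for Lemma~\ref{appdx-technical-lem:sadiev-generalization}. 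The proposal is correct.
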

	
	\begin{proof}
		First, using~\eqref{9715675c-c075-4d13-a6e5-978530f178fe-hess},~\eqref{9715675c-c075-4d13-a6e5-978530f178fd-hess} and~\eqref{5bf5f4fb-a644-41c4-a212-9c4930a0075d-hess} we have
		\[ \norm{\sum_{j = 1}^t (1 - \alpha)^{t - j} \omega_j} \le \underbrace{\norm{\sum_{j = 1}^t (1 - \alpha)^{t - j} \omega_j^r}}_{\Circled{9}} + \underbrace{\norm{\sum_{j = 1}^t (1 - \alpha)^{t - j} \omega_j^b}}_{\Circled{8}} + \norm{\sum_{j = 1}^t (1 - \alpha)^{t - j} \omega_j^u}, \numberthis\label{9221fb23-db5b-497f-9062-1c71b7f12882-hess} \]
		As in the previous lemmas, the computation~\eqref{21eaf977-44d0-4b4b-8b63-5af59e5cc089-2} is identical and we now need to bound all terms \Circled{5}, \Circled{6}, \Circled{7}, \Circled{8} and \Circled{9} where
		\[ \Circled{5} \eqdef \abs{\sum_{j = 1}^t W_j^t}, \quad \Circled{6} \eqdef \sum_{j = 1}^t Z_j^t \,\, \text{ and } \,\, \Circled{7} \eqdef \sum_{j = 1}^t \ExpSub{\hat{\xi}_j}{\sqnorm{(1 - \alpha)^{t - j} \omega_j^u}}, \]
		and $W_1^t, \ldots, W_t^t$ is a martingale difference sequence since, by definition of $\omega_j^u$ and $W_j^t$, for all $j \in [t]$ we have $\ExpCond{W_j^t}{W_{j - 1}^t, \ldots, W_1^t} = 0$ since 
		\[ \ExpSub{q_j, \hat{\xi}_j}{\omega_j^u} = \ExpSub{q_j}{\ExpSubCond{\hat{\xi}_j}{\omega_j^u}{q_j}} = 0. \]
		The same argument applies to the sequence $Z_1^t, \ldots, Z_t^t$.
		
		\paragraph{Bounding \Circled{5}.} As already done before, we choose $c_1 \eqdef 2 \lbd_1$ and $b_1 \eqdef \left( \frac{c_1}{3} + \sqrt{\frac{c_1^2}{9} + 2 G_1} \right) \log \frac{2}{\delta''} > 0$. Then, to select $G_1 > 0$ we need to bound the sum $\sigma_1^2 + \cdots + \sigma_t^2$, i.e.,
		\begin{alignat*}{2}
			\sum_{j = 1}^t \sigma_j^2 \oversetlab{\eqref{74b4e2bb-4754-4dd3-ae90-37ef40362f93-2}}&{\le} \sum_{j = 1}^t ( 1 - \alpha)^{2 (t - j)} \, \ExpSub{\hat{\xi}_j}{\sqnorm{\omega_j^u}} \\
			\oversetref{Lem.}{\ref{appdx-technical-lem:sadiev-generalization}}&{\le} 18 \lbd_1^{2 - q} \gamma^q \sigma_2^q \sum_{j = 1}^t (1 - \alpha)^{2 (t - j)} \\
			& \le \frac{18 \lbd_1^{2 - q} \gamma^q \sigma_2^q}{\alpha}. \numberthis\label{74b4e2bb-4754-4dd3-ae90-37ef40362f93-hess}
		\end{alignat*}
		In the application of~\Cref{appdx-technical-lem:sadiev-generalization} we use~\Cref{ass:q-bounded-central-moment-hessian}
		\begin{alignat*}{2}
			&\ExpSub{\hat{\xi}_j}{\norm{\nabla^2 f(\hat{x}_j, \hat{\xi}_j)(x_j - x_{j - 1}) - \nabla^2 F(\hat{x}_j) (x_j - x_{j - 1})}^q}\\
			&\qquad\qquad\qquad\qquad\le \ExpSub{\hat{\xi}_j}{\normop{\nabla^2 f(\hat{x}_j, \hat{\xi}_j) - \nabla^2 F(\hat{x}_j)}^q \cdot \norm{x_j - x_{j - 1}}^q} \le \gamma^q \sigma_2^q, 
		\end{alignat*}
		as
		\[ \ExpSub{\hat{\xi}_j}{\nabla^2 f(\hat{x}_j, \hat{\xi}_j)(x_j - x_{j - 1})} = \nabla^2 F(\hat{x}_j)(x_j - x_{j - 1}). \]
		Moreover, applying~\Cref{appdx-technical-lem:sadiev-generalization} requires to take $\lbd_1 \ge 2 \max_{j \in [t]} \norm{\nabla^2 F(\hat{x}_j) (x_j - x_{j - 1})}$ and, since 
		\[ \norm{\nabla^2 F(\hat{x}_j) (x_j - x_{j - 1})} \le \normop{\nabla^2 F(\hat{x}_j)} \cdot \norm{x_j - x_{j - 1}} \oversetref{Ass.}{\ref{ass:L-lipschitz-gradients}}{\le} L_1 \norm{x_j - x_{j - 1}} = \gamma L_1, \]
		then it is enough to have $\lbd_1 \ge 2 \gamma L_1$. Setting $G_1 \eqdef \frac{18 \lbd_1^{2 - q} \gamma^q \sigma_2^q}{\alpha} > 0$ gives the desired bound.
		
		\paragraph{Bounding \Circled{6}.} Similarly to the previous lemma, we set $\widetilde{c}_1 \eqdef 8 \lbd_1^2$ and $\widetilde{b}_1 \eqdef \left( \frac{\widetilde{c}_1}{3} + \sqrt{\frac{\widetilde{c}_1^2}{9} + 2 \widetilde{G}_1} \right) \log \frac{2}{\delta''}$. For the choice $\widetilde{G}_1$, we have the bound 
		\[ \sum_{j = 1}^t \widetilde{\sigma}_j^2 \oversetlab{\eqref{a26983bb-60b7-49ab-ac0e-a1d026f1c51e-2}}{\le} 16 \lbd_1^2 \sum_{j = 1}^t \ExpSub{q_j, \hat{\xi}_j}{\sqnorm{(1 - \alpha)^{t - j} \omega_j^u}} \oversetlab{\eqref{74b4e2bb-4754-4dd3-ae90-37ef40362f93-hess}}{\le} 16 \lbd_1^2 \cdot \frac{18 \lbd_1^{2 - q} \gamma^q \sigma_2^q}{\alpha} = \frac{288 \lbd_1^{4 - q} \gamma^q \sigma_2^q}{\alpha}, \]
		(where we require $\lbd_1 \ge 2 \gamma L_1$). Hence, if we let $\widetilde{G}_1 \eqdef \frac{288 \lbd_1^{4 - q} \gamma^q \sigma_2^q}{\alpha}$ this establishes the desired bound.
		
		\paragraph{Bounding \Circled{7}.} As we already did in the two last paragraphs, we have
		\[ \Circled{7} \eqdef \sum_{j = 1}^t \ExpSub{\hat{\xi}_j}{\sqnorm{(1 - \alpha)^{t - j} \omega_j^u}} \oversetlab{\eqref{74b4e2bb-4754-4dd3-ae90-37ef40362f93-hess}}{\le} \frac{18 \lbd_1^{2 - q} \gamma^q \sigma_2^q}{\alpha}, \]
		as desired.
		
		\paragraph{Bounding \Circled{8}.} For this bound, if we assume that $\lbd_1 \ge 2 \gamma L_1$ then, with probability one we have
		\begin{alignat*}{2}
			\Circled{8} \eqdef* \norm{\sum_{j = 1}^t (1 - \alpha)^{t - j} \omega_j^b} \\
			& \le \sum_{j = 1}^t (1 - \alpha)^{t - j} \norm{\omega_j^b} \\
			\oversetrel{rel:6bcc4f03-036d-4593-857a-983716b63243-2}&{\le} 4 \lbd_1^{1 - q} \gamma^q \sigma_2^q \sum_{j = 1}^t (1 - \alpha)^{t - j} \\
			& \le \frac{4 \lbd_1^{1 - q} \gamma^q \sigma_2^q}{\alpha},
		\end{alignat*}
		where in~\relref{rel:6bcc4f03-036d-4593-857a-983716b63243-2} we use~\Cref{appdx-technical-lem:sadiev-generalization} (with $\ExpSubCond{\hat{\xi}_j}{\cdot}{q_j}$), more precisely, for any $j \in [t]$,
		\[ \norm{\omega_j^b} \le 2^q \lbd_1^{1 - q} \gamma^q \sigma_2^q \le 4 \lbd_1^{1 - q} \gamma^q \sigma_2^q, \]
		since 
		\begin{alignat*}{2}
			&\E{\norm{\nabla^2 f(\hat{x}_j, \hat{\xi}_j)(x_j - x_{j - 1}) - \nabla^2 F(\hat{x}_j) (x_j - x_{j - 1})}^q}\\
			&\qquad\qquad \le \E{\normop{\nabla^2 f(\hat{x}_j, \hat{\xi}_j) - \nabla^2 F(\hat{x}_j)}^q \cdot \norm{x_j - x_{j - 1}}^q} \\
			&\qquad\qquad\qquad\qquad\oversetref{Lem.}{\ref{lem:tower-property}}{=} \gamma^q \, \E{\ExpCond{\normop{\nabla^2 f(\hat{x}_j, \hat{\xi}_j) - \nabla^2 F(\hat{x}_j)}^q}{\hat{x}_j}} \\
			&\qquad\qquad\qquad\qquad\qquad\qquad\oversetref{Ass.}{\ref{ass:q-bounded-central-moment-hessian}}{\le} \gamma^q \sigma_2^q.
		\end{alignat*}
		
		\paragraph{Bounding \Circled{9}.} For the last bound, we use the triangle inequality, this gives
		\begin{alignat*}{2}
			\Circled{9} \eqdef* \norm{\sum_{j = 1}^^t (1 - \alpha)^{t  j} \omega_j^r} \le \sum_{j = 1}^t (1 - \alpha)^{t - j} \norm{\omega_j^r} \\
			& = \sum_{j = 1}^t (1 - \alpha)^{t - j} \norm{\nabla^2 F(\hat{x}_j)(x_j - x_{j - 1}) - \left[ \nabla F(x_j) - \nabla F(x_{j - 1}) \right]} \\
			\oversetrel{rel:e38b5d27-2b7b-4482-87a7-e89e0257e963}&{\le} \min\ens{2 \gamma L_1, \frac{\gamma^2 L_2}{2}} \sum_{j = 1}^t (1 - \alpha)^{t - j}  \le \frac{2 \min\ens{\gamma L_1, \gamma^2 L_2}}{\alpha}, \numberthis\label{2752ee44-9282-4441-80e7-2703274875e6}
		\end{alignat*}
		where in~\relref{rel:e38b5d27-2b7b-4482-87a7-e89e0257e963} we use the bounds~\eqref{46d7ba91-ef00-4ec2-9b1e-d222e9968289a} and~\eqref{46d7ba91-ef00-4ec2-9b1e-d222e9968289b}, which holds with probability one.
		
		\paragraph{Summing up all bounds \Circled{5}, \Circled{6}, \Circled{7}, \Circled{8} and \Circled{9}.} We introduce the same events $E_{\Circled{5}, t}$ and $E_{\Circled{6}, t}$ as in~\eqref{1ded530f-71dc-46eb-9fdb-5308c908cb80} and~\eqref{92e2f28f-a246-45f3-bdea-ba28563d3825} respectively. Then, by our choice of $c_1$, $b_1$ and $G_1$ we have the bound
		\begin{alignat*}{2}
			b_1 \eqdef \left( \frac{c_1}{3} + \sqrt{\frac{c_1^2}{9} + 2 G_1} \right) \log \frac{2}{\delta''} & \le \left( \frac{2 c_1}{3} + \sqrt{2 G_1} \right) \log \frac{2}{\delta''} \\
			& = \left( \frac{4 \lbd_1}{3} + \sqrt{\frac{36 \lbd_1^{2 - q} \gamma^q \sigma_2^q}{\alpha}} \right) \log \frac{2}{\delta''} \\
			& = \lbd_1 \left( \frac{4}{3} + 6 \sqrt{\frac{1}{\alpha} \left(\frac{\gamma \sigma_2}{\lbd_1} \right)^q} \right) \log \frac{2}{\delta''} \\
			\oversetrel{rel:122b8683-5b62-44f2-b3aa-52164298844a}&{\le} \frac{22 \lbd_1}{3} \log \frac{2}{\delta''}, \numberthis\label{35c58053-3ad9-4b62-ae23-2a6f658c08da-hess}
		\end{alignat*}
		where~\relref{rel:122b8683-5b62-44f2-b3aa-52164298844a} holds provided $\lbd_1 \ge \gamma \bar{\delta} \alpha^{-\frac{1}{q}}$. Moreover, by our choice of $\widetilde{c}_1$, $\widetilde{b}_1$ and $\widetilde{G}_1$ we also have the bound
		\begin{alignat*}{2}
			\widetilde{b}_1 \eqdef \left( \frac{\widetilde{c}_1}{3} + \sqrt{\frac{\widetilde{c}_1^2}{9} + 2 \widetilde{G}_1} \right) \log \frac{2}{\delta''} & \le \left( \frac{2 \widetilde{c}_1}{3} + \sqrt{2 \widetilde{G}_1} \right) \log \frac{2}{\delta''} \\
			& = \left( \frac{16 \lbd_1^2}{3} + \sqrt{\frac{576 \lbd_1^{4 - q} \gamma^q \sigma_2^q}{\alpha}} \right) \log \frac{2}{\delta''} \\
			& = \lbd_1^2 \left( \frac{16}{3} + 24 \sqrt{\frac{1}{\alpha} \left(\frac{\gamma \sigma_2}{\lbd_1} \right)^q} \right) \log \frac{2}{\delta''} \\
			& \le \frac{88 \lbd_1^2}{3} \log \frac{2}{\delta''}, \numberthis\label{c33e8593-ede2-4c3e-b76d-7e7ba9b43443-hess}
		\end{alignat*}
		where the last inequality holds given $\lbd_1 \ge \gamma \sigma_2 \alpha^{-\frac{1}{q}}$.
		
		Next, given $\lbd_1 \ge \max\ens{2 \gamma L_1, \gamma \sigma_2 \alpha^{-\frac{1}{q}}}$, as in the previous lemma, we have $\Proba{E_{\Circled{5}, t}} \ge 1 - \delta''$ and $\Proba{E_{\Circled{6}, t}} \ge 1 - \delta''$ hence, by the union bound inequality we have
		\[ \Proba{E_{\Circled{5}, t} \cap E_{\Circled{6}, t}} \ge 1 - 2 \delta'', \]
		and on the event $E_{\Circled{5}, t} \cap E_{\Circled{6}, t}$ we obtain the inequality
		\begin{alignat*}{2}
			\norm{\sum_{j = 1}^t (1 - \alpha)^{t - j} \omega_j} & \le \norm{\sum_{j = 1}^t (1 - \alpha)^{t - j} \omega_j^r} + \norm{\sum_{j = 1}^t (1 - \alpha)^{t - j} \omega_j^b} + \norm{\sum_{j = 1}^t (1 - \alpha)^{t - j} \omega_j^u} \\
			\oversetlab{\eqref{21eaf977-44d0-4b4b-8b63-5af59e5cc089-2}+\eqref{9221fb23-db5b-497f-9062-1c71b7f12882-hess}}&{\le} \Circled{9} + \Circled{5} + \sqrt{2 \cdot \Circled{6} + 2 \cdot \Circled{7}} + \Circled{8} \\
			\oversetlab{\eqref{2752ee44-9282-4441-80e7-2703274875e6}+\eqref{35c58053-3ad9-4b62-ae23-2a6f658c08da-hess}+\eqref{c33e8593-ede2-4c3e-b76d-7e7ba9b43443-hess}}&{\le} \frac{2 \min\ens{\gamma L_1, \gamma^2 L_2}}{\alpha} + \frac{22 \lbd_1}{3} \log \frac{2}{\delta''} \\
			&\qquad+ \sqrt{2 \cdot \frac{176 \lbd_1^2}{3} \log \frac{2}{\delta''} + 2 \cdot \frac{18 \lbd_1^{2 - q} \gamma^q \sigma_2^q}{\alpha}} + \frac{4 \lbd_1^{1 - q} \gamma^q \sigma_2^q}{\alpha} \\
			\oversetrel{rel:65519c2e-05e9-4a6c-87ef-8631a38365a-2}&{\le} \frac{2 \min\ens{\gamma L_1, \gamma^2 L_2}}{\alpha} + \lbd_1 \left( \frac{22}{3} + \sqrt{\frac{176}{3} + \frac{36}{\alpha} \left( \frac{\gamma \sigma_2}{\lbd_1} \right)^q} + \frac{4}{\alpha} \left( \frac{\gamma \sigma_2}{\lbd_1} \right)^q \right) \log \frac{2}{\delta''} \\
			\oversetrel{rel:95ba6431-6302-47f5-be7b-98aa809fc5a5-2}&{\le} \frac{2 \min\ens{\gamma L_1, \gamma^2 L_2}}{\alpha} + \lbd_1 \left( \frac{22}{3} + \sqrt{\frac{176}{3} + 36} + 4 \right) \log \frac{2}{\delta''} \\
			& \le \frac{2 \min\ens{\gamma L_1, \gamma^2 L_2}}{\alpha} + 22 \lbd_1 \log \frac{2}{\delta''},
		\end{alignat*}
		where in~\relref{rel:65519c2e-05e9-4a6c-87ef-8631a38365a-2} we use the fact that $\log \frac{2}{\delta''} \ge 1$ and in~\relref{rel:95ba6431-6302-47f5-be7b-98aa809fc5a5-2} we use $\lbd_1 \ge \gamma \sigma_2 \alpha^{-\frac{1}{q}}$. This proves the lemma.
	\end{proof}
	
	\subsection{Proof of~\Cref{thm:clipped-nsgd-mvr-convergence-analysis}}
	
	With~\Cref{appdx-lem:high-probability-analysis-bound-1,appdx-lem:high-probability-analysis-bound-2} in our hands, we are now ready to prove the main result of this section, i.e., high-probability convergence guarantees for~\Cref{algo:clipped-nsgd-mvr}, i.e., \algname{\clip-NSGD-MVR}.
	
	\begin{restate-theorem}{\ref{thm:clipped-nsgd-mvr-convergence-analysis}}
		Under~\Cref{ass:lower-boundedness,ass:p-bounded-central-moment-gradient,ass:mean-squared-smoothness}, let $T \ge 1$ and $\delta \in \intof{0}{1}$ such that $\log \frac{8 T}{\delta} \ge 1$ and suppose that we choose $g_0 = 0$ in~\Cref{algo:clipped-nsgd-mvr} and let $\Delta_1 \eqdef F(x_0) - F^{\inf}$ the initial sub-optimality. Suppose we run~\Cref{algo:clipped-nsgd-mvr} using momentum parameter $\alpha = \max\{T^{-\frac{p}{2 p - 1}}, T^{-\frac{p q}{p (2 q + 1) - 2 q}}\}$, clipping thresholds $\lbd_1 = 2 \gamma \bar{L} \alpha^{-\frac{1}{q}}$ and $\lbd_2 = \max\{4 \sqrt{\bar{L} \Delta_1}, \sigma_1 \alpha^{-\frac{1}{p}}\}$ and with stepsize
		\[ \gamma = \cO\left( \min\ens{\sqrt{\frac{\Delta_1}{\bar{L} T}}, \alpha \sqrt{\frac{\Delta_1}{\bar{L}}}, \frac{1}{\alpha T \log \frac{T}{\delta}} \sqrt{\frac{\Delta_1}{\bar{L}}}, \frac{\Delta_1}{\sigma_1 \alpha^{\frac{p - 1}{p}} T \log \frac{T}{\delta}}, \sqrt{\frac{\Delta_1 \alpha^{\frac{1}{q}}}{\bar{L} T \log \frac{T}{\delta}}}} \right). \]
		
		Then, with probability at least $1 - \delta$, the output of~\Cref{algo:clipped-nsgd-mvr} satisfies
		\[ \frac{1}{T} \sum_{t = 0}^{T - 1} \norm{\nabla F(x_t)} \le \frac{2 \Delta_1}{\gamma T}, \]
		and, by our choice of parameters, the norm of the gradients converges at the rate
		\[ \frac{1}{T} \sum_{t = 0}^{T - 1} \norm{\nabla F(x_t)} = \cO\left( \left( \frac{\sqrt{\bar{L} \Delta_1} + \sigma_1}{T^{\frac{p - 1}{2 p - 1} \wedge \frac{q (p - 1)}{p (2 q + 1) - 2 q}}} \right)\log \frac{T}{\delta} \right), \]
		with high probability.
	\end{restate-theorem}
	
	\begin{proof}
		Let us remind from the proofs of~\Cref{appdx-lem:high-probability-analysis-bound-1,appdx-lem:high-probability-analysis-bound-2} that we defined
		\begin{alignat*}{2}
			E_{\Circled{1}, t} \eqdef* \ens{\abs{\sum_{j = 1}^t V_j^t} \le b_2 \, \text{ or } \, \sum_{j = 1}^t \sigma_j^2 > G_2 \log \frac{2}{\delta'}}, & E_{\Circled{2}, t} \eqdef* \ens{\abs{\sum_{j = 1}^t Y_j^t} \le \widetilde{b}_2 \, \text{ or } \, \sum_{j = 1}^t \widetilde{\sigma}_j^2 > \widetilde{G}_2 \log \frac{2}{\delta'}}, \\
			E_{\Circled{5}, t} \eqdef* \ens{\abs{\sum_{j = 1}^t W_j^t} \le b_1 \, \text{ or } \, \sum_{j = 1}^t \sigma_j^2 > G_1 \log \frac{2}{\delta''}}, \quad & E_{\Circled{6}, t} \eqdef* \ens{\abs{\sum_{j = 1}^t Z_j^t} \le \widetilde{b}_1 \, \text{ or } \, \sum_{j = 1}^t \widetilde{\sigma}_j^2 > \widetilde{G}_1 \log \frac{2}{\delta''}},
		\end{alignat*}
		where
		\[ c_1 \eqdef 2 \lbd_1, \quad \widetilde{c}_1 \eqdef 8 \lbd_1^2, \quad c_2 \eqdef 2 \lbd_2, \quad \widetilde{c}_2 \eqdef 8 \lbd_2^2, \]
		\[ G_1 \eqdef \frac{72 \lbd_1^{2 - q} \gamma^q \bar{L}^q}{\alpha}, \quad \widetilde{G}_1 \eqdef \frac{1152 \lbd_1^{4 - q} \gamma^q \bar{L}^q}{\alpha}, \quad G_2 \eqdef \frac{18 \lbd_2^{2 - p} \sigma_1^p}{\alpha}, \quad \widetilde{G}_2 \eqdef \frac{288 \lbd_2^{4 - p} \sigma_1^p}{\alpha}, \]
		\[ b_1 \le \frac{40 \lbd_1}{3} \log \frac{2}{\delta''}, \quad \widetilde{b}_1 \le  \frac{160 \lbd_1^2}{3} \log \frac{2}{\delta''}, \quad b_2 \le \frac{22 \lbd_2}{3} \log \frac{2}{\delta'}, \quad \widetilde{b}_2 \le \frac{88 \lbd_2^2}{3} \log \frac{2}{\delta'}, \]
		and we have shown the following bounds:
		\[ \Proba{E_{\Circled{1}, t}} \ge 1 - \delta', \quad \Proba{E_{\Circled{2}, t}} \ge 1 - \delta', \quad \Proba{E_{\Circled{5}, t}} \ge 1 - \delta'', \quad \Proba{E_{\Circled{6}, t}} \ge 1 - \delta'', \numberthis\label{4ead95f0-2b11-4c81-9020-65644456a5a3} \]
		valid for all $t \in \Int{0}{T - 1}$.
		
		The idea of the proof is based on a technique from previous works in the literature~\citep{10.5555/3495724.3496985,pmlr-v202-sadiev23a,pmlr-v195-liu23c,sadiev2025second} and proceeds by a mathematical induction. Our goal is to prove that for any $\tau \in \Int{0}{T - 1}$, the event
		\[ G_{\tau} \eqdef E_{\tau} \cap E_{1, \tau} \cap E_{2, \tau} \cap E_{5, \tau} \cap E_{6, \tau}, \]
		holds with probability at least $1 - \frac{\tau \delta}{T}$, where the event $E_{\tau}$ is defined as
		\[ E_{\tau} \eqdef \bigcap_{t = 0}^{\tau} \ens{\gamma \sum_{j = 0}^t \norm{\nabla F(x_j)} + \Delta_{t + 1} \le 2 \Delta_1}, \]
		and
		\[ E_{1, \tau} \eqdef \bigcap_{t = 1}^{\tau} E_{\Circled{1}, t}, \quad E_{2, \tau} \eqdef \bigcap_{t = 1}^{\tau} E_{\Circled{2}, t}, \quad E_{5, \tau} \eqdef \bigcap_{t = 1}^{\tau} E_{\Circled{5}, t}, \quad E_{6, \tau} \eqdef \bigcap_{t = 1}^{\tau} E_{\Circled{6}, t}. \]
		
		\paragraph{Base case.} For the base case $\tau = 0$ we have $G_0 = E_0 = \ens{\Delta_1 \le 2 \Delta_1}$ which holds with probability $1 = 1 - \frac{\tau \delta}{T}$ since $\tau = 0$.
		
		\paragraph{Inductive case.} Now, let us assume that the induction hypothesis holds for $\tau - 1$, i.e., $\Proba{G_{\tau - 1}} \ge 1 - \frac{(\tau - 1) \delta}{T}$ and we need to prove that is holds for $\tau$ also, that is, $\Proba{G_{\tau}} \ge 1 - \frac{\tau \delta}{T}$. First, let us observe that
		\[ E_{\tau - 1} \cap E_{1, \tau} \cap E_{2, \tau} \cap E_{5, \tau} \cap E_{6, \tau} = G_{\tau - 1} \cap E_{\Circled{1}, \tau} \cap E_{\Circled{2}, \tau} \cap E_{\Circled{5}, \tau} \cap E_{\Circled{6}, \tau}. \numberthis\label{37c9a99d-499b-4b0e-ab10-001e0f246a9e} \]
		Then, we have
		\begin{alignat*}{2}
			\Proba{E_{\tau - 1} \cap E_{1, \tau} \cap E_{2, \tau} \cap E_{5, \tau} \cap E_{6, \tau}} \oversetlab{\eqref{37c9a99d-499b-4b0e-ab10-001e0f246a9e}}&{=} \Proba{G_{\tau - 1} \cap E_{\Circled{1}, \tau} \cap E_{\Circled{2}, \tau} \cap E_{\Circled{5}, \tau} \cap E_{\Circled{6}, \tau}} \\
			& = 1 - \Proba{\overline{G_{\tau - 1} \cap E_{\Circled{1}, \tau} \cap E_{\Circled{2}, \tau} \cap E_{\Circled{5}, \tau} \cap E_{\Circled{6}, \tau}}} \\
			& \ge 1 - \Proba{\overline{G_{\tau - 1}}} - \Proba{\overline{E_{\Circled{1}, \tau}}} - \Proba{\overline{E_{\Circled{2}, \tau}}} - \Proba{\overline{E_{\Circled{5}, \tau}}} - \Proba{\overline{E_{\Circled{6}, \tau}}} \\
			\oversetlab{\eqref{4ead95f0-2b11-4c81-9020-65644456a5a3}}&{\ge} 1 - \frac{(\tau - 1) \delta}{T} - 2 \delta' - 2 \delta'' \\
			& = 1 - \frac{\tau \delta}{T} + \left( \frac{\delta}{T} - 2 \delta' - 2 \delta'' \right) \\
			& = 1 - \frac{\tau \delta}{T}, \numberthis\label{092ed50d-cb6a-420c-b2dc-26ff706c2ac0}
		\end{alignat*}
		provided $\delta' = \delta'' = \frac{\delta}{4 T}$. Now, given $\tau \in \Int{0}{T - 1}$, on the event $E_{\tau - 1}$ we have
		\[ \Delta_t \le \gamma \sum_{j = 0}^{t - 1} \norm{\nabla F(x_j)} + \Delta_t \le 2 \Delta_1 \]
		for all integer $1 \le t \le \tau$ and therefore, for all $j \in \Int{0}{\tau - 1}$, we have
		\[ \norm{\nabla F(x_j)} \le \sqrt{2 \bar{L} \Delta_j} \le 2 \sqrt{\bar{L} \Delta_1}, \]
		hence, by our choice of clipping threshold $\lbd_2 = \max\ens{4 \sqrt{\bar{L} \Delta_1}, \sigma_1 \alpha^{-\frac{1}{p}}}$ we have $\lbd_2 \ge 2 \max_{j \in [t]} \norm{\nabla F(x_j)}$. Additionally, if we assume the clipping level $\lbd_1 = 2 \gamma \bar{L} \alpha^{-\frac{1}{q}}$ then, on the event $E_{\tau - 1} \cap E_{1, \tau} \cap E_{2, \tau} \cap E_{5, \tau} \cap E_{6, \tau}$, we have
		\begin{alignat*}{2}
			\gamma \sum_{t = 0}^{\tau} \norm{\nabla F(x_t)} + \Delta_{\tau + 1} \oversetref{Lem.}{\ref{appdx-lem:high-probability-analysis-descent-lemma}}&{\le} \Delta_1 
			\begin{aligned}[t]
				&+ \frac{2 \gamma \sqrt{2 \bar{L} \Delta_1}}{\alpha} + \frac{\gamma^2 \bar{L} (\tau + 1)}{2} \\
				&+ 2 \gamma \alpha \sum_{t = 1}^{\tau} \norm{\sum_{j = 1}^t (1 - \alpha)^{t - j} \theta_j} + 2 \gamma (1 - \alpha) \sum_{t = 1}^{\tau} \norm{\sum_{j = 1}^t (1 - \alpha)^{t - j} \omega_j}
			\end{aligned} \\
			\oversetref{Lem.}{\ref{appdx-lem:high-probability-analysis-bound-1}+\ref{appdx-lem:high-probability-analysis-bound-2}}&{\le} \Delta_1 
			\begin{aligned}[t]
				&+ \frac{2 \gamma \sqrt{2 \bar{L} \Delta_1}}{\alpha} + \frac{\gamma^2 \bar{L} (\tau + 1)}{2} \\
				&+ 44 \gamma \alpha \tau \lbd_2 \log \frac{8 T}{\delta} + 92 \gamma (1 - \alpha) \tau \lbd_1 \log \frac{8 T}{\delta},
			\end{aligned}
		\end{alignat*}
		and, as $0 \le \tau \le T - 1$ then, by our choice of stepsize
		\[ \gamma = \min\ens{\sqrt{\frac{\Delta_1}{2 \bar{L} T}}, \frac{\alpha}{8} \sqrt{\frac{\Delta_1}{2 \bar{L}}}, \frac{1}{704 \alpha T \log \frac{8 T}{\delta}} \sqrt{\frac{\Delta_1}{\bar{L}}}, \frac{\Delta_1}{176 \sigma_1 \alpha^{\frac{p - 1}{p}} T \log \frac{8 T}{\delta}}, \sqrt{\frac{\Delta_1 \alpha^{\frac{1}{q}}}{736 \bar{L} T \log \frac{8 T}{\delta}}}}, \numberthis\label{787efe83-7ee7-4154-be0c-7c0b97faf778} \]
		we have
		\[ \gamma \sum_{t = 0}^{\tau} \norm{\nabla F(x_t)} + \Delta_{\tau + 1} \oversetlab{\eqref{787efe83-7ee7-4154-be0c-7c0b97faf778}}{\le} \Delta_1 + \frac{\Delta_1}{4} + \frac{\Delta_1}{4} + \frac{\Delta_1}{4} + \frac{\Delta_1}{4} = 2 \Delta_1, \]
		therefore we have $E_{\tau} \cap E_{1, \tau} \cap E_{2, \tau} \cap E_{5, \tau} \cap E_{6, \tau} = E_{\tau - 1} \cap E_{1, \tau} \cap E_{2, \tau} \cap E_{5, \tau} \cap E_{6, \tau}$ which leads to
		\begin{alignat*}{2}
			\Proba{G_{\tau}} & = \Proba{E_{\tau} \cap E_{1, \tau} \cap E_{2, \tau} \cap E_{5, \tau} \cap E_{6, \tau}} \\
			& = \Proba{E_{\tau - 1} \cap E_{1, \tau} \cap E_{2, \tau} \cap E_{5, \tau} \cap E_{6, \tau}} \\
			\oversetlab{\eqref{092ed50d-cb6a-420c-b2dc-26ff706c2ac0}}&{\ge} 1 - \frac{\tau \delta}{T}, \numberthis\label{14f8ce1e-9a04-4930-b1bc-84b37957e99f}
		\end{alignat*}
		as claimed. This achieves the proof of the induction.
		
		\paragraph{Bounding $\frac{1}{T} \sum\limits_{t = 0}^{T - 1} \norm{\nabla F(x_t)}$ in high-probability.} From the previous paragraph, we have for $\tau = T$
		\[ \Proba{E_T} \ge \Proba{G_T} \ge 1 - \delta, \]
		hence, with probability at least $1 - \delta$ we have
		\[ \gamma \sum_{t = 0}^{T - 1} \norm{\nabla F(x_t)} + \Delta_T \le 2 \Delta_1, \]
		which implies the bound
		\[ \frac{1}{T} \sum_{t = 0}^{T - 1} \norm{\nabla F(x_t)} \le \frac{2 \Delta_1}{\gamma T}, \]
		with probability at least $1 - \delta$. By our choice of stepsize~\eqref{787efe83-7ee7-4154-be0c-7c0b97faf778} we obtain
		\[ \frac{1}{T} \sum_{t = 0}^{T - 1} \norm{\nabla F(x_t)} \le \frac{2 \Delta_1}{\gamma T} = \cO\left( \max\ens{\sqrt{\frac{\bar{L} \Delta_1}{T}}, \frac{\sqrt{\bar{L} \Delta_1}}{\alpha T}, \alpha \sqrt{\bar{L} \Delta_1} \log \frac{T}{\delta}, \sigma_1 \alpha^{\frac{p - 1}{p}} \log \frac{T}{\delta}, \sqrt{\frac{\bar{L} \Delta_1 \log \frac{T}{\delta}}{T \alpha^{\frac{1}{q}}}}} \right), \]
		and choosing $\alpha = \max\ens{T^{-\frac{p}{2 p - 1}}, T^{-\frac{p q}{p (2 q + 1) - 2 q}}}$ we have
		\[ \alpha^{\frac{p - 1}{p}} = \max\ens{T^{-\frac{p - 1}{2 p - 1}}, T^{-\frac{q (p - 1)}{p (2 q + 1) - 2 q}}}, \,\, \text{ and } \,\, T^{\frac{1}{2}} \alpha^{\frac{1}{2q}} \ge T^{\frac{1}{2} \left( 1 - \frac{p}{p (2 q + 1) - 2 q} \right)} = T^{\frac{q (p - 1)}{p (2 q + 1) - 2 q}}, \]
		and
		\[ \alpha T \ge T^{1 - \frac{p}{2 p - 1}} = T^{\frac{p - 1}{2 p - 1}}, \]
		thus, assuming $\log \frac{8 T}{\delta} \ge 1$ we get
		\begin{alignat*}{2}
			\frac{1}{T} \sum_{t = 0}^{T - 1} \norm{\nabla F(x_t)} & = \cO\Bigg( \max\Bigg\{\sqrt{\frac{\bar{L} \Delta_1}{T}}, \frac{\sqrt{\bar{L} \Delta_1}}{T^{\frac{p - 1}{2 p - 1}}} \log \frac{T}{\delta}, \frac{\sqrt{\bar{L} \Delta_1}}{T^{\frac{q (p - 1)}{p (2 q + 1) - 2 q}}} \log \frac{T}{\delta}, \frac{\sigma_1}{T^{\frac{p - 1}{2 p - 1}}} \log \frac{T}{\delta},\\
			&\qquad\qquad\qquad\qquad\frac{\sigma_1}{T^{\frac{q (p - 1)}{p (2 q + 1) - 2 q}}} \log \frac{T}{\delta}, \frac{\sqrt{\bar{L} \Delta_1 \log \frac{T}{\delta}}}{T^{\frac{q (p - 1)}{p (2 q + 1) - 2 q}}}\Bigg\} \Bigg) \\
			\oversetrel{rel:4736f1d8-e280-4380-aecc-558076fa1309}&{=} \cO\left( \left( \frac{\sqrt{\bar{L} \Delta_1} + \sigma_1}{T^{\frac{p - 1}{2 p - 1} \wedge \frac{q (p - 1)}{p (2 q + 1) - 2 q}}} \right)\log \frac{T}{\delta} \right)
		\end{alignat*}
		since $\frac{p - 1}{2 p - 1} \le \frac{1}{2}$. In~\relref{rel:4736f1d8-e280-4380-aecc-558076fa1309} we use $\wedge$ to denote the minimum between the two exponents $\frac{p - 1}{2 p - 1}$ and $\frac{q (p - 1)}{p (2 q + 1) - 2 q}$.
		
		This concludes the proof of the theorem.
	\end{proof}
	
	\subsection{Proof of~\Cref{thm:clipped-nsgd-mvr-convergence-analysis-2}}

	\begin{theorem}\label{thm:clipped-nsgd-mvr-convergence-analysis-2}
		Under~\Cref{ass:lower-boundedness,ass:L-lipschitz-gradients,ass:p-bounded-central-moment-gradient,ass:mean-squared-smoothness-2}, let $T \ge 1$ and $\beta \in \intof{0}{1}$ be such that $\log \frac{8 T}{\beta} \ge 1$. Let $x_0\in\mathbb{R}^d$ and define $\Delta_1 \eqdef F(x_0) - F^{\inf}$.
		Suppose that \Cref{algo:clipped-nsgd-mvr} is run with $g_0 = 0$, momentum parameter $\alpha = \max\{T^{-\frac{p}{2 p - 1}}, T^{-\frac{p q}{p (2 q + 1) - 2 q}}\}$, clipping thresholds $\lbd_1 = \max\{2 \gamma L_1, \gamma \delta \alpha^{-\frac{1}{q}}\}$ and $\lbd_2 = \max\{4 \sqrt{L_1 \Delta_1}, \sigma_1 \alpha^{-\frac{1}{p}}\}$, and stepsize
		\begin{alignat*}{2}
			\gamma & = \cO\left( \min\left\{\alpha \sqrt{\frac{\Delta_1}{L_1}}, \frac{1}{\alpha T \log \frac{T}{\beta}} \sqrt{\frac{\Delta_1}{L_1}}, \frac{\Delta_1}{\sigma_1 \alpha^{\frac{p - 1}{p}} T \log \frac{T}{\beta}}, \sqrt{\frac{\Delta_1 \alpha^{\frac{1}{q}}}{\delta T \log \frac{T}{\beta}}}, \sqrt{\frac{\Delta_1}{L_1 T \log \frac{T}{\beta}}}\right\} \right).
		\end{alignat*}
		
		Then, with probability at least $1 - \beta$, the output of~\Cref{algo:clipped-nsgd-mvr} satisfies
		\[ \frac{1}{T} \sum_{t = 0}^{T - 1} \norm{\nabla F(x_t)} \le \frac{2 \Delta_1}{\gamma T}, \]
		and, by our choice of parameters, the norm of the gradients converges with high probability at the rate
		\begin{alignat*}{2}
			\frac{1}{T} \sum_{t = 0}^{T - 1} \norm{\nabla F(x_t)} & = \cO\left( \left( \frac{\sqrt{L_1 \Delta_1} + \sigma_1}{T^{\frac{p - 1}{2 p - 1} \wedge \frac{q (p - 1)}{p (2 q + 1) - 2 q}}} \right)\log \frac{T}{\beta} + \frac{\sqrt{\delta \Delta_1}}{T^{\frac{q (p - 1)}{p (2 q + 1) - 2 q}}} \sqrt{\log \frac{T}{\beta}} \right).
		\end{alignat*}
	\end{theorem}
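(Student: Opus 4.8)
The plan is to mirror, almost verbatim, the proof of Theorem~\ref{thm:clipped-nsgd-mvr-convergence-analysis}, swapping its $q$-weak average smoothness ingredients for their $(q,\delta)$-similarity analogues. The three building blocks are: the descent inequality \Cref{appdx-lem:high-probability-analysis-descent-lemma-2}, which carries $L_1$ in place of $\bar L$ because $\nabla F$ is $L_1$-Lipschitz; the bound \Cref{appdx-lem:high-probability-analysis-bound-1} on the clipped-gradient sum $\norm{\sum_{j=1}^t(1-\alpha)^{t-j}\theta_j}$, which is untouched since it only uses \Cref{ass:p-bounded-central-moment-gradient}; and the refined bound \Cref{appdx-lem:high-probability-analysis-bound-3} on the clipped-difference sum, namely $\norm{\sum_{j=1}^t(1-\alpha)^{t-j}\omega_j}\le 22\lbd_1\log\tfrac{2}{\delta''}$, valid under \Cref{ass:L-lipschitz-gradients,ass:mean-squared-smoothness-2} provided $\lbd_1\ge\max\{2\gamma L_1,\gamma\delta\alpha^{-1/q}\}$. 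Since $g_0=0$ forces $x_0=x_1$ and $\Delta_0=\Delta_1$, the initialization error is $\norm{\hat{e}_0}=\norm{\nabla F(x_1)}\le\sqrt{2L_1\Delta_1}$ by $L_1$-smoothness, which is why I would take $\lbd_2=\max\{4\sqrt{L_1\Delta_1},\sigma_1\alpha^{-1/p}\}$.

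The heart of the argument is an induction on $\tau\in\Int{0}{T-1}$ showing that
\[ G_\tau \eqdef E_\tau\cap E_{1,\tau}\cap E_{2,\tau}\cap E_{5,\tau}\cap E_{6,\tau} \]
holds with probability at least $1-\tfrac{\tau\beta}{T}$, where $E_\tau \eqdef \bigcap_{t=0}^\tau\{\gamma\sum_{j=0}^t\norm{\nabla F(x_j)}+\Delta_{t+1}\le 2\Delta_1\}$ and $E_{1,\tau},E_{2,\tau},E_{5,\tau},E_{6,\tau}$ are the intersections over $t\le\tau$ of the Freedman-type events of \Cref{appdx-lem:high-probability-analysis-bound-1,appdx-lem:high-probability-analysis-bound-3}, each with failure parameter $\delta'=\delta''=\tfrac{\beta}{4T}$. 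The base case $\tau=0$ is immediate. For the inductive step, on $E_{\tau-1}$ one has $\Delta_t\le 2\Delta_1$ for all $t\le\tau$, hence $\norm{\nabla F(x_j)}\le\sqrt{2L_1\Delta_j}\le 2\sqrt{L_1\Delta_1}$, so the requirement $\lbd_2\ge 2\max_j\norm{\nabla F(x_j)}$ of \Cref{appdx-lem:high-probability-analysis-bound-1} is met; feeding \Cref{appdx-lem:high-probability-analysis-bound-1,appdx-lem:high-probability-analysis-bound-3} into \Cref{appdx-lem:high-probability-analysis-descent-lemma-2} yields, on $E_{\tau-1}\cap E_{1,\tau}\cap E_{2,\tau}\cap E_{5,\tau}\cap E_{6,\tau}$,
\[ \gamma\sum_{t=0}^\tau\norm{\nabla F(x_t)}+\Delta_{\tau+1} \le \Delta_1 + \frac{2\gamma\sqrt{2L_1\Delta_1}}{\alpha} + \frac{\gamma^2 L_1 T}{2} + 44\gamma\alpha T\lbd_2\log\tfrac{8T}{\beta} + 44\gamma T\lbd_1\log\tfrac{8T}{\beta}. \]
The five pieces of the stepsize are then calibrated so that each of the last four terms is at most $\Delta_1/4$: bounding $\lbd_2\le 4\sqrt{L_1\Delta_1}+\sigma_1\alpha^{-1/p}$ and $\lbd_1\le 2\gamma L_1+\gamma\delta\alpha^{-1/q}$, the piece $\alpha\sqrt{\Delta_1/L_1}$ controls $\gamma\sqrt{L_1\Delta_1}/\alpha$, the piece $\tfrac{1}{\alpha T\log(T/\beta)}\sqrt{\Delta_1/L_1}$ controls $\gamma\alpha T\sqrt{L_1\Delta_1}\log$, the piece $\tfrac{\Delta_1}{\sigma_1\alpha^{(p-1)/p}T\log(T/\beta)}$ controls $\gamma\alpha T\sigma_1\alpha^{-1/p}\log$, the piece $\sqrt{\Delta_1\alpha^{1/q}/(\delta T\log(T/\beta))}$ controls $\gamma^2\delta\alpha^{-1/q}T\log$, and $\sqrt{\Delta_1/(L_1 T\log(T/\beta))}$ controls both $\gamma^2 L_1 T$ and $\gamma^2 L_1 T\log$. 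Thus $E_\tau\supseteq E_{\tau-1}\cap E_{1,\tau}\cap E_{2,\tau}\cap E_{5,\tau}\cap E_{6,\tau}$, and a union bound combined with the inductive hypothesis gives $\Proba{G_\tau}\ge 1-\tfrac{(\tau-1)\beta}{T}-4\cdot\tfrac{\beta}{4T}=1-\tfrac{\tau\beta}{T}$.

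Taking $\tau=T$ gives $\Proba{E_T}\ge 1-\beta$, i.e.\ $\tfrac1T\sum_{t=0}^{T-1}\norm{\nabla F(x_t)}\le\tfrac{2\Delta_1}{\gamma T}$ with probability at least $1-\beta$; substituting the stepsize, $\tfrac{2\Delta_1}{\gamma T}$ becomes the maximum of $\tfrac{\sqrt{L_1\Delta_1}}{\alpha T}$, $\alpha\sqrt{L_1\Delta_1}\log\tfrac T\beta$, $\sigma_1\alpha^{(p-1)/p}\log\tfrac T\beta$, $\sqrt{\tfrac{\delta\Delta_1\log(T/\beta)}{T\alpha^{1/q}}}$, and $\sqrt{\tfrac{L_1\Delta_1\log(T/\beta)}{T}}$. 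Plugging in $\alpha=\max\{T^{-\frac{p}{2p-1}},T^{-\frac{pq}{p(2q+1)-2q}}\}$ and using the elementary estimates $\alpha T\ge T^{\frac{p-1}{2p-1}}$, $\alpha^{(p-1)/p}=\max\{T^{-\frac{p-1}{2p-1}},T^{-\frac{q(p-1)}{p(2q+1)-2q}}\}$, and $T^{1/2}\alpha^{1/(2q)}\ge T^{\frac{q(p-1)}{p(2q+1)-2q}}$ (together with $\tfrac{p-1}{2p-1}\le\tfrac12$, which lets the $\sqrt{L_1\Delta_1}/T$-type terms be absorbed) collapses the maximum to
\[ \cO\!\left(\frac{\sqrt{L_1\Delta_1}+\sigma_1}{T^{\frac{p-1}{2p-1}\wedge\frac{q(p-1)}{p(2q+1)-2q}}}\log\frac T\beta + \frac{\sqrt{\delta\Delta_1}}{T^{\frac{q(p-1)}{p(2q+1)-2q}}}\sqrt{\log\frac T\beta}\right), \]
which is the claimed rate. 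The extra, separately-tracked $\sqrt{\delta\Delta_1}$ term is exactly what distinguishes this bound from Theorem~\ref{thm:clipped-nsgd-mvr-convergence-analysis}: there $\lbd_1$ scaled with the same $\bar L$ as the descent term, whereas here $\delta$ (possibly $\ll L_1$) enters $\lbd_1$ independently of the $L_1$ in the descent term, forcing two distinct stepsize pieces. The main obstacle I expect is purely bookkeeping: pinning down the $\cO(\cdot)$ constants in the five-way minimum for $\gamma$ so that the ``$\le 2\Delta_1$'' invariant closes at every step, and carrying the two branches of $\lbd_1$ and of $\lbd_2$ cleanly through both the descent estimate and the final $\alpha$-substitution; everything else is a mechanical transcription of the proof of Theorem~\ref{thm:clipped-nsgd-mvr-convergence-analysis} with \Cref{appdx-lem:high-probability-analysis-descent-lemma} replaced by \Cref{appdx-lem:high-probability-analysis-descent-lemma-2} and \Cref{appdx-lem:high-probability-analysis-bound-2} by \Cref{appdx-lem:high-probability-analysis-bound-3}.
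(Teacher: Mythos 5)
Your proposal is correct and follows essentially the same route as the paper: it substitutes Lemma~\ref{appdx-lem:high-probability-analysis-descent-lemma-2} for the descent inequality and Lemma~\ref{appdx-lem:high-probability-analysis-bound-3} for the clipped-difference bound while keeping Lemma~\ref{appdx-lem:high-probability-analysis-bound-1} unchanged, then runs the same induction over the events $G_\tau$ with $\delta' = \delta'' = \beta/(4T)$, calibrates the five stepsize pieces to bound the excess by $\Delta_1$, and substitutes the choice of $\alpha$ at the end. Your observation that $\delta\ll L_1$ decouples $\lbd_1$ from the descent-curvature term and thereby produces the separately-tracked $\sqrt{\delta\Delta_1}\,T^{-q(p-1)/(p(2q+1)-2q)}\sqrt{\log(T/\beta)}$ contribution matches the paper's reasoning and correctly explains why this term appears with only a $\sqrt{\log}$ factor.
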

	
	\begin{proof}
		From the proofs of~\Cref{appdx-lem:high-probability-analysis-bound-1,appdx-lem:high-probability-analysis-bound-3} we have
		\begin{alignat*}{2}
			E_{\Circled{1}, t} \eqdef* \ens{\abs{\sum_{j = 1}^t V_j^t} \le b_2 \, \text{ or } \, \sum_{j = 1}^t \sigma_j^2 > G_2 \log \frac{2}{\delta'}}, & E_{\Circled{2}, t} \eqdef* \ens{\abs{\sum_{j = 1}^t Y_j^t} \le \widetilde{b}_2 \, \text{ or } \, \sum_{j = 1}^t \widetilde{\sigma}_j^2 > \widetilde{G}_2 \log \frac{2}{\delta'}}, \\
			E_{\Circled{5}, t} \eqdef* \ens{\abs{\sum_{j = 1}^t W_j^t} \le b_1 \, \text{ or } \, \sum_{j = 1}^t \sigma_j^2 > G_1 \log \frac{2}{\delta''}}, \quad & E_{\Circled{6}, t} \eqdef* \ens{\abs{\sum_{j = 1}^t Z_j^t} \le \widetilde{b}_1 \, \text{ or } \, \sum_{j = 1}^t \widetilde{\sigma}_j^2 > \widetilde{G}_1 \log \frac{2}{\delta''}},
		\end{alignat*}
		where
		\[ c_1 \eqdef 2 \lbd_1, \quad \widetilde{c}_1 \eqdef 8 \lbd_1^2, \quad c_2 \eqdef 2 \lbd_2, \quad \widetilde{c}_2 \eqdef 8 \lbd_2^2, \]
		\[ G_1 \eqdef \frac{18 \lbd_1^{2 - q} \gamma^q \bar{\delta}^q}{\alpha}, \quad \widetilde{G}_1 \eqdef \frac{288 \lbd_1^{4 - q} \gamma^q \bar{\delta}^q}{\alpha}, \quad G_2 \eqdef \frac{18 \lbd_2^{2 - p} \sigma_1^p}{\alpha}, \quad \widetilde{G}_2 \eqdef \frac{288 \lbd_2^{4 - p} \sigma_1^p}{\alpha}, \]
		\[ b_1 \le \frac{22 \lbd_1}{3} \log \frac{2}{\delta''}, \quad \widetilde{b}_1 \le  \frac{88 \lbd_1^2}{3} \log \frac{2}{\delta''}, \quad b_2 \le \frac{22 \lbd_2}{3} \log \frac{2}{\delta'}, \quad \widetilde{b}_2 \le \frac{88 \lbd_2^2}{3} \log \frac{2}{\delta'}, \]
		and we have shown the following bounds:
		\[ \Proba{E_{\Circled{1}, t}} \ge 1 - \delta', \quad \Proba{E_{\Circled{2}, t}} \ge 1 - \delta', \quad \Proba{E_{\Circled{5}, t}} \ge 1 - \delta'', \quad \Proba{E_{\Circled{6}, t}} \ge 1 - \delta'', \numberthis\label{4ead95f0-2b11-4c81-9020-65644456a5a3-2} \]
		valid for all $t \in \Int{0}{T - 1}$.
		
		Now, we follows the exact same steps as in the proof of the previous theorem, up to~\eqref{092ed50d-cb6a-420c-b2dc-26ff706c2ac0}. Then, given $\tau \in \Int{0}{T - 1}$, on the event $E_{\tau - 1}$ we have
		\[ \Delta_t \le \gamma \sum_{j = 0}^{t - 1} \norm{\nabla F(x_j)} + \Delta_t \le 2 \Delta_1 \]
		for all integer $1 \le t \le \tau$ and therefore, for all $j \in \Int{0}{\tau - 1}$, we have
		\[ \norm{\nabla F(x_j)} \le \sqrt{2 \bar{L} \Delta_j} \le 2 \sqrt{L_1 \Delta_1}, \]
		hence, by our choice of clipping threshold $\lbd_2 = \max\ens{4 \sqrt{L_1 \Delta_1}, \sigma_1 \alpha^{-\frac{1}{p}}}$ we have $\lbd_2 \ge 2 \max_{j \in [t]} \norm{\nabla F(x_j)}$. Additionally, if we assume the clipping level $\lbd_1 = \max\ens{2 \gamma L_1, \gamma \bar{\delta} \alpha^{-\frac{1}{q}}}$ then, on the event $E_{\tau - 1} \cap E_{1, \tau} \cap E_{2, \tau} \cap E_{5, \tau} \cap E_{6, \tau}$, we have
		\begin{alignat*}{2}
			\gamma \sum_{t = 0}^{\tau} \norm{\nabla F(x_t)} + \Delta_{\tau + 1} \oversetref{Lem.}{\ref{appdx-lem:high-probability-analysis-descent-lemma-2}}&{\le} \Delta_1 
			\begin{aligned}[t]
				&+ \frac{2 \gamma \sqrt{2 L_1 \Delta_1}}{\alpha} + \frac{\gamma^2 L_1 (\tau + 1)}{2} \\
				&+ 2 \gamma \alpha \sum_{t = 1}^{\tau} \norm{\sum_{j = 1}^t (1 - \alpha)^{t - j} \theta_j} + 2 \gamma (1 - \alpha) \sum_{t = 1}^{\tau} \norm{\sum_{j = 1}^t (1 - \alpha)^{t - j} \omega_j}
			\end{aligned} \\
			\oversetref{Lem.}{\ref{appdx-lem:high-probability-analysis-bound-1}+\ref{appdx-lem:high-probability-analysis-bound-3}}&{\le} \Delta_1 + \frac{2 \gamma \sqrt{2 L_1 \Delta_1}}{\alpha} + \frac{\gamma^2 L_1 (\tau + 1)}{2} \\
			&\qquad\qquad\qquad+ 44 \gamma \alpha \tau \lbd_2 \log \frac{8 T}{\delta} + 44 \gamma (1 - \alpha) \tau \lbd_1 \log \frac{8 T}{\delta}, \numberthis\label{0f3f34c1-906a-43c0-b66b-9472d1bce764}
		\end{alignat*}
		and, as $0 \le \tau \le T - 1$ then, by our choice of stepsize
		\begin{alignat*}{2} \gamma &= \min\Bigg\{\sqrt{\frac{\Delta_1}{2 L_1 T}}, \frac{\alpha}{8} \sqrt{\frac{\Delta_1}{2 L_1}}, \frac{1}{704 \alpha T \log \frac{8 T}{\delta}} \sqrt{\frac{\Delta_1}{L_1}}, \\
			&\qquad\qquad\qquad\frac{\Delta_1}{176 \sigma_1 \alpha^{\frac{p - 1}{p}} T \log \frac{8 T}{\delta}}, \sqrt{\frac{\Delta_1}{352 L_1 T \log \frac{8 T}{\delta}}}, \sqrt{\frac{\Delta_1 \alpha^{\frac{1}{q}}}{176 \bar{\delta} T \log \frac{8 T}{\delta}}}\Bigg\}, \numberthis\label{787efe83-7ee7-4154-be0c-7c0b97faf778-2}\end{alignat*}
		we have
		\[ \gamma \sum_{t = 0}^{\tau} \norm{\nabla F(x_t)} + \Delta_{\tau + 1} \oversetlab{\eqref{787efe83-7ee7-4154-be0c-7c0b97faf778-2}}{\le} \Delta_1 + \frac{\Delta_1}{4} + \frac{\Delta_1}{4} + \frac{\Delta_1}{4} + \frac{\Delta_1}{4} = 2 \Delta_1, \]
		and we can achieve the proof by induction as in~\eqref{14f8ce1e-9a04-4930-b1bc-84b37957e99f}.
		
		Then, as before, with probability as least $1 - \delta$, we have
		\[ \gamma \sum_{t = 0}^{T - 1} \norm{\nabla F(x_t)} + \Delta_T \le 2 \Delta_1, \]
		which implies the bound
		\[ \frac{1}{T} \sum_{t = 0}^{T - 1} \norm{\nabla F(x_t)} \le \frac{2 \Delta_1}{\gamma T}, \]
		with probability at least $1 - \delta$. By our choice of stepsize~\eqref{787efe83-7ee7-4154-be0c-7c0b97faf778-2} we obtain
		\begin{alignat*}{2}
			\frac{1}{T} \sum_{t = 0}^{T - 1} \norm{\nabla F(x_t)} &\le \frac{2 \Delta_1}{\gamma T}\\
			& = \cO\Bigg( \max\Bigg\{\sqrt{\frac{L_1 \Delta_1}{T}}, \frac{\sqrt{L_1 \Delta_1}}{\alpha T}, \alpha \sqrt{L_1 \Delta_1} \log \frac{T}{\delta},\\
			&\qquad \qquad \qquad \qquad \sigma_1 \alpha^{\frac{p - 1}{p}} \log \frac{T}{\delta}, \sqrt{\frac{L_1 \Delta_1 \log \frac{T}{\delta}}{T}},\sqrt{\frac{\bar{\delta} \Delta_1 \log \frac{T}{\delta}}{T \alpha^{\frac{1}{q}}}}\Bigg\} \Bigg) \\
			& = \cO\left( \max\ens{\frac{\sqrt{L_1 \Delta_1}}{\alpha T}, \alpha \sqrt{L_1 \Delta_1} \log \frac{T}{\delta}, \sigma_1 \alpha^{\frac{p - 1}{p}} \log \frac{T}{\delta}, \sqrt{\frac{L_1 \Delta_1 \log \frac{T}{\delta}}{T}}, \sqrt{\frac{\bar{\delta} \Delta_1 \log \frac{T}{\delta}}{T \alpha^{\frac{1}{q}}}}} \right),
		\end{alignat*}
		as we assumed $\log \frac{8 T}{\delta} \ge 1$. Now, choosing $\alpha = \max\ens{T^{-\frac{p}{2 p - 1}}, T^{-\frac{p q}{p (2 q + 1) - 2 q}}}$ we have
		\[ \alpha^{\frac{p - 1}{p}} = \max\ens{T^{-\frac{p - 1}{2 p - 1}}, T^{-\frac{q (p - 1)}{p (2 q + 1) - 2 q}}}, \,\, \text{ and } \,\, T^{\frac{1}{2}} \alpha^{\frac{1}{2q}} \ge T^{\frac{1}{2} \left( 1 - \frac{p}{p (2 q + 1) - 2 q} \right)} = T^{\frac{q (p - 1)}{p (2 q + 1) - 2 q}}, \]
		and
		\[ \alpha T \ge T^{1 - \frac{p}{2 p - 1}} = T^{\frac{p - 1}{2 p - 1}}, \]
		thus, given $\log \frac{8 T}{\delta} \ge 1$ we get
		\begin{alignat*}{2}
			\frac{1}{T} \sum_{t = 0}^{T - 1} \norm{\nabla F(x_t)} & = \cO\Bigg( \max\Bigg\{\frac{\sqrt{L_1 \Delta_1}}{T^{\frac{p - 1}{2 p - 1}}} \log \frac{T}{\delta}, \frac{\sqrt{L_1 \Delta_1}}{T^{\frac{q (p - 1)}{p (2 q + 1) - 2 q}}} \log \frac{T}{\delta}, \frac{\sigma_1}{T^{\frac{p - 1}{2 p - 1}}} \log \frac{T}{\delta}, \\
			&\qquad\qquad\qquad\qquad \frac{\sigma_1}{T^{\frac{q (p - 1)}{p (2 q + 1) - 2 q}}} \log \frac{T}{\delta}, \frac{\sqrt{\bar{\delta} \Delta_1 \log \frac{T}{\delta}}}{T^{\frac{q (p - 1)}{p (2 q + 1) - 2 q}}}\Bigg\} \Bigg) \\
			\oversetrel{rel:4736f1d8-e280-4380-aecc-558076fa1309}&{=} \cO\left( \left( \frac{\sqrt{L_1 \Delta_1} + \sigma_1}{T^{\frac{p - 1}{2 p - 1} \wedge \frac{q (p - 1)}{p (2 q + 1) - 2 q}}} \right)\log \frac{T}{\delta} + \frac{\sqrt{\bar{\delta} \Delta_1}}{T^{\frac{q (p - 1)}{p (2 q + 1) - 2 q}}} \sqrt{\log \frac{T}{\delta}} \right)
		\end{alignat*}
		since $\frac{p - 1}{2 p - 1} \le \frac{1}{2}$ and $\sqrt{\log \frac{T}{\delta}} \lesssim \log \frac{T}{\delta}$. In~\relref{rel:4736f1d8-e280-4380-aecc-558076fa1309} we use $\wedge$ to denote the minimum between the two exponents $\frac{p - 1}{2 p - 1}$ and $\frac{q (p - 1)}{p (2 q + 1) - 2 q}$.
		
		This concludes the proof of the theorem.
	\end{proof}
	
	\subsection{Proof of~\Cref{thm:clipped-nsgd-mvr-convergence-analysis-hess}}
	
	\begin{algorithm}%
		\caption{\algname{Clipped NSGD-\texttt{Hess}} (Clipped Normalized \algname{SGD} with Hessian-corrected Momentum)}%
		\label{algo:clipped-nsgd-mvr-hess}%
		
		\DontPrintSemicolon%
		\SetKwProg{Init}{Initialization}{:}{}%
		\Init{}{%
			$x_0 \in \R^d$, the starting point\;
			$T > 0$, the number of iterations\;
			$g_0 \in \R^d$, an initial vector\;
			$\gamma > 0$, the stepsize\;
			$\alpha \in \intof{0}{1}$, the momentum parameter for \texttt{Hess}\;
			$\lambda_1, \lambda_2 >0$, the clipping thresholds\;
		}%
		
		\vspace{\baselineskip}
		
		$x_1 \gets x_0 - \gamma \frac{g_0}{\norm{g_0}}$\;
		\For{$t = 1, 2, \ldots, T - 1$}{%
			Sample $q_t \sim \mathcal{U}\left(\intff{0}{1}\right)$\;
			$\hat{x}_t \gets q_t x_t + (1 - q_t) x_{t - 1}$\;
			\tcp*[h]{Apply \texttt{Hess}, here $\xi_t, \hat{\xi}_t \sim \cD$ are independent.}\;
			$g_t \gets (1 - \alpha) \left( g_{t - 1} + \texttt{clip}\left(\nabla^2 f(\hat{x}_t, \hat{\xi}_t) (x_t - x_{t - 1}), \lambda_1\right) \right) + \alpha \texttt{clip}\left(\nabla f \left( x_t, \xi_t \right), \lambda_2\right)$\;
			\tcp*[h]{Do one descent step.}\;
			$x_{t + 1} \gets x_t - \gamma \frac{g_t}{\norm{g_t}}$\;
		}%
		\KwOut{$x_T$}%
	\end{algorithm}%
	
	\begin{theorem}\label{thm:clipped-nsgd-mvr-convergence-analysis-hess}
		Under~\Cref{ass:lower-boundedness,ass:L-lipschitz-gradients,ass:p-bounded-central-moment-gradient,ass:L-lipschitz-hessians,ass:q-bounded-central-moment-hessian}, let $T \ge 1$ and $\beta \in \intof{0}{1}$ such that $\log \frac{8 T}{\beta} \ge 1$ and suppose that we choose $g_0 = 0$ in~\Cref{algo:clipped-nsgd-mvr-hess} and let $\Delta \eqdef F(x_0) - F^{\inf}$ the initial sub-optimality. Suppose we run~\Cref{algo:clipped-nsgd-mvr-hess} using momentum parameter $\alpha = \max\{T^{-\frac{p}{2 p - 1}}, T^{-\frac{p q}{p (2 q + 1) - 2 q}}\}$, clipping thresholds $\lbd_1 = \max\{2 \gamma L_1, \gamma \sigma_2 \alpha^{-\frac{1}{q}}\}$ and $\lbd_2 = \max\{4 \sqrt{L_1 \Delta_1}, \sigma_1 \alpha^{-\frac{1}{p}}\}$ and with stepsize
		\begin{alignat*}{2}
			\gamma & = \cO\left( \min\left\{\alpha \sqrt{\frac{\Delta}{L_1}}, \sqrt[3]{\frac{\Delta \alpha}{L_2 T}}, \frac{1}{\alpha T \log \frac{T}{\beta}} \sqrt{\frac{\Delta}{L_1}}, \frac{\Delta_1}{\sigma_1 \alpha^{\frac{p - 1}{p}} T \log \frac{T}{\beta}}, \sqrt{\frac{\Delta \alpha^{\frac{1}{q}}}{\sigma_2 T \log \frac{T}{\beta}}}, \sqrt{\frac{\Delta}{L_1 T \log \frac{T}{\beta}}}\right\} \right).
		\end{alignat*}
		
		Then, with probability at least $1 - \beta$, the output of~\Cref{algo:clipped-nsgd-mvr} satisfies
		\[ \frac{1}{T} \sum_{t = 0}^{T - 1} \norm{\nabla F(x_t)} \le \frac{2 \Delta_1}{\gamma T}, \]
		and, by our choice of parameters, the norm of the gradients converges at the rate
		\begin{alignat*}{2}
			\frac{1}{T} \sum_{t = 0}^{T - 1} \norm{\nabla F(x_t)}  = \cO\left( \frac{\left( L_2^{\nicefrac{1}{2}} \Delta \right)^{\frac{2}{3}}}{T^{\frac{2 (p - 1)}{4 p - 3}}} + \left( \frac{\sqrt{L_1 \Delta} + \sigma_1}{T^{\min\left\{\frac{p - 1}{2 p - 1},~ \frac{q (p - 1)}{p (2 q + 1) - 2 q}\right\}}} \right)\log \frac{T}{\beta} + \frac{\sqrt{\sigma_2 \Delta}}{T^{\frac{q (p - 1)}{p (2 q + 1) - 2 q}}} \sqrt{\log \frac{T}{\beta}} \right),
		\end{alignat*}
		with high probability.
	\end{theorem}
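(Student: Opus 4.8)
The proof follows verbatim the induction-over-iterations scheme of \Cref{thm:clipped-nsgd-mvr-convergence-analysis} and \Cref{thm:clipped-nsgd-mvr-convergence-analysis-2}, now built on the descent lemma for the Hessian-clipped method, \Cref{appdx-lem:high-probability-analysis-descent-lemma-hess}, together with the two concentration bounds \Cref{appdx-lem:high-probability-analysis-bound-1} (for the gradient-clipping errors $\theta_j$) and \Cref{appdx-lem:high-probability-analysis-bound-hess} (for the Hessian-clipping errors $\omega_j$). Since $g_0=0$ one has $x_1=x_0$, so $\Delta_0=\Delta_1=\Delta$ and, by \Cref{ass:L-lipschitz-gradients}, $\norm{\hat{e}_0}=\norm{\nabla F(x_1)}\le\sqrt{2L_1\Delta}$; this makes $L_1$ (not $\bar{L}$) the relevant smoothness constant throughout.

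I would first fix $\delta'=\delta''=\tfrac{\beta}{4T}$ and form the events $E_{\Circled{1},t},E_{\Circled{2},t},E_{\Circled{5},t},E_{\Circled{6},t}$ from the proofs of \Cref{appdx-lem:high-probability-analysis-bound-1} and \Cref{appdx-lem:high-probability-analysis-bound-hess}, each of probability at least $1-\tfrac{\beta}{4T}$. Setting $E_\tau=\bigcap_{t\in\Int{0}{\tau}}\{\gamma\sum_{j=0}^t\norm{\nabla F(x_j)}+\Delta_{t+1}\le 2\Delta\}$ and $G_\tau=E_\tau\cap E_{1,\tau}\cap E_{2,\tau}\cap E_{5,\tau}\cap E_{6,\tau}$ (with $E_{1,\tau}=\bigcap_{t\le\tau}E_{\Circled{1},t}$, etc.), the goal is $\Proba{G_\tau}\ge 1-\tfrac{\tau\beta}{T}$ by induction on $\tau$. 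The base case is trivial. For the inductive step, on $E_{\tau-1}$ every $\Delta_t\le 2\Delta$, hence $\norm{\nabla F(x_j)}\le 2\sqrt{L_1\Delta}$, so the choice $\lbd_2=\max\{4\sqrt{L_1\Delta},\sigma_1\alpha^{-1/p}\}$ verifies the hypothesis $\lbd_2\ge 2\max_j\norm{\nabla F(x_j)}$ of \Cref{appdx-lem:high-probability-analysis-bound-1}, while $\lbd_1=\max\{2\gamma L_1,\gamma\sigma_2\alpha^{-1/q}\}$ verifies the hypothesis of \Cref{appdx-lem:high-probability-analysis-bound-hess}. Plugging these two bounds into \Cref{appdx-lem:high-probability-analysis-descent-lemma-hess} and using $0\le\tau\le T-1$ gives, on $G_{\tau-1}\cap E_{\Circled{1},\tau}\cap E_{\Circled{2},\tau}\cap E_{\Circled{5},\tau}\cap E_{\Circled{6},\tau}$,
\begin{equation*}
\gamma\sum_{t=0}^{\tau}\norm{\nabla F(x_t)}+\Delta_{\tau+1}\le\Delta+\frac{2\gamma\sqrt{2L_1\Delta}}{\alpha}+\frac{\gamma^2L_1T}{2}+\frac{4\gamma^3L_2T}{\alpha}+44\gamma\alpha T\lbd_2\log\tfrac{8T}{\beta}+44\gamma T\lbd_1\log\tfrac{8T}{\beta}.
\end{equation*}
Expanding $\lbd_1,\lbd_2$, the right-hand side is $\Delta$ plus six terms of the types $\gamma\alpha^{-1}\sqrt{L_1\Delta}$, $\gamma^2L_1T\log\tfrac{T}{\beta}$, $\gamma^3L_2\alpha^{-1}T$, $\gamma\alpha\sqrt{L_1\Delta}\,T\log\tfrac{T}{\beta}$, $\gamma\alpha^{(p-1)/p}\sigma_1T\log\tfrac{T}{\beta}$, $\gamma^2\alpha^{-1/q}\sigma_2T\log\tfrac{T}{\beta}$; the six factors in the definition of $\gamma$ are chosen precisely to make each of these at most $\tfrac{\Delta}{6}$, so the bracket is $\le 2\Delta$, $E_\tau$ holds, and the union bound propagates the probability exactly as in \eqref{092ed50d-cb6a-420c-b2dc-26ff706c2ac0} and \eqref{14f8ce1e-9a04-4930-b1bc-84b37957e99f}.

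Taking $\tau=T$ yields $\Proba{E_T}\ge 1-\beta$, hence $\tfrac1T\sum_{t=0}^{T-1}\norm{\nabla F(x_t)}\le\tfrac{2\Delta}{\gamma T}$ with probability at least $1-\beta$. It then remains to substitute the stepsize: each of the six terms in the $\min$ contributes one term to $\tfrac{2\Delta}{\gamma T}$, namely $\tfrac{\sqrt{L_1\Delta}}{\alpha T}$, $\tfrac{(L_2^{1/2}\Delta)^{2/3}}{\alpha^{1/3}T^{2/3}}$, $\alpha\sqrt{L_1\Delta}\log\tfrac{T}{\beta}$, $\sigma_1\alpha^{(p-1)/p}\log\tfrac{T}{\beta}$, $\sqrt{\tfrac{\sigma_2\Delta\log(T/\beta)}{\alpha^{1/q}T}}$, $\sqrt{\tfrac{L_1\Delta\log(T/\beta)}{T}}$; plugging in $\alpha=\max\{T^{-p/(2p-1)},T^{-pq/(p(2q+1)-2q)}\}$, using $\alpha\ge T^{-p/(2p-1)}$ to bound the $L_2$-term by $(L_2^{1/2}\Delta)^{2/3}T^{-2(p-1)/(4p-3)}$, and simplifying the remaining maximum exactly as in the proof of \Cref{thm:clipped-nsgd-mvr-convergence-analysis-2} (with $\delta$ there replaced by $\sigma_2$, and using $\tfrac{p-1}{2p-1}\le\tfrac12$ and $\sqrt{\log(T/\beta)}\lesssim\log(T/\beta)$) produces the claimed rate. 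The main obstacle, relative to the first-order theorems, is the treatment of the non-martingale second-order Taylor-remainder part $\omega_j^r$ of the Hessian-clipping error, which contributes the deterministic term $\tfrac{2\min\{\gamma L_1,\gamma^2L_2\}}{\alpha}$ in \Cref{appdx-lem:high-probability-analysis-bound-hess}; accommodating it forces the extra stepsize constraint $\gamma=\cO(\sqrt[3]{\Delta\alpha/(L_2T)})$ and is exactly what introduces the new $(L_2^{1/2}\Delta)^{2/3}/T^{2(p-1)/(4p-3)}$ term. One must also verify along the induction that the hypotheses of the two concentration lemmas on $\lbd_1,\lbd_2$ remain in force for every $t\le\tau$, which is precisely the content of the "on $E_{\tau-1}$" argument above.
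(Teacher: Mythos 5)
Your proposal is correct and follows essentially the same route as the paper: the same induction over iterations, the same descent lemma (\Cref{appdx-lem:high-probability-analysis-descent-lemma-hess}) together with the concentration bounds from \Cref{appdx-lem:high-probability-analysis-bound-1} and \Cref{appdx-lem:high-probability-analysis-bound-hess}, the same use of $\alpha\ge T^{-p/(2p-1)}\ge T^{-2p/(4p-3)}$ to control the $L_2$-term, and the same final balancing of the stepsize factors. The only cosmetic discrepancy is that the paper splits the slack into five pieces of $\Delta/5$ rather than six of $\Delta/6$, which does not affect the argument.
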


	\begin{proof}
		The proof readily follows from the proof of~\Cref{thm:clipped-nsgd-mvr-convergence-analysis-2} where $\bar{\delta}$ is replaced by $\sigma_2$. Additionally, by~\Cref{appdx-lem:high-probability-analysis-bound-hess} we need to bound an extra term in~\eqref{0f3f34c1-906a-43c0-b66b-9472d1bce764}, i.e., we have
		\begin{alignat*}{2}
			\gamma \sum_{t = 0}^{\tau} \norm{\nabla F(x_t)} + \Delta_{\tau + 1} \oversetref{Lem.}{\ref{appdx-lem:high-probability-analysis-descent-lemma-2}}&{\le} \Delta_1 
			\begin{aligned}[t]
				&+ \frac{2 \gamma \sqrt{2 L_1 \Delta_1}}{\alpha} + \frac{\gamma^2 L_1 (\tau + 1)}{2} \\
				&+ 2 \gamma \alpha \sum_{t = 1}^{\tau} \norm{\sum_{j = 1}^t (1 - \alpha)^{t - j} \theta_j} + 2 \gamma (1 - \alpha) \sum_{t = 1}^{\tau} \norm{\sum_{j = 1}^t (1 - \alpha)^{t - j} \omega_j}
			\end{aligned} \\
			\oversetref{Lem.}{\ref{appdx-lem:high-probability-analysis-bound-hess}+\ref{appdx-lem:high-probability-analysis-bound-hess}}&{\le} \Delta_1 
			\begin{aligned}[t]
				&+ \frac{2 \gamma \sqrt{2 L_1 \Delta_1}}{\alpha} + \frac{\gamma^2 L_1 (\tau + 1)}{2} + \frac{4 (1 - \alpha) \gamma^3 L_2 (\tau + 1)}{\alpha} \\
				&+ 44 \gamma \alpha \tau \lbd_2 \log \frac{8 T}{\delta} + 44 \gamma (1 - \alpha) \tau \lbd_1 \log \frac{8 T}{\delta},
			\end{aligned}
		\end{alignat*}
		where in the last inequality, we drop the $\min\ens{\cdots}$ term and simply keep the $\gamma^3 L_2$ term. Additionally, as we have $0 \le \tau \le T - 1$ then choosing
		\begin{alignat*}{2}
			\gamma &= \min\left\{\sqrt{\frac{2 \Delta_1}{5 L_1 T}}, \frac{\alpha}{10} \sqrt{\frac{\Delta_1}{2 L_1}}, \sqrt[3]{\frac{\Delta_1 \alpha}{20 L_2 T}}, \frac{1}{880 \alpha T \log \frac{8 T}{\delta}} \sqrt{\frac{\Delta_1}{L_1}},\right. \\
			&\qquad\qquad\qquad\left. \frac{\Delta_1}{220 \sigma_1 \alpha^{\frac{p - 1}{p}} T \log \frac{8 T}{\delta}}, \sqrt{\frac{\Delta_1}{440 L_1 T \log \frac{8 T}{\delta}}}, \sqrt{\frac{\Delta_1 \alpha^{\frac{1}{q}}}{220 \sigma_2 T \log \frac{8 T}{\delta}}}\right\}, \numberthis\label{787efe83-7ee7-4154-be0c-7c0b97faf778-hess}
		\end{alignat*}
		gives
		\[ \gamma \sum_{t = 0}^{\tau} \norm{\nabla F(x_t)} + \Delta_{\tau + 1} \oversetlab{\eqref{787efe83-7ee7-4154-be0c-7c0b97faf778-hess}}{\le} \Delta_1 + \frac{\Delta_1}{5} + \frac{\Delta_1}{5} + \frac{\Delta_1}{5} + \frac{\Delta_1}{5} + \frac{\Delta_1}{5} = 2 \Delta_1. \]
		Then, as before, with probability as least $1 - \delta$, we have
		\[ \gamma \sum_{t = 0}^{T - 1} \norm{\nabla F(x_t)} + \Delta_T \le 2 \Delta_1, \]
		which implies the bound
		\[ \frac{1}{T} \sum_{t = 0}^{T - 1} \norm{\nabla F(x_t)} \le \frac{2 \Delta_1}{\gamma T}, \]
		with probability at least $1 - \delta$. By our choice of stepsize~\eqref{787efe83-7ee7-4154-be0c-7c0b97faf778-hess} we obtain
		\begin{alignat*}{2}
			\frac{1}{T} \sum_{t = 0}^{T - 1} \norm{\nabla F(x_t)} \le \frac{2 \Delta_1}{\gamma T} & = \cO\left( \max\left\{\sqrt{\frac{L_1 \Delta_1}{T}}, \frac{\sqrt{L_1 \Delta_1}}{\alpha T}, \alpha^{-\frac{1}{3}} \left( \frac{L_2^{\nicefrac{1}{2}} \Delta_1}{T} \right)^{\frac{2}{3}}, \alpha \sqrt{L_1 \Delta_1} \log \frac{T}{\delta}, \right. \right.\\
			&\qquad\qquad\qquad\qquad\qquad\left.\left.\sigma_1 \alpha^{\frac{p - 1}{p}} \log \frac{T}{\delta}, \sqrt{\frac{L_1 \Delta_1 \log \frac{T}{\delta}}{T}}, \sqrt{\frac{\sigma_2 \Delta_1 \log \frac{T}{\delta}}{T \alpha^{\frac{1}{q}}}}\right\} \right) \\
			& = \cO\left( \max\left\{\frac{\sqrt{L_1 \Delta_1}}{\alpha T}, \alpha^{-\frac{1}{3}} \left( \frac{L_2^{\nicefrac{1}{2}} \Delta_1}{T} \right)^{\frac{2}{3}}, \alpha \sqrt{L_1 \Delta_1} \log \frac{T}{\delta}, \right. \right.\\
			&\qquad\qquad\qquad\qquad\qquad\left.\left.\sigma_1 \alpha^{\frac{p - 1}{p}} \log \frac{T}{\delta}, \sqrt{\frac{L_1 \Delta_1 \log \frac{T}{\delta}}{T}}, \sqrt{\frac{\sigma_2 \Delta_1 \log \frac{T}{\delta}}{T \alpha^{\frac{1}{q}}}}\right\} \right),
		\end{alignat*}
		as we assumed $\log \frac{8 T}{\delta} \ge 1$. Now, choosing 
		\[ \alpha = \max\ens{T^{-\frac{p}{2 p - 1}}, T^{-\frac{p q}{p (2 q + 1) - 2 q}}, T^{-\frac{2 p}{4 p - 3}}} = \max\ens{T^{-\frac{p}{2 p - 1}}, T^{-\frac{p q}{p (2 q + 1) - 2 q}}}, \]
		since $\frac{p}{2 p - 1} \le \frac{2 p}{4 p - 3}$, we have
		\[ \alpha^{\frac{p - 1}{p}} = \max\ens{T^{-\frac{p - 1}{2 p - 1}}, T^{-\frac{q (p - 1)}{p (2 q + 1) - 2 q}}}, \quad T^{\frac{1}{2}} \alpha^{\frac{1}{2q}} \ge T^{\frac{1}{2} \left( 1 - \frac{p}{p (2 q + 1) - 2 q} \right)} = T^{\frac{q (p - 1)}{p (2 q + 1) - 2 q}}, \]
		and
		\[ \alpha T \ge T^{1 - \frac{p}{2 p - 1}} = T^{\frac{p - 1}{2 p - 1}}, \quad T^{\frac{2}{3}} \alpha^{\frac{1}{3}} \ge T^{\frac{2}{3} - \frac{2 p}{3 (4 p - 3)}} = T^{-\frac{2 (p - 1)}{4 p - 3}} \]
		thus, given $\log \frac{8 T}{\delta} \ge 1$ we get
		\begin{alignat*}{2}
			\frac{1}{T} \sum_{t = 0}^{T - 1} \norm{\nabla F(x_t)} &= \cO\left( \max\left\{\frac{\left( L_2^{\nicefrac{1}{2}} \Delta_1 \right)^{\frac{2}{3}}}{T^{\frac{2 (p - 1)}{4 p - 3}}}, \frac{\sqrt{L_1 \Delta_1}}{T^{\frac{p - 1}{2 p - 1}}} \log \frac{T}{\delta}, \frac{\sqrt{L_1 \Delta_1}}{T^{\frac{q (p - 1)}{p (2 q + 1) - 2 q}}} \log \frac{T}{\delta}, \right. \right.\\
			&\qquad\qquad\qquad\qquad\qquad\left.\left. \frac{\sigma_1}{T^{\frac{p - 1}{2 p - 1}}} \log \frac{T}{\delta}, \frac{\sigma_1}{T^{\frac{q (p - 1)}{p (2 q + 1) - 2 q}}} \log \frac{T}{\delta}, \frac{\sqrt{\sigma_2 \Delta_1 \log \frac{T}{\delta}}}{T^{\frac{q (p - 1)}{p (2 q + 1) - 2 q}}}\right\} \right) \\
			\oversetrel{rel:4736f1d8-e280-4380-aecc-558076fa1309}&{=} \cO\left( \frac{\left( L_2^{\nicefrac{1}{2}} \Delta_1 \right)^{\frac{2}{3}}}{T^{\frac{2 (p - 1)}{4 p - 3}}} + \left( \frac{\sqrt{L_1 \Delta_1} + \sigma_1}{T^{\frac{p - 1}{2 p - 1} \wedge \frac{q (p - 1)}{p (2 q + 1) - 2 q}}} \right)\log \frac{T}{\delta} + \frac{\sqrt{\sigma_2 \Delta_1}}{T^{\frac{q (p - 1)}{p (2 q + 1) - 2 q}}} \sqrt{\log \frac{T}{\delta}} \right),
		\end{alignat*}
		since $\frac{p - 1}{2 p - 1} \le \frac{1}{2}$ and $\sqrt{\log \frac{T}{\delta}} \lesssim \log \frac{T}{\delta}$. In~\relref{rel:4736f1d8-e280-4380-aecc-558076fa1309} we use $\wedge$ to denote the minimum between the two exponents $\frac{p - 1}{2 p - 1}$ and $\frac{q (p - 1)}{p (2 q + 1) - 2 q}$.
		
		This concludes the proof of the theorem.
	\end{proof}
	
	\newpage
	
	\section{Technical Lemmas}
	
	We list below the technical lemmas used throughout this paper.
	
	\begin{lemma}[{\citet[Lemma~10]{hübler2025gradientclippingnormalizationheavy}}]\label{appdx-technical-lem:app-von-Bahr-and-Essen}
		Let $p \in \intff{1}{2}$, and $X_1, \ldots, X_n \in \R^d$ be a martingale difference sequence (MDS), i.e., $\ExpCond{X_j}{X_{j-1}, \ldots, X_1} = 0$ a.s. for all $j = 1, \ldots, n$ satisfying 
		\begin{align*}
			\E{\norm{X_j}^p } < \infty, \qquad \text{for all } j = 1, \ldots, n. 
		\end{align*}
		Define $S_n \eqdef \sum\limits_{j=1}^n X_j$, then 
		\begin{align*}
			\E{\norm{S_n}^p} \leq 2 \sum_{j = 1}^n \E{\norm{X_j}^p}.
		\end{align*}
	\end{lemma}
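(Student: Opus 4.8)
The plan is a classical von Bahr--Esseen--type induction on $n$. Write $\mathcal{F}_0 \eqdef \{\emptyset, \Omega\}$ and $\mathcal{F}_k \eqdef \sigma(X_1, \ldots, X_k)$, so that $X_k$ is a martingale difference with respect to $(\mathcal{F}_k)$, and set $S_k = \sum_{j=1}^k X_j$. The base case $n=1$ is immediate since $\E{\norm{S_1}^p} = \E{\norm{X_1}^p} \le 2\,\E{\norm{X_1}^p}$. For the inductive step I would write $S_n = S_{n-1} + X_n$ with $S_{n-1}$ being $\mathcal{F}_{n-1}$-measurable, and exploit that $\E{X_n \mid \mathcal{F}_{n-1}} = 0$ after invoking a pointwise ``first--order smoothness'' bound for the map $x \mapsto \norm{x}^p$.

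Concretely, the key lemma I would establish is: for all $a, b \in \R^d$ and all $p \in [1,2]$,
\[
    \norm{a + b}^p \;\le\; \norm{a}^p + p\,\norm{a}^{p-2}\inp{a}{b} + 2\,\norm{b}^p,
\]
where the middle term is read as $0$ when $a = 0$ (consistent with $|p\,\norm{a}^{p-2}\inp{a}{b}| \le p\,\norm{a}^{p-1}\norm{b} \to 0$ as $a \to 0$). This inequality is the heart of the argument and the place where the constant $2$ enters. I would prove it by reducing to the two-dimensional subspace $\mathrm{span}\{a,b\}$ (one-dimensional when $a,b$ are collinear), rescaling so that $\norm{a} = 1$, decomposing $b = \alpha a + \beta e$ with $e \perp a$, $\norm{e} = 1$, and verifying the resulting elementary two-variable inequality
\[
    \big((1+\alpha)^2 + \beta^2\big)^{p/2} \;\le\; 1 + p\alpha + 2\,(\alpha^2 + \beta^2)^{p/2};
\]
alternatively, it follows from the scalar von Bahr--Esseen inequality $|u+v|^p \le |u|^p + p\,\sign(u)|u|^{p-1}v + 2|v|^p$ combined with an orthogonal decomposition of $b$ relative to $a$. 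I expect this pointwise estimate to be the main obstacle; everything else is bookkeeping.

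Granting the pointwise bound, I would finish as follows. Apply it with $a = S_{n-1}(\omega)$ and $b = X_n$, and take conditional expectation given $\mathcal{F}_{n-1}$: since $S_{n-1}$ is $\mathcal{F}_{n-1}$-measurable and $\E{X_n \mid \mathcal{F}_{n-1}} = 0$, the linear term vanishes in conditional expectation, yielding
\[
    \ExpCond{\norm{S_n}^p}{\mathcal{F}_{n-1}} \;\le\; \norm{S_{n-1}}^p + 2\,\ExpCond{\norm{X_n}^p}{\mathcal{F}_{n-1}} \qquad \text{a.s.}
\]
Taking total expectations (all terms are finite: $\E{\norm{S_{n-1}}^p} < \infty$ by the inductive hypothesis applied to $X_1, \ldots, X_{n-1}$, and $\E{\norm{X_n}^p} < \infty$ by assumption) and then invoking the inductive hypothesis for $S_{n-1}$,
\[
    \E{\norm{S_n}^p} \;\le\; \E{\norm{S_{n-1}}^p} + 2\,\E{\norm{X_n}^p} \;\le\; 2\sum_{j=1}^{n-1} \E{\norm{X_j}^p} + 2\,\E{\norm{X_n}^p} \;=\; 2\sum_{j=1}^{n} \E{\norm{X_j}^p},
\]
which closes the induction. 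As an alternative route, one can reduce the general $L^p$ case to the $L^2$ case by truncating each $X_j$ at a level proportional to its $L^p$-norm (re-centering to preserve the martingale-difference property), handling the bounded part via orthogonality of martingale differences in $L^2$ together with concavity of $t \mapsto t^{p/2}$ and Jensen's inequality, and the tail part via the triangle inequality — essentially von Bahr and Esseen's original argument. In either case the statement coincides verbatim with \citet[Lemma~10]{hübler2025gradientclippingnormalizationheavy}, whose proof may simply be cited.
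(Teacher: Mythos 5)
Your proof is correct. Note first that the paper gives no proof of this statement---it is quoted and attributed directly to \citet[Lemma~10]{hübler2025gradientclippingnormalizationheavy}---so there is nothing internal to compare against. Your argument is the standard von Bahr--Esseen martingale induction and is sound as written. Two remarks. First, the pointwise estimate at the heart of your argument, $\norm{a+b}^{p}\le\norm{a}^{p}+p\norm{a}^{p-2}\ps{a}{b}+2\norm{b}^{p}$, is a slightly weaker version (constant $2$ in place of $2^{2-p}$) of \Cref{lem:euclidean-norm-expansion-generalized}, which the paper already records from \citet[Lemma~1]{he2025complexitynormalizedstochasticfirstorder}; you could simply invoke that lemma for $p\in\intof{1}{2}$ and handle $p=1$ by the triangle inequality together with Cauchy--Schwarz (as you sketch), sparing yourself the two-variable verification. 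Second, when you pass to conditional expectations you should make explicit that the cross term $\norm{S_{n-1}}^{p-2}\ps{S_{n-1}}{X_n}$ is integrable: its absolute value is at most $\norm{S_{n-1}}^{p-1}\norm{X_n}$, which has finite expectation by H\"older's inequality with exponents $\nicefrac{p}{p-1}$ and $p$ combined with the inductive hypothesis $\E{\norm{S_{n-1}}^{p}}<\infty$. This justifies $\ExpCond{\ps{\norm{S_{n-1}}^{p-2}S_{n-1}}{X_n}}{\mathcal{F}_{n-1}}=0$ a.s.\ (with the convention that the expression is $0$ on $\{S_{n-1}=0\}$). With those details spelled out, the induction closes and the proof is complete.
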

	
	The following lemma will be useful to derive high-probability convergence bounds. The constant $2^{p - 2}$ appearing in~\eqref{2425acdd-8fb2-4df1-ae5c-908f6e744926} is a consequence of~\citet[Lemma~1]{he2025complexitynormalizedstochasticfirstorder} which we recall in~\Cref{lem:euclidean-norm-expansion-generalized}.
	\begin{lemma}[{\cite[Lemma~10]{NEURIPS2021_26901deb}}]\label{appdx-technical-lem:cutkosky-mehta}
		Let $X_1, \ldots, X_n \in \R^d$ be a sequence of random vectors. Let us define the following sequence $w_1, \ldots, w_n$ recursively by
		\begin{enumerate}
			\item $w_0 = 0$,
			\item if $\sum\limits_{i = 1}^{j - 1} X_i \neq 0$, then we set
			\[ w_j = \sign\left( \sum\limits_{i = 1}^{j - 1} w_i \right) \frac{\ps{\sum\limits_{i = 1}^{j - 1} X_i}{X_j}}{\norm{\sum\limits_{i = 1}^{j - 1} X_i}}, \]
			\item if $\sum\limits_{i = 1}^{j - 1} X_i = 0$, we set $w_j = 0$.
		\end{enumerate}
		Then, $\abs{w_j} \le \norm{X_j}$ for all $j \in \Int{1}{n}$ and
		\[ \norm{\sum_{i = 1}^n X_i} \le \abs{\sum_{i = 1}^n w_i} + \left( \max_{i \in [n]} \norm{X_i}^p + 2^{2 - p} \sum_{i = 1}^n \norm{X_i}^p  \right)^{\frac{1}{p}}. \numberthis\label{2425acdd-8fb2-4df1-ae5c-908f6e744926} \]
	\end{lemma}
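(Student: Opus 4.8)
The plan is the following induction. Write $S_j := \sum_{i=1}^j X_i$ and $M_j := \sum_{i=1}^j w_i$, with $S_0 = M_0 = 0$, and adopt the convention $\sign(0) = 1$ (without a nonzero value at $0$ the recursion forces $w_j \equiv 0$, and the statement then fails). The pointwise bound $\abs{w_j} \le \norm{X_j}$ is immediate: when $S_{j-1}\neq 0$ it is the Cauchy--Schwarz inequality (\Cref{lem:cauchy-schwarz}) applied to $w_j = \sign(M_{j-1})\,\ps{S_{j-1}}{X_j}/\norm{S_{j-1}}$, and when $S_{j-1}=0$ one simply has $w_j = 0$.

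For the inequality~\eqref{2425acdd-8fb2-4df1-ae5c-908f6e744926} I would prove, by induction on $j \in \Int{0}{n}$, the slightly stronger statement $\norm{S_j} \le \abs{M_j} + E_j$, where
\[
    E_j \;:=\; \Big( \max_{1 \le i \le j} \norm{X_i}^p + 2^{2-p} \sum_{i=1}^j \norm{X_i}^p \Big)^{1/p}.
\]
Introduce $u_j := \sign(M_{j-1})\, w_j$, so that $\abs{u_j} = \abs{w_j} \le \norm{X_j}$; one has $u_j = \ps{X_j}{S_{j-1}/\norm{S_{j-1}}}$ when $S_{j-1}\neq 0$ and $u_j = 0$ when $S_{j-1}=0$. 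From $\ps{S_{j-1}}{X_j} = \norm{S_{j-1}}\, u_j$, $w_j^2 = u_j^2$ and $M_j = \sign(M_{j-1})\bigl(\abs{M_{j-1}} + u_j\bigr)$ one obtains the two exact identities
\[
    \norm{S_j}^2 = \bigl(\norm{S_{j-1}} + u_j\bigr)^2 + \bigl(\norm{X_j}^2 - u_j^2\bigr), \qquad \abs{M_j} = \bigl\lvert\, \abs{M_{j-1}} + u_j \,\bigr\rvert,
\]
the second exhibiting $(\abs{M_j})_{j}$ as the reflection at $0$ of the scalar walk with increments $u_j$.

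The inductive step proceeds via these identities. Write $A := \norm{S_{j-1}} + u_j$ and $A' := \abs{M_{j-1}} + u_j$, so that the induction hypothesis reads $A \le A' + E_{j-1}$ and the target is $\sqrt{A^2 + (\norm{X_j}^2 - u_j^2)} \le \abs{A'} + E_j$. I would split on the sign of $A$: if $A \le 0$, then $\abs{u_j} \ge \norm{S_{j-1}}$ and the first identity collapses to $\norm{S_j}^2 = \norm{S_{j-1}}^2 - 2\norm{S_{j-1}}\abs{u_j} + \norm{X_j}^2 \le \norm{X_j}^2$, so $\norm{S_j} \le \norm{X_j} \le E_j \le \abs{M_j} + E_j$; if $A > 0$, then (after a routine further split on $\sign(A')$, using $A \le A' + E_{j-1}$ and $\norm{X_j}^2 - u_j^2 \le \norm{X_j}^2$) squaring reduces the whole claim to the single numerical inequality $E_j^2 - E_{j-1}^2 \ge \norm{X_j}^2$. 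That inequality holds because $E_j^p - E_{j-1}^p \ge 2^{2-p}\norm{X_j}^p$ (the $\max$ term of $E_j^p$ is nondecreasing in $j$), while the superadditivity of $t \mapsto t^{2/p}$ (valid for $p \le 2$) gives $E_j^2 - E_{j-1}^2 \ge \bigl(E_j^p - E_{j-1}^p\bigr)^{2/p} \ge 2^{(4-2p)/p}\norm{X_j}^2 \ge \norm{X_j}^2$. This is exactly where the exponent $p$ and the constant $2^{2-p}$ enter; following~\citet{he2025complexitynormalizedstochasticfirstorder}, one may instead route this step through the generalized Euclidean--norm expansion of~\Cref{lem:euclidean-norm-expansion-generalized}. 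Taking $j = n$ and recalling $\abs{M_n} = \bigl\lvert\sum_i w_i\bigr\rvert$ yields~\eqref{2425acdd-8fb2-4df1-ae5c-908f6e744926}; the base cases $j = 0,1$ are trivial, since $M_1 = 0$ and $\norm{X_1} \le E_1$.

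The step I expect to be the genuine obstacle is precisely this discrepancy accounting. The naive per--step estimate $\norm{S_j} \le \norm{S_{j-1}} + \norm{X_j}$ is useless here: telescoped, it gives $\sum_j \norm{X_j}$, which is far larger than $(\sum_j \norm{X_j}^p)^{1/p}$ when the $X_j$ have comparable norms. The argument works only because the two extreme regimes compensate each other: when $X_j$ is nearly orthogonal to $S_{j-1}$ one has $\norm{S_j}^2 \approx \norm{S_{j-1}}^2 + \norm{X_j}^2$, so concavity of $t \mapsto t^{p/2}$ keeps the $p$--th power from increasing by more than a constant times $\norm{X_j}^p$ while $\abs{M_j} \approx \abs{M_{j-1}}$; when $X_j$ is nearly aligned with $S_{j-1}$ the quantities $\norm{S_j}$ and $\abs{M_j}$ move essentially in lockstep, which is the entire purpose of the $\sign(M_{j-1})$ factor in the definition of $w_j$. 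Threading these two regimes through one induction with a single clean constant, and in particular correctly handling the reflection steps where $\abs{M_{j-1}} + u_j$ changes sign, is the delicate point; the remaining ingredients --- Cauchy--Schwarz, the $S_{j-1}=0$ degeneracy, and the concluding $p$--th root --- are routine.
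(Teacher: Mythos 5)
The paper does not prove this lemma: it is stated as a citation of Cutkosky--Mehta's Lemma~10, with only the one-sentence remark preceding it that the constant comes from \Cref{lem:euclidean-norm-expansion-generalized} (that remark actually reads ``$2^{p-2}$'' where the lemma body has $2^{2-p}$; the latter, which satisfies $2^{2-p}\ge 1$ for $p\le 2$, is what the applications use and what any argument requires). So there is no in-paper proof to compare against, and the question is just whether your blind proof is sound. It is.

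You follow the same skeleton as the Cutkosky--Mehta original: introduce the tangential component $u_j$, use the exact identity $\norm{S_j}^2 = (\norm{S_{j-1}}+u_j)^2 + (\norm{X_j}^2 - u_j^2)$, observe that $\abs{M_j}$ is the reflection at zero of the scalar walk with increments $u_j$, and run an induction coupling $\norm{S_j}$ to $\abs{M_j}+E_j$. The only place $p$ enters is the per-step accounting $E_j^2 - E_{j-1}^2 \ge \norm{X_j}^2$, which you settle via superadditivity of $t\mapsto t^{2/p}$ for $p\le 2$. This is a clean variant of the route the paper's remark gestures at — using \Cref{lem:euclidean-norm-expansion-generalized} directly on the $p$-th power of the norm — and it arrives at the same constant while avoiding that lemma's cross term $p\norm{a}^{p-2}\ps{a}{b}$; both routes are fine. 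Two small points worth preserving: the argument is implicitly restricted to $p\le 2$ (which is the only regime where the lemma is invoked), and your observation that the statement fails under the convention $\sign(0)=0$ is correct and worth making explicit, since the lemma as typeset in the paper leaves that convention unspecified.
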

	
	\begin{lemma}[Freedman's Inequality]\label{appdx-technical-lem:freedman-inequality}
		Let $X_1, \ldots, X_n \in \R^d$ be a martingale difference sequence (MDS), i.e., $\ExpCond{X_j}{X_{j-1}, \ldots, X_1} = 0$ a.s. for $j = 1, \ldots, n$. Assume there exists a constant $c > 0$ such that $\abs{X_j} \le c$ a.s. for $j = 1, \ldots, n$ and define $\sigma_j^2 \eqdef \ExpCond{X_j^2}{X_{j - 1}, \ldots, X_1}$. Then, for all $b > 0$ and all $G > 0$ we have
		\[ \Proba{\abs{\sum_{j = 1}^n X_j} > b \, \text{ and } \, \sum_{j = 1}^n \sigma_j^2 \le G} \le 2 \exp\left( -\frac{b^2}{2 G + \frac{2 b c}{3}} \right). \]
	\end{lemma}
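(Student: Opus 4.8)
The plan is to prove Freedman's inequality by the classical exponential–supermartingale method, with a predictable stopping time used to accommodate the random constraint $\sum_j \sigma_j^2 \le G$. As in all applications of this lemma in the paper, I treat the $X_j$ as real-valued; set $\mathcal{F}_0 = \{\emptyset, \Omega\}$ and $\mathcal{F}_j = \sigma(X_1, \ldots, X_j)$, and write $S_k \eqdef \sum_{j=1}^k X_j$, $V_k \eqdef \sum_{j=1}^k \sigma_j^2$, observing that $\sigma_j^2 = \ExpCond{X_j^2}{\mathcal{F}_{j-1}}$ is $\mathcal{F}_{j-1}$-measurable, so $(V_k)$ is a predictable process.

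The first step is the conditional moment-generating bound: for a fixed $\lambda > 0$, writing $g(u) \eqdef (e^u - 1 - u)/u^2$ (which is nondecreasing on $\R$, with $g(0) = 1/2$), the pointwise inequality $e^{\lambda x} \le 1 + \lambda x + g(\lambda c)\,\lambda^2 x^2$ holds for every $x \le c$, because $g(\lambda x) \le g(\lambda c)$. Taking conditional expectations and using $\ExpCond{X_j}{\mathcal{F}_{j-1}} = 0$ together with $1 + t \le e^t$ gives $\ExpCond{e^{\lambda X_j}}{\mathcal{F}_{j-1}} \le \exp\!\big(g(\lambda c)\,\lambda^2 \sigma_j^2\big)$. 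Hence $Z_k \eqdef \exp\!\big(\lambda S_k - g(\lambda c)\,\lambda^2 V_k\big)$ is a nonnegative supermartingale with $Z_0 = 1$.

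Next I introduce the stopping time $\tau \eqdef \min\{k \ge 0 : V_{k+1} > G\} \wedge n$, which is legitimate precisely because $(V_k)$ is predictable, and which satisfies $V_\tau \le G$ almost surely. By optional stopping for the bounded stopping time $\tau$, $\E{Z_\tau} \le \E{Z_0} = 1$. On the event $A \eqdef \{S_n > b\} \cap \{V_n \le G\}$ one has $\tau = n$ (since $V$ is nondecreasing, $V_n \le G$ forces no early stop), so $Z_\tau \ge \exp\!\big(\lambda b - g(\lambda c)\,\lambda^2 G\big)$; Markov's inequality then yields $\Proba{A} \le \exp\!\big(-\lambda b + g(\lambda c)\,\lambda^2 G\big)$. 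Using the elementary Bennett-to-Bernstein bound $g(u) \le \tfrac{1}{2(1 - u/3)}$, valid for $0 \le u < 3$, and choosing $\lambda = b/(G + bc/3)$ (for which $\lambda c < 3$), a short computation collapses the exponent exactly to $-b^2/(2G + 2bc/3)$. Finally, the same argument applied to the martingale difference sequence $(-X_j)$, which has identical conditional variances, bounds $\Proba{\{-S_n > b\} \cap \{V_n \le G\}}$ by the same quantity, and a union bound over the two one-sided events produces the factor $2$ and the stated inequality.

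The only genuinely delicate point is that $G$ bounds a random quantity rather than a deterministic one, so one cannot simply take an unconditional expectation of $Z_n$; the predictable stopping time $\tau$ resolves this by freezing $S$ before $V$ is able to overshoot $G$, and one must verify that optional stopping applies (boundedness of $\tau$, nonnegativity of $Z$) and that on $A$ indeed $\tau = n$. Everything else — monotonicity of $g$, the inequality $g(u) \le 1/(2(1-u/3))$, and the single-variable optimization over $\lambda$ — is routine.
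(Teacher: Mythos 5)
Your proof is correct and complete. The paper itself does not supply a proof for this lemma --- it is stated as a classical technical result (due to Freedman, 1975), so there is no "paper's proof" to compare against. Your argument is the standard one: the pointwise bound $e^{\lambda x}\le 1+\lambda x+g(\lambda c)\lambda^2 x^2$ for $x\le c$ (using monotonicity of $g$), the resulting exponential supermartingale $Z_k=\exp\bigl(\lambda S_k-g(\lambda c)\lambda^2 V_k\bigr)$, a predictable stopping time to freeze $V$ at or below $G$, optional stopping and Markov, the Bernstein-type bound $g(u)\le\tfrac{1}{2(1-u/3)}$, and the optimal choice $\lambda=b/(G+bc/3)$, which does give exactly $-b^2/(2G+2bc/3)$. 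You correctly handle the two subtleties --- that $G$ bounds a random quantity (via the predictable stopping time $\tau$, which works because $V_k$ is $\mathcal{F}_{k-1}$-measurable) and that on $\{V_n\le G\}$ one has $\tau=n$ because $V$ is nondecreasing --- and you correctly read the vector statement as real-valued (as the paper always applies it). The one-sided bound plus the same argument for $-X_j$ and a union bound give the factor $2$. No gaps.
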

	
	The next lemma is a generalization of~\cite[Lemma~5.1]{pmlr-v202-sadiev23a}.
	\begin{lemma}\label{appdx-technical-lem:sadiev-generalization}
		Let $\lbd > 0$ be a scalar, $X \in \R^d$ be a random vector and let $\widetilde{X} = \clip(X, \lbd)$ then
		\[ \norm{\widetilde{X} - \E{\widetilde{X}}} \le 2 \lbd. \numberthis\label{3f712cbe-7d84-446c-a934-ad43a82e08dd-1} \]
		Moreover, if for some $\sigma_1 \ge 0$ and $p \in \intof{1}{2}$ we have $\E{X} = x \in \R^d$, $\E{\norm{X - x}^p} \le \sigma_1^p$, and $\norm{x} \le \frac{\lbd}{2}$ then, for any $q \in \intff{p}{2}$ we have
		\begin{alignat*}{2}
			\norm{\E{\widetilde{X}} - x} & \le \frac{2^p \sigma_1^p}{\lbd^{p - 1}}, \numberthis\label{3f712cbe-7d84-446c-a934-ad43a82e08dd-2} \\
			\E{\norm{\widetilde{X} - x}^q} & \le 2 \cdot 3^q \lbd^{q - p} \sigma_1^p, \numberthis\label{3f712cbe-7d84-446c-a934-ad43a82e08dd-3} \\
			\E{\norm{\widetilde{X} - \E{\widetilde{X}}}^q} & \le 2^{3 - q} \cdot 3^q \lbd^{q - p} \sigma_1^p. \numberthis\label{3f712cbe-7d84-446c-a934-ad43a82e08dd-4}
		\end{alignat*}
	\end{lemma}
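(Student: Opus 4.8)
The plan is to handle the four inequalities of \Cref{appdx-technical-lem:sadiev-generalization} in turn, using only the pointwise bound $\norm{\clip(v,\lambda)} \le \lambda$, Markov's inequality, and the elementary power-mean inequality $\norm{a+b}^\alpha \le 2^{\alpha-1}(\norm a^\alpha + \norm b^\alpha)$ (\Cref{lem:norm-power-alpha-inequality}); this is a direct generalization of~\citet[Lemma~5.1]{pmlr-v202-sadiev23a} from second moments to $p$-/$q$-th moments. The bound $\norm{\widetilde X - \E{\widetilde X}} \le 2\lambda$ is immediate and needs no assumption on $x$: $\norm{\widetilde X} \le \lambda$ by the definition of clipping, hence $\norm{\E{\widetilde X}} \le \lambda$ by Jensen's inequality, and the triangle inequality finishes it.

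For the remaining three bounds I would condition on the \emph{clipping event} $\cE \eqdef \{\norm{X} > \lambda\}$, on whose complement $\widetilde X = X$ while on $\cE$ one has $\norm{\widetilde X - X} = \norm X - \lambda$. The key structural observation is that on $\cE$ the hypothesis $\norm x \le \lambda/2$ forces $\norm{X-x} \ge \norm X - \norm x > \lambda/2$, so that $\mathbb{I}_{\cE} \le (2\norm{X-x}/\lambda)^r$ for every $r \ge 0$, and, by Markov's inequality, $\Proba{\cE} \le \Proba{\norm{X-x} > \lambda/2} \le 2^p\sigma_1^p/\lambda^p$. For the bias bound, write $\E{\widetilde X} - x = \E{\widetilde X - X}$; on $\cE$, $\norm{\widetilde X - X} = \norm X - \lambda \le \norm{X-x} \le 2^{p-1}\lambda^{1-p}\norm{X-x}^p$ (applying the observation with $r = p-1$), and it vanishes on $\cE^c$, so taking expectations and using the $p$-BCM bound yields $\norm{\E{\widetilde X} - x} \le 2^{p-1}\lambda^{1-p}\sigma_1^p \le 2^p\sigma_1^p/\lambda^{p-1}$. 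For the $q$-th moment bound I would split $\E{\norm{\widetilde X - x}^q}$ over $\cE$ and $\cE^c$: on $\cE^c$, $\norm{\widetilde X - x} = \norm{X-x} \le \norm X + \norm x \le \tfrac32\lambda$, so $\norm{X-x}^q \le (\tfrac32\lambda)^{q-p}\norm{X-x}^p$ and that part is $\le (\tfrac32\lambda)^{q-p}\sigma_1^p$; on $\cE$, $\norm{\widetilde X - x} \le \norm{\widetilde X} + \norm x \le \tfrac32\lambda$, so that part is $\le (\tfrac32\lambda)^q\Proba{\cE} \le (\tfrac32)^q 2^p\lambda^{q-p}\sigma_1^p$; adding the pieces and verifying that the resulting coefficient is at most $2\cdot 3^q$ for all $p\in(1,2]$, $q\in[p,2]$ gives the claim. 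The recurring device is the interpolation $\norm v^q = \norm v^{q-p}\norm v^p$ applied on the event where $\norm v = O(\lambda)$, which is precisely how one trades the available $p$-th moment for the desired $q$-th power.

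Finally, the centered $q$-th moment bound follows by combining the previous two: writing $\widetilde X - \E{\widetilde X} = (\widetilde X - x) - (\E{\widetilde X} - x)$ and applying the power-mean inequality gives $\E{\norm{\widetilde X - \E{\widetilde X}}^q} \le 2^{q-1}\big(\E{\norm{\widetilde X - x}^q} + \norm{\E{\widetilde X} - x}^q\big)$, after which one bounds $\norm{\E{\widetilde X} - x}^q$ by interpolating the pointwise estimate $\norm{\E{\widetilde X} - x} \le \tfrac32\lambda$ against the bias bound just established, and substitutes the $q$-th moment bound for the first term. I expect the only genuine work — and the main obstacle — to be the bookkeeping of numerical constants: making the final coefficient collapse exactly to $2^{3-q}\cdot 3^q$ (and to $2\cdot 3^q$ in the uncentered case) requires choosing the interpolation exponents and the split $\{\norm X \le \lambda\}$ versus $\{\norm X > \lambda\}$ judiciously and then checking a handful of inequalities of the form $2^a 3^b \le 2^{3-q}3^q$ over the admissible range of $(p,q)$; conceptually every step is routine.
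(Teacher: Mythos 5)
Your arguments for (1), (2), and (3) are correct and essentially the same as the paper's: the split over $\cE = \{\norm{X} > \lambda\}$, the Markov bound $\Proba{\cE} \le (2/\lambda)^p\sigma_1^p$, the uniform bound $\norm{\widetilde X - x} \le \tfrac{3}{2}\lambda$, and the interpolation $\norm{v}^q = \norm{v}^{q-p}\norm{v}^p$ are exactly the devices used there; for (2) the paper defers to a citation, but your self-contained derivation via $\E{\widetilde X} - x = \E{(\widetilde X - X)\mathbb{I}_{\cE}}$ and the pointwise estimate $\norm{X} - \lambda \le \norm{X - x} \le 2^{p-1}\lambda^{1-p}\norm{X-x}^p$ on $\cE$ is correct.

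Your route to (4), however, contains a genuine gap that no bookkeeping can repair. The power-mean inequality $\E{\norm{\widetilde X - \E{\widetilde X}}^q} \le 2^{q-1}\big(\E{\norm{\widetilde X - x}^q} + \norm{\E{\widetilde X} - x}^q\big)$ inserts a factor $2^{q-1}$ before any interpolation is even performed; even if the bias term on the right were exactly zero, plugging in (3) would already give $2^{q-1} \cdot 2 \cdot 3^q = 2^q \cdot 3^q$, which exceeds the target $2^{3-q} \cdot 3^q$ for every $q > \tfrac{3}{2}$ (at $q = 2$ it gives $4 \cdot 9$ instead of $2 \cdot 9$). The missing ingredient is the sharp centering inequality recorded in \Cref{lem:variance-decomposition-any-exponent}: for $q \in \intof{1}{2}$ and any non-random $c$, one has $\E{\norm{Y - \E Y}^q} \le 2^{2-q}\,\E{\norm{Y - c}^q}$. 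This is not a power-mean bound but a consequence of the refined expansion $\norm{a+b}^q \le \norm{a}^q + q\norm{a}^{q-2}\ps{a}{b} + 2^{2-q}\norm{b}^q$ (\Cref{lem:euclidean-norm-expansion-generalized}) applied with $a = c - \E Y$ and $b = Y - c$: the cross-term has expectation $-q\norm{c - \E Y}^q \le 0$ and can be discarded, so the coefficient is $2^{2-q}$ rather than $2^{q-1}$. Applying this with $c = x$ and substituting (3) gives $2^{2-q} \cdot 2 \cdot 3^q \lambda^{q-p}\sigma_1^p = 2^{3-q} \cdot 3^q \lambda^{q-p}\sigma_1^p$ in one step, with no additional bias term to control.
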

	
	\begin{proof}
		The proof follows the same strategy as in~\cite{pmlr-v202-sadiev23a}; inequalities~\eqref{3f712cbe-7d84-446c-a934-ad43a82e08dd-1} and~\eqref{3f712cbe-7d84-446c-a934-ad43a82e08dd-2} have already been established in~\cite{pmlr-v202-sadiev23a}.
		
		As in~\cite{pmlr-v202-sadiev23a}, let us define the random variables
		\[ \chi = \mathbb{I}\,\ens{\norm{X} > \lbd} = \begin{cases}
			1, & \text{if $\norm{X} > \lbd$;} \\
			0, & \text{otherwise;}
		\end{cases} \,\, \text{ and } \,\, \eta = \mathbb{I}\,\ens{\norm{X - x} > \frac{\lbd}{2}} = \begin{cases}
			1, & \text{if $\norm{X - x} > \frac{\lbd}{2
				}$;} \\
			0, & \text{otherwise;}
		\end{cases} \]
		then, observe that
		\[ \clip(X, \lbd) = \min\ens{1, \frac{\lbd}{\norm{X}} X} = \frac{\lbd X}{\norm{X}} \chi + X (1 - \chi) = X + \chi \left( \frac{\lbd}{\norm{X}} - 1 \right) X, \]
		and $\chi \le \eta$ since $\norm{X} > \lbd$ implies 
		\[ \norm{X - x} \ge \norm{X} - \norm{x} > \lbd - \norm{x} \ge \frac{\lbd}{2}. \]
		
		Moreover, thanks to Markov's inequality, we have
		\begin{alignat*}{2}
			\E{\eta} & = \Proba{\norm{X - \lbd} > \frac{\lbd}{2}} \\
			& = \Proba{\norm{X - x}^p > \left( \frac{\lbd}{2} \right)^p} \\
			\oversetref{Lem.}{\ref{lem:markov-inequality}}&{\le} \left( \frac{2}{\lbd} \right)^p \E{\norm{X - x}^p} \\
			& \le \left( \frac{2}{\lbd} \right)^p \sigma_1^p. \numberthis\label{a8e6cfe7-3338-4dff-9a19-431faa006da8}
		\end{alignat*}
		
		\paragraph{Proof of~\eqref{3f712cbe-7d84-446c-a934-ad43a82e08dd-3}:} using $\norm{x} \le \frac{\lbd}{2}$ and $\norm{\widetilde{X} - x} \le \norm{\widetilde{X}} + \norm{x} \le \lbd + \frac{\lbd}{2} = \frac{3 \lbd}{2}$ we have for any $q \in \intff{p}{2}$
		\begin{alignat*}{2}
			\E{\norm{\widetilde{X} - x}^q} & = \E{\norm{\widetilde{X} - x}^p \cdot \norm{\widetilde{X} - x}^{q - p}} \\
			& \le \left( \frac{3 \lbd}{2} \right)^{q - p} \E{\norm{\widetilde{X} - x}^p \chi + \norm{\widetilde{X} - x}^p (1 - \chi)} \\
			\oversetrel{rel:14ba94db-df7d-4216-a292-3c9904775d80}&{=} \left( \frac{3 \lbd}{2} \right)^{q - p} \E{\norm{\frac{\lbd X}{\norm{X}} - x}^p \chi + \norm{X - x}^p (1 - \chi)} \\
			& \le \left( \frac{3 \lbd}{2} \right)^{q - p} \E{\left( \norm{\frac{\lbd X}{\norm{X}}} + \norm{x} \right)^p \chi + \norm{X - x}^p (1 - \chi)} \\
			& \le \left( \frac{3 \lbd}{2} \right)^{q - p} \E{\left( \frac{3 \lbd}{2} \right)^p \chi + \norm{X - x}^p (1 - \chi)} \\
			\oversetrel{rel:a904b7bb-be71-4e76-8972-5db3c541e63a}&{\le} \left( \frac{3 \lbd}{2} \right)^q \E{\chi} + \left( \frac{3 \lbd}{2} \right)^{q - p} \sigma_1^p \\
			\oversetlab{\eqref{a8e6cfe7-3338-4dff-9a19-431faa006da8}}&{\le} \left[ \left( \frac{3 \lbd}{2} \right)^q \left( \frac{2}{\lbd} \right)^p + \left( \frac{3 \lbd}{2} \right)^{q - p} \right] \sigma_1^p \\
			& \le 2 \cdot 3^q \lbd^{q - p} \sigma_1^p,
		\end{alignat*}
		where in~\relref{rel:14ba94db-df7d-4216-a292-3c9904775d80} we use the definition of $\clip$ and $\chi$. In~\relref{rel:a904b7bb-be71-4e76-8972-5db3c541e63a} we use $1 - \chi \le 1$ and $\E{\norm{X - x}^p} \le \sigma_1^p$.
		
		\paragraph{Proof of~\eqref{3f712cbe-7d84-446c-a934-ad43a82e08dd-4}:} according to~\Cref{lem:variance-decomposition-any-exponent} with $c = x$, we have
		\begin{alignat*}{2}
			\E{\norm{\widetilde{X} - \E{\widehat{X}}}^q} & \le 2^{2 - q} \E{\norm{\widetilde{X} - x}^q} \\
			\oversetlab{\eqref{3f712cbe-7d84-446c-a934-ad43a82e08dd-3}}&{\le} 2^{3 - q} \cdot 3^q \lbd^{q - p} \sigma_1^p,
		\end{alignat*}
		as claimed.
	\end{proof}
	
	\newpage
	
	\section{Useful Identities and Inequalities}
	
	For any vectors $x, y \in \R^d$, we have
	\begin{alignat*}{2}
		2 \ps{x}{y} & = \sqnorm{x} + \sqnorm{y} - \sqnorm{x - y}. \numberthis\label{eq:identity-1}
	\end{alignat*}
	
	\begin{lemma}[Variance Decomposition]\label{lem:variance-decomposition}
		For any random vector $X \in \R^d$ and any non-random vector $c \in \R^d$ we have
		\[ \E{\sqnorm{X - c}} = \E{\sqnorm{X - \E{X}}} + \sqnorm{\E{X} - c}. \]
	\end{lemma}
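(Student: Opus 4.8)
The plan is to use the standard \textbf{bias--variance decomposition} trick: insert $\E{X}$ into the difference $X - c$ and expand the squared norm. First I would write $X - c = (X - \E{X}) + (\E{X} - c)$, which is legitimate because $c$ is non-random. Expanding the squared Euclidean norm then gives
\[
\sqnorm{X - c} = \sqnorm{X - \E{X}} + 2 \ps{X - \E{X}}{\E{X} - c} + \sqnorm{\E{X} - c}.
\]

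Next I would take expectations of both sides and apply linearity of $\E{\cdot}$. The first term becomes $\E{\sqnorm{X - \E{X}}}$ and the third term is deterministic, so it equals $\sqnorm{\E{X} - c}$. For the cross term, since $\E{X} - c$ is a fixed vector, I can pull it out of the expectation (using that $\ps{\cdot}{v}$ is a linear functional, applied coordinate-wise): $\E{\ps{X - \E{X}}{\E{X} - c}} = \ps{\E{X - \E{X}}}{\E{X} - c} = \ps{0}{\E{X} - c} = 0$, because $\E{X - \E{X}} = \E{X} - \E{X} = 0$. This eliminates the middle term and yields the claimed identity.

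There is essentially no obstacle here; the only point meriting a sentence of care is the interchange of expectation and inner product, which is justified by linearity of expectation in each coordinate (equivalently, $v \mapsto \ps{\cdot}{v}$ is bounded linear for fixed $v$), and by the integrability of $X$, which is implied by the finiteness of the left-hand side $\E{\sqnorm{X - c}}$. As an alternative to the explicit expansion, one could instead invoke identity~\eqref{eq:identity-1} with $x = X - \E{X}$ and $y = X - c$, but the direct expansion above is the most transparent route and I would use it.
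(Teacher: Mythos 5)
Your proof is correct, and it is the canonical argument: decompose $X - c = (X - \E{X}) + (\E{X} - c)$, expand the squared norm, take expectations, and note the cross term vanishes because $\E{X - \E{X}} = 0$. The paper states this lemma without proof (it is a textbook fact), so there is no alternative paper argument to compare against; your write-up, including the remark on integrability and linearity, is exactly what one would supply.
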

	
	\begin{remark}
		In particular, this implies that for any random vector $X \in \R^d$ and any non-random vector $c \in \R^d$ we have
		\[ \E{\sqnorm{X - \E{X}}} \le \E{\sqnorm{X - c}}. \]
		In~\Cref{lem:variance-decomposition-any-exponent} we extend this inequality to any exponent $\alpha > 1$ instead of just $2$.
	\end{remark}
	
	\begin{lemma}[Tower Property of the Expectation]\label{lem:tower-property}
		For any random variables $X \in \R^d$ and $Y_1, \ldots, Y_n$ we have
		\[ \E{\ExpCond{X}{Y_1, \ldots, Y_n}} = \E{X}. \]
	\end{lemma}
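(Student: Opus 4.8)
The plan is to invoke the defining property of conditional expectation directly. Write $\mathcal{G} \eqdef \sigma(Y_1, \ldots, Y_n)$ for the $\sigma$-algebra generated by the conditioning variables, and recall that (for integrable $X$, understood coordinate-wise when $d > 1$) the random vector $Z \eqdef \ExpCond{X}{Y_1, \ldots, Y_n}$ is by definition the a.s.-unique $\mathcal{G}$-measurable integrable random vector satisfying
\[
    \E{Z \, \mathbb{I}_{A}} = \E{X \, \mathbb{I}_{A}} \qquad \text{for every } A \in \mathcal{G}.
\]
First I would specialize this identity to $A = \Omega$, the whole sample space, which lies in $\mathcal{G}$ because every $\sigma$-algebra contains $\Omega$. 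Since $\mathbb{I}_{\Omega} \equiv 1$, the displayed identity reduces to $\E{Z} = \E{X}$, that is, $\E{\ExpCond{X}{Y_1, \ldots, Y_n}} = \E{X}$, which is precisely the assertion.

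An alternative route, in case one prefers to argue from scratch, is to establish the equality first for indicators $X = \mathbb{I}_{B}$, extend to simple functions by linearity of both the expectation and the conditional expectation, pass to nonnegative $X$ by monotone convergence, and finally treat general integrable $X$ via the decomposition $X = X^{+} - X^{-}$ applied componentwise; this is strictly longer than the one-line argument above and introduces no new ideas.

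The only points requiring attention — and they hardly constitute an obstacle — are: (i) the standing assumption that $X$ is integrable, so that $\ExpCond{X}{Y_1, \ldots, Y_n}$ is well-defined in the first place; and (ii) in the vector-valued setting, the convention that conditional expectation, and hence the defining identity, is applied componentwise, so that the scalar argument transfers verbatim to each of the $d$ coordinates. I expect no genuine difficulty here: the statement is an immediate consequence of the definition of conditional expectation, and the \say{hard part} is merely recalling that definition precisely.
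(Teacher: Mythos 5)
Your argument is correct: specializing the defining identity of conditional expectation to the event $A = \Omega \in \sigma(Y_1,\ldots,Y_n)$ yields $\E{\ExpCond{X}{Y_1,\ldots,Y_n}} = \E{X}$ immediately, and the componentwise remark handles $d>1$. The paper states this lemma without proof — it is quoted as a standard textbook fact in the ``Useful Identities and Inequalities'' appendix — so there is no in-paper argument to compare against; yours is the canonical one-line derivation.
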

	
	\begin{lemma}[Cauchy Schwarz's Inequality]\label{lem:cauchy-schwarz}
		For any vectors $a, b \in \R^d$ we have
		\[ \ps{a}{b} \le \abs{\ps{a}{b}} \le \norm{a} \norm{b}. \]
	\end{lemma}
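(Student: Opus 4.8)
The plan is to handle the two inequalities in the statement separately. The left inequality $\ps{a}{b} \le \abs{\ps{a}{b}}$ is immediate and requires no work: $\ps{a}{b}$ is a real number, and every real $x$ satisfies $x \le \abs{x}$, so I would simply instantiate this at $x = \ps{a}{b}$. All the content is therefore in the right inequality $\abs{\ps{a}{b}} \le \norm{a}\norm{b}$, which is the classical Cauchy--Schwarz inequality over $\R$.

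For that inequality I would first dispose of the degenerate case: if $b = 0$ (or symmetrically $a = 0$) then both sides vanish and there is nothing to prove. Assuming $b \neq 0$, the key idea is to exploit nonnegativity of a squared norm. Concretely, for every $t \in \R$ one has $0 \le \norm{a - t b}^2 = \norm{a}^2 - 2 t \ps{a}{b} + t^2 \norm{b}^2$, using bilinearity and symmetry of $\ps{\cdot}{\cdot}$ together with $\norm{x}^2 = \ps{x}{x}$. I would then specialize to the minimizer $t = \ps{a}{b}/\norm{b}^2$, obtaining $0 \le \norm{a}^2 - \ps{a}{b}^2/\norm{b}^2$; multiplying through by $\norm{b}^2 > 0$ and taking square roots yields $\abs{\ps{a}{b}} \le \norm{a}\norm{b}$. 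An equivalent route is to note that the quadratic $t \mapsto \norm{a}^2 - 2 t \ps{a}{b} + t^2 \norm{b}^2$ is nonnegative for all $t$ and has positive leading coefficient, so its discriminant $4\ps{a}{b}^2 - 4\norm{a}^2\norm{b}^2$ must be $\le 0$, giving the same conclusion.

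There is no genuine obstacle here: the argument is elementary and self-contained, relying only on the inner-product axioms. The only points that warrant mild care are the degenerate case $b = 0$ (so that dividing by $\norm{b}^2$ is legitimate) and the fact that we work over $\R$, so that $\ps{\cdot}{\cdot}$ is symmetric and no complex conjugation enters the expansion of $\norm{a - tb}^2$. I would not pursue the equality characterization (collinearity of $a$ and $b$), since the lemma as stated asserts only the inequality and that is all that is used downstream.
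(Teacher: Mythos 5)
Your proof is correct and complete: the reduction of the left inequality to $x \le \abs{x}$, the separate treatment of the degenerate case $b = 0$, and the nonnegativity-of-$\norm{a - tb}^2$ (equivalently, discriminant) argument constitute the standard and fully rigorous proof of Cauchy--Schwarz over $\R$. The paper states this lemma as a classical fact without proof, so there is nothing to compare against; your argument fills that gap correctly.
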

	
	\begin{lemma}[Young's Inequality (Norm Form)]\label{lem:youg-inequality}
		For any vectors $a, b \in \R^d$ and any scalar $\alpha > 0$ we have
		\[ \sqnorm{a + b} \le (1 + \alpha) \sqnorm{x} + \left( 1 + \frac{1}{\alpha} \right) \sqnorm{y}. \]
	\end{lemma}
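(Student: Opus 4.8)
The plan is a one-line norm expansion followed by a weighted arithmetic–geometric mean estimate; nothing deeper is needed. First I would expand the left-hand side as $\sqnorm{a+b} = \sqnorm{a} + 2\ps{a}{b} + \sqnorm{b}$ (equivalently, one may apply identity~\eqref{eq:identity-1} to $a$ and $-b$). The only remaining task is to absorb the cross term $2\ps{a}{b}$ into the two squared-norm terms with the prescribed weights $(1+\alpha)$ and $(1+\alpha^{-1})$.

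Next I would bound the cross term by applying the Cauchy–Schwarz inequality (\Cref{lem:cauchy-schwarz}), giving $2\ps{a}{b} \le 2\norm{a}\norm{b}$, and then the elementary scalar inequality $2uv \le \alpha u^2 + \alpha^{-1} v^2$, valid for all $u,v \ge 0$ and $\alpha > 0$ because it is equivalent to $\left(\sqrt{\alpha}\,u - v/\sqrt{\alpha}\right)^2 \ge 0$. Taking $u = \norm{a}$ and $v = \norm{b}$ yields $2\ps{a}{b} \le \alpha\sqnorm{a} + \alpha^{-1}\sqnorm{b}$. (An equally short alternative is to apply $2\ps{u}{v} \le \sqnorm{u} + \sqnorm{v}$ — itself a rearrangement of~\eqref{eq:identity-1} — to the rescaled vectors $u = \sqrt{\alpha}\,a$ and $v = b/\sqrt{\alpha}$.)

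Substituting this estimate into the expansion gives $\sqnorm{a+b} \le \sqnorm{a} + \alpha\sqnorm{a} + \alpha^{-1}\sqnorm{b} + \sqnorm{b} = (1+\alpha)\sqnorm{a} + (1+\alpha^{-1})\sqnorm{b}$, which is exactly the claimed inequality — the statement's use of $x$ and $y$ on the right-hand side being a typographical slip for $a$ and $b$. There is no genuine obstacle in this argument; the only point worth noting is that the bound is sharp (equality when $b = \alpha a$), so no smaller universal constants are possible, and the proof relies on nothing beyond Cauchy–Schwarz and the nonnegativity of a square.
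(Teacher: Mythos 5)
Your proof is correct and complete. The paper states this lemma without giving a proof of its own, so there is no in-paper argument to compare against; however, the paper does prove the companion inner-product form (Lemma~\ref{lem:youg-inequality-inner-product}) via exactly the same scalar inequality $2uv \le \alpha u^2 + \alpha^{-1}v^2$, so your route is the natural one the authors had in mind. Your observation that $x,y$ in the statement is a typo for $a,b$ is also correct, and the sharpness remark (equality at $b = \alpha a$) is a nice extra.
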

	
	\begin{lemma}[Young's Inequality (Inner Product Form)]\label{lem:youg-inequality-inner-product}
		For any vectors $a, b \in \R^d$ and any scalar $\alpha > 0$ we have
		\[ 2\ps{a}{b} \le 2 \abs{\ps{a}{b}} \le \alpha \sqnorm{x} + \frac{1}{\alpha} \sqnorm{y}. \numberthis\label{55c90e49-73cb-4448-8f5b-fb2c30a7c439} \]
	\end{lemma}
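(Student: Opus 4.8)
The plan is to obtain the inequality as an immediate consequence of Cauchy--Schwarz (\Cref{lem:cauchy-schwarz}) combined with the elementary scalar bound $2uv \le u^2 + v^2$ (equivalently $(u-v)^2 \ge 0$). First I would record the trivial step $2\ps{a}{b} \le 2\abs{\ps{a}{b}}$, and then invoke \Cref{lem:cauchy-schwarz} to get $2\abs{\ps{a}{b}} \le 2\norm{a}\,\norm{b}$.

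The second step is a rescaling trick: for a fixed scalar $\alpha > 0$, write $2\norm{a}\,\norm{b} = 2\bigl(\sqrt{\alpha}\,\norm{a}\bigr)\bigl(\tfrac{1}{\sqrt{\alpha}}\,\norm{b}\bigr)$ and apply $2uv \le u^2 + v^2$ with $u = \sqrt{\alpha}\,\norm{a}$ and $v = \tfrac{1}{\sqrt{\alpha}}\,\norm{b}$, which yields $2\norm{a}\,\norm{b} \le \alpha\sqnorm{a} + \tfrac{1}{\alpha}\sqnorm{b}$. Chaining the two displayed bounds gives the statement (here $\sqnorm{x}$ and $\sqnorm{y}$ in the claim should read $\sqnorm{a}$ and $\sqnorm{b}$).

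There is essentially no obstacle: the only point requiring care is that $\alpha > 0$ must be strictly positive so that both the rescaling and the term $\tfrac{1}{\alpha}$ are well defined, and one should emphasize that the bound holds simultaneously for \emph{every} such $\alpha$, which is exactly what makes the lemma useful when later steps optimize the free parameter $\alpha$. If a cleaner single-line argument is preferred, one can alternatively expand $\sqnorm{\sqrt{\alpha}\,a - \tfrac{1}{\sqrt{\alpha}}\,b} \ge 0$ directly and rearrange, bypassing the explicit use of \Cref{lem:cauchy-schwarz}.
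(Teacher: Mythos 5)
Your proof is correct and takes essentially the same route as the paper: Cauchy--Schwarz to pass from $\abs{\ps{a}{b}}$ to $\norm{a}\norm{b}$, followed by the rescaled scalar AM--GM bound $2uv \le u^2 + v^2$. (The paper phrases its reduction as ``it suffices to take $d=1$,'' which implicitly invokes Cauchy--Schwarz; you make that step explicit. You are also right that the $x,y$ in the displayed inequality are a typo for $a,b$.)
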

	
	\begin{proof}
		It's enough to prove inequality~\eqref{55c90e49-73cb-4448-8f5b-fb2c30a7c439} when $d = 1$. Hence, consider $a, b \in \R$, we have given $\alpha > 0$
		\[ 2ab \le 2\abs{ab} = 2\abs{a} \cdot \abs{b} = 2\abs{\sqrt{\alpha} \, a} \cdot \abs{\frac{b}{\sqrt{\alpha}}} \oversetrel{rel:c481fc03-dcbc-4a9b-b12e-1b8282d05b5d}{\le} \alpha \abs{a}^2 + \frac{1}{\alpha} \abs{b}^2 = \alpha \, a^2 + \frac{b^2}{\alpha}, \]
		where in~\relref{rel:c481fc03-dcbc-4a9b-b12e-1b8282d05b5d} we use the arithmetic-geometric inequality in $n = 2$ variables $\left( \sqrt{\alpha} \abs{a}, \frac{1}{\sqrt{\alpha}} \abs{b} \right)$.
	\end{proof}
	
	
	\begin{lemma}\label{lem:norm-power-alpha-inequality}
		For any vectors $a, b \in \R^d$ and any scalar $\alpha \ge 1$ we have
		\[ \norm{a + b}^{\alpha} \le 2^{\alpha - 1} \left( \norm{a}^{\alpha} + \norm{b}^{\alpha} \right). \numberthis\label{e4d36f60-854e-4737-9f49-0ac946fb5b07} \]
	\end{lemma}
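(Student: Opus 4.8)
The plan is to reduce \eqref{e4d36f60-854e-4737-9f49-0ac946fb5b07} to a one-dimensional convexity statement. First I would apply the triangle inequality, $\norm{a + b} \le \norm{a} + \norm{b}$, and then raise both sides to the power $\alpha$; this is legitimate because $\alpha \ge 1 > 0$ implies $t \mapsto t^{\alpha}$ is nondecreasing on $[0, +\infty)$ and both sides are nonnegative. This yields $\norm{a + b}^{\alpha} \le \left( \norm{a} + \norm{b} \right)^{\alpha}$, so it remains to prove the scalar inequality $(x + y)^{\alpha} \le 2^{\alpha - 1} (x^{\alpha} + y^{\alpha})$ for all $x, y \ge 0$, which I would apply with $x = \norm{a}$ and $y = \norm{b}$.

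For the scalar inequality, the key step is the convexity of the function $\varphi \colon t \mapsto t^{\alpha}$ on $[0, +\infty)$, valid precisely because $\alpha \ge 1$ (indeed $\varphi''(t) = \alpha (\alpha - 1) t^{\alpha - 2} \ge 0$ for $t > 0$, and $\varphi$ is continuous at $0$). Convexity applied to the midpoint gives
\[
    \left( \frac{x + y}{2} \right)^{\alpha} = \varphi\!\left( \frac{x + y}{2} \right) \le \frac{\varphi(x) + \varphi(y)}{2} = \frac{x^{\alpha} + y^{\alpha}}{2},
\]
and multiplying through by $2^{\alpha}$ yields $(x + y)^{\alpha} \le 2^{\alpha - 1} (x^{\alpha} + y^{\alpha})$, as desired. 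Combining this with the bound from the previous paragraph concludes the proof.

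There is no real obstacle here; the only point requiring a moment of care is the justification of monotonicity of $t \mapsto t^{\alpha}$ (used to raise the triangle inequality to the power $\alpha$) and of convexity (used for the midpoint bound), both of which hinge on the hypothesis $\alpha \ge 1$. The degenerate case $a = b = 0$ is covered automatically since both sides of \eqref{e4d36f60-854e-4737-9f49-0ac946fb5b07} vanish. I would also remark, as the paper does, that taking $\alpha = 2$ recovers the familiar inequality $\sqnorm{a + b} \le 2 (\sqnorm{a} + \sqnorm{b})$.
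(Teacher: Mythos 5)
Your proof is correct and rests on the same key idea as the paper's: midpoint convexity of the power function with exponent $\alpha \ge 1$. The only cosmetic difference is that you first invoke the triangle inequality and then apply convexity of $t \mapsto t^{\alpha}$ to scalars, whereas the paper applies Jensen's inequality directly to the convex map $x \mapsto \norm{x}^{\alpha}$ on $\R^d$; the two routes are equivalent in substance.
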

	
	\begin{proof}
		Note that the function $x \mapsto \norm{x}^{\alpha}$ is convex over $\R^d$ since $\norm{\cdot}$ is convex (as a norm) and $t \mapsto t^{\alpha}$ is convex since $\alpha \ge 1$. Hence, by the Jensen's inequality (\Cref{lem:jensen-inequality}), we have
		\[ \norm{\frac{a + b}{2}}^{\alpha} \le \frac{1}{2} \left( \norm{a}^{\alpha} + \norm{b}^{\alpha} \right), \]
		i.e.,
		\[ \norm{a + b}^{\alpha} \le 2^{\alpha - 1} \left( \norm{a}^{\alpha} + \norm{b}^{\alpha} \right), \]
		as claimed.
		
	\end{proof}
	
	\begin{remark}
		Similar inequalities to~\eqref{e4d36f60-854e-4737-9f49-0ac946fb5b07} appear in~\cite{he2025complexitynormalizedstochasticfirstorder}. In~\Cref{lem:euclidean-norm-expansion-generalized} we state one of them that we use in our proofs.
	\end{remark}
	
	\begin{lemma}[{\cite[Lemma~1]{he2025complexitynormalizedstochasticfirstorder}}]\label{lem:euclidean-norm-expansion-generalized}
		For any vectors $a, b \in \R^d$ and any scalar $\alpha\in \intof{1}{2}$ we have\footnote{Note that, when $a = 0$ we have $\norm{a}^{\alpha - 2} a = 0$ since $\alpha > 1$ thus $\alpha - 2 > -1$.}
		\[ \norm{a + b}^{\alpha} \le \norm{a}^{\alpha} + \alpha \norm{a}^{\alpha - 2} \ps{a}{b} + 2^{2 - \alpha} \norm{b}^{\alpha}. \]
	\end{lemma}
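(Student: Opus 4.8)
The plan is to prove the inequality by isolating the Bregman-type defect
$D \eqdef \norm{a+b}^\alpha - \norm{a}^\alpha - \alpha\norm{a}^{\alpha-2}\ps{a}{b} \ge 0$ and bounding it by $2^{2-\alpha}\norm{b}^\alpha$, splitting into the regime where $\norm{a}$ is large relative to $\norm{b}$ (handled purely by concavity) and the regime where it is small (a one‑variable optimization). The case $\alpha = 2$ is the parallelogram identity and $\norm{b}=0$ is trivial, so one may assume $\alpha\in(1,2)$ and $\norm{b}>0$; since both sides are jointly $\alpha$‑homogeneous in $(a,b)$, normalize $\norm{b}=1$ and set $r\eqdef\norm{a}\ge 0$ and $\mu\eqdef\ps{a}{b}\in[-r,r]$, so that $\norm{a+b}^2 = r^2+2\mu+1$ and $D = (r^2+2\mu+1)^{\alpha/2} - r^\alpha - \alpha r^{\alpha-2}\mu$.

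First I would use that $s\mapsto s^{\alpha/2}$ is concave on $[0,\infty)$ (as $\alpha/2\le 1$): its graph lies below the tangent at $s_0 = r^2$, which evaluated at $s=\norm{a+b}^2$ gives $(r^2+2\mu+1)^{\alpha/2}\le r^\alpha + \tfrac{\alpha}{2}r^{\alpha-2}(2\mu+1)$, i.e. $D\le \tfrac{\alpha}{2}r^{\alpha-2}$. When $r\ge \tfrac12$, the map $x\mapsto x^{\alpha-2}$ is decreasing (since $\alpha-2<0$), hence $D\le \tfrac{\alpha}{2}(\tfrac12)^{\alpha-2} = \tfrac{\alpha}{2}\,2^{2-\alpha}\le 2^{2-\alpha}$, the desired bound. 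It then remains to treat $r<\tfrac12$.

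For $r<\tfrac12$ I would optimize over $\mu\in[-r,r]$. Since $\mu\ge -r > -\tfrac12$ one has $r^2+2\mu+1 > r^2$, and because the exponent $\tfrac{\alpha}{2}-1$ is negative, $\partial_\mu D = \alpha\big[(r^2+2\mu+1)^{\alpha/2-1} - r^{\alpha-2}\big] < 0$; thus $D$ is maximized at $\mu=-r$, where $r^2+2\mu+1=(1-r)^2$ and $D = h(r)\eqdef (1-r)^\alpha - r^\alpha + \alpha r^{\alpha-1}$. So it suffices to show $h(r)\le 2^{2-\alpha}$ on $[0,\tfrac12]$. The function $h$ is continuous on this compact interval, so its maximum is attained at an endpoint or at an interior critical point. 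At the endpoints, $h(0)=1\le 2^{2-\alpha}$ and $h(\tfrac12)=\alpha\,2^{1-\alpha}\le 2^{2-\alpha}$. At an interior $r_0$ with $h'(r_0)=0$ the first‑order condition reads $(\alpha-1)r_0^{\alpha-2} = r_0^{\alpha-1}+(1-r_0)^{\alpha-1}$; multiplying by $r_0$ and substituting back into $h(r_0)$ collapses it to $h(r_0) = r_0^{\alpha-1} + (1-r_0)^{\alpha-1}$. Finally, on $[0,1]$ the map $\rho\mapsto \rho^{\alpha-1}+(1-\rho)^{\alpha-1}$ has derivative $(\alpha-1)\big[\rho^{\alpha-2}-(1-\rho)^{\alpha-2}\big]>0$ for $\rho<\tfrac12$, so it increases on $[0,\tfrac12]$ and $h(r_0)\le 2\cdot(\tfrac12)^{\alpha-1} = 2^{2-\alpha}$. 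Combining the two regimes finishes the proof.

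The hard part is the estimate $h(r)\le 2^{2-\alpha}$ on $[0,\tfrac12]$: a crude bound (e.g. $(1-r)^\alpha\le 1$ plus $\alpha r^{\alpha-1}\le \alpha\,2^{1-\alpha}$) loses too much as $\alpha\uparrow 2$, precisely where the target inequality degenerates to an identity. The point that rescues it is that at an interior maximizer the first‑order condition rewrites the maximal value as the symmetric, increasing‑on‑$[0,\tfrac12]$ quantity $r^{\alpha-1}+(1-r)^{\alpha-1}$, whose supremum over $[0,\tfrac12]$ is exactly $2^{2-\alpha}$. I would also make sure to dispatch the harmless edge cases cleanly ($a=0$, vanishing base of a fractional power, and the non‑differentiability of $h$ at $r=0$, which is irrelevant since that point is an endpoint of the interval).
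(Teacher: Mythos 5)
The paper does not prove this lemma itself; it is imported from He et al.\ [2025, Lemma~1] and used as a black box, so there is no in-paper argument to compare against. Your proof is correct and self-contained. Each structural step is sound: reduction by $\alpha$-homogeneity to $\norm{b}=1$; the tangent-line bound from concavity of $s\mapsto s^{\alpha/2}$, which gives $D\le\tfrac{\alpha}{2}r^{\alpha-2}$ and already dispatches $r\ge\tfrac12$; the observation that $\partial_\mu D<0$ on the admissible range forces the antiparallel extremum $\mu=-r$, collapsing the problem to the scalar function $h(r)=(1-r)^\alpha - r^\alpha+\alpha r^{\alpha-1}$ on $[0,\tfrac12]$; and the first-order-condition identity $h(r_0)=r_0^{\alpha-1}+(1-r_0)^{\alpha-1}$ at interior critical points, whose supremum over $[0,\tfrac12]$ is exactly $2^{2-\alpha}$ by the monotonicity argument. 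The algebra in the substitution step checks out, the endpoint values $h(0)=1$ and $h(\tfrac12)=\alpha\,2^{1-\alpha}$ are both $\le 2^{2-\alpha}$, and the degenerate cases ($a=0$, $b=0$, $\alpha=2$, and the non-differentiable endpoint $r=0$) are all correctly isolated. Since you cannot compare against an in-paper proof here, the only thing worth flagging is that the tightness at $\alpha\uparrow 2$ (where the target becomes an identity) is genuinely exploited: a blunt endpoint-plus-triangle bound would lose the constant, and the critical-point rewrite is what keeps the estimate sharp.
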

	
	\begin{lemma}\label{lem:variance-decomposition-any-exponent}
		For any scalar $\alpha \in \intof{1}{2}$, any random vector $X \in \R^d$ and any non-random vector $c \in \R^d$ we have
		\[ \E{\norm{X - \E{X}}^{\alpha}} \le 2^{2 - \alpha} \E{\norm{X - c}^{\alpha}}. \]
	\end{lemma}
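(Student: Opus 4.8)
The plan is to reduce the inequality $\E{\norm{X - \E{X}}^{\alpha}} \le 2^{2 - \alpha} \E{\norm{X - c}^{\alpha}}$ to the triangle inequality plus the power-mean inequality from \Cref{lem:norm-power-alpha-inequality}. Write $m \eqdef \E{X}$ and decompose $X - m = (X - c) - (m - c)$. First I would apply the triangle inequality in the form $\norm{X - m} \le \norm{X - c} + \norm{m - c}$, then raise both sides to the power $\alpha \ge 1$ and invoke \Cref{lem:norm-power-alpha-inequality} to get $\norm{X - m}^{\alpha} \le 2^{\alpha - 1}\left( \norm{X - c}^{\alpha} + \norm{m - c}^{\alpha} \right)$ pointwise. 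Taking expectations yields
\[
\E{\norm{X - m}^{\alpha}} \le 2^{\alpha - 1}\left( \E{\norm{X - c}^{\alpha}} + \norm{m - c}^{\alpha} \right).
\]

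Next I would control the term $\norm{m - c}^{\alpha}$. Since $m - c = \E{X - c} = \E{X} - c$, Jensen's inequality applied to the convex map $x \mapsto \norm{x}^{\alpha}$ (convexity of $\norm{\cdot}^{\alpha}$ for $\alpha \ge 1$, exactly as used in the proof of \Cref{lem:norm-power-alpha-inequality}) gives $\norm{m - c}^{\alpha} = \norm{\E{X - c}}^{\alpha} \le \E{\norm{X - c}^{\alpha}}$. Substituting this bound into the previous display produces $\E{\norm{X - m}^{\alpha}} \le 2^{\alpha - 1} \cdot 2\, \E{\norm{X - c}^{\alpha}} = 2^{\alpha}\, \E{\norm{X - c}^{\alpha}}$, which is weaker than the claimed constant $2^{2-\alpha}$ whenever $\alpha > 1$.

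The main obstacle — and the reason the naive route above does not suffice — is that $2^{\alpha}$ is off by a factor of $2^{2\alpha - 2}$ from the target $2^{2-\alpha}$. To recover the sharper constant I would instead argue more carefully using \Cref{lem:euclidean-norm-expansion-generalized}: apply it with $a = X - m$ and $b = m - c$ to obtain $\norm{X - c}^{\alpha} = \norm{(X-m)+(m-c)}^{\alpha} \le \norm{X - m}^{\alpha} + \alpha \norm{X-m}^{\alpha-2}\ps{X-m}{m-c} + 2^{2-\alpha}\norm{m-c}^{\alpha}$ (reading the identity in the direction that isolates $\norm{X-m}^{\alpha}$ would instead need the reverse expansion, so the cleaner choice is $a = m - c$, $b = X - m$, giving $\norm{X-c}^{\alpha} \le \norm{m-c}^{\alpha} + \alpha\norm{m-c}^{\alpha-2}\ps{m-c}{X-m} + 2^{2-\alpha}\norm{X-m}^{\alpha}$). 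Taking expectations kills the cross term because $\E{X - m} = 0$ and $m - c$ is deterministic, leaving $\E{\norm{X-c}^{\alpha}} \ge 2^{2-\alpha}\E{\norm{X-m}^{\alpha}} + \norm{m-c}^{\alpha} - 2^{2-\alpha}\norm{m-c}^{\alpha} + \E{\ldots}$; I would then need to check that the leftover deterministic terms have the right sign so the $2^{2-\alpha}\E{\norm{X-m}^{\alpha}}$ bound survives. Verifying that sign bookkeeping — in particular that the coefficient of $\norm{m-c}^{\alpha}$, namely $1 - 2^{2-\alpha}$, is nonnegative for $\alpha \in (1,2]$ (it is, since $2-\alpha \le 1$) — is the crux, and once it is in hand the inequality $\E{\norm{X-c}^{\alpha}} \ge 2^{2-\alpha}\E{\norm{X-m}^{\alpha}}$ follows directly, which is exactly the claim after rearranging.
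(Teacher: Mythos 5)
Your instinct to abandon the naive triangle-inequality route and reach for \Cref{lem:euclidean-norm-expansion-generalized} is the right one, and you correctly identify that the constant $a$ should be chosen deterministic so the $\norm{a}^{\alpha-2}$ prefactor of the cross term can be pulled out of the expectation. But you have applied the expansion to the wrong object. You expand $\norm{X - c}^{\alpha} = \norm{(m-c)+(X-m)}^{\alpha}$ and thereby obtain, after taking expectations,
\[
\E{\norm{X - c}^{\alpha}} \le \norm{m - c}^{\alpha} + 2^{2-\alpha} \E{\norm{X - m}^{\alpha}},
\]
which is an \emph{upper} bound on $\E{\norm{X-c}^{\alpha}}$ and is useless for the claim — rearranged, the claim requires $\E{\norm{X-c}^{\alpha}} \ge 2^{\alpha - 2}\E{\norm{X - m}^{\alpha}}$, a \emph{lower} bound. \Cref{lem:euclidean-norm-expansion-generalized} is one-sided, so no choice of $(a,b)$ summing to $X - c$ can give you a lower bound on $\norm{X-c}^{\alpha}$. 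The sentence asserting that "sign bookkeeping" yields $\E{\norm{X-c}^{\alpha}} \ge 2^{2-\alpha}\E{\norm{X-m}^{\alpha}}$ does not follow from what precedes it, and that inequality is in fact false: taking $c = m$ makes both sides equal to $\E{\norm{X-m}^{\alpha}}$ up to the prefactor $2^{2-\alpha} > 1$ when $\alpha < 2$.

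The fix — and the paper's actual route — is to expand the quantity you want to \emph{upper bound}, namely $\norm{X - m}^{\alpha} = \norm{(c - m) + (X - c)}^{\alpha}$, with $a = c - m$ (deterministic) and $b = X - c$. Then \Cref{lem:euclidean-norm-expansion-generalized} gives pointwise
\[
\norm{X - m}^{\alpha} \le \norm{c - m}^{\alpha} + \alpha \norm{c - m}^{\alpha - 2}\ps{c - m}{X - c} + 2^{2-\alpha}\norm{X - c}^{\alpha},
\]
and taking expectations, $\E{\ps{c - m}{X - c}} = \ps{c-m}{m-c} = -\sqnorm{c - m}$, so the middle term becomes $-\alpha\norm{c - m}^{\alpha}$. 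The first two terms combine to $(1 - \alpha)\norm{c - m}^{\alpha} \le 0$ since $\alpha > 1$, leaving $\E{\norm{X - m}^{\alpha}} \le 2^{2-\alpha}\E{\norm{X - c}^{\alpha}}$. Note that the sign check you should be doing is $1 - \alpha \le 0$ on the coefficient of $\norm{c - m}^{\alpha}$, not $1 - 2^{2-\alpha} \ge 0$ as you wrote.
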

	
	\begin{proof}
		By~\Cref{lem:euclidean-norm-expansion-generalized} we have
		\begin{alignat*}{2}
			\E{\norm{X - \E{X}}^{\alpha}} & = \E{\norm{\left( X - c \right) + \left( c - \E{X} \right)}^{\alpha}} \\
			\oversetref{Lem.}{\ref{lem:euclidean-norm-expansion-generalized}}&{\le} \norm{c - \E{X}}^{\alpha} + \alpha \norm{c - \E{X}}^{\alpha - 2} \E{\ps{c - \E{X}}{X - c}} + 2^{2 - \alpha} \E{\norm{X - c}^{\alpha}} \\
			& = \norm{c - \E{X}}^{\alpha} - \alpha \norm{c - \E{X}}^{\alpha} + 2^{2 - \alpha} \E{\norm{X - c}^{\alpha}} \\
			\oversetrel{rel:55614670-ed22-4bb1-b96d-621007650db7}&{\le} 2^{2 - \alpha} \E{\norm{X - c}^{\alpha}},
		\end{alignat*}
		where in~\relref{rel:55614670-ed22-4bb1-b96d-621007650db7} we use $\alpha > 1$.
	\end{proof}
	
	\begin{lemma}[Jensen's Inequality]\label{lem:jensen-inequality}
		Let $f \colon \R^d \to \R$ be a convex function then
		\begin{enumerate}
			\item (Probabilistic Form) for any random vector $X \in \R^d$ we have
			$$\E{f(X)} \ge f\left(\E{X}\right).$$
			
			\item (Deterministic Form) for any vectors $v_1, \ldots, v_n \in \R^d$ and scalars $\lbd_1, \ldots, \lbd_n \in \R_+$ we have
			$$\sum_{i = 1}^n \lbd_i f(v_i) \ge f\left( \sum_{i = 1}^n \lbd_i v_i \right),$$
			provided $\lbd_i \ge 0$ for all $i \in [n]$ and $\sum\limits_{i = 1}^n \lbd_i = 1$.
		\end{enumerate}
	\end{lemma}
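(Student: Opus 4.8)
The plan is to obtain both parts from the \emph{supporting hyperplane} characterization of convexity. Since $f \colon \R^d \to \R$ is convex and finite everywhere, its effective domain is all of $\R^d$, so $f$ is continuous and subdifferentiable at every point; in particular, writing $m \eqdef \E{X}$ (well defined because $X$ is implicitly assumed integrable), there exists $g \in \R^d$ with
\[ f(y) \ge f(m) + \ps{g}{y - m} \quad \text{for all } y \in \R^d. \]

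\textbf{Probabilistic form.} I would instantiate the above at $y = X$, which gives the pointwise (a.s.) inequality $f(X) \ge f(m) + \ps{g}{X - m}$. The right-hand side is integrable since $X$ is, hence $f(X)^-$ is integrable and $\E{f(X)}$ is well defined in $(-\infty, +\infty]$; if it equals $+\infty$ the claim is immediate, and otherwise taking expectations of both sides and using linearity yields $\E{f(X)} \ge f(m) + \ps{g}{\E{X} - m} = f(m) = f(\E{X})$, as desired.

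\textbf{Deterministic form.} The quickest route is to specialize part~1: since $\lbd_1, \ldots, \lbd_n \ge 0$ and $\sum_{i} \lbd_i = 1$, one may take a random vector $X$ with $\Proba{X = v_i} = \lbd_i$ for $i \in [n]$; then $\E{X} = \sum_i \lbd_i v_i$ and $\E{f(X)} = \sum_i \lbd_i f(v_i)$, so part~1 gives exactly $\sum_i \lbd_i f(v_i) \ge f\big(\sum_i \lbd_i v_i\big)$. Alternatively, one can argue by induction on $n$: the case $n = 2$ is the definition of convexity; for the inductive step, if $\lbd_n \in \{0, 1\}$ the statement is trivial, and otherwise one writes $\sum_{i=1}^n \lbd_i v_i = (1 - \lbd_n)\,\widetilde v + \lbd_n v_n$ with $\widetilde v \eqdef \sum_{i=1}^{n-1} \tfrac{\lbd_i}{1 - \lbd_n} v_i$, applies convexity once, and then the inductive hypothesis to the convex combination $\widetilde v$ (whose coefficients sum to $1$).

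There is essentially no obstacle in this proof; the only two points warranting a word of care are the \emph{existence} of the subgradient $g$ (automatic because $f$ is finite-valued on all of $\R^d$, so it has a nonempty subdifferential everywhere) and the \emph{well-posedness} of $\E{f(X)}$ — handled by the observation that the supporting-hyperplane inequality itself pins $f(X)$ below by an integrable quantity, so the only way expectations could fail to behave is $\E{f(X)} = +\infty$, in which case the inequality is trivially true. Everything else is bookkeeping.
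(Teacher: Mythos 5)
Your proof is correct and complete: the supporting-hyperplane argument (valid because a finite-valued convex function on all of $\R^d$ has a nonempty subdifferential everywhere) gives the probabilistic form, and your handling of the well-posedness of $\E{f(X)}$ via the integrable lower bound $f(m) + \ps{g}{X - m}$ is exactly the right way to make the expectation step rigorous; deriving the deterministic form by taking $X$ discrete with $\Proba{X = v_i} = \lbd_i$ (or by induction on $n$) is likewise standard and correct. The paper itself states this lemma as a classical fact without proof, so there is no in-paper argument to compare against; your write-up is the textbook proof and fills that gap cleanly.
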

	
	\begin{lemma}\label{lem:jensen-form-1}
		For any vectors $v_1, \ldots, v_n \in \R^d$ we have
		$$\sqnorm{\sum_{i = 1}^n v_i} \leq n \sum_{i = 1}^n \sqnorm{v_i}.$$
	\end{lemma}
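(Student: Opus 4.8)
The plan is to deduce this directly from the convexity of the squared Euclidean norm, exactly as in the proof of \Cref{lem:norm-power-alpha-inequality}. First I would observe that the map $x \mapsto \norm{x}^2$ is convex on $\R^d$, being the composition of the (convex) norm with the (convex, nondecreasing on $\R_+$) function $t \mapsto t^2$. This is the only structural input needed.

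Next I would apply the deterministic form of Jensen's inequality (\Cref{lem:jensen-inequality}, item 2) with the uniform weights $\lbd_i = \nicefrac{1}{n}$ for $i \in [n]$, which satisfy $\lbd_i \ge 0$ and $\sum_{i=1}^n \lbd_i = 1$. This yields
\[
    \norm{\frac{1}{n} \sum_{i=1}^n v_i}^2 \le \frac{1}{n} \sum_{i=1}^n \norm{v_i}^2 .
\]
Multiplying both sides by $n^2$ and using homogeneity of the norm ($\norm{\nicefrac{1}{n}\sum_i v_i}^2 = \nicefrac{1}{n^2}\,\sqnorm{\sum_i v_i}$) gives $\sqnorm{\sum_{i=1}^n v_i} \le n \sum_{i=1}^n \sqnorm{v_i}$, which is the claim. (An equally short alternative route would be to expand $\sqnorm{\sum_i v_i} = \sum_{i,j} \ps{v_i}{v_j}$ and bound each cross term via \Cref{lem:cauchy-schwarz} and the elementary inequality $ab \le \nicefrac{1}{2}(a^2+b^2)$, but the Jensen argument is cleaner and reuses machinery already in the paper.)

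There is no real obstacle here: the statement is a textbook consequence of convexity, and the only point requiring a moment's care is the bookkeeping of the $n$ versus $n^2$ factors when passing between the averaged and summed forms. I would present the argument in two or three lines.
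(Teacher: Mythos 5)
Your proof is correct and follows essentially the same route as the paper: both note that $x \mapsto \sqnorm{x}$ is convex (you via composition, the paper via $2$-strong convexity), apply the deterministic Jensen inequality with uniform weights $\lbd_i = \nicefrac{1}{n}$, and multiply through by $n^2$. Nothing to change.
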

	
	\begin{proof}
		The function $\sqnorm{\cdot} \colon \R^d \to \R$ is $\mu$-strongly convex with $\mu = 2$ so is convex thus applying Jensen's inequality~\ref{lem:jensen-inequality} with $\lbd_1 = \cdots = \lbd_n = \frac{1}{n}$ gives
		$$\sqnorm{\sum_{i = 1}^n \frac{v_i}{n}} \le \frac{1}{n} \sum_{i = 1}^n \sqnorm{v_i},$$
		and multiplying both sides by $n^2$ gives the desired inequality.
	\end{proof}

	\begin{lemma}[Markov's Inequality]\label{lem:markov-inequality}
		For any non-negative random variable $X$ and any scalar $a > 0$, we have
		\[ \Proba{X \ge a} \le \frac{\E{X}}{a}. \]
	\end{lemma}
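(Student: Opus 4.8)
The plan is to invoke the classical indicator-function (a.k.a.\ Chebyshev--Markov) argument. First I would introduce the indicator random variable $\mathbb{I}\{X \ge a\}$, which equals $1$ on the event $\{X \ge a\}$ and $0$ on its complement, and record the elementary pointwise inequality $a\,\mathbb{I}\{X \ge a\} \le X$, valid almost surely. Indeed, on $\{X \ge a\}$ the left-hand side equals $a$, which is at most $X$ by definition of the event; on the complement $\{X < a\}$ the left-hand side equals $0$, which is at most $X$ because $X$ is nonnegative by hypothesis. Note this is the only place where the assumption $X \ge 0$ is used.

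Next I would take expectations on both sides of $a\,\mathbb{I}\{X \ge a\} \le X$, using monotonicity and linearity of the expectation together with the identity $\E{\mathbb{I}\{X \ge a\}} = \Proba{X \ge a}$. This yields $a\,\Proba{X \ge a} \le \E{X}$. Finally, since $a > 0$, dividing both sides by $a$ gives the claimed bound $\Proba{X \ge a} \le \E{X}/a$, completing the proof.

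Every step here is routine, so there is no substantive obstacle; the only subtleties worth flagging are that nonnegativity of $X$ is genuinely needed (otherwise the pointwise inequality can fail on $\{X < a\}$), and that the conclusion is vacuous unless $\E{X} < a$. An equivalent route, if one prefers to avoid indicators explicitly, is to bound $\E{X} = \E{X\,\mathbb{I}\{X \ge a\}} + \E{X\,\mathbb{I}\{X < a\}} \ge \E{X\,\mathbb{I}\{X \ge a\}} \ge a\,\Proba{X \ge a}$, again using $X \ge 0$ to drop the second term; I would present the first version as it is the shortest.
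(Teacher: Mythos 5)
Your proof is correct and is the canonical indicator-function argument for Markov's inequality. The paper states this lemma without proof (it is a classical result collected in the "Useful Identities and Inequalities" appendix), so there is no paper proof to compare against; your argument is exactly the standard one that the authors implicitly rely on.
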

	
	\begin{lemma}[Bernoulli's Inequality]\label{lem:bernoulli-inequality}
		For any real number $x \ge -1$ and $r \in \ens{0} \cup \intfo{1}{+\infty}$ we have
		\[ (1 + x)^r \ge 1 + r x. \]
	\end{lemma}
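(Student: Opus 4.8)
The plan is to deduce Bernoulli's inequality from the first-order (tangent-line) characterization of convexity applied to the power function $\phi(t) \eqdef t^r$ on $[0,+\infty)$; this is the cleanest route and keeps everything to a few lines.

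First I would dispose of the degenerate cases. If $r = 0$ then, with the usual convention $0^0 = 1$ (or simply restricting to $x > -1$ in that case), both sides of $(1+x)^r \ge 1 + rx$ equal $1$; if $r = 1$ then both sides equal $1 + x$. In either case the inequality holds with equality, so it remains to treat $r > 1$.

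For $r > 1$, put $t \eqdef 1 + x$; since $x \ge -1$ we have $t \ge 0$, so $t$ lies in the domain of $\phi$. The function $\phi(t) = t^r$ is convex on $[0,+\infty)$: it is twice differentiable on $(0,+\infty)$ with $\phi''(t) = r(r-1)t^{r-2} \ge 0$ (here $r - 1 > 0$), and it is continuous at $0$. The tangent-line inequality for convex functions, taken at the base point $t_0 = 1$, then yields
\[ \phi(t) \ge \phi(1) + \phi'(1)\,(t - 1) = 1 + r(t - 1) = 1 + rx, \]
which is exactly the claimed bound. At the boundary point $t = 0$, i.e.\ $x = -1$, the inequality reads $0 \ge 1 - r$, which holds since $r \ge 1$; alternatively it follows by continuity from the interior.

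There is essentially no obstacle here, as the statement is elementary. If one preferred to avoid invoking convexity explicitly, the same conclusion follows from a one-variable monotonicity argument: set $g(x) \eqdef (1+x)^r - 1 - rx$ on $[-1,+\infty)$, observe $g(0) = 0$ and $g'(x) = r\big((1+x)^{r-1} - 1\big)$, which is negative on $(-1,0)$ and positive on $(0,+\infty)$ when $r > 1$; hence $g$ attains its minimum value $0$ at $x = 0$, so $g \ge 0$ throughout $[-1,+\infty)$. Either route settles the lemma.
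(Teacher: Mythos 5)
Your proof is correct. The paper states Bernoulli's inequality without proof since it is a classical elementary fact, so there is no "paper approach" to compare against; your tangent-line/convexity argument (and the alternative one-variable monotonicity route you sketch) is a standard and complete derivation, with the boundary case $t = 0$ and the degenerate cases $r \in \{0,1\}$ handled cleanly.
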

	
	\begin{lemma}[{Absolute $p^{\textnormal{th}}$ Central Moment of Normal Distribution~\citep[eq.~(18)]{winkelbauer2014momentsabsolutemomentsnormal}}]\label{appdx-lem:absolute-central-moment-gaussian}
		For any real number $p > -1$, $\sigma_1 > 0$ and $\mu \in \R$, if $X \sim \mathcal{N}(\mu, \sigma_1^2)$ then
		\[ \E{\abs{X - \mu}^p} = 2^{\frac{p}{2}} \sigma_1^p \cdot \frac{\Gamma\left( \frac{p + 1}{2} \right)}{\sqrt{\pi}}, \]
		where $\Gamma$ is the Euler's gamma function (see~\citet[{(5.2.1)}]{NIST:DLMF} for formal definition).
	\end{lemma}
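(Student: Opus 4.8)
The plan is to reduce the computation to the standard normal and then recognize the resulting integral as a rescaled Euler Gamma integral. First I would write $X = \mu + \sigma_1 Z$ with $Z \sim \mathcal{N}(0,1)$, so that $\E{\abs{X-\mu}^p} = \sigma_1^p\, \E{\abs{Z}^p}$. Since the claimed formula depends on $\sigma_1$ only through the factor $\sigma_1^p$, it suffices to show that $\E{\abs{Z}^p} = 2^{p/2}\,\Gamma\!\left(\tfrac{p+1}{2}\right)/\sqrt{\pi}$.

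Next I would expand $\E{\abs{Z}^p}$ against the standard Gaussian density and use the evenness of $z \mapsto \abs{z}^p e^{-z^2/2}$ to fold the integral onto the positive half-line, obtaining $\E{\abs{Z}^p} = \frac{2}{\sqrt{2\pi}}\int_0^{\infty} z^p e^{-z^2/2}\odif{z}$. Then the substitution $u = z^2/2$ — so that $z = (2u)^{1/2}$ and $\odif{z} = (2u)^{-1/2}\odif{u}$ — turns the remaining integral into
\[ \int_0^\infty (2u)^{p/2} e^{-u}\,(2u)^{-1/2}\odif{u} = 2^{(p-1)/2}\int_0^\infty u^{(p-1)/2}e^{-u}\odif{u} = 2^{(p-1)/2}\,\Gamma\!\left(\tfrac{p+1}{2}\right), \]
using the definition of the Gamma function. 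Here the hypothesis $p > -1$ is precisely what is needed: it gives $(p-1)/2 > -1$, which guarantees convergence of the integral at the origin and well-definedness of $\Gamma\!\left(\tfrac{p+1}{2}\right)$, while the Gaussian factor takes care of integrability at infinity for every $p$.

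Combining the two displays yields $\E{\abs{Z}^p} = \frac{2}{\sqrt{2\pi}}\cdot 2^{(p-1)/2}\,\Gamma\!\left(\tfrac{p+1}{2}\right) = \frac{2^{p/2}}{\sqrt{\pi}}\,\Gamma\!\left(\tfrac{p+1}{2}\right)$, and multiplying back by $\sigma_1^p$ gives the claim. There is essentially no obstacle: once the reduction to $\mathcal{N}(0,1)$ is made, the whole argument is a single change of variables, and the only point requiring care is tracking the assumption $p > -1$ to justify the integrability at $z = 0$. Alternatively, one may simply invoke eq.~(18) of~\citet{winkelbauer2014momentsabsolutemomentsnormal} verbatim, as the paper does.
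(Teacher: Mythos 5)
Your computation is correct. The paper itself offers no proof of this lemma --- it simply invokes eq.~(18) of \citet{winkelbauer2014momentsabsolutemomentsnormal} --- so there is nothing to compare against except the citation, and your last sentence already acknowledges that shortcut. Your self-contained derivation is the standard one and checks out in every detail: the reduction $X=\mu+\sigma_1 Z$ correctly isolates the factor $\sigma_1^p$, the substitution $u=z^2/2$ produces $2^{(p-1)/2}\Gamma\!\left(\tfrac{p+1}{2}\right)$, and the prefactor bookkeeping $\tfrac{2}{\sqrt{2\pi}}\cdot 2^{(p-1)/2}=\tfrac{2^{p/2}}{\sqrt{\pi}}$ is right. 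You also correctly identify that the hypothesis $p>-1$ is exactly the condition $(p-1)/2>-1$ needed for integrability at the origin; this is the only place the hypothesis enters, and it is worth noting that the paper only ever applies the lemma with $p>2$ (in the proof of the bound~\eqref{4f3b2ba5-4a06-44f5-b589-94a0605dc4ab}), where the condition is trivially satisfied.
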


	\begin{definition}[Operator Norm]\label{appdx-def:operator-norm}
		Given a $d \times d$ matrix $A \in \R^{d \times d}$, the operator norm of $A$ is the norm
		\[ \normop{A} \eqdef \sup_{y \in \R^d, \norm{y} = 1} \norm{A y}, \]
		where $\norm{\cdot}$ is the standard euclidean norm in $\R^d$.
	\end{definition}
	
	\begin{lemma}[Lipchitz Gradients Implies Bounded Hessian]\label{appdx-lem:lipschitz-gradients-implies-bounded-hessian}
		Given $F \colon \R^d \to \R$ a twice continuously differentiable function over $\R^d$ such that its gradients are $L_1$--Lipschitz continuous for some constant $L_1 \ge 0$ then, for all $x \in \R^d$ we have
		\[ \normop{\nabla^2 F(x)} \le L_1. \]
	\end{lemma}
	The above lemma can be extended to $p^{\textnormal{th}}$-order continuously differentiable function, $p \ge 2$ for which the $(p - 1)^{\textnormal{th}}$-order derivative of $F$ is Lipschitz continuous in the operator norm induced by $\ell^2$--norm. 
	
	\begin{lemma}\label{appdx-lem:bound-norm-gradient-by-L-and-Delta}
		Given $F \colon \R^d \to \R$ a continuously differentiable and lower bounded function over $\R^d$ such that its gradients are $L_1$--Lipschitz continuous for some constant $L_1 \ge 0$ then, for all $x \in \R^d$ we have
		\[ \norm{\nabla F(x)}^2 \le 2 L_1 \left( F(x) - F^{\inf} \right), \]
		for any $F^{\inf} \le \inf_{x \in \R^d} F(x)$.
	\end{lemma}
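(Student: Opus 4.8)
The plan is to invoke the standard \say{descent lemma} consequence of $L_1$--Lipschitz gradients and then optimize over a one-step gradient move. First I would recall that $L_1$--smoothness of $F$ (\Cref{ass:L-lipschitz-gradients}) yields, for all $x, y \in \R^d$,
\[ F(y) \le F(x) + \ps{\nabla F(x)}{y - x} + \frac{L_1}{2} \sqnorm{y - x}. \]
This is the classical quadratic upper bound; it follows from the fundamental theorem of calculus applied to $t \mapsto F(x + t(y - x))$ together with Cauchy--Schwarz (\Cref{lem:cauchy-schwarz}) and the Lipschitz bound on $\nabla F$.

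Next, assuming $L_1 > 0$, I would specialize the inequality to the point $y = x - \frac{1}{L_1} \nabla F(x)$. Substituting gives
\[ F(y) \le F(x) - \frac{1}{L_1} \sqnorm{\nabla F(x)} + \frac{L_1}{2} \cdot \frac{1}{L_1^2} \sqnorm{\nabla F(x)} = F(x) - \frac{1}{2 L_1} \sqnorm{\nabla F(x)}. \]
Since $F$ is lower bounded and $F^{\inf} \le \inf_{z \in \R^d} F(z) \le F(y)$, rearranging yields
\[ \frac{1}{2 L_1} \sqnorm{\nabla F(x)} \le F(x) - F(y) \le F(x) - F^{\inf}, \]
which is exactly the claimed bound $\sqnorm{\nabla F(x)} \le 2 L_1 (F(x) - F^{\inf})$.

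Finally I would dispose of the degenerate case $L_1 = 0$: then $\nabla F$ is constant on $\R^d$, and if this constant were nonzero, $F$ would be unbounded below along a ray, contradicting lower boundedness; hence $\nabla F \equiv 0$ and the inequality holds trivially (both sides vanish, using $F(x) - F^{\inf} \ge 0$). I do not expect any genuine obstacle here — the only points requiring minor care are choosing the stepsize $1/L_1$ to make the right-hand side as small as possible and separating out the $L_1 = 0$ case so that division by $L_1$ is legitimate.
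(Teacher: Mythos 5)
Your proof is correct, and it is the standard argument for this classical fact: apply the quadratic upper bound implied by $L_1$-Lipschitz gradients at $y = x - \tfrac{1}{L_1}\nabla F(x)$, then use $F^{\inf} \le F(y)$. The paper states this lemma without proof in its technical appendix, so there is nothing to compare against; your handling of the degenerate $L_1 = 0$ case (where $\nabla F$ is constant, hence zero by lower boundedness, making both sides vanish) is a reasonable touch that covers the full range $L_1 \ge 0$ in the statement.
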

	
	\begin{lemma}\label{appdx-lem:bound-taylor-gradient-hessian}
		Given $F \colon \R^d \to \R$ a twice continuously differentiable and lower bounded function over $\R^d$ such that its Hessians are $L_2$--Lipschitz continuous for some constant $L_2 \ge 0$ then, for all $x, y \in \R^d$ we have
		\[ \norm{\nabla F(x) - \nabla F(y) - \nabla^2 F(y) (x - y)} \le \frac{L_2}{2} \sqnorm{x - y}. \]
	\end{lemma}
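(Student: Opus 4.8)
The plan is to prove the claim by integrating the Hessian along the segment $[y, x]$ and comparing it to its value at the endpoint $y$. First I would invoke the fundamental theorem of calculus in the form
\[
    \nabla F(x) - \nabla F(y) = \int_0^1 \nabla^2 F\big( y + t (x - y) \big) (x - y) \odif{t},
\]
which is valid since $F$ is twice continuously differentiable. Subtracting $\nabla^2 F(y)(x - y) = \int_0^1 \nabla^2 F(y)(x - y) \odif{t}$ from both sides then yields
\[
    \nabla F(x) - \nabla F(y) - \nabla^2 F(y)(x - y) = \int_0^1 \left[ \nabla^2 F\big( y + t (x - y) \big) - \nabla^2 F(y) \right] (x - y) \odif{t}.
\]

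Next I would take the $\ell^2$--norm of both sides, pull the norm inside the integral (triangle inequality for vector-valued integrals), and bound the integrand using the definition of the operator norm (\Cref{appdx-def:operator-norm}):
\[
    \norm{\left[ \nabla^2 F\big( y + t (x - y) \big) - \nabla^2 F(y) \right] (x - y)} \le \normop{\nabla^2 F\big( y + t (x - y) \big) - \nabla^2 F(y)} \cdot \norm{x - y}.
\]
Finally, applying the $L_2$--Lipschitz continuity of the Hessian gives $\normop{\nabla^2 F( y + t (x - y)) - \nabla^2 F(y)} \le L_2 \norm{t(x - y)} = L_2 t \norm{x - y}$, so the integral is bounded by $L_2 \norm{x - y}^2 \int_0^1 t \odif{t} = \frac{L_2}{2} \sqnorm{x - y}$, which is the desired inequality.

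This is a routine Taylor-remainder estimate, so there is no genuine obstacle; the only point requiring a little care is justifying the interchange of norm and integral and the operator-norm bound on a matrix acting on a vector, both of which are standard. No lower-boundedness of $F$ is actually needed for this particular estimate (it is listed in the hypotheses only for uniformity with the companion lemma \Cref{appdx-lem:bound-norm-gradient-by-L-and-Delta}), so I would simply not use it.
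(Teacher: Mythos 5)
Your proof is correct and is the standard Taylor-remainder argument; the paper in fact states this lemma without proof in its collection of technical facts, and the argument you give (fundamental theorem of calculus for the gradient along the segment, operator-norm bound on the integrand, $L_2$-Lipschitz continuity of the Hessian, and $\int_0^1 t \, \mathrm{d}t = \tfrac{1}{2}$) is exactly the canonical derivation one would supply. Your observation that lower boundedness of $F$ plays no role in this estimate is also correct.
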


	\begin{lemma}[{Landau-Kolmogorov's Inequalities on a Finite Interval~\citep[Theorem~2]{Fabry_1987}}]\label{appdx-lem:landau-kolmogorov-inequalities}
		Let $p \ge 1$ be an integer, $a \le b$ be real numbers and $F \in \mathcal{C}^p(\intff{a}{b}, \R)$. For any $k \in \Int{0}{p}$ let $M_k \eqdef \sup_{x \in \intff{a}{b}} \abs{F^{(k)}(x)} \in \intff{0}{+\infty}$ then, if $M_0$ and $M_p$ are finite, we have for all $k \in [p]$,
		\begin{enumerate}
			\item $M_k$ is finite,
			\item the inequality
			\[ M_k \le c_{k, p} M_0^{\frac{p - k}{p}} \max\ens{\frac{M_p}{2^{p - 1} p!}, 4^p (b - a)^{-p} M_0}^{\frac{k}{p}}, \numberthis\label{ce6d9b55-0646-4949-b2b3-2311a3f3f37b} \]
			holds where $c_{k, p} > 0$ is the universal constant
			\[ c_{k, p} \eqdef \frac{p 2^k k!}{p + k} \binom{p + k}{p - k}. \]
		\end{enumerate}
	\end{lemma}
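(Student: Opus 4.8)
The plan is to prove~\eqref{ce6d9b55-0646-4949-b2b3-2311a3f3f37b} by reducing to a fixed interval and then running a local Taylor/finite-difference estimate that is optimized over the step size. The first assertion (finiteness of $M_k$ for every $k \le p$) is immediate in our setting: since $F \in \mathcal{C}^p(\intff{a}{b}, \R)$ and $\intff{a}{b}$ is compact, each derivative $F^{(k)}$ with $k \le p$ is continuous on a compact set, hence bounded, so $M_k < +\infty$; alternatively it follows a posteriori from~\eqref{ce6d9b55-0646-4949-b2b3-2311a3f3f37b} once $M_0$ and $M_p$ are finite. So the real content is the inequality.

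\textbf{Step 1 (rescaling).} First I would normalize the interval. Set $G(t) \eqdef F\big( \tfrac{a + b}{2} + \tfrac{b - a}{2} t \big)$ for $t \in \intff{-1}{1}$, so that $G^{(k)}(t) = \big( \tfrac{b - a}{2} \big)^k F^{(k)}\big( \tfrac{a + b}{2} + \tfrac{b - a}{2} t \big)$ and $\widetilde{M}_k \eqdef \sup_{t \in \intff{-1}{1}} \abs{G^{(k)}(t)} = \big( \tfrac{b - a}{2} \big)^k M_k$. Substituting these identities, the claimed inequality for $F$ on $\intff{a}{b}$ becomes an equivalent inequality for $G$ on $\intff{-1}{1}$ with the very same constants $c_{k, p}$, so it suffices to work on $\intff{-1}{1}$, where the right-hand side reads $c_{k, p}\, \widetilde{M}_0^{(p - k)/p} \max\{ \widetilde{M}_p/(2^{p - 1} p!),\, \widetilde{M}_0 \}^{k/p}$.

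\textbf{Step 2 (local estimate and optimization).} Fix $t_0 \in \intff{-1}{1}$ and $h > 0$ small enough that the $p$ nodes $t_0, t_0 + h, \ldots, t_0 + (p - 1) h$ (or, if $t_0$ is near the right endpoint, the mirrored nodes) all lie in $\intff{-1}{1}$; this is possible whenever $h \le \tfrac{2}{p - 1}$. Taylor-expanding $G$ about $t_0$ to order $p$ with integral remainder gives $G(t_0 + j h) = \sum_{i = 0}^{p - 1} \tfrac{(j h)^i}{i!} G^{(i)}(t_0) + R_j$ with $\abs{R_j} \le \tfrac{(j h)^p}{p!} \widetilde{M}_p$. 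There exist coefficients $c_0, \ldots, c_{p - 1}$ depending only on $p$ and $k$ — the solution of the Vandermonde system $\sum_j c_j j^i = i!\, \delta_{ik}$ for $0 \le i \le p - 1$ — with $h^k G^{(k)}(t_0) = \sum_j c_j G(t_0 + j h) - \sum_j c_j R_j$, whence
\[
    \abs{G^{(k)}(t_0)} \le A_{k, p}\, h^{-k}\, \widetilde{M}_0 + B_{k, p}\, h^{p - k}\, \widetilde{M}_p,
\]
with $A_{k, p} \eqdef \sum_j \abs{c_j}$ and $B_{k, p} \eqdef \tfrac{1}{p!} \sum_j \abs{c_j} j^p$. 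Now I would optimize over $h$: the unconstrained minimizer is $h_\star \sim ( \widetilde{M}_0 / \widetilde{M}_p )^{1/p}$. If $h_\star \le \tfrac{2}{p - 1}$ (the regime where $\widetilde{M}_p$ is large, i.e. $\widetilde{M}_p/(2^{p - 1} p!) \gtrsim \widetilde{M}_0$), plugging in $h_\star$ yields $\abs{G^{(k)}(t_0)} \lesssim \widetilde{M}_0^{(p - k)/p} \widetilde{M}_p^{k/p}$, the first argument of the $\max$; if $h_\star > \tfrac{2}{p - 1}$ (the regime where $\widetilde{M}_p$ is small, so the interval is too short for the top derivative to matter), take the largest admissible step $h = \tfrac{2}{p - 1}$, giving $\abs{G^{(k)}(t_0)} \lesssim (p - 1)^k \widetilde{M}_0$, which after undoing the rescaling is precisely the $4^p (b - a)^{-p} M_0$ term inside the $\max$. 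Taking $\sup_{t_0}$ in both regimes produces~\eqref{ce6d9b55-0646-4949-b2b3-2311a3f3f37b}.

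\textbf{Main obstacle.} The crude equally-spaced-node argument already gives the qualitative bound $M_k \lesssim M_0^{(p - k)/p} \max\{ M_p,\, (b - a)^{-p} M_0 \}^{k/p}$ with \emph{some} constant depending on $(k, p)$ — which is all the applications in this paper actually require. The delicate point is matching the \emph{explicit} constant $c_{k, p} = \tfrac{p\, 2^k k!}{p + k} \binom{p + k}{p - k}$: this forces one to choose Chebyshev-extremal nodes instead of equally spaced ones, so that the worst case in the ``$\widetilde{M}_p$ small'' regime is a shifted Chebyshev polynomial $T_p$, whose derivative norm $\norm{T_p^{(k)}}_{\intff{-1}{1}}$ has a closed form of exactly that binomial shape (the Markov-brothers' mechanism), together with careful bookkeeping of the endpoint cases and of the exact threshold between the two regimes. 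For the sharp constant I would defer to~\citet[Theorem~2]{Fabry_1987}.
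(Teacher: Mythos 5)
The paper does not prove this lemma: it is imported verbatim as \citet[Theorem~2]{Fabry_1987}, so there is no in-paper proof to compare against. Your task here was really only to recognize that this is a cited external result, and your proposal does do that explicitly by deferring to the reference for the sharp constant $c_{k,p}$.

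That said, the sketch you give for the \emph{qualitative} bound is essentially the standard finite-difference / Taylor argument, and it is structurally sound: solve the Vandermonde system for coefficients annihilating all derivatives except the $k$-th, split the bound into a $h^{-k}M_0$ term and a $h^{p-k}M_p$ term, optimize over the step size $h$, and cap $h$ when the interval is too short. There are two small bookkeeping slips worth flagging. First, your rescaling to $[-1,1]$ via $G(t) = F\bigl(\tfrac{a+b}{2} + \tfrac{b-a}{2}t\bigr)$ turns $4^p(b-a)^{-p}M_0$ into $\bigl(\tfrac{2}{b-a}\bigr)^p\,2^p\,\widetilde{M}_0$, not $\bigl(\tfrac{2}{b-a}\bigr)^p\,\widetilde{M}_0$; so after factoring out $\bigl(\tfrac{2}{b-a}\bigr)^p$ from the $\max$ the second argument should be $2^p\widetilde{M}_0$, not $\widetilde{M}_0$. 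Second, the admissibility condition on $h$ is worse than $h \le \tfrac{2}{p-1}$: for a generic $t_0$ (say $t_0 = 0$) you only have half the interval to one side, so the forward-or-backward node scheme requires $h \le \tfrac{1}{p-1}$, which further inflates the constant. Neither issue affects the shape of the bound, only the constant—and since the exact $c_{k,p} = \tfrac{p\,2^k\,k!}{p+k}\binom{p+k}{p-k}$ requires the Chebyshev-extremal analysis of Fabry's theorem, you are correct that matching it from scratch is beyond a short proof and that the citation is the right move. The applications in this paper (\Cref{appdx-lem:lower-bounding-F-steepest-descent-direction} and the lemma following it) only use the inequality up to a $(k,p)$-dependent constant, so your weaker qualitative version would already suffice for everything downstream.
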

	
	\begin{remark}
		Notably, the constant $c_{k, p}$ appearing in the inequality~\eqref{ce6d9b55-0646-4949-b2b3-2311a3f3f37b} does not depend on the choice of the function $F$ while the $\ens{M_k}_{k \in \Int{0}{p}}$ do.
	\end{remark}
	
	\begin{remark}
		Other related inequalities can be found in~\citet{Mitrinović1991,Chen_1993}.
	\end{remark}
	
	\begin{lemma}\label{appdx-lem:lower-bounding-F-steepest-descent-direction}
		Let $F \colon \R^d \to \R$ be a continuously differentiable function over $\R^d$ with $L$--Lipschitz gradients for some $L > 0$ and $x_0 \in \R^d$. Assume that $\norm{\nabla F(x_0)} > 0$ then for all $t \in \intff{0}{2}$ we have
		\[ F(x_0 + t v) \le F(x_0), \]
		where $v = - \frac{1}{L} \nabla F(x_0)$.
	\end{lemma}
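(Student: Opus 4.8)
Looking at this final lemma, I need to prove that for a continuously differentiable $F$ with $L$-Lipschitz gradients, moving along the negative gradient direction (scaled by $1/L$) with step $t \in [0,2]$ does not increase the function value.

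\textbf{Proof proposal.}

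The plan is to use the standard descent inequality for functions with $L$-Lipschitz gradients (the ``$L$-smoothness'' upper bound), namely
\[
F(y) \le F(x_0) + \ps{\nabla F(x_0)}{y - x_0} + \frac{L}{2}\sqnorm{y - x_0}
\]
for all $y \in \R^d$. First I would set $y = x_0 + t v$ with $v = -\frac{1}{L}\nabla F(x_0)$, so that $y - x_0 = -\frac{t}{L}\nabla F(x_0)$. Substituting into the descent inequality yields
\[
F(x_0 + t v) \le F(x_0) - \frac{t}{L}\sqnorm{\nabla F(x_0)} + \frac{L}{2}\cdot\frac{t^2}{L^2}\sqnorm{\nabla F(x_0)}
= F(x_0) + \left(\frac{t^2}{2L} - \frac{t}{L}\right)\sqnorm{\nabla F(x_0)}.
\]
The coefficient is $\frac{t}{L}\left(\frac{t}{2} - 1\right)\sqnorm{\nabla F(x_0)}$, which is nonpositive precisely when $0 \le t \le 2$ (using $t \ge 0$ and $\frac{t}{2} - 1 \le 0$ when $t \le 2$, together with $L > 0$ and $\sqnorm{\nabla F(x_0)} \ge 0$). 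Hence $F(x_0 + t v) \le F(x_0)$ for all $t \in [0,2]$, as claimed. The hypothesis $\norm{\nabla F(x_0)} > 0$ is not strictly needed for the inequality (it holds trivially when the gradient vanishes), but it ensures $v$ is a genuine descent direction in the non-degenerate case.

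\textbf{Main obstacle.} There is essentially no obstacle here — the only ``ingredient'' is the descent lemma for $L$-smooth functions, which is classical (it follows from integrating $\nabla F$ along the segment $[x_0, y]$ and applying the Lipschitz bound $\norm{\nabla F(z) - \nabla F(x_0)} \le L\norm{z - x_0}$ together with Cauchy--Schwarz). Since the paper already invokes this inequality repeatedly (e.g.\ in the descent lemmas of Section~\ref{sec:optimal-method-nsgd-mvr}), I would simply cite it or re-derive it in one line. The only care point is tracking the sign of the quadratic coefficient $\frac{t^2}{2L} - \frac{t}{L}$ over the interval $t\in[0,2]$, which is routine.
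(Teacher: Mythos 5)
Your proof is correct and follows exactly the paper's argument: both apply the $L$-smoothness descent inequality at $y = x_0 + tv$ and observe that the resulting coefficient $\frac{t}{L}\left(\frac{t}{2} - 1\right) \le 0$ for $t \in [0,2]$. Your remark that $\norm{\nabla F(x_0)} > 0$ is not strictly needed is also consistent with the paper's own remark following the lemma.
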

	
	\begin{proof}
		Using the fact that the function $F$ has $L$--Lipschitz gradients then we know $F$ is $L$--smooth (\cite{nesterov2018lectures}) hence, for all $t \in \intff{0}{2}$ we have
		\begin{alignat*}{2}
			D_F(x_0 + t v, x_0) \eqdef F(x_0 + t v) - F(x_0) - \ps{\nabla F(x_0)}{(x_0 + t v) - x_0} \le \frac{L}{2} \sqnorm{(x_0 + t v) - x_0} = \frac{L t^2}{2} \sqnorm{v},
		\end{alignat*}
		where $D_F(x, y)$ denotes the Bregman divergence of $F$ at $x, y \in \R^d$. Rewriting the above inequality using the choice of $v$ gives
		\begin{alignat*}{2}
			F(x_0 + t v) & \le F(x_0) + t \ps{\nabla F(x_0)}{v} + \frac{L t^2}{2} \sqnorm{v} \\
			& = F(x_0) - \frac{t}{L} \sqnorm{\nabla F(x_0)} + \frac{t^2}{2 L} \sqnorm{\nabla F(x_0)} \\
			& = F(x_0) - \frac{t}{L} \sqnorm{\nabla F(x_0)} \left( 1 - \frac{t}{2} \right) \\
			& \le F(x_0),
		\end{alignat*}
		since $0 \le t \le 2$ and $L > 0$. This achieves the proof of the lemma.
	\end{proof}
	
	\begin{remark}
		The above lemme still holds when $\norm{\nabla F(x_0)} = 0$, i.e., $v = 0$ but it is not of interest.
	\end{remark}
	
	\begin{lemma}
		Let $F \colon \R^d \to \R$ be $p+1$ times continuously differentiable ($p \ge 1$) over $\R^d$. Assume that
		\begin{enumerate}
			\item for all $k \in [p]$, the function $\nabla^k F$ is $L_k$--Lipschitz continuous for some $L_k > 0$,
			\item the function $F$ is lower bounded over $\R^d$ and we denote $F^{\inf} \eqdef \inf_{x \in \R^d} F(x)$,
		\end{enumerate}
		then, for all $k \in [p]$ there exists a constant $c_k \eqdef (k + 1)^2 > 0$, depending only on $k$ such that for all $x_0 \in \R^d$
		\[ \norm{\nabla F(x_0)} \le c_k \left( F(x_0) - F^{\inf} \right)^{\nicefrac{k}{k + 1}} \max\ens{\frac{L_k^{\frac{1}{k + 1}}}{2^{\frac{k}{k + 1}} [(k + 1)!]^{\frac{1}{k + 1}}}, \frac{2 L_1 \left( F(x_0) - F^{\inf} \right)^{\frac{1}{k + 1}}}{\norm{\nabla F(x_0)}}}. \numberthis\label{e7a373a5-ff82-4f5f-9ece-a932573b577d} \]
		
		In particular, if the gradient of $F$ at $x_0$ is large (say, $\norm{\nabla F(x_0)} = \Omega(\sqrt{L_1 \left( F(x_0) - F^{\inf} \right)})$, see~\Cref{appdx-lem:bound-norm-gradient-by-L-and-Delta}) and the Lipschitz constant $L_k$ dominates, then~\eqref{e7a373a5-ff82-4f5f-9ece-a932573b577d} simplifies to
		\[ \norm{\nabla F(x_0)} \le \widetilde{c}_k L_k^{\nicefrac{1}{k + 1}} \left( F(x_0) - F^{\inf} \right)^{\nicefrac{k}{k + 1}}, \numberthis\label{5ea0787a-1b21-4f89-9e4e-032978f6ec42} \]
		for some universal constant $\widetilde{c}_k$ which depends only on $k$.
	\end{lemma}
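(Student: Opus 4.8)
The plan is to restrict $F$ to the steepest‑descent line through $x_0$ and apply the finite‑interval Landau--Kolmogorov inequality (\Cref{appdx-lem:landau-kolmogorov-inequalities}). We may assume $\norm{\nabla F(x_0)} > 0$, since otherwise the right‑hand side of~\eqref{e7a373a5-ff82-4f5f-9ece-a932573b577d} is infinite and the bound is vacuous. Set $v \eqdef -\tfrac{1}{L_1}\nabla F(x_0)$ and define $g \colon \intff{0}{2} \to \R$ by $g(t) \eqdef F(x_0 + t v) - F^{\inf}$; since $F \in \mathcal{C}^{p+1}$ with $p \ge k$, $g$ is $(k+1)$ times continuously differentiable. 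Writing $M_j \eqdef \sup_{t \in \intff{0}{2}} \abs{g^{(j)}(t)}$, I would record three facts. First, $g'(0) = \ps{\nabla F(x_0)}{v} = -\tfrac{1}{L_1}\sqnorm{\nabla F(x_0)}$, hence $M_1 \ge \tfrac{1}{L_1}\sqnorm{\nabla F(x_0)}$. Second, by \Cref{appdx-lem:lower-bounding-F-steepest-descent-direction} we have $F^{\inf} \le F(x_0 + t v) \le F(x_0)$ for all $t \in \intff{0}{2}$, so $0 \le g(t) \le F(x_0) - F^{\inf}$ and therefore $M_0 \le F(x_0) - F^{\inf}$. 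Third, $g^{(k+1)}(t) = \nabla^{k+1}F(x_0+tv)[v,\dots,v]$, and since $\nabla^k F$ is $L_k$‑Lipschitz the higher‑order analogue of \Cref{appdx-lem:lipschitz-gradients-implies-bounded-hessian} (noted right after it) gives $\normop{\nabla^{k+1}F(\cdot)} \le L_k$, whence $M_{k+1} \le L_k \norm{v}^{k+1} = L_k\,\norm{\nabla F(x_0)}^{k+1}/L_1^{k+1}$. In particular $M_0$ and $M_{k+1}$ are finite, so \Cref{appdx-lem:landau-kolmogorov-inequalities} is applicable.

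Next I would invoke \Cref{appdx-lem:landau-kolmogorov-inequalities} with $p = k+1$, $\intff{a}{b} = \intff{0}{2}$, and the index $1 \in [p]$. A short computation gives the constant $c_{1,k+1} = \tfrac{2(k+1)}{k+2}\binom{k+2}{k} = (k+1)^2 = c_k$, and $4^{p}(b-a)^{-p} = 4^{k+1}2^{-(k+1)} = 2^{k+1}$, so~\eqref{ce6d9b55-0646-4949-b2b3-2311a3f3f37b} reads
$$ M_1 \le c_k\, M_0^{\frac{k}{k+1}} \max\left\{ \frac{M_{k+1}}{2^{k}(k+1)!},\ 2^{k+1} M_0 \right\}^{\frac{1}{k+1}}. $$
Plugging in $M_1 \ge \norm{\nabla F(x_0)}^2/L_1$, $M_0 \le F(x_0)-F^{\inf}$, and $M_{k+1} \le L_k\,\norm{\nabla F(x_0)}^{k+1}/L_1^{k+1}$, and using that the right‑hand side above is monotone in $M_0$ and $M_{k+1}$, I obtain
$$ \frac{\norm{\nabla F(x_0)}^2}{L_1} \le c_k\,\bigl(F(x_0)-F^{\inf}\bigr)^{\frac{k}{k+1}} \max\left\{ \frac{L_k\,\norm{\nabla F(x_0)}^{k+1}}{2^{k}(k+1)!\,L_1^{k+1}},\ 2^{k+1}\bigl(F(x_0)-F^{\inf}\bigr) \right\}^{\frac{1}{k+1}}. $$

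Finally I would split according to which term realizes the maximum. If the first term dominates, taking its $(k+1)$‑th root and dividing both sides by $\norm{\nabla F(x_0)}/L_1 > 0$ yields precisely $\norm{\nabla F(x_0)} \le c_k\,(F(x_0)-F^{\inf})^{k/(k+1)}\, L_k^{1/(k+1)}/\bigl(2^{k/(k+1)}[(k+1)!]^{1/(k+1)}\bigr)$, the first branch of the $\max$ in~\eqref{e7a373a5-ff82-4f5f-9ece-a932573b577d}. If the second term dominates, then $(2^{k+1})^{1/(k+1)} = 2$ gives $\norm{\nabla F(x_0)}^2 \le 2 c_k L_1 (F(x_0)-F^{\inf})$, which, dividing by $\norm{\nabla F(x_0)} > 0$, rearranges to $\norm{\nabla F(x_0)} \le c_k\,(F(x_0)-F^{\inf})^{k/(k+1)}\cdot \tfrac{2 L_1 (F(x_0)-F^{\inf})^{1/(k+1)}}{\norm{\nabla F(x_0)}}$, the second branch. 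In either case $\norm{\nabla F(x_0)}$ is at most the $\max$ on the right‑hand side, which proves~\eqref{e7a373a5-ff82-4f5f-9ece-a932573b577d}; the simplification~\eqref{5ea0787a-1b21-4f89-9e4e-032978f6ec42} then follows whenever the second branch is no larger than the first (which happens, e.g., when $\norm{\nabla F(x_0)} \gtrsim \sqrt{L_1(F(x_0)-F^{\inf})}$ by \Cref{appdx-lem:bound-norm-gradient-by-L-and-Delta} and $L_k$ is large), since the $\max$ then collapses to the first term. The only genuinely delicate points are the derivative bounds in the third fact above and the final case analysis—in particular matching the Landau--Kolmogorov constant to $c_k = (k+1)^2$ and verifying that each branch of the $\max$ lands on the corresponding branch in~\eqref{e7a373a5-ff82-4f5f-9ece-a932573b577d}; everything else is routine calculation.
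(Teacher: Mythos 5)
Your proof is correct and follows essentially the same route as the paper's: restrict $F$ to the steepest-descent ray $t \mapsto x_0 + t v$ with $v = -\nabla F(x_0)/L_1$, use \Cref{appdx-lem:lower-bounding-F-steepest-descent-direction} and the Lipschitz bounds to control $M_0$, $M_1$ and $M_{k+1}$, then invoke \Cref{appdx-lem:landau-kolmogorov-inequalities} with index $1$ out of $k+1$ on $\intff{0}{2}$, recovering the same constant $c_{1,k+1} = (k+1)^2$. The only cosmetic differences are that the paper factors $\norm{\nabla F(x_0)}/L_1$ out of the $\max$ directly while you argue branch by branch, and that you explicitly record monotonicity of the right-hand side in $M_0$ and $M_{k+1}$ (a step the paper leaves implicit); both are fine.
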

	
	\begin{remark}
		The bound~\eqref{e7a373a5-ff82-4f5f-9ece-a932573b577d} is (almost) a generalization to high-order Lipschitz constant of the well-known inequality
		\[ \norm{\nabla F(x_0)} \le \sqrt{2 L_1 \left( F(x_0) - F^{\inf} \right)}, \]
		which we recall in~\Cref{appdx-lem:bound-norm-gradient-by-L-and-Delta} and which corresponds to the case $k = 1$.
		
		While it is hopeless\footnote{Inequality~\eqref{5ea0787a-1b21-4f89-9e4e-032978f6ec42} already fails for the case $k = 2$ with quadratic functions. For instance, consider $F \colon x \mapsto \frac{1}{2} \norm{x}^2$ over $\R^d$ and let $x_0 = (1, 0, \ldots, 0)$ then $\norm{\nabla F(x_0)} = \norm{x_0} = 1$, but $\nabla^2 F(x) = \mathrm{Id}$ for all $x \in \R^d$ hence we can take $L_2 = 0$ since the Hessian of $F$ is constant, but then
			\[ L_2^{\nicefrac{1}{3}} \left( F(x_0) - F^{\inf} \right)^{\nicefrac{2}{3}} = 0, \]
			and there do not exists universal constant $c_2$ for which~\eqref{5ea0787a-1b21-4f89-9e4e-032978f6ec42} can hold.} to obtain the inequality~\eqref{5ea0787a-1b21-4f89-9e4e-032978f6ec42} in full generality, i.e., without extra assumptions, in practical settings the Lipschitz constants are very large and the gradient of the objective at the initial point is also large.
	\end{remark}
	
	\begin{proof}
		Let $F \colon \R^d \to \R$ be defined as above. Then, if $\norm{\nabla F(x_0)} = 0$ the inequality~\eqref{e7a373a5-ff82-4f5f-9ece-a932573b577d} holds since $c_k > 0$ and $F(x_0) - F^{\inf} \ge 0$. Now, assume $\norm{\nabla F(x_0)} > 0$, we define the scalar function $\varphi \colon \intff{0}{2} \to \R$ as
		\[ \varphi \colon t \mapsto F(x_0 + t v) - F^{\inf}, \]
		where $v = -\frac{1}{L_1} \nabla F(x_0) \neq 0$ and $L_1$ is the Lipschitz constant of $\nabla F$. Then, the above function $\varphi$ is $p + 1$ time continuously differentiable over $\intff{0}{2}$, and for all $t \in \intff{0}{2}$ and all $k \in \Int{2}{p + 1}$ we have
		\[ \varphi^{(k)}(t) = \nabla^k F(x_0 + t v) [v, \ldots, v] = \frac{(-1)^k}{L_1^k} \nabla^k F(x_0 + t v) [g, \ldots, g], \numberthis\label{441999ae-9106-410c-a85b-fdcd81183237} \]
		where we let $g \eqdef \nabla F(x_0)$ and $\nabla^k F(x_0 + t v) [\cdot, \ldots, \cdot]$ is the $k$--linear form induced by the $k^{\textnormal{th}}$--derivative of $F$. From equality~\eqref{441999ae-9106-410c-a85b-fdcd81183237} we obtain
		\[ \abs{\varphi^{(k)}(t)} = \frac{1}{L_1^k} \abs{\nabla^k F(x_0 + t v) [g, \ldots, g]} \oversetrel{rel:acc34fcc-c875-448a-88f1-e14977032a5f}{\le} \frac{1}{L_1^k} \normop{\nabla^k F(x_0 + t v)} \cdot \norm{g}^k \oversetrel{rel:acc34fcc-c875-448a-88f1-e14977032a5f-2}{\le} \left( \frac{\norm{\nabla F(x_0)}}{L_1} \right)^k L_{k - 1}, \numberthis\label{7d81208b-8c62-409a-af2f-97a7bcb40436} \]
		where in~\relref{rel:acc34fcc-c875-448a-88f1-e14977032a5f} we use the Cauchy-Schwarz's inequality (\Cref{lem:cauchy-schwarz}) while in~\relref{rel:acc34fcc-c875-448a-88f1-e14977032a5f-2} we use~\eqref{441999ae-9106-410c-a85b-fdcd81183237} and the fact that $\nabla^r F$ is $L_r$--Lipschitz continuous for any $r \in [p]$  which implies that $\nabla^k F$ is bounded in operator norm as long as $k \ge 2$ (see~\Cref{appdx-lem:lipschitz-gradients-implies-bounded-hessian} for the case $k = 2$). Hence, if we define $M_k \eqdef \sup_{t \in \intff{0}{2}} \abs{\varphi^{(k)} (t)}$ we have, by~\eqref{7d81208b-8c62-409a-af2f-97a7bcb40436},
		\[ M_k \le \left( \frac{\norm{\nabla F(x_0)}}{L_1} \right)^k L_{k - 1} < +\infty, \numberthis\label{e0aab2e4-82dd-4824-b2b2-df7ad67d99b1} \]
		since $L_k > 0$ for all $k \in [p]$ and
		\[ M_1 = \sup_{t \in \intff{0}{2}} \abs{\varphi'(t)} = \sup_{t \in \intff{0}{2}} \abs{\ps{\nabla F(x_0 + t v)}{v}} = \frac{1}{L_1} \sup_{t \in \intff{0}{2}} \abs{\ps{\nabla F(x_0 + t v)}{\nabla F(x_0)}} \ge \frac{1}{L_1} \sqnorm{\nabla F(x_0)}. \numberthis\label{a718a294-777a-4426-9f05-5289df231fe8} \]
		Additionally,
		\[ M_0 \eqdef \sup_{t \in \intff{0}{2}} \abs{\varphi(t)} \oversetrel{rel:093df292-d584-4453-8021-1b0270938a32}{=} \sup_{t \in \intff{0}{2}} \left( F(x_0 + t v) - F^{\inf} \right) \oversetrel{rel:109aa7fb-3c92-4a72-88f4-8219339a2311}{=} F(x_0) - F^{\inf} < +\infty, \numberthis\label{6c704741-f58f-49c3-be4f-986e40421895} \]
		where~\relref{rel:093df292-d584-4453-8021-1b0270938a32} follows from $F \ge F^{\inf}$ while~\relref{rel:109aa7fb-3c92-4a72-88f4-8219339a2311} follows from~\Cref{appdx-lem:lower-bounding-F-steepest-descent-direction} since $F$ is continuously differentiable and has $L_1$--Lipschitz gradients.
		
		Then for any $k \in [p]$, since the function $\varphi$ is $k + 1 \ge 2$ times continuously differentiable over $\R$ and because the $\ens{M_{\ell}}_{\ell \in \Int{0}{p}}$ are all finite from~\eqref{e0aab2e4-82dd-4824-b2b2-df7ad67d99b1} and~\eqref{6c704741-f58f-49c3-be4f-986e40421895} then, by the Landau-Kolmogorov's inequalities (\Cref{appdx-lem:landau-kolmogorov-inequalities}),
		\[ M_1 \le c_k \, M_0^{\frac{k}{k + 1}} \max\ens{\frac{M_{k + 1}}{2^k (k + 1)!}, 2^{k + 1} M_0}^{\frac{1}{k + 1}}, \numberthis\label{80ccea34-41b0-4eb4-9743-22232a23d1f2} \]
		where $c_k \eqdef c_{1, k + 1} = (k + 1)^2$ is an universal constant depending only on $k$ (by~\Cref{appdx-lem:landau-kolmogorov-inequalities}). From the inequalities~\eqref{e0aab2e4-82dd-4824-b2b2-df7ad67d99b1} and~\eqref{a718a294-777a-4426-9f05-5289df231fe8},~\eqref{6c704741-f58f-49c3-be4f-986e40421895} along with~\eqref{80ccea34-41b0-4eb4-9743-22232a23d1f2} we obtain
		\begin{alignat*}{2}
			\frac{1}{L_1} \sqnorm{\nabla F(x_0)} & \le c_k \, \left( F(x_0) - F^{\inf} \right)^{\frac{k}{k + 1}} \max\ens{\left( \left( \frac{\norm{\nabla F(x_0)}}{L_1} \right)^{k + 1} \frac{L_k}{2^k (k + 1)!} \right)^{\frac{1}{k + 1}}, 2 M_0^{\frac{1}{k + 1}}} \\
			& = \frac{c_k}{L_1} \left( F(x_0) - F^{\inf} \right)^{\nicefrac{k}{k + 1}} \norm{\nabla F(x_0)} \max\ens{\frac{L_k^{\frac{1}{k + 1}}}{2^{\frac{k}{k + 1}} [(k + 1)!]^{\frac{1}{k + 1}}}, \frac{2 L_1 \left( F(x_0) - F^{\inf} \right)^{\frac{1}{k + 1}}}{\norm{\nabla F(x_0)}}},
		\end{alignat*}
		which simplifies to 
		\[ \norm{\nabla F(x_0)} \le c_k \left( F(x_0) - F^{\inf} \right)^{\nicefrac{k}{k + 1}} \max\ens{\frac{L_k^{\frac{1}{k + 1}}}{2^{\frac{k}{k + 1}} [(k + 1)!]^{\frac{1}{k + 1}}}, \frac{2 L_1 \left( F(x_0) - F^{\inf} \right)^{\frac{1}{k + 1}}}{\norm{\nabla F(x_0)}}}. \]
		and leads to the desired inequality.
	\end{proof}
	\newpage
	\section{Additional Analysis in The Case $p=q=2$}
	
	In this section, which is of independent interest, we study regular, not normalized, \algname{SGD} with Momentum Variance Reduction (\algname{SGD-MVR}). This method is the original one by \citet{cutkosky2019momentum}, where it is called \algname{STORM}. The method is shown as Algorithm~\ref{algo:sgd-mvr}. We show that in the case $p=q=2$, it achieves the same optimal complexity as \algname{NSGD-MVR}. 
	
	That is, normalization is not required in this particular case.\\
	
	\begin{algorithm}%
		\caption{\algname{SGD-MVR} (\algname{SGD} with \texttt{MVR})}%
		\label{algo:sgd-mvr}%
		
		\DontPrintSemicolon%
		\SetKwProg{Init}{Initialization}{:}{}%
		\Init{}{%
			$x_0 \in \R^d$, the starting point\;
			$T > 0$, the number of iterations\;
			$g_0 \in \R^d$, an initial vector\;
			$\gamma > 0$, the stepsize\;
			$\alpha \in \intof{0}{1}$, the momentum parameter for \texttt{MVR}\;
		}
		
		\vspace{\baselineskip}
		
		$x_1 \gets x_0 - \gamma g_0$\;
		\For{$t = 1, 2, \ldots, T - 1$}{%
			\tcp*[h]{Apply \texttt{MVR}.}\;
			$g_t \gets (1 - \alpha) \left( g_{t - 1} + \nabla f\left( x_t, \xi_t \right) - \nabla f \left( x_{t - 1}, \xi_t \right) \right) + \alpha \nabla f \left( x_t, \xi_t \right)$\;
			\tcp*[h]{Do one descent step.}\;
			$x_{t + 1} \gets x_t - \gamma g_t$\;
		}%
		\KwOut{$x_T$}%
	\end{algorithm}%

	\begin{theorem}\label{thm:sgd-mvr-convergence-analysis}
		Under~\Cref{ass:lower-boundedness,ass:L-lipschitz-gradients,ass:p-bounded-central-moment-gradient,ass:mean-squared-smoothness-2}, with $p=q=2$, let the initial gradient estimate $g_0$ be given by
		\[ g_0 = \frac{1}{B_{\textnormal{init}}} \sum_{j = 1}^{B_{\textnormal{init}} - 1} \nabla f\left( x_0, \xi_{0, j} \right), \]
		with $B_{\textnormal{init}} = \max\ens{1, \frac{\sigma_1^2}{\eps^2} }$, 
		momentum parameter $\alpha = \min\ens{1, \frac{\eps^2}{\sigma_1^2}}$, stepsize
		$\gamma=\frac{1}{L_1+\frac{\delta\sqrt{2}(1-\alpha)}{\sqrt{\alpha}}}$.
		Then \Cref{algo:sgd-mvr} is guaranteed to find an $\eps$-stationary point with total sample complexity
		\[ \cO\left(  \frac{\sigma_1^2}{\eps^2}  + \frac{(L_1 + \delta) \Delta}{\eps^2} + \frac{\delta \Delta\sigma_1}{\eps^3}  \right). \]
	\end{theorem}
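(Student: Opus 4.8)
The plan is to combine a one-step descent inequality for the \emph{unnormalized} update $x_{t+1}=x_t-\gamma g_t$ with a variance-reduction recursion for the estimator error, and to observe that the stated stepsize is tuned precisely so that the negative $\|g_t\|^2$ term coming from $L_1$-smoothness absorbs the $\|g_{t-1}\|^2$ term produced by $(2,\delta)$-similarity (the term that normalization would otherwise have replaced by the constant $\gamma^2$). Since $p=q=2$, \Cref{ass:mean-squared-smoothness-2} together with \Cref{ass:L-lipschitz-gradients} makes $F$ $L_1$-smooth, so the whole argument can be run with plain $\ell^2$ computations, avoiding the von Bahr--Esseen machinery of \Cref{appdx-lem:bounding-error-term-nsgd-mvr,appdx-lem:bounding-error-term-nsgd-mvr-refined}.

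\textbf{Step 1 (Descent lemma).} Writing $\hat e_t\eqdef g_t-\nabla F(x_t)$ and using the identity~\eqref{eq:identity-1} in the form $-\langle\nabla F(x_t),g_t\rangle=\tfrac12\|\hat e_t\|^2-\tfrac12\|\nabla F(x_t)\|^2-\tfrac12\|g_t\|^2$ inside $L_1$-smoothness, I obtain
\[
    F(x_{t+1})\le F(x_t)-\frac{\gamma}{2}\|\nabla F(x_t)\|^2+\frac{\gamma}{2}\|\hat e_t\|^2-\frac{\gamma}{2}(1-\gamma L_1)\|g_t\|^2 .
\]
This is the analogue of \Cref{appdx-lem:descent-lemma-nsgd-mvr-hess} but, unlike in the normalized case, I keep the $-\tfrac{\gamma}{2}(1-\gamma L_1)\|g_t\|^2$ term. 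Summing over $t\in\Int{0}{T-1}$ and using \Cref{ass:lower-boundedness} gives $\tfrac{\gamma}{2}\sum_t\E\|\nabla F(x_t)\|^2\le\Delta+\tfrac{\gamma}{2}\sum_t\E\|\hat e_t\|^2-\tfrac{\gamma}{2}(1-\gamma L_1)\sum_t\E\|g_t\|^2$.

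\textbf{Step 2 (Error recursion).} From the update of $g_t$ one has $\hat e_t=(1-\alpha)\hat e_{t-1}+\alpha e_t-(1-\alpha)\hat S_t$ with $e_t\eqdef\nabla f(x_t,\xi_t)-\nabla F(x_t)$ and $\hat S_t\eqdef[\nabla F(x_t)-\nabla f(x_t,\xi_t)]-[\nabla F(x_{t-1})-\nabla f(x_{t-1},\xi_t)]$, both mean-zero given $\mathcal F_{t-1}\eqdef\sigma(g_0,\xi_1,\dots,\xi_{t-1})$; since $x_t,g_{t-1},\hat e_{t-1}$ are $\mathcal F_{t-1}$-measurable, $\E[\hat e_t\mid\mathcal F_{t-1}]=(1-\alpha)\hat e_{t-1}$ and, using $\|u+v\|^2\le 2\|u\|^2+2\|v\|^2$ (\Cref{lem:jensen-form-1}), \Cref{ass:p-bounded-central-moment-gradient} on $e_t$, \Cref{ass:mean-squared-smoothness-2} on $\hat S_t$, and $\|x_t-x_{t-1}\|=\gamma\|g_{t-1}\|$,
\[
    \E\|\hat e_t\|^2\le(1-\alpha)^2\E\|\hat e_{t-1}\|^2+2\alpha^2\sigma_1^2+2(1-\alpha)^2\delta^2\gamma^2\,\E\|g_{t-1}\|^2 .
\]
Unrolling this recursion, summing over $t$, and using $1-(1-\alpha)^2\ge\alpha$ for the geometric sums yields
\[
    \sum_{t=0}^{T-1}\E\|\hat e_t\|^2\le\frac{\E\|\hat e_0\|^2}{\alpha}+2\alpha\sigma_1^2T+\frac{2(1-\alpha)^2\gamma^2\delta^2}{\alpha}\sum_{t=0}^{T-1}\E\|g_t\|^2 .
\]

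\textbf{Step 3 (Combine and conclude).} Substituting into Step~1, the coefficient of $\sum_t\E\|g_t\|^2$ is $\tfrac{\gamma}{2}\bigl(\gamma L_1+\tfrac{2(1-\alpha)^2\gamma^2\delta^2}{\alpha}-1\bigr)$. With $\gamma=\bigl(L_1+\tfrac{\sqrt2\,\delta(1-\alpha)}{\sqrt\alpha}\bigr)^{-1}$ one has $\gamma\cdot\tfrac{\sqrt2\,\delta(1-\alpha)}{\sqrt\alpha}\le1$, hence $\tfrac{2(1-\alpha)^2\gamma^2\delta^2}{\alpha}\le\gamma\cdot\tfrac{\sqrt2\,\delta(1-\alpha)}{\sqrt\alpha}$ and therefore $\gamma L_1+\tfrac{2(1-\alpha)^2\gamma^2\delta^2}{\alpha}\le1$, so these terms are non-positive and drop out. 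Dividing by $\gamma T/2$ gives $\tfrac1T\sum_t\E\|\nabla F(x_t)\|^2\le\tfrac{2\Delta}{\gamma T}+\tfrac{\E\|\hat e_0\|^2}{\alpha T}+2\alpha\sigma_1^2$. Then $\E\|\hat e_0\|^2\le\sigma_1^2/B_{\mathrm{init}}\le\eps^2$ up to a universal constant; with $\alpha=\min\{1,\eps^2/\sigma_1^2\}$ we get $2\alpha\sigma_1^2\le2\eps^2$, $\tfrac{\E\|\hat e_0\|^2}{\alpha T}\le\tfrac{\max\{\eps^2,\sigma_1^2\}}{T}$, and $\gamma^{-1}\le L_1+\sqrt2\,\delta\max\{1,\sigma_1/\eps\}$, so the right-hand side is $\cO(\eps^2)$ once $T=\cO\bigl(\tfrac{\sigma_1^2}{\eps^2}+\tfrac{(L_1+\delta)\Delta}{\eps^2}+\tfrac{\delta\Delta\sigma_1}{\eps^3}\bigr)$ (the $\tfrac{\delta\Delta}{\eps^2}$ summand being vacuously absorbed: when $\sigma_1<\eps$ we have $\alpha=1$ and no $\delta$ appears, when $\sigma_1\ge\eps$ it is dominated by $\tfrac{\delta\Delta\sigma_1}{\eps^3}$). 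A Jensen step turns the bound on the average of $\|\nabla F(x_t)\|^2$ into $\E\|\nabla F(\bar x)\|\le\eps$ for $\bar x$ uniform among the iterates, and accounting for the $B_{\mathrm{init}}$ initial samples together with the two gradient queries per iteration keeps the total sample complexity at the claimed order.

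\textbf{Main obstacle.} The only non-routine point is the coupling $\|x_t-x_{t-1}\|=\gamma\|g_{t-1}\|$, which feeds a $\delta^2\gamma^2\|g_{t-1}\|^2$ term into the error recursion that cannot a priori be bounded by a constant (this is exactly what normalization circumvents). Resolving it requires retaining the negative $\|g_t\|^2$ term in the descent inequality and choosing $\gamma$ small enough that the quadratic constraint $\gamma L_1+\tfrac{2(1-\alpha)^2\gamma^2\delta^2}{\alpha}\le1$ holds — which is precisely what pins down the stated stepsize; the rest is standard \algname{STORM}-type bookkeeping specialized to $p=q=2$.
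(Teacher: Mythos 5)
Your proof is correct and takes essentially the same route as the paper's: the descent inequality that retains the negative $\|g_t\|^2$ term, the $(2,\delta)$-similarity error recursion $\E\|\hat e_t\|^2\le(1-\alpha)^2\E\|\hat e_{t-1}\|^2+2\alpha^2\sigma_1^2+2(1-\alpha)^2\delta^2\gamma^2\E\|g_{t-1}\|^2$, and the stepsize constraint $\gamma L_1+\frac{2(1-\alpha)^2\gamma^2\delta^2}{\alpha}\le 1$ pinned down by the stated $\gamma$. The only difference is presentational: you unroll the error recursion and sum (using $1-(1-\alpha)^2\ge\alpha$ for the geometric sums), whereas the paper packages the identical bookkeeping into a one-step decrease of the Lyapunov function $H^t=F(x^t)-F^{\inf}+\frac{\gamma}{2\alpha}\|g_t-\nabla F(x^t)\|^2$; both yield the same bound $\frac{1}{T}\sum_t\E\|\nabla F(x_t)\|^2\le\frac{2\Delta}{\gamma T}+\frac{\E\|\hat e_0\|^2}{\alpha T}+2\alpha\sigma_1^2$ and the same complexity with the given $\alpha$, $\gamma$, and $B_{\textnormal{init}}$.
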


	\begin{proof}
		Let $t\geq 1$. 
		We have  the descent lemma 
		\begin{align*}
			F(x^{t})&\leq F(x^{t-1}) + \langle x^{t}-x^{t-1},\nabla F(x^{t-1})\rangle + \frac{L_1}{2} \sqnorm{x^{t}-x^{t-1}}\\
			&= F(x^{t-1}) -\gamma  \langle g_{t-1},\nabla F(x^{t-1})\rangle + \frac{L_1}{2} \sqnorm{x^{t}-x^{t-1}}\\
			&=F(x^{t-1}) +\frac{\gamma}{2}\sqnorm{g_{t-1}-\nabla F(x^{t-1})}-\frac{\gamma}{2}\sqnorm{\nabla F(x^{t-1})}-\frac{\gamma}{2}\sqnorm{g_{t-1}} + \frac{L_1}{2} \sqnorm{x^{t}-x^{t-1}}\\
			&=F(x^{t-1}) +\frac{\gamma}{2}\sqnorm{g_{t-1}-\nabla F(x^{t-1})}-\frac{\gamma}{2}\sqnorm{\nabla F(x^{t-1})} + \left(\frac{L_1}{2}-\frac{1}{2\gamma}\right) \sqnorm{x^{t}-x^{t-1}}.
		\end{align*}
		Therefore, denoting by $\mathcal{F}_{t-1}$ the  filtration  $\sigma(g_0, \xi_1, \ldots, \xi_{t-1})$, we have
		\begin{align*}
			\ExpCond{F(x^{t})-F^{\inf}}{\mathcal{F}_{t-1}}&\leq F(x^{t-1})-F^{\inf}  -\frac{\gamma}{2}\sqnorm{\nabla F(x^{t-1})} +\frac{\gamma}{2}\sqnorm{g_{t-1}-\nabla F(x^{t-1})} \\
			&\quad+ \left(\frac{L_1}{2}-\frac{1}{2\gamma}\right) \sqnorm{x^{t}-x^{t-1}}.
		\end{align*}
		We need to control the deviation $\sqnorm{g_{t-1}-\nabla F(x^{t-1})}$, that will appear in the Lyapunov function. So, let us study $\ExpCond{\sqnorm{g_{t}-\nabla F(x^{t})}}{\mathcal{F}_{t-1}}$. From a bias--variance decomposition, we have
		\begin{align*}
			&\ExpCond{\sqnorm{g_{t}-\nabla F(x^{t})}}{\mathcal{F}_{t-1}}\\
			&=\ExpCond{\sqnorm{\nabla f(x^{t},\xi^t) -\nabla F(x^{t}) +(1-\alpha)\big(g_{t-1} - \nabla f(x^{t-1},\xi^t) \big)}}{\mathcal{F}_{t-1}}\\
			&=(1-\alpha)^2 \sqnorm{g_{t-1}-\nabla F(x^{t-1})}\\
			&\quad+\ExpCond{\sqnorm{\nabla f(x^{t},\xi^t) -\nabla F(x^{t}) +(1-\alpha)\big(\nabla F(x^{t-1}) - \nabla f(x^{t-1},\xi^t) \big)}}{\mathcal{F}_{t-1}}\\
			&=(1-\alpha)^2 \sqnorm{g_{t-1}-\nabla F(x^{t-1})}\\
			&\quad+\ExpCond{\sqnorm{\alpha\big(\nabla f(x^{t},\xi^t) -\nabla F(x^{t})\big)+(1-\alpha) \big(\nabla f(x^{t},\xi^t) -\nabla F(x^{t})- \nabla f(x^{t-1},\xi^t) +\nabla F(x^{t-1} \big)}}{\mathcal{F}_{t-1}}\\
			&\leq (1-\alpha)^2 \sqnorm{g_{t-1}-\nabla F(x^{t-1})}+2\alpha^2 \ExpCond{\sqnorm{\nabla f(x^{t},\xi^t)  -\nabla F(x^{t})}}{\mathcal{F}_{t-1}}\\
			&\quad + 2(1-\alpha)^2 \ExpCond{\sqnorm{\nabla f(x^{t},\xi^t) -\nabla F(x^{t})- \nabla f(x^{t-1},\xi^t) +\nabla F(x^{t-1})}}{\mathcal{F}_{t-1}}\\
			&\leq (1-\alpha)^2 \sqnorm{g_{t-1}-\nabla F(x^{t-1})}+2\alpha^2 \sigma_1^2 + 2(1-\alpha)^2 \delta^2 \sqnorm{x^{t}-x^{t-1}}.
		\end{align*}
		We introduce the Lyapunov function 
		$$\lya^t\eqdef F(x^t)-F^{\inf} + \frac{\gamma}{2\alpha} \sqnorm{g_t-\nabla F(x^{t})}.$$
		We have 
		\begin{align*}
			\ExpCond{\lya^{t}}{\mathcal{F}_{t-1}}&
			\leq F(x^{t-1})-F^{\inf} -\frac{\gamma}{2}\sqnorm{\nabla F(x^{t-1})} +\frac{\gamma}{2}\sqnorm{g_{t-1}-\nabla F(x^{t-1})} \\
			&\quad+ \left(\frac{L}{2}-\frac{1}{2\gamma} \right)\ExpCond{\sqnorm{x^{t}-x^{t-1}}}{\mathcal{F}_{t-1}} \\
			&\quad+(1-\alpha)^2 \frac{\gamma}{2\alpha}\sqnorm{g_{t-1}-\nabla F(x^{t-1})}+\gamma \alpha\sigma_1^2 \\
			&\quad+ \frac{(1-\alpha)^2\gamma \delta^2}{\alpha} \ExpCond{\sqnorm{x^{t}-x^{t-1}}}{\mathcal{F}_{t-1}}\\
			&\leq F(x^{t-1})-F^{\inf} -\frac{\gamma}{2}\sqnorm{\nabla F(x^{t-1})} +\gamma \alpha\sigma_1^2\\
			&\quad+\big(\alpha+(1-\alpha)^2\big) \frac{\gamma}{2\alpha}\sqnorm{g_{t-1}-\nabla F(x^{t-1})} \\
			&\quad+ \left(\frac{L}{2}-\frac{1}{2\gamma} + \frac{(1-\alpha)^2\gamma \delta^2}{\alpha} \right)\ExpCond{\sqnorm{x^{t}-x^{t-1}}}{\mathcal{F}_{t-1}} .
		\end{align*}
		We have 
		$\alpha+(1-\alpha)^2\leq \alpha+(1-\alpha) = 1$. 
		A sufficient condition for $\frac{L_1}{2}-\frac{1}{2\gamma} + \frac{(1-\alpha)^2\gamma \delta^2}{\alpha}\leq 0$ is 
		
		$$\gamma\leq \frac{1}{L_1+\delta \frac{\sqrt{2}(1-\alpha)}{\sqrt{\alpha}}}.$$
		Assuming now that this condition holds, we have
		
		\begin{align}
			\ExpCond{\lya^{t}}{\mathcal{F}_{t-1}}&
			\leq \lya^{t-1} -\frac{\gamma}{2}\sqnorm{\nabla F(x^{t-1})} +\gamma \alpha\sigma_1^2.
		\end{align}
		Let $T\geq 1$. Unrolling the recursion, we have
		\begin{align}
			\sum_{t=0}^{T-1} \left(\frac{\gamma}{2} \Exp{\sqnorm{\nabla F(x^t)}} - \gamma \alpha\sigma^2\right) &\leq 
			\Delta+ \frac{\gamma}{2\alpha}\Exp{ \sqnorm{g^0-\nabla F(x^{0})}},
		\end{align}
		so that 
		\begin{align}
			\frac{1}{T}\sum_{t=0}^{T-1}\Exp{ \sqnorm{\nabla F(x^t)}} &\leq 
			\frac{2\Delta}{\gamma T} +2a\sigma^2+\frac{1}{\alpha T}\Exp{ \sqnorm{g^0-\nabla F(x^{0})}}.
		\end{align}
		We form $g^0$ as an unbiased estimate of $\nabla F(x^{0})$ using a minibatch of size $B_{\textnormal{init}}$. Hence $ \Exp{\sqnorm{g^0-\nabla F(x^{0})}}\leq \frac{\sigma_1^2}{B_{\textnormal{init}}}$
		and 
		\begin{align}
			\frac{1}{T}\sum_{t=0}^{T-1}\Exp{ \sqnorm{\nabla F(x^t)}} &\leq 
			\frac{2\Delta}{\gamma T} +\left(2a+\frac{1}{\alpha B_{\textnormal{init}}T}\right)\sigma_1^2.
		\end{align}
		We interpret the left-hand side as  $\Exp{\sqnorm{\nabla F(\hat{x}^T)}}$ for $\hat{x}^T$ chosen uniformly at random in $x^0,\ldots,x^{T-1}$.
		Hence, given $\eps>0$, with $\alpha=\min\ens{1,\frac{\eps^2}{\sigma_1^2}}$, $\gamma=\frac{1}{L_1+\frac{\delta\sqrt{2}(1-\alpha)}{\sqrt{\alpha}}}$, 
		$B_{\textnormal{init}} = \alpha^{-1}=\max\ens{1, \frac{\sigma_1^2}{\eps^2} }$, we have
		\begin{align}
			\Exp{\sqnorm{\nabla F(\hat{x}^T)}} &\leq 
			\frac{2\Delta}{T}\left(L_1+\delta\sigma_1 \eps^{-1}\right) +2\eps^2+\frac{\sigma_1^2}{T}.
		\end{align}
		With $T=\max\left(\frac{\sigma_1^2}{\eps^2},2\Delta \left(L_1\eps^{-2}+\delta\sigma_1 \eps^{-3}\right) \right)$, 
		we have $\frac{\sigma_1^2}{T}\leq \eps^2$ and  $\frac{2\Delta}{T}\left(L_1+\delta\sigma \eps^{-1}\right)  \leq \epsilon^2$.
		Hence, 
		\begin{align}
			\Exp{\sqnorm{\nabla F(\hat{x}^T)}} &\leq 4\eps^2,
		\end{align}
		with $B_{\textnormal{init}}+2(T-1)=\mathcal{O}\left(\frac{\sigma_1^2}{\eps^2}+\frac{\Delta (L_1+\delta) }{ \eps^2}+\frac{\Delta \delta \sigma_1}{ \epsilon^{3}}\right)$ stochastic gradient evaluations. 
	\end{proof}
	
	\newpage 
	\section{Empirical Evaluation}
	\label{sec:experiments}
	
	To validate our theoretical findings and provide valuable intuition regarding the behavior and stability of the proposed algorithms, we conduct a series of synthetic experiments. These experiments evaluate convergence trajectories, the necessity of our double-clipping mechanism, and how algorithmic complexity scales with the heavy-tail index $p$.
	
	\subsection{Experimental Setup}
	We consider the minimization of a highly ill-conditioned quadratic objective $F(x) = \frac{1}{2} x^\top A x$, where $A \in \R^{d \times d}$ is a diagonal matrix with $A_{1,1} = 0.01$ and $A_{i,i} = 1$ for $i > 1$, and dimension $d=10$. To simulate heavy-tailed noise under the $p$-BCM assumption (\Cref{ass:p-bounded-central-moment-gradient}), we inject synthetic noise into the exact gradients and Hessian-vector products. Specifically, the noise vectors are drawn from a symmetric Pareto-like distribution generated via $s \cdot (u^{-1/p} - 1)$, where $u \sim \mathcal{U}(0,1)$ and $s \sim \{-1, 1\}$ uniformly. This ensures that the noise has an infinite variance when $p < 2$, strictly adhering to our theoretical noise model.
	
	\subsection{The Necessity of Double-Clipping and Algorithmic Stability}
	
	\begin{figure}[htbp]
		\centering
		
		\includegraphics[width=0.6\linewidth]{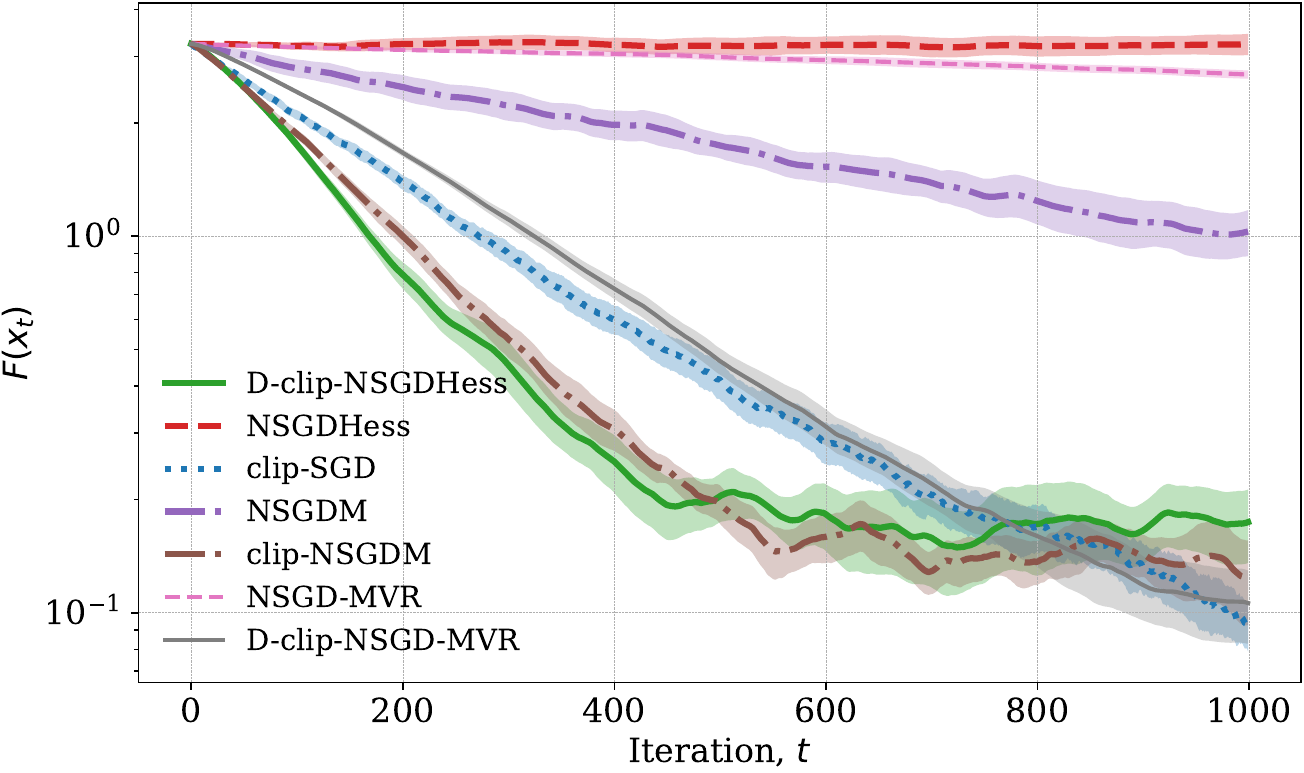} \\[1em]
		
		\includegraphics[width=\linewidth]{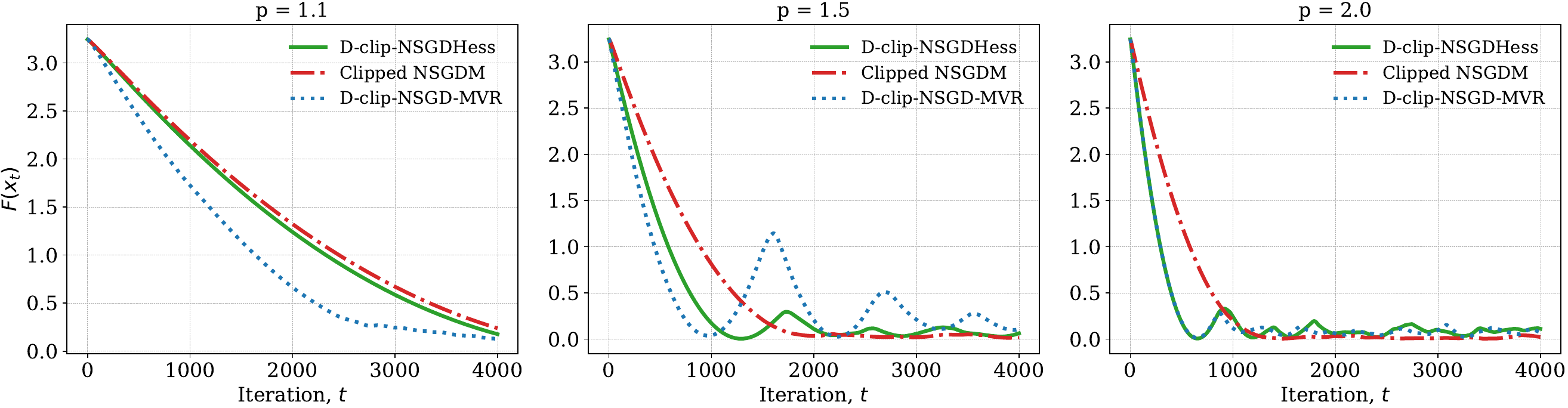}
		
		\caption{\textbf{Top:} Convergence trajectories of $F(x_t)$ over $T=1000$ iterations for $p=1.1$. Solid/dashed lines represent the mean over 20 independent runs, and shaded regions denote the standard deviation. \textbf{Bottom:} Algorithm performance across different tail indices $p \in \{1.1, 1.5, 2.0\}$. Hyperparameters are scaled strictly according to their respective theoretical optimal rates.}
		\label{fig:convergence_trajectories}
	\end{figure}

	In our first experiment (Figure~\ref{fig:convergence_trajectories}, Left), we fix the tail index to $p=1.1$ (an extreme heavy-tailed regime) and compare our proposed double-clipped methods (\textsf{D-\clip-NSGD-MVR} and \textsf{D-\clip-NSGDHess}) against standard clipped SGD \cite{10.5555/3495724.3497014}, clipped NSGD with momentum (\textsf{clip-NSGDM} \citep{NEURIPS2021_26901deb,pmlr-v195-liu23c}), and their unclipped counterparts. We run $T=1000$ iterations and average the results over 20 independent trials. 
	
	The results clearly demonstrate the necessity of clipping in this regime: unclipped methods (\algname{NSGDM} \citep{pmlr-v119-cutkosky20b,hübler2025gradientclippingnormalizationheavy} and \algname{NSGD-MVR}) fail to converge and exhibit severe instability, eventually diverging. In contrast, all clipped variants successfully decrease the objective. Notably, our proposed \algname{D-\clip-NSGD-MVR} and \algname{D-\clip-NSGDHess} exhibit the fastest and most stable convergence, validating the theoretical benefit of combining variance reduction or Hessian-correction with our proposed double-clipping mechanism.
	
	Next, we evaluate the performance of these algorithms across a spectrum of heavy-tailed noise distributions by varying the tail index $p \in \{1.1, 1.5, 2.0\}$ over $T=4000$ iterations (Figure~\ref{fig:convergence_trajectories}, Right). Crucially, to test our theoretical bounds, the stepsize $\gamma$, momentum parameter $\alpha$, and clipping thresholds for each method are scaled \textit{strictly} according to the theoretically optimal $T$-dependent exponents derived in our main theorems. The results confirm our theoretical characterizations: in the most challenging regime ($p=1.1$), \algname{D-\clip-NSGD-MVR} significantly outperforms the standard \algname{\clip-NSGDM} \citep{NEURIPS2021_26901deb,pmlr-v195-liu23c} baseline. As $p \to 2.0$ (approaching the standard bounded variance setting), the gap between the methods narrows, and all algorithms exhibit rapid, stable convergence.
	
	\subsection{Theoretical vs. Empirical Complexity Scaling}

	\begin{figure}[htbp]
		\centering
		\begin{minipage}{0.48\textwidth}
			\centering
			\includegraphics[width=\linewidth]{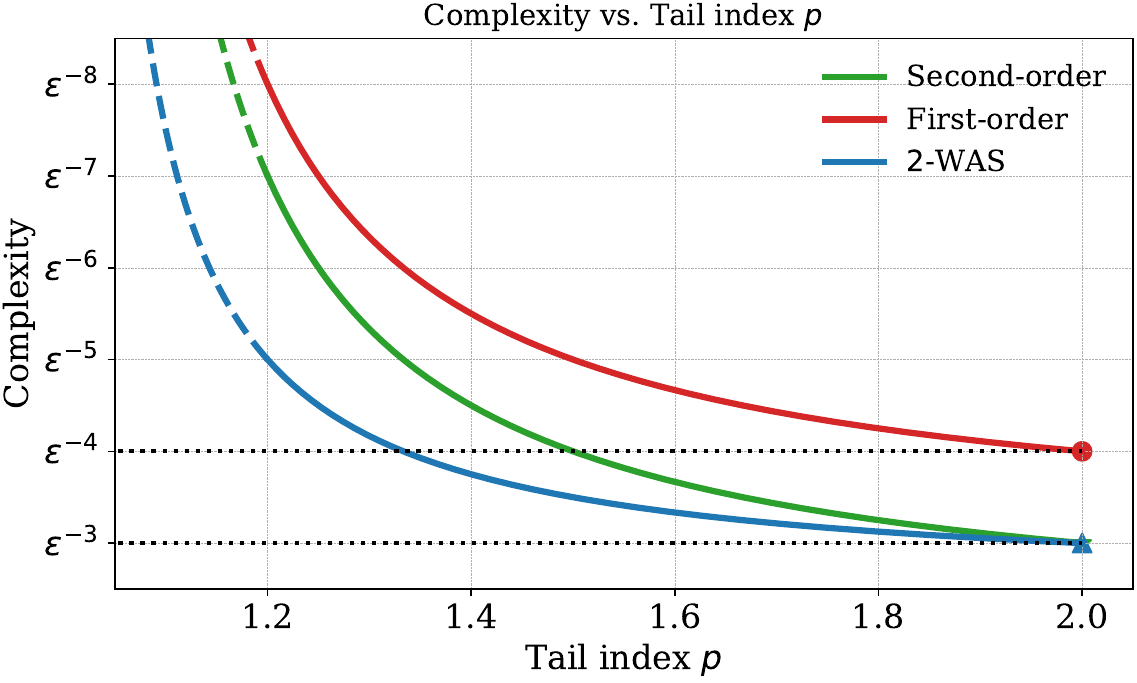} 
		\end{minipage}\hfill
		\begin{minipage}{0.48\textwidth}
			\centering
			\includegraphics[width=\linewidth]{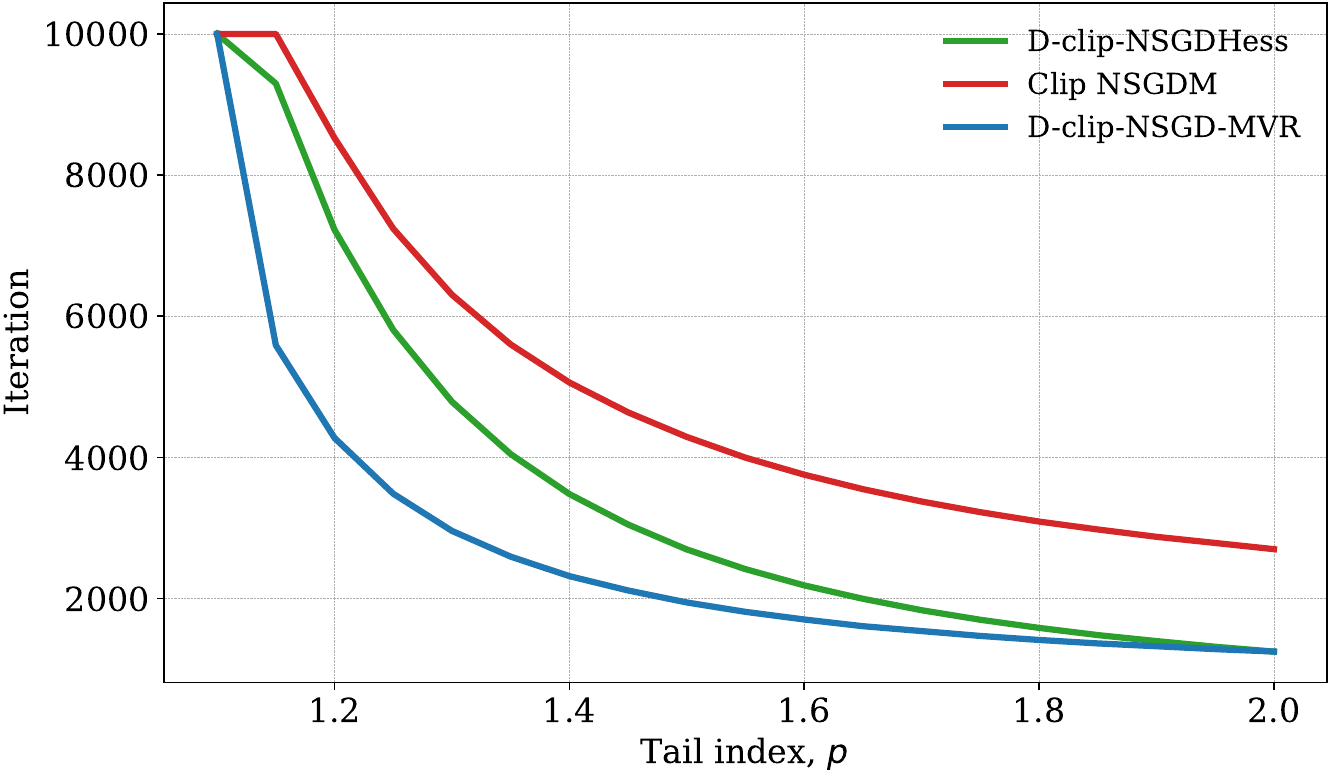} 
		\end{minipage}
		\caption{\textbf{Left:} Theoretical complexity $\mathcal{O}(\varepsilon^{-c})$ vs. tail index $p$. The y-axis represents the exponent $c$ of $\varepsilon$. \textbf{Right:} Empirical iterations required to reach a target suboptimality $F(x_t) < 1.2$ across different tail indices $p$. The empirical performance perfectly mirrors the theoretical scaling, demonstrating that \textsf{D-clip-NSGD-MVR} ($2$-WAS) provides the most robust acceleration in severe heavy-tailed regimes ($p<2$).}
		\label{fig:complexity_vs_p}
	\end{figure}
	
	To further illustrate the robustness of our analysis, we investigate how the heavy-tail index $p$ explicitly impacts sample complexity. We compare three distinct regimes: standard first-order optimization (\algname{\clip NSGDM}, \citep{NEURIPS2021_26901deb,pmlr-v195-liu23c}), second-order optimization (\textsf{D-\clip-NSGDHess}), and first-order optimization with variance reduction under the 2-WAS assumption (\textsf{D-\clip-NSGD-MVR}).
	
	\textbf{Theoretical Scaling:} In the left panel of Figure~\ref{fig:complexity_vs_p}, we plot the theoretical complexity exponents $c$ for the bound $\mathcal{O}(\varepsilon^{-c})$ as a function of the tail index $p \in (1, 2]$. As expected, in the bounded variance regime ($p=2$), the standard first-order method requires $\mathcal{O}(\varepsilon^{-4})$, while both the second-order method and the 2-WAS variance-reduced method achieve the optimal $\mathcal{O}(\varepsilon^{-3})$ complexity. However, as the noise becomes increasingly heavy-tailed ($p \to 1$), the required complexity approaches infinity across all methods. Notably, the theoretical curves reveal that under the 2-WAS assumption, our variance-reduced method (\textsf{D-clip-NSGD-MVR}) achieves a strictly better polynomial dependence on $\varepsilon$ than even the second-order method for all $p < 2$.
	
	\textbf{Empirical Validation:} To validate these theoretical scaling laws empirically, we track the exact number of iterations required for each algorithm to reach a fixed suboptimality threshold ($F(x_t) < 1.2$) as we vary $p$ from $1.1$ to $2.0$ (Figure~\ref{fig:complexity_vs_p}, Right). The empirical results perfectly mirror the theoretical predictions. \algname{\clip-NSGDM} consistently requires the most iterations to converge. Furthermore, the empirical iteration count confirms the exact crossover phenomenon predicted by our lower bounds: \textsf{D-\clip-NSGD-MVR} requires fewer iterations than the second-order \textsf{D-\clip-NSGDHess} in the severe heavy-tailed regime ($p < 2$). This compellingly demonstrates that variance reduction under the $2$-WAS assumption is a highly effective mechanism for accelerating convergence under extreme noise, completely aligning with our established theory.

\end{document}